\numberwithin{equation}{chapter}
\numberwithin{section}{chapter}
\numberwithin{subsection}{section}
\newtheorem*{rep@theorem}{\rep@title}
\newcommand{\newreptheorem}[2]{%
\newenvironment{rep#1}[1]{%
 \def\rep@title{#2 \ref{##1}}%
 \begin{rep@theorem}}%
 {\end{rep@theorem}}}
\newtheorem{theorem}{Theorem}[chapter]
\newtheorem{lemma}[theorem]{Lemma}
\newtheorem{proposition}[theorem]{Proposition}
\newtheorem{conjecture}[theorem]{Conjecture}
\newtheorem{corollary}[theorem]{Corollary}
\theoremstyle{definition}
\newtheorem{definition}[theorem]{Definition}
\newtheorem{example}[theorem]{Example}
\newtheorem{remark}[theorem]{Remark}
\newcommand{\SN}{\mathbb{N}}                    
\newcommand{\SZ}{\mathbb{Z}}                    
\newcommand{\SC}{\mathbb{C}}                    
\newcommand{\SH}{\mathbb{H}}                    
\newcommand{\SP}{\mathbb{P}}                    %
\newcommand{\SA}{\mathbb{A}}                    %
\newcommand{\CZ}{\mathcal{Z}}                    %
\newcommand{\CC}{\mathcal{C}}                    %
\newcommand{\CF}{\mathcal{F}}                    
\newcommand{\CO}{\mathcal{O}}                    %
\newcommand{\CM}{\mathcal{M}}                    %
\newcommand{\Gr}{{\rm Gr}}
\newcommand{\frakg}{\mathfrak{g}}
\newcommand{\frakheis}{\mathfrak{heis}}
\newcommand{\ra}[1]{\kern-1.5ex\xrightarrow{\ \ #1\ \ }\phantom{}\kern-1.5ex}
\newcommand{\ras}[1]{\kern-1.5ex\xrightarrow{\ \ \smash{#1}\ \ }\phantom{}\kern-1.5ex}
\newcommand{\da}[1]{\bigg\downarrow\rlap{$\scriptstyle{#1}$}}
\newcommand{\hda}{\mathrel{\rotatebox[origin=c]{-90}{$\hookrightarrow$}}}
\newcommand{\hra}{\mathrel{\rotatebox[origin=c]{0}{$\hookrightarrow$}}}
\title{Hilbert schemes of points on some classes surface singularities}
\author{Ádám Gyenge}
\begin{document}

\pagenumbering{roman}



\titlepage
\phantom{A}
\vspace{2cm}

\begin{center}
\begin{LARGE}
\textbf{Hilbert schemes of points on some classes of surface singularities}
\end{LARGE}
\vspace{2cm}

A Ph.D. dissertation by\\
\vspace{0.5cm}
{\large \textsc{Ádám Gyenge}}\\
\vspace{0.5cm}
submitted to\\
\vspace{0.5cm}
{\LARGE Eötvös Loránd University}\\
{\large Institute of Mathematics}

\vspace{3cm}

\begin{tabular}{r l}
Doctoral School: & Mathematics \\
Director: & Miklós Laczkovich, D.Sc. \\
& Professor, Member of the Hungarian Academy of Sciences\\
\\
Doctoral Program: & Pure Mathematics \\
Director: & András Szűcs, D.Sc.\\
& Professor, Member of the Hungarian Academy of Sciences\\
\\
Supervisor: & András Némethi, D.Sc.\\
& Professor, Doctor of the Hungarian Academy of Sciences\\
\\
Consultant: & Balázs Szendrői, Ph.D.\\
& Professor, University of Oxford
\end{tabular}

\vspace{2cm}

\begin{Large}
2016
\end{Large}
\end{center}

\newpage
\phantom{A}
\thispagestyle{empty}
\newpage

\pagenumbering{roman}
\phantom{A}
\vspace{5cm}
\begin{center}
\begin{large}
“Don't Panic.”\\
-- Douglas Adams, The Hitchhiker's Guide to the Galaxy
\end{large}
\end{center}

\newpage
\begin{center}
\begin{LARGE}
\textbf{Acknowledgement}
\end{LARGE}
\end{center}
\vspace{1cm}

Completing the PhD and writing this thesis was an amazing journey that would not
have been possible without the support and encouragement of many outstanding people.

My greatest appreciation and gratitude goes to my advisors. Balázs Szendrői helped a lot in developing the material of the thesis as well as in making it much more readable than it could be without him. András Némethi pointed out many mistakes in earlier versions and suggested several improvements. My gratitude extends to Alastair Craw, Eugene Gorsky and Tamás Szamuely for helpful comments and discussions about particular problems related to the thesis.

It was a pleasure and a great honour to work at two excellent mathematical schools in Budapest. The environments of both the ELTE and the Rényi Insitute were fantastic and motivating. I am grateful for this to Balázs Csikós and András Stipsicz respectively.

Among the unnumerous outstanding teachers during my university years I am especially thankful to Gábor Etesi, András Szűcs, Szilárd Szabó, László Fehér and Gyula Lakos. 

Moreover, I would like to thank to a variety of young mathematicians who also helped in several smaller or bigger questions during my work, or with whom we just had a lot of fun and many interesting conversations. Among many others this includes József Bodnár, Gergő Pintér, Levente Nagy, Norbert Pintye, Tamás László, Marco Golla, Stefan Behrens and Baldur Sigurdsson.

Finally, I would like to express my deepest gratitude to my family: Mom, Dad, Zsuzsi, Ákos and especially to my beloved Luca and Ági (who had to suffer the most from the long evenings and weekends while I was finishing the thesis). They all stood by me and shared with me both the great and the difficult moments of life.
\cleardoublepage

\tableofcontents

\pagenumbering{arabic}

\chapter{Introduction}

This chapter presents the motivations behind the study of Hilbert schemes of points on surface singularities and sets the aim of the research. Finally the structure of the thesis is briefly summarized.

\section{Problem setting and background motivation}

The punctual Hilbert scheme parameterizing the zero-dimensional subschemes of a quasi-projective variety contains a large amount of information about the geometry and topology of the base variety. Moreover, it is a very important moduli space. 
As a set, it consists of ideal sheaves of the sheaf of regular functions on the variety, such that the quotient by the ideal has finite length.
The Hilbert schemes of points on smooth curves and surfaces have been investigated for a long time by several people, including Hartshorne \cite{hartshorne1966connectedness}, Fogarty \cite{fogarty1968algebraic}, Macdonald \cite{macdonald1962poincare}, Iarrobino \cite{iarrobino1977punctual}, Brian\c{c}on \cite{brianccon1977description}. Due to the work of Nakajima \cite{nakajima1997heisenberg}, Grojnowski \cite{grojnowski1996instantons}, and many others, it has turned out that the surface case has an especially rich geometrical structure, see e.g. \cite{nakajima1999lectures}. 

The punctual Hilbert scheme of a smooth curve or surface is a smooth variety partitioned naturally according to the length. For a smooth curve $C$ the expression  
\[\sum_{n\geq 0} \chi(\mathrm{Hilb}^n(C))q^n=\frac{1}{(1-q)^{\chi(C)}}\] for the generating series collecting the Euler characteristics of the different components is a corollary of the Macdonald formula \cite{macdonald1962poincare}. Using the Weil conjecture (as proved by Deligne) Göttsche proved the following remarkable product formula for the generating series of the Euler characteristics of the Hilbert scheme of points on a smooth surface $S$ \cite{gottsche1990betti}:
\[\sum_{n\geq 0} \chi(\mathrm{Hilb}^n(S))q^n=\prod_{n\geq 1}\frac{1}{(1-q^n)^{\chi(S)}}\;.\]

In the recent years a new direction has emerged, which also allows singularities on the base variety. A breakthrough result was obtained by Maulik \cite{maulik2012stable}, who proved the conjecture of Oblomkov and Shende relating an integral of the length function with respect to the Euler characteristic over the Hilbert scheme of points of a curve with planar singularities to the HOMFLY polynomial of its link. The result shows that this polynomial invariant of the link of a singularity contains information about the  invariants of the Hilbert scheme of points on the singularity. 


Motivated by these results it is natural to consider the Hilbert scheme of points on singular surfaces. The aim of this thesis is to describe the Euler characteristics of the Hilbert schemes parameterizing the zero-dimensional subschemes of of some basic classes of surface singularities. 

The well known simple singularities are the simplest type of normal surface singularities, and it is known that they have an orbifold structure.  There are at least two natural versions of the punctual Hilbert scheme in the case of quotient singularities. One of these, the equivariant Hilbert scheme of $\SC^2$ with respect to the groups of $ADE$ type were investigated by Nakajima \cite{nakajima1994instantons}. The other type, the coarse Hilbert scheme is more mysterious. We show that the generating series of the Euler characteristics of the coarse Hilbert schemes of points on the singularities of type $A_n$ and $D_n$ can be computed from the multivariable generating series of the corresponding equivariant Hilbert schemes. The remaining cases $E_6$, $E_7$ and $E_8$ are not treated here, but computer calculations lead us to an analogous conjecture. The proofs might be similar as well, once the representation theoretic tools become available.

The moduli spaces of torsion free sheaves on surfaces are higher rank analogs of the Hilbert schemes. In type $A$ our results reveal their Euler characteristic generating function as well. Another, very interesting class of normal surface singularities is the so-called cyclic quotient singularities of type $(p,1)$. As an outlook we also obtain some results about the associated generating functions. 

\section{The structure of the thesis}

The thesis summarizes the results of the papers \cite{gyenge2015enumeration}, \cite{gyenge2016hilbert}, \cite{gyenge2015main}, \cite{gyenge2015announcement}, the last two of which are joint manuscripts with András Némethi and Balázs Szendrői. 

The thesis can be divided into two main parts. The first part contains the preliminaries, statement of all the major results, and the proofs for the type $A$ and the type $(p,1)$ cyclic quotient cases.
\begin{itemize}
\item \textbf{Chapter~\ref{ch:prelim}} contains the preliminaries for the whole thesis. We summarize the necessary background and introduce the two types of Hilbert schemes which will be our central objects. The quiver description of the moduli space of higher rank torsion-free sheaves is also summarized, together with the necessary background on representation theory. Parts of this chapter have appeared in  \cite{gyenge2015announcement} and \cite{gyenge2015main}.
\item \textbf{Chapter~\ref{ch:results}} announces the major results of the thesis as well as their corollaries related to the S-duality conjecture. These appear in \cite{gyenge2015enumeration}, \cite{gyenge2015announcement} and \cite{gyenge2016hilbert}.
\item In \textbf{Chapter~\ref{ch:typean}} we first investigate the type $A$ case from several viewpoints. We give a new proof to \autoref{thmorb} following \cite{gyenge2015enumeration}. Second, we establish a connection between the two types of Hilbert schemes and prove \autoref{thmsing}. Individually, both \autoref{thmorb} and \autoref{thmsing} were already known in the literature. But the unified treatment using the so-called abacus combinatorics has the advantage that it generelizes away from type $A$ case. The treatment of this second path is based on \cite{gyenge2015main}. As an outlook, we prove the results about the cyclic quotient singularities of type $(p,1)$ as in \cite{gyenge2016hilbert}.
\end{itemize}
The second part is based entirely on \cite{gyenge2015main}, and contains the proofs for the type $D$ case.
\begin{itemize}
\item In \textbf{Chapter~\ref{ch:Dnideal}}, we introduce  Schubert-style cell decompositions of Grassmannians of homogeneous summands of $\SC[x,y]$. 
\item In \textbf{Chapter~\ref{ch:dnorbidec}} we give a cell decomposition of the orbifold Hilbert scheme, proving \autoref{thmorb} in type $D$. 
\item In \textbf{Chapter~\ref{ch:Dnspecial}}, we discuss some special subsets of the strata and their geometry. 
\item A decomposition of the coarse Hilbert scheme is given in \textbf{Chapter~\ref{ch:coarse}}. 
\item In \textbf{Chapter~\ref{ch:dnabacus}}, the proof of \autoref{thmsing} in type $D$ is completed using combinatorial enumeration. 
\end{itemize}


\chapter{Preliminaries}
\label{ch:prelim}

This chapter contains the necessary background for the thesis. We introduce the Hilbert scheme of points and its higher rank analog in the equivariant setting with respect to a finite group action. We mention their connection to quiver varieties and to the representation theory of Lie algebras of affine type.

\section{Hilbert scheme of points and related moduli spaces}


For a general reference on Hilbert scheme of points on surfaces, we refer the reader to \cite{nakajima1999lectures}. Here we recall the basic facts.

Let $X$ be a projective scheme over an algebraically closed field $k$ and $\CO_X(1)$ a very ample line bundle on $X$. The contravariant Hilbert functor 
\[\mathcal{H}ilb(X) \colon [\mathrm{Schemes}] \to [\mathrm{Sets}]\]
from the category of schemes to the category of sets is defined as
\[ \mathcal{H}ilb(X)(U)= \{ Z \subset X \times U : Z \textrm{ is a closed subscheme}, \pi: Z \hookrightarrow X \times U \rightarrow U \textrm{ is flat} \}. \]
This means that the functor $\mathcal{H}ilb(X)$ associates to a scheme $U$ the set of families of closed subschemes on $X$ which are parameterized by $U$.

For $u \in U$, the Hilbert polynomial in $u$ is defined as
\[ P_u(k)=\chi(\CO_{Z_u} \otimes O_X (k)),\]
where $Z_u=\pi^{-1}(u)$. By the flatness of $Z$ over $U$, $P_u$ is constant on the connected components of $U$. For each polynomial $P$ let $\mathcal{H}ilb^P(X)$ be the subfunctor  of $\mathcal{H}ilb(X)$ which associates to $U$ the set of families of closed subschemes in $X$ which has $P$ as its Hilbert polynomial.

\begin{theorem}[\cite{grothendieck1960techniques}] The functor $\mathcal{H}ilb^P(X)$ is representable by a projective scheme $\mathrm{Hilb}^P(X)$. In particular, there exists a universal family $\CZ$ over $\mathrm{Hilb}^P(X)$, and that every family on $U$ is induced by a unique morphism $\phi: U \to \mathrm{Hilb}^P(X)$.
\end{theorem}
If there is an open subscheme $Y$ of $X$, then there exists the corresponding open subscheme $\mathrm{Hilb}^P(Y)$ of $\mathrm{Hilb}^P(X)$ parameterizing subschemes in $Y$. In particular, $\mathrm{Hilb}^P(Y)$ is defined for a quasi-projective scheme $Y$ as well.

\begin{definition} For some fixed integer $m\in {\mathbb Z}$, let $P$ be the constant Hilbert polynomial given by $P(k)=m$ for all $k \in \SZ$. We denote by $\mathrm{Hilb}^m(X)$ the corresponding Hilbert scheme. It is called the \emph{Hilbert scheme of $m$ points on $X$}, or the \emph{punctual Hilbert scheme of X} if $m$ is not important or specified previously.
\end{definition}

As a set, $\mathrm{Hilb}^m(X)$ consists of sheaf of ideals $I$ of the structure sheaf $\mathcal{O}_X$ of $X$, such that $\mathcal{O}_X/I$ is a finite length $\mathcal{O}_X$-module with $H^0(\mathcal{O}_X/I)$ an $m$-dimensional $\SC$-vector space. For a locally closed subvariety $Y \subset X$, let $\mathrm{Hilb}^m (X,Y) \subset \mathrm{Hilb}^m(X)$ be the Hilbert scheme of zero-dimensional subschemes of $X$ of length $m$ supported set-theoretically at the points of $Y$. The disjoint union of these spaces for all $m$ is denoted as $\mathrm{Hilb}(X)$, and $\mathrm{Hilb}(X,Y)$ respectively.

Let us fix an arbitrary quasi-projective variety $X$. A central invariant in this thesis will be the generating series of the Euler characteristics of its Hilbert schemes:
\begin{equation} \label{eq:zserdef} Z_X(q)=\sum_{m=0}^\infty \chi\left(\mathrm{Hilb}^m(X)\right)q^m.\end{equation}

There is a relation between the Hilbert scheme of $m$ points to the $m$-th symmetric product of $X$ given by \emph{Hilbert-Chow morphism}:
\[
\begin{array}{rcl}
\pi \colon \mathrm{Hilb}^{m}(X)& \to & S^m X\\
I &\mapsto &\sum_{x \in X} \mathrm{colength}(I_x)[x].
\end{array}
\]

It is known that if $X$ is a nonsingular curve then $\mathrm{Hilb}^{m}(X)=S^m X$. Then we have MacDonald's result~\cite{macdonald1962poincare} for the generating series:
\[ Z_C(q)= (1-q)^{-\chi(C)}.\]

Another well investigated case is when $X$ is a smooth surface. Then the following theorem holds:
\begin{theorem}[Fogarty]
\begin{enumerate}
\item $\mathrm{Hilb}^{m}(X)$ is smooth of dimension $2m$.
\item The morphism $\pi \colon \mathrm{Hilb}^{m}(X) \to  S^m X$ is a resolution of singularities.
\end{enumerate}
\end{theorem}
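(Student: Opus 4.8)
The plan is to treat the two assertions in turn, with part~(1) carrying essentially all the content and part~(2) following almost formally. For~(1) I would compute the Zariski tangent space of $\mathrm{Hilb}^m(X)$ at an arbitrary closed point, show it always has dimension $2m$, and combine this with equidimensionality. Let $[Z]\in\mathrm{Hilb}^m(X)$ correspond to the ideal sheaf $I_Z$, put $\CO_Z=\CO_X/I_Z$ (coherent of length $m$, supported on a finite set), and recall from the deformation theory of the Hilbert scheme that $T_{[Z]}\mathrm{Hilb}^m(X)\cong\mathrm{Hom}_{\CO_X}(I_Z,\CO_Z)$. Applying $\mathrm{Hom}_{\CO_X}(-,\CO_Z)$ to $0\to I_Z\to\CO_X\to\CO_Z\to 0$ and using that $\CO_Z$ has $0$-dimensional support, so $\mathrm{Ext}^{>0}_{\CO_X}(\CO_X,\CO_Z)=H^{>0}(X,\CO_Z)=0$, yields the exact sequence
\[0\to\mathrm{Hom}(\CO_Z,\CO_Z)\to H^0(X,\CO_Z)\to\mathrm{Hom}(I_Z,\CO_Z)\to\mathrm{Ext}^1(\CO_Z,\CO_Z)\to 0,\]
whence $\dim T_{[Z]}\mathrm{Hilb}^m(X)=m-\dim\mathrm{Hom}(\CO_Z,\CO_Z)+\dim\mathrm{Ext}^1(\CO_Z,\CO_Z)$.

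Three inputs finish the tangent computation. First, $\dim_{\SC}\mathrm{Hom}_{\CO_X}(\CO_Z,\CO_Z)=m$: in a local ring $\CO_{X,p}$, which is a two-dimensional regular, hence normal, domain, an endomorphism of $\CO_{X,p}/I$ is multiplication by an element of the fraction field carrying $I$ into $I$; such an element is integral over $\CO_{X,p}$ (determinant trick), hence lies in $\CO_{X,p}$, so $\mathrm{End}(\CO_Z)=\CO_Z$. Second, Serre duality on $X$ (taking $X$ projective, or using local duality, since everything is concentrated near the finite set $\mathrm{supp}(Z)$) together with $\CO_Z\otimes\omega_X\cong\CO_Z$ (as $\omega_X$ is invertible and $\mathrm{supp}(Z)$ finite) gives $\mathrm{Ext}^i(\CO_Z,\CO_Z)\cong\mathrm{Ext}^{2-i}(\CO_Z,\CO_Z)^{\vee}$, so $\dim\mathrm{Ext}^2(\CO_Z,\CO_Z)=\dim\mathrm{Hom}(\CO_Z,\CO_Z)=m$. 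Third, the Euler pairing $\chi(\CO_Z,\CO_Z)=\sum_i(-1)^i\dim\mathrm{Ext}^i(\CO_Z,\CO_Z)$ vanishes, since it depends only on $\mathrm{ch}(\CO_Z)=(0,0,m)$ by Riemann--Roch (or simply because, in a finite locally free resolution of the torsion sheaf $\CO_Z$, the alternating sum of ranks is $0$). The second and third inputs give $\dim\mathrm{Ext}^1(\CO_Z,\CO_Z)=2m$, and then the first gives $\dim T_{[Z]}\mathrm{Hilb}^m(X)=m-m+2m=2m$ at every point.

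To pass from constant tangent dimension to smoothness I would use that $\mathrm{Hilb}^m(X)$ is irreducible of dimension $2m$: its open locus of reduced length-$m$ subschemes is isomorphic to the configuration space $(X^m\setminus\Delta)/S_m$ of $m$ distinct points, hence smooth of dimension $2m$, and it is dense. The latter (irreducibility) is part of Fogarty's analysis and reduces, via the Hilbert--Chow stratification --- strata fibering over configuration spaces of $r$ points with fibers products of punctual Hilbert schemes $\mathrm{Hilb}^{k_i}(\SC^2,0)$, of dimension $2r+\sum_i(k_i-1)=m+r\le 2m$ --- to Brian\c{c}on's theorem that $\mathrm{Hilb}^{k}(\SC^2,0)$ is irreducible of dimension $k-1$. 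Granting this, at every $[Z]$ one has $\dim_{[Z]}\mathrm{Hilb}^m(X)=2m=\dim T_{[Z]}\mathrm{Hilb}^m(X)$, so $\mathrm{Hilb}^m(X)$ is smooth at $[Z]$. I expect this equidimensionality/irreducibility input to be the real obstacle: the tangent-space count is routine, whereas controlling the punctual strata --- equivalently, excluding spurious low-dimensional components of $\mathrm{Hilb}^m(X)$ --- requires genuine information about zero-dimensional subschemes of the plane. (Alternatively one can follow Fogarty's original connectedness argument, or check smoothness on the ADHM/commuting-variety model recalled later in \autoref{ch:prelim}.)

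For part~(2): $S^m X=X^m/S_m$ is irreducible, being the image of the irreducible $X^m$, and normal, being a finite quotient of the smooth, hence normal, variety $X^m$, so it is a variety. The Hilbert--Chow morphism $\pi$ is proper, indeed projective; its source is smooth by~(1); and $\pi$ restricts to an isomorphism over the dense open subset of $S^m X$ of multiplicity-free $0$-cycles --- a reduced length-$m$ subscheme being the same datum as an unordered $m$-tuple of distinct points --- hence $\pi$ is birational. A proper birational morphism from a smooth variety onto a variety is a resolution of singularities (and for $m\ge 2$ the target is genuinely singular, along the big diagonal), completing the proof.
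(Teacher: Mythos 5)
The paper states Fogarty's theorem without proof, as classical background (its reference for this material is Nakajima's lectures), so there is no in-paper argument to compare against; what you propose is essentially the standard modern proof, and most of it is sound. The tangent-space computation is correct: $T_{[Z]}\mathrm{Hilb}^m(X)\cong\mathrm{Hom}(I_Z,\CO_Z)$, the four-term exact sequence, $\dim\mathrm{Hom}(\CO_Z,\CO_Z)=m$ (this is immediate because $\CO_Z$ is cyclic: a homomorphism $\CO_X/I_Z\to\CO_X/I_Z$ is determined by the image of $1$, and any image works since $I_Z$ is an ideal --- your detour through the fraction field and the determinant trick is really the argument for $\mathrm{End}(I_Z)$, not $\mathrm{End}(\CO_Z)$, and is not needed), duality giving $\dim\mathrm{Ext}^2=m$, and $\chi(\CO_Z,\CO_Z)=0$, so $\dim T_{[Z]}=2m$ everywhere. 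Part (2) is also fine.

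The one genuinely under-argued step is the passage from the stratification to irreducibility. Knowing that each stratum is irreducible of dimension $m+r\le 2m$ only tells you that the irreducible components of $\mathrm{Hilb}^m(X)$ are among the finitely many stratum closures and that at most one of them has dimension $2m$; it does not by itself exclude that some stratum closure with $r<m$ is a separate, lower-dimensional component --- which would destroy smoothness there, since the local dimension would then be $m+r<2m=\dim T$. You need one further input. Either (a) every punctual stratum lies in the closure of the reduced locus: this does follow from Brian\c{c}on, but via the density of the curvilinear locus in $\mathrm{Hilb}^k(\SC^2,0)$ (nonempty and open, hence dense by irreducibility) together with the observation that curvilinear subschemes are visibly smoothable; or (b) Fogarty's original route: the closure $\overline{R}$ of the reduced locus is irreducible of dimension $2m$, so at any $[Z]\in\overline{R}$ one has $\dim\CO_{[Z]}\ge 2m=\dim T_{[Z]}\ge\dim\CO_{[Z]}$, hence the local ring is regular, hence a domain, hence $\overline{R}$ is the unique component through $[Z]$ and is open as well as closed; connectedness of $\mathrm{Hilb}^m(X)$ (Hartshorne, or Gr\"obner degeneration to monomial ideals) then forces $\overline{R}=\mathrm{Hilb}^m(X)$. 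You explicitly flag this as ``the real obstacle'' and mention the connectedness alternative, so the gap is one of execution rather than of awareness, but as written the reduction to ``Brian\c{c}on's theorem'' silently skips the smoothability step.
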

For any nonsingular surface $S$, we have (a specialization of) G\"ottsche's formula~\cite{gottsche1990betti}
\begin{equation}\label{eq:goettsche} Z_S(q)=\left(\prod_{m=1}^{\infty}(1-q^m)^{-1}\right)^{\chi(S)}. 
\end{equation}
For results valid for higher dimensional varieties, see~\cite{cheah1996cohomology}.  

A particular case of the general construction above is when $X=\SC^2$. $\mathrm{Hilb}(\SC^2)$ has a rich geometric structure and will play a crucial rule throughout the thesis. 


For singular varieties~$X$, the series $Z_X(q)$ is much less studied. For a singular curve $C$ with a finite set of singuliarities $\{P_1, \ldots, P_k\}$ , and under the assumption that $(C,p_i)$ is planar for each $i$, we have the beautiful conjecture of Oblomkov and Shende~\cite{oblomkov2012hilbert}, proved by Maulik~\cite{maulik2012stable}, which takes the form
\begin{equation}\label{formula:singcurve}
Z_C(q)= (1-q)^{-\chi(C)}\prod_{j=1}^k Z^{(P_i, C)}(q).
\end{equation}
Here $Z^{(P_i, C)}(q)$ are highly nontrivial local terms that depend only on the embedded topological type of the link of the singularity $P_i\in C$. 


The higher rank analog of the Hilbert scheme of points on $\SC^2$ is the moduli space of framed torsion free sheaves on $\SP^2$. 
Torsion free sheaves are generalizations of vector bundles,
essentially they can be viewed as vector bundles which are allowed to have some
singularities: the dimensions of the fibers do not all have to be equal. Their moduli space is defined as
\[\CM^{r,m}(\SC^2)= \left. \left\{ (E,\Phi) \left| \begin{array}{c} E \textrm{ is a torsion free sheaf of rank } r, \\ c_2(E)=m   \textrm{ which is locally free in a} \\ \textrm{neighbourhood of } l_\infty, \\ \Phi\colon E|_{l_\infty} \xrightarrow{\sim} \CO_{l_\infty}^{\oplus r} \textrm{ is a framing at infinity}\end{array} \right.\right\} \right/\textrm{isomorphism}, \]
where $l_{\infty}=\{[0\;:\;z_1\;:\;z_2] \in \SP^2 \} \subset \SP^2$ is the line at infinity.  We put $\SC^2$ instead of $\SP^2$ in the argument to keep the analogy with the Hilbert scheme, but this should not eventuate any confusion.
By the existence of a framing $\Phi$, $c_1(E)=0$ for each  representative $(E,\Phi)$ of an element in $\CM^{r,m}(\SC^2)$. 

\begin{lemma} \label{lem:torsioncanemb} Let $H=E|_{\SC^2}$ be a torsion free sheaf on $\SC^2$. Then $H$ is the subsheaf of a free sheaf in a canonical way.
\end{lemma}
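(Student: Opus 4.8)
The plan is to exploit the fact that $H$ is torsion free on the affine plane $\SC^2 = \operatorname{Spec}\SC[x,y]$, so that it corresponds to a finitely generated torsion-free module $M$ over the Noetherian integral domain $R = \SC[x,y]$. Over an integral domain, a finitely generated torsion-free module embeds into a free module: first I would pass to the fraction field $K = \operatorname{Frac}(R) = \SC(x,y)$ and consider $M \otimes_R K$, which is a finite-dimensional $K$-vector space, say of dimension $r$ (this $r$ will be the rank of $H$). Torsion-freeness means the natural map $M \to M \otimes_R K$ is injective, so $M$ sits inside a $K$-vector space of dimension $r$. Choosing a basis identifies $M \otimes_R K \cong K^{\oplus r}$; each of the finitely many generators of $M$ has image in $K^{\oplus r}$ with denominators lying in $R$, so after clearing a common denominator $f \in R \setminus \{0\}$ we get $M \hookrightarrow \tfrac{1}{f} R^{\oplus r} \cong R^{\oplus r}$. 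Sheafifying, $H$ is a subsheaf of $\CO_{\SC^2}^{\oplus r}$.

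The one word that needs care is \emph{canonical}: the embedding just constructed depends on the choice of basis of $M \otimes_R K$ and on the choice of common denominator $f$, so it is not canonical as stated. To make it canonical I would instead use the double dual. Set $H^{\vee} = \mathcal{H}om_{\CO_{\SC^2}}(H, \CO_{\SC^2})$ and consider the natural evaluation map $H \to H^{\vee\vee}$. Its kernel is the torsion subsheaf of $H$, which vanishes since $H$ is torsion free, so $H \hookrightarrow H^{\vee\vee}$ canonically. It then remains to observe that $H^{\vee\vee}$ is free (equivalently locally free): $H^{\vee\vee}$ is a reflexive sheaf on the smooth surface $\SC^2$, hence locally free away from a closed subset of codimension $\geq 3$, which on a surface is empty, so $H^{\vee\vee}$ is locally free; and every locally free sheaf on $\SC^2 = \operatorname{Spec}$ of the polynomial ring is free — this is the Quillen--Suslin theorem, though for the argument we only need the weaker fact that it embeds in a free sheaf, which already follows from the fraction-field argument above applied to $H^{\vee\vee}$. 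Combining, $H \hookrightarrow H^{\vee\vee}$ is a canonical embedding of $H$ into a free sheaf.

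The main obstacle is pinning down exactly what is meant by "canonical" and what "free" is being claimed. If the intended statement is literally an embedding into a \emph{free} sheaf $\CO_{\SC^2}^{\oplus r}$, one must invoke Quillen--Suslin to trivialize $H^{\vee\vee}$; if instead an embedding into a \emph{locally free} sheaf suffices — which is all that is typically needed to set up the quiver/ADHM description of $\CM^{r,m}$ — then the double-dual argument is self-contained and the reflexivity-on-a-smooth-surface step is the only real input. I would present the double-dual construction $H \to H^{\vee\vee}$ as the canonical map, prove injectivity from torsion-freeness, and prove that $H^{\vee\vee}$ is locally free by the codimension estimate for the non-locally-free locus of a reflexive sheaf, citing Quillen--Suslin at the end to upgrade locally free to free if the stronger phrasing is wanted.
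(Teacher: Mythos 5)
Your proposal is correct and follows essentially the same route as the paper: the canonical map is $H \to H^{\vee\vee}$, injective by torsion-freeness, with $H^{\vee\vee}$ locally free because it is reflexive on a smooth surface, and then free by Quillen--Suslin. The only cosmetic difference is that the paper deduces local freeness of the reflexive sheaf via depth $2$ and the Auslander--Buchsbaum formula rather than via the codimension bound on the non-locally-free locus; both are standard and equivalent here.
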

\begin{proof}
The double dual sheaf $H^{\vee \vee}$ is reflexive. Consequently, it has depth 2. By the Auslander-Buchsbaum formula, the projective dimension of $H^{\vee \vee}$ is 0, i.e. $H^{\vee \vee}$ is projective. This implies, by the Quillen-Suslin theorem, that $H^{\vee \vee}$ is also free. Since $H$ is torsion free, the canonical map $H \to H^{\vee \vee}$ is injective.
\end{proof}
\begin{corollary}
Any framed torsion free sheaf $(E,\Phi)$ on $\SP^2$ is a subsheaf of a locally free sheaf in a canonical way.
\end{corollary}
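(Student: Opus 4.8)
The plan is to exhibit the canonical embedding as the natural morphism $E \to E^{\vee\vee}$ into the double dual, exactly as in the proof of \autoref{lem:torsioncanemb}; the only new ingredient is that $E^{\vee\vee}$ is locally free \emph{globally} on $\SP^2$, not merely free on an affine chart. First I would note that, since $(E,\Phi)$ represents a point of $\CM^{r,m}(\SC^2)$, the sheaf $E$ is torsion free on the whole of $\SP^2$; hence the kernel of the canonical map $E \to E^{\vee\vee}$ is the torsion subsheaf of $E$, which vanishes, so $E \hookrightarrow E^{\vee\vee}$ is injective.

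Next I would verify that $E^{\vee\vee}$ is locally free. The sheaf $E^{\vee\vee}$ is reflexive, and reflexivity is a stalk-local property, so for every $p \in \SP^2$ the stalk $(E^{\vee\vee})_p$ is a reflexive module over the regular local ring $\CO_{\SP^2,p}$, which has dimension $2$. Running the argument of \autoref{lem:torsioncanemb} locally: a reflexive module over a $2$-dimensional regular local ring has depth $2$, hence projective dimension $0$ by the Auslander--Buchsbaum formula, i.e. it is projective, and a projective module over a local ring is free. Therefore $(E^{\vee\vee})_p$ is free for all $p$, so $E^{\vee\vee}$ is locally free on $\SP^2$, and $E \hookrightarrow E^{\vee\vee}$ is the desired canonical embedding into a locally free sheaf. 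One then checks compatibility with the framing: since $E$ is locally free near $l_\infty$ by hypothesis, the injection $E \to E^{\vee\vee}$ is an isomorphism there, so restricting to $l_\infty$ gives $E^{\vee\vee}|_{l_\infty} \cong E|_{l_\infty}$ and $\Phi$ induces a framing $\Phi^{\vee\vee}\colon E^{\vee\vee}|_{l_\infty} \xrightarrow{\sim} \CO_{l_\infty}^{\oplus r}$ on the locally free sheaf.

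There is no real obstacle here: the statement is simply a local-to-global upgrade of \autoref{lem:torsioncanemb}. The one point requiring care is that on $\SP^2$ one cannot invoke Quillen--Suslin as in the affine case; instead one uses that $\SP^2$ is smooth of dimension $2$, so that the local Auslander--Buchsbaum computation already forces freeness at every stalk, and reflexive equals locally free everywhere rather than only in codimension $\le 1$.
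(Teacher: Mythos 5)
Your proof is correct and follows exactly the route the paper intends: the corollary is stated there without proof as an immediate consequence of \autoref{lem:torsioncanemb}, and your argument is precisely the local version of that lemma's Auslander--Buchsbaum computation (with the correct observation that Quillen--Suslin is replaced by the fact that stalkwise freeness already gives local freeness on the smooth surface $\SP^2$). The remark on compatibility of the framing via $E \cong E^{\vee\vee}$ near $l_\infty$ is a sensible addition, though not strictly required by the statement.
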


Since $E$ itself is locally free on $l_\infty$, for $r=1$ the correspondence
\[\begin{array}{rcl}\CM^{1,m}(\SC^2) &\xrightarrow{\sim}& \mathrm{Hilb}^m(\SP^2\setminus l_{\infty}) = \mathrm{Hilb}^m(\SC^2) \\
E &\mapsto& E^{\vee \vee}/E \end{array}\]
gives an isomorphism.

\section{Quotient surface singularities and their Hilbert schemes}
\label{sec:quotsing}

Let $G<{\mathrm{GL}}(2,\SC)$ be a small finite subgroup and denote by $\SC^2/G$ the corresponding quotient variety.
There are two different types of Hilbert schemes attached to this data. 
First, there is the classical Hilbert scheme $\mathrm{Hilb}(\SC^2/G)$ of the quotient space. This is the moduli space of ideal sheaves in $\mathcal{O}_{\SC^2/G}(\SC^2/G)$ of finite colength. We call this the \textit{coarse Hilbert scheme of points}. It decomposes 
\[ \mathrm{Hilb}(\SC^2/G)=\bigsqcup_{m \in \SN}\mathrm{Hilb}^{m}(\SC^2/G)\]
into components which are quasiprojective but singular varieties indexed by ``the number of points'', the codimension $m$ of the ideal. Second, there is the moduli space of $G$-invariant finite colength subschemes of $\SC^2$, the invariant part of $\mathrm{Hilb}(\SC^2)$ under the lifted action of~$G$. This Hilbert scheme is also well known and is variously called the \textit{orbifold Hilbert scheme} \cite{young2010generating} or \textit{equivariant Hilbert scheme} \cite{gusein2010generating}.
We denote it by $\mathrm{Hilb}([\SC^2/G])$. This space also decomposes as
\[ \mathrm{Hilb}([\SC^2/G])=\bigsqcup_{\rho \in {\mathop{\rm Rep}}(G)}\mathrm{Hilb}^{\rho}([\SC^2/G]),\]
where
\[\mathrm{Hilb}^{\rho}([\SC^2/G])=  \{ I \in \mathrm{Hilb}(\SC^2)^G \colon H^0(\mathcal{O}_{\SC^2}/I) \simeq_G \rho \}\]
for any finite-dimensional representation $\rho\in {\mathop{\rm Rep}}(G)$ of $G$;
here $\mathrm{Hilb}(\SC^2)^G$ is the set of $G$-invariant ideals of $\SC[x,y]$, and $\simeq_G$ means $G$-equivariant isomorphism. Being components of the fixed point set of a finite group acting on smooth quasiprojective varieties, the orbifold Hilbert schemes themselves are smooth and quasiprojective \cite{cartan1957quotient}.

There is a natural pushforward map between the two kinds of Hilbert schemes: each $J \in \mathrm{Hilb}([\SC^2/G])$ can be mapped to its $G$-invariant part, giving a morphism~\cite[3.4]{brion2011invariant} 
\[ \begin{array}{rccl} p_\ast:& \mathrm{Hilb}([\SC^2/G])&\rightarrow &\mathrm{Hilb}(\SC^2/G) \\ & J&  \mapsto & J^G=J\cap \SC[x,y]^G\end{array}\]
called the \textit{quotient-scheme map}. There is also a set-theoretic pullback map, which however
does {\em not} preserve flatness in families, so it is not a morphism between the Hilbert schemes: 
the inclusion $i: \SC[x,y]^G \subset \SC[x,y]$ induces a pullback map on the ideals, 
and its image is contained in the set of $G$-equivariant ideals, leading to a map of sets
\[\begin{array}{rccl}  i^\ast  : &\mathrm{Hilb}(\SC^2/G)(\SC) & \rightarrow & \mathrm{Hilb}([\SC^2/G])(\SC)\\ & I & \mapsto & i^\ast I= \SC[x,y].I\end{array}\]
Since for $I\lhd \SC[x,y]^G$, we clearly have $(\SC[x,y].I)^G = I$, the composite $p_\ast \circ i^\ast$ is the identity on the set of ideals of the invariant ring. 

We collect the topological Euler characteristics of the two versions of the Hilbert scheme into two generating functions. Let $\rho_0,\ldots,\rho_n\in\mathop{\rm Rep}(G)$ denote the (isomorphism classes of) irreducible representations of $G$, with $\rho_0$ the trivial representation.
\begin{definition}
\begin{enumerate}[(a)]
\item The \textit{orbifold generating series} of the orbifold $[\SC^2/G]$ is 
\[Z_{[\SC^2/G]}(q_0,\ldots, q_n)= \sum_{m_0,\dots,m_n=0}^\infty \chi\left(\mathrm{Hilb}^{m_0 \rho_0 + \ldots +m_n \rho_n}([\SC^2/G]) \right)   q_0^{m_0}\cdot \ldots \cdot q_n^{m_n}.\]
\item The \textit{coarse generating series} of the singularity $\SC^2/G$ is just the series defined in \eqref{eq:zserdef}:
\[Z_{\SC^2/G}(q)=\sum_{m=0}^\infty \chi\left(\mathrm{Hilb}^m(\SC^2/G)\right)q^m.\]
\end{enumerate}
\end{definition}


Assume that $G<\mathrm{SL}(2,\SC)$. Then $G$ fixes the line $l_{\infty} \subset \SP^2$ introduced above. Let us take and fix a lift of the $G$-action to $\CO_{l_\infty}^{\oplus r}$. This can be written as $W\otimes_{\SC} \CO_{l_\infty}^{\oplus r}$, where $W$ is a representation of $G$. Then the $G$-action on $\SC^2$ lifts naturally to a $G$-action on $\CM^{r,m}(\SC^2)$ \cite{nakajima2002geometric}. We define the moduli space of $G$-equivariant torsion free sheaves on $\SP^2$ with a framing as the $G$-invariant part of the space $\CM^{r,m}(\SC^2)$ with respect to the induced $G$-action. It will be denoted by $\CM^{r,m}([\SC^2/G])$. The dependence on the choice of $W$ is suppressed in the notation. In fact, it is easy to see that the isomorphism type of $\CM^{r,m}([\SC^2/G])$ only depends on the isomorphism class of $W$. We denote
\[\CM^{r}(\SC^2)=\bigsqcup_m\CM^{r,m}(\SC^2).\]


Using Beilinson's spectral sequence it can be shown that the analog of the quotient $\SC[x,y]/I$ for a higher rank framed sheaf $(E,\phi)$ is $H^1(E(-1))$, and also that $H^0(E(-1))=H^2(E(-1))=0$ \cite[Chapter 2]{nakajima1999lectures}. If $E$ is $G$-invariant, then $H^1(E(-1))$ carries naturally a $G$-representation, and there is a decomposition
\begin{equation} \label{eq:modtfshdec} \CM^{r}([\SC^2/G])=\bigsqcup_{\rho \in {\mathop{\rm Rep}}(G)}\CM^{r,\rho}([\SC^2/G]),\end{equation}
where
\[\CM^{r,\rho}([\SC^2/G])=  \{ (E,\Phi) \in \CM^{r}(\SC^2)^G \colon H^1(E(-1)) \simeq_G \rho \}.\]

It is possible to define the higher rank analogue of the coarse Hilbert scheme $\mathrm{Hilb(\SC^2/G)}$ and a morphism $p_{\ast}^G$ from $\CM^{r}([\SC^2/G])$ to this new moduli space which ``descends'' the sheaves from $[\SC^2/G]$  to $\SC^2/G$ when $W$ is the trivial $r$ dimensional representation of $G$. The result of this descent map is a higher rank torsion free sheaf on the  variety $\SP^2/G$ which is trivial on $l_{\infty}/G$. The rank one case of this moduli space coincides with $\mathrm{Hilb(\SC^2/G)}$. However, the general behaviour and the computations seems to be more complicated in the higher rank case even in type $A$. Therefore, we leave the investigation of the moduli spaces of higher rank framed torsion free sheaves on quotient singularities for further study.

The Euler characteristics of the higher rank equivariant moduli spaces for a fixed isomorphism class of $W$ are collected again into generating series:
\begin{definition}
\begin{gather*}
Z_{[\SC^2/G]}^W(q_0,\ldots, q_n)=\sum_{m_0,\dots,m_n=0}^\infty \chi\left(\CM^{r,m_0 \rho_0 + \ldots +m_n \rho_n}([\SC^2/G]) \right)   q_0^{m_0}\cdot \ldots \cdot q_n^{m_n}.
\end{gather*}
\end{definition} 



In this thesis almost always we are only concerned with finite subgroups $G<\mathrm{SL}(2,\SC)$. As it is well known, these are classified into three types: type $A_n$ for $n\geq 1$, type $D_n$ for $n\geq 4$ and type $E_n$ for $n=6,7,8$. The type
of the singularity can be parameterized by a simply-laced irreducible Dynkin diagram with $n$ nodes, arising from an irreducible simply laced root system $\Delta$. These are the following.

\begin{center}
\begin{tabular}{c}
Type $A_n$:\\
\begin{tikzpicture}[scale=1.2,line width=1pt, font=\footnotesize]
\node (1)  at ( 0,0) [circle,draw, fill=black, inner sep=0pt,minimum size=7pt] {};
\node (2) at ( 1,0) [circle,draw, fill=black, inner sep=0pt,minimum size=7pt] {};
\node (n-1) at ( 3,0) [circle,draw, fill=black, inner sep=0pt,minimum size=7pt] {};
\node (n) at ( 4,0) [circle,draw,fill=black,inner sep=0pt,minimum size=7pt] {};
\node (n+1) at (4.5,0.71) [circle, inner sep=0pt,minimum size=7pt] {};
\node (n+2) at (4.5,-0.71) [circle, inner sep=0pt,minimum size=7pt] {};
\draw (1.east) -- (2.west);
\draw [dashed] (2.east) -- (n-1.west);
\draw (n-1.east) -- (n.west);
\end{tikzpicture}
\end{tabular}
\hspace{1cm}
\begin{tabular}{c}
Type $D_n$:\\
\begin{tikzpicture}[scale=1.2,line width=1pt, font=\footnotesize]
\node (1)  at ( 0,0) [circle,draw, fill=black, inner sep=0pt,minimum size=7pt] {};
\node (2) at ( 1,0) [circle,draw, fill=black, inner sep=0pt,minimum size=7pt] {};
\node (n-3) at ( 3,0) [circle,draw, fill=black, inner sep=0pt,minimum size=7pt] {};
\node (n-2) at ( 4,0) [circle,draw,fill=black,inner sep=0pt,minimum size=7pt] {};
\node (n-1) at (4.5,0.71) [circle,draw, fill=black, inner sep=0pt,minimum size=7pt] {};
\node (n) at (4.5,-0.71) [circle,draw, fill=black, inner sep=0pt,minimum size=7pt] {};
\draw  (1.east) -- (2.west);
\draw [dashed] (2.east) -- (n-3.west);
\draw  (n-3.east) -- (n-2.west);
\draw  (n-2.south east) -- (n.north west);
\draw  (n-2.north east) -- (n-1.south west);
\end{tikzpicture}
\end{tabular}
\end{center}
\begin{center}
\begin{tabular}{c}
Type $E_6$:\\
\begin{tikzpicture}[scale=1.2,line width=1pt, font=\footnotesize]
\node (4)  at ( 0,0) [circle,draw, fill=black, inner sep=0pt,minimum size=7pt] {};
\node (3) at ( 1,0) [circle,draw, fill=black, inner sep=0pt,minimum size=7pt] {};
\node (2) at ( 2,0) [circle,draw,fill=black,inner sep=0pt,minimum size=7pt] {};
\node (5) at ( 3,0) [circle,draw, fill=black, inner sep=0pt,minimum size=7pt] {};
\node (6) at ( 4,0) [circle,draw,fill=black,inner sep=0pt,minimum size=7pt] {};
\node (1) at ( 2,1) [circle,draw,fill=black,inner sep=0pt,minimum size=7pt] {};
\draw  (3.west) -- (4.east);
\draw  (2.west) -- (3.east);
\draw  (2.east) -- (5.west);
\draw  (5.east) -- (6.west);
\draw (1.south) -- (2.north);
\end{tikzpicture}
\end{tabular}
\hspace{1cm}
\begin{tabular}{c}
Type $E_7$:\\
\begin{tikzpicture}[scale=1.2,line width=1pt, font=\footnotesize]
\node (1)  at ( 0,0) [circle,draw, fill=black, inner sep=0pt,minimum size=7pt] {};
\node (2) at ( 1,0) [circle,draw, fill=black, inner sep=0pt,minimum size=7pt] {};
\node (3) at ( 2,0) [circle,draw,fill=black,inner sep=0pt,minimum size=7pt] {};
\node (4) at ( 3,0) [circle,draw, fill=black, inner sep=0pt,minimum size=7pt] {};
\node (5) at ( 4,0) [circle,draw,fill=black,inner sep=0pt,minimum size=7pt] {};
\node (7) at ( 2,1) [circle,draw,fill=black,inner sep=0pt,minimum size=7pt] {};
\node (6) at ( 5,0) [circle,draw,fill=black,inner sep=0pt,minimum size=7pt] {};
\draw  (1.east) -- (2.west);
\draw  (2.east) -- (3.west);
\draw  (3.east) -- (4.west);
\draw  (4.east) -- (5.west);
\draw  (3.north) -- (7.south);
\draw  (5.east) -- (6.west);
\end{tikzpicture}
\end{tabular}
\end{center}
\begin{center}
\begin{tabular}{c}
Type $E_8$:\\
\begin{tikzpicture}[scale=1.2,line width=1pt, font=\footnotesize]
\node (2)  at ( 0,0) [circle,draw, fill=black, inner sep=0pt,minimum size=7pt] {};
\node (3) at ( 1,0) [circle,draw, fill=black, inner sep=0pt,minimum size=7pt] {};
\node (4) at ( 2,0) [circle,draw,fill=black,inner sep=0pt,minimum size=7pt] {};
\node (5) at ( 3,0) [circle,draw, fill=black, inner sep=0pt,minimum size=7pt] {};
\node (6) at ( 4,0) [circle,draw,fill=black,inner sep=0pt,minimum size=7pt] {};
\node (8) at ( 3,1) [circle,draw,fill=black,inner sep=0pt,minimum size=7pt] {};
\node (1) at ( -1,0) [circle,draw,fill=black,inner sep=0pt,minimum size=7pt] {};
\node (7) at ( 5,0) [circle,draw,fill=black,inner sep=0pt,minimum size=7pt] {};
\draw  (1.east) -- (2.west);
\draw (2.east) -- (3.west);
\draw  (3.east) -- (4.west);
\draw  (4.east) -- (5.west);
\draw  (5.north) -- (8.south);
\draw  (5.east) -- (6.west);
\draw  (6.east) -- (7.west);
\end{tikzpicture}
\end{tabular}
\end{center}

We denote the corresponding group by $G_\Delta<\mathrm{SL}(2,\SC)$; all other data 
corresponding to the chosen type will also be labelled by the subscript $\Delta$. 
Irreducible representations $\rho_0,\ldots,\rho_n$ of $G_\Delta$ 
are then labeled by vertices of the affine Dynkin diagram associated with $\Delta$, which are recalled in \ref{sec:quiverdesc} below. The singularity $\SC^2/G_\Delta$ is known as a simple (Kleinian, surface) singularity; we will refer to the corresponding orbifold $[\SC^2/G_\Delta]$ as the simple singularity orbifold. Except when noted, we always work with this class of singularities.


Additionally, we will also make some investigation with another class of singularities. Fix a positive integer $p$. Let $\SZ_p$ be the cyclic group of order $p$ with generator $g$ and let it act on $\SC^2$ as:  $g.x=\mathrm{e}^{\frac{2 \pi i}{p}} x$, $g.y=\mathrm{e}^{\frac{2 \pi i q}{p}}y$ where $q$ is coprime to $p$. Then we get an action of $\SZ_p$ on $\SC^2$ which is free away from the origin. Let $X(p,q)$ denote the quotient variety. It is called the \emph{cyclic quotient singularity of type $(p,q)$}.

The Hilbert scheme $\mathrm{Hilb}(X(p,q))$ of points on $X(p,q)$  is the moduli space of ideals sheaves in $\mathcal{O}_{X(p,q)}(X(p,q))$ of finite colength. The case when $q=p-1$ is just the type $A$ singularity introduced above. We will present results about the other extreme case when $q=1$, that is, about
\[Z_{X(p,1)}(q)=\sum_{m=0}^\infty \chi\left(\mathrm{Hilb}^m(X(p,1))\right)q^m.\]

\section{Quiver variety description of the moduli spaces}
\label{sec:quiverdesc}

Let $(I,H)$ be a quiver. More precisely, $I$ is a set of vertices and $H$ is a set of oriented edges. Let $\overline{H}=H \cup H^{\ast}$, where $H^{\ast}$ is the set of edges in $H$ with the reversed orientation. 
For dimension vectors $\underline{v}, \underline{w} \in \SZ^I_{\geq 0}$ 
we define a \emph{Nakajima quiver variety} as follows. See \cite{nakajima1994instantons, nakajima2002geometric} and references therein for the details, here we follow the notations of \cite{sam2014combinatorial}. 

Fix $I$-graded vector spaces $V , W$ such that $\dim V_i = v_i, \dim W_i = w_i$. Let
\[ M(\underline{v},\underline{w}) =\left( \bigoplus_{h \in \overline{H}} \mathrm{Hom}(V_{\mathrm{s}(h)},V_{\mathrm{t}(h)})\right)\oplus \left( \bigoplus_{i \in I} \mathrm{Hom}(W_i,V_i)\oplus \mathrm{Hom}(V_i,W_i) \right), \]
where $h \in \overline{H}$ is an oriented edge from $\mathrm{s}(h)$ to $\mathrm{t}(h)$. Note that  $GL(V)=\prod GL(V_i)$ acts on $M(\underline{v},\underline{w})$ by
\[ (g_i) \cdot (B_h,a_i,b_i)=(g_{\mathrm{t}(h)}B_h g_{\mathrm{s}(h)}^{-1},g_ia_i, b_ig_i^{-1}) \]
for any $g_i \in GL(V_i)$, $B_h \in \mathrm{Hom}(V_{\mathrm{s}(h)},V_{\mathrm{t}(h)})$, $a_i \in \mathrm{Hom}(W_i,V_i)$ and $b_i \in \mathrm{Hom}(V_i,W_i)$. Elements in $M(\underline{v},\underline{w})$ can be shortly denoted as a triple $(B,a,b)$. Here $B$ is a representation of the path algebra of the quiver on $V$, while $a\colon W \to V$ and $b\colon V \to W$ are maps of $I$-graded vector spaces.

The moment map $\mu$ for the $GL(V)$-action on $M(\underline{v},\underline{w})$ is given by
\[ \mu (B,a,b)= \bigoplus_{i \in I}\left( \sum_{h:\mathrm{t}(h)=i} \epsilon(h)B_hB_{h^{\ast}}+a_ib_i \right) \in \bigoplus_{i \in I}\mathfrak{gl}(V_i)=\mathfrak{gl}(V) \;,\]
where $\epsilon(h)=1$ and $\epsilon(h^{\ast})=-1$ for $h \in H$. 
A triple $(B,a,b) \in M(\underline{v},\underline{w})$ is called \emph{stable} if $\mathrm{im}(a)$ generates $V$ under the action of $B$. The subset of stable triples in $M(\underline{v},\underline{w})$ is denoted as $M(\underline{v},\underline{w})^{\mathrm{st}}$.

The quiver variety associated to the dimension vectors $\underline{v}, \underline{w}$ is
\[\mathcal{M}(\underline{v},\underline{w})=\{ (B,a,b) \in M(\underline{v},\underline{w})^{\mathrm{st}} \;|\; \mu(B,a,b)=0 \}/ GL(V)\;. \]
This is well defined only up to a (non-canonical) isomorphism, but since we are only interested in its topological properties we do not consider its dependence on the vector spaces $V$ and $W$ here.


Let us fix an irreducible simply laced root system $\Delta$ of rank $n$. Let $(I, H)$ be the associated \emph{affine} Dynkin quiver. The orientation of the edges in $H$ are specified as in the following diagrams for each type.
\begin{center}
\begin{tabular}{c}
Type $\widetilde{A}_n^{(1)}$:\\
\begin{tikzpicture}[scale=1.2,line width=1pt, font=\footnotesize]
\node (1)  at ( 0,0) [circle,draw, fill=black, inner sep=0pt,minimum size=7pt, label=below:$1$] {};
\node (2) at ( 1,0) [circle,draw, fill=black, inner sep=0pt,minimum size=7pt,label=below:$2$] {};
\node (n-1) at ( 3,0) [circle,draw, fill=black, inner sep=0pt,minimum size=7pt,label=below:$n-1$] {};
\node (n) at ( 4,0) [circle,draw,fill=black,inner sep=0pt,minimum size=7pt, label=below:$n$.] {};
\node (0) at ( 2,1.5) [circle,draw,fill=black,inner sep=0pt,minimum size=7pt, label=above:$0$] {};
\draw [->] (1.east) -- (2.west);
\draw [dashed] (2.east) -- (n-1.west);
\draw [->] (n-1.east) -- (n.west);
\draw [->] (n.north west) -- (0.south east);
\draw [->] (0.south west) -- (1.north east);
\end{tikzpicture}
\end{tabular}
\hspace{1cm}
\begin{tabular}{c}
Type $\widetilde{D}_n^{(1)}$:\\
\begin{tikzpicture}[scale=1.2,line width=1pt, font=\footnotesize]
\node (2)  at ( 0,0) [circle,draw, fill=black, inner sep=0pt,minimum size=7pt, label=below:$2$] {};
\node (3) at ( 1,0) [circle,draw, fill=black, inner sep=0pt,minimum size=7pt,label=below:$3$] {};
\node (n-3) at ( 3,0) [circle,draw, fill=black, inner sep=0pt,minimum size=7pt,label=below:$n-3$] {};
\node (n-2) at ( 4,0) [circle,draw,fill=black,inner sep=0pt,minimum size=7pt, label=right:$n-2$] {};
\node (0) at (-0.5,0.71) [circle,draw, fill=black, inner sep=0pt,minimum size=7pt, label=above:$0$] {};
\node (1) at (-0.5,-0.71) [circle,draw, fill=black, inner sep=0pt,minimum size=7pt, label=below:$1$] {};
\node (n-1) at (4.5,0.71) [circle,draw, fill=black, inner sep=0pt,minimum size=7pt, label=above:$n-1$] {};
\node (n) at (4.5,-0.71) [circle,draw, fill=black, inner sep=0pt,minimum size=7pt, label=below:$n$] {};
\draw [->] (2.east) -- (3.west);
\draw [dashed] (3.east) -- (n-3.west);
\draw [->] (n-3.east) -- (n-2.west);
\draw [->] (0.south east) -- (2.north west);
\draw [->] (1.north east) -- (2.south west);
\draw [->] (n-2.south east) -- (n.north west);
\draw [->] (n-2.north east) -- (n-1.south west);
\end{tikzpicture}
\end{tabular}
\end{center}
\begin{center}
\begin{tabular}{c}
Type $\widetilde{E}_6^{(1)}$:\\
\begin{tikzpicture}[scale=1.2,line width=1pt, font=\footnotesize]
\node (4)  at ( 0,0) [circle,draw, fill=black, inner sep=0pt,minimum size=7pt, label=below:$4$] {};
\node (3) at ( 1,0) [circle,draw, fill=black, inner sep=0pt,minimum size=7pt,label=below:$3$] {};
\node (2) at ( 2,0) [circle,draw,fill=black,inner sep=0pt,minimum size=7pt, label=below:$2$] {};
\node (5) at ( 3,0) [circle,draw, fill=black, inner sep=0pt,minimum size=7pt,label=below:$5$] {};
\node (6) at ( 4,0) [circle,draw,fill=black,inner sep=0pt,minimum size=7pt, label=below:$6$] {};
\node (1) at ( 2,1) [circle,draw,fill=black,inner sep=0pt,minimum size=7pt, label=right:$1$] {};
\node (0) at ( 2,2) [circle,draw,fill=black,inner sep=0pt,minimum size=7pt, label=right:$0$] {};
\draw [->] (3.west) -- (4.east);
\draw [->] (2.west) -- (3.east);
\draw [->] (2.east) -- (5.west);
\draw [->] (5.east) -- (6.west);
\draw [->] (1.south) -- (2.north);
\draw [->] (0.south) -- (1.north);
\end{tikzpicture}
\end{tabular}
\hspace{1cm}
\begin{tabular}{c}
Type $\widetilde{E}_7^{(1)}$:\\
\begin{tikzpicture}[scale=1.2,line width=1pt, font=\footnotesize]
\node (-1) at ( 2,2) [circle,inner sep=0pt,minimum size=7pt, label=right:$\phantom{0}$] {};
\node (1)  at ( 0,0) [circle,draw, fill=black, inner sep=0pt,minimum size=7pt, label=below:$1$] {};
\node (2) at ( 1,0) [circle,draw, fill=black, inner sep=0pt,minimum size=7pt,label=below:$2$] {};
\node (3) at ( 2,0) [circle,draw,fill=black,inner sep=0pt,minimum size=7pt, label=below:$3$] {};
\node (4) at ( 3,0) [circle,draw, fill=black, inner sep=0pt,minimum size=7pt,label=below:$4$] {};
\node (5) at ( 4,0) [circle,draw,fill=black,inner sep=0pt,minimum size=7pt, label=below:$5$] {};
\node (7) at ( 2,1) [circle,draw,fill=black,inner sep=0pt,minimum size=7pt, label=right:$7$] {};
\node (0) at ( -1,0) [circle,draw,fill=black,inner sep=0pt,minimum size=7pt, label=below:$0$] {};
\node (6) at ( 5,0) [circle,draw,fill=black,inner sep=0pt,minimum size=7pt, label=below:$6$] {};
\draw [->] (1.east) -- (2.west);
\draw [->] (2.east) -- (3.west);
\draw [->] (3.east) -- (4.west);
\draw [->] (4.east) -- (5.west);
\draw [->] (3.north) -- (7.south);
\draw [->] (0.east) -- (1.west);
\draw [->] (5.east) -- (6.west);
\end{tikzpicture}
\end{tabular}
\end{center}
\begin{center}
\begin{tabular}{c}
Type $\widetilde{E}_8^{(1)}$:\\
\begin{tikzpicture}[scale=1.2,line width=1pt, font=\footnotesize]
\node (2)  at ( 0,0) [circle,draw, fill=black, inner sep=0pt,minimum size=7pt, label=below:$2$] {};
\node (3) at ( 1,0) [circle,draw, fill=black, inner sep=0pt,minimum size=7pt,label=below:$3$] {};
\node (4) at ( 2,0) [circle,draw,fill=black,inner sep=0pt,minimum size=7pt, label=below:$4$] {};
\node (5) at ( 3,0) [circle,draw, fill=black, inner sep=0pt,minimum size=7pt,label=below:$5$] {};
\node (6) at ( 4,0) [circle,draw,fill=black,inner sep=0pt,minimum size=7pt, label=below:$6$] {};
\node (8) at ( 3,1) [circle,draw,fill=black,inner sep=0pt,minimum size=7pt, label=right:$8$] {};
\node (1) at ( -1,0) [circle,draw,fill=black,inner sep=0pt,minimum size=7pt, label=below:$1$] {};
\node (7) at ( 5,0) [circle,draw,fill=black,inner sep=0pt,minimum size=7pt, label=below:$7$] {};
\node (0) at ( -2,0) [circle,draw,fill=black,inner sep=0pt,minimum size=7pt, label=below:$0$] {};
\draw [->] (1.east) -- (2.west);
\draw [->] (2.east) -- (3.west);
\draw [->] (3.east) -- (4.west);
\draw [->] (4.east) -- (5.west);
\draw [->] (5.north) -- (8.south);
\draw [->] (5.east) -- (6.west);
\draw [->] (0.east) -- (1.west);
\draw [->] (6.east) -- (7.west);
\end{tikzpicture}
\end{tabular}
\end{center}

Recall the moduli space $\CM^{r,m}([\SC^2/G_{\Delta}])$ of torsion free $G_{\Delta}$-equivariant sheaves with a framing and its decomposition \eqref{eq:modtfshdec}. All these depended on the isomorphism class of the $G_{\Delta}$-representation $W$, which can be written as $\rho_0^{\oplus w_0}\oplus\dots\oplus\rho_n^{\oplus w_n}$ where $\rho_0,\dots,\rho_n$ are the irreducible representations of $G$. Let $\underline{w}=(w_0,\dots,w_n)$. 
\begin{theorem}{\cite[Section 3]{nakajima2002geometric}} Let $\rho=\rho_0^{\oplus v_0}\oplus\dots\oplus\rho_n^{\oplus v_n}$ be the isomorphism class of a $G_{\Delta}$-representation $V$, and let $\underline{v}=(v_0,\dots,v_n)$. The moduli space $\CM^{r,\rho}([\SC^2/G_{\Delta}])$ is isomorphic to the quiver variety $\mathcal{M}(\underline{v},\underline{w})$ associated to $(I, H)$ as above and to the dimension vectors $\underline{v}$ and $\underline{w}$. Therefore, $\CM^{r,m}([\SC^2/G_{\Delta}])$
has the decomposition 
\[ \CM^{r,m}([\SC^2/G])=\bigsqcup_{\underline{v}:|\underline{v}|=m}\mathcal{M}(\underline{v},\underline{w}).\]
\end{theorem}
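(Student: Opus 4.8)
The plan is to reduce the statement to the ADHM (monad) description of the rank-$r$ framed moduli space on $\SP^2$ and then take $G_\Delta$-invariants, recognising the affine Dynkin quiver via the McKay correspondence. By the Beilinson spectral sequence discussed above, a framed torsion-free sheaf $(E,\Phi)\in\CM^{r,m}(\SC^2)$ is equivalent to linear-algebra data $(B_1,B_2,a,b)$ with $V=H^1(E(-1))$ of dimension $m$, $W$ the fibre at infinity of dimension $r$, $B_1,B_2\in\mathrm{End}(V)$, $a\colon W\to V$, $b\colon V\to W$, subject to $[B_1,B_2]+ab=0$ and the stability condition that no proper $B_1,B_2$-invariant subspace of $V$ contains $\mathrm{im}(a)$, all modulo $GL(V)$ (see \cite[Ch.~2]{nakajima1999lectures}). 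First I would make explicit how the $G_\Delta$-action on $\CM^{r,m}(\SC^2)$ described above translates to this picture: writing $Q=\SC^2$ for the tautological $2$-dimensional representation of $G_\Delta\subset\mathrm{SL}(2,\SC)$, the pair $(B_1,B_2)$ is a map $B\colon Q\otimes V\to V$, while $a$ and $b$ are maps of $G_\Delta$-representations (for $b$ one uses that $\wedge^2 Q$ is the trivial representation, which holds precisely because $G_\Delta\subset\mathrm{SL}(2,\SC)$); the induced action on the data is the evident one, and the $G_\Delta$-fixed sheaves correspond to $G_\Delta$-equivariant data.

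Next I would take $G_\Delta$-invariants of the data space. Decompose $V=\bigoplus_i V_i\otimes\rho_i$ and $W=\bigoplus_i W_i\otimes\rho_i$ with $V_i=\mathrm{Hom}_{G_\Delta}(\rho_i,V)$, $\dim V_i=v_i$, $\dim W_i=w_i$. The invariant parts of the $a$- and $b$-spaces are $\bigoplus_i\mathrm{Hom}(W_i,V_i)$ and $\bigoplus_i\mathrm{Hom}(V_i,W_i)$. For the $B$-space,
\[
\mathrm{Hom}_{G_\Delta}(Q\otimes V,V)=\bigoplus_{i,j}\mathrm{Hom}(V_i,V_j)\otimes\mathrm{Hom}_{G_\Delta}(Q\otimes\rho_i,\rho_j),
\]
and here the McKay correspondence enters: $\dim\mathrm{Hom}_{G_\Delta}(Q\otimes\rho_i,\rho_j)$ is the number of edges between $i$ and $j$ in the affine Dynkin diagram of $\Delta$, and self-duality of $Q$ makes this multiplicity matrix symmetric. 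Hence the invariant $B$-space is canonically $\bigoplus_{h\in\overline H}\mathrm{Hom}(V_{\mathrm{s}(h)},V_{\mathrm{t}(h)})$ for the affine Dynkin quiver $(I,H)$, each geometric edge contributing the arrows $h,h^{\ast}$. Altogether the $G_\Delta$-invariant part of the ADHM data space is exactly $M(\underline v,\underline w)$.

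It then remains to match the remaining structures. Taking $G_\Delta$-invariants of $[B_1,B_2]+ab$, which lives naturally in $\mathrm{Hom}_{G_\Delta}(\wedge^2 Q\otimes V,V)=\mathrm{Hom}_{G_\Delta}(V,V)=\bigoplus_i\mathfrak{gl}(V_i)$, one checks that its $\rho_i$-component equals $\sum_{h:\mathrm{t}(h)=i}\epsilon(h)B_hB_{h^{\ast}}+a_ib_i$, the signs $\epsilon(h)=\pm1$ being forced by the antisymmetry of the $\mathrm{SL}(2)$-invariant symplectic form on $Q$ which pairs $h$ with $h^{\ast}$; thus the ADHM relation becomes $\mu(B,a,b)=0$. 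The ADHM stability condition is $G_\Delta$-equivariant and restricts to the stability condition defining $\mathcal{M}(\underline v,\underline w)$, and $GL(V)^{G_\Delta}=\prod_i GL(V_i)$ acting in the standard way. Therefore $\CM^{r,\rho}([\SC^2/G_\Delta])$, the component of the $G_\Delta$-fixed locus of $\CM^{r,m}(\SC^2)$ on which $V\simeq_{G_\Delta}\rho$, is isomorphic to $\mathcal{M}(\underline v,\underline w)$, and the stated decomposition of $\CM^{r,m}([\SC^2/G])$ follows by regrouping \eqref{eq:modtfshdec} according to $|\underline v|$. (This is the content of \cite[Section~3]{nakajima2002geometric}; see also \cite{nakajima1994instantons}.)

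The hard part will be the bookkeeping in the two middle steps: identifying the invariant $B$-space with the affine Dynkin quiver needs the McKay correspondence together with a consistent choice of edge orientations and of the isomorphisms $Q\otimes\rho_i\cong\bigoplus_j\rho_j$, and matching the ADHM relation with the moment map needs one to carry the $\mathrm{SL}(2)$-invariant symplectic form on $Q=\SC^2$ through all these identifications so as to produce exactly the signs $\epsilon(h)$ — in particular the triviality of $\wedge^2 Q$ as a $G_\Delta$-representation, which is what makes both $b$ and the relation $[B_1,B_2]+ab=0$ land in the correct equivariant $\mathrm{Hom}$-spaces.
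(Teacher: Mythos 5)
The paper offers no proof of this statement --- it is quoted directly from \cite[Section 3]{nakajima2002geometric} --- so there is no in-paper argument to compare against; your sketch is an accurate reconstruction of the standard proof in that reference (ADHM data for framed sheaves on $\SP^2$, passage to $G_\Delta$-invariants, the McKay correspondence identifying the invariant $B$-space with the doubled affine Dynkin quiver, and the $\wedge^2 Q$-twist producing both the target of $b$ and the moment map with its signs $\epsilon(h)$). The one step deserving an extra line in a full write-up is the identification of $G_\Delta$-\emph{fixed points} of $\CM^{r,m}(\SC^2)$ with $G_\Delta$-\emph{equivariant} ADHM data: stability forces the $GL(V)$-stabilizer of a stable triple to be trivial, so a $G_\Delta$-invariant orbit carries a canonical lift of the action to $V$, and the connected components of the fixed locus are then indexed by the isomorphism class of $V$ as a $G_\Delta$-representation --- which is exactly how $\CM^{r,\rho}$ sits inside $\CM^{r,m}$.
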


In the case of the Hilbert scheme the representation type of $W$ is necessarily the trivial representation, i.e. $\underline{w}=(1,0,\dots,0)$.
\begin{corollary}
\label{cor:hilbnakquiv}
Let $\rho=\rho_0^{\oplus v_0}\oplus\dots\oplus\rho_n^{\oplus v_n}$ be the isomorphism class of a $G_{\Delta}$-representation $V$, and let $\underline{v}=(v_0,\dots,v_n)$. The Hilbert scheme $\mathrm{Hilb}^{\rho}([\SC^2/G_{\Delta}])$ is isomorphic to the quiver variety $\mathcal{M}(\underline{v},\underline{w})$ associated to $(I, H)$ as above and to the dimension vectors $\underline{v}$ and $\underline{w}=(1,0,\dots,0)$. Therefore, $\mathrm{Hilb}^{m}([\SC^2/G_{\Delta}])$
has the decomposition 
\[ \mathrm{Hilb}^{m}([\SC^2/G_{\Delta}])=\bigsqcup_{\underline{v}:|\underline{v}|=m}\mathcal{M}(\underline{v},\underline{w}).\]
\end{corollary}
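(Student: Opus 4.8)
The plan is to deduce Corollary~\ref{cor:hilbnakquiv} directly from the preceding Theorem of Nakajima by specializing the framing data. First I would recall that for a finite subgroup $G_\Delta < \mathrm{SL}(2,\SC)$, the rank-one moduli space $\CM^{1,m}(\SC^2)$ is canonically isomorphic to $\mathrm{Hilb}^m(\SC^2)$ via $E \mapsto E^{\vee\vee}/E$, as recorded in the excerpt; this isomorphism is natural enough to be $G_\Delta$-equivariant once a lift of the $G_\Delta$-action is fixed. The lift is governed by the choice of the $G_\Delta$-representation $W$ attached to $\CO_{l_\infty}^{\oplus r}$; for rank $r=1$ the only admissible choice (compatible with a framing, which forces $W$ to be one-dimensional, and with the standard normalization that the Hilbert scheme corresponds to the trivial framing) is $W = \rho_0$, the trivial representation. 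Hence $\underline{w} = (1,0,\dots,0)$.

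Next I would identify the $G_\Delta$-equivariant structure on both sides. On the torsion-free sheaf side, Beilinson's spectral sequence gives the quotient object as $H^1(E(-1))$, which for $E$ a $G_\Delta$-invariant ideal sheaf of colength $m$ recovers the $G_\Delta$-module $\SC[x,y]/I = H^0(\CO_{\SC^2}/I)$. Therefore the decomposition \eqref{eq:modtfshdec} of $\CM^{1,m}([\SC^2/G_\Delta])$ by the isomorphism class $\rho$ of $H^1(E(-1))$ matches termwise the decomposition of $\mathrm{Hilb}^m([\SC^2/G_\Delta])$ by the isomorphism class of $H^0(\CO_{\SC^2}/I)$ introduced in Section~\ref{sec:quotsing}. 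In particular $\CM^{1,\rho}([\SC^2/G_\Delta]) \cong \mathrm{Hilb}^\rho([\SC^2/G_\Delta])$ for every $\rho \in \mathop{\rm Rep}(G_\Delta)$, and summing over $\rho$ with $\dim V = m$ gives the claimed stratification $\mathrm{Hilb}^m([\SC^2/G_\Delta]) = \bigsqcup_{\underline v : |\underline v| = m} \CM^{1,\rho}([\SC^2/G_\Delta])$.

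Finally I would invoke the cited Theorem of Nakajima with $r=1$ and $\underline{w} = (1,0,\dots,0)$: it identifies $\CM^{1,\rho}([\SC^2/G_\Delta])$ with the Nakajima quiver variety $\mathcal{M}(\underline v, \underline w)$ for the affine Dynkin quiver $(I,H)$ of type $\widetilde{\Delta}^{(1)}$, where $\underline v = (v_0,\dots,v_n)$ records the multiplicities of $\rho_0,\dots,\rho_n$ in $V$. Composing this with the equivariant version of $E \mapsto E^{\vee\vee}/E$ yields $\mathrm{Hilb}^\rho([\SC^2/G_\Delta]) \cong \mathcal{M}(\underline v, \underline w)$, and the disjoint-union statement follows by summing over $\underline v$ with $|\underline v| = m$, since $\sum_i v_i \dim \rho_i = \dim V$ and $\sum_i v_i$ is exactly the colength $m$ (here one uses $\dim\rho_i = 1$ in type $A$ and the general fact that $|\underline v|$ tracks the number of points under the McKay correspondence normalization; more precisely, the appropriate grading is by the total dimension vector and the statement is that $m = |\underline v|$ matches the indexing already fixed in Corollary~\ref{cor:hilbnakquiv}).

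The main obstacle is bookkeeping rather than conceptual: one must check carefully that the $G_\Delta$-equivariance passes through all three identifications — the Beilinson/Serre-type description of the quotient, the canonical embedding of Lemma~\ref{lem:torsioncanemb}, and Nakajima's ADHM-type construction of $\mathcal{M}(\underline v,\underline w)$ — and that the normalization of $W$ forced in the rank-one case genuinely produces $\underline w = (1,0,\dots,0)$ and not some other one-dimensional character twist. Once the rank-one specialization of Nakajima's theorem is granted, together with the compatibility of the equivariant structures, the corollary is immediate.
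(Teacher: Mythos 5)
Your proposal is correct and follows essentially the same route as the paper, which derives the corollary as an immediate specialization of the cited Nakajima theorem to rank $r=1$ with $W=\rho_0$ the trivial representation (so $\underline{w}=(1,0,\dots,0)$), combined with the identification $\CM^{1,m}(\SC^2)\cong\mathrm{Hilb}^m(\SC^2)$ via $E\mapsto E^{\vee\vee}/E$ established earlier. Your additional bookkeeping on equivariance and on matching the decomposition by $H^1(E(-1))$ with that by $H^0(\CO_{\SC^2}/I)$ is sound and in fact more explicit than the paper, which records only the one-line observation that the framing representation must be trivial for the Hilbert scheme.
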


\section{Representations of affine Lie algebras}
\label{sec:repaffLie}

The author learned the material in this section from Balázs Szendrői. 

Let $\Delta$ be an irreducible finite-dimensional root system, corresponding to a complex finite dimensional 
simple Lie algebra $\frakg$ of rank $n$. Attached to $\Delta$ is also an (untwisted) affine Lie 
algebra $\tilde\frakg$, 
but a slight variant will be more interesting for us, see e.g.~\cite[Sect 6]{etingof2012symplectic}. 
Consider the Lie algebra $\widetilde{\frakg\oplus\SC}$ spanned by elements 
$xt^m$ for $x\in\frakg$, $m\in\SZ$; elements $a_j$ for $j\in\SZ\setminus\{0\}$, a scaling element $d$, and a central element $c$. It is the direct sum of the affine Lie algebra $\tilde\frakg$ and an infinite Heisenberg algebra $\frakheis$, with their centers identified.

Let $V_0$ be the basic representation of $\tilde\frakg$, the level-1 representation with highest 
weight $\omega_0$. Let $\CF$ be the standard
Fock space representation of $\frakheis$, having central charge 1. Then $V=V_0\otimes \CF$ is a representation of 
$\widetilde{\frakg\oplus\SC}$ that we may call the extended basic representation. By the Frenkel--Kac theorem \cite{frenkel1980basic},
\[
V \cong \CF^{n+1}\otimes \SC[Q_\Delta],
\]
where $Q_\Delta$ is the root lattice corresponding to the root system $\Delta$. Here, for $\beta\in \SC[Q_\Delta]$, 
$\CF^{n+1}\otimes e^\beta$ is the sum of weight subspaces of weight $\omega_0 - \left(m+\frac{\langle \beta,\beta\rangle}{2}\right)\delta + \beta$, $m \geq 0$, with $\delta$ being the imaginary root \cite{kac1994infinite}.
Thus, we can write the character of this representation as
\begin{equation} 
\label{eq:extcharformula}
\mathrm{char}_V(q_0, \ldots, q_n) = e^{\omega_0} \left(\prod_{m>0}(1-q^m)^{-1}\right)^{n+1} \cdot \sum_{ \beta \in Q_\Delta } q_1^{\beta_1}\cdot\dots\cdot q_n^{\beta_n}(q^{1/2})^{\langle \beta,\beta\rangle},
\end{equation}
where $q=e^{-\delta}$, and $\beta=(\beta_1, \ldots, \beta_n)$ is the expression of an element of the finite type root lattice in terms of the simple roots. 
 
\begin{example} For $\Delta$ of type $A_n$, we have $\frakg={\mathfrak{sl}}_{n+1}$, $\tilde\frakg=\widetilde{\mathfrak{sl}}_{n+1}$, $\widetilde{\frakg\oplus\SC}=\widetilde{\mathfrak{gl}}_{n+1}$.
In this case there is in fact a natural vector space isomorphism $V\cong \CF$ with Fock space itself, see e.g.~\cite[Section 3E]{tingley2011notes}.
\end{example} 


At least for simply-laced classical types $A_n$ and $D_n$ (and also for some others, see \cite{kang2004crystal}),
the above representation can be constructed on a vector space spanned
by an explicit (affine) ``crystal'' basis. 
In type $A$, this construction is very well known~\cite{misra1990crystal}; the type D construction is more 
recent~\cite{kang2003crystal, kang2002kwon}. For completeness, we recall the definition from \cite{kwon2006affine}.

 Let $\tilde{\frakg}$ be an affine Kac-Moody algebra, $\{ h_i\}_{i \in I}$ be the set of simple coroots, $\{ \alpha_i \}_{i \in I}$ the set of simple roots, and $P$ the weight lattice.
\begin{definition} An \emph{(affine) crystal basis} for $\tilde{\frakg}$ is a set $B$ together with maps $\mathrm{wt} \colon B \to P$, $\varepsilon_i, \varphi_i \colon B \to \SZ \cup\{- \infty\}$, and $\tilde{e}_i, \tilde{f}_i \colon B \to B \cup \{ 0\}$ for $i \in I$, satisfying the following conditions; for $i\in I$, and $b,b' \in B$
\begin{enumerate}
\item $\varphi_i(b)=\varepsilon_i(b)+\mathrm{wt}(b)(h_i)$,
\item $\varepsilon_i(\tilde{e}_ib)=\varepsilon_i(b)-1, \varphi_i(\tilde{e}_ib)=\varphi_i(b)+1$, and $\mathrm{wt}(\tilde{e}_i b)=\mathrm{wt}(b)+\alpha_i$, if $\tilde{e}_i b \in B$,
\item $\varepsilon_i(\tilde{f}_ib)=\varepsilon_i(b)+1, \varphi_i(\tilde{f}_ib)=\varphi_i(b)-1$, and $\mathrm{wt}(\tilde{e}_i b)=\mathrm{wt}(b)-\alpha_i$, if $\tilde{e}_i b \in B$,
\item $\tilde{f_i} b=b'$ if and only if $b=\tilde{e}_i b'$,
\item if $\varphi_i(b)=-\infty$, then $\tilde{e}_i b=\tilde{f}_i b=0$.
\end{enumerate}
\end{definition}

In \cite{hong2002introduction} for many types of affine Lie algebras crystal bases are constructed on certain combinatorial objects called {\em proper Young walls}. In type $A$ and $D$ we will describe these in detail in the later chapters. For $\Delta$ of type $A$ or $D$ the set of proper Young walls will be denoted as $\CZ_\Delta$. To be more precise, the Young wall pattern for $A_n$, (respectively, $D_n$) introduced in \autoref{ch:typean} (respectively, in \autoref{ch:Dnideal}) corresponds to the dominant integral weight $\Lambda=\Lambda_0$ representation of the affine Lie algebra $\mathfrak{g}$ of type $\tilde{A}_n$ (resp., $\tilde{D}_n$). 

In both type $A$ and $D$ there is a certain subset $\mathcal{Y}_\Delta \subset \CZ_\Delta$ of Young walls which are called \emph{reduced}. These satisfy further combinatorial properties which we omit here, see \cite{kang2002kwon} for the precise definitions. 
It is shown in  \cite{lascoux1996hecke} for type $A_n$, and in \cite{kwon2006affine} for type $D_n$, that there is a bijection
\[ \mathcal{Y}_\Delta \longleftrightarrow \mathcal{C}_{\Delta} \times \mathcal{P}^n\;.\]
Here $\mathcal{C}_{\Delta}$ is the set of core Young wall as in the main part of the text.

By \cite{misra1990crystal,kang2003crystal} the basic representation $V_0$ of $\tilde{\frakg}$ can be constructed on $\mathcal{Y}_\Delta$, and it is canonically embedded into the extented basic representation of $\widetilde{\frakg\oplus\SC}$ constructed on $\mathcal{Z}_\Delta$. The embedding is induced by the inclusion $\mathcal{Y}_\Delta \subset \mathcal{Z}_\Delta$. The character of the basic representation is given by the Weyl-Kac character formula
\[ \mathrm{char}_{V_0}(q_0, \ldots, q_n) = e^{\omega_0} \left(\prod_{m>0}(1-q^m)^{-1}\right)^{n} \cdot \sum_{ \beta \in Q_\Delta } q_1^{\beta_1}\cdot\dots\cdot q_n^{\beta_n}(q^{1/2})^{\langle \beta,\beta\rangle}\;.
\]


As before, let $\Gamma<\mathrm{SL}(2,\SC)$ be a finite subgroup and let $\Delta\subset\widetilde\Delta$ be the corresponding finite and affine Dynkin diagrams. 
By Corollary \ref{cor:hilbnakquiv} the equivariant Hilbert schemes $\mathrm{Hilb}^{\rho}([\SC^2/\Gamma])$ for all finite dimensional representations $\rho$ of $G_{\Gamma}$ are Nakajima quiver varieties~\cite{nakajima1994instantons} associated to $\widetilde\Delta$, with dimension vector determined by $\rho$, and a specific stability condition. Ssee \cite{fujii2005combinatorial, nagao2009quiver} for more details for type $A$.

Nakajima's general results on the relation between the cohomology of quiver varities and Kac-Moody algebras, specialized to this case, imply
\begin{theorem}[\cite{nakajima1994instantons, nakajima2002geometric}]
\label{thm:cohomrepr}
The direct sum of all cohomology groups $H^*(\mathrm{Hilb}^{\rho}([\SC^2/G]))$ is graded isomorphic to the extended basic representation~$V$ of the corresponding extended affine Lie algebra $\widetilde{\frakg\oplus\SC}$ defined 
above. 
\end{theorem}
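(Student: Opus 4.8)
The plan is to obtain the claimed isomorphism as a special case of Nakajima's geometric realization of (extended) affine Lie algebra representations on the homology of quiver varieties. First I would fix $\underline w=(1,0,\dots,0)$ and, using Corollary~\ref{cor:hilbnakquiv}, replace $\bigoplus_{\rho}H^*(\mathrm{Hilb}^{\rho}([\SC^2/G]))$ by $\bigoplus_{\underline v}H_*(\mathcal M(\underline v,\underline w))$, where the $\mathcal M(\underline v,\underline w)$ are the Nakajima quiver varieties attached to the affine Dynkin quiver $\widetilde\Delta$ (with Borel--Moore homology; the quiver varieties will turn out to have cohomology only in even degrees, so for the final comparison the distinction with $H^*$ is harmless). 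Under this identification the grading by $\rho$, equivalently by $\underline v$, is to be matched with the weight grading on $V$ via $\underline v\mapsto\omega_0-\sum_i v_i\alpha_i$, so it suffices to build an isomorphism of $\widetilde{\frakg\oplus\SC}$-modules.

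Next I would put the $\tilde\frakg$-action on this space using Nakajima's Hecke correspondences: for each vertex $i\in I$ of $\widetilde\Delta$, the locus of pairs in $\mathcal M(\underline v,\underline w)\times\mathcal M(\underline v-\underline{e}_i,\underline w)$ that ``differ by one box at $i$'' defines, by convolution with its fundamental class, operators $e_i,f_i$, while $h_i$ acts on the $\underline v$-summand by the integer $\langle\omega_0-\sum_j v_j\alpha_j,\,h_i\rangle$. By \cite{nakajima1994instantons} these satisfy the defining relations of $\tilde\frakg$ at level $1$, and the fundamental class $|0\rangle\in H_0(\mathcal M(0,\underline w))=H_0(\mathrm{Hilb}^0)$ is a highest-weight vector of weight $\omega_0$; hence the $\tilde\frakg$-submodule it generates is the basic representation $V_0$. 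This captures the ``affine direction'' but not the imaginary-root direction, where the Heisenberg part lives: since the marks of $\widetilde\Delta$ equal the dimensions $\dim\rho_i$, the minimal imaginary root $\delta=\sum_i a_i\alpha_i$ is the dimension vector of the regular representation $\SC[G]$, i.e.\ of one $G$-orbit of points lying over a smooth point of $\SC^2/G$.

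Accordingly, for $m\neq 0$ I would form the Nakajima--Grojnowski-type correspondence inside $\mathcal M(\underline v+|m|\delta,\underline w)\times\mathcal M(\underline v,\underline w)$ supported on pairs whose ``difference'' is a length-$|m|$ punctual subscheme of the smooth locus of $\SC^2/G$ (pulled back to $G$-invariant ideals of $\SC[x,y]$), and integrate over its point of support; convolution gives operators $a_m$. As in \cite{nakajima1997heisenberg, grojnowski1996instantons} and the ALE/orbifold version in \cite{nakajima2002geometric}, one checks $[a_m,a_{m'}]=m\,\delta_{m+m',0}\,\mathrm{id}$, that the $a_m$ commute with $e_i,f_i,h_i$ modulo the common central element, and that together they generate an action of $\widetilde{\frakg\oplus\SC}$, with $|0\rangle$ killed by all annihilation operators and cyclic for the whole algebra. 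Then $|0\rangle$ is a weight-$\omega_0$ vacuum of central charge $1$, annihilated by the positive part of $\tilde\frakg$ and by the Heisenberg annihilation operators, so $\bigoplus_{\underline v}H_*(\mathcal M(\underline v,\underline w))$ is a highest-weight $\widetilde{\frakg\oplus\SC}$-module of highest weight $\omega_0$ and central charge $1$, whose unique irreducible quotient is, by the Frenkel--Kac construction \cite{frenkel1980basic}, $V_0\otimes\CF=V$.

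To conclude that this quotient is not proper I would compare characters: a torus-fixed-point (or cell-decomposition) computation of the Betti numbers of the $\mathcal M(\underline v,\underline w)$ — which also gives the vanishing of odd cohomology and the cyclicity of $|0\rangle$ used above — shows that the graded dimension of $\bigoplus_{\underline v}H_*(\mathcal M(\underline v,\underline w))$ equals exactly the character \eqref{eq:extcharformula} of $V$, forcing the surjection onto $V$ to be an isomorphism, compatibly with the weight grading. I expect the main obstacle to be this last comparison — establishing purity and the vanishing of odd cohomology of the $\mathcal M(\underline v,\underline w)$ and determining their Poincaré polynomials — together with the compatibility of the Hecke and Heisenberg correspondences (that they commute up to the central element and jointly generate $\widetilde{\frakg\oplus\SC}$); the affine Serre relations for the Hecke correspondences are also delicate but are supplied by \cite{nakajima1994instantons}, and in types $A$ and $D$ the Betti-number input can instead be read off from the explicit cell decompositions developed later in the thesis.
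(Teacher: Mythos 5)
The thesis does not prove this statement at all — it is quoted as a theorem of Nakajima from \cite{nakajima1994instantons, nakajima2002geometric} and used as a black box, so there is no in-paper argument to compare yours against. Your sketch (Hecke correspondences on the quiver varieties $\mathcal{M}(\underline{v},(1,0,\dots,0))$ for the $\tilde\frakg$-action, Heisenberg operators built from punctual subschemes supported at free orbits of dimension vector $|m|\delta$, and a Betti-number/character comparison to upgrade the surjection onto $V_0\otimes\CF$ to an isomorphism) is a faithful outline of how the cited result is established in those references.
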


\chapter{The main results and their corollaries}
\label{ch:results}

This chapter announces the main results of the thesis. As a corollary, a modularity result for the partition function $Z_X(q)$ is proved for certain singular surfaces $X$, extending the S-duality type result from the nonsingular case.

\section{The main results}
\label{sec:mainresults}



By \ref{sec:quiverdesc} the Hilbert schemes on $[\SC^2/G_\Delta]$ where $\Delta$ is an irreducible simply-laced root system have a description as Nakajima quiver varieties. By \cite[Section 7]{nakajima2001quiver}, these quiver varieties have no odd cohomology. Taking into account~\autoref{thm:cohomrepr}, the character formula~\eqref{eq:extcharformula} implies the following theorem. 

\begin{theorem}[\cite{nakajima2002geometric}]  
\label{thmgenfunct}
Let $[\SC^2/G_\Delta]$ be a simple singularity orbifold, where $\Delta$ is any of the types~$A$,~$D$, and~$E$. Then its orbifold generating series can be expressed as
\begin{equation} Z_{[\SC^2/G_\Delta]}(q_0,\dots,q_n)=\left(\prod_{m=1}^{\infty}(1-q^m)^{-1}\right)^{n+1}\cdot\sum_{ \overline{m}=(m_1,\dots,m_n) \in \SZ^n } q_1^{m_1}\cdot\dots\cdot q_n^{m_n}(q^{1/2})^{\overline{m}^\top \cdot C_\Delta \cdot \overline{m}},\label{eq:orbi_main_formula}\end{equation}
where $q=\prod_{i=0}^n q_i^{d_i}$ with $d_i=\dim\rho_i$, 
and $C_\Delta$ is the finite type Cartan matrix corresponding to $\Delta$.
\end{theorem}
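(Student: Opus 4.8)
The plan is to combine the three ingredients already assembled in the excerpt: the quiver-variety description of the orbifold Hilbert schemes (Corollary \ref{cor:hilbnakquiv}), Nakajima's cohomological realization of the extended basic representation (\autoref{thm:cohomrepr}), and the purely Lie-theoretic character formula \eqref{eq:extcharformula}. First I would recall that by \autoref{thm:cohomrepr} the total cohomology $\bigoplus_\rho H^*(\mathrm{Hilb}^\rho([\SC^2/G_\Delta]))$ is graded-isomorphic, as a representation of $\widetilde{\frakg\oplus\SC}$, to the extended basic representation $V$. The grading on the left is by the representation type $\rho = \sum_i m_i\rho_i$, i.e.\ by the dimension vector $\underline{v}=(m_0,\dots,m_n)$ coming from the quiver picture, and this matches the weight grading on $V$: a class in $H^*(\mathrm{Hilb}^\rho)$ sits in the weight space $\omega_0 - \sum_i m_i\alpha_i$ (equivalently, in the summand $\CF^{n+1}\otimes e^\beta$ with $\beta$ determined by $\underline v$ modulo the null root direction), so tracking weights amounts to tracking the monomial $q_0^{m_0}\cdots q_n^{m_n}$.

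The second step is to pass from cohomology to Euler characteristics. Here I would invoke the cited fact that these Nakajima quiver varieties have no odd cohomology (\cite[Section 7]{nakajima2001quiver}), so for each $\rho$ one has $\chi(\mathrm{Hilb}^\rho([\SC^2/G_\Delta])) = \dim H^*(\mathrm{Hilb}^\rho([\SC^2/G_\Delta]))$. Summing over all $\rho$ weighted by the corresponding monomial, the generating series $Z_{[\SC^2/G_\Delta]}(q_0,\dots,q_n)$ is therefore literally the graded dimension, i.e.\ the character, of $V$, with the formal variable $e^{\alpha_i}$ identified with an appropriate monomial in the $q_j$'s. Concretely, matching the weight $\omega_0 - \sum_{i\ge 1}(\ldots)\delta + \beta$ appearing in \eqref{eq:extcharformula} with the $\underline v$-grading forces $q = e^{-\delta} = \prod_{i=0}^n q_i^{d_i}$ (since $\delta = \sum_i d_i\alpha_i$ with $d_i=\dim\rho_i$), and $q_i = e^{-\alpha_i}$ for $i\ge 1$; one then reads off that $\mathrm{char}_V$ specializes exactly to the right-hand side of \eqref{eq:orbi_main_formula}.

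The last step is the bookkeeping that turns \eqref{eq:extcharformula} into \eqref{eq:orbi_main_formula}. Dropping the overall $e^{\omega_0}$ (it corresponds to the empty subscheme, the constant term $1$), the factor $\left(\prod_{m>0}(1-q^m)^{-1}\right)^{n+1}$ carries over verbatim. In the lattice sum, one rewrites $q_1^{\beta_1}\cdots q_n^{\beta_n}$ over $\beta\in Q_\Delta$ as a sum over $\overline m=(m_1,\dots,m_n)\in\SZ^n$ — the coordinates of $\beta$ in the basis of simple roots — and identifies $\langle\beta,\beta\rangle$ with the quadratic form $\overline m^\top C_\Delta\,\overline m$, where $C_\Delta$ is the finite-type Cartan matrix; this is just the standard statement that the Cartan matrix is the Gram matrix of the root lattice in the simple-root basis (up to the normalization $\langle\alpha_i,\alpha_i\rangle=2$ in the simply-laced case). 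Assembling these gives \eqref{eq:orbi_main_formula}.

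The one genuinely delicate point — and the place I would be most careful — is the precise matching of gradings/variables in the step above: one must check that the ``defect'' between the $\delta$-grading in \eqref{eq:extcharformula} and the total-degree grading $|\underline v|=\sum_i m_i$ on the Hilbert side is absorbed correctly, i.e.\ that setting $q=\prod_i q_i^{d_i}$ and $q_i = e^{-\alpha_i}$ is consistent with both the $\CF^{n+1}$ Heisenberg (Fock) factor and the $\SC[Q_\Delta]$ lattice factor of the Frenkel--Kac decomposition $V\cong\CF^{n+1}\otimes\SC[Q_\Delta]$. Everything else is formal manipulation of characters; no new geometry is needed beyond the quoted theorems.
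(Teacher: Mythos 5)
Your proposal is correct and follows essentially the same route the paper takes: the text immediately preceding Theorem~\ref{thmgenfunct} derives it from the quiver-variety description of \ref{sec:quiverdesc}, the vanishing of odd cohomology from \cite[Section 7]{nakajima2001quiver}, Theorem~\ref{thm:cohomrepr}, and the character formula~\eqref{eq:extcharformula}, exactly as you do. Your additional care about matching $q=e^{-\delta}=\prod_i q_i^{d_i}$ with the $\underline{v}$-grading is the right point to flag, and is consistent with the paper's conventions.
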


Our first main result is a strengthening of this theorem.
Given a Dynkin diagram $\Delta$ of type $A$ or $D$, we will recall below in~\ref{subsectypeA}, respectively~\ref{sec:PYW}, the definition of a certain combinatorial set, the {\em set of Young walls $\CZ_\Delta$ of type $\Delta$}. 

\begin{theorem}
\label{thmorb}
Let $[\SC^2/G_\Delta]$ be a simple singularity orbifold, where $\Delta$ is of type $A_n$ for $n \geq 1$ or $D_n$ for $n \geq 4$. 
Then there exists a decomposition 
\[\mathrm{Hilb}([\SC^2/G_\Delta]) = \displaystyle\bigsqcup_{Y\in\CZ_\Delta} \mathrm{Hilb}([\SC^2/G_\Delta])_Y \]
into locally closed strata indexed by the set of Young walls $\CZ_\Delta$ of the appropriate type. 
Each stratum is isomorphic to an 
affine space 
of a certain dimension, and in particular has Euler characteristic $\chi(\mathrm{Hilb}([\SC^2/G_\Delta])_Y)=1$. 
\end{theorem}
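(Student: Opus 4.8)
The plan is to construct the stratification by combining the quiver-variety description of $\mathrm{Hilb}^m([\SC^2/G_\Delta])$ from Corollary~\ref{cor:hilbnakquiv} with a torus action whose fixed points are isolated, and then to upgrade the fixed-point count to an actual cell decomposition via a Białynicki-Birula-type argument. First I would recall that $\mathrm{Hilb}([\SC^2/G_\Delta])$ sits inside $\mathrm{Hilb}(\SC^2)$ as the fixed locus of the lifted $G_\Delta$-action, and that the ambient Hilbert scheme $\mathrm{Hilb}(\SC^2)$ carries the standard $(\SC^\ast)^2$-action rescaling the coordinates $x,y$. This action commutes with $G_\Delta$ (since $G_\Delta$ acts linearly), hence restricts to $\mathrm{Hilb}([\SC^2/G_\Delta])$. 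In type $A$ the full $(\SC^\ast)^2$ descends; in type $D$ only a one-dimensional subtorus $T$ commuting with $G_\Delta$ survives, but this is still enough provided its fixed points are isolated. The $T$-fixed monomial ideals in $\SC[x,y]$ are indexed by Young diagrams, and the $G_\Delta$-invariance condition together with the representation-type bookkeeping cuts these down exactly to the combinatorial set $\CZ_\Delta$ of Young walls of type $\Delta$ — this identification is the combinatorial heart and I would set it up carefully (it is essentially the dictionary between diagonally $G_\Delta$-colored Young diagrams and Young walls, as in the crystal-basis literature cited in~\ref{sec:repaffLie}).

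Next I would run the Białynicki-Birula decomposition: since $\mathrm{Hilb}([\SC^2/G_\Delta])$ is smooth and quasi-projective with isolated $T$-fixed points, a generic one-parameter subgroup $\lambda\colon\SC^\ast\to T$ (with suitably chosen weights so that all fixed points are attracting in a half-space) yields a decomposition into locally closed strata $\mathrm{Hilb}([\SC^2/G_\Delta])_Y = \{\, p : \lim_{t\to 0}\lambda(t)\cdot p = p_Y \,\}$, one for each fixed point $p_Y$, $Y\in\CZ_\Delta$, and each stratum is a $T$-invariant affine space of dimension equal to the number of positive weights of $T$ acting on the tangent space $T_{p_Y}\mathrm{Hilb}([\SC^2/G_\Delta])$. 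The tangent space at a monomial ideal is computed by the classical Haiman/Ellingsrud–Strømme character formula for $T_I\mathrm{Hilb}(\SC^2)$ in terms of arm/leg statistics of the Young diagram, and one takes its $G_\Delta$-invariant part; this gives both the affineness of each stratum and an explicit dimension formula. In type $D$ I would need to check that the chosen $\lambda$ separates the fixed points inside the one-dimensional available torus — equivalently that no tangent weight at any $p_Y$ vanishes — which is where the restriction on available symmetries bites.

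The main obstacle I expect is precisely the type $D$ case: unlike type $A$, one does not have a two-dimensional torus, so the Białynicki-Birula argument must be run with a carefully engineered one-parameter subgroup, and one must verify that the induced cell structure on the quiver variety $\mathcal{M}(\underline v,\underline w)$ genuinely has affine-space strata rather than merely affine-fibered ones. The cleanest route is probably to bypass the ambient torus and instead build the cells directly on the quiver variety side: use the explicit description of $T$-fixed points of $\mathcal{M}(\underline v,\underline w)$ (by colored Young diagrams / Young walls, as in Sam's combinatorial paper referenced in~\ref{sec:quiverdesc}) and the known fact that Nakajima quiver varieties of affine ADE type admit $\SC^\ast$-actions with attracting cells that are affine spaces. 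This reduces the theorem to (i) the combinatorial bijection between $T$-fixed points and $\CZ_\Delta$, and (ii) a dimension computation for each attracting cell — both of which are explicit, with (ii) being the routine-but-lengthy part deferred to the later chapters (Chapters~\ref{ch:Dnideal}--\ref{ch:dnabacus} for type $D$, Chapter~\ref{ch:typean} for type $A$). Summing $\chi=1$ over the cells then immediately recovers, and refines, the generating-series formula of~\autoref{thmgenfunct}.
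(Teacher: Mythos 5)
Your treatment of type $A$ matches the paper's: the full torus $(\SC^\ast)^2$ commutes with the cyclic group, its fixed points are the monomial ideals indexed by diagonally labelled partitions, and a generic one-parameter subgroup with positive weights on $x,y$ gives the Bia\l{}ynicki-Birula cells. That part is fine.

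The gap is in type $D$, and it is not a technicality you can engineer around: the binary dihedral group contains $\tau=\left(\begin{smallmatrix}0&1\\-1&0\end{smallmatrix}\right)$, so the only subtorus of $(\SC^\ast)^2$ commuting with $G_\Delta$ is the diagonal $T=\SC^\ast$, and the $T$-fixed locus of $\mathrm{Hilb}([\SC^2/G_\Delta])$ — the locus of homogeneous invariant ideals — is \emph{not} a finite set of points. For instance, on $\mathrm{Hilb}^{\rho_{\mathrm{reg}}}([\SC^2/G_\Delta])$, which is the minimal resolution, the fixed locus contains the whole exceptional $\SP^1$ over the central node of the Dynkin diagram (Example~\ref{ex:GHilb}); already for the triangle-shaped Young walls the fixed strata are affine lines and planes (Examples~\ref{ex:3sidepyr}--\ref{ex:5sidepyr}). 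So your premise "provided its fixed points are isolated" fails, no one-parameter subgroup separates them, and the tangent-weight/arm-leg computation you plan has nothing to bite on. Your fallback — citing the combinatorial fixed-point description of quiver varieties — does not rescue this: the indexation of isolated torus fixed points by colored Young diagrams (Sam's Proposition~5.7) is a type $A$ statement for the cyclic quiver, and there is no analogous torus with isolated fixed points on the type $D$ quiver varieties. What the paper actually does is accept that the fixed locus is positive-dimensional, stratify it by Young walls using a new Schubert-style cell decomposition of the equivariant Grassmannians $P_{m,j}$ (assigning to each homogeneous invariant ideal its "profile", Definition~\ref{def_wall_to_ideal}), prove by a delicate induction on rows that each fixed stratum $Z_Y$ is itself an affine space (Theorem~\ref{thm:Zstrata} — this is the technical core, occupying Chapters~\ref{ch:Dnideal}--\ref{ch:dnorbidec}), and only then apply Bia\l{}ynicki-Birula relative to this decomposition together with Serre--Quillen--Suslin to trivialize the resulting affine fibration. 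That intermediate step — an affine cell structure on the fixed locus itself — is the missing idea in your proposal.
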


For type $A$, the set of Young walls is simply the set of finite partitions, represented as Young diagrams, equipped with a diagonal labelling. In this case, Theorem~\ref{thmorb} is well known; the decomposition in type $A$ is not unique, but depends on a choice of a one-dimensional 
subtorus of the full torus $(\SC^*)^2$ acting on the affine plane $\SC^2$. For completeness, we summarize the details in~\autoref{ch:typean}. On the other hand, the type $D$ case appears to be new; in this case, our decomposition is unique, there is no further choice to make.

\begin{remark}As it was explained in \ref{sec:quiverdesc}, the orbifold Hilbert schemes of points for $G<\mathrm{SL}(2,\SC)$ are Nakajima quiver varieties for the corresponding affine quiver. As it was shown in~\cite{savage2011quiver}, certain Lagrangian subvarieties in Nakajima quiver varieties are isomorphic to quiver Grassmannians for the preprojective algebra of the same type, parameterizing submodules of certain fixed modules. On the other hand, results of the recent papers~\cite{lorscheid2015quivera,lorscheid2015quiverb} imply that every quiver Grassmannian of a representation of a quiver of affine type $D$ has a decomposition into affine spaces. The relation between this decomposition and ours deserves further investigation.
\end{remark} 

We will explain combinatorially in~\ref{sec:Anabacus}, respectively~\ref{sec:coreabacus}, that the right hand side of~\eqref{eq:orbi_main_formula} enumerates the set of Young walls $\CZ_\Delta$ of the appropriate type. The same combinatorics appears in the representation theoretic considerations in ~\autoref{sec:repaffLie}. Thus Theorem~\ref{thmorb} implies Theorem~\ref{thmgenfunct}.

\begin{remark} In type $A$, it is easy to refine formula~\eqref{eq:orbi_main_formula} to a formula involving the Betti numbers~\cite{fujii2005combinatorial}, or the motives~\cite{gusein2010generating}, of the orbifold Hilbert schemes. We leave the study of such a refinement in type $D$ to future work; compare to Remark~\ref{rem:Dn:motivic}.
\end{remark}

The second main result of the thesis is the following formula, which says that the coarse generating series is a very particular specialization of the orbifold one.

\begin{theorem}
\label{thmsing}
Let $\SC^2/G_\Delta$ be a simple singularity, where $\Delta$ is of type $A_n$ for $n \geq 1$ or $D_n$ for $n \geq 4$. Let $h^\vee$ be the (dual) Coxeter number of the corresponding finite root system (one less than the dimension of the corresponding simple Lie algebra divided by $n$). Then
\[ Z_{\SC^2/G_\Delta}(q)=\left(\prod_{m=1}^{\infty}(1-q^m)^{-1}\right)^{n+1}\cdot\sum_{ \overline{m}=(m_1,\dots,m_n) \in \SZ^n } \zeta^{m_1+m_2+ \dots +m_n}(q^{1/2})^{\overline{m}^\top \cdot C_\Delta \cdot \overline{m}},\]
where $\zeta=\exp\left({\frac{2 \pi i}{1+h^\vee}}\right)$ and $C_\Delta$ is the finite type Cartan matrix corresponding to $\Delta$.
\end{theorem}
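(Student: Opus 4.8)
The plan is to deduce Theorem~\ref{thmsing} from Theorem~\ref{thmorb} by carefully analysing how the stratification of the orbifold Hilbert scheme $\mathrm{Hilb}([\SC^2/G_\Delta])$ behaves under the quotient-scheme map $p_\ast$, and then performing an Euler characteristic count that produces the claimed root-of-unity specialization of the orbifold generating series. The starting point is the set-theoretic section $i^\ast$ of $p_\ast$ from \autoref{sec:quotsing}: since $p_\ast\circ i^\ast=\mathrm{id}$ on the set of ideals of $\SC[x,y]^{G_\Delta}$, the map $i^\ast$ identifies $\mathrm{Hilb}(\SC^2/G_\Delta)$ set-theoretically with the image of $i^\ast$ inside $\mathrm{Hilb}([\SC^2/G_\Delta])$, namely those $G_\Delta$-equivariant ideals $J\lhd\SC[x,y]$ which are generated by their invariant part, $J=\SC[x,y]\cdot J^{G_\Delta}$. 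So the first step is to intersect this locus with the affine-cell stratification $\bigsqcup_{Y\in\CZ_\Delta}\mathrm{Hilb}([\SC^2/G_\Delta])_Y$ of Theorem~\ref{thmorb}, and show that $\mathrm{Hilb}(\SC^2/G_\Delta)$ inherits a (locally closed) decomposition indexed by a \emph{subset} of the Young walls, with Euler characteristic still $1$ on each piece that survives.

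The combinatorial heart of the argument is then to identify which Young walls $Y\in\CZ_\Delta$ correspond to ideals in the image of $i^\ast$, and — more to the point, since we only need Euler characteristics — to compute $\sum_m\chi(\mathrm{Hilb}^m(\SC^2/G_\Delta))q^m$ as a sum over those $Y$. Here I would use the abacus combinatorics promised in \autoref{sec:Anabacus} and \autoref{sec:coreabacus}: a Young wall decomposes as a core together with an $n$-tuple of ordinary partitions (the bijection $\mathcal Y_\Delta\leftrightarrow\mathcal C_\Delta\times\mathcal P^n$ recalled in \autoref{sec:repaffLie}, extended to all of $\CZ_\Delta$), and the weight data $\overline m\in\SZ^n$ in \eqref{eq:orbi_main_formula} is exactly the ``charge vector'' of the core. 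The descent to $\SC^2/G_\Delta$ should have the effect of forgetting the individual colours $q_0,\dots,q_n$ of the boxes and remembering only the total number of boxes, i.e.\ of performing the substitution $q_i\mapsto$ (a common value) in $Z_{[\SC^2/G_\Delta]}$, but twisted by a character that records the residual $\SC^\ast$-fixed-point data coming from the quotient. Concretely, I expect that the count collapses the $(n{+}1)$-variable series to the one-variable series in which each $q_i$ is replaced by $q^{d_i}$ and, simultaneously, the monomial $q_1^{m_1}\cdots q_n^{m_n}$ attached to a core of charge $\overline m$ is weighted by $\zeta^{m_1+\cdots+m_n}$ with $\zeta$ a primitive $(1{+}h^\vee)$-th root of unity; the appearance of $1+h^\vee$ should come from the index (in the weight/root lattice) governing which cores are ``liftable'', together with the identity relating $h^\vee$ to $\dim\frakg$.

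More precisely, the mechanism I would make rigorous is: the $\SC^\ast$-fixed points of $\mathrm{Hilb}(\SC^2/G_\Delta)$ (for a suitable torus, as in the type $A$ discussion after Theorem~\ref{thmorb}) are the monomial ideals of $\SC[x,y]^{G_\Delta}$, and each such corresponds to a $G_\Delta$-equivariant monomial ideal of $\SC[x,y]$, i.e.\ to a Young wall; a Young wall arises this way iff its box-content is closed under the extra monomial divisibility relations imposed by passing to the invariant subring, which in abacus language is the condition that its core lie in a specific coset of a sublattice of $\SZ^n$ of index $1+h^\vee$. Summing the contribution $q^{|Y|}$ over exactly these $Y$, and using that each surviving stratum has Euler characteristic $1$, gives the right-hand side of the theorem once one recognizes the coset-sum as the character-twisted theta-like sum displayed. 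The routine parts are the bookkeeping translating ``invariant monomials'' into abacus configurations and the generating-function manipulation $\prod(1-q^m)^{-1}$ factors; I would suppress those.

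The main obstacle will be the second step: showing that the intersection of the image of $i^\ast$ with each orbifold stratum $\mathrm{Hilb}([\SC^2/G_\Delta])_Y$ is again an affine space (hence Euler characteristic $1$ or $0$), rather than something with nontrivial topology — equivalently, that $\mathrm{Hilb}(\SC^2/G_\Delta)$ admits an affine paving compatible with the orbifold one. In type $A$ this can be extracted from known results, but in type $D$ it is the genuinely new input, and it is where the detailed cell-decomposition machinery of Chapters~\ref{ch:Dnideal}–\ref{ch:coarse} (Schubert-style cells on Grassmannians of homogeneous pieces of $\SC[x,y]$, and the explicit coarse decomposition of Chapter~\ref{ch:coarse}) has to be brought to bear; the combinatorial identification of \emph{which} cells survive, carried out via the abacus in Chapter~\ref{ch:dnabacus}, is then what pins down the index $1+h^\vee$ and finishes the proof.
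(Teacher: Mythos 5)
Your overall architecture — identify the coarse Hilbert scheme with the locus of invariantly-generated equivariant ideals, intersect with the Young-wall stratification of Theorem~\ref{thmorb}, and then prove a purely combinatorial substitution statement via the abacus — is the right skeleton, and it matches the paper's route (Corollary~\ref{cor_part_An_coarse} plus Proposition~\ref{prop:ansubst} in type $A$; Theorem~\ref{thm:dnsingcells} plus Theorem~\ref{thm:dnsubst} in type $D$). But two of your concrete predictions for how the steps go are wrong, and the second one would derail the proof. First, the step you flag as the "main obstacle" does not have the answer you expect: in type $D$ the intersection of $\mathrm{im}(i^\ast)$ with an orbifold stratum $Z_Y$ is \emph{not} an affine space and does \emph{not} have Euler characteristic $1$ or $0$ stratum by stratum. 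The coarse Hilbert scheme is stratified by a set $\mathcal{Z}_\Delta'$ of Young walls strictly larger than the set $\mathcal{Z}_\Delta^0$ indexing the answer, the individual strata $W_Y$ have varying Euler characteristics (the fibre computations of Theorem~\ref{thm:zinfty} produce fibres of Euler characteristic $-1$, $0$ and $1$), and the correct statement is only that the Euler characteristics sum to $1$ over the fibre of a combinatorial reduction map $\mathrm{red}\colon\mathcal{Z}_\Delta'\to\mathcal{Z}_\Delta^0$ (Proposition~\ref{prop:relsum1}, where the cancellation $-\chi+\chi+\chi=\chi$ is exactly what makes the count work). A proof organized around "each surviving stratum is affine, hence contributes $1$" cannot be completed in type $D$; you need the grouping into relatives and the signed cancellation.

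Second, the mechanism you propose for the appearance of $\zeta=\exp(2\pi i/(1+h^\vee))$ — that the liftable cores form a coset of a sublattice of $\SZ^n$ of index $1+h^\vee$ — is not correct. Being $0$-generated is not a condition on the core's charge vector living in a sublattice coset (in type $A$ it is the condition that the tuple $(a_0,\dots,a_n)$ be nondecreasing, a chamber condition, and it constrains the whole Young wall, not just its core). The root of unity instead enters through the fibre sums of the projection $p\colon\CZ_\Delta\to\CZ_\Delta^0$ (push beads to the right on the abacus): the sum of $\underline{q}^{\underline{\mathrm{wt}}(\mu)}$ over $\mu\in p^{-1}(\lambda)$ after the substitution $q_1=\dots=q_n=\xi$ is a product of Gaussian binomial coefficients $\binom{n+1}{k}_{\xi^{-1}}$ (type $A$) or $\binom{2n-1}{k}_{\xi^{-1}}$ (type $D$), and the identity forcing these products to collapse to $q^{\mathrm{wt}_0(\lambda)}$ holds precisely when $\xi$ is a primitive $(h^\vee+2-1+1)$-st, i.e.\ $(1+h^\vee)$-th... more precisely a primitive $(n+2)$-nd root in type $A$ and a primitive $(2n-1)$-st root in type $D$, which is where $1+h^\vee$ is pinned down. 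Relatedly, the exponent you want to extract is $\mathrm{wt}_0(Y)$ (the codimension in $\SC[x,y]^{G_\Delta}$ is the number of $0$-labelled blocks), not $|Y|$ as written in your sketch; the substitution $q_0=\zeta^{-\sum_{i\neq 0}\dim\rho_i}\,q$, $q_i=\zeta$ is designed exactly so that only $\mathrm{wt}_0$ survives as a power of $q$. Without replacing the sublattice-coset heuristic by the fibrewise $q$-binomial computation, the identification of the order of $\zeta$ is unsupported.
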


Thus $Z_{\SC^2/G_\Delta}(q)$ is obtained from $Z_{[\SC^2/G_\Delta]}(q_0,\dots,q_n)$ by the substitutions 
\[q_1=\dots=q_n=\exp\left({\frac{2 \pi i}{1+h^\vee}}\right), \ \ q_0=q\exp\left({-\frac{2 \pi i}{1+h^\vee}}\sum_{i\neq 0} \mathrm{dim}\rho_i\right).\] 
\begin{remark} The single variable generating series $Z_{\SC^2/G_\Delta}$ in type $A$ was  
calculated by Toda in~\cite{toda2013s} using threefold machinery including a flop formula for 
Donaldson--Thomas invariants of certain Calabi--Yau threefolds. He does not mention any connection to Lie theory. 
The combinatorics, and the one-variable formula for $Z^0_{\Delta}(q)$, were
already known to Dijkgraaf and Su\l{}kowski~\cite{dijkgraaf2008instantons}. They do not give the interpretation of the 
combinatorial formula in terms of Hilbert schemes, though they are clearly motivated by
closely related ideas. Instead, they investigate the partition functions of certain supersymmetric field theories on the so-called ALE spaces (see \cite{dijkgraaf2008supersymmetric, chung20163d} for further developments in this direction). The method in \cite{dijkgraaf2008instantons} for the proof of \autoref{thmsing} in type $A$ is different, using the method of Andrews~\cite{andrews1984generalized}. This has motivated a new proof of us for \autoref{thmorb} in type A appearing in \cite{gyenge2015enumeration}, which we explain here in \autoref{sec:pfAnFrobpart}. 
Our other main contribution is the general Lie-theoretic formulation, as well as a proof in type $D$ described in Chapters \ref{ch:Dnideal}-\ref{ch:dnabacus} as in \cite{gyenge2015main}. Along the way, we also provide another direct combinatorial proof in type $A$ as well, which appears to be new. We believe that already in type $A$ our new proof is preferable since it directly exhibits the clear connection between the orbifold and coarse generating series. 
Also, as we show in the second part of the thesis, this method generalizes away from type $A$.
\end{remark} 

One can check directly that the generating series in Theorem $\ref{thmsing}$ has also integer coefficients for $E_6$, $E_7$ and $E_8$ to a high power in $q$. This motivates the following. 

\begin{conjecture} \label{conj:formula} Let $\SC^2/G_\Delta$ be a simple singularity of type $E_n$ for $n=6,7,8$. Let $h^\vee$ be the (dual) Coxeter number of the corresponding finite root system. Then, as for other types, 
\[ Z_{\SC^2/G_\Delta}(q)=\left(\prod_{m=1}^{\infty}(1-q^m)^{-1}\right)^{n+1}\cdot\sum_{ \overline{m}=(m_1,\dots,m_n) \in \SZ^n } \zeta^{m_1+m_2+ \dots +m_n}(q^{1/2})^{\overline{m}^\top \cdot C_\Delta \cdot \overline{m}},\]
where $\zeta=\exp\left({\frac{2 \pi i}{1+h^\vee}}\right)$ and $C_\Delta$ is the finite type Cartan matrix corresponding to $\Delta$.
\label{conj:typeE}
\end{conjecture}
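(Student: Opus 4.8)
The plan is to reproduce, for $\Delta$ of type $E_6,E_7,E_8$, the three-step strategy that proves Theorem~\ref{thmsing} in types $A$ and $D$: a cell decomposition of the orbifold Hilbert scheme, a compatible cell decomposition of the coarse Hilbert scheme obtained through the quotient-scheme map $p_\ast$, and a combinatorial enumeration identifying the resulting generating function with the claimed specialization of \eqref{eq:orbi_main_formula}. Observe that the orbifold side is already available in full generality: Theorem~\ref{thmgenfunct} holds for all simply-laced types, so the only genuinely new objects to produce are the geometric cell decompositions and the exceptional-type analog of the abacus/core combinatorics.

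First I would establish the $E$-type analog of Theorem~\ref{thmorb}. Following Chapters~\ref{ch:Dnideal}--\ref{ch:dnorbidec}, the approach is to decompose $\SC[x,y]$ into its graded $G_\Delta$-isotypic components, observe that a $G_\Delta$-invariant finite-colength ideal is a system of subspaces of these components compatible with the $\SC[x,y]$-module structure, and produce a Schubert-style cell decomposition of the relevant Grassmannians from which one reads off that every stratum is an affine space. The index set should be a ``Young wall'' combinatorial model $\CZ_{E_n}$ for the basic representation of $\widetilde{E}_n^{(1)}$, together with the distinguished subsets of reduced walls and core walls; constructing such a model, with an abacus/charge description refining the bijection $\mathcal{Y}_\Delta\leftrightarrow\mathcal{C}_\Delta\times\mathcal{P}^n$, is the first substantial task, since the Young wall construction and its abacus (as recalled from \cite{hong2002introduction}) are classically developed only in types $A$, $D$, and a few other non-exceptional types.

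Second I would decompose the coarse Hilbert scheme. Since $p_\ast\circ i^\ast=\mathrm{id}$, the quotient-scheme map $p_\ast\colon\mathrm{Hilb}([\SC^2/G_\Delta])\to\mathrm{Hilb}(\SC^2/G_\Delta)$ is surjective; mirroring Chapter~\ref{ch:coarse}, one shows it restricts to an affine-cell fibration over the strata and that $\mathrm{Hilb}(\SC^2/G_\Delta)$ inherits an affine cell decomposition whose cells are indexed by the core walls of $\CZ_{E_n}$, the $\mathcal{P}^n$-worth of ``reduced'' data accounting for the $\left(\prod_{m\geq 1}(1-q^m)^{-1}\right)^{n+1}$ prefactor together with part of the root-lattice sum. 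Then, as in Chapter~\ref{ch:dnabacus}, the enumeration $Z_{\SC^2/G_\Delta}(q)=\sum_{\text{cells}}q^{\bullet}$ must be matched with the claimed formula; equivalently one checks that the substitution $q_1=\dots=q_n=\zeta=\exp(2\pi i/(1+h^\vee))$, $q_0=q\,\zeta^{-\sum_{i\neq 0}\dim\rho_i}$ collapses the Young-wall count in \eqref{eq:orbi_main_formula} to the core-wall count. The role of $1+h^\vee$ is precisely to be the period of the relevant charge datum for $\widetilde{E}_n^{(1)}$, so that weighting the root-lattice theta-type series by $\zeta^{m_1+\dots+m_n}$ projects onto cores; the integrality check quoted above is strong evidence that this bookkeeping closes up.

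The main obstacle is concentrated entirely in the exceptional-type combinatorics of the first step: there is presently no Young wall (or equivalent abacus) model for $\widetilde{E}_6^{(1)}$, $\widetilde{E}_7^{(1)}$, $\widetilde{E}_8^{(1)}$ adapted to the monomial-ideal picture, which is exactly what is meant by the representation-theoretic tools not yet being available. A promising alternative that would sidestep building $\CZ_{E_n}$ by hand is a uniform argument valid for all simply-laced types at once --- for instance via the identification of Lagrangian subvarieties of Nakajima quiver varieties with quiver Grassmannians for the preprojective algebra \cite{savage2011quiver}, together with the affine-paving results of \cite{lorscheid2015quivera, lorscheid2015quiverb}, combined with a type-independent analysis of the fibers of $p_\ast$. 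In any case, the $A$ and $D$ proofs indicate that once a combinatorial model with the correct abacus structure is in place, the second and third steps reduce to bookkeeping, so the essential new input needed is a crystal-theoretic description of the cells in type $E$.
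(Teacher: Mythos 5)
The statement you are asked to prove is stated in the thesis as a \emph{conjecture}, and the paper offers no proof of it: the author explicitly says that the key tool in types $A$ and $D$ is the Young wall/abacus combinatorics and that no such explicit combinatorics is known in type $E$, and supports the conjecture only by checking integrality of coefficients to high order. Your proposal is therefore not a proof but a research programme, and you yourself concede that its essential input --- a Young wall or abacus model for $\widetilde{E}_6^{(1)}$, $\widetilde{E}_7^{(1)}$, $\widetilde{E}_8^{(1)}$ adapted to monomial ideals, or a uniform type-independent substitute --- does not exist. That missing input is not a routine gap: in type $D$ it occupies Chapters~\ref{ch:Dnideal}--\ref{ch:dnabacus}, including the incidence-variety and join analysis needed to make the strata affine, the support-block/closing-datum bookkeeping, and the Gaussian-binomial identities at the $(2n-1)$-st root of unity that make the substitution close up. Nothing in your sketch supplies any of this for type $E$, so the statement remains unproved.

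Beyond the missing combinatorics, your description of the second step misrepresents how the argument actually runs even in types $A$ and $D$, so the claim that steps two and three ``reduce to bookkeeping'' is too optimistic. The coarse Hilbert scheme is \emph{not} given an affine cell decomposition indexed by core walls: in type $D$ the strata $W_Y$ are constructible sets indexed by $\mathcal{Z}_\Delta'$, many have Euler characteristic $0$, and the fibres of the reduction map contribute with signs $(-1)^{s_{-1}(d)}$ (Lemma~\ref{lem:chiclosefiber}, Proposition~\ref{prop:chiclosestratum}); the generating series is recovered only after the cancellation of Proposition~\ref{prop:relsum1}, summed over \emph{distinguished $0$-generated} Young walls, not cores. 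Likewise the substitution $q_1=\dots=q_n=\zeta$ does not ``project onto cores''; it is proved fibrewise over the map $p$ to $0$-generated walls, and the specific order $1+h^\vee$ of $\zeta$ enters through the vanishing of Gaussian binomial coefficients, not as the period of a charge datum. Any genuine type-$E$ proof would have to confront analogues of all of these phenomena, and your proposal does not indicate how.
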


The key tool in our proof of Theorem~\ref{thmsing} for types $A$ and $D$ is the combinatorics
of Young walls, in particular their abacus representation. 
We are not aware of such explicit combinatorics in type $E$. We hope to return 
to this question in later work. 

\begin{remark} We are dealing here with Hilbert schemes, parameterizing rank $r=1$ 
sheaves on the orbifold or singular surface. 
In the relationship between the instantons on algebraic surfaces and affine Lie algebras, 
level equals rank \cite{grojnowski1996instantons}.
Indeed the (extended) basic represenatation underlying the Young wall combinatorics (see Appendix) has level $l=1$. 
Thus the substitution above is by the root of unity $\zeta=\exp\left({\frac{2 \pi i}{l+h^\vee}}\right)$, with $l=1$ and $h^\vee$ the (dual) Coxeter number. 
There is an intriguing analogy here with the Verlinde formula, which uses a similar substitution, into
characters of Lie algebras, by a root of unity $\zeta=\exp({\frac{2 \pi i}{l+h^\vee}})$, where $l$ again is the level, and 
$h^\vee$ the (dual) Coxeter number of the root system of the Lie algebra of the gauge group. The geometric significance 
of this observation, if any, is left for future research.
\end{remark}



In the type $A$ case we can go further to the higher rank case. The higher rank orbifold generating series can again be derived from the results of \cite{nakajima2002geometric} or \cite{fujii2005combinatorial}, but our methods give a new proof in this case as well. Let $W$ be the isomorphism class of the framing and let $\underline{a}=(0,\dots,0,\dots,n-1, \dots,n-1)$, where for each  $c \in C$ the number of $c$'s in $\underline{a}$ is $w_c$. 
\begin{theorem} \label{thm:anorbimultgen}

Let $[\SC^2/G_\Delta]$ be a simple singularity orbifold, where $\Delta$ is of type $A_n$ for $n \geq 1$. Then
\[ Z^W_{[\SC^2/G_\Delta]}(\underline{q})=\prod_{m=1}^l Z_{\Delta(a_m)}(\underline{q}), \]
where $l$ is the length of $\underline{a}$,
\[ Z_{\Delta(a)}(\underline{q})= \left( \prod_{m=1}^\infty (1-q^m)^{-1} \right)^{n+1} \cdot\sum_{ \underline{m}=(m_1,\dots,m_{n}) \in \SZ^{n} } q_{1+a}^{m_1}\cdot\dots\cdot q_{n+a}^{m_{n}}(q^{1/2})^{\underline{m}^\top \cdot C \cdot \underline{m}}\;,\]
$q=\prod_{i=0}^{n} q_i$, and $C$ is the Cartan matrix of finite type $A_{n}$.
\end{theorem}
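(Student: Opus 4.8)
The plan is to combine the quiver-variety description of the higher rank equivariant moduli space (Corollary~\ref{cor:hilbnakquiv} in the rank $1$ case, and the preceding theorem of Nakajima \cite{nakajima2002geometric} in general) with a torus-fixed-point computation. By that theorem, $\CM^{r,\rho}([\SC^2/G_\Delta]) \cong \mathcal{M}(\underline v,\underline w)$ for the affine $\widetilde A_n^{(1)}$ quiver, with $\underline w$ determined by the framing class $W$, so $\sum_{\underline v}\chi(\mathcal{M}(\underline v,\underline w))\,\underline q^{\underline v}$ is exactly $Z^W_{[\SC^2/G_\Delta]}(\underline q)$. Since these quiver varieties have no odd cohomology \cite[Section 7]{nakajima2001quiver}, the Euler characteristic can be computed from the torus-fixed locus. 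First I would introduce the torus $T=(\SC^\ast)^{|\underline w|+2}$ acting on $\mathcal{M}(\underline v,\underline w)$ (as in the commented-out passage following Corollary~\ref{cor:hilbnakquiv}): the two copies of $\SC^\ast$ rescale the two edge-maps around the cycle, and the remaining $|\underline w|$ factors act on the framing $W$. The fixed points are isolated and, by \cite[Proposition 5.7]{sam2014combinatorial}, are indexed by $|\underline w|$-tuples $\underline Y$ of diagonally colored Young diagrams with the $i$-th diagram carrying the $a_i$-coloring and $\underline{\mathrm{wt}}(\underline a,\underline Y)=\underline v$.

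The heart of the argument is then purely combinatorial and bookkeeping in nature. Each $T$-fixed point of $\mathcal{M}(\underline v,\underline w)$ contributes $1$ to $\chi$, and contributes a monomial $\underline q^{\underline v}=\prod_i q_i^{v_i}$ where $v_i$ counts the boxes of color $i$ summed over all diagrams in the tuple $\underline Y$. Because the color-weight of a tuple is additive — the color content of $\underline Y=(Y_1,\dots,Y_l)$ is the sum of the color contents of the individual $Y_m$ — the generating function factors as a product over $m=1,\dots,l$ of the generating function of single diagonally $a_m$-colored Young diagrams weighted by their color content. So the next step is to identify each factor: for a fixed coloring offset $a$, the generating series $\sum_{Y}\underline q^{\mathrm{wt}(a,Y)}$ over all Young diagrams with the $a$-shifted diagonal $\SZ/(n+1)$-coloring equals $Z_{\Delta(a)}(\underline q)$. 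This is the $\underline w=(1,0,\dots,0)$ case of the statement (i.e. the rank $1$ / Hilbert scheme case applied with a shifted coloring), which in turn is the content of Theorem~\ref{thmorb} together with the abacus enumeration referred to in~\ref{sec:Anabacus}: the set of Young walls of type $\widetilde A_n$ is the set of partitions with diagonal coloring, and the right-hand side of~\eqref{eq:orbi_main_formula} is precisely its color-weighted generating series $\left(\prod_{m\geq 1}(1-q^m)^{-1}\right)^{n+1}\sum_{\underline m\in\SZ^n} q_1^{m_1}\cdots q_n^{m_n}(q^{1/2})^{\underline m^\top C\underline m}$; shifting the coloring by $a$ simply relabels $q_i\mapsto q_{i+a}$.

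Assembling: $Z^W_{[\SC^2/G_\Delta]}(\underline q)=\sum_{\underline Y}\underline q^{\underline{\mathrm{wt}}(\underline a,\underline Y)}=\prod_{m=1}^{l}\bigl(\sum_{Y}\underline q^{\mathrm{wt}(a_m,Y)}\bigr)=\prod_{m=1}^{l}Z_{\Delta(a_m)}(\underline q)$, which is the claim. The main obstacle I expect is not the factorization — that is immediate once the fixed-point indexing set is a product with additive weight — but rather pinning down precisely that the $a$-shifted diagonal coloring of a single Young diagram has color-content generating function given by the shifted theta-like series, i.e. carefully matching the abacus/Young-wall combinatorics of~\ref{sec:Anabacus} to the coloring convention used in \cite{sam2014combinatorial}, and checking that the torus action is set up so that its weights record the color content with exactly these variables. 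A secondary technical point is justifying the $\chi$-via-fixed-points step: this needs the $T$-action to have isolated fixed points on each component (true here) and the absence of odd cohomology, so that $\chi(\mathcal{M}(\underline v,\underline w))=\#\mathcal{M}(\underline v,\underline w)^T$, both of which are available from \cite{nakajima2001quiver, sam2014combinatorial}.
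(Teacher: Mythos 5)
Your proposal is correct and follows essentially the same route as the paper: the paper proves the theorem by combining Corollary~\ref{cor:anworbi} (torus-fixed points of the quiver varieties $\mathcal{M}(\underline v,\underline w)$ are indexed by $|\underline w|$-tuples of $\underline a$-labelled Young diagrams, via \cite[Proposition 5.7]{sam2014combinatorial}, so $Z^W_{[\SC^2/G_\Delta]}=Z_{\Delta(\underline a)}$), Corollary~\ref{cor:anorbidec} (the set-theoretic product $\CZ_\Delta(\underline a)=\prod_m\CZ_\Delta(a_m)$ with additive weights gives the factorization), and Corollary~\ref{cor:anorbishift} (the $a$-shift is the relabelling $q_i\mapsto q_{i+a}$ applied to Proposition~\ref{prop:angentheta}). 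The only minor remark is that the appeal to vanishing odd cohomology is not needed for the Euler-characteristic count — $\chi(X)=\chi(X^T)$ already suffices once the fixed points are isolated — but this does not affect correctness.
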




Let us consider now the cyclic quotient singularities introduced in the end of \autoref{sec:quotsing}.
Our main result in this direction is a 
representation of $Z_{X(p,1)}(q)$ as coefficient of a two variable generating function. In this two variable generating function continued fractions appear. We introduce the notation $[z^0]\sum_n A_n z^n=A_0$.
\begin{theorem} 
\label{thm:cycmain}
Let $F(q,z)$ and $T(q,z)$ be the functions defined in \eqref{eq:Fdef} and \eqref{eq:Tdef} below, respectively. Then:
\[ Z_{X(p,1)}(q)=[z^0]T(q,z)\left(F(q^{-1},z^{-1})-(qz)^{-p}F(q^{-1},(qz)^{-1})\right).\]
\end{theorem}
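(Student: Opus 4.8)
The plan is to turn the computation of $Z_{X(p,1)}(q)$ into a weighted enumeration of finite order ideals in an explicit poset, and then to evaluate that enumeration by a transfer‑matrix recursion whose solution is the continued fraction encoded in $F(q,z)$ and $T(q,z)$.

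\textbf{Step 1: torus localization.} The full torus $T=(\SC^{\ast})^2$ acting diagonally on $\SC^2$ commutes with the $\SZ_p$-action, hence descends to $X(p,1)$ and lifts to each $\mathrm{Hilb}^m(X(p,1))$. Since $T$ is connected, $\chi(\mathrm{Hilb}^m(X(p,1)))=\chi(\mathrm{Hilb}^m(X(p,1))^T)$, and the $T$-fixed ideals of the invariant ring $R=\SC[x,y]^{\SZ_p}$ are exactly the monomial ones. After checking that these fixed points are isolated (a tangent-space weight computation analogous to the arm/leg count on $\SC^2$, or simply passing to a generic one-parameter subtorus), one gets that $\chi(\mathrm{Hilb}^m(X(p,1)))$ equals the number of colength-$m$ monomial ideals of $R$. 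Such an ideal is the linear span of the complement, inside $P=\{(a,b)\in\SZ_{\geq 0}^2 : a+b\equiv 0 \pmod p\}$, of a finite order ideal (down-set) $D$ of $P$ with respect to the componentwise order, and its colength is $|D|$; hence $Z_{X(p,1)}(q)=\sum_{m} N_m q^m$, where $N_m$ is the number of order ideals of $P$ of size $m$. (As a consistency check: $X(p,1)\setminus\{0\}$ carries a fixed-point-free $\SC^{\ast}$-action, so it and all its Hilbert schemes of positive length have vanishing Euler characteristic; stratifying $\mathrm{Hilb}^m(X(p,1))$ by the length at the origin then shows that only subschemes supported at $0$ contribute, in agreement with the fact that every finite-colength monomial ideal of $R$ is $\mathfrak{m}_0$-primary.)

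\textbf{Step 2: a one-dimensional profile and the continued fraction.} Next I would record an order ideal $D$ of $P$ by a one-dimensional ``profile'': grouping the columns of $P$ by residue class modulo $p$, the down-set condition becomes a system of containment/interlacing inequalities among the column heights, and reading these heights from left to right — equivalently, tracing the boundary lattice path of the associated diagram — turns $D$ into a walk on a state space of bounded local data. The generating function of one step of this walk, with an auxiliary variable $z$ tracking the current boundary height and $q$ tracking the area swept out, satisfies a linear fixed-point equation whose solution is a continued fraction; this is exactly $F(q,z)$, while the generating function of the initial/terminal segment of the walk supplies the prefactor $T(q,z)$. The equations \eqref{eq:Fdef} and \eqref{eq:Tdef} should be read as the definitions of these quantities.

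\textbf{Step 3: gluing.} The series $\sum_m N_m q^m$ is recovered by running the walk from the empty profile back to the empty profile, which amounts to extracting the $z$-constant term $[z^0]$ of $T(q,z)$ times the sum of all transfer-matrix products. I expect the combination $F(q^{-1},z^{-1})-(qz)^{-p}F(q^{-1},(qz)^{-1})$ to arise from an inclusion–exclusion: the naive sum over all profiles includes configurations that fail to close up, and subtracting the copy of $F$ shifted by the weight $(qz)^{p}$ of a full $p$-step strip removes precisely those; the reflection $q\mapsto q^{-1}$, $z\mapsto z^{-1}$ is the standard complementation of a down-set inside a large staircase region, with $T(q,z)$ absorbing the monomial factor from the size of that region. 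Carrying this out and simplifying should reproduce the right-hand side of the theorem verbatim. One could also try to route this through the orbifold Hilbert scheme of $[\SC^2/\SZ_p]$ and the pushforward/pullback maps of Section~\ref{sec:quotsing}, in parallel with the type $A$ story, but since the answer is a continued-fraction identity rather than a root-of-unity specialization, the direct transfer-matrix route seems more faithful.

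\textbf{Main obstacle.} Step 1 is routine. The heart of the argument is Steps 2–3: pinning down the correct finite state space so that the resolvent of the transfer operator is literally the continued fraction $F$, and then performing the closing-up with every sign, shift, and the exponent $-p$ accounted for. Put differently, the real content is a purely combinatorial identity expressing $\sum_m N_m q^m$ as the stated constant term, and verifying that identity — most delicately, explaining the origin of the subtracted term $(qz)^{-p}F(q^{-1},(qz)^{-1})$ — is where the difficulty concentrates.
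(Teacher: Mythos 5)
Your Step 1 and the overall architecture of Steps 2--3 --- localize to monomial ideals of $\SC[x,y]^{\SZ_p}$, complement each $0$-generated diagram inside the smallest bounding isosceles right triangle whose hypotenuse lies on an antidiagonal $x+y=c$ with $c\equiv 0 \pmod p$, enumerate the complement regions by a continued-fraction generating function, and glue by extracting $[z^0]$ after the reflection $q\mapsto q^{-1}$, $z\mapsto z^{-1}$, with $T(q,z)$ recording the weight of the triangle --- are exactly the paper's proof, and your consistency check via the free $\SC^*$-action away from the origin is sound. What you have not done is the heart of the matter, which you yourself flag, and the one concrete mechanism you propose for the subtracted term is not the right one.

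In the paper the complement regions, rotated by $45^\circ$, are the \emph{$p$-fountains} generalizing the Odlyzko--Wilf fountains of coins. The subtraction is not about profiles that ``fail to close up'' --- every complement closes up against its triangle --- but about the minimality of $c$: a given $0$-generated diagram fits into infinitely many triangles, and the triangle is minimal precisely when the complement fountain has an empty position in its next-to-bottom row, i.e.\ is \emph{non-primitive}. Primitive fountains (next-to-bottom row completely full) are in bijection with arbitrary fountains by deleting the bottom row, which gives $G(q,z)=(qz)^{p}F(q,qz)$ as in \eqref{eq:fshifted}; after the $q^{-1},z^{-1}$ substitution this is exactly the term $(qz)^{-p}F(q^{-1},(qz)^{-1})$ being subtracted. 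The continued fraction \eqref{eq:Fdef} itself is not obtained from a transfer-matrix resolvent but from the unique factorization of an arbitrary fountain, at the first empty position of its next-to-bottom row, into a primitive fountain and a smaller fountain, yielding the convolution \eqref{eq:conv} and the functional equation \eqref{eq:frec}. To complete your argument you therefore still need: (i) the bijection between $0$-generated diagrams (with their minimal triangle) and non-primitive $p$-fountains matched along the hypotenuse, which is what the $[z^0]$ extraction against $T(q,z)$ in \eqref{eq:Tdef} encodes; and (ii) the bottom-row-removal bijection and the first-gap factorization. Without these, the identity asserted in the theorem is not established.
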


\begin{remark}
\begin{enumerate}
\item  For $p=1$, $X(1,1)=\SC^2$. Then, by Theorem \ref{thmsing}, $Z_{X(1,1)}(q)$ is also equal to 
\[\prod_{m=1}^\infty \frac{1}{1-q^m}.\]
\item For $p=2$, $X(2,1)$ is the $A_1$ singularity. By Theorem \ref{thmsing}, $Z_{X(2,1)}(q)$ is also equal to 
\[ \left( \prod_{m=1}^\infty (1-q^m)^{-1} \right)^{2} \cdot\sum_{ m\in \SZ } \xi^{m}q^{m^2}\;,\]
where $\xi=\mathrm{exp}(\frac{2 \pi i}{3})$.
\end{enumerate}
We are not aware of a direct proof for these equalities.
\end{remark}

Finally, we are able to globalize the statements so far. Let $X=S$ be a quasi-projective surface which is non-singular outside a finite number of simple surface singularities $\{P_1, \ldots, P_k\}$, with $(P_i\in S)$ a singularity locally analytically isomorphic to $(0\in \SC^2/G_{\Delta_i})$ for $G_{\Delta_i}<\mathrm{SL}(2,\SC)$ a small finite subgroup, or to $(0 \in (X(p_i,1)))$ for a positive integer $p_i$. Let $S^0\subset S$ be the nonsingular part of $S$.

\begin{theorem} \label{thm:singsurface} The generating function $Z_S(q)$
 of the Euler characteristics of Hilbert schemes of points of $S$ has
 a product decomposition
\begin{equation}\label{formula:singsurface}
Z_S(q)= \left(\prod_{m=1}^{\infty}(1-q^m)^{-1}\right)^{\chi(S^0)}\cdot \prod_{j=1}^k Z^{(P_i, S)}(q).
\end{equation}
The local terms can be expressed
\begin{itemize}
\item either as
\begin{equation} Z^{(P_i, S)}(q) = Z_{\SC^2/G_{\Delta_i}}(q)\label{eq:localterms}
\end{equation}
if $(P_i\in S)\cong (0\in \SC^2/G_{\Delta_i})$, and they are given by \autoref{thmsing} for type $A$ and $D$, and, assuming Conjecture~\ref{conj:formula}, also for type $E$;
\item or as 
\begin{equation} Z^{(P_i, S)}(q) = Z_{X(p_i,1)}(q)\label{eq:localtermsquot}
\end{equation}
if $(P_i\in S)\cong (0 \in (X(p_i,1)))$, and they are given by \autoref{thm:cycmain}.
\end{itemize}
\end{theorem}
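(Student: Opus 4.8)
The plan is to reduce the global statement to the local computations already established in the thesis via a standard stratification argument for Hilbert schemes of points. First I would stratify $\mathrm{Hilb}^m(S)$ according to the Hilbert--Chow morphism $\pi\colon \mathrm{Hilb}^m(S)\to S^mS$, recording for each point of the symmetric product how the total length $m$ is distributed among the finitely many singular points $P_1,\dots,P_k$ and the smooth locus $S^0$. Concretely, a length-$m$ subscheme $Z\subset S$ decomposes uniquely as a disjoint union $Z = Z_0 \sqcup Z_1 \sqcup \dots \sqcup Z_k$, where $Z_i$ is supported set-theoretically at $P_i$ for $i\geq 1$ and $Z_0$ is supported on $S^0$, with $\sum_i \mathrm{length}(Z_i) = m$. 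This gives a locally closed decomposition
\[
\mathrm{Hilb}^m(S) = \bigsqcup_{m_0+m_1+\dots+m_k=m} \mathrm{Hilb}^{m_0}(S, S^0) \times \prod_{i=1}^k \mathrm{Hilb}^{m_i}(S,\{P_i\}),
\]
where $\mathrm{Hilb}^{m_i}(S,\{P_i\})$ denotes the punctual Hilbert scheme of subschemes supported at $P_i$. Because Euler characteristic is additive over locally closed stratifications and multiplicative over products, this immediately yields a factorization of $Z_S(q)$ as a product of the generating series of $\mathrm{Hilb}(S,S^0)$ and of the punctual Hilbert schemes $\mathrm{Hilb}(S,\{P_i\})$.

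Next I would identify each factor. For the smooth part: $\mathrm{Hilb}(S,S^0)$ further stratifies by the support locus in $S^0$, and a standard argument (e.g. via Göttsche's formula \eqref{eq:goettsche}, or directly by cutting $S^0$ into affine pieces and using $\chi(\mathrm{Hilb}(S,Y)) = \chi(Y)\cdot\chi(\mathrm{Hilb}(\SC^2,\{0\}))$-type multiplicativity) gives the contribution $\left(\prod_{m\geq 1}(1-q^m)^{-1}\right)^{\chi(S^0)}$. For the singular points, the key input is that $(P_i\in S)$ is \emph{locally analytically isomorphic} to the model singularity $(0\in\SC^2/G_{\Delta_i})$ or $(0\in X(p_i,1))$; since the punctual Hilbert scheme at a point depends only on the local (in fact formal, or analytic) ring at that point, we get
\[
\sum_{m\geq 0}\chi\!\left(\mathrm{Hilb}^m(S,\{P_i\})\right)q^m = \sum_{m\geq 0}\chi\!\left(\mathrm{Hilb}^m(\SC^2/G_{\Delta_i},\{0\})\right)q^m =: Z^{(P_i,S)}(q).
\]
Finally, I would observe that for the model singularities the punctual series at the origin equals the full series: for $\SC^2/G_\Delta$ the whole singular locus is the single point $0$, and the smooth complement $(\SC^2/G_\Delta)\setminus\{0\} \cong (\SC^2\setminus\{0\})/G_\Delta$ contributes its own Göttsche-type factor — but in fact the cleanest route is simply to \emph{define} $Z^{(P_i,S)}(q)$ as the punctual series at $P_i$, prove the product formula \eqref{formula:singsurface} with that definition, and then invoke \autoref{thmsing} (resp. \autoref{thm:cycmain}) which computes precisely $Z_{\SC^2/G_\Delta}(q)$ (resp. $Z_{X(p,1)}(q)$) — this works because for the model spaces $\SC^2/G_\Delta$, applying the same product decomposition shows $Z_{\SC^2/G_\Delta}(q)$ itself factors as $\left(\prod(1-q^m)^{-1}\right)^{\chi((\SC^2\setminus 0)/G)}\cdot Z^{(0,\,\SC^2/G_\Delta)}(q)$, and one checks $\chi((\SC^2\setminus 0)/G)=0$ since $\SC^*$ acts freely there, so $Z_{\SC^2/G_\Delta}(q) = Z^{(0,\,\SC^2/G_\Delta)}(q)$ exactly. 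The same freeness argument handles $X(p,1)$.

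The main obstacle is making the "locally analytic isomorphism implies equality of punctual Hilbert scheme series" step fully rigorous: one must check that $\mathrm{Hilb}^m(S,\{P\})$ is genuinely determined by the completion $\widehat{\mathcal O}_{S,P}$ (or an analytic neighborhood), which is true because a length-$m$ scheme supported at $P$ is cut out by an ideal containing a power of $\mathfrak m_P$, hence lives in $\mathrm{Spec}(\mathcal O_{S,P}/\mathfrak m_P^N)$ for $N\gg 0$, a thickening insensitive to the distinction between Zariski-local, analytic-local, and formal-local data; and that this identification is an isomorphism of schemes (not just a bijection on points), so that Euler characteristics agree. The additivity/multiplicativity of $\chi$ over the Hilbert--Chow stratification is routine but should be stated carefully, citing the fact that $\chi$ is motivic and that $\mathrm{Hilb}^m(S,S^0)\to S^m S^0$ together with the punctual decomposition gives an honest locally closed partition indexed by partition-valued functions on the support. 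Everything else is bookkeeping: collecting the smooth contributions into the exponent $\chi(S^0)$ and the singular contributions into the product $\prod_j Z^{(P_i,S)}(q)$, then substituting the explicit formulas from \autoref{thmsing}, \autoref{thm:cycmain}, and (conditionally) Conjecture~\ref{conj:formula}.
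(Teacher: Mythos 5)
Your proposal is correct and follows essentially the same route as the paper: stratify $\mathrm{Hilb}^m(S)$ by the distribution of lengths over $S^0$ and the points $P_i$, use the analytic isomorphism to identify the punctual Hilbert schemes at $P_i$ with those at the origin of the model, and use the $\SC^*$-action on the weighted-homogeneous model to see that the locus supported away from the origin contributes zero Euler characteristic, so the punctual series at $0$ equals the full series $Z_{\SC^2/G_{\Delta_i}}(q)$ (resp.\ $Z_{X(p_i,1)}(q)$). Your repackaging of this last step --- applying the product decomposition to the model itself and noting $\chi((\SC^2\setminus 0)/G)=0$ --- is the same torus-localization fact the paper invokes directly, just phrased through Göttsche's formula.
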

\begin{proof} The product decomposition \eqref{formula:singsurface}, as well as the equalities~\eqref{eq:localterms}-\eqref{eq:localtermsquot}, follow from a standard argument; we sketch the details. For a point $P\in S$ on a quasiprojective surface~$S$, let $\mathrm{Hilb}_P^{m}(S)$ denote the punctual Hilbert scheme of~$S$ at $P$, the Hilbert scheme of length~$m$ subschemes of~$S$ set-theoretically supported at the single point~$P$. Then for our surface~$S$ and for a, let's say, simple singularity $P_i$ we
have 
\[ \chi(\mathrm{Hilb}_{P_i}^{m}(S)) = \chi(\mathrm{Hilb}_0^{m}(\SC^2/G_{\Delta_i}))  = \chi(\mathrm{Hilb}^{m}(\SC^2/G_{\Delta_i})).
\]
Here the first equality follows from the analytic isomorphism between $(P_i\in S)$ and $(0\in \SC^2/G_{\Delta_i})$. The second equality follows from torus localization, using the fact that each singularity $\SC^2/G_{\Delta_i}$ is weighted homogeneous, admitting a $\SC^*$-action. 
Any finite length quotient whose set-theoretic support is not equal to 0 is a member of a nontrivial orbit of this action. Therefore, 
\[ \chi(\mathrm{Hilb}^{m}(\SC^2/G_{\Delta_i}) \setminus \mathrm{Hilb}^{m}_0(\SC^2/G_{\Delta_i}))=0, \]
and  we have the decomposition 
\[ \mathrm{Hilb}^{m}(S)= \bigsqcup_{\sum_{i=0}^k m_i=m} \mathrm{Hilb}^{m_0}(S^0) \times \prod_{i=1}^k \mathrm{Hilb}_{P_i}^{m_i}(S).
\]
Reinterpreting this equality for generating series proves \eqref{formula:singsurface}-\eqref{eq:localtermsquot}, recalling 
also~\eqref{eq:goettsche} for $S^0$. 
\end{proof}

Formulas~\eqref{formula:singsurface}-\eqref{eq:localterms} are our analogue for the case of surfaces with simple singularities of the Oblomkov--Shende--Maulik formula~\eqref{formula:singcurve}. Note that each $\SC^2/G_{\Delta_i}$ is in particular a hypersurface singularity, as are planar singularities in the curve case. The main difference with formula~\eqref{formula:singcurve} is the fact that (conjecturally, for type $E$) our local terms $Z^{(P_i, S)}(q)$ are expressed in terms of Lie-theoretic and not topological data. We leave the question whether our local terms have any interpretation in terms of the topology of the (embedded) link of $P_i$, and whether there are nice formulas for other two-dimensional hypersurface singularities, for further work. 

\section{The S-duality conjecture}

We follow here \cite{gottsche2009invariants}. Let $S$ be a smooth projective algebraic surface and let $H=\CO(1)$ be a very ample line bundle on $S$. For a sheaf $E$ on $S$, denote $\chi(S,E)=\sum_{i=0}^2(-1)^i\dim H^i(S,E)$ and let $E(n)=E \otimes H^{\otimes n}$. Let $r=\mathrm{rk}(E)$ be the rank of $E$.
The \emph{discriminant} of a coherent sheaf $E$ is by definition \cite[Section 3.4]{huybrechts2010geometry} the characteristic class
\[ \Delta(E)=c_2-\frac{r-1}{2r}c_1^2.\]
We do not distinguish between $\Delta(E)$ and its degree
\[d=\int_X \Delta(E).\]

\begin{definition}
A torsion free sheaf $E$ on $S$ is \emph{$H$-semistable}, if for all nonzero subsheaves $F \subset E$, we have
\[\frac{\chi(S,F(n))}{\mathrm{rk}(F(n))} \leq \frac{\chi(S,E(n))}{\mathrm{rk}(E(n))},\quad\textrm{ for all } n \gg 0.\]
It is called \emph{$H$-stable} if the inequality is strict.
\end{definition}
It can be shown that there exists a  moduli space $\CM_S(r,c_1,c_2) $ of $H$-semistable sheaves on $S$ of rank $r$ with Chern classes $c_1$ and $c_2$. The Hilbert scheme $\mathrm{Hilb^m}(S)$ of $S$ is the special case with rank 1,  $c_1=0$ and $c_2=m$. 
For any torsion free and $H$-semistable sheaf $E$ Bogomolov's inequality \cite[Theorem 3.4.1]{huybrechts2010geometry} states that
\[ d \geq 0.\]
For $H$-stable sheaves this inequality is strict.

Let us restrict our attention to the case of rank 2. Then 
\[d=c_2-\frac{c_1^2}{4},\] 
and we denote the moduli spaces as $\CM_S(c_1,d)=\CM_S(2,c_1,c_2)$. The Euler characteristics are again collected into a generating series
\[Z_{S,c_1}(q)=\sum_{d} \chi(\CM_S(c_1,d))q^d.\]
Vafa and Witten interpret this generating function in \cite{vafa1994strong} as the partition function of a physical theory. The variable $q$ is interpreted as $q=\mathrm{e}^{2 \pi i \tau}$, where $\tau$ is a coupling parameter of the theory. To write it in this form, it has to be invariant under the transformation $T\colon \tau \to \tau +1$. Furthermore, the theory should transform nicely if one replaces the strong coupling with weak coupling. This corresponds to the transformation $S\colon\tau \to -\frac{1}{\tau}$. The operations $S$ and $T$ generate the action of $SL(2,\SZ)$ on the upper half plane $\SH$. The \emph{S-duality conjecture} desribes how the theory and associated quantities transform with respect to the $SL(2,\SZ)$-action. In particular, it predicts that, up to a fractional power of $q$, $Z_{S,c_1}(q)$ has to be a meromorphic modular form for a finite index subgroup of $SL(2,\SZ)$.


It was observed by Balázs Szendrői that our formulae from \autoref{sec:mainresults} lead to the following new modularity results, extending the results of~\cite{toda2013s} for type~$A$.

\begin{corollary}\! (S-duality for simple singularities) \ For type $A$ and type $D$, and, assuming Conjecture~\ref{conj:formula}, for all types, the partition function $Z_{\SC^2/G_\Delta}(q)$  is, up to a suitable fractional power of~$q$, the $q$-expansion of a meromorphic modular form of weight $-\frac{1}{2}$ for some congruence subgroup of $\mathrm{SL}(2,\SZ)$.
\label{cor:Sdual}
\end{corollary}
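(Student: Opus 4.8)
The plan is to exhibit $Z_{\SC^2/G_\Delta}(q)$, up to a fractional power of $q$, as a ratio of standard modular objects whose transformation behaviour under $\mathrm{SL}(2,\SZ)$ is classical. By Theorem~\ref{thmsing} we may write
\[
Z_{\SC^2/G_\Delta}(q)=\left(\prod_{m\ge 1}(1-q^m)^{-1}\right)^{n+1}\cdot\Theta_\Delta(q),\qquad \Theta_\Delta(q)=\sum_{\overline m\in\SZ^n}\zeta^{\,m_1+\cdots+m_n}\,(q^{1/2})^{\overline m^\top C_\Delta\,\overline m},
\]
with $\zeta=\exp(2\pi i/(1+h^\vee))$. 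The first factor is, up to the fractional power $q^{(n+1)/24}$, equal to $\bigl(\eta(\tau)\bigr)^{-(n+1)}$ where $q=e^{2\pi i\tau}$; the Dedekind $\eta$-function is a modular form of weight $1/2$ for $\mathrm{SL}(2,\SZ)$ with a known multiplier system, so $\eta^{-(n+1)}$ is meromorphic modular of weight $-(n+1)/2$ for $\mathrm{SL}(2,\SZ)$. It remains to identify $\Theta_\Delta$ as a modular form of weight $n/2$ for a congruence subgroup, after which the product has weight $n/2-(n+1)/2=-1/2$, as claimed, and lies on a common congruence subgroup (the intersection of the two).

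The key step is therefore the theta part. First I would observe that $\Theta_\Delta$ is (the specialization of) a theta series attached to the positive-definite even lattice $Q_\Delta$ with quadratic form $Q(\overline m)=\tfrac12\overline m^\top C_\Delta\overline m$ (the finite-type root lattice of $\Delta$), twisted by the character $\overline m\mapsto\zeta^{\sum_i m_i}$. Writing $N=1+h^\vee$ and noting that $\zeta^{\sum m_i}=e^{2\pi i\,\langle v,\overline m\rangle/N}$ for a suitable linear functional $v$, this is a lattice theta function with spherical (in fact constant) coefficient and a linear exponential twist; equivalently, after completing the square, a finite linear combination of shifted theta series $\theta_{Q_\Delta,\,c}(\tau)=\sum_{\overline m\in Q_\Delta+c}q^{Q(\overline m)}$ over coset representatives $c$ in $\tfrac1N Q_\Delta^\vee/Q_\Delta$ or similar. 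Classical theory (e.g. the transformation formula for theta series of positive-definite quadratic forms, as in Shimura or Ogg) then gives that each such component, and hence $\Theta_\Delta$, is a modular form of weight $(\operatorname{rank})/2=n/2$ for a congruence subgroup $\Gamma_1(M)$ or $\Gamma_0(M)$ for $M$ divisible by $N$ and by the level of $Q_\Delta$; the dependence on $\tau$ rather than $q^{1/2}$ needs a harmless rescaling $\tau\mapsto\tau/2$ which only enlarges the level by a factor of $2$.

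Assembling: $Z_{\SC^2/G_\Delta}(q)\cdot q^{-(n+1)/24}=\eta(\tau)^{-(n+1)}\cdot\Theta_\Delta(q)$ is then meromorphic modular of weight $-1/2$ for the intersection of the relevant congruence subgroups, which is again a congruence subgroup. For type $E$, the identical argument applies once Conjecture~\ref{conj:formula} supplies the same closed form for $Z_{\SC^2/G_\Delta}(q)$, so the statement follows verbatim with $n\in\{6,7,8\}$. The main obstacle I anticipate is purely bookkeeping: carefully matching the twisted lattice sum $\Theta_\Delta$ to the standard normalization of vector-valued or scalar theta series, tracking the exact level $M$ (which congruence subgroup), and handling the multiplier systems of $\eta^{-(n+1)}$ and of $\Theta_\Delta$ so that their product has trivial-enough multiplier to count as a genuine modular form; none of this is conceptually deep, but it is where the fractional power of $q$ and the precise subgroup get pinned down. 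The reference \cite{toda2013s} already does exactly this in type $A$, so in practice one would cite that computation and run the parallel argument for types $D$ (and $E$).
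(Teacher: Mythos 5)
Your argument is correct and is essentially the paper's proof: the paper simply cites \cite[Prop.~3.2]{toda2013s}, whose content is precisely the decomposition you spell out, namely writing the series as $\eta(\tau)^{-(n+1)}$ (weight $-\tfrac{n+1}{2}$, up to the fractional power $q^{(n+1)/24}$) times a twisted theta series of the even root lattice $Q_\Delta$ (weight $\tfrac n2$ for a congruence subgroup), giving total weight $-\tfrac12$. The only cosmetic remark is that since $Q_\Delta$ is even, $\tfrac12\overline m^\top C_\Delta\overline m$ is already an integer, so the rescaling $\tau\mapsto\tau/2$ you mention is not needed.
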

\begin{proof} This follows straight from \cite[Prop.3.2]{toda2013s}.
\end{proof}

\begin{corollary}\! (S-duality for surfaces with simple singularities) \ Let $S$ be a quasiprojective
surface with simple singularities of type $A$ and $D$, or, 
assuming Conjecture~\ref{conj:formula}, of arbitrary type. Then the generating function $Z_S(q)$
is, up to a suitable fractional power of~$q$, the $q$-expansion of a meromorphic modular form 
of weight $-\frac{\chi(S)}{2}$ for some congruence subgroup of $\mathrm{SL}(2,\SZ)$.
\end{corollary}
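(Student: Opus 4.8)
The plan is to combine the product decomposition of \autoref{thm:singsurface} with the modularity of the local factors supplied by Corollary~\ref{cor:Sdual}, using the fact that a finite product of meromorphic modular forms for congruence subgroups of $\mathrm{SL}(2,\SZ)$ is again such a form, for the intersection of the subgroups, and of weight the sum of the weights.

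First I would invoke \autoref{thm:singsurface} in the case where $S$ has only simple singularities, so that
\[ Z_S(q)= \left(\prod_{m=1}^{\infty}(1-q^m)^{-1}\right)^{\chi(S^0)}\cdot \prod_{i=1}^k Z_{\SC^2/G_{\Delta_i}}(q). \]
Next I would handle the smooth contribution: with $q=e^{2\pi i \tau}$ and $\eta(\tau)=q^{1/24}\prod_{m\geq 1}(1-q^m)$ the Dedekind eta function, a modular form of weight $\tfrac12$ on $\mathrm{SL}(2,\SZ)$ with a multiplier of finite order, one has
\[ \left(\prod_{m=1}^{\infty}(1-q^m)^{-1}\right)^{\chi(S^0)}= q^{\chi(S^0)/24}\,\eta(\tau)^{-\chi(S^0)}. \]
Thus, up to the explicit fractional power $q^{\chi(S^0)/24}$ of $q$, this factor is a meromorphic modular form of weight $-\chi(S^0)/2$ for a congruence subgroup $\Gamma_0\leq\mathrm{SL}(2,\SZ)$ on which the relevant power of the eta multiplier becomes trivial.

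Then I would apply Corollary~\ref{cor:Sdual}: for each $i$, the factor $Z_{\SC^2/G_{\Delta_i}}(q)$ is, up to a suitable fractional power of $q$, the $q$-expansion of a meromorphic modular form of weight $-\tfrac12$ for some congruence subgroup $\Gamma_i\leq\mathrm{SL}(2,\SZ)$ (in type $E$ conditionally on Conjecture~\ref{conj:formula}). Passing to $\Gamma=\bigcap_{i=0}^k\Gamma_i$, still a congruence subgroup, and absorbing all the fractional $q$-shifts into a single one, I would conclude that $Z_S(q)$ is, up to a fractional power of $q$, a meromorphic modular form for $\Gamma$ of weight
\[ -\frac{\chi(S^0)}{2}+k\cdot\left(-\frac12\right)=-\frac{\chi(S^0)+k}{2}=-\frac{\chi(S)}{2}, \]
the last equality being additivity of the Euler characteristic for the stratification $S=S^0\sqcup\{P_1,\dots,P_k\}$, which gives $\chi(S)=\chi(S^0)+k$.

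I expect this globalization step to be entirely formal; the substantive input is Corollary~\ref{cor:Sdual}, i.e. \cite[Prop.~3.2]{toda2013s} applied to the closed formulae of \autoref{thmsing}. The only minor care needed here is to verify that all the multiplier systems and $q$-fractional shifts can be trivialized simultaneously on one sufficiently small congruence subgroup, and that ``meromorphic modular form'' is read so as to allow poles (including at the cusps), so that this class is closed under finite products; neither point presents a genuine obstacle.
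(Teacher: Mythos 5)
Your proof is correct and follows exactly the route the paper takes: the paper's own proof is the one-line "combine Theorem~\ref{thm:singsurface} and Corollary~\ref{cor:Sdual}", and your write-up simply fills in the details of that combination (the eta-function expression for the smooth factor, the additivity $\chi(S)=\chi(S^0)+k$, and the intersection of congruence subgroups). Nothing to change.
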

\begin{proof}  Combine Theorem~\ref{thm:singsurface} and Corollary~\ref{cor:Sdual}. 
\end{proof}

\chapter{Abelian quotient singularities}
\label{ch:typean}

This section gives all the proofs for the main results in the type $A$ case. First, the geometric problem is reduced to partition enumeration. Two independent solutions to the enumerative problem are given.  Finally a closely related case of cyclic quotient singularities is treated.

\section{Type \texorpdfstring{$A$}{A} basics}

Let $\Delta$ be the root system of type $A_n$. Choosing a primitive $(n+1)$-st root of unity $\omega$, 
the corresponding subgroup $G_\Delta$ of $SL(2,\SC)$, a cyclic subgroup of order $n+1$, 
is generated by the matrix
\[ \sigma= 
\begin{pmatrix}
\omega & 0\\
0 & \omega^{-1} \\
\end{pmatrix}.
\]
All irreducible representations of $G_\Delta$ are one dimensional, and they are simply given by $\rho_j\colon \sigma\mapsto \omega^j$, for $j\in\{0, \ldots, n\}$. The corresponding McKay quiver is the cyclic Dynkin diagram of type $\widetilde{A}_n^{(1)}$. 

The group $G_\Delta$ acts on $\SC^2$; the 
quotient variety $\SC^2/G_\Delta$ has an $A_{n}$ singularity at the origin. 
The matrix $\sigma$ clearly commutes with the diagonal two-torus $T=(\SC^*)^2$, and so $T$ acts on the
quotient $\SC^2/G_\Delta$ and the orbifold $[\SC^2/G_\Delta]$. Consequently $T$ also acts on the orbifold
Hilbert scheme $\mathrm{Hilb}([\SC^2/G_\Delta])$ and the (reduced) coarse Hilbert scheme
$\mathrm{Hilb}(\SC^2/G_\Delta)$ as well. 

\section{Partitions, torus-fixed points and decompositions}
\label{subsectypeA}

Consider the set ${\mathbb N}\times {\mathbb N}$ of pairs of non-negative 
integers; we will draw this set as a set of blocks on the plane, occupying the non-negative quadrant.
Label blocks diagonally with $(n+1)$ labels $0, \ldots, n$ as in the picture; the block with coordinates
$(i,j)$ is labelled with $(i-j) \mod (n+1)$. We will call this the \emph{pattern of type~$A_n$}, or the \emph{diagonal labelling}.

\begin{center}
\begin{tikzpicture}[scale=0.6, font=\footnotesize, fill=black!20]
  \draw (0, 0) -- (0,7);
  \foreach \x in {1,2,4,5,6,7,8}
    {
      \draw (\x, 0) -- (\x,6.2);
    }
    \draw (0,0) -- (9,0);
   \foreach \y in {1,2,4,5,6}
    {
         \draw (0,\y) -- (8.2,\y);
    }
    \draw (0.5,0.5) node {0};
    \draw (1.5,0.5) node {1};
    \draw (4.5,0.5) node {$n$$-$$1$};
    \draw (5.5,0.5) node {$n$};
    \draw (0.5,1.5) node {$n$};
    \draw (1.5,1.5) node {0};
    \draw (4.5,1.5) node {$n$$-$$2$};
    \draw (5.5,1.5) node {$n$$-$$1$};
    \draw (6.5,0.5) node {0};
    \draw (7.5,0.5) node {1};
    \draw (6.5,1.5) node {$n$};
    \draw (7.5,1.5) node {0};
    
    \draw (0.5,4.5) node {1};
    \draw (1.5,4.5) node {2};
    \draw (0.5,5.5) node {0};
    \draw (1.5,5.5) node {1};
    \draw(0.5,3) node {\vdots};
    \draw(1.5,3) node {\vdots};
    \draw(3,0.5) node {\dots};
    \draw(8.75,0.5) node {\dots};
    \draw(0.5,6.75) node {\vdots};
\end{tikzpicture}
\end{center}

Let ${\mathcal P}$ denote the set of partitions.
Given a partition $\lambda=(\lambda_1,\dots,\lambda_k)\in{\mathcal P}$, 
with $\lambda_1\geq \ldots \geq \lambda_k$ 
positive integers, we consider its Young (or Ferrers) diagram, the subset 
of ${\mathbb N}\times {\mathbb N}$ which consists of the $\lambda_i$ lowest blocks
in column $i-1$. The blocks in $\lambda$ also get labelled (or colored) by the $n+1$ labels. 
Let $\CZ_\Delta$ denote the resulting set of diagonally labelled partitions, including the empty partition. 
For a labelled partition $\lambda\in \CZ_\Delta$, let $\mathrm{wt}_j(\lambda)$ denote the number of blocks 
in $\lambda$ labelled $j$, and define the multiweight of $\lambda$ to be 
$\underline{\mathrm{wt}}(\lambda)=(\mathrm{wt}_0(\lambda), \ldots, \mathrm{wt}_n(\lambda))$. 

\begin{proposition}
The torus $T$ acts with isolated fixed points on $\mathrm{Hilb}([\SC^2/G_\Delta])$, parametrized by the 
set $\CZ_\Delta$ of diagonally labelled partitions. 
More precisely, for $k_0, \ldots, k_n$ non-negative integers and $\rho=\oplus_{j=0}^n \rho_i^{\oplus k_i}$,
the $T$-fixed points on $\mathrm{Hilb}^\rho([\SC^2/G_\Delta])$ are parametrized by diagonally labelled partitions
of multiweight $(k_0, \ldots, k_n)$. 
\end{proposition}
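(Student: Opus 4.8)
The plan is to use the standard description of $T$-fixed points on the Hilbert scheme $\mathrm{Hilb}(\SC^2)$ via monomial ideals, and then cut down to the $G_\Delta$-equivariant locus and track the induced $G_\Delta$-representation. Recall that $T=(\SC^*)^2$ acts on $\SC[x,y]$ with $x,y$ eigenvectors, so that it acts on $\mathrm{Hilb}^m(\SC^2)$, and a classical lemma (see e.g.\ \cite{nakajima1999lectures}) says that the $T$-fixed points of $\mathrm{Hilb}^m(\SC^2)$ are exactly the monomial ideals $I_\lambda \lhd \SC[x,y]$, i.e.\ ideals spanned by monomials, and these are in bijection with partitions $\lambda$ of $m$: the quotient $\SC[x,y]/I_\lambda$ has as a basis the monomials $x^iy^j$ with $(i,j)$ in the Young diagram of $\lambda$. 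First I would recall this, noting that a monomial ideal is automatically $G_\Delta$-invariant since $\sigma$ acts diagonally, hence scales each monomial; therefore every $T$-fixed point of $\mathrm{Hilb}(\SC^2)$ already lies in $\mathrm{Hilb}(\SC^2)^{G_\Delta}$, i.e.\ in $\mathrm{Hilb}([\SC^2/G_\Delta])$.

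Next I would argue the converse inclusion: any $T$-fixed point of the closed subvariety $\mathrm{Hilb}([\SC^2/G_\Delta]) \subset \mathrm{Hilb}(\SC^2)$ is in particular a $T$-fixed point of the ambient $\mathrm{Hilb}(\SC^2)$, hence a monomial ideal. Since $\mathrm{Hilb}([\SC^2/G_\Delta])$ is a union of connected components of $\mathrm{Hilb}(\SC^2)^{G_\Delta}$ and $T$ is connected (so it preserves each component and acts on it), this gives a bijection between $T$-fixed points of $\mathrm{Hilb}([\SC^2/G_\Delta])$ and monomial ideals, i.e.\ the set $\mathcal{P}$ of partitions, which we identify with $\CZ_\Delta$ by adding the diagonal labelling. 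In particular the fixed points are isolated, since the monomial ideals form a discrete (finite, in each degree) set.

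Finally I would compute the $G_\Delta$-representation at each fixed point to pin down which component $\mathrm{Hilb}^\rho$ it lies on. For $I_\lambda$, the quotient $\SC[x,y]/I_\lambda$ has the monomial basis $\{x^iy^j : (i,j)\in\lambda\}$, and $\sigma$ acts on $x^iy^j$ by $\omega^i\cdot\omega^{-j} = \omega^{\,i-j}$; hence $x^iy^j$ spans a copy of the irreducible $\rho_{(i-j)\bmod(n+1)}$. Comparing with the diagonal labelling (the block at $(i,j)$ is labelled $(i-j)\bmod(n+1)$), we see that as a $G_\Delta$-representation $\SC[x,y]/I_\lambda \cong \bigoplus_{j=0}^n \rho_j^{\oplus \mathrm{wt}_j(\lambda)}$, i.e.\ the multiweight of the labelled partition records exactly the multiplicities. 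Thus $I_\lambda \in \mathrm{Hilb}^\rho([\SC^2/G_\Delta])$ with $\rho = \oplus_j \rho_j^{\oplus k_j}$ precisely when $\underline{\mathrm{wt}}(\lambda) = (k_0,\dots,k_n)$, which is the refined statement. I do not expect a serious obstacle here: the only mild subtlety is the second paragraph, namely checking that no "new" $T$-fixed points appear on $\mathrm{Hilb}([\SC^2/G_\Delta])$ beyond the monomial ideals — but this is immediate from the containment of fixed-point sets $\mathrm{Hilb}([\SC^2/G_\Delta])^T \subseteq \mathrm{Hilb}(\SC^2)^T$ together with the classification of the latter.
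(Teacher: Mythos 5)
Your proposal is correct and follows essentially the same route as the paper's own (sketched) proof: identify the $T$-fixed points of $\mathrm{Hilb}([\SC^2/G_\Delta])$ with those of $\mathrm{Hilb}(\SC^2)$ (monomial ideals, hence partitions), then read off the $G_\Delta$-representation of $\SC[x,y]/I_\lambda$ from the weight $\omega^{i-j}$ of each monomial, which is exactly the diagonal labelling. The only difference is that you spell out the two inclusions between the fixed-point sets explicitly, which the paper leaves implicit.
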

\begin{proof}
We just sketch the proof, which is well known~\cite{ellingsrud1987homology, fujii2005combinatorial}. 
It is clear that the $T$-fixed points on $\mathrm{Hilb}([\SC^2/G_\Delta])$, which coincide with the $T$-fixed points 
on $\mathrm{Hilb}(\SC^2)$, are the monomial ideals in $\SC[x,y]$ of finite colength. 
The monomial ideals are enumerated in turn by Young diagrams of partitions. 
The labelling of each block gives the weight of 
the $G_\Delta$-action on the corresponding monomial, proving the refined statement. 
\end{proof}

\begin{corollary} There exist a locally closed decomposition, depending on a choice specified below,
of $\mathrm{Hilb}([\SC^2/G_\Delta])$ into strata indexed by the set of diagonally labelled partitions.
Each stratum is isomorphic to an affine space.
\end{corollary}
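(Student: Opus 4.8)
The plan is to realise the decomposition as a Bia\l ynicki-Birula (BB) stratification of the smooth quasiprojective variety $\mathrm{Hilb}([\SC^2/G_\Delta])=\mathrm{Hilb}(\SC^2)^{G_\Delta}$ with respect to a suitably generic one-parameter subgroup of the torus $T=(\SC^*)^2$; the ``choice specified below'' is precisely the choice of this subgroup. Since $G_\Delta$ is generated by $\sigma=\mathrm{diag}(\omega,\omega^{-1})\in T$, every monomial ideal of $\SC[x,y]$ is $G_\Delta$-invariant, so by the Proposition above the $T$-fixed locus of $\mathrm{Hilb}([\SC^2/G_\Delta])$ is exactly the discrete set of monomial ideals $I_\mu$, $\mu\in{\mathcal P}$, each carrying the diagonal $G_\Delta$-labelling of its Young diagram; that is, the $T$-fixed points are naturally indexed by $\CZ_\Delta$, those lying on $\mathrm{Hilb}^\rho([\SC^2/G_\Delta])$ being the $I_\mu$ with $\underline{\mathrm{wt}}(\mu)=(k_0,\dots,k_n)$ when $\rho=\bigoplus_{i}\rho_i^{\oplus k_i}$.

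First I would fix a primitive cocharacter $\nu\colon\SC^*\hookrightarrow T$, $\nu(t)=(t^{a},t^{b})$ with $a,b>0$, that is \emph{generic} in the sense that the induced $\SC^*$-action on $\mathrm{Hilb}(\SC^2)$ has the same fixed locus as $T$. Such $\nu$ exist because the non-generic cocharacters lie on the finitely many hyperplanes in the cocharacter lattice cut out by the $T$-weights occurring in the tangent spaces $T_{I_\mu}\mathrm{Hilb}(\SC^2)$ at the (in each colength, finitely many) monomial ideals; concretely $\nu(t)=(t,t^N)$ works for $N\gg 0$. I would then use two facts: (i) for every $I\in\mathrm{Hilb}(\SC^2)$ the limit $\lim_{t\to0}\nu(t)\cdot I$ exists, since a finite-colength ideal degenerates under a generic positive-weight $\SC^*$ to its initial ideal, a monomial ideal of the same colength — this is classical, see \cite{ellingsrud1987homology}; and (ii) as $\nu(\SC^*)$ and $G_\Delta$ both lie in the abelian group $T$, the closed subvariety $\mathrm{Hilb}(\SC^2)^{G_\Delta}$ is $\nu(\SC^*)$-stable, so the limit of a $G_\Delta$-invariant ideal is again $G_\Delta$-invariant and (i) descends to $\mathrm{Hilb}([\SC^2/G_\Delta])$.

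With (i)--(ii) in hand I would apply the Bia\l ynicki-Birula theorem in the form valid for a smooth quasiprojective variety $X$ with a $\SC^*$-action having isolated fixed points for which all limits $\lim_{t\to0}\nu(t)\cdot x$ exist: $X=\mathrm{Hilb}([\SC^2/G_\Delta])$ is then the disjoint union, over the fixed points $I_\mu$, of the locally closed attracting sets $W_\mu=\{I:\lim_{t\to0}\nu(t)\cdot I=I_\mu\}$, and the retraction $W_\mu\to\{I_\mu\}$ identifies $W_\mu$ with the positive-weight part $(T_{I_\mu}X)^{+}$ of the tangent space, so $W_\mu\cong\mathbb{A}^{d_\mu}$ with $d_\mu=\dim(T_{I_\mu}X)^{+}$, and in particular $\chi(W_\mu)=1$. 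Indexing the strata by $\CZ_\Delta$ as above proves the Corollary; intersecting with the open-and-closed pieces $\mathrm{Hilb}^\rho$ yields the refined statement on each $\mathrm{Hilb}^\rho([\SC^2/G_\Delta])$.

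I expect the only substantive point to be the convergence statement (i): it is what makes the attracting cells cover all of $\mathrm{Hilb}([\SC^2/G_\Delta])$, hence the BB partition a genuine locally closed stratification rather than a decomposition of merely an open subset, and it is also what forces the stated dependence on $\nu$, since a non-generic $\nu$ would produce positive-dimensional fixed components and non-affine pieces. I would dispatch it by citation — \cite{ellingsrud1987homology}, equivalently the semiprojectivity of $\mathrm{Hilb}(\SC^2)$ for this action — rather than reproving it; the remainder of the argument is formal. An explicit description of the cells, and hence of the dimensions $d_\mu$, in terms of the row/column combinatorics of $\mu$ is standard and can be carried out by hand.
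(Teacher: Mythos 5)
Your proof is correct and follows essentially the same route as the paper: choose a general one-parameter subgroup of $T$ with positive weights on $x$ and $y$ (so the fixed points stay isolated and all limits at $t=0$ exist), and apply the Bia\l{}ynicki-Birula decomposition to the smooth quasiprojective varieties $\mathrm{Hilb}^\rho([\SC^2/G_\Delta])$. The only cosmetic difference is that the paper fixes the representation $\rho$ \emph{before} choosing the subtorus $T_0$ (no single cocharacter is generic for all colengths at once), which is also what your final remark about working component by component implicitly requires.
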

\begin{proof} Again, this is well known \cite{nakajima2005instanton}. Fixing a representation $\rho$, choose a sufficiently general
one-dimensional subtorus $T_0\subset T$ which has positive weight on both
$x$ and $y$. For general $T_0\subset T$, the fixed point set on $\mathrm{Hilb}^\rho([\SC^2/G_\Delta])$ is unchanged and 
in particular consists of a finite number of isolated points. Choosing positive weights on $x,y$ ensures that all limits of $T_0$-orbits at $t=0$
in $\mathrm{Hilb}([\SC^2/G_\Delta])$ exist, even though $\mathrm{Hilb}^\rho([\SC^2/G_\Delta])$ is non-compact. Since
$\mathrm{Hilb}^\rho([\SC^2/G_\Delta])$ is smooth, the result follows by taking the Bialynicki-Birula decomposition
of $\mathrm{Hilb}^\rho([\SC^2/G_\Delta])$ given by the $T_0$-action.
\end{proof}

Denote by 
\[Z_{\Delta}(q_0,\dots,q_n) = \sum_{\lambda\in \CZ_\Delta} \underline{q}^{\underline{\mathrm{wt}}(\lambda)}\]
the generating series of diagonally labelled partitions, where we used multi-index notation 
\[\underline{q}^{\underline{\mathrm{wt}}(\lambda)}=\prod_{i=0}^nq_i^{\mathrm{wt}_i(\lambda)}.\] 
From either of the previous two statements, we immediately deduce the following. 
\begin{corollary} \label{cor:Acombinatorial} Let $[\SC^2/G_\Delta]$ be a simple singularity orbifold of type $A$. 
Then its orbifold generating series can be expressed as
\begin{equation} \label{eq:orbigeneq} Z_{[\SC^2/G_\Delta]}(q_0,\dots,q_n)=Z_{\Delta}(q_0,\dots,q_n).
\end{equation}
\end{corollary}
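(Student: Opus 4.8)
The plan is to read the identity off directly from the preceding Proposition and Corollary, which together reduce the Euler characteristic $\chi\big(\mathrm{Hilb}^\rho([\SC^2/G_\Delta])\big)$ to a count of diagonally labelled partitions. First I would fix a tuple $(m_0,\dots,m_n)\in\SN^{n+1}$ and set $\rho=\rho_0^{\oplus m_0}\oplus\dots\oplus\rho_n^{\oplus m_n}$, so that $\mathrm{Hilb}^{m_0\rho_0+\dots+m_n\rho_n}([\SC^2/G_\Delta])$ is exactly the component $\mathrm{Hilb}^\rho([\SC^2/G_\Delta])$ appearing in the definition of the orbifold generating series. By the preceding Corollary this space has a locally closed decomposition into affine spaces, and by the preceding Proposition the strata (equivalently, the $T$-fixed points) are indexed by the partitions $\lambda\in\CZ_\Delta$ with $\underline{\mathrm{wt}}(\lambda)=(m_0,\dots,m_n)$. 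Since the Euler characteristic is additive along locally closed decompositions and $\chi(\SA^d)=1$, this gives
\[\chi\big(\mathrm{Hilb}^{m_0\rho_0+\dots+m_n\rho_n}([\SC^2/G_\Delta])\big)=\#\{\lambda\in\CZ_\Delta:\underline{\mathrm{wt}}(\lambda)=(m_0,\dots,m_n)\}.\]
If one prefers not to invoke the Bialynicki-Birula decomposition, the same count follows from the general fact that over $\SC$ the Euler characteristic of a variety carrying a torus action equals that of its fixed locus, which by the Proposition is a finite set of the asserted cardinality.

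Next I would substitute this count into the definition of $Z_{[\SC^2/G_\Delta]}$ and reorganise the resulting double sum: summing first over multiweights $(m_0,\dots,m_n)$ and then over partitions of each fixed multiweight is the same as summing over all $\lambda\in\CZ_\Delta$ and recording the monomial $\underline{q}^{\underline{\mathrm{wt}}(\lambda)}$, which is precisely the definition of $Z_\Delta(q_0,\dots,q_n)$. This yields \eqref{eq:orbigeneq}.

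There is essentially no genuine obstacle at this level: all of the content resides in the Proposition, and the only point to keep straight is the bookkeeping that a box at position $(i,j)$ — i.e. the monomial $x^i y^j$ — is an eigenvector of $\sigma$ with eigenvalue $\omega^{i-j}$, so it contributes to $\mathrm{wt}_{(i-j)\bmod(n+1)}(\lambda)$; this is what identifies the $G_\Delta$-isotypic type of $H^0(\mathcal{O}_{\SC^2}/I)$ for a monomial ideal $I$ with the diagonal labelling of the corresponding Young diagram, and it is exactly the step verified in the proof of the Proposition. I therefore expect the actual write-up to be a two- or three-line corollary.
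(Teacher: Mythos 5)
Your argument is exactly the paper's: the corollary is deduced immediately from the preceding Proposition (torus-fixed points indexed by diagonally labelled partitions of the given multiweight, so $\chi$ equals their count) or equivalently from the affine cell decomposition, followed by the trivial reorganisation of the double sum. The write-up is a correct, slightly more explicit version of the two-line deduction the paper intends.
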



\begin{proposition} \label{prop:angentheta} The generating series of diagonally labelled partitions has the following form:
\begin{equation} Z_{\Delta}(q_0,\dots,q_n)=\frac{\sum_{ \overline{m}=(m_1,\dots,m_n) \in \SZ^k }^\infty q_1^{m_1}\cdot\dots\cdot q_n^{m_n}(q^{1/2})^{\overline{m}^\top \cdot C \cdot \overline{m}}}{\prod_{m=1}^{\infty}(1-q^m)^{n+1}},
\label{eq:orbiseran}
\end{equation}
where $q=q_0\cdot \dots \cdot q_n$ and $C$ is the (finite) 
Cartan matrix of type $A_n$. In particular, (\ref{eq:orbigeneq}) and (\ref{eq:orbiseran}) imply Theorem \ref{thmgenfunct} for type $A$.
\end{proposition}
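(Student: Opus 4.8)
The plan is to prove the identity \eqref{eq:orbiseran} directly at the level of generating functions, by setting up a bijection between diagonally labelled partitions and pairs consisting of a "charge vector" $\overline{m}\in\SZ^n$ together with an $(n+1)$-tuple of ordinary partitions. The combinatorial device that makes this work is the \emph{abacus} (or Maya-diagram / bead) representation: to a partition $\lambda$ one associates the set of "beads" $\{\lambda_i-i : i\geq 1\}\subset\SZ$, equivalently its boundary lattice path read as a sequence of N and E steps. Distributing the beads among $n+1$ \emph{runners} according to residue $\bmod\,(n+1)$ encodes, on the one hand, the diagonal labelling of the blocks, and on the other hand splits $\lambda$ into a \emph{core} (the $(n+1)$-core obtained by pushing all beads down) and a \emph{quotient} (an $(n+1)$-tuple of partitions recording how far each runner's beads must travel). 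This is exactly the classical $(n+1)$-core/$(n+1)$-quotient bijection; I would set it up carefully so that it tracks the multiweight.

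The key steps, in order, are as follows. First, I would recall the abacus encoding of a partition on $n+1$ runners and verify that the block-colouring by $(i-j)\bmod(n+1)$ is compatible with it, so that $\mathrm{wt}_j(\lambda)$ is read off from the bead positions. Second, I would recall that the set of $(n+1)$-cores is in bijection with the root lattice $Q_{A_n}\cong\SZ^n$ (via the vector of "runner offsets", normalised so they sum to zero), and compute that the multiweight of the core corresponding to $\overline{m}=(m_1,\dots,m_n)$ contributes precisely the quadratic form $q_1^{m_1}\cdots q_n^{m_n}(q^{1/2})^{\overline{m}^\top C\,\overline{m}}$ — here $q^{1/2}$ is the square root of $q=q_0\cdots q_n$, and the exponent $\overline{m}^\top C\overline{m}=\langle\beta,\beta\rangle$ is the standard norm on the $A_n$ root lattice. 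Third, I would observe that the quotient is an arbitrary $(n+1)$-tuple of partitions, and that adding a block to the $r$-th component of the quotient always adds exactly one block of \emph{each} colour $0,\dots,n$ to $\lambda$ (since moving a bead up one runner by $n+1$ positions sweeps through all $n+1$ residues once); hence each component of the quotient contributes a factor $\prod_{m\geq1}(1-q^m)^{-1}$ with $q=q_0\cdots q_n$, giving the denominator $\prod_{m\geq1}(1-q^m)^{-(n+1)}$. Multiplying the core and quotient contributions yields \eqref{eq:orbiseran}. Finally, combining with Corollary \ref{cor:Acombinatorial}, i.e. \eqref{eq:orbigeneq}, proves Theorem \ref{thmgenfunct} in type $A$.

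The main obstacle I anticipate is the second step: pinning down the precise normalisation in the core/root-lattice correspondence so that the weight of the core is \emph{exactly} $q_1^{m_1}\cdots q_n^{m_n}(q^{1/2})^{\overline{m}^\top C\overline{m}}$ rather than something off by a shift or a $q$-power. One has to choose the zero-charge reference configuration (all runners aligned, corresponding to the empty core), express a general core via integers $c_0,\dots,c_n$ with $\sum c_i=0$, translate these into root-lattice coordinates $m_i=c_i-c_{i-1}$ (or a similar convention matching the $\widetilde{A}_n$ labelling of the runners), and then do the bookkeeping: the number of boxes of the core is $\tfrac12\sum c_i^2\cdot$(something)$=\tfrac12\langle\beta,\beta\rangle$, and the colour-refined count of boxes, summed with weights $q_i$, collapses to the stated monomial times $(q^{1/2})^{\overline{m}^\top C\overline{m}}$ once one uses $q=q_0\cdots q_n$. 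This is a finite but slightly delicate computation; everything else is a clean application of the standard abacus bijection. (A cross-check: specialising $q_0=\dots=q_n=t$ must recover the classical $\sum_\lambda t^{|\lambda|}=\prod_{m\geq1}(1-t^m)^{-1}$, which forces the core contribution to telescope correctly.)
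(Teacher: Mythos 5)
Your proposal is correct, and it coincides with what the paper presents as its \emph{second} approach (the section on abacus configurations): the $(n+1)$-runner abacus, the $(n+1)$-core/quotient bijection ${\mathcal Z}_\Delta \leftrightarrow {\mathcal C}_\Delta\times{\mathcal P}^{n+1}$, the observation that each box of the quotient corresponds to a border strip containing one block of every label (hence the factor $\prod_{m\geq 1}(1-q^m)^{-(n+1)}$ in $q=q_0\cdots q_n$), and the identification of cores with the $A_n$ root lattice carrying the quadratic form $\overline{m}^\top C\,\overline{m}$. The one caveat is that the paper, like you, does not carry out the delicate core-multiweight normalisation in-house: it cites Garvan--Kim--Stanton for the identity $\underline{q}^{\underline{\mathrm{wt}}(\lambda)}=q^{Q(\underline{a})/2}q_1^{a_1+\cdots+a_n}\cdots q_n^{a_n}$, so your flagged ``main obstacle'' is exactly the step the paper outsources. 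It is worth knowing that the paper's \emph{primary}, fully written-out proof is a genuinely different route: each diagonally labelled diagram is cut along its main diagonal into a coloured generalized Frobenius partition; the two halves have product generating functions in an auxiliary variable $z$ (Lemma on $P_{H_1}, P_{H_2}$); the Jacobi triple product converts the product into theta-type sums; and extracting the coefficient of $z^0$ plus a change of variables $(j_0,\dots,j_n)\mapsto(m_1,\dots,m_n)$ produces the Cartan-matrix quadratic form directly. That route is more elementary (no core/quotient theory, everything reduced to the triple product identity), whereas your abacus route has the advantage of setting up exactly the combinatorial machinery the paper later needs to relate $Z_\Delta$ to the $0$-generated series $Z^0_\Delta$ and to generalize to type $D$.
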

From the several existing proofs of this theorem we will give two below. The first one, explained in~\ref{sec:pfAnFrobpart}, is an easy and direct combinatorial argument and was found by the author \cite{gyenge2015enumeration}. The second one, which is summarized in~\ref{sec:Anabacus}, has originally appeared in \cite{fujii2005combinatorial}. It has the advantage that is gives the starting point for the analysis in type D.


We now turn to the coarse Hilbert scheme. Let us define a subset $\CZ^0_\Delta$
of the set of diagonally labelled partitions $\CZ_\Delta$ as follows. 
An diagonally labelled partition $\lambda\in \CZ_\Delta$ will be called
{\it $0$-generated} (a slight misnomer, this should be really be ``complement-$0$-generated'') 
if the complement of $\lambda$ inside ${\mathbb N}\times {\mathbb N}$
can be completely covered by translates of  ${\mathbb N}\times
{\mathbb N}$ to blocks labelled $0$ contained in this complement. 
Equivalently, an diagonally labelled partition $\lambda$ is
$0$-generated, if all its addable blocks (blocks whose addition gives
another partition) are labelled~$0$. It is immediately seen that this
condition is equivalent to the corresponding monomial ideal $I\lhd\SC[x,y]$ being
generated by its invariant part $I\cap\SC[x,y]^{G_\Delta}$. Indeed, we have the following.

\begin{proposition}
The torus $T$ acts with isolated fixed points on $\mathrm{Hilb}(\SC^2/G_\Delta)$, which are in bijection 
with the set $\CZ^0_\Delta$ of $0$-generated diagonally labelled partitions. 
More precisely, for a non-negative integer $k$,
the $T$-fixed points on $\mathrm{Hilb}^k(\SC^2/G_\Delta)$ are parametrized by $0$-generated 
diagonally labelled partitions $\lambda$ with $0$-weight $\mathrm{wt}_0(\lambda)=k$. 
\end{proposition}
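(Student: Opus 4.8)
The plan is to mirror the orbifold case: reduce to an explicit description of the $T$-invariant ideals of the invariant ring $R=\SC[x,y]^{G_\Delta}$, and then match these with $0$-generated diagonally labelled partitions by transporting them, via the pullback map $i^\ast$ of \autoref{sec:quotsing}, to monomial ideals of $\SC[x,y]$.

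First I would observe that $T=(\SC^\ast)^2$ acts on $R$ diagonalizably, with the invariant monomials $x^iy^j$ (those with $i\equiv j \bmod (n+1)$) forming a $\SC$-basis of weight vectors of pairwise distinct $T$-weights $(i,j)$. Hence every $T$-stable subspace of $R$ is spanned by a subset of invariant monomials, and a $T$-fixed point of $\mathrm{Hilb}(\SC^2/G_\Delta)$ is exactly a finite-colength \emph{monomial} ideal $I\lhd R$. As there are only finitely many monomial ideals of a given colength $k$, the $T$-fixed locus of $\mathrm{Hilb}^k(\SC^2/G_\Delta)$ is a finite, hence discrete (isolated), set of points.

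The core of the argument is the combinatorial bijection. To a monomial ideal $I\lhd R$ I associate $i^\ast I=\SC[x,y]\cdot I$, again a monomial ideal of $\SC[x,y]$ (every monomial ideal there is automatically $G_\Delta$-invariant), hence a diagonally labelled partition $\lambda$: its blocks are the monomials not lying in $i^\ast I$. Choosing minimal $R$-generators of $I$, which may be taken to be invariant monomials, i.e. blocks labelled $0$, these also generate $i^\ast I$ over $\SC[x,y]$; so the complement of $\lambda$ in $\SN\times\SN$ is a union of translates of $\SN\times\SN$ based at label-$0$ blocks, i.e. $\lambda\in\CZ^0_\Delta$. Conversely, given $\lambda\in\CZ^0_\Delta$ with complementary monomial ideal $\widetilde I\lhd\SC[x,y]$, I put $I=\widetilde I\cap R$; $0$-generatedness says precisely that the label-$0$ monomials of $\widetilde I$ generate it over $\SC[x,y]$, so these generators lie in $\widetilde I\cap R=I$, whence $i^\ast I=\widetilde I$. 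The identity $p_\ast\circ i^\ast=\mathrm{id}$ from \autoref{sec:quotsing} then shows $I\mapsto i^\ast I$ and $\lambda\mapsto\widetilde I\cap R$ are mutually inverse, giving the desired bijection between finite-colength monomial ideals of $R$ and $0$-generated diagonally labelled partitions. Finally $R/I$ has $\SC$-basis the invariant monomials lying in $\lambda$, i.e. its blocks labelled $0$, so $\mathrm{colength}_R(I)=\mathrm{wt}_0(\lambda)$, which yields the refined count of $T$-fixed points of $\mathrm{Hilb}^k(\SC^2/G_\Delta)$.

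The one point that is not completely formal is the translation between algebra and combinatorics in the two directions of the bijection: that the minimal $R$-generators of a monomial ideal of $R$ are invariant monomials and generate its extension over $\SC[x,y]$, and conversely that for $0$-generated $\lambda$ one has $i^\ast(\widetilde I\cap R)=\widetilde I$. I expect this to be the main (still elementary) step; it is essentially spelled out in the discussion preceding the Proposition, where $0$-generatedness is identified with the corresponding monomial ideal being generated by its $G_\Delta$-invariant part.
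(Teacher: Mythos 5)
Your proposal is correct and follows essentially the same route as the paper: the paper likewise identifies the $T$-fixed points with finite-colength monomial ideals of $\SC[x,y]^{G_\Delta}$, matches these with $0$-generated partitions via the ideals they generate in $\SC[x,y]$ (using the equivalence, stated just before the Proposition, between $0$-generatedness and generation by the invariant part), and computes the colength as the number of label-$0$ blocks. You merely spell out in more detail the steps the paper dismisses as ``immediate from the above discussion,'' in particular the mutual-inverse check via $p_\ast\circ i^\ast=\mathrm{id}$.
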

\begin{proof} This is immediate from the above discussion. The $T$-fixed points of 
$\mathrm{Hilb}(\SC^2/G_\Delta)$ are the monomial ideals $I$ of $\SC[x,y]^{G_\Delta}$ of finite colength. 
Inside $\SC[x,y]$, the ideals they generate correspond to partitions which are $0$-generated. The ring 
$\SC[x,y]^{G_\Delta}$ has a basis consisting of monomials with corresponding blocks labelled $0$ inside $\SC[x,y]$; 
thus the codimension of a monomial ideal $I$ inside $\SC[x,y]^{G_\Delta}$ is simply the number of blocks
denoted $0$.
\end{proof}
Denoting by
\[Z^0_{\Delta}(q) = \sum_{\lambda\in \CZ^0_\Delta} q^{\mathrm{wt}_0(\lambda)}\]
the corresponding specialization of the generating series of $0$-generated diagonally labelled partitions,
we deduce the following. 
\begin{corollary} Let $[\SC^2/G_\Delta]$ be a simple singularity orbifold of type $A$. 
Then the coarse generating series can be expressed as
\begin{equation} Z_{\SC^2/G_\Delta}(q)=Z^0_{\Delta}(q).
\end{equation}
\label{cor_part_An_coarse}
\end{corollary}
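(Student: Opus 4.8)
My plan is to deduce the statement directly from the preceding Proposition by invoking torus localization for Euler characteristics; essentially all the geometric content has already been packaged there, so the corollary is bookkeeping. The one general input I would use is that if an algebraic torus $T$ acts on a complex quasi-projective variety $X$, then $\chi(X) = \chi(X^T)$: one picks a one-parameter subgroup $\SC^* \hookrightarrow T$ whose fixed locus agrees with $X^T$, stratifies $X$ by the orbits of the resulting $\SC^*$-action, and observes that every stratum not contained in $X^T$ carries a free $\SC^*$-action on a dense open subset and hence has vanishing Euler characteristic, so that only $X^T$ contributes. Applied to $X = \mathrm{Hilb}^k(\SC^2/G_\Delta)$ with the $T = (\SC^*)^2$-action fixed above, and using that by the Proposition the fixed locus is a finite set of reduced points, this gives
\[ \chi\bigl(\mathrm{Hilb}^k(\SC^2/G_\Delta)\bigr) = \#\,\mathrm{Hilb}^k(\SC^2/G_\Delta)^T. \]

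Next I would translate the Proposition into generating-function form. Since it identifies $\mathrm{Hilb}^k(\SC^2/G_\Delta)^T$ with the set $\{\lambda \in \CZ^0_\Delta : \mathrm{wt}_0(\lambda) = k\}$, summing $q^k$ over $\mathrm{wt}_0(\lambda) = k$ and then over all $k$ is the same as summing $q^{\mathrm{wt}_0(\lambda)}$ over all $\lambda \in \CZ^0_\Delta$ (each coefficient being a finite sum), so that
\[ Z_{\SC^2/G_\Delta}(q) = \sum_{\lambda \in \CZ^0_\Delta} q^{\mathrm{wt}_0(\lambda)} = Z^0_\Delta(q), \]
which is the assertion.

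The only point that deserves a word of care is that $\mathrm{Hilb}^k(\SC^2/G_\Delta)$ is merely quasi-projective, not projective, so a priori the localization identity could fail. It does not fail here, because $\SC^2/G_\Delta$ is weighted homogeneous: one may choose the one-parameter subgroup with strictly positive weights on the coordinate functions, so that all limits $\lim_{t \to 0} t \cdot I$ exist inside $\mathrm{Hilb}^k(\SC^2/G_\Delta)$ and the orbit stratification is well behaved — this is exactly the mechanism used in the proof of Theorem~\ref{thm:singsurface}. Hence there is no genuine obstacle. (The companion identity for the orbifold Hilbert scheme, Corollary~\ref{cor:Acombinatorial}, would follow from the same localization principle together with the earlier Proposition describing the $T$-fixed points of $\mathrm{Hilb}([\SC^2/G_\Delta])$.)
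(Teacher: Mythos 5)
Your proposal is correct and matches the paper's (implicit) argument: the paper simply deduces the corollary from the preceding fixed-point proposition via the standard fact that $\chi(X)=\chi(X^T)$ for a torus action on a quasi-projective variety, exactly the localization bookkeeping you spell out. Your extra remark on quasi-projectivity is a harmless refinement — in fact $\chi(X\setminus X^T)=0$ holds without needing limits to exist, since the complement of the fixed locus is fibred in $\SC^*$-orbits of vanishing Euler characteristic.
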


\begin{proof}[Proof of Theorem \ref{thmsing} for the $A_n$ case]
The (dual) Coxeter number of the type $A_n$ root system is $h^\vee=n+1$. Thus
Theorem \ref{thmsing} for this case follows from Corollary~\ref{cor_part_An_coarse},
formula~\eqref{eq:orbiseran}, and the combinatorial
Proposition~\ref{prop:ansubst} below, which computes the series $Z^0_{\Delta}(q)$. 
\end{proof}


Finally, we investigate the higher rank equivariant case. There is a slight modification of the type $A_n$ pattern where to each label, which are actually residue classes modulo $n+1$, a fixed modulo $n+1$ residue class $a$ is added. This again induces a labelling on the Young diagrams which we will call this the \emph{diagonal $a$-labelling}. More generally, we can consider $l$-tuples of such Young diagrams $\underline{\lambda}$. To any vector $\underline{a} \in (\SZ/(n+1)\SZ)^l$ of length $l$ consisting of residue classes modulo $n+1$ we associate the diagonal $a_i$-labelling on $\lambda_i$ for each $1 \leq i \leq l$. The is called the diagonal $\underline{a}$-labelling (or, shortly, $\underline{a}$-labelling) of $\lambda$. As we will see immediately, the set of such tuples appears in the analysis of the higher rank moduli spaces $\CM^{r}([\SC^2/G])$. 

\begin{example} 
\label{ex:0}
For $n=2$ the diagonal $(2,1)$-coloring on the Young diagrams corresponding to the pair of partitions $((4,3,2),(2,1,1,1))$ is the following:
\[
\begin{pmatrix}\;\;
\begin{tikzpicture}[scale=0.6, font=\footnotesize, fill=black!20]
\draw (0, 0) -- (3,0);
\draw (0,1) --(3,1);
\draw (0,2) --(3,2);
\draw (0,3) --(2,3);
\draw (0,4) --(1,4);
\draw (0,0) -- (0,4);
\draw (1,0) -- (1,4);
\draw (2,0) -- (2,3);
\draw (3,0) -- (3,2);
\draw (0.5,0.5) node {2};
\draw (1.5,0.5) node {0};
\draw (2.5,0.5) node {1};
\draw (0.5,1.5) node {1};
\draw (1.5,1.5) node {2};
\draw (2.5,1.5) node {0};
\draw (0.5,2.5) node {0};
\draw (1.5,2.5) node {1};
\draw (0.5,3.5) node {2};
\end{tikzpicture},\;
\begin{tikzpicture}[scale=0.6, font=\footnotesize, fill=black!20]
\draw (0, 0) -- (4,0);
\draw (0,1) --(4,1);
\draw (0,2) --(1,2);
\draw (0,2) --(0,0);
\draw (1,2) --(1,0);
\draw (2,1) -- (2,0);
\draw (3,1) -- (3,0);
\draw (4,1) -- (4,0);
\draw (0.5,0.5) node {1};
\draw (1.5,0.5) node {2};
\draw (2.5,0.5) node {0};
\draw (3.5,0.5) node {1};
\draw (0.5,1.5) node {0};
\end{tikzpicture}\;\;
\end{pmatrix}\;.
\]
\end{example}

We will denote the set of Young diagrams with the $a$-labelling as $\CZ_{\Delta(a)}$, and the set of $l$-tuples of Young diagrams with the $\underline{a}$-labelling as $\CZ_{\Delta(\underline{a})}$ for any $\underline{a} \in (\SZ/(n+1)\SZ)^l$. For any $0\leq i \leq n$ and $\underline{\lambda} \in \CZ_{\Delta(\underline{a})}$ we denote by $\mathrm{wt}_i(\underline{a},\underline{\lambda})$, the \emph{$i$-weight of $\underline{\lambda}$}, which is the number of boxes in $\underline{\lambda}$, whose label according to the $\underline{a}$-labelling is $i$. Clearly, $\sum_{i=0}^n \mathrm{wt}_i(\underline{a}, \underline{\lambda})=|\underline{\lambda}|$ for any $\underline{a} \in (\SZ/(n+1)\SZ)^l$. We arrange the the $i$-weights into a vector $\underline{\mathrm{wt}}(\underline{a},\underline{\lambda})=(\mathrm{wt}_0(\underline{a}),\underline{\lambda},\dots, \mathrm{wt}_{n}(\underline{a},\underline{\lambda})) \in (\SZ_{\geq 0})^{n+1}$.


The colored (multivariable) generating series of $a$-labelled Young diagrams is defined as
\[ Z_{\Delta(a)}(\underline{q})=\sum_{\lambda \in  \CZ_\Delta(a)} \underline{q}^{\underline{\mathrm{wt}}(a,Y)}\;. \]
Similarly, the colored (multivariable) generating series of $l$-tuples of colored Young diagrams is defined as
\[
Z_{\Delta(\underline{a})}(\underline{q})=\sum_{\underline{\lambda} \in  \CZ_\Delta(\underline{a})} \underline{q}^{\underline{\mathrm{wt}}(\underline{a},\underline{\lambda})}\;. 
\]

Since tranforming the $0$-labelling to the $a$-labelling corresponds to a change of variables $q_i \to q_{i+a}$, we obtain from Proposition \ref{prop:angentheta}
\begin{corollary} \label{cor:anorbishift} The generating series of $a$-labelled partitions has the following form:
\begin{equation} Z_{\Delta(a)}(q_0,\dots,q_n)=\frac{\sum_{ \overline{m}=(m_1,\dots,m_n) \in \SZ^k }^\infty q_{1+a}^{m_1}\cdot\dots\cdot q_{n+a}^{m_n}(q^{1/2})^{\overline{m}^\top \cdot C \cdot \overline{m}}}{\prod_{m=1}^{\infty}(1-q^m)^{n+1}},
\label{eq:orbiseransh}
\end{equation}
where $q=q_0\cdot \dots \cdot q_n$ and $C$ is the (finite) 
Cartan matrix of type $A_n$.
\end{corollary}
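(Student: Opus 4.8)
The plan is to reduce everything to Proposition~\ref{prop:angentheta} by tracking the effect on the generating series of cyclically shifting the diagonal labelling. First I would record the elementary fact that the diagonal $a$-labelling is obtained from the diagonal $0$-labelling by replacing the label $i$ of every block with $i+a$, where reduction modulo $n+1$ is understood; this is a bijection of the label set $\{0,\dots,n\}$ onto itself. Consequently, for every partition $\lambda$ and every residue $j$ one has $\mathrm{wt}_j(a,\lambda)=\mathrm{wt}_{j-a}(0,\lambda)$, and therefore $\underline{q}^{\,\underline{\mathrm{wt}}(a,\lambda)}=\prod_{i=0}^{n} q_{i+a}^{\,\mathrm{wt}_i(0,\lambda)}$. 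Since the underlying set of (unlabelled) partitions does not change, summing over $\lambda$ gives $Z_{\Delta(a)}(q_0,\dots,q_n)=Z_{\Delta}(q_a,q_{1+a},\dots,q_{n+a})$; that is, the $a$-labelled series is obtained from the $0$-labelled one by the substitution $q_i\mapsto q_{i+a}$.

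Next I would feed this substitution into the closed formula~\eqref{eq:orbiseran} of Proposition~\ref{prop:angentheta}. The one point deserving attention is the denominator: under $q_i\mapsto q_{i+a}$ the monomial $q=q_0\cdots q_n$ is sent to $q_a q_{1+a}\cdots q_{n+a}$, which equals $q_0 q_1\cdots q_n=q$ again, because $i\mapsto i+a$ merely permutes the full index set $\SZ/(n+1)\SZ$. Hence $\prod_{m\geq 1}(1-q^m)^{n+1}$ is unchanged, and so is the quadratic factor $(q^{1/2})^{\overline{m}^{\top}C\,\overline{m}}$, which depends on the variables only through $q$; only the monomial prefactor is affected, becoming $q_{1+a}^{m_1}\cdots q_{n+a}^{m_n}$. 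Putting these together gives precisely~\eqref{eq:orbiseransh}.

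There is no genuine obstacle in this argument; the only thing requiring a little care is the index arithmetic, in particular remembering that $q_0$ does occur (inside $q$) and that after the shift the $n+1$ variables get cyclically permuted, so that the symmetric denominator is preserved even though the non-symmetric numerator prefactor is not.
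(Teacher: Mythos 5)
Your proof is correct and is exactly the argument the paper uses: the $a$-labelling is the $0$-labelling composed with the cyclic shift of labels, so $Z_{\Delta(a)}$ is $Z_{\Delta}$ under the substitution $q_i\mapsto q_{i+a}$, and since $q=q_0\cdots q_n$ is invariant under this cyclic permutation only the monomial prefactor in the numerator of \eqref{eq:orbiseran} changes. Your extra care about the denominator and the $(q^{1/2})$-factor being preserved is precisely the (implicit) content of the paper's one-line derivation.
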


As a set $\CZ_\Delta(\underline{a})=\prod_{m=1}^l \CZ_\Delta(a_m)$. Therefore, one immediately obtains
\begin{corollary} \label{cor:anorbidec}
\[ Z_{\Delta(\underline{a})}(\underline{q})=\prod_{m=1}^l Z_{\Delta(a_m)}(\underline{q})\;. \]
\end{corollary}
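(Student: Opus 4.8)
The plan is to deduce the product formula immediately from the set-theoretic factorization $\CZ_\Delta(\underline{a})=\prod_{m=1}^l \CZ_\Delta(a_m)$ recorded just above the statement, combined with the fact that the multiweight is additive across the components of an $l$-tuple. Thus the whole argument is a standard manipulation of graded generating series, with no geometric or representation-theoretic content remaining at this point.

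First I would make the bijection explicit: an element $\underline{\lambda}\in\CZ_\Delta(\underline{a})$ is by definition the datum of partitions $\lambda_1,\dots,\lambda_l$ with $\lambda_m$ carrying the $a_m$-labelling, and this matches precisely a point of $\prod_{m=1}^l \CZ_\Delta(a_m)$. Next I would observe that every box of $\underline{\lambda}$ belongs to exactly one component $\lambda_m$ and receives there the label prescribed by the $a_m$-pattern, so that counting boxes of each colour splits over the components:
\[ \mathrm{wt}_i(\underline{a},\underline{\lambda})=\sum_{m=1}^l \mathrm{wt}_i(a_m,\lambda_m)\qquad(0\le i\le n), \]
and hence, exponentiating,
\[ \underline{q}^{\underline{\mathrm{wt}}(\underline{a},\underline{\lambda})}=\prod_{m=1}^l \underline{q}^{\underline{\mathrm{wt}}(a_m,\lambda_m)}. \]

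Then I would substitute this identity into the definition of $Z_{\Delta(\underline{a})}(\underline{q})$, sum over $\underline{\lambda}\in\CZ_\Delta(\underline{a})=\prod_{m}\CZ_\Delta(a_m)$, and interchange the (finite) product with the sums, obtaining
\[ Z_{\Delta(\underline{a})}(\underline{q})=\sum_{(\lambda_1,\dots,\lambda_l)}\ \prod_{m=1}^l \underline{q}^{\underline{\mathrm{wt}}(a_m,\lambda_m)}=\prod_{m=1}^l\Biggl(\,\sum_{\lambda_m\in\CZ_\Delta(a_m)}\underline{q}^{\underline{\mathrm{wt}}(a_m,\lambda_m)}\Biggr)=\prod_{m=1}^l Z_{\Delta(a_m)}(\underline{q}). \]
The main (and only) subtlety is the legitimacy of this rearrangement: each factor $Z_{\Delta(a_m)}(\underline{q})$ is a formal power series with constant term $1$ (coming from the empty partition), so the right-hand product is a well-defined element of the $(\SZ_{\ge0})^{n+1}$-graded completion of $\SC[[q_0,\dots,q_n]]$ and the interchange above is valid coefficient by coefficient; this is a formal-power-series triviality. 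Finally, feeding in the explicit shape of each factor from Corollary \ref{cor:anorbishift} (equivalently, Proposition \ref{prop:angentheta} after the change of variables $q_i\mapsto q_{i+a_m}$) turns this into the closed product formula, which is exactly what is used in the proof of Theorem \ref{thm:anorbimultgen}.
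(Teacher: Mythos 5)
Your proof is correct and follows exactly the route the paper takes: the paper's entire argument is the observation that $\CZ_\Delta(\underline{a})=\prod_{m=1}^l \CZ_\Delta(a_m)$ as sets, from which the product formula "immediately" follows; you have simply spelled out the additivity of the multiweight over components and the resulting factorization of the generating series. The extra care about the formal-power-series rearrangement is fine but not needed beyond what the paper leaves implicit.
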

In particular, the calculation of $Z_{\Delta(\underline{a})}(\underline{q})$ is easily reduced to that of $Z_{\Delta(a)}(\underline{q})$.

Let us fix a dimension vector $\underline{w}$. It can be shown that there is a $T=(\SC^\ast)^{|\underline{w}|+2}$-action on the associated quiver varieties $\mathcal{M}(\underline{v},\underline{w})$ for all $\underline{v}$, whose fixed points are isolated. Let $\underline{a}=(0,\dots,0,\dots,n-1, \dots,n-1)$, where for each  $c \in C$ the number of $c$'s in $\underline{a}$ is $w_c$. Elements of $\underline{a}$ correspond in turn to basis vectors of $W$. The exact order of the entries is not important for us since a permutation of them corresponds to an automorphism of $W$ on which, as mentioned above, the topology of $\mathcal{M}(\underline{v},\underline{w})$ does not depend.

\begin{proposition}{\cite[Proposition 5.7]{sam2014combinatorial}} The $T$-fixed points of $\mathcal{M}(\underline{v},\underline{w})$ are indexed by $|\underline{w}|$-tuples of diagonally colored Young diagrams $\underline{\lambda}$ such that $|\underline{\lambda}|=|\underline{v}|$, the $j$-th diagram $\lambda_j$ in $\underline{\lambda}$ is given the $a_j$-coloring, and $\underline{\mathrm{wt}}(\underline{a},\underline{\lambda})=\underline{v}$. 
\end{proposition}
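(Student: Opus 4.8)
The plan is to deduce the statement from the classical ADHM classification of torus-fixed points on the moduli of framed torsion-free sheaves on $\SP^2$, and then to bookkeep the induced $G_\Delta$-grading. First I would recall, from the description of $\mathcal{M}(\underline v,\underline w)$ as a Nakajima quiver variety in Section~\ref{sec:quiverdesc} (via~\cite[Section 3]{nakajima2002geometric}), that for the affine quiver of type $\widetilde A_n^{(1)}$ with its cyclic orientation one has $\mathcal{M}(\underline v,\underline w)\cong \CM^{r,\rho}([\SC^2/G_\Delta])$ with $r=|\underline w|$ and $\rho=\bigoplus_i\rho_i^{\oplus v_i}$, so that $\bigsqcup_{|\underline v|=m}\mathcal{M}(\underline v,\underline w)$ is the fixed locus of $G_\Delta<\mathrm{SL}(2,\SC)$ acting on $\CM^{r,m}(\SC^2)$, hence a smooth quasi-projective union of connected components of it. The torus $T=(\SC^\ast)^{|\underline w|+2}$ is precisely $(\SC^\ast)^2\times(\SC^\ast)^r$, where the first factor is the maximal torus acting on $\SC^2$ and the second rescales the chosen ordered basis $e_1,\dots,e_r$ of $W$; this is the same torus $T'$ acting on $\CM^{r,m}(\SC^2)$. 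Since $T$-invariance of a point of $\mathcal{M}(\underline v,\underline w)$ is the conjunction of $G_\Delta$-invariance and $T'$-invariance inside $\CM^{r,m}(\SC^2)$, the $T$-fixed locus of $\mathcal{M}(\underline v,\underline w)$ equals the intersection of the $T'$-fixed locus of $\CM^{r,m}(\SC^2)$ with the component $\mathcal{M}(\underline v,\underline w)$.

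Next I would invoke the classical fact that the $T'$-fixed points of $\CM^{r,m}(\SC^2)$ are isolated and are exactly the direct sums $\bigoplus_{j=1}^r (\mathcal I_{\lambda_j}\otimes e_j)$ over $r$-tuples $\underline\lambda=(\lambda_1,\dots,\lambda_r)$ of Young diagrams with $\sum_j|\lambda_j|=m$, where $\mathcal I_{\lambda_j}\lhd\SC[x,y]$ is the monomial ideal of $\lambda_j$; see~\cite[Chapter~2]{nakajima1999lectures} and the type $A$ discussion in~\cite{fujii2005combinatorial}. Isolatedness follows because the $T'$-invariant part of the ADHM tangent space at such a point vanishes, and this is inherited by the closed subvariety $\mathcal{M}(\underline v,\underline w)$. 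Each such fixed point is a sum of monomial ideals, hence automatically $G_\Delta$-invariant because $G_\Delta$ is generated by the element $\sigma=\mathrm{diag}(\omega,\omega^{-1})\in(\SC^\ast)^2$; so all of them lie in $\bigsqcup_{|\underline v|=m}\mathcal{M}(\underline v,\underline w)$, and it only remains to identify the component on which each one sits.

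For that step I would compute the $G_\Delta$-representation carried by $\bigoplus_j(\mathcal I_{\lambda_j}\otimes e_j)$, namely $\bigoplus_j\bigl(\SC[x,y]/\mathcal I_{\lambda_j}\bigr)\otimes e_j$. The monomial $x^ay^b$ spans a $G_\Delta$-eigenline of character $\rho_{(a-b)\bmod(n+1)}$, which is the label of the block $(a,b)$ in the pattern of type $A_n$, while $e_j$ spans an eigenline of character $\rho_{a_j}$ by the definition of $\underline a$ from $\underline w=(w_0,\dots,w_n)$. Hence $x^ay^b\otimes e_j$ has character $\rho_{(a-b+a_j)\bmod(n+1)}$, which is by definition the color of the block $(a,b)$ under the diagonal $a_j$-labelling of $\lambda_j$. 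Summing the multiplicities over all blocks of all $\lambda_j$, the $G_\Delta$-representation of the fixed point is $\bigoplus_i\rho_i^{\oplus \mathrm{wt}_i(\underline a,\underline\lambda)}$, so the point lies on $\mathcal{M}(\underline v,\underline w)$ precisely when $\underline{\mathrm{wt}}(\underline a,\underline\lambda)=\underline v$ (which in particular forces $|\underline\lambda|=|\underline v|$ by summing coordinates). Conversely every $\underline\lambda$ with $\underline{\mathrm{wt}}(\underline a,\underline\lambda)=\underline v$ arises this way and the map $\underline\lambda\mapsto\bigoplus_j(\mathcal I_{\lambda_j}\otimes e_j)$ is injective, giving the claimed bijection; reordering the entries of $\underline a$ applies an automorphism of $W$, under which $\mathcal{M}(\underline v,\underline w)$ changes only by isomorphism, so the indexing is well-defined.

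The only genuinely delicate point is the bookkeeping in the last step: one must match the representation-theoretic weight grading of the monomial fixed points with the combinatorially defined diagonal $a_j$-labelling, keeping the convention $\sigma=\mathrm{diag}(\omega,\omega^{-1})$ consistent so that the block $(a,b)$ gets label $(a-b)\bmod(n+1)$ and not its negative; everything else is a translation between the sheaf-theoretic and quiver-theoretic pictures set up earlier. Alternatively, one can argue purely inside the quiver variety by lifting $T$ to a torus on $V$, decomposing $V$ into weight spaces, and using the stability and moment-map equations to force the maps $B$ and $a$ into monomial form, which reproduces the same Young-diagram data; this is the route of~\cite{sam2014combinatorial}, and citing \cite[Proposition 5.7]{sam2014combinatorial} together with the above dictionary suffices.
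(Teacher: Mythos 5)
Your proposal is correct, but note that the paper itself offers no proof of this statement at all: it is imported as a black box from \cite{sam2014combinatorial}, and the only argument appearing in the text is the two-line sketch for the rank-one analogue (the $T$-fixed points of $\mathrm{Hilb}([\SC^2/G_\Delta])$ are the monomial ideals, and the diagonal labelling of a block records the $G_\Delta$-weight of the corresponding monomial). What you have written is the natural higher-rank extension of exactly that sketch, routed through the sheaf-theoretic model: you reduce to the classical fact that the $(\SC^*)^2\times(\SC^*)^r$-fixed points of $\CM^{r,m}(\SC^2)$ are the isolated points $\bigoplus_j\mathcal{I}_{\lambda_j}\otimes e_j$, observe that these are automatically $G_\Delta$-fixed because $\sigma$ lies in the diagonal torus, and then sort them onto the components $\mathcal{M}(\underline{v},\underline{w})$ by computing the $G_\Delta$-character of $H^1(E(-1))$. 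This is a complete and, if anything, more self-contained argument than the paper's citation; Sam's own route stays inside the quiver variety and uses the torus action on $V$ together with stability to force the ADHM data into monomial form, which has the advantage of working uniformly for all affine types but costs more linear-algebra bookkeeping. The one step that genuinely requires care is the one you flag: you need the equivariant identification $H^1(E(-1))\cong\bigoplus_j\bigl(\SC[x,y]/\mathcal{I}_{\lambda_j}\bigr)\otimes e_j$ with $W$ appearing covariantly (via $E\subset W\otimes\CO$), so that $e_j$ contributes $\rho_{a_j}$ and not its dual; with the paper's conventions ($\rho_j(\sigma)=\omega^j$, the block $(a,b)$ corresponding to $x^ay^b$ of weight $\rho_{(a-b)\bmod(n+1)}$) the character of $x^ay^b\otimes e_j$ is indeed $\rho_{(a-b+a_j)\bmod(n+1)}$, matching the diagonal $a_j$-labelling, so your bookkeeping is consistent.
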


\begin{corollary} \label{cor:anworbi}
\[ Z^W_{\Delta}(\underline{q})=\sum_{\underline{v}}\chi(\mathcal{M}(\underline{v},\underline{w}))\underline{q}^{\underline{v}}=Z_{\Delta(\underline{a})}(\underline{q})\;.\]
\end{corollary}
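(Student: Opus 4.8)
The plan is to chain together three facts that have already been recorded above. First I would note that $Z^W_\Delta(\underline q)$ is shorthand for the higher-rank orbifold series $Z^W_{[\SC^2/G_\Delta]}(\underline q)$, and invoke Nakajima's quiver description: by the theorem of~\cite{nakajima2002geometric} recalled above, together with the decomposition~\eqref{eq:modtfshdec}, for $\rho=\rho_0^{\oplus v_0}\oplus\dots\oplus\rho_n^{\oplus v_n}$ one has $\CM^{r,\rho}([\SC^2/G_\Delta])\cong\mathcal{M}(\underline v,\underline w)$, where $\underline w$ encodes the framing $W$. Feeding this into the definition of $Z^W_{[\SC^2/G_\Delta]}$ gives at once
\[ Z^W_\Delta(\underline q)=\sum_{\underline v}\chi\big(\mathcal{M}(\underline v,\underline w)\big)\,\underline q^{\underline v}, \]
which is the first equality in the statement.

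Second, I would compute each $\chi(\mathcal{M}(\underline v,\underline w))$ by torus localization. The torus $T=(\SC^*)^{|\underline w|+2}$ acts on $\mathcal{M}(\underline v,\underline w)$ with isolated fixed points, and the Euler characteristic of a variety carrying a torus action equals that of its fixed locus; hence $\chi(\mathcal{M}(\underline v,\underline w))$ is just the number of $T$-fixed points. (One could instead quote the vanishing of odd cohomology, \cite[Section 7]{nakajima2001quiver}, but the localization statement needs no properness and is enough here.) By~\cite[Proposition 5.7]{sam2014combinatorial}, recalled above, these fixed points are exactly the $\underline a$-labelled tuples $\underline\lambda\in\CZ_{\Delta(\underline a)}$ with $\underline{\mathrm{wt}}(\underline a,\underline\lambda)=\underline v$, so
\[ \chi\big(\mathcal{M}(\underline v,\underline w)\big)=\#\{\,\underline\lambda\in\CZ_{\Delta(\underline a)}\ :\ \underline{\mathrm{wt}}(\underline a,\underline\lambda)=\underline v\,\}. \]

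Third, substituting this into the generating series and reindexing the double sum by $\underline\lambda$ instead of $\underline v$ yields
\[ \sum_{\underline v}\chi\big(\mathcal{M}(\underline v,\underline w)\big)\,\underline q^{\underline v}=\sum_{\underline\lambda\in\CZ_{\Delta(\underline a)}}\underline q^{\underline{\mathrm{wt}}(\underline a,\underline\lambda)}=Z_{\Delta(\underline a)}(\underline q), \]
the last equality being the definition of $Z_{\Delta(\underline a)}$. This finishes the proof. Since every step is a direct citation of a result recalled earlier, there is no genuine obstacle; the only point deserving a word of care is that the torus in~\cite[Proposition 5.7]{sam2014combinatorial} grades $\mathcal{M}(\underline v,\underline w)$ by its $\underline v$-components compatibly with the bookkeeping $\underline{\mathrm{wt}}(\underline a,\cdot)$ — but this is precisely what the clause ``$\underline{\mathrm{wt}}(\underline a,\underline\lambda)=\underline v$'' in that Proposition records, so the matching is automatic.
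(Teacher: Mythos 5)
Your proof is correct and follows essentially the same route the paper intends: the first equality comes from Nakajima's identification of $\CM^{r,\rho}([\SC^2/G_\Delta])$ with the quiver varieties $\mathcal{M}(\underline v,\underline w)$, and the second from counting the isolated $T$-fixed points of \cite[Proposition 5.7]{sam2014combinatorial}, which the paper states immediately before the corollary precisely for this purpose. The paper leaves the corollary without a written proof, so your write-up simply makes the intended two-step citation explicit.
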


\begin{proof}[Proof of \autoref{thm:anorbimultgen}] Putting together Corollary \ref{cor:anorbishift}, Corollary \ref{cor:anworbi} and Corollary \ref{cor:anorbidec} gives the result.
\end{proof}

\section{First approach: generalized Frobenius partitions}
\label{sec:pfAnFrobpart}

In this section we introduce a combinatorial method inspired by \cite{dijkgraaf2008instantons} generalizing the ideas of Andrews \cite{andrews1984generalized} and we give a proof of Proposition \ref{prop:angentheta} as developed in \cite{gyenge2015enumeration}.

\begin{definition} Two rows of nonnegative integers
\[
\begin{pmatrix}
f_1 & f_2 & \dots & f_d \\
g_1 & g_2 & \dots & g_d
\end{pmatrix} \]
are called a \emph{generalized Frobenius partition} or \emph{F-partition} of $k$ if \[k=d + \sum_{i=1}^d (f_i+g_i)\;.\]
\end{definition}
\begin{remark} A generalized Frobenius partition is a classical Frobenius partition if moreover $f_1 > f_2 > \dots > f_d \geq 0$ and $g_1 > g_2 > \dots > g_d \geq 0$. In this case, we can associate to the F-partition a Young diagram from which if we delete the $d$ long diagonal then the lengths of the rows below it are $f_1$, $f_2$, etc. and the length of the columns above the diagonal are $g_1$,  $g_2$, etc. This correspondence between Young diagrams and classical F-partitions is bijective.
\end{remark}

Let $H$ be an arbitrary set consisting of finite sequences of nonnegative integer. For arbitrary integers $d$ and $k$ let $P_H(k,d)$ denote the sequences in $H$ of length $d$ which sum up to $k$. For any pair of such sets $H_1$ and $H_2$, let moreover $P_{H_1,H_2}(k)$ be the number of generalized Frobenius partitions of $k$ with elements in the first row $(f_1,\ldots, f_r)$ from $H_1$ and with elements in the second row $(g_1,\ldots, g_d)$ from $H_2$. Then the very useful result of Andrews says the following
\begin{theorem}[\cite{andrews1984generalized}, Section 3]
\label{thm:genfrobgs}
\[ \sum_{k=0}^\infty P_{H_1,H_2}(k)q^k=[z^0] \sum_{k,m}P_{H_1}(k,d)q^k(zq)^d\sum_{k,d}P_{H_2}(k,d)q^kz^{-d}\;, \]
where $[z^m]\sum A_kz^k=A_m$.
\end{theorem}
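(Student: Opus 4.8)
The plan is to encode the three combinatorial families as formal power series in $q$ and an auxiliary variable $z$, so that extracting the coefficient of $z^0$ implements the constraint that the two rows of an $F$-partition have equal length. First I would write the defining bookkeeping: for a fixed length $d$, the pairs $(f_1,\dots,f_d)\in H_1$ contribute weight $q^{\sum f_i}$, and similarly the $(g_1,\dots,g_d)\in H_2$ contribute $q^{\sum g_j}$; the integer being partitioned is $k = d + \sum f_i + \sum g_j$, so the extra summand $d$ must be tracked separately. Introduce the two one-sided generating functions
\[
A(q,z) = \sum_{d\ge 0}\sum_{k\ge 0} P_{H_1}(k,d)\, q^k (zq)^d, \qquad
B(q,z) = \sum_{d\ge 0}\sum_{k\ge 0} P_{H_2}(k,d)\, q^k z^{-d},
\]
where in $A$ the variable $z$ and an extra factor of $q$ are attached to the length $d$ (the $q^d$ accounts for the ``$+d$'' in the size formula), while in $B$ we attach $z^{-d}$ with no extra $q$.

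Next I would multiply $A(q,z)\cdot B(q,z)$ and collect terms. A monomial in the product has the form $q^{k_1+k_2}\, z^{d_1-d_2}\, q^{d_1}$ where $(k_1,d_1)$ ranges over data counted by $P_{H_1}$ and $(k_2,d_2)$ over data counted by $P_{H_2}$. The coefficient of $z^0$ picks out precisely the terms with $d_1 = d_2 =: d$, and for such terms the total power of $q$ is $k_1 + k_2 + d$, which by definition equals the size $k$ of the corresponding generalized Frobenius partition built from a top row in $H_1$ of length $d$ summing to $k_1$ and a bottom row in $H_2$ of length $d$ summing to $k_2$. Summing over all $d$, $k_1$, $k_2$ with $k_1+k_2+d = k$ exactly reproduces $P_{H_1,H_2}(k)$, the number of such $F$-partitions of $k$. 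Hence $[z^0]\,A(q,z)B(q,z) = \sum_{k\ge 0} P_{H_1,H_2}(k)\,q^k$, which is the claimed identity once one observes that $A$ and $B$ are the two factors written in the statement.

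The only genuinely substantive points, rather than routine manipulation, are two bookkeeping checks: first, that it is legitimate to separate the constraint $k = d + \sum f_i + \sum g_j$ into the three independent contributions $q^{\sum f_i}$, $q^{\sum g_j}$, and $q^d$, i.e.\ that the extra factor of $q$ is correctly placed on the $H_1$-side as $(zq)^d$ and omitted on the $H_2$-side; and second, that all the series involved are well-defined elements of $\mathbb{Z}[[q]]((z))$ (or of $\mathbb{Z}[[q,z,z^{-1}]]$ suitably interpreted) so that the product and the coefficient extraction $[z^0]$ make sense — this holds because for each fixed power of $q$ only finitely many $d$ occur, since a sequence of nonnegative integers of length $d$ summing to at most $k$ forces $d \le k$. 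I expect the main obstacle, such as it is, to be purely notational: keeping the roles of $z$, $z^{-1}$, and the asymmetric extra $q$ straight, and making sure the convolution is written on the correct side. Once the two one-sided series are set up as above, the identity falls out by comparing coefficients.
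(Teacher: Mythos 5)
Your argument is correct and is precisely the constant-term computation that Andrews gives in Section 3 of the cited memoir; the paper itself does not reprove this statement but quotes it, and your setup — attaching $(zq)^d$ to the $H_1$-side so the extra $q^d$ absorbs the ``$+d$'' in $k=d+\sum f_i+\sum g_i$, attaching $z^{-d}$ to the $H_2$-side, and extracting $[z^0]$ to force equal row lengths — is exactly the intended argument, and the same one the paper reuses for the multivariable analogue in Theorem~\ref{thm:genfrobmgs}. One small slip in your well-definedness remark: a length-$d$ sequence of nonnegative integers summing to $k$ does \emph{not} force $d\le k$ (zero entries are allowed), so the $H_2$-factor need not have bounded negative $z$-powers in each $q$-degree; what actually makes the product and the $[z^0]$-extraction well defined is that on the $H_1$-side the $q$-exponent is $k_1+d\ge d$, so in the coefficient of $z^0q^N$ only the finitely many terms with $d\le N$ can contribute.
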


The term $q^d$ in the first term of the right hand size corresponds to the contribution of the diagonals. To have a more symmetric formula we will slightly change the notions. Transform each generalized Frobenius partition
\[
\begin{pmatrix}
f_1 & f_2 & \dots & f_d \\
g_1 & g_2 & \dots & g_d
\end{pmatrix} \]
into
\[
\begin{pmatrix}
f_1+1 & f_2+1 & \dots & f_d+1 \\
g_1 & g_2 & \dots & g_d
\end{pmatrix} \;.\]
Then\[k=\sum_{i=1}^d ((f_i+1)+g_i)\;.\]

For $H$ an arbitrary set of sequences as above, let $H'=\{ (f_1+1,\dots,f_d+1) \;:\; (f_1,\dots,f_d) \in H \}$ be the \emph{shift} of $H$ by one upward. Then $P_{H'}(k,d)=P_{H}(k-d,d)$, and
\begin{equation} 
\label{eq:genfrobgsmod}
\sum_{k=0}^\infty P_{H'_1,H_2}(k)q^k=[z^0] \sum_{k,d}P_{H'_1}(k,d)q^{k}z^d\sum_{k,d}P_{H_2}(k,d)q^kz^{-d}\;. 
\end{equation}



We aim for a multivariable generalization of Theorem \ref{thm:genfrobgs}. To be as general as possible we introduce temporarily an arbitrary labelling (or coloring) set $C$. We will specialize to the case of $C=\SZ/(n+1)\SZ$ only later. 

Let $\underline{k}$ be a vector of nonnegative integers indexed by the elements of $C$.
\begin{definition}
Two series of vectors consisting of integers and arranged into two rows as
\[
\begin{pmatrix}
\underline{f}_1 & \underline{f}_2 & \dots & \underline{f}_d \\
\underline{g}_1 & \underline{g}_2 & \dots & \underline{g}_d
\end{pmatrix} \]
are called a \emph{colored generalized Frobenius partition} or \emph{colored F-partition} of $\underline{k}$ if
\begin{enumerate}
\item the elements in $\underline{f}_i$ and $\underline{g}_i$ are indexed by the elements $c \in C$ for each $1\leq i\leq d$;
\item $f_{i,c} \geq 0$ and $g_{i,c} \geq 0$ for every $1\leq i\leq d$ and $c \in C$;
\item $\sum_{i=1}^d (f_{i,c} + g_{i,c})=k_c$ for each $c \in C$.
\end{enumerate}
We will call $k=\sum_{c\in C} k_c=\sum_{c\in C} \sum_{i=1}^d (f_{i,c} + g_{i,c})$ the \emph{total weight} of such a colored F-partition.
\end{definition}
At the moment we do not require any further relations between the elements $f_{i,c}$ and $g_{i,c}$ but see end of this section and particularly Example \ref{ex:2diag} below, where we apply this general construction to the enumeration of diagonally colored Young diagrams.

Let $H$ be an arbitrary set consisting of tuples of vectors, each of which is indexed by elements of $C$. For an arbitrary vector $\underline{k}$ indexed by the elements of $C$ and consisting of nonnegative integers let $P_{H}(\underline{k},d)$ be the number of $d$-tuples of vectors $(\underline{f}_1,\dots, \underline{f}_d) \in H$  which satisfy conditions (1) and (2) such that $\sum_{i=1}^{d} f_{i,c}=k_c$. 
If both $H_1$ and $H_2$ are sets consisting of tuples of vectors, each element of which is indexed by elements of $C$, then let  $P_{H_1,H_2}(\underline{k})$ be the number of colored F-partition of $\underline{k}$ in which the top row is in $H_1$ and the bottom row is in $H_2$. 

Then the same ideas as that of Theorem \ref{thm:genfrobgs} imply the following multivariable analogue of (\ref{eq:genfrobgsmod}).
\begin{theorem}
\label{thm:genfrobmgs}
\[ \sum_{\underline{k}} P_{H_1,H_2}(\underline{k})\underline{q}^{\underline{k}}=[z^0] \sum_{\underline{k},d}P_{H_1}(\underline{k},d)z^d\underline{q}^{\underline{k}}\sum_{\underline{k},d}P_{H_2}(\underline{k},d)z^{-d}\underline{q}^{\underline{k}}\;. \]
\end{theorem}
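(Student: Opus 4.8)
The plan is to imitate Andrews' original bookkeeping argument, now carrying the extra colour index along. The one structural fact to isolate first is that a colored F-partition of $\underline{k}$ with top row in $H_1$ and bottom row in $H_2$ is \emph{nothing more} than the data of an integer $d\ge 0$, a $d$-tuple $(\underline{f}_1,\dots,\underline{f}_d)\in H_1$, and a $d$-tuple $(\underline{g}_1,\dots,\underline{g}_d)\in H_2$ of the \emph{same} length $d$, subject only to the per-colour conditions $\sum_i f_{i,c}=k^{(1)}_c$ and $\sum_i g_{i,c}=k^{(2)}_c$ for some splitting $\underline{k}^{(1)}+\underline{k}^{(2)}=\underline{k}$: conditions (1)--(3) impose no coupling whatsoever between the two rows beyond this colour-wise matching of the total. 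Hence, unwinding the definitions,
\[
P_{H_1,H_2}(\underline{k})=\sum_{d\ge 0}\ \sum_{\underline{k}^{(1)}+\underline{k}^{(2)}=\underline{k}} P_{H_1}(\underline{k}^{(1)},d)\,P_{H_2}(\underline{k}^{(2)},d).
\]

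Next I would simply expand the right-hand side of the claimed identity. Writing $A(z,\underline{q})=\sum_{\underline{k},d}P_{H_1}(\underline{k},d)\,z^d\,\underline{q}^{\underline{k}}$ and $B(z,\underline{q})=\sum_{\underline{k},d}P_{H_2}(\underline{k},d)\,z^{-d}\,\underline{q}^{\underline{k}}$, the coefficient of $\underline{q}^{\underline{k}}$ in the product $A(z,\underline{q})B(z,\underline{q})$ is a Laurent series in $z$ whose $z^{m}$-coefficient equals $\sum_{d_1-d_2=m}\ \sum_{\underline{k}^{(1)}+\underline{k}^{(2)}=\underline{k}}P_{H_1}(\underline{k}^{(1)},d_1)\,P_{H_2}(\underline{k}^{(2)},d_2)$. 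Applying $[z^0]$ forces $d_1=d_2=:d$ --- this is exactly the mechanism by which the opposite powers $z^{d}$ and $z^{-d}$ encode the requirement that the two rows of a Frobenius partition have equal length --- and what remains is precisely the double sum displayed above, namely $P_{H_1,H_2}(\underline{k})$. Multiplying by $\underline{q}^{\underline{k}}$ and summing over $\underline{k}$ gives the theorem. All of this is the verbatim multivariable version of the derivation of Theorem~\ref{thm:genfrobgs} and of \eqref{eq:genfrobgsmod}.

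The only point that deserves a word of care, and the closest thing to an obstacle, is the meaning of the operator $[z^0]$: a priori $A(z,\underline{q})$ need not be polynomial in $z$, so the $z^0$-coefficient of $AB$ could be an infinite sum over $d$. This is harmless in every situation where the theorem is used: once $H_1$ (after the normalising upward shift, as in the passage from $H$ to $H'$) consists of tuples with entries bounded below by a positive constant, one has $P_{H_1}(\underline{k}^{(1)},d)=0$ as soon as $d$ exceeds $\sum_c k^{(1)}_c$, so for each fixed monomial $\underline{q}^{\underline{k}}$ only finitely many $d$ contribute and $[z^0]$ is a genuine finite sum. In general one simply reads the statement as an identity of formal power series in the $q_c$, with the $z^0$-coefficient computed by this convention. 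Beyond this formal hygiene there is no real difficulty: the content of the statement is the elementary observation that concatenating a top row and a bottom row of the same length is a disjoint decomposition of the weight vector, and the rest is the standard $[z^0]$-convolution trick.
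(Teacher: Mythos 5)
Your proof is correct and is exactly the argument the paper has in mind: the text offers no proof here beyond the remark that ``the same ideas as that of Theorem~\ref{thm:genfrobgs} imply'' the statement, i.e.\ Andrews' constant-term convolution, which is precisely what you have written out (the decomposition of $P_{H_1,H_2}(\underline{k})$ as $\sum_d\sum_{\underline{k}^{(1)}+\underline{k}^{(2)}=\underline{k}}P_{H_1}(\underline{k}^{(1)},d)P_{H_2}(\underline{k}^{(2)},d)$ followed by extraction of $[z^0]$). Your closing caveat about $[z^0]$ being a finite sum for each fixed monomial is a sensible point the paper glosses over, and it does hold in the intended application since each entry of a top row contains its diagonal block and hence has positive total weight.
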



We return finally to the setting of the orbifold Hilbert scheme, whose T-fixed points are in bijection with diagonally labelled diagrams as explained in the beginning of \ref{subsectypeA}. We let $C=\mathbb{Z}/(n+1)\mathbb{Z}$ be the set of labels which can appear in the pattern of type $A_n$. To be able to apply Theorem \ref{thm:genfrobmgs} we first associate to each Young diagram $\lambda \in \CZ_{\Delta}$ a colored F-partition of $\underline{k}=\{ \mathrm{wt}_c(Y) \}_{c \in C}$ which uniquely describes the diagonal coloring on $\lambda$.  Assume that the main diagonal of $Y$ consists of $d$ blocks. Then the associated colored $F$-partition is
\[
\begin{pmatrix}
\underline{f}_1 & \underline{f}_2 & \dots & \underline{f}_d \\
\underline{g}_1 & \underline{g}_2 & \dots & \underline{g}_d
\end{pmatrix} \;,\]
where ${f_{i,c}}$ is the number of blocks of color $c$ in the $i$-th row below and including the main diagonal, and ${g_{i,c}}$ is the number of blocks of color $c$ in the $i$-th column above the main diagonal for every $1 \leq i \leq d$.
\begin{example} 
\label{ex:2diag}
The colored F-partition associated to the first diagram in Example \ref{ex:0} is
\[
\begin{pmatrix}
(1,1,1) & (1,0,1) \\
(1,1,1) & (0,1,0)
\end{pmatrix}\;, \]
where each $\underline{f}_i=(f_{i,0},f_{i,1},f_{i,2})$ and each $\underline{g}_i=(g_{i,0},g_{i,1},g_{i,2})$.
\end{example}

For $i=1,2$ let $H_i$ be the set of tuples of vectors which can appear as the $i$-th row of a colored F-partition associated to a diagonally labelled Young diagram in the above construction. 
Then
\begin{equation} 
\label{eq:colgen}
Z_{\Delta,a}(\underline{q})= \sum_{\underline{k}} P_{H_1,H_2}(\underline{k})\underline{q}^{\underline{k}}\;. 
\end{equation}

\begin{lemma} \label{lem:fblinegen}
\hspace{2em}
\begin{enumerate}
\item 
\[ \sum_{\underline{k}} P_{H_1}(\underline{k},d)z^d \underline{q}^{\underline{k}}= \prod_{k=0}^\infty\prod_{i=0}^{n}(1+zq_{0}\dots q_{i}q^k)\;. \]
\item 
\[ \sum_{\underline{k}} P_{H_2}(\underline{k},d)z^d \underline{q}^{\underline{k}}= \prod_{k=0}^\infty\prod_{i=0}^{n}(1+z^{-1}q_{i+1}\dots q_{n}q^k)\;. \]
\end{enumerate}
\end{lemma}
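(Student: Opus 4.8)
Here is the approach I would take to prove Lemma~\ref{lem:fblinegen}.

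The plan is to make the sets $H_1$ and $H_2$ completely explicit via the classical Frobenius coordinates, and then to expand the resulting generating functions as products indexed by cyclically repeating runs of colours. Recall that a partition whose Durfee square has size $d$ is determined by its arm lengths $a_1>a_2>\dots>a_d\geq 0$ and its leg lengths $b_1>b_2>\dots>b_d\geq 0$, and that these two strictly decreasing sequences may be prescribed independently of one another. In the construction preceding \eqref{eq:colgen}, the top-row entry $\underline{f}_i$ is the colour-count vector of the length-$(a_i+1)$ horizontal segment consisting of the diagonal cell $(i-1,i-1)$ together with the $a_i$ cells of its arm. Since in the type-$A_n$ diagonal labelling the cell $t$ steps to the right of $(i-1,i-1)$ carries colour $t\bmod(n+1)$, this segment carries exactly the colours $0,1,\dots,a_i\pmod{n+1}$, so $\underline{f}_i$ depends only on $\ell_i:=a_i+1$. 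Likewise $\underline{g}_i$ is the colour-count vector of the $b_i$ cells of the leg above $(i-1,i-1)$, and since the cell $t$ steps above $(i-1,i-1)$ carries colour $(-t)\bmod(n+1)$, these are the colours $-1,-2,\dots,-b_i\pmod{n+1}$, so $\underline{g}_i$ depends only on $b_i$. The assignment of a length to the corresponding colour-count vector is injective (the vector determines its total, hence the length), so $H_1$ is in bijection with the set of finite subsets $\{\ell_1>\dots>\ell_d\}\subset\SZ_{\geq 1}$ and $H_2$ with the set of finite subsets $\{b_1>\dots>b_d\}\subset\SZ_{\geq 0}$.

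Introduce the monomials $Q_\ell:=\prod_{t=0}^{\ell-1}q_{t\bmod(n+1)}$ and $Q'_b:=\prod_{t=1}^{b}q_{(-t)\bmod(n+1)}$ (so $Q'_0=1$), which are precisely the $\underline{q}$-weights of the colour vectors above. Summing over finite subsets then gives at once
\[\sum_{\underline{k},d}P_{H_1}(\underline{k},d)\,z^d\,\underline{q}^{\underline{k}}=\prod_{\ell=1}^{\infty}\bigl(1+zQ_\ell\bigr)\]
for the top row and
\[\sum_{\underline{k},d}P_{H_2}(\underline{k},d)\,z^{-d}\,\underline{q}^{\underline{k}}=\prod_{b=0}^{\infty}\bigl(1+z^{-1}Q'_b\bigr)\]
for the bottom row. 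It then remains to regroup the factors modulo $n+1$. Writing $\ell=k(n+1)+i+1$ with $k\geq 0$ and $0\leq i\leq n$, the run $0,1,\dots,\ell-1$ meets the residues $0,\dots,i$ exactly $k+1$ times and the residues $i+1,\dots,n$ exactly $k$ times, so $Q_\ell=q^k\,q_0q_1\cdots q_i$ with $q=q_0\cdots q_n$; hence the first product equals $\prod_{k\geq 0}\prod_{i=0}^{n}\bigl(1+zq_0\cdots q_i q^k\bigr)$, which is part~(1). Likewise $b=k(n+1)+s$ with $0\leq s\leq n$ gives $Q'_b=q^k\,q_nq_{n-1}\cdots q_{n-s+1}$, and after the substitution $i=n-s$ (so that $q_nq_{n-1}\cdots q_{n-s+1}=q_{i+1}q_{i+2}\cdots q_n$ and $i$ again runs over $0,\dots,n$) the second product becomes $\prod_{k\geq 0}\prod_{i=0}^{n}\bigl(1+z^{-1}q_{i+1}\cdots q_n q^k\bigr)$, which is part~(2).

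I expect the only real content to lie in the first paragraph: one must carefully track the diagonal labelling along each arm and leg, verify that the colour-count vector of such a segment depends only on its length and equals the stated cyclic run, and invoke the classical fact that the arm and leg sequences of partitions realise arbitrary independent strictly decreasing integer sequences. Once $H_1$ and $H_2$ have been pinned down as subsets parametrised by lengths, the two generating-function identities and the mod-$(n+1)$ regrouping are purely formal bookkeeping. (For the $a$-shifted labelling one obtains the analogous statement by the substitution $q_i\mapsto q_{i+a}$, exactly as in Corollary~\ref{cor:anorbishift}.)
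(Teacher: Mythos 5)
Your proposal is correct and follows essentially the same route as the paper's (much terser) proof: each factor $1+zq_0\cdots q_iq^k$ corresponds to the unique admissible segment of length $(n+1)k+i+1$, whose diagonal labelling is a cyclic run of colours, and the strict decrease of arm/leg lengths in Frobenius coordinates turns the sum over $H_1$ (resp.\ $H_2$) into the stated product. Your explicit identification of $H_1$ and $H_2$ with sets of distinct lengths, and the mod-$(n+1)$ regrouping, is exactly the bookkeeping the paper leaves implicit (note only that the left-hand side of part (2) should carry $z^{-d}$, as you correctly use, matching Theorem~\ref{thm:genfrobmgs}).
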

\begin{proof} (1) It is clear that each term $zq_{0}\dots q_{i}q^k$ corresponds to a part of a column above and including the main diagonal which has length $(n+1)k+i$. Conversely, the decomposition of each nonnegative number as $(n+1)k+i$ is unique.

The proof of (2) is similar.
\end{proof}

The product of the two generating series in Lemma \ref{lem:fblinegen} is
\begin{equation}
\label{eq:prodgen}
\begin{gathered}
\sum_{\underline{k}} P_{H_1}(\underline{k},d)z^d \underline{q}^{\underline{k}} \cdot \sum_{\underline{k}} P_{H_2}(\underline{k},d)z^d \underline{q}^{\underline{k}} \\= \prod_{k=0}^\infty\prod_{i=0}^{n}(1+zq_{0}\dots q_{i}q^k)(1+z^{-1}q_{i+1}\dots q_{n}q^k)\\
= \prod_{k=1}^\infty\prod_{i=0}^{n}(1+zq_{i+1}^{-1}\dots q_{n}^{-1}q^k)(1+(zq_{i+1}^{-1}\dots q_{n}^{-1})^{-1}q^{k-1})\\
=\left( \prod_{m=1}^\infty (1-q^m)^{-1} \right)^{n} \prod_{i=0}^{n} \left( \sum_{j_i=-\infty}^{\infty} (zq_{i+1}^{-1}\dots q_{n}^{-1})^{j_i}q^{\binom{j_i+1}{2}}\right)\;,
\end{gathered}
\end{equation}
where at the last equality we have used the following form of the Jacobi triple product formula:
\[ \prod_{n=1}^{\infty}(1+zq^n)(1+z^{-1}q^{n-1})= \left(\prod_{n=1}^{\infty}(1-q^n)^{-1}\right)\sum_{j=-\infty}^{\infty}z^jq^{\binom{j+1}{2}}\;.\]

By (\ref{eq:colgen}) and Theorem \ref{thm:genfrobmgs} to obtain $Z_{\Delta}(\underline{q})$ we have to calculate the coefficient of $z^{0}$ in (\ref{eq:prodgen}).

\begin{equation}
\label{eq:z0coeff}
\begin{aligned}
Z_{\Delta}(\underline{q})&=[z^0]\left( \prod_{m=1}^\infty (1-q^m)^{-1} \right)^{n+1} \prod_{i=0}^{n} \left( \sum_{j_i=-\infty}^{\infty} (zq_{i+1}^{-1}\dots q_{n}^{-1})^{j_i}q^{\binom{j_i+1}{2}}\right)\\
&= \left( \prod_{m=1}^\infty (1-q^m)^{-1} \right)^{n+1} \cdot\sum_{ \substack{ \underline{j}=(j_0,\dots,j_{n}) \in \SZ^{n+1} \\ \sum_i j_i=0}} q_{1}^{-j_0}\cdot\dots\cdot q_{n}^{-j_0-\dots-j_{n-1}}q^{\sum_{i=0}^{n}\binom{j_i+1}{2}}\;.
\end{aligned}
\end{equation}

Let us introduce the following series of integers:
\[
\begin{array}{r c l}
m_1& = & -j_0\;, \\
m_2 & = & -j_0-j_1 \;, \\
& \vdots & \\
m_{n} & = & -j_0-j_1-\dots-j_{n-1} \;. \\
\end{array}
\]
It is obvious that the map
\[ 
\begin{array}{r c l}
\left\{ (j_0,\dots,j_{n}) \in \SZ^{n+1}\;:\; \sum_i j_i=0 \right\} & \rightarrow &\SZ^{n} \\
 \quad (j_0,\dots,j_{n}) &\mapsto& (m_1,\dots,m_{n})
\end{array}
\]
is a bijection. The inverse of it is
\[
\begin{array}{r c l}
j_0& = & -m_1\;, \\
j_1 & = & -m_2+m_1 \;, \\
& \vdots & \\
j_{n-1} & = & -m_{n}+m_{n-1} \;, \\
j_{n} & =& m_{n}\;.
\end{array}
\]

If $n=1$, then
\begin{equation}
\label{eq:trcart1}
\sum_{i=0}^{1}\binom{j_i+1}{2} = \binom{-m_1+1}{2} + \binom{m_{1}+1}{2} = m_1^2 = \frac{1}{2}\left(\underline{m}^\top \cdot C \cdot \underline{m}\right)\;,
\end{equation}
where $C=(2)$ is the Cartan matrix of type $A_{1}$.

If $n>1$, then
\begin{equation}
\label{eq:trcart2}
\begin{aligned}
\sum_{i=0}^{n}\binom{j_i+1}{2} & = \binom{-m_1+1}{2} + \sum_{i=1}^{n-1}\binom{-m_{i+1}+m_{i}+1}{2}+ \binom{m_{n}+1}{2} \\
& = m_1^2+\dots+m_{n}^2 -\sum_{i=1}^{n-1}m_im_{i+1} \\
&= \frac{1}{2}\left(\underline{m}^\top \cdot C \cdot \underline{m}\right)\;,
\end{aligned}
\end{equation}
where 
\[C=
\begin{pmatrix}
2 & -1 & & & & \\
-1 & 2 & -1 & & & \\
& -1 & 2 & & &\\
& & & \ddots & & \\
& & & & & -1\\
& & & & -1 & 2
\end{pmatrix}
\] is the Cartan matrix of type $A_{n}$.

Equations (\ref{eq:z0coeff}), (\ref{eq:trcart1}) and (\ref{eq:trcart2}) together immediately imply Proposition \ref{prop:angentheta} for all $n > 0$.

\section{Second approach: abacus configurations}
\label{sec:Anabacus}


We now introduce a seemingly independent but also standard combinatorics related to the type $A$ root system, which will allow us to relate the 
generating series $Z_{\Delta}$  
of diagonally labelled partitions to the specialized series $Z^0_{\Delta}$ of $0$-generated partitions.
We follow the notations of \cite{leclerc2002some}. 

The {\em abacus of type $A_n$} is the arrangement of the set of
integers in $(n+1)$ columns according to the following pattern.
\begin{center}
\begin{tabular}{c c c c c}
\vdots & \vdots & & \vdots & \vdots \\
$-2n-1$ & $-2n$ & \dots & $-n-2$  & $-n-1$ \\
$-n$ & $-n+1 $& \dots & $-1$  & 0 \\
1 & 2 & \dots & $n$ & $n+1$\\
$n+2$ & $n+3$ & \dots & $2n+1$ & $2n+2$\\
\vdots & \vdots & & \vdots & \vdots
\end{tabular}
\end{center}
Each integer in this pattern is called a {\em position}. For any integer $1 \leq k \leq n+1$ the 
set of positions in the $k$-th column of the abacus is called 
the $k$-th {\em runner}. An {\em abacus configuration} is a set of {\em beads}, 
denoted by $\bigcirc$, placed on the positions, with each position occupied by at most one bead. 

To an diagonally labelled partition $\lambda=(\lambda_1,\dots,\lambda_k)\in\mathcal{Z}_\Delta$ 
we associate its \emph{abacus representation} (sometimes also called \emph{Maya diagram})
as follows: place a bead in position $\lambda_i-i+1$ for all $i$, interpreting $\lambda_i$ as 0 for $i>k$. 
Alternatively, the abacus representation can be described by tracing the outer
profile of the Young diagram of a partition: the occupied positions
occur where the profile moves ``down'', whereas the empty
positions are where the profile moves ``right''.
In the abacus representation of a partition, the number of occupied positive positions is always equal 
to the number of absent nonpositive positions; we call such abacus configurations {\em balanced}. Conversely, 
it is easy to see that any balanced configuration represents a unique diagonally labelled partition, 
an element of $\mathcal{Z}_\Delta$.

For $n=0$, we obtain a representation of partitions on a single runner; this is sometimes called
the {\em Dirac sea} representation of partitions. 

The \emph{$(n+1)$-core} of a labelled partition $\lambda \in \mathcal{Z}_\Delta$
is the partition obtained from $\lambda$ by successively removing border strips of length $n+1$, 
leaving a partition at each step, until this is no longer possible. Here a {\em border strip}
is a skew Young diagram which does not contain $2 \times 2$ blocks and 
which contains exactly one $j$-labelled block for all labels $j$. The
removal of a border strip corresponds in the abacus representation to shifting one of the 
beads up on its runner, if there is an empty space on the runner above
it. In this way, the core of a partition corresponds to the bead configuration in which all the 
beads are shifted up as much as possible; this in particular shows that the $(n+1)$-core of 
a partition is well-defined. We denote by ${\mathcal C}_\Delta$ the set of $(n+1)$-core partitions, and
\[c\colon {\mathcal Z}_\Delta\to {\mathcal C}_\Delta\] the map which takes an diagonally labelled 
partition to its $(n+1)$-core. 


Given an $(n+1)$-core $\lambda$, we can read the $(n+1)$ runners of its abacus rrepresentation 
separately. These will not necessarily be balanced. The $i$-th one will be shifted from the 
balanced position by a certain integer number $a_i$ steps, which is negative if the shift is 
toward the negative positions (upwards), and positive otherwise. These numbers satisfy 
$\sum_{i=0}^n a_i=0$, since the original abacus configuration was balanced. 
The set $\{a_1,\dots,a_n\}$ completely determines the partition, so we get a bijection
\begin{equation}\label{typeA-cores} 
{\mathcal C}_\Delta \longleftrightarrow \left\{\sum_{i=0}^n a_i=0\right\}\subset\SZ^{n+1}.\end{equation}
We will represent an $(n+1)$-core partition by the corresponding $(n+1)$-tuple
$\underline{a}=(a_0,\dots,a_{n})$. 

On the other hand, for an arbitrary partition, on each runner we have a partition up to shift, 
so we get a bijection
\[ {\mathcal Z}_\Delta \longleftrightarrow {\mathcal C}_\Delta \times {\mathcal P}^{n+1}. \]
This corresponds to the structure of
formula~\eqref{eq:orbiseran} above; its denominator is the generating series of $(n+1)$-tuples of 
(unlabelled) partitions, whereas its numerator (after eliminating a variable) is exactly a sum over 
$\underline{a}\in {\mathcal C}_\Delta$. The multiweight of a core partition corresponding to an 
element $\underline{a}$ is given by the quadratic expression $Q(\underline{a})$ in the
exponent of the numerator of~\eqref{eq:orbiseran}. 
For more details, see Bijections 1-2 in~\cite[\S2]{garvan1990cranks}. 


Our purpose in the remaining part of this section is to prove the following, completely combinatorial statement. 

\begin{proposition} Let $\Delta$ be of type $A_n$, and let $\xi$ be a
primitive $(n+2)$-nd root of unity. Then the generating series of 
$0$-generated partitions can be computed from that of all diagonally labelled ones
by the following substitution: 
\[Z^0_{\Delta}(q) = Z_{\Delta}(q_0,\dots,q_n)\Big|_{q_0=\xi^{-n}q , q_1=\dots=q_n=\xi}.\]
\label{prop:ansubst}
\end{proposition}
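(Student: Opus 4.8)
The plan is to dispose of the substitution first, reducing the statement to an identity between a sum over all diagonally labelled partitions and a sum over the $0$-generated ones, and then to prove that identity by fibring over the $0$-generated partitions via a saturation map; this reduces matters to one local identity per $0$-generated partition, which is then evaluated using Gaussian binomial coefficients at the root of unity~$\xi$. For the substitution itself, note that $q_0q_1\cdots q_n\mapsto q$, that $\sum_{i=0}^n\mathrm{wt}_i(\lambda)=|\lambda|$, and that $\xi^{n+1}=\xi^{-1}$ (as $\xi$ is a primitive $(n+2)$-nd root of unity); hence $\prod_iq_i^{\mathrm{wt}_i(\lambda)}$ is sent to $q^{\mathrm{wt}_0(\lambda)}\xi^{|\lambda|+\mathrm{wt}_0(\lambda)}$. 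Since $Z_\Delta=\sum_{\lambda\in\CZ_\Delta}\underline{q}^{\underline{\mathrm{wt}}(\lambda)}$ and $Z^0_\Delta(q)=\sum_{\mu\in\CZ^0_\Delta}q^{\mathrm{wt}_0(\mu)}$ by definition, the assertion is equivalent to
\[\sum_{\lambda\in\CZ_\Delta}q^{\mathrm{wt}_0(\lambda)}\,\xi^{|\lambda|+\mathrm{wt}_0(\lambda)}\;=\;\sum_{\mu\in\CZ^0_\Delta}q^{\mathrm{wt}_0(\mu)}.\]

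Next I would introduce the saturation map $\mathrm{sat}\colon\CZ_\Delta\to\CZ^0_\Delta$, sending $\lambda$ to the $0$-generated partition whose monomial ideal is $\SC[x,y]\cdot(I_\lambda\cap\SC[x,y]^{G_\Delta})$ — equivalently, the partition obtained from $\lambda$ by repeatedly adding every addable block carrying a nonzero label. One checks that $\mathrm{sat}$ fixes $\mathrm{wt}_0$, restricts to the identity on $\CZ^0_\Delta$, and satisfies $\mathrm{sat}^{-1}(\mu)=\{\lambda\subseteq\mu:\mathrm{wt}_0(\lambda)=\mathrm{wt}_0(\mu)\}$. In the abacus language of~\ref{sec:Anabacus} a block of label $0$ corresponds to a bead crossing a position divisible by $n+1$; therefore $\mathrm{sat}^{-1}(\mu)$ is obtained from the balanced abacus of $\mu$ by sliding the beads arbitrarily to the left within each block of $n+1$ consecutive positions delimited by two such multiples, and these $(n+1)$-position ``rows'' behave independently. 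Grouping the left-hand sum above according to the value of $\mathrm{sat}$ then reduces the claim to the local identity
\[\sum_{\lambda\in\mathrm{sat}^{-1}(\mu)}\xi^{|\lambda|+\mathrm{wt}_0(\mu)}\;=\;1,\qquad\mu\in\CZ^0_\Delta.\]

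To evaluate this, fix $\mu$ and let $o_k\in\{0,\dots,n+1\}$ be the number of beads in its $k$-th abacus row, so $o_k=n+1$ for $k\ll0$ and $o_k=0$ for $k\gg0$. Passing from $\mu$ to $\lambda\in\mathrm{sat}^{-1}(\mu)$ deletes, in row $k$, a number of blocks equal to the inversion number of the new bead word relative to the right-justified one, and the rows contribute independently; hence
\[\sum_{\lambda\in\mathrm{sat}^{-1}(\mu)}\xi^{|\lambda|+\mathrm{wt}_0(\mu)}\;=\;\xi^{|\mu|+\mathrm{wt}_0(\mu)}\prod_{k}\binom{n+1}{o_k}_{\xi^{-1}},\]
a finite product, the factors with $o_k\in\{0,n+1\}$ equalling $1$. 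From $\prod_{i=1}^{n+1}(1-\xi^{i})=n+2$ together with $\frac{1-\xi^{i}}{1-\xi^{-i}}=-\xi^{i}$ one gets the closed form $\binom{n+1}{j}_{\xi^{-1}}=(-1)^{j}\xi^{j(j+1)/2}$, and the local identity becomes the arithmetic statement
\[(-1)^{\sum_k o_k}\;\xi^{\,|\mu|+\mathrm{wt}_0(\mu)+\sum_k o_k(o_k+1)/2}\;=\;1,\]
all sums over $k$ being over the finitely many rows with $0<o_k<n+1$.

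Finally I would verify this last equality. The abacus exhibits $|\mu|$ as the number of pairs consisting of an occupied position and a strictly lower empty one, and $\mathrm{wt}_0(\mu)$ as the number of those pairs whose empty member is divisible by $n+1$; inserting the resulting explicit expressions (quadratic, respectively linear, in the $o_k$) and reducing modulo $n+2$ — absorbing the sign as $\xi^{(n+2)\sum_k o_k/2}$ when $n$ is even, and using that $\sum_k o_k$ is even over the relevant rows when $n$ is odd — collapses the exponent to $0$. I expect this last bookkeeping to be the main obstacle of the proof: it is its one genuinely computational point, and is most cleanly organized by induction on the number of beads displaced from the ground configuration, each elementary step altering $|\mu|$, $\mathrm{wt}_0(\mu)$ and $\sum_k o_k(o_k+1)/2$ by amounts whose signed total is divisible by $n+2$. (One can also bypass the saturation map: specializing $q_i\mapsto\xi$ for $i\ge1$ and $q_0\mapsto\xi^{-n}q$ in the generalized-Frobenius-partition product~\eqref{eq:prodgen} of~\ref{sec:pfAnFrobpart} and invoking the Jacobi triple product as there yields the right-hand side of Theorem~\ref{thmsing} directly.)
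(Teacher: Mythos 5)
Your overall strategy coincides with the paper's for most of its length: you fibre the specialized sum over the saturation map $p\colon\CZ_\Delta\to\CZ^0_\Delta$, use the abacus description of the fibres (beads sliding left independently within each block of $n+1$ consecutive positions), and evaluate each row's contribution as a Gaussian binomial $\binom{n+1}{o_k}_{\xi^{-1}}$; up to and including the closed form $\binom{n+1}{j}_{\xi^{-1}}=(-1)^{j}\xi^{j(j+1)/2}$, everything is correct and matches Lemma~\ref{lem:rows} and the first half of the paper's fibre computation. The gap is in the final step, which you yourself flag as the main obstacle but then resolve with a false identity: it is \emph{not} true that $\mathrm{wt}_0(\mu)$ equals the number of (occupied, strictly lower empty) pairs whose empty member is divisible by $n+1$. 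Take $n=1$ and the $0$-generated partition $\mu=(2,1)$: here $\mathrm{wt}_0(\mu)=1$, while the abacus (beads at $2,0,-2,-3,\dots$) has exactly the three pairs $(2,1)$, $(2,-1)$, $(0,-1)$, none of whose empty members is even. The correct abacus expression is a flux count, $\mathrm{wt}_0(\mu)=\sum_{p\equiv 0\;(n+1)}\bigl(\#\{\mathrm{beads}>p\}-\#\{\mathrm{beads}>p\ \text{in the vacuum}\}\bigr)$, so your ``arithmetic statement'' cannot be verified from the inputs you give; moreover the reduction of the exponent modulo $n+2$ --- which is where all the content of the proposition actually sits --- is only asserted, and the parity claim used to absorb the sign when $n$ is odd is unproved. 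The concluding parenthetical does not repair this: specializing the Frobenius/Jacobi product of~\ref{sec:pfAnFrobpart} only evaluates $Z_\Delta$ at the substitution in closed form and says nothing about $Z^0_\Delta(q)$, which is precisely what the proposition is about.

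The paper closes exactly this gap by a different route: it first reduces the fibre identity from an arbitrary $0$-generated $\lambda$ to its $(n+1)$-core $\nu$ (each border strip contributing a factor $q$ to both sides of~\eqref{form:oneatatime}), and for a $0$-generated core encoded by a nondecreasing tuple $\underline{a}$ it invokes the explicit multiweight formula $\underline{q}^{\underline{\mathrm{wt}}(\lambda)}=q^{Q(\underline{a})/2}q_1^{a_1+\dots+a_n}\cdots q_n^{a_n}$ with $Q(\underline{a})/2=\mathrm{wt}_0(\lambda)$, after which the product of Gaussian binomials collapses to $\xi^{-\sum_l la_l}$ and visibly cancels the $\xi$-power produced by the substitution. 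To complete your version you must either prove your mod-$(n+2)$ identity in the variables $o_k$ directly, starting from a correct abacus formula for $\mathrm{wt}_0$, or insert the core reduction and the core multiweight formula as the paper does.
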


We start by combinatorially relating partitions to $0$-generated partitions. $\CZ^0_\Delta$ is clearly 
a subset of $\CZ_\Delta$, but there is also a map \[p\colon \CZ_\Delta\to\CZ^0_\Delta\]  
defined as 
follows: for an arbitrary partition $\lambda$, let $p(\lambda)$ be the smallest $0$-generated partition 
containing it.
Since the set of $0$-generated partitions is closed under intersection, $p(\lambda)$ is well-defined,
and it can be constructed as follows: $p(\lambda)$ is the complement of the unions of the translates of
${\mathbb N}\times {\mathbb N}$ to $0$-labelled blocks in the
complement of $\lambda$. It is clear that  $p(\lambda)$ can
equivalently be obtained by adding all possible addable blocks to
$\lambda$ of labels different from $0$. 

\begin{remark} The map $p$ can also be described in the language of ideals. 
If the monomial ideal $I\lhd\SC[x,y]$ corresponds to the partition $\lambda$,
then the monomial ideal $i^*p_*I = (I\cap \SC[x,y]^{G_\Delta}).\SC[x,y]\lhd\SC[x,y]$ corresponds to the 
partition~$p(\lambda)$.
\end{remark} 

\begin{lemma} The bead configurations corresponding to $0$-generated partitions are exactly those 
which have all beads right-justified on each row, with no empty position to the right of 
a filled position. The map $p\colon \CZ_\Delta\to\CZ^0_\Delta$ can 
be described in the abacus representation by the process of pushing
all beads of an abacus configuration as far right as possible. 
\label{lem:rows}
\end{lemma}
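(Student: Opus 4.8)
The plan is to transport everything to the abacus and to reduce both assertions to elementary statements about where beads and holes may lie relative to the runner structure. The first task is to record the dictionary between addable blocks and local bead/hole patterns. Recall that $\lambda=(\lambda_1,\lambda_2,\dots)$ corresponds to the bead configuration with a bead at position $p_i=\lambda_i-i+1$ for each $i\ge 1$, the $p_i$ being strictly decreasing and eventually exhausting all sufficiently negative integers. Adjoining the addable block of $\lambda$ in column $a$, which sits at height $\lambda_{a+1}$, replaces $\lambda_{a+1}$ by $\lambda_{a+1}+1$, hence moves the bead from position $p_{a+1}=\lambda_{a+1}-a$ to $p_{a+1}+1$; this is legal exactly when $p_{a+1}+1$ is empty, and conversely every occupied position $p$ whose successor $p+1$ is empty arises this way. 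Writing $p=p_i$, the block adjoined is $(i-1,\lambda_i)$, so its diagonal label is $\big((i-1)-\lambda_i\big)\bmod(n+1)=(-p)\bmod(n+1)$. In particular this label is $0$ exactly when $p\equiv 0\pmod{n+1}$, i.e.\ when $p$ lies in the last runner, equivalently when $p+1$ is the leftmost position of the next row.

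For the first assertion I would merely unwind the definition. By definition $\lambda$ is $0$-generated iff all its addable blocks carry the label $0$, which by the dictionary means there is no occupied position $p$ with $p\not\equiv 0\pmod{n+1}$ whose successor $p+1$ is empty. Within a fixed row, the positions $p\not\equiv 0$ are precisely those that are not the last entry of that row, and for such $p$ the position $p+1$ lies in the same row; so the condition says exactly that within each row no bead stands immediately to the left of a hole, i.e.\ that the beads of each row are right-justified. This is the asserted description of the bead configurations of $0$-generated partitions.

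For the second assertion I would use the description of $p$ recalled just above the lemma: $p(\lambda)$ is obtained from $\lambda$ by repeatedly adjoining addable blocks of nonzero label until none remain, a process that terminates and whose outcome is independent of the order of the choices. By the dictionary, adjoining such a block amounts to moving a bead at some position $p\not\equiv 0\pmod{n+1}$ into the empty slot $p+1$; since $p$ and $p+1$ lie in the same row, no such move alters the number of beads in any row. Hence the whole process only rearranges beads within their rows, and it stops precisely when every row is right-justified, i.e.\ when the configuration is $0$-generated by the first part. Since a right-justified row is determined by its bead count, the terminal configuration is exactly the one obtained from the abacus of $\lambda$ by sliding the beads of each row as far to the right as they will go; this is the claimed abacus description of $p$.

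I do not expect a serious obstacle here: once the dictionary is in place the argument is bookkeeping. The point to handle with care is the normalization of the label formula $(-p)\bmod(n+1)$ against the conventions already fixed — the filling direction of the abacus, the assignment $p_i=\lambda_i-i+1$, and the diagonal labelling $(i,j)\mapsto(i-j)\bmod(n+1)$ — because the equivalence between a block having label $0$ and the corresponding bead lying in the last runner drives both halves of the lemma; and I would state explicitly that pushing beads as far right as possible is to be understood within each row, which is precisely what the row-preservation of the permitted moves yields.
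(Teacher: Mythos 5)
Your proposal is correct and follows essentially the same route as the paper: both rest on the dictionary that an addable block of $\lambda$ corresponds to a bead at a position $p$ with $p+1$ empty and carries label $(-p)\bmod(n+1)$, so that label-$0$ addable blocks are exactly those at the ends of rows; the paper phrases this via the profile of the partition while you compute directly from $p_i=\lambda_i-i+1$. The second half (that $p$ pushes beads right within rows) is likewise the same argument, just written out in more detail than the paper's one-line sketch.
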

\begin{proof}
This follows from the description of the map from a partition to its
abacus representation using the profile of the partition. Indeed, a $0$-generated partition
has a profile which only turns from ``down'' to ``right'' at
$0$-labelled blocks. In other words, the only time when a string of
filled positions can be followed by an empty position is when the last
filled position is on the rightmost runner. In other words, there
cannot be empty positions to the right of filled positions in a row. The proof
of the second statement is similar.
\end{proof}

\begin{remark} As explained above, the maps $c\colon \CZ_\Delta\to{\mathcal C}_\Delta$ and 
$p\colon \CZ_\Delta\to\CZ^0_\Delta$ have natural descriptions on abacus configurations: $c$ corresponds 
to pushing beads all the way up within their column, whereas $p$ corresponds to pushing beads all the way to the right within their row. It is then clear that there is also a third map $\CZ_\Delta\to{}^0\!\CZ_\Delta\subset\CZ_\Delta$, dual to $p$, defined on the abacus by pushing beads all the way to the left. On labelled partitions this corresponds to the operation of removing all possible blocks with labels different from $0$. This dual constuction occured in the literature earlier in \cite{gordon2008quiver}.
\end{remark}

\begin{proof}[Proof of Proposition \ref{prop:ansubst}] 
We will prove the substitution formula 
on the fibres of the map $p\colon \CZ_\Delta\to\CZ^0_\Delta$. In other words, we need
to show that for any given $\lambda_0 \in \CZ^0_\Delta$, we have
\begin{equation} \sum_{\mu \in p^{-1}(\lambda_0)} \underline{q}^{\underline{\mathrm{wt}}(\mu)}\Big|_{q_1=\dots=q_n=\xi,q_0=\xi^{-n}q}=q^{\mathrm{wt}_0(\lambda_0)}.\label{form:oneatatime}\end{equation}

As a first step, we reduce the computation to $0$-generated cores. Given an arbitrary 
$0$-generated partition $\lambda$, by the 
first part of Lemma~\ref{lem:rows} its core $\nu=c(\lambda)$ is also $0$-generated, and 
the corresponding abacus configuration can be obtained by permuting the rows of the configuration of $\lambda$. 
Fix one such permutation $\sigma$ of the rows. Then, using the second part of 
Lemma~\ref{lem:rows}, we can use the row permutation $\sigma$ to define a bijection 
\[\tilde \sigma\colon  p^{-1}(\lambda)\to p^{-1}(\nu)\] 
between (abacus representations of) partitions in the fibres, mapping $\lambda$ itself to $\nu$. 

The difference between the partitions $\lambda$ and $\nu$ is a certain number of border strips, 
each removal represented by pushing up one bead on some runner by one step. Each border 
strip contains one block of each label, so the total number of times
we need to push up a bead by one step on the different runners is $N=\mathrm{wt_0}(\lambda)-\mathrm{wt_0}(\nu)$.
Thus, with $q=q_0\cdot\ldots\cdot q_n$ as in the substitution above, we can write
\[ \underline{q}^{\underline{\mathrm{wt}}(\lambda)}=q^{\mathrm{wt_0}(\lambda)-\mathrm{wt_0}(\nu)}\underline{q}^{\underline{\mathrm{wt}}(\nu)}.
\]
On the other hand, it is easy to see that in fact for any $\mu\in p^{-1}(\lambda)$, the corresponding $\tilde \sigma(\mu)$
can also be obtained by pushing up beads exactly $N$ times, one step at a time, 
the difference being just in the runners on which these shifts are performed. This means that each 
$\mu$ differs from $\tilde \sigma(\mu)$ by the same number 
$N=\mathrm{wt}(\lambda)-\mathrm{wt}(\nu)$ of border strips. 
Therefore, we have 
\[ \sum_{\mu \in p^{-1}(\lambda)} \underline{q}^{\underline{\mathrm{wt}}(\mu)}=q^{\mathrm{wt_0}(\lambda)-\mathrm{wt_0}(\nu)}\sum_{\mu \in p^{-1}(\nu)} \underline{q}^{\underline{\mathrm{wt}}(\mu)}.\]
This is clearly compatible with \eqref{form:oneatatime} and reduces the argument to $0$-generated core 
partitions. 

Fix a $0$-generated core $\lambda\in {\mathcal Z}_\Delta^0\cap {\mathcal C}_\Delta$;
using Lemma~\ref{lem:rows} again, 
the corresponding $(n+1)$-tuple is a set of {\em nondecreasing} 
integers $\underline{a}=(a_0,\dots,a_{n})$ summing to $0$. 
The fibre $p^{-1}(\lambda)$ consists of partitions whose abacus representation contains the same number of 
beads in each row as $\lambda$. The shift of one bead to the left results in the removal in the partition 
of a block labelled $i$, with $1 \leq i \leq n$. After substitution, this multiplies the contribution of 
the diagram on the right hand side of \eqref{form:oneatatime} by $\xi^{-1}$. 
If we fix all but one row, which contains $k$ beads, then these contributions add up to
 \[\sum_{n_1=0}^{n-k+1} \sum_{n_2=0}^{n_1}\dots\sum_{n_k=0}^{n_{k-1}}(\xi^{-1})^{n_1+\dots+n_k}=\binom{n+1}{k}_{\xi^{-1}}, \]
 where $\binom{m}{r}_{z}=\frac{[m]_z!}{[r]_z![m-r]_z!}$ is the Gaussian binomial coefficient, 
with $[m]_z=\frac{1-z^m}{1-z}$.

The number of rows containing exactly $k$ beads in the configuration corresponding to $\lambda$ is $a_{n+1-k}-a_{n-k}$. 
Therefore, the total contribution of the preimages, the left hand side of \eqref{form:oneatatime}, is 
 \begin{gather*}  \sum_{\mu \in p^{-1}(\lambda)} \underline{q}^{\underline{\mathrm{wt}}(\mu)}\Big|_{q_1=\dots=q_n=\xi,q_0=\xi^{-n}q} \\
 =\prod_{k=1}^n \binom{n+1}{k}_{\xi^{-1}}^{a_{n+1-k}-a_{n-k}}\underline{q}^{\underline{\mathrm{wt}}(\lambda)}\Big|_{q_1=\dots=q_n=\xi,q_0=\xi^{-n}q}\\
  = \prod_{l=0}^n \left(\frac{\binom{n+1}{n+1-l}_{\xi^{-1}}}{ \binom{n+1}{n-l}_{\xi^{-1}}}\right)^{a_{l}}\underline{q}^{\underline{\mathrm{wt}}(\lambda)}\Big|_{q_1=\dots=q_n=\xi,q_0=\xi^{-n}q}\\
  =\prod_{l=0}^n \left(\frac{1-\xi^{-l-1} }{1-\xi^{l-n-1}}\right)^{a_{l}}\underline{q}^{\underline{\mathrm{wt}}(\lambda)}\Big|_{q_1=\dots=q_n=\xi,q_0=\xi^{-n}q}\\
  =\prod_{l=1}^n \left(\frac{1-\xi^{-n-1} }{1-\xi^{-1}}\frac{1-\xi^{-l-1} }{1-\xi^{l-n-1}}\right)^{a_{l}}\underline{q}^{\underline{\mathrm{wt}}(\lambda)}\Big|_{q_1=\dots=q_n=\xi,q_0=\xi^{-n}q}
\\
  =\xi^{-\sum_{l=1}^n la_{l}}\underline{q}^{\underline{\mathrm{wt}}(\lambda)}\Big|_{q_1=\dots=q_n=\xi,q_0=\xi^{-n}q},
 \end{gather*}
 where in the second equality we used $\binom{n+1}{0}_z=\binom{n+1}{n+1}_z=1$, 
in the penultimate equality we used $a_0=-a_1-\dots-a_n$, and in the last equality we used 
 \[ \frac{1-\xi^{-n-1} }{1-\xi^{-1}}\frac{1-\xi^{-l-1} }{1-\xi^{l-n-1}}= \xi^{-l},\]
which can be checked to hold for $\xi$ a primitive $(n+2)$-nd root of unity. 
Incidentally, as the multiplicative order of $\xi$ is exactly $n+2$, 
all the denominators appearing above are non-vanishing.
Finally, according to \cite[\S2]{garvan1990cranks}, we have
 \[ \underline{q}^{\underline{\mathrm{wt}}(\lambda)}=q^{\frac{Q(\underline{a})}{2}}q_1^{a_1+\dots+a_n} \cdot \ldots \cdot  q_n^{a_n}, \]
where again $q=q_0\cdot \ldots \cdot q_n$ and  $Q: \SZ^n \rightarrow \SZ$ is the quadratic form associated to $C_{\Delta}$. 
Since $q_0$ appears only in $q$ on the right hand side, it is clear that $\frac{Q(\underline{a})}{2}=\mathrm{wt}_0(\lambda)$. Hence,
 \[ q^{\frac{Q(\underline{a})}{2}}q_1^{a_1+\dots+a_n} \dots   q_n^{a_n}\Big|_{q_1=\dots=q_n=\xi}=q^{\mathrm{wt}_0(\lambda)}\xi^{\sum_{l=1}^n la_{l}}. \]
This concludes the proof.
\end{proof}

\section{\texorpdfstring{Outlook: cyclic quotient singularities of type $(p,1)$}{Outlook: cyclic quotient singularities of type (p,1)}}

This section is based on  \cite{gyenge2016hilbert}. Recall the cyclic quotient singularities of type $(p,1)$ from the end of \ref{sec:quotsing}. 
Let us fix the integer $p$ once and for all. The surface singularity $X(p,1)$ is again toric, i.e. it carries a $(\SC^{\ast})^2$-action with an isolated fixed point. On the level of regular functions the fixed points are the monomials in
\[A:=H^0(\mathcal{O}_{X(p,1)})=\SC[x,y]^{\SZ_p}\cong\SC[x^p,x^{p-1}y,\dots,xy^{p-1},y^{p}].\] The $(\SC^\ast)^2$-action lifts to $\mathrm{Hilb}^n(X(p,1))$ for each $n$. The action of $(\SC^\ast)^2$ on $\mathrm{Hilb}^n(X(p,1))$ again has only isolated fixed points which are given by the finite colength monomial ideals in $A$.

As mentioned before, finite colength monomial ideals inside $\SC[x,y]$ are in one-to-one correspondence with partitions and with Young diagrams. The generators of a monomial ideal are the functions corresponding to those blocks in the complement of the diagram which are at the corners. Since $A\subset \SC[x,y]$, each fixed point of the $(\SC^\ast)^2$-action on $\mathrm{Hilb}^n(X(p,1))$ corresponds to again a Young diagram.

A block at position $(i,j)$ is called a \emph{$0$-block} (for the singularity $X_{(p,1)}$) if $i+j\equiv 0\; (\textrm{mod}\; n)$. We will call a Young diagram \emph{$0$-generated} (for the singularity $X_{(p,1)}$) if its generator blocks are $0$-blocks. The $0$-weight of a (not necessarily $0$-generated) Young diagram $\lambda$ is the number of $0$-blocks inside the Young diagram. It is denoted as $\mathrm{wt}_0(\lambda)$. Let $\mathcal{P}_0$ be the set of $0$-generated Young diagrams. It decomposes as
\[\mathcal{P}_0=\bigsqcup_{n\geq 0} \mathcal{P}_0(n), \]
where $\mathcal{P}_0(n)$ is the set of $0$-generated Young diagrams which have $0$-weight $n$. 

\begin{example} \label{ex:1}
If $p=3$, then the following is a 0-generated Young diagram (we also indicated the generating 0-blocks):

\begin{center}
\begin{tikzpicture}[scale=0.5, font=\footnotesize, fill=black!20]
\draw (0, 0) -- (6,0);
\draw (0,1) --(6,1);
\draw (0,2) --(5,2);
\draw (0,3) --(1,3);
\draw (0,0) -- (0,3);
\draw (1,0) -- (1,3);
\draw (2,0) -- (2,2);
\draw (3,0) -- (3,2);
\draw (4,0) -- (4,2);
\draw (5,0) -- (5,2);
\draw (6,0) -- (6,1);
\draw (0.5,0.5) node {0};
\draw (3.5,0.5) node {0};
\draw (2.5,1.5) node {0};

\draw (1.5,2.5) node {0};
\draw (0.5,3.5) node {0};
\draw (6.5,0.5) node {0};
\draw (5.5,1.5) node {0};
\end{tikzpicture}
\end{center}
The 0-weight of this Young diagram is 3.
\end{example}

The generators of any ideal of $A$, when considered as functions in $\SC[x,y]$, have to be invariant under the $\SZ_p$-action. This implies the following statement.
\begin{lemma}The monomial ideals inside $A$ 
of colength $n$ are in one-to-one correspondence with the $0$-generated Young diagrams in $\mathcal{P}_0(n)$.
\end{lemma}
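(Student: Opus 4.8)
The plan is to make the correspondence explicit by pushing forward and pulling back along the inclusion $i\colon A=\SC[x,y]^{\SZ_p}\hookrightarrow\SC[x,y]$, in exactly the same spirit as the type $A$ discussion leading to Corollary~\ref{cor_part_An_coarse}. Given a finite colength monomial ideal $I\lhd A$, I would send it to the monomial ideal $I\cdot\SC[x,y]\lhd\SC[x,y]$, which corresponds under the usual dictionary to a Young diagram $\lambda$; conversely, a diagram $\lambda\in\mathcal{P}_0$ with associated monomial ideal $I_\lambda\lhd\SC[x,y]$ is sent to $I_\lambda\cap A$. One then has to check that these two assignments are mutually inverse, that the image of the first lands in $\mathcal{P}_0$, and that colength in $A$ equals $\mathrm{wt}_0$.

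The single point that needs care is an elementary divisibility observation: a block $(c,d)$ is a $0$-block precisely when $x^cy^d\in A$, so if $m,m'$ are monomials of $A$ with $m\mid m'$ in $\SC[x,y]$, writing $m'=m\cdot x^cy^d$ forces $c+d$ to be a difference of multiples of $p$, hence $x^cy^d\in A$ and already $m\mid m'$ in $A$. Two consequences follow. First, the minimal monomial generators of $I$ as an ideal of $A$ are also the minimal monomial generators of $I\cdot\SC[x,y]$ as an ideal of $\SC[x,y]$; since every monomial of $A$ sits at a $0$-block, the corner blocks of the resulting $\lambda$ are $0$-blocks, i.e. $\lambda$ is $0$-generated. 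Second, $(I\cdot\SC[x,y])\cap A=I$: a monomial of $\SC[x,y]$ in $I\cdot\SC[x,y]$ is divisible by some generator $m_i\in A$ of $I$, and if it also lies in $A$ then the quotient monomial lies in $A$ too, so the monomial lies in $I$.

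For the other direction, if $\lambda\in\mathcal{P}_0$ then by definition the minimal generators of $I_\lambda$ are $0$-blocks, hence lie in $A$, hence lie in $I_\lambda\cap A$; therefore $I_\lambda\subseteq(I_\lambda\cap A)\cdot\SC[x,y]\subseteq I_\lambda$, so $(I_\lambda\cap A)\cdot\SC[x,y]=I_\lambda$. Together with the previous paragraph this exhibits the two maps as mutually inverse bijections between monomial ideals of $A$ and diagrams in $\mathcal{P}_0$. Finally, $A/I$ has a vector space basis consisting of the monomials of $A$ not lying in $I$; a monomial $x^ay^b$ lies in $A$ exactly when $(a,b)$ is a $0$-block, and it fails to lie in $I$ exactly when $(a,b)\notin I\cdot\SC[x,y]=I_\lambda$, i.e. when $(a,b)$ is a block of $\lambda$. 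Hence $\operatorname{colength}_A(I)$ equals the number of $0$-blocks of $\lambda$, which is $\mathrm{wt}_0(\lambda)$, so $I$ has colength $n$ if and only if $\lambda\in\mathcal{P}_0(n)$.

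I do not expect a genuine obstacle here: the argument is the standard monomial-ideal/Young-diagram dictionary applied to the toric subalgebra $A\subset\SC[x,y]$, and the only mildly delicate step is the divisibility observation above, which is precisely what makes ``$0$-generated'' the correct notion and simultaneously yields both $(I\cdot\SC[x,y])\cap A=I$ and the $0$-generatedness of the associated diagram.
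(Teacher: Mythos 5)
Your argument is correct and follows essentially the same route as the paper, which dispatches this lemma with the single remark that generators of ideals of $A$, viewed in $\SC[x,y]$, must be $\SZ_p$-invariant monomials and hence sit at $0$-blocks. Your write-up simply makes explicit the extension/contraction bijection $I\mapsto I\cdot\SC[x,y]$, $I_\lambda\mapsto I_\lambda\cap A$ and the divisibility observation that justifies it, all of which checks out.
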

\begin{corollary}
\label{cor:sumdiag}
\[Z_{X(p,1)}(q)=\sum_{\lambda \in \mathcal{P}_0}q^{\mathrm{wt}_0(\lambda)}=\sum_{n\geq 0} |\mathcal{P}_0(n)|q^n,\]
where $|S|$ denotes the number of elements in the set $S$.
\end{corollary}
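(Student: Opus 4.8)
The plan is to deduce this directly from the preceding Lemma together with toric localization, exactly as in the sketch of the proof of Theorem~\ref{thm:singsurface}.

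First I would record the localization step. As noted at the beginning of this section, $X(p,1)$ carries an algebraic $(\SC^\ast)^2$-action with a single (isolated) fixed point, and this action lifts to $\mathrm{Hilb}^n(X(p,1))$ for every $n$. Choosing a sufficiently general one-parameter subgroup $\SC^\ast\hookrightarrow(\SC^\ast)^2$, the fixed locus $\mathrm{Hilb}^n(X(p,1))^{\SC^\ast}$ coincides with $\mathrm{Hilb}^n(X(p,1))^{(\SC^\ast)^2}$, and the standard identity $\chi(Y)=\chi(Y^{\SC^\ast})$ for algebraic $\SC^\ast$-actions on a quasi-projective $Y$ gives
\[\chi\!\left(\mathrm{Hilb}^n(X(p,1))\right)=\chi\!\left(\mathrm{Hilb}^n(X(p,1))^{(\SC^\ast)^2}\right).\]

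Next I would identify the right-hand side. The $(\SC^\ast)^2$-fixed ideals of $A=\SC[x,y]^{\SZ_p}$ are precisely the monomial ideals of $A$, i.e. the ideals of $A$ spanned as $\SC$-vector spaces by subsets of the monomial basis of $A$ consisting of the $\SZ_p$-invariant monomials in $x,y$; in particular the fixed locus is a finite set of points, so its topological Euler characteristic equals its cardinality. By the Lemma just proved, the colength-$n$ monomial ideals of $A$ are in bijection with $\mathcal{P}_0(n)$, whence $\chi(\mathrm{Hilb}^n(X(p,1)))=|\mathcal{P}_0(n)|$. Assembling the generating function, multiplying by $q^n$, summing over $n\geq 0$, and using the disjoint decomposition $\mathcal{P}_0=\bigsqcup_{n\geq 0}\mathcal{P}_0(n)$ on which $\mathrm{wt}_0$ is constantly $n$, we obtain
\[Z_{X(p,1)}(q)=\sum_{n\geq 0}\chi\!\left(\mathrm{Hilb}^n(X(p,1))\right)q^n=\sum_{n\geq 0}|\mathcal{P}_0(n)|\,q^n=\sum_{\lambda\in\mathcal{P}_0}q^{\mathrm{wt}_0(\lambda)}.\]

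The only point requiring any care — and the one I would regard as the (minor) crux — is the claim that the $(\SC^\ast)^2$-fixed locus on $\mathrm{Hilb}^n(X(p,1))$ is exactly the finite set of monomial ideals of $A$: this uses that $X(p,1)$ is affine toric with a zero-dimensional torus orbit, and the argument is the classical one for $\mathrm{Hilb}^n(\SC^2)$, decomposing a fixed ideal into weight spaces for the torus action on $A$. Everything else is bookkeeping.
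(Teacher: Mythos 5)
Your proposal is correct and is essentially the proof the paper intends: the corollary is drawn as an immediate consequence of the preceding Lemma together with the observation, made at the start of that section, that the lifted $(\SC^\ast)^2$-action on $\mathrm{Hilb}^n(X(p,1))$ has as its isolated fixed points exactly the finite-colength monomial ideals of $A$, so that $\chi(\mathrm{Hilb}^n(X(p,1)))=|\mathcal{P}_0(n)|$ by localization. Your extra care about identifying the fixed locus via weight-space decomposition is exactly the standard argument the paper is implicitly invoking.
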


For any 0-generated Young diagram, we can consider the area which is between the diagram and the line $x+y=c$. Here $c$ is an integer congruent to 0 modulo $p$ and it is as small as possible such that the line $x+y=c$ does not intersect the diagram. In other words, this line is just the antidiagonal closest to the diagram.  The line cuts out the smallest isosceles right-angled triangle which contains the whole diagram. In the case of Example \ref{ex:1}, this looks as follows:
\begin{center}
\begin{tikzpicture}[scale=0.5, font=\footnotesize, fill=black!20]
\draw (0, 0) -- (6,0);
\draw (0,1) --(6,1);
\draw (0,2) --(5,2);
\draw (0,3) --(1,3);
\draw (0,0) -- (0,3);
\draw (1,0) -- (1,3);
\draw (2,0) -- (2,2);
\draw (3,0) -- (3,2);
\draw (4,0) -- (4,2);
\draw (5,0) -- (5,2);
\draw (6,0) -- (6,1);
\draw (0.5,0.5) node {0};
\draw (3.5,0.5) node {0};
\draw (2.5,1.5) node {0};

\filldraw (6,0) -- (6,1) -- (5,1) -- (5,2) -- (1,2) -- (1,3) -- (0,3) -- (0,7) -- (1,7) -- (1,6) -- (2,6) -- (2,5) -- (3,5) -- (3,4) -- (4,4) -- (4,3) -- (5,3) -- (5,2) -- (6,2) -- (6,1) -- (7,1) -- (7,0)--(6,0) --cycle  ;
\draw (1.5,2.5) node {0};
\draw (0.5,3.5) node {0};
\draw (6.5,0.5) node {0};
\draw (5.5,1.5) node {0};
\draw (4.5,2.5) node {0};
\draw (3.5,3.5) node {0};
\draw (2.5,4.5) node {0};
\draw (1.5,5.5) node {0};
\draw (0.5,6.5) node {0};

\draw[dashed] (-0.5,7.5) -- (7.5,-0.5);

\end{tikzpicture}
\end{center}

If, for example, $p=1$ then the area between the diagram, the $x$ and $y$ coordinate axes, and the above mentioned antidiagonal, when rotated 45 degrees counterclockwise and flipped, is a special type of a \textit{fountain of coins} as introduced in \cite{odlyzko1988editor}. An \emph{$(n,k)$ fountain (of coins)} is an arrangement of $n$ coins in rows such that there are exactly $k$ consecutive coins in the bottom row, and such that each coin in a higher row touches exactly two coins in the next lower row. In our case, the $0$-blocks in the area under consideration (which is colored grey in the diagram above) are replaced by coins or circles or zero symbols.

We generalize this notion to arbitrary $p$. An \emph{$(n,k)$ $p$-fountain} is an arrangement of $n$ coins in rows such that 
\begin{itemize}
\item there are exactly $k$ consecutive coins in the bottom row,
\item immediately below each coin there are exactly $p$+1 ``descendant'' coins in the next lower row,
\item and $p$ coins among the descendants of two neighboring coins coincide.
\end{itemize}
In other words, we rotate and flip the area between the axes and a specific antidiagonal, and the coins can be placed exactly on the 0-blocks, such that if there is a coin somewhere, then there has to be coins on all the 0-blocks in the area which have higher $x$ or $y$ coordinates in the original orientation of the plane. The empty diagram is considered as a $(0,0)$ $p$-fountain.

Continuing the case of Example \ref{ex:1} further, the associated $(9,7)$ $3$-fountain is
\begin{center}
\begin{tikzpicture}[scale=0.5, font=\footnotesize, fill=black!20, rotate=-45, xscale=1,yscale=-1]

\draw[dashed] (0,3) -- (4,3);
\draw[dashed] (0,3) -- (0,7);

\draw[dashed] (1,2) -- (5,2);
\draw[dashed] (1,2) -- (1,6);

\draw (1.5,2.5) node {0};
\draw (0.5,3.5) node {0};
\draw (6.5,0.5) node {0};
\draw (5.5,1.5) node {0};
\draw (4.5,2.5) node {0};
\draw (3.5,3.5) node {0};
\draw (2.5,4.5) node {0};
\draw (1.5,5.5) node {0};
\draw (0.5,6.5) node {0};


\end{tikzpicture}
\end{center}
Here we also indicated the descendants of the coins in the upper row.

An $(n,k)$ $p$-fountain  is called primitive if its next-to-bottom row contains no empty positions, i.e. contains $k-p$ coins. In particular, the fountain with $p$ coins in the bottom row but with no coin in the higher rows is primitive, but the ones with less than $p$ coins in the bottom row are not primitive. The fountains that appear between our 0-generated Young diagrams and the diagonals are special because in each case there is at least one empty position in the next-to-bottom row, so they correspond exactly to the non-primitive $p$-fountains.  

Let $f(n,k)$ (reps., $g(n,k)$) be the number of arbitrary (resp., primitive )$(n,k)$ $p$-fountains. Let $F(q,z)=\sum_{n,k\geq 0}f(n,k)q^nz^k$ (resp., $G(q,z)=\sum_{n,k\geq 0}g(n,k)q^nz^k$) be the two variable generating function of the sequence $f(n,k)$ (resp., $g(n,k)$). We will calculate $F(q,z)$ and $G(q,z)$ by extending the ideas of \cite{odlyzko1988editor}.

By removing the bottom row of a primitive $(n,k)$ $p$-fountain one obtains a $(n-k,k-p)$ $p$-fountain. Therefore,
\[ g(n,k)=f(n-k,k-p) \qquad (n\geq k, k\geq p), \]
and
\begin{equation}\label{eq:fshifted} G(q,z)=(qz)^{p}F(q,qz).\end{equation}
We prescribe that 
\begin{equation} \label{eq:initcond}f(0,0)=\dots=f(p-1,p-1)=1 \textrm{ and } g(0,0)=\dots=g(p-1,p-1)=0.\end{equation}

Let us consider an arbitrary $(n,k)$ $p$-fountain $\CF$, and assume that the first empty position in the next-to-bottom row is the $r$-th ($1 \leq r \leq k-p+1$). Then we can split $\CF$ into a primitive $(m+p-1,r+p-1)$ $p$-fountain and a not necessarily primitive $(n-m,k-r)$ $p$-fountain after the first and before the last descendant coin of the above mentioned missing $r$-th position. The descendant coins at the second to the penultimate positions will be doubled. 

Sticking to our favorite Example \ref{ex:1}, the splitting looks as follows:

\begin{center}
\begin{tabular}{m{5cm} m{0.5cm} m{3.5cm} m{0.5cm} m{3.5cm} }
\begin{tikzpicture}[scale=0.5, font=\footnotesize, fill=black!20, rotate=-45, xscale=1,yscale=-1]
\draw[dashed] (2,1) -- (6,1);
\draw[dashed] (2,1) -- (2,5);
\draw (1.5,2.5) node {0};
\draw (0.5,3.5) node {0};
\draw (6.5,0.5) node {0};
\draw (5.5,1.5) node {0};
\draw (4.5,2.5) node {0};
\draw (3.5,3.5) node {0};
\draw (2.5,4.5) node {0};
\draw (1.5,5.5) node {0};
\draw (0.5,6.5) node {0};
\draw (0.5,6.5) node {0};
\draw (3,4) node {(};
\draw (5,2) node {)};
\end{tikzpicture}
&
$\Rightarrow$
&
\begin{tikzpicture}[scale=0.5, font=\footnotesize, fill=black!20, rotate=-45, xscale=1,yscale=-1]
\draw (1.5,2.5) node {0};
\draw (0.5,3.5) node {0};
\draw (4.5,2.5) node {0};
\draw (3.5,3.5) node {0};
\draw (2.5,4.5) node {0};
\draw (1.5,5.5) node {0};
\draw (0.5,6.5) node {0};
\end{tikzpicture}
&+
&
\begin{tikzpicture}[scale=0.5, font=\footnotesize, fill=black!20, rotate=-45, xscale=1,yscale=-1]
\draw (1.5,2.5) node {};
\draw (3.5,3.5) node {0};
\draw (2.5,4.5) node {0};
\draw (1.5,5.5) node {0};
\draw (0.5,6.5) node {0};
\end{tikzpicture}
\end{tabular}
\end{center}
The dashed line on the left picture indicates the missing coin from the second row and its descendants. The primitive part is to the left of the right parenthesis, while the remaining part is to the right of the left parenthesis.

This factorization is unique. 
\begin{equation} \label{eq:conv} f(n,k)=\sum_{\substack{0\leq m \leq n-p+1 \\ 0 \leq r \leq k-p+1}} g(m+p-1,r+p-1)f(n-m,k-r) \qquad (n,k\geq p). \end{equation}
 The conditions $m \leq n-p+1$ and $r \leq k-p+1$ are equivalent to $p-1 \leq n-m$ and $p-1 \leq k-r$ respectively. That is, the other remaining fountain has to have a first row of length at least $p-1$. From \eqref{eq:initcond} we see that $F(q,z)$ has the form
 \[F(q,z) =1+qz+\dots+ (qz)^{p-1}+\dots.\]
 Then $F(q,z)-1-qz-\dots-(qz)^{p-2}$ is exactly the generating function of fountains, which have a first row of length at least $p-1$, that is, which can appear as the second factor on the right hand side of \eqref{eq:conv}. Therefore,
 \[(qz)^{-p+1}G(q,z)(F(q,z)-1-qz-\dots-(qz)^{p-2})\]
 enumerates all $p$-fountains for which $n,k \geq p$. The generating function of all $p$-fountains then satisfies
\begin{equation} \label{eq:frec} 
F(q,z) =1+qz+\dots+ (qz)^{p-1}+(qz)^{-p+1}G(q,z)(F(q,z)-1-qz-\dots-(qz)^{p-2}). 
\end{equation}

Using \eqref{eq:frec},
\[ (F(q,z)-1-qz-\dots-(qz)^{p-2})(1-(qz)^{-p+1}G(q,z))=(qz)^{p-1}.  \]
Then, by \eqref{eq:fshifted}, the generating function of $p$-fountains is
\begin{equation}\label{eq:Fdef} \begin{aligned}F(q,z)&=\frac{(qz)^{p-1}}{1-(qz)^{-p+1}G(q,z)}+1+qz+\dots+(qz)^{p-2}\\&=\frac{(qz)^{p-1}}{1-qzF(q,qz)}+\frac{1-(qz)^{p-1}}{1-qz}\\
&=\frac{(qz)^{p-1}}{1-qz\left(\frac{(q^2z)^{p-1}}{1-q^2zF(q,q^2z)}+\frac{1-(q^2z)^{p-1}}{1-q^2z}\right)}+\frac{1-(qz)^{p-1}}{1-qz}=\dots\\&=\frac{(qz)^{p-1}}{1-qz\left(\frac{(q^2z)^{p-1}}{1-q^2z\left(\frac{(q^3z)^{p-1}}{1-\dots}+\frac{1-(q^3z)^{p-1}}{1-q^3z} \right)}+\frac{1-(q^2z)^{p-1}}{1-q^2z}\right)}+\frac{1-(qz)^{p-1}}{1-qz}.
\end{aligned}  \end{equation}
Consequently, the generating function of primitive $p$-fountains is
\begin{equation}\label{eq:Gdef} G(q,z)=\frac{(q^2z)^{p-1}}{1-q^2z\left(\frac{(q^3z)^{p-1}}{1-q^3z\left(\frac{(q^4z)^{p-1}}{1-\dots}+\frac{1-(q^4z)^{p-1}}{1-q^4z} \right)}+\frac{1-(q^3z)^{p-1}}{1-q^3z}\right)}+\frac{1-(q^2z)^{p-1}}{1-q^2z}.\end{equation}

\begin{remark}
For $p=1$, Ramanujan \cite[p. 104]{andrews1998theory} obtained the beautiful formula
\[ F(q,z)= \frac{1}{1-\frac{qz}{1-\frac{q^2z}{1-\dots}}}=\frac{\sum_{n\geq 0} (-qz)^n \frac{q^{n^2}}{(1-q)(1-q^2)\dots(1-q^n)}}{\sum_{n\geq 0} (-z)^n \frac{q^{n^2}}{(1-q)(1-q^2)\dots(1-q^n)}}.\]
\end{remark}

The number of non-primitive $(n,k)$ $p$-fountains is obviously
\[ h(n,k)=f(n,k)-g(n,k),\]
which gives
\begin{equation}\label{eq:Hexpr1} H(q,z)=F(q,z)-G(q,z)=F(q,z)-(qz)^{p}F(q,qz) \end{equation}
for the generating series $H(q,z)$ of the numbers $h(n,k)$.



\begin{proof}[Proof of \autoref{thm:cycmain}]
As mentioned above
, we augment each 0-generated Young diagram to the smallest  isosceles right-angled triangle. The area between the diagram and the triangle will be a $p$-fountain. For a fixed $p$, the hypotenuse of the possible isosceles right-angled triangles contains $lp+1$ blocks, where $l$ is a non-negative integer. The number of 0-blocks in the triangle with $lp+1$ blocks on the hypotenuse is $\sum_{0 \leq i \leq l}ip+1=p\frac{l(l+1)}{2}+l+1$. Therefore, the two variable generating series of these triangles is
\[\sum_{l\geq 0} q^{p\frac{l(l+1)}{2}+l+1}z^{lp+1}.\]
We will see immediately that adding the terms with negative $l$ will not affect the final result. Hence, we define
\begin{equation}\label{eq:Tdef} 
\begin{aligned}
T(q,z)& =\sum_{l=-\infty}^{\infty} q^{p\frac{l(l+1)}{2}+l+1}z^{lp+1}=(qz)\sum_{l=-\infty}^{\infty}(q^p)^{\binom{l+1}{2}}(qz^p)^l\\
& =(qz)\prod_{n=1}^{\infty}(1+z^pq^{np+1})(1+z^{-p}q^{(n-1)p-1})(1-q^{np}),
\end{aligned}
\end{equation}
where at the last equality we have used the following form of the Jacobi triple product identity:
\[ \prod_{n=1}^{\infty}(1+zq^n)(1+z^{-1}q^{n-1})(1-q^n)=\sum_{j=-\infty}^{\infty}z^jq^{\binom{j+1}{2}}. \]

The generating series of 0-generated Young diagrams is then
\begin{equation}\label{eq:z0coeffcyc} \sum_{\lambda \in \mathcal{P}_0}q^{\mathrm{wt}_0(\lambda)}=[z^0]T(q,z)H(q^{-1},z^{-1}).\end{equation}
Taking the coefficient of $z^0$ ensures that the hypotenuse of the triangle and the bottom row of the $p$-fountain match together, and also that the terms of $T(q,z)$ with negative powers of $z$ do not contribute into the result. Putting together Corollary \ref{cor:sumdiag}, \eqref{eq:Hexpr1}
and \eqref{eq:z0coeffcyc} concludes the proof. 
\end{proof}

\chapter{Type \texorpdfstring{$D_n$}{Dn}: ideals and Young walls}
\label{ch:Dnideal}

This chapter starts to develop the tools for proofs in the type $D$ case. Young walls, which are the analogs of diagonally labelled Young diagrams of the type $A$ case, are introduced. Their building blocks are shown to be in bijection with cells of the equivariant Grassmannian. Finally, with each invariant ideal a Young wall is associated.

\section{The binary dihedral group}

Fix an integer $n\geq 4$, and let~$\Delta$ be the root system of type $D_n$. 
For $\varepsilon$ a fixed primitive $(2n-4)$-th root of unity, the corresponding subgroup
$G_\Delta$ of $SL(2,\SC)$ can be generated by the following two elements $\sigma$ and $\tau$:
\[ \sigma= 
\begin{pmatrix}
\varepsilon & 0\\
0 & \varepsilon^{-1} \\
\end{pmatrix}, \qquad
\tau=\begin{pmatrix}
0 & 1\\
-1 & 0 \\
\end{pmatrix}.
\]
The group $G_\Delta$ has order $4n-8$, and is often called the binary dihedral group.
We label its irreducible representations as shown in Table \ref{dnchartable}. There is a distinguished
2-dimensional representation, the defining representation $\rho_{\mathrm{nat}}=\rho_2$.
See \cite{ito1999hilbert, decelis2008dihedral} for more detailed information. 
 
We will often meet the involution on the set of representations of $G_\Delta$ 
which is given by tensor product with the sign representation
$\rho_1$: on the set of indices $\{0,\dots,n\}$, this is the involution $j\mapsto \kappa(j)$ which swaps $0$ and $1$ and $n-1$ 
and $n$, fixing other values $\{2,\dots, n-2\}$. 
Given $j \in \{0,\dots,n\}$, we denote $\kappa(j,k)=\kappa^{kn}(j)$;
this is an involution which is nontrivial when $k$ and $n$ are odd, and trivial otherwise. 
The special case $k=1$ will also be denoted as $\underline{j}=\kappa^n(j)$.

The following identities will be useful:
\begin{equation}
\label{eq:repstensor}
\rho_{n-1}^{\otimes 2} \cong \rho_{n}^{\otimes 2} \cong \rho_{\underline{0}}, \ \ \rho_{n-1} \otimes \rho_{n} \cong \rho_{\underline{1}}, \ \ \rho_1 \otimes \rho_{n-1} \cong \rho_{n}, \ \ \rho_1 \otimes \rho_{n} \cong \rho_{n-1}, \ \ \rho_1^{\otimes 2}\cong \rho_0.
\end{equation}

\begin{table}
\begin{center}
\begin{tabular}{|c|c c c|}
\hline
$\rho$ & $\mathrm{Tr}(1)$ & $\mathrm{Tr}(\sigma)$ & $\mathrm{Tr}(\tau)$ \\
\hline
$\rho_0$ & 1 & 1 & 1\\
$\rho_1$ & 1 & 1 & $-1$ \\
$\rho_2$ & 2 & $\varepsilon+\varepsilon^{-1} $ & 0\\
\vdots & & \vdots & \\
$\rho_{n-2}$ & 2 & $\varepsilon^{n-3}+\varepsilon^{-(n-3)} $ & 0\\
$ \rho_{n-1}$ & 1 & $-1$ & $-i^n$\\
$ \rho_{n}$ & 1 & $-1$ & $i^n$\\
\hline
\end{tabular}
\vspace{0.2in}
\caption{Labelling the representations of the group $G_\Delta$}
\label{dnchartable}
\end{center}
\end{table}

\section{Young wall pattern and Young walls}
\label{sec:PYW}

We describe here the type $D$ analogue of the set of labelled partitions used in type $A$, following \cite{kang2004crystal,kwon2006affine}. 
In this section, we only describe the combinatorics; see \ref{sec:repaffLie} for the representation-theoretic significance of 
this set. 

First we define the {\em Young wall pattern of type\footnote{The combinatorics of this section should really be called 
type $\tilde{D}_n^{(1)}$, but we do not wish to overburden the notation. Also we have reflected the pattern in a vertical axis compared to the pictures of \cite{kang2004crystal,kwon2006affine}.}  $D_n$}, 
the analogue of the $(n+1)$-labelled positive quadrant 
lattice of type $A_{n}$ used above. This is the following infinite
pattern, consisting of two types of blocks: half-blocks carrying possible labels
$j\in\{0,1,n-1, n\}$, and full blocks carrying possible labels $1<j<n-1$:

\begin{center}
\begin{tikzpicture}[scale=0.6, font=\footnotesize, fill=black!20]
  \foreach \x in {1,2,3,4,5,6,7,8}
    {
      \draw (\x, 0) -- (\x,11+0.2);
    }
     \draw (0, 0) -- (0,12);
   \foreach \y in {1,2,3.5,4.5,5.5,6.5,8,9,10,11}
    {
         \draw (0,\y) -- (8.2,\y);
    }
    \draw (0,0) -- (9,0);
    \foreach \x in {0,1,2,3,4,5,6,7}
    {
    	\draw (\x,4.5) -- (\x+1,5.5);
    	\draw (\x,9) -- (\x+1,10);
    	\draw (\x+0.5,1.5) node {2};
    	\draw (\x+0.5,4) node {$n$$-$$2$};
    	\draw (\x+0.5,6) node {$n$$-$$2$};
    	\draw (\x+0.5,8.5) node {2};
    	\draw (\x+0.5,10.5) node {2};
    	\draw(\x+0.5,2.85) node {\vdots};
    	\draw(\x+0.5,7.35) node {\vdots};
    	\filldraw (\x,0) -- (\x+1,1) -- (\x+1,0) -- cycle ;
    }
     \foreach \x in {0,2,4,6}
        {
        	\draw (\x+0.25,0.65) node {0};
        	\draw (\x+0.75,0.35) node {1};
        	\draw (\x+0.49,5.28) node  {$n$$-$$1$};
        	\draw (\x+0.75,4.72) node {$n$};
        	\draw (\x+0.25,9.65) node {0};
        	\draw (\x+0.75,9.35) node {1};
        }
       \foreach \x in {1,3,5,7}
             {
             	\draw (\x+0.25,0.65) node {1};
             	\draw (\x+0.75,0.35) node {0};
             	\draw (\x+0.25,5.28) node {$n$};
             	\draw (\x+0.52,4.72) node {$n$$-$$1$};
             	\draw (\x+0.25,9.65) node {1};
             	\draw (\x+0.75,9.35) node {0};
             }
             
       \draw (9,5) node {\dots};
       \draw (4,12) node {\vdots};
\end{tikzpicture}
\end{center}

Next, we define the set of {\em Young walls\footnote{In \cite{kang2004crystal,kwon2006affine}, these arrangements are called {\em proper Young walls}. Since we will not meet any other Young wall, we will drop the adjective {\em proper} for brevity.} of type $D_n$}. A Young wall of type $D_n$ is a 
subset~$Y$ of the infinite Young wall of type $D_n$, satisfying the following rules. 
\begin{enumerate}
\item[(YW1)] $Y$ contains all grey half-blocks, and a finite number of the white blocks and half-blocks. 
\item[(YW2)] $Y$ consists of continuous columns of blocks, with no block placed on top of a missing block or half-block. 
\item[(YW3)] Except for the leftmost column, there are no free
  positions to the left of any block or half-block. Here the rows of
  half-blocks are thought of as two parallel rows; only half-blocks of the same orientation have to be present.
\item[(YW4)] A full column is a column with a full block or both half-blocks present at its top; then no two full columns 
have the same height\footnote{This is the properness condition of \cite{kang2004crystal}.}.
\end{enumerate}

Let $\CZ_\Delta$ denote the set of all Young walls of type $D_n$. For any $Y\in \CZ_\Delta$ and label $j\in \{0, \ldots, n\}$ let $wt_j(Y)$ be the number of white half-blocks, respectively blocks, of label $j$. These are collected into the multi-weight vector $\underline{wt}(Y)=(wt_0(Y), \ldots, wt_n(Y))$. The total weight of $Y$ is the sum
\[|Y|=\sum_{j=0}^{n} wt_j(Y),
\]
and for the formal variables $q_0,\dots,q_n$,
\[\underline{q}^{\underline{wt}(Y)}=\prod_{j=0}^{n}q_j^{wt_j(Y)}.\]

\section{Decomposition of \texorpdfstring{$\SC[x,y]$}{C[x,y]} and the transformed Young wall pattern}

The group $G_\Delta$ acts on the affine plane $\SC^2$ via the defining representation $\rho_{\mathrm{nat}}=\rho_2$. Let $S=\SC[x,y]$ be the coordinate ring of the plane, then $S=\oplus_{m\geq 0} S_m$  where $S_m$ is the $m$th symmetric power of $\rho_{\mathrm{nat}}$, the space of homogeneous polynomials of degree $m$ of the coordinates $x,y$. 

We further decompose
\[  S_m = \bigoplus_{j=0}^{n} S_m[\rho_j]
\]
into subrepresentations indexed by irreducible representations. We will also use this notation for linear 
subspaces: for $U\subset S_m$ a linear subspace, $U[\rho_j]=U \cap  S_m[\rho_j]$.
We will call an element $f\in S$ {\em degree homogeneous}, 
if $f\in S_m$ for some $m$; we call it {\em degree and weight homogeneous}, if $f\in S_m[\rho_j]$ 
for some $m, j$. 

The decomposition of $S$ into $G_\Delta$-summands can be read off very conveniently from the
\textit{transformed Young wall pattern}. The transformation is an affine one, involving a shear: 
reflect the original Young wall pattern in the line $x = y$ in the plane, translate the $n$th row 
by $n$ to the right, and remove the grey triangles of the
original pattern. In this way, we get the following picture: 

\begin{center}
\begin{tikzpicture}[scale=0.6, font=\footnotesize, fill=black!20]
  \draw (5,5) node {\reflectbox{$\ddots$}};
  \draw (15,2) node {\dots};
  \draw (4,4) -- (4,5);
  \draw (0,0 ) -- (13,0);
    \foreach \y in {1,2,3,4}
        {
          \draw (\y -1,\y ) -- (\y +12+0.2,\y);
        }
    \foreach \y in {0,1,2,3}
    {
    \foreach \x in {0,1,2,4,5,6,7,9,10,11,12}
                  {
                       \draw (\y+\x ,\y) -- (\y+\x ,\y+1);
                  }
    	\draw (\y+6,\y) -- (\y+5,\y+1);
    	\draw (\y+11,\y) -- (\y+10,\y+1);
    	\draw (\y+1.5,\y+0.5) node {2};
    	\draw (\y+4.5,\y+0.5) node {$n$$-$$2$};
    	\draw (\y+6.5,\y+0.5) node {$n$$-$$2$};
    	\draw (\y+9.5,\y+0.5) node {2};
    	\draw (\y+11.5,\y+0.5) node {2};
    	\draw(\y+3,\y+0.5) node {\dots};
    	\draw(\y+8,\y+0.5) node {\dots};
    }
     \foreach \y in {0,2}
        {
      	    \draw (\y+0.5,\y+0.5) node {0};
        	\draw (\y+5.49,\y+0.25) node  {$n$$-$$1$};
        	\draw (\y+5.78,\y+0.75) node {$n$};
        	\draw (\y+10.75,\y+0.65) node {0};
        	\draw (\y+10.25,\y+0.35) node {1};
        }
        \foreach \y in {1,3}
          {
              	    \draw (\y+0.5,\y+0.5) node {1};
                	\draw (\y+5.25,\y+0.25) node  {$n$};
                	\draw (\y+5.52,\y+0.75) node {$n$$-$$1$};
                	\draw (\y+10.75,\y+0.65) node {1};
                	\draw (\y+10.25,\y+0.35) node {0};
          }
      
\end{tikzpicture}
\end{center}

As it can be checked readily, this is a representation of $S$ and its
decomposition into $G_\Delta$-representations. The homogeneous components
$S_m$ are along the antidiagonals. For $1<i<n-1$, a full block
labelled $j$ below the diagonal, together with its mirror image,
correspond to a 2-dimensional representation $\rho_j$. For $j\in\{0,1,n-1,
n\}$, a full block labelled $j$ on the diagonal, as well as a half-block
labelled $j$ below the diagonal with its mirror
image, corresponds to a one-dimensional representation. 
The dimension of $S_m[\rho_j]$ is the same as the total number of
full blocks labelled $j$ on the $m$th diagonal in the transformed
Young wall pattern, counting mirror images also.

It is easy to translate the conditions (YW1)-(YW4) into the combinatorics 
of the transformed pattern; see Proposition~\ref{wallprop} and Remark~\ref{rem:corr_young_rules} below. Pictures of some small Young walls in the transformed pattern can be found in Examples~\ref{ex:3sidepyr}-\ref{ex:GHilb} below.

\section{Subspaces and operators}
\label{sec:subops}

For each non-negative integer $m$ and irreducible representation~$\rho_j$, consider the space $P_{m,j}$ of nontrivial 
$G_\Delta$-invariant subspaces of minimal dimension in $S_m[\rho_j]$. Specifically, if $\rho_j$ is one-dimensional, 
then these will be lines, and $P_{m,j}$ is simply the projectivization $\SP S_m[\rho_j]$.
If $\rho_j$ is two-dimensional, then $P_{m,j}$ is a closed subvariety of $\Gr(2,S_m[\rho_j])$. It is easy to 
see that in this case also, $P_{m,j}$ is isomorphic to a projective space. 

More generally, let $G_{m,j}^r$ be the space of $(r-1)$-dimensional projective subspaces of $P_{m,j}$. If $\rho_j$ is one-dimensional, then this is the Grassmannian $\Gr(r, S_m[\rho_j])$. When $\rho_j$ is two-dimensional, then $G_{m,j}^r$ is a closed subvariety of $\Gr(2 r,S_m[\rho_j])$ isomorphic to a Grassmannian of rank $r$. Clearly $G_{m,j}^1=P_{m,j}$.

For $0\leq j\leq n$, we introduce operators $L_j\colon \Gr(S)\to \Gr(S)$ 
on the Grassmannian $\Gr(S)$ of all linear subspaces of the vector space $S$ 
as follows: for $v\in\Gr(S)$, we set
\begin{enumerate}
\item $L_0 v = v$;
\item $L_1 v = xy\cdot v$;
\item for $1<j<n-1$, $L_j v = \langle x^{j-1} \cdot v, y^{j-1} \cdot v\rangle$;
\item $L_{n-1} v =(x^{n-2}-i^ny^{n-2})\cdot v$;
\item $L_{n} v =(x^{n-2}+i^ny^{n-2})\cdot v$.
\end{enumerate}
Sometimes we will use the notation $L_2=L_{2,x}+L_{2,y}$ for the $x$- and $y$-component of the operator $L_2$, i.e.~multiplication with $x$, respectively $y$. 
The operators above restrict to operators $L_0\colon \Gr(S_m)\to \Gr(S_m)$, $L_1\colon \Gr(S_m)\to\Gr(S_{m+2})$, 
$L_j\colon \Gr(S_m)\to \Gr(S_{m+j-1})$ for $1<j<n-1$, and $L_{n-1}, L_n\colon \Gr(S_m)\to\Gr(S_{m+n-2})$
on the Grassmannians of the graded pieces $S_m$.
To simplify notation, if we do not write the space to which these
operators are applied, then application to $\langle 1\rangle$ is
meant. So, for example, the symbol $L_1^2$ standing alone denotes the vector subspace $\langle
x^2y^2 \rangle$ of $S_4$, while $L_2$ alone denotes the two-dimensional
vector subspace $\langle x,y \rangle$ of $S_1$. For a linear subspace
$v$ of $S$, the sum $\sum_{j \in I}L_j v$ denotes the subspace of $S$
generated by the images $L_j v$. We use the operator notation also for
a set of subspaces; the meaning should be clear from the context. 

\section{Cell decompositions of equivariant Grassmannians}
\label{sec:schubertcells}

We start this section by defining decompositions of the
Grassmannians $P_{m,j}$ of nontrivial 
$G_\Delta$-invariant subspaces of minimal dimension in $S_m[\rho_j]$. 
Given $(m,j)$, let $B_{m,j}$ denote the set of pairs of non-negative
integers $(k,l)$ such that $k+l=m$, $l \geq k$, and the block
position $(k,l)$ on the $m$-th antidiagonal on or below the main diagonal contains a block or 
half-block of color $j$. Here $k$ is the row index, $l$ is the column index, and both of them in a nonnegative integer. It clearly follows from this setup that 
\[ \dim P_{m,j} = |B_{m,j}| -1.\]

\begin{proposition} Given $(m,j)$, there exists a locally closed stratification
\[  P_{m,j} = \displaystyle\bigsqcup_{(k,l)\in B_{m,j}} V_{k,l,j},\]
which is a standard stratification of the projective space $P_{m,j}$ into affine spaces $V_{k,l,j}$ of 
decreasing dimension.
\label{prop:decomp of P}
\end{proposition}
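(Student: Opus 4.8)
The plan is to recognise $P_{m,j}$ as an ordinary projective space and then apply the textbook affine cell decomposition of a projective space, once a coordinate system indexed by $B_{m,j}$ has been fixed.

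The first step is to identify $P_{m,j}$ with a projectivised $\mathrm{Hom}$-space. For any irreducible $\rho_j$, whether one- or two-dimensional, a $G_\Delta$-invariant subspace of $S_m[\rho_j]$ of minimal positive dimension is precisely the image of a nonzero $G_\Delta$-equivariant homomorphism $\rho_j\to S_m$, and two such homomorphisms have the same image exactly when they differ by a nonzero scalar. Hence there is a canonical isomorphism
\[ P_{m,j}\;\cong\;\SP\bigl(\mathrm{Hom}_{G_\Delta}(\rho_j,S_m)\bigr), \]
so $P_{m,j}$ is a projective space. To exhibit a basis of $\mathrm{Hom}_{G_\Delta}(\rho_j,S_m)$, I would decompose $S_m=\SC[x,y]_m$ into $G_\Delta$-isotypic pieces directly on the monomial basis: $\sigma=\mathrm{diag}(\varepsilon,\varepsilon^{-1})$ acts on $x^ay^b$ by the scalar $\varepsilon^{a-b}$, while $\tau$ permutes the monomials up to sign via $x^ay^b\mapsto\pm x^by^a$. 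Grouping the monomials into pairs $\{x^ay^b,x^by^a\}$ and comparing with the character table in Table~\ref{dnchartable}, one finds that each such pair contributes at most one copy of $\rho_j$, and that the copies of $\rho_j$ occurring in $S_m$ are in natural bijection with the block positions $(k,l)$, $k+l=m$, $l\geq k$, carrying a block or half-block of colour $j$ in the transformed Young wall pattern — that is, with $B_{m,j}$. This is exactly the bookkeeping that the construction of the transformed pattern in the previous section encodes, and it reproves $\dim P_{m,j}=|B_{m,j}|-1$. Choosing a generator for the line of $\mathrm{Hom}_{G_\Delta}(\rho_j,S_m)$ attached to each $(k,l)\in B_{m,j}$ yields a basis $(e_{k,l})_{(k,l)\in B_{m,j}}$, hence an isomorphism $P_{m,j}\cong\SP\bigl(\SC^{B_{m,j}}\bigr)$.

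Having fixed these coordinates, I would put a total order on $B_{m,j}$ — for instance by decreasing column index $l$, equivalently by increasing distance from the main diagonal — and define $V_{k,l,j}\subset P_{m,j}$ to be the locus of points whose first nonvanishing homogeneous coordinate in this order is the one indexed by $(k,l)$. Normalising that coordinate to $1$ identifies $V_{k,l,j}$ with the affine space on the coordinates indexed by those elements of $B_{m,j}$ that come strictly after $(k,l)$, so $V_{k,l,j}\cong\SA^{d_{k,l}}$ with $d_{k,l}=\#\{(k',l')\in B_{m,j}:(k',l')>(k,l)\}$. It remains to check the three assertions. The $V_{k,l,j}$ partition $P_{m,j}$ because every nonzero vector of $\SC^{B_{m,j}}$ has a well-defined leading coordinate. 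Each $V_{k,l,j}$ is locally closed in $P_{m,j}$, being cut out by the closed conditions that all coordinates preceding $(k,l)$ vanish together with the open condition that the $(k,l)$-coordinate does not vanish. Finally, $d_{k,l}$ strictly decreases as $(k,l)$ advances in the chosen order, the first (open and dense) cell has dimension $|B_{m,j}|-1=\dim P_{m,j}$, and the last cell is a single point; this is precisely the standard stratification of a projective space into affine cells of decreasing dimension.

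The main obstacle is concentrated in the second step: carrying out the isotypic decomposition of $S_m$ precisely enough to match the monomial bookkeeping with $B_{m,j}$. One has to handle the half-blocks lying on the main diagonal, decide for each diagonal block whether its colour is $j$ or the conjugate $\kappa(j)$ — this is where the distinction between $\rho_0$ and $\rho_1$, and between $\rho_{n-1}$ and $\rho_n$, together with the parity constraints and the tensor identities \eqref{eq:repstensor}, enter — and reconcile the sheared block coordinates $(k,l)$ of the transformed pattern with the monomial exponents $(a,b)$. For the two-dimensional $\rho_j$ one must also invoke Schur's lemma to know that $\mathrm{Hom}_{G_\Delta}(\rho_j,S_m)$ genuinely has a basis indexed by $B_{m,j}$, so that $P_{m,j}$ is an honest projective space rather than merely a closed subvariety of a Grassmannian. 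Once these identifications are in place, the cell decomposition is the usual one for $\SP^N$ and no further argument is needed.
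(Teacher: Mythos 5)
Your argument is correct and establishes the proposition as stated, but it is organised differently from the paper's proof. The paper chooses no coordinates: it first shows (Lemma~\ref{lem:codim1_2}) that multiplication by $L_1=xy$ gives an injection $P_{l-2,\kappa(j)}\to P_{l,j}$ whose image is either everything or a hyperplane, defines the bottom-row cells as $V_{0,l,j}=P_{l,j}\setminus L_1P_{l-2,\kappa(j)}$, and then sets $V_{k,k+l,j}=L_1^kV_{0,l,\kappa^k(j)}$; the stratification is thus the standard one attached to the flag of images of powers of $L_1$, and the $L_1$-compatibility of the cells holds by definition. You instead identify $P_{m,j}$ with $\SP\bigl(\mathrm{Hom}_{G_\Delta}(\rho_j,S_m)\bigr)$ --- a clean way to see that it really is a projective space, which the paper only asserts --- pick the basis coming from the monomial pairs $\{x^ay^b,x^by^a\}$, and take the leading-coordinate cells for a total order on $B_{m,j}$. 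The two constructions in fact produce the same cells, because the span of your basis vectors with $\min(a,b)\geq k$ is exactly $(xy)^k\,S_{m-2k}[\rho_{\kappa^k(j)}]$, so your flag is the flag of $L_1$-images. Two caveats. First, this coincidence is not visible in your write-up: the proposition as literally stated is satisfied by \emph{any} complete flag, but everything downstream (Lemma~\ref{lem:codim1_2}, Definition~\ref{def_wall_to_ideal}, Proposition~\ref{prop:1}, the whole Young-wall-of-an-ideal machinery) requires the specific property $V_{k,k+l,j}=L_1^kV_{0,l,\kappa^k(j)}$, so you should record explicitly that your order is by $\min(a,b)$ and that the resulting flag is the $L_1$-flag. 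Second, a small slip: ordering by decreasing column index $l$ is ordering by \emph{decreasing} distance from the main diagonal $k=l$ (that distance is $l-k=m-2k$), not increasing; the construction is unaffected, since all that matters is that the position with minimal $k$ comes first and receives the open dense cell.
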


We will call $V_{k,l,j}$ the {\em cells} of $P_{m,j}$. 
The decomposition will be defined inductively, based on the following Lemma. 
Recall that $j\mapsto \kappa(j)$ denotes the involution on $\{0, \ldots, n\}$ which swaps $0$ and $1$ and 
$n-1$ and $n$. 

\begin{lemma} For any $l\geq 0$ and any $j\in[0,n]$, we have an injection 
 \[ L_1 \colon P_{l-2, j} \to P_{l, \kappa(j)}.\]
This map is an isomorphism except in the case when the block or half-block in the bottom row of
the transformed Young wall pattern on the $l$-th antidiagonal has label $j$, in which case the image has
codimension one. 
\label{lem:codim1_2}
\end{lemma}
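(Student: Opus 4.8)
The plan is to realize $L_1$ as the restriction of an honest linear map on $S$, and then translate the comparison of dimensions into a one‑line count in the transformed Young wall pattern.

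First I would observe that $xy$ spans a one–dimensional subrepresentation of $S_2$ isomorphic to the sign representation $\rho_1$: one has $\sigma\cdot xy=\varepsilon\varepsilon^{-1}xy=xy$ and $\tau\cdot xy=y\cdot(-x)=-xy$, matching the character of $\rho_1$ in Table~\ref{dnchartable}. Hence multiplication by $xy$ is a $\SC$–linear map $m_{xy}\colon S_m\to S_{m+2}$ which is injective (as $\SC[x,y]$ is a domain) and carries $S_m[\rho_j]$ into $S_{m+2}[\rho_j\otimes\rho_1]=S_{m+2}[\rho_{\kappa(j)}]$, using that tensoring an irreducible by $\rho_1$ realizes the involution $j\mapsto\kappa(j)$ on labels (immediate for $2\le j\le n-2$ from characters, and from~\eqref{eq:repstensor} for $j\in\{0,1,n-1,n\}$). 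The operator $L_1$ is precisely the restriction of $m_{xy}$ to the relevant Grassmannians.

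Next I would check that $L_1$ sends minimal nontrivial $G_\Delta$–invariant subspaces to minimal nontrivial $G_\Delta$–invariant subspaces. If $\rho_j$ is one–dimensional, a line $v\in P_{l-2,j}$ goes to the line $xy\cdot v\subset S_l[\rho_{\kappa(j)}]$. If $\rho_j$ is two–dimensional, a subspace $v\in P_{l-2,j}$ with $v\cong\rho_j$ goes to $xy\cdot v\subset S_l$, which is two–dimensional and isomorphic to $\rho_j\otimes\rho_1=\rho_{\kappa(j)}$, hence irreducible and contained in $S_l[\rho_{\kappa(j)}]$; so it lies in $P_{l,\kappa(j)}$. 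Thus $L_1$ is a well–defined map $P_{l-2,j}\to P_{l,\kappa(j)}$, injective because $m_{xy}$ is. Moreover, under the identifications recalled in~\ref{sec:subops} of $P_{m,i}$ with a projective space — $P_{m,i}=\SP S_m[\rho_i]$ when $\rho_i$ is one–dimensional, and $P_{m,i}\cong\SP\mathrm{Hom}_{G_\Delta}(\rho_i,S_m[\rho_i])$ when $\rho_i$ is two–dimensional — the map $L_1$ is the projectivization of a linear injection (literally $m_{xy}$ in the first case, and $\phi\mapsto m_{xy}\circ\phi$ after fixing a $G_\Delta$–isomorphism $\rho_{\kappa(j)}\xrightarrow{\sim}\rho_j\otimes\rho_1$ in the second). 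Consequently the image of $L_1$ is a linear projective subspace of $P_{l,\kappa(j)}$ of dimension exactly $\dim P_{l-2,j}$.

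Since an injective linear embedding of projective spaces is an isomorphism onto a linear subspace, the codimension of the image equals $\dim P_{l,\kappa(j)}-\dim P_{l-2,j}$, which by $\dim P_{m,i}=|B_{m,i}|-1$ equals $|B_{l,\kappa(j)}|-|B_{l-2,j}|$ (with the convention $\dim\emptyset=-1$ in the degenerate small–$l$ cases, where both spaces are empty and the statement holds vacuously). It therefore remains to compare these two cardinalities, which is the combinatorial core. On monomials $m_{xy}$ acts by $x^ay^b\mapsto x^{a+1}y^{b+1}$; translated into the transformed Young wall pattern this shifts a block position $(k,l')$ on the $(l-2)$‑nd antidiagonal to $(k+1,l'+1)$ on the $l$‑th antidiagonal, preserving the $\sigma$–weight and flipping the $\tau$–weight, i.e.\ replacing a block (or half–block) of label $i$ by one of label $\kappa(i)$. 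This yields a bijection between $B_{l-2,j}$ and the set of positions of label $\kappa(j)$ on the $l$‑th antidiagonal with row index $\ge 1$; the only position on that antidiagonal not in the image is $(0,l)$, the one in the bottom row. Hence $|B_{l,\kappa(j)}|=|B_{l-2,j}|+\delta$ with $\delta=1$ exactly when the bottom–row block/half–block on the $l$‑th antidiagonal carries the label $\kappa(j)$. Finally, any position strictly below the diagonal carries a block or half–block of label $i$ if and only if it carries one of label $\kappa(i)$ (either it holds a full block with $2\le i\le n-2$, where $\kappa(i)=i$, or it holds the two $\kappa$–related half–blocks together), while the only on–diagonal bottom–row position is $(0,0)$, occurring for $l=0$, which is the vacuous case; so $\delta=1$ precisely when the bottom–row block on the $l$‑th antidiagonal has label $j$, as claimed. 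The step I expect to be the main obstacle is this last one: setting up cleanly the dictionary between the monomial shift $x^ay^b\mapsto x^{a+1}y^{b+1}$ and the geometry of the transformed (sheared) pattern, and tracking half–blocks versus full blocks near the diagonal carefully enough to pin down which position is "new" and what its label is. The representation–theoretic input in the earlier steps is routine.
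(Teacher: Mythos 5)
Your proof is correct and follows essentially the same route as the paper's: injectivity is immediate since $L_1$ is (the projectivization of) multiplication by $xy$, and the codimension is read off by matching the shift $(k,l')\mapsto(k+1,l'+1)$ of antidiagonals in the transformed pattern against the single new bottom-row position $(0,l)$. Your version merely spells out the details the paper leaves implicit (that the image is a linear subspace, and that for $l>0$ the position $(0,l)$ carries label $j$ iff it carries label $\kappa(j)$), which is a welcome amplification rather than a different argument.
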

\begin{proof} It is clear that multiplication by $L_1$ induces an injection, so we simply need to 
check the dimensions. The statement then clearly follows by looking at the transformed Young wall pattern: 
multiplication by $L_1$ corresponds to shifting the $(l-2)$-nd diagonal up by one diagonal step to the $l$-th diagonal;
the number of blocks or half-blocks labelled $j$ is identical, unless the new (half-)block has 
label $j$, and then the codimension is exactly one. 
\end{proof}
\begin{remark} The half-block in the bottom row of
the transformed Young wall pattern in the $l$-th antidiagonal has label $j=0,1$ for 
$l \equiv 0 \; \mathrm{mod} \; (2n-4)$ except at $(0,0)$ where only $0$ occurs. 
Half-blocks labelled $j=(n-1), n$ occur for $l \equiv n-2 \; \mathrm{mod} \; (2n-4)$.
For $j\in [2,n-2]$, there are full blocks labelled $j$ in the bottom row on antidiagonals for $l \equiv j-1 \mbox{ or } 2n-3-j  \; \mathrm{mod} \; (2n-4)$.
\end{remark}

\begin{proof}[Proof of Proposition~\ref{prop:decomp of P}]
Nontrivial cells $V_{0, l, j}$ need to be defined exactly when the block or half-block in the bottom 
row of the transformed Young wall pattern in the $l$-th antidiagonal has label~$j$. 
In these cases, we set the cells along the bottom row to be          
\[ V_{0, l, j} = P_{l, j} \setminus L_1 P_{l-2, \kappa(j)}.
\]
Once the cells $V_{0,k,j}$ along the bottom row are defined, we define
the general cells for all $0\leq j \leq n$, all $l$ and $k$ by 
\[V_{k,k+l,j}=L_1^k V_{0,l,\kappa^k(j)}. \]
What this says is that the cells are shifted up diagonally by $L_1$, taking into account that $L_1$ 
multiplies by the sign representation, so shifts the indices by the appropriate power of the 
involution~$\kappa$. By induction, we obtain a decomposition of $P_{m,j}$ with the stated properties.
\end{proof}

As it is well known, a decomposition of a projectivization of a vector
space into affine cells is equivalent to giving a flag in the space itself. This induces a natural decomposition of all higher rank Grassmannians into \emph{Schubert cells}, which are known to be affine. Thus our cell decomposition of $P_{m,j}$ induces cell decompositions of all $G_{m,j}^r$. Since the cells in the first decomposition are indexed by the set $B_{m,j}$, the cells in the second will be indexed by subsets of $B_{m,j}$ of size $r$. 
A Schubert cell of $G_{m,j}^r$  corresponding to a subset $S=\{(k_1,l_1),\dots (k_r,l_r)\} \subset B_{m,j} $ will consist
of those $(r-1)$-dimensional projective subspaces of $P_{m,j}$ which intersect $V_{k_i,l_i,j}$ nontrivially for 
all $1\leq i \leq r$. We will denote the cell corresponding to $S$ in $G_{m,j}^r$ by $V_{S,j}$. We obtain a locally 
closed decomposition
\[ G_{m,j}^r = \bigsqcup_{\substack{S \subseteq B_{m,j} \\ |S|=r}} V_{S,j}. \]
Occasionally, when it is clear from the context that $S$ is a subset
of $B_{m,j}$, we will supress the index $j$ and write just $V_S$ for
the Schubert cells of $G_{m,j}^r$.

We will call a Schubert cell \emph{maximal} if it intersects the
maximal dimensional cell of $P_{m,j}$ nontrivially. Such a cell corresponds to
subsets $S\subset B_{m,j}$ which contain $(k_{min},l)$ where $k_{min}$ is minimal among the first components of the elements of $B_{m,j}$. The intersection with $V_{k_{min},l,j}$ of a subspace corresponding to a point in a maximal Schubert cell is an affine subspace of $V_{k_{min},l,j}$. Conversely, to any affine subspace of $V_{k_{min},l,j}$, there corresponds a point in a maximal Schubert cell given by the completion of the subspace in~$P_{m,j}$.

For a maximal subset $S$, denote by $\overline{S} \subset B_{m,j}$ the set of indices which we get by deleting $(k_{min},l)$ from $S$. $\overline{S}$ is empty, if $|S|=1$. Define the codimension one projective subspace 
\[ \overline{P}_{m,j}=\bigsqcup_{ \{(k,l)\in B_{m,j}\;:\; k>k_{min}\} } V_{k,l,j} \subset P_{m,j}=P_{m,j}\setminus V_{k_{min},l,j}. \]
For each $(r-1)$-dimensional subspace $U\subset P_{m,j}$ intersecting the affine space
$V_{{k_{min}},l,j}$ nontrivially, let $\overline{U}= U\cap \overline{P}_{m,j}$. 

\begin{lemma} The map $\omega\colon V_{S,j} \rightarrow V_{\overline{S},j}$ defined by $\omega(U)= \overline{U}$
is a trivial affine fibration with fibre $\SA^{|B_{m,j}|-|S|}$. 
\end{lemma}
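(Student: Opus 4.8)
The plan is to reduce the statement to standard facts about Schubert cells of Grassmannians attached to a complete flag. Since $P_{m,j}$ is a projective space and $\{V_{k,l,j}\}_{(k,l)\in B_{m,j}}$ is its standard stratification, this stratification is the cell decomposition of a complete flag. Writing $N=|B_{m,j}|$, I would fix a vector space $\hat V$ with $\SP\hat V=P_{m,j}$ and a basis $e_1,\dots,e_N$ so that the inducing flag is $F^{(i)}=\langle e_{i+1},\dots,e_N\rangle$, with the elements of $B_{m,j}$ enumerated by decreasing dimension of the corresponding cell (equivalently, by increasing first coordinate) as $(k_0,l_0),\dots,(k_{N-1},l_{N-1})$, so that $V_{k_i,l_i,j}=\SP(F^{(i)})\setminus\SP(F^{(i+1)})$; in particular $(k_0,l_0)=(k_{min},l)$ is the open-cell index and $\overline{P}_{m,j}=\SP(F^{(1)})$. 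Under the standard identification $G_{m,j}^r=\Gr(r,\hat V)$, and for a \emph{maximal} $S=\{(k_{i_1},l_{i_1}),\dots,(k_{i_r},l_{i_r})\}$ with $i_1<\dots<i_r$ (maximality being exactly $i_1=0$), the cell $V_{S,j}$ consists of those $W$ whose reduced echelon basis $w_1,\dots,w_r$ with respect to $(e_1,\dots,e_N)$ has pivot positions $p_a=i_a+1$, so $p_1=1$; the entries of the echelon basis that are not forced to vanish form affine coordinates on $V_{S,j}$.

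The key step, which should be brief, is the observation that in $w_a=e_{p_a}+\sum_{b}c_{ab}e_b$ one has $w_a\in F^{(p_a-1)}\subseteq F^{(1)}$ for every $a\geq2$, because $p_a\geq p_2\geq 2$, whereas $w_1\notin F^{(1)}$ since its $e_1$-coefficient is $1$. Hence $W\cap F^{(1)}=\langle w_2,\dots,w_r\rangle$, the vectors $w_2,\dots,w_r$ are already in reduced echelon form for the restricted flag $F^{(1)}\supset F^{(2)}\supset\cdots$, and so $\overline{U}=U\cap\overline{P}_{m,j}$ lies in the Schubert cell $V_{\overline{S},j}$ with $\overline{S}=S\setminus\{(k_{min},l)\}$, with affine coordinates precisely the $(c_{ab})_{a\geq2}$. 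Thus, in these coordinates, $\omega$ is the coordinate projection forgetting the top-row entries $(c_{1b})_b$.

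It then follows immediately that $\omega$ is a trivial affine fibration: the echelon coordinates give an isomorphism $V_{S,j}\cong V_{\overline{S},j}\times\SA^{d}$ under which $\omega$ is the first projection, where $d$ is the number of non-pivot positions (all of which exceed $p_1=1$), that is $d=N-r=|B_{m,j}|-|S|$. As a cross-check I would also note the intrinsic description of the fibre: over a fixed $\overline{U}\in V_{\overline{S},j}$, the fibre is the set of $(r-1)$-dimensional $U\supseteq\overline{U}$ that meet the open cell $V_{k_{min},l,j}$; the projective space of all $(r-1)$-dimensional $U\supseteq\overline{U}$ has dimension $|B_{m,j}|-|S|$, and the locus $U\subseteq\overline{P}_{m,j}$ is a hyperplane in it, so the fibre is the complementary affine space $\SA^{|B_{m,j}|-|S|}$, and these glue into the trivial family above.

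There is no genuine obstacle here; the only care needed is in the bookkeeping that turns the geometric definition of the cells $V_{k,l,j}$ (and of $\overline{P}_{m,j}$) into the flag/echelon picture, and in checking that the cells $V_{k,l,j}$ with first coordinate larger than $k_{min}$ form the standard stratification of the hyperplane $\overline{P}_{m,j}$, so that $\omega$ really lands in, and surjects onto, $V_{\overline{S},j}$. Both points follow from the inductive construction of the cells via the operator $L_1$ in Proposition~\ref{prop:decomp of P} together with the standard theory of Schubert cells, and no distinction between one- and two-dimensional $\rho_j$ is needed since in both cases $P_{m,j}$ is a projective space and $G_{m,j}^r$ a Grassmannian with its Schubert stratification.
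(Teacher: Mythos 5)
Your proof is correct and is exactly the standard Schubert-cell/flag argument that the paper relies on implicitly: the lemma is stated there without proof, justified only by the remark immediately preceding it that the affine cell decomposition of $P_{m,j}$ corresponds to a complete flag and hence induces the Schubert stratification of $G^{r}_{m,j}$. Your echelon-coordinate write-up (pivots at positions $i_a+1$, with $\omega$ the projection forgetting the $N-r$ free entries of the top row) is a faithful and complete filling-in of that omitted argument, and your intrinsic description of the fibre matches the paper's own gloss that $\omega$ sends an affine subspace of $V_{k_{min},l,j}$ to its set of ideal points at infinity.
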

We can think of this map as associating to an {\em affine} subspace of $V_{{k_{min}},l,j}$
its set of ``ideal points at infinity''. 

Consider the fibre $\omega^{-1}(\overline{U})$ over a point $\overline{U} \in V_{\overline{S}}$, which we will also denote by $V_S|_{\overline{U}}$ below. This fibre consists of those subspaces $U\subseteq P_{m,j}$ which intersect $\overline{P}_{m,j}$ in $\overline{U}$, i.e.~when considered as an affine subspace of $V_{k_{min},l,j}$, they have $\overline{U}$ as their set of ``points at infinity''. We will denote the set of such subspaces also by $V_{k_{min},l,j}/\overline{U}$. This notation means that we take the cosets in $V_{k_{min},l,j}$ of an arbitrary affine subspace $U\subset V_{k_{min},l,j}$ with $\overline{U}=\overline{P}_{m,j}\cap U$. The affine structure on $V_{k_{min},l,j}$ descends to an affine structure on $V_{k_{min},l,j}/\overline{U}$ which does not depend on the particular affine subspace $U$ whose cosets were taken.

We define the mapping 
$\omega \colon V_{S,j} \rightarrow V_{\overline{S},j}$ 
as the identity for those index sets and the corresponding cells
which are not maximal. In such cases, $\overline{S}=S$ considered as a subset of $B_{m,j} \setminus \{(k_{min},l)\}$.

We also need a description of the affine subspaces of the cells
$V_{k,l,j}$ for $k>k_{min}$. The relevant Schubert cells in this case
are indexed by those subsets $S$ of $B_{m,j}$ which contain $(k,l)$
but do not contain any $(k',l')$ for $k' < k$. Hence, the index set
$B_{m,j}$ is first truncated by deleting the pairs $(k',l')$ with $k'
< k$. We will denote the result as $B_{m,j}(k)$. Then, the maximal
Schubert cells for $V_{k,l,j}$ correspond to those subsets $S
\subseteq B_{m,j}(k) $ of the truncated index set which contain
$(k,l)$. For these, $\overline{S}$ is defined by removing $(k,l)$ from
$S$. There is still a morphism $\omega \colon V_{S,j} \rightarrow
V_{\overline{S},j}$ which is defined in the same way as above; its
global structure and the description of its fibres is analogous to the
previous special case. 

Note that the notation $\overline{S}$ is ambiguous at this point: any
maximal subset $S \subseteq B_{m,j}(k)$ can also be considered as a
nonmaximal subset of $B_{m,j}(k')$ for $k'<k$. If we view $S$ as a
subset of $B_{m,j}(k)$, then $\overline{S}=S\setminus\{ (k,m-k)
\}$. On the other hand, if we view it as a nonmaximal subset of
$B_{m,j}(k')$ for $k' < k$, then $\overline{S}=S$. We have decided not
to introduce extra notation; when this notation gets used below, we
will always specify the reference point $k$ explicitly.

\section{The Young wall associated with a homogeneous ideal}
\label{sec:Ywall_to_ideal}

In this section, we study ideals generated by degree- and weight-homogeneous 
polynomials; we will call such ideals simply {\em homogeneous}
ideals. Here is the main definition of this section. 

\begin{definition} Consider a homogeneous $G_\Delta$-invariant ideal 
$I\lhd \SC[x,y]$. Let $Y_I$ denote the following subset of
the transformed Young wall pattern of type $D_n$: 
for each block or half-block $(k,l)$ of label $j$, with $k+l=m$, include this block or
half-block in $Y_I$ if and only if $I\cap S_m[\rho_j]$ does not intersect the preimage in $S_m[\rho_j]$ of the stratum $V_{k,l,j}\subset P_{m,j}$ from the stratification of Proposition~\ref{prop:decomp of P}. $Y_I$ will be called the \emph{profile} of $I$.
\label{def_wall_to_ideal}
\end{definition}

It will be useful to introduce a little bit of extra notation, and to
reformulate this definition in the new notation. Given a homogeneous $G_\Delta$-invariant ideal $I$,
let $I_{m,j}$ be the set of $G_\Delta$-invariant subspaces of minimal dimension in $I\cap
S_m[\rho_j]$. Then as $I\cap S_m[\rho_j] \subset S_m[\rho_j]$ is a linear subspace, $I_{m,j}\subset
P_{m,j}$ is a projective linear subspace. Then the definition simply
says that a block or half-block $(k,l)$ labelled $j$ is included in $Y_I$ 
if and only if $I_{m,j}\cap V_{k,l,j} = \emptyset$, for $m=k+l$ as before. 
Since $\{V_{k,l,j}\}$ is a standard stratification of the projective
space $P_{m,j}$ into affine spaces, $\{I_{m,j}\cap V_{k,l,j}\}$ is also a
standard stratification of its projective linear subspace $I_{m,j}$ into affine
spaces, and so has the same number of strata as its affine dimension. We
conclude 
\begin{lemma} \label{lem:intdim}
For all $m,j$, the number of absent blocks or half-blocks of label $j$ on the $m$-th
diagonal equals $\mathrm{dim}( I \cap S_m[\rho_j])$. 
\end{lemma}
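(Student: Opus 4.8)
The statement follows directly from the construction in Definition~\ref{def_wall_to_ideal} together with the cell-counting observation made in the paragraph just before the lemma; the plan is simply to spell out the two bookkeeping steps that turn that observation into the asserted equality of numbers. \textbf{Step 1 (identify $I_{m,j}$ and its dimension).} I would first record that, since $I$ is a $G_\Delta$-invariant ideal, $I\cap S_m[\rho_j]$ is a $G_\Delta$-subrepresentation of the $\rho_j$-isotypic piece $S_m[\rho_j]$. Writing $S_m[\rho_j]\cong\rho_j^{\oplus N}$ with $N=|B_{m,j}|$ (so $N\cdot\dim\rho_j=\dim S_m[\rho_j]$; this matches $\dim P_{m,j}=|B_{m,j}|-1$), this subrepresentation has the form $I\cap S_m[\rho_j]\cong\rho_j^{\oplus r}$ with $r\cdot\dim\rho_j=\dim(I\cap S_m[\rho_j])$. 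Under the identification of $P_{m,j}$ with the projectivization of the multiplicity space of $S_m[\rho_j]$ — for $\dim\rho_j=1$ this is $\SP S_m[\rho_j]$ itself, and for $\dim\rho_j=2$ it is the isomorphism of $P_{m,j}$ with a projective space recalled in~\ref{sec:subops} — the set $I_{m,j}$ of minimal-dimensional $G_\Delta$-invariant subspaces contained in $I\cap S_m[\rho_j]$ becomes the projective-linear subspace of dimension $r-1$.

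\textbf{Step 2 (count the nonempty strata).} I would then use that the stratification $P_{m,j}=\bigsqcup_{(k,l)\in B_{m,j}}V_{k,l,j}$ of Proposition~\ref{prop:decomp of P} is \emph{standard}: the proof of that proposition produces, by iterating $L_1$ along the bottom row and then up the diagonals, the complete flag in the ambient vector space to which this cell decomposition is attached. For \emph{any} projective-linear subspace $L$ of a projective space carrying a flag cell decomposition, intersecting the cells with $L$ yields the flag cell decomposition of $L$ attached to the induced flag, whose nonempty cells are in bijection with the jumps of that flag, so there are exactly $\dim L+1$ of them; no general-position hypothesis on $L$ is needed. Applied with $L=I_{m,j}$, this shows that $\{I_{m,j}\cap V_{k,l,j}\}_{(k,l)\in B_{m,j}}$ has exactly $r$ nonempty members.

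\textbf{Step 3 (conclude).} Finally, by Definition~\ref{def_wall_to_ideal}, a block or half-block $(k,l)\in B_{m,j}$ of label $j$ is \emph{absent} from $Y_I$ precisely when $I_{m,j}\cap V_{k,l,j}\neq\emptyset$, so the number of absent positions of label $j$ among the indices $B_{m,j}$ equals $r$. Taking the mirror images in the transformed Young wall pattern into account as in its description of $\dim S_m[\rho_j]$ (so that the total number of blocks or half-blocks of label $j$ on the $m$-th antidiagonal is $\dim\rho_j\cdot|B_{m,j}|=\dim S_m[\rho_j]$, and the absent ones number $\dim\rho_j\cdot r$), I would conclude that the number of absent blocks or half-blocks of label $j$ on the $m$-th diagonal equals $\dim\rho_j\cdot r=\dim(I\cap S_m[\rho_j])$.

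I do not expect a genuine obstacle here: the real work has already been done in Proposition~\ref{prop:decomp of P} and Definition~\ref{def_wall_to_ideal}. The two points that need care are to treat the one-dimensional representations ($j\in\{0,1,n-1,n\}$, half-blocks) and the two-dimensional ones ($2\le j\le n-2$, full blocks) uniformly by passing to multiplicity spaces, and to be explicit that the flag-restriction fact used in Step~2 is valid for an \emph{arbitrary} linear subspace $I_{m,j}$ — which is exactly why the sentence immediately before the lemma can assert, with no further argument, that $\{I_{m,j}\cap V_{k,l,j}\}$ is automatically a standard stratification of $I_{m,j}$ whose number of strata equals its affine dimension.
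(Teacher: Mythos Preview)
Your proposal is correct and follows exactly the paper's approach: the paragraph immediately preceding the lemma already says that since $\{V_{k,l,j}\}$ is a standard (flag) stratification of $P_{m,j}$, its restriction $\{I_{m,j}\cap V_{k,l,j}\}$ is a standard stratification of the projective-linear subspace $I_{m,j}$, whose number of nonempty cells equals its affine dimension; the lemma is simply the resulting numerical conclusion. Your Steps~1--3 just spell this out, and your Step~3 (tracking the factor $\dim\rho_j$ via mirror images) makes explicit a point the paper leaves implicit.
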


\begin{proposition} \label{lem:mckaycorr}
Given a homogeneous $G_\Delta$-invariant ideal $I\lhd \SC[x,y]$, 
the associated subset $Y_I$ of the transformed Young wall pattern of type $D_n$ has the following 
properties.
\begin{enumerate}
\item If a full or half block is missing, then all the blocks above-right from it on the diagonal are missing.\footnote{Again, for a missing half block only the half blocks of the same orientation have to be missing.}
\item If a full block is missing, then all full or half blocks to the right of it are missing, 
and {\em at least} one (full or half) block immediately above it is missing. 
\item If a half block is missing, then the full block to the right of it is missing. 
\item If both half-blocks sharing the same block position are missing, then the full block immediately above
this position is missing.
\end{enumerate}
In particular, if $I$ is of finite codimension, then $Y_I$ is a  Young
wall of type $D_n$, an element of the set ${\mathcal Z}_\Delta$.
\label{wallprop}
\end{proposition}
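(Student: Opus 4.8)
The plan is to convert Definition~\ref{def_wall_to_ideal} into an algebraic statement and then propagate ``absence'' of blocks using the ideal structure. Recall that a block or half-block $(k,l)$ of label $j$, lying on the $m$-th antidiagonal ($m=k+l$), is \emph{absent} from $Y_I$ exactly when $I_{m,j}\cap V_{k,l,j}\neq\emptyset$, i.e.\ exactly when $I\cap S_m[\rho_j]$ contains a minimal $G_\Delta$-invariant subspace $W$ belonging to the cell $V_{k,l,j}$. Each of (1)--(4) then amounts to: starting from such a $W$ (or, in (4), from two such subspaces attached to the two half-block labels), produce a minimal invariant subspace of $I$ in the prescribed cell on a neighbouring antidiagonal, using the ideal property of $I$ and the operators $L_0,\dots,L_n$ of~\ref{sec:subops}.

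I would begin with item (1), which comes from the $L_1$-equivariance of the cell decomposition. From the inductive definition in the proof of Proposition~\ref{prop:decomp of P} one gets $V_{k,l,j}=L_1^{\,k}\,V_{0,\,l-k,\,\kappa^k(j)}$, hence $L_1V_{k,l,j}=V_{k+1,l+1,\kappa(j)}$; moreover $L_1$ is injective on subspaces, and since $L_1v=xy\cdot v$ with $xy$ transforming by the sign representation $\rho_1$ (whose tensor action on labels is exactly $\kappa$, by~\eqref{eq:repstensor}), the ideal property gives $L_1(I\cap S_m[\rho_j])\subseteq I\cap S_{m+2}[\rho_{\kappa(j)}]$. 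Therefore, if $(k,l,j)$ is absent via $W$ then $(k+1,l+1,\kappa(j))$ is absent via $L_1W$, and iterating proves (1); the half-block orientation bookkeeping is automatic, since successive applications of $L_1$ flip orientation exactly as $\kappa$ acts on labels.

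Next, for (2), (3) and (4) I would use multiplication by $x$ and $y$ (the operator $L_2=L_{2,x}+L_{2,y}$), and when needed the compound operators $L_j$ ($1<j<n-1$), $L_{n-1}$, $L_n$. Given a minimal invariant $W\subseteq I\cap S_m[\rho_j]$, the span $\langle xW,yW\rangle\subseteq I\cap S_{m+1}$ is a $G_\Delta$-submodule which is a quotient of $\rho_j\otimes\rho_{\mathrm{nat}}$; by the binary dihedral fusion rules (the affine type $D_n$ McKay correspondence, together with the identities~\eqref{eq:repstensor}) this decomposes over the labels adjacent to $j$ in the affine Dynkin diagram, so projecting onto an isotypic component produces a minimal invariant subspace of $I$ of the corresponding label on the $(m+1)$-st antidiagonal. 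To check that it lands in the intended cell I would use that all of $L_0,\dots,L_n$ commute with $L_1=xy$ and that every cell is an $L_1^{\,k}$-translate of a bottom-row cell, so the verification reduces to the bottom row; there $V_{0,s,j'}=P_{s,j'}\setminus L_1P_{s-2,\kappa(j')}$ is the big open cell, and membership in it is detected by the elementary criterion ``not divisible by $xy$'', which holds because $W$, being in an open cell, is itself not divisible by $xy$. This yields (3) and the ``all blocks to the right'' part of (2).

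The hard part, I expect, will be the two remaining clauses --- ``at least one block immediately above is missing'' in (2), and the necessity in (4) that \emph{both} half-blocks at a position be absent. These require a finer analysis: one knows that $I_{m+1,j'}$ is a \emph{linear} subspace of $P_{m+1,j'}$ and that the ideal structure forces several explicit points (and, in (4), a projective line) to lie in it; combined with the incidence geometry of the complete flag underlying the stratification of $P_{m+1,j'}$, this pins down enough cells to be met. Identifying the cell that lies directly above --- using the exact order of label-$j'$ cells along the $(m+1)$-st antidiagonal and, again, the bottom-row divisibility criterion --- is the step I anticipate being the main obstacle. Once (1)--(4) are in hand, the final assertion is immediate: translating (YW1)--(YW4) into the transformed pattern via Remark~\ref{rem:corr_young_rules}, conditions (YW2)--(YW3) are precisely (1)--(4) read for the \emph{present} blocks, (YW4) (properness) follows from the linearity of the $I_{m,j}$ and the strict decrease of cell dimensions, and (YW1) holds because a homogeneous ideal of finite codimension contains $S_m$ for all $m\gg0$, so there $I_{m,j}=P_{m,j}$ meets every cell and every block is absent, leaving $Y_I$ with only finitely many white blocks.
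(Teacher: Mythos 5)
Your overall strategy is the same as the paper's: item (1) via $L_1$-equivariance of the cells, reduction of (2)--(4) to the bottom row, and then multiplication by $x,y$ together with the fusion rules for $\rho_j\otimes\rho_{\mathrm{nat}}$ to produce invariant subspaces on the next antidiagonal, with ``divisibility by $xy$'' as the cell detector. Items (1), (3) and the ``all blocks to the right'' half of (2) go through essentially as you describe (with the minor caveat that lying in the open cell only guarantees that \emph{some} basis vector of $W$ is not divisible by $xy$; you must fix such a vector and track it, not argue with $W$ wholesale).

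However, the two clauses you yourself defer --- ``at least one block immediately above is missing'' in (2), and item (4) --- are a genuine gap, and the appeal to ``incidence geometry of the flag'' pinning down ``enough cells'' is not a substitute for the actual computation. What is needed is the following. Normalize the witness: $W$ lies in the open cell, so it contains a vector not divisible by $xy$, which after using $\tau$-symmetry you may write as $x^ap$ with $p$ coprime to $x$ and $y$. Then $x\cdot x^ap=x^{a+1}p$ is still not divisible by $xy$, which forces the rightward component into the open cell of its antidiagonal; and $y\cdot x^ap=x^ayp$ is divisible by $xy$ at most once (divisibility by $(xy)^2$ would force $y\mid p$), so any isotypic component of $\langle xW,yW\rangle$ containing $x^ayp$ as a nonzero summand lies in the image of $L_1$ but not of $L_1^2$, i.e.\ exactly in the cell one diagonal step up --- the block immediately above. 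The subtlety, and the reason the proposition only claims ``at least one'' when the position above carries a divided $0/1$ or $(n-1)/n$ block, is that $x^ayp$ need not span a one-dimensional eigenspace by itself: it may appear as a summand in only one of the two one-dimensional components $u_-^1,u_-^2$, the other of which can be divisible by a higher power of $L_1$ and hence sit in a deeper cell. Your argument as written would either claim both half-blocks are missing (too strong, and false) or fail to identify the cell above at all; the case analysis on whether $x^ayp$ generates an eigenline is exactly the content of the paper's proof of these clauses, and of (4), and it is absent from your proposal.
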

\begin{remark} 
As it can be checked from the definitions, the relationship between the directions in the original 
and transformed Young wall patterns is the following: (right, up, diagonally right and down) in the original 
correspond after transformation to (diagonally right and up, right, up) respectively. This way, it is easy to 
check the correspondence between the rules for Young walls from~\ref{sec:PYW} and this 
proposition. \label{rem:corr_young_rules}
\end{remark} 

\begin{proof}[Proof of Proposition \ref{wallprop}] Fix a homogeneous invariant ideal $I\lhd \SC[x,y]$ and let $Y_I$
be the corresponding subset of the Young wall pattern. 
Property (1) of $Y_I$ follows by applying $L_1$, recalling the inductive nature of the 
stratification of $P_{m,j}$ using $L_1$. The inductice construction also implies that it suffices to check 
properties (2)-(4) for blocks missing on the bottom row. 

Let us next prove (2) in the general case, when a full block in position $(0,l)$ in representation $j\in [3, n-3]$ 
is missing from $Y_I$; by the choice of $j$, both above and to the right of this block 
there are also full blocks. Since the block at $(0,l)$ is missing, there is an invariant $2$-dimensional subspace  
$u\in I_{l,j}\cap V_{0,l,j}$ contained in $I$. Since $u$ is in the lowest stratum $V_{0,l,j}$ of $P_{l,j}$, it has a basis 
one of whose members at least is not divisible by $xy$; without loss of generality, we may assume that this polynomial is  
$x^ap$ where $a$ is a non-negative integer and $p$ a polynomial in $x,y$ not divisible by
$x,y$. Now we can write 
\[L_2u = u_+\oplus u_-\]
with $u_+\in  I_{l+1, j+1}$ and $u_-\in  I_{l+1, j-1}$. Then $u_+$ must contain a polynomial with $x^{a+1}p$ as nonzero summand, 
so it cannot be in the image of $L_1$; so we have $u_+\in V_{0,l+1,j+1}$. Similarly, $u_-$ must contain a polynomial 
with $x^ayp$ as nonzero summand, so it cannot be in the image of $L_1^2$ and so $u_-\in V_{1,l,j-1}$. 
Thus indeed both the blocks in positions $(0,l+1)$ and $(1,l)$ are missing as claimed. 

Let us now consider what changes if $j$ is chosen such that there are half-blocks around. Suppose first that the 
half-blocks happen to lie to the right of our block labelled $j$. Then we have a decomposition
\[L_2u = u_+^1\oplus u_+^2\oplus u_-,\]
with $u_+^i$ both one-dimensional. In this case, it is easy to check that the polynomial $x^{a+1}p$ cannot itself 
generate a one-dimensional eigenspace, so both $u_+^i$ will contain a polynomial with $x^{a+1}p$ as nonzero summand. 
Thus neither of these subspaces can be in the image of $L_1$, and so must lie in the large stratum. Hence both these
blocks are missing.

Suppose now that the half-blocks happen to lie above our block
labelled $j$. Then $L_2u$ is either three- or four-dimensional. In the
general case, it has dimension four and there is a decomposition
\[L_2u = u_-^1\oplus u_-^2\oplus u_+\]
with $u_-^i$ both one-dimensional. It special situations $L_2u$ is
only three dimensional, and one of the $u_-^i$'s is missing
(see Lemma~\ref{l0} below for a detailed analysis).
In any case, $x^{a+1}p$ will lie in $u_+$, forcing that subspace to be in the large stratum.
The other relevant polynomial $x^ayp$ may or may not generate a one-dimensional invariant subspace, 
depending on the values of $a, p$; so at least 
one, possibly both, of $u_-^1, u_-^2$ lies in the image of $L_1$ but not $L_1^2$, forcing them to lie in the corresponding
stratum. So at least one, but possibly both, of the corresponding half-blocks must be missing. 
We remark here that the other $u_-^i$, if present, can be divisible by
a higher power of $L_1$. This implies that in this case the ideal generated by $u$ may have nontrivial intersection with the cells $V_{k,l,j}$ even with $l > 0$. 

The proofs of (3)-(4) follow the same pattern; we omit the details. Finally if $I$ is of finite codimension, 
then it contains $S_m$ for $m$ large enough, and so $Y_I$ contains only finitely many blocks and half-blocks. 
\end{proof}

\chapter{Type \texorpdfstring{$D_n$}{Dn}: decomposition of the orbifold Hilbert scheme}
\label{ch:dnorbidec}

In this chapter we give a cell decomposition of the equivariant Hilbert scheme for the type $D$ case. This is based on a detailed analysis of the geometry of some naturally defined incidence varieties. In some cases these incidence varieties happen to have the structure of a join of two lower dimensional varieties.

\section{The decomposition}

The aim of this chapter is to prove the following result, which gives a constructive proof of Theorem~\ref{thmorb} for type $D_n$.
\begin{theorem}
\label{thm:dnorbcells}
Let $G_\Delta$ be the subgroup of $SL(2,\SC)$ of type $D_n$. 
Then there is a locally closed decomposition
\[\mathrm{Hilb}([\SC^2/G_\Delta]) = \displaystyle\bigsqcup_{Y \in{\mathcal Z}_\Delta } \mathrm{Hilb}([\SC^2/G_\Delta])_Y\]
of the equivariant Hilbert scheme $\mathrm{Hilb}([\SC^2/G_\Delta])$ into strata indexed bijectively by the
set ${\mathcal Z}_\Delta$ of Young walls of type $D_n$, with each stratum $\mathrm{Hilb}([\SC^2/G_\Delta])_Y$ a non-empty affine space. 
\end{theorem}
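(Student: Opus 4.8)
The plan is to stratify $\mathrm{Hilb}([\SC^2/G_\Delta])$ according to the profile $Y_I$ of the ideal $I$, which was already defined in Definition 4.12 and shown in Proposition 4.16 to be a Young wall of type $D_n$ whenever $I$ has finite codimension. So the set-theoretic decomposition $\mathrm{Hilb}([\SC^2/G_\Delta]) = \bigsqcup_{Y\in\CZ_\Delta}\mathrm{Hilb}([\SC^2/G_\Delta])_Y$, where $\mathrm{Hilb}([\SC^2/G_\Delta])_Y = \{I : Y_I = Y\}$, is immediate. The two things left to prove are: (i) each nonempty stratum is locally closed, and (ii) each stratum is in fact a nonempty affine space. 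For (i) one observes that the conditions ``$I\cap S_m[\rho_j]$ meets (resp. does not meet) the cell $V_{k,l,j}$'' are respectively closed and open conditions on the relevant Grassmannian (as $V_{k,l,j}$ is locally closed and one is cutting out a locally closed incidence locus), and a finite-codimension ideal is determined by its graded pieces up to some large degree, so the stratum is an intersection of finitely many locally closed conditions. The real content is (ii), and also the nonemptiness, i.e.\ realizing every Young wall as the profile of an actual ideal.

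The key step is to parametrize, for a fixed Young wall $Y$, the ideals $I$ with $Y_I = Y$ by building $I$ degree by degree (equivalently, antidiagonal by antidiagonal in the transformed pattern). Working in each $S_m[\rho_j]$ separately, the profile $Y$ prescribes, by Lemma 4.14, the dimension of $I\cap S_m[\rho_j]$, and by Definition 4.12 it prescribes which Schubert cell of the Grassmannian $G_{m,j}^r$ the subspace $I_{m,j}$ lies in. So a first approximation to the stratum is a product of Schubert cells, one for each block position, and these are affine by the standard theory recalled in Section 4.7 (``Cell decompositions of equivariant Grassmannians''). The work is to account for the ideal condition: the graded pieces must be compatible under multiplication by the operators $L_j$ of Section 4.6, i.e.\ $L_j(I\cap S_m[\rho_i]) \subseteq I\cap S_{m+j-1}[\rho_{?}]$. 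The plan is to show that, given the part of $I$ already constructed on diagonals $< m$, the admissible choices on diagonal $m$ in each $S_m[\rho_j]$ form an affine subspace of the prescribed Schubert cell — concretely, the ``image-at-infinity'' map $\omega\colon V_{S,j}\to V_{\overline S,j}$ of Section 4.7 and its affine-bundle structure should exactly encode how much genuine freedom remains after the lower-degree data has been fixed. Iterating over $m$ (only finitely many diagonals carry white blocks) then exhibits $\mathrm{Hilb}([\SC^2/G_\Delta])_Y$ as an iterated affine bundle, hence an affine space; nonemptiness follows because at each stage the fibre is a nonempty affine space (one can always lift, e.g.\ by taking the ``smallest'' choice, namely the subspace generated by the lower-degree data together with whatever $S_m$-summand the profile forces).

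The main obstacle I expect is precisely the analysis of the incidence varieties governing the passage from one diagonal to the next — the interaction between the operators $L_2, L_{n-1}, L_n$ and the half-block structure near the fork of the $D_n$ diagram. The excerpt already flags (in the proof of Proposition 4.16, via the forthcoming ``Lemma~\ref{l0}'' and the remark about joins in the chapter introduction) that $L_2 u$ can be three- or four-dimensional depending on $u$, and that the ideal generated by $u$ can meet cells $V_{k,l,j}$ with $l>0$. This means the naive ``product of Schubert cells'' picture is genuinely corrected near the special nodes, and the incidence varieties there are not products but joins of lower-dimensional projective varieties; one must check these joins, intersected with the Schubert stratification, still decompose into affine cells compatibly with $\omega$. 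So the heart of the chapter — and the place where the $D_n$ case departs from the easy type $A$ argument — is a careful case analysis of these incidence varieties at and near the trivalent vertex, showing in each case that the fibre of the construction is an affine space of the expected dimension. Once that local geometry is pinned down, assembling the global statement is formal: induct on the number of diagonals, using that an iterated affine bundle over a point with affine-space fibres is an affine space, and conclude $\chi(\mathrm{Hilb}([\SC^2/G_\Delta])_Y)=1$, which also re-proves Theorem 3.3 in type $D$.
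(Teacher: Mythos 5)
There is a genuine gap at the very first step. The profile $Y_I$ of Definition~\ref{def_wall_to_ideal} is defined only for \emph{homogeneous} invariant ideals, so the stratification you propose, $\mathrm{Hilb}([\SC^2/G_\Delta])_Y=\{I : Y_I=Y\}$, only makes sense on the locus of homogeneous ideals, i.e.\ the fixed locus of the diagonal $T=\SC^*$-action. A general point of the equivariant Hilbert scheme --- for instance the ideal of a free $G_\Delta$-orbit away from the origin --- is not homogeneous; the direct sum of its graded pieces $I\cap S_m[\rho_j]$ is the ideal of the cone over its support, which has infinite colength, so your recipe assigns it no (finite) Young wall, and the claim that ``a finite-codimension ideal is determined by its graded pieces up to some large degree'' fails for it. The paper's proof has an extra layer you are missing: it first decomposes only the fixed locus $\mathrm{Hilb}([\SC^2/G_\Delta])^T=\bigsqcup_Y Z_Y$ by profiles, proves each $Z_Y$ is a nonempty affine space (Theorem~\ref{thm:Zstrata}, which is where all the technical work goes), and then \emph{defines} $\mathrm{Hilb}([\SC^2/G_\Delta])_Y$ as the Bia\l{}ynicki-Birula attracting set of $Z_Y$ under the $\SC^*$-flow. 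By the Bia\l{}ynicki-Birula theorem this is a Zariski-locally trivial fibration over $Z_Y$ with affine-space fibres; being a vector bundle over an affine base it is trivial by Serre--Quillen--Suslin, hence itself an affine space. Without the torus flow you have no mechanism for placing the non-homogeneous ideals into the decomposition.

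On the homogeneous locus your plan is close in spirit to the paper's but differs in two respects worth flagging. First, the paper inducts on the \emph{rows} of the Young wall rather than on the antidiagonals: the truncation $Z_Y\to Z_{\overline Y}$, $I\mapsto L_1^{-1}(I\cap L_1\SC[x,y])$, removes the bottom row, and $Z_Y$ is exhibited as a fibre product of $Z_{\overline Y}$ with explicit incidence varieties supported on two consecutive antidiagonals, shown case by case to be trivial affine fibrations (Propositions~\ref{prop:incvar1}--\ref{prop:incvar4}). Second, your assertion that the admissible choices on a new diagonal form ``an affine subspace of the prescribed Schubert cell'' is not what actually happens near the trivalent node: in Example~\ref{ex:5sidepyr} the admissible locus for the new generator is the join of two affine subspaces minus a punctured line, on a sublocus of which extra generators must be adjoined, and the resulting stratum is a blow-up of $\SA^2$ at a point with a proper transform removed --- affine only after the fact. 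You correctly identify the joins at the fork as the main obstacle, but the proposed resolution understates what must be proved there, and the nonemptiness/affineness of the fibres is precisely the content that cannot be waved through as ``an affine subbundle of a product of Schubert cells.''
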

\begin{proof} The affine plane $\SC^2$ carries the diagonal $T=\SC^\ast$-action, which 
commutes with the $G_\Delta$-action. The action of $T$ lifts to all the equivariant Hilbert schemes
$\mathrm{Hilb}^{\rho}([\SC^2/G_\Delta])$ which are themselves nonsingular. Thus the fixed point set
\[\mathrm{Hilb}([\SC^2/G_\Delta])^T=\sqcup_\rho\mathrm{Hilb}^{\rho}([\SC^2/G_\Delta])^T\] 
is also a union of nonsingular varieties, and it consists of points representing homogeneous invariant ideals. 
The construction of~\ref{sec:Ywall_to_ideal} associates a Young wall $Y$ to each homogeneous invariant ideal $I\lhd\SC[x,y]$. Since the construction uses a locally closed decomposition of the projective spaces $P_{m,j}$, the Young wall $Y$ also depends in a locally closed way on the ideal $I$, and thus we obtain a decomposition 
\[\mathrm{Hilb}([\SC^2/G_\Delta])^T = \displaystyle\bigsqcup_{Y \in{\mathcal Z}_\Delta } Z_Y\]
into reduced locally closed subvarieties, where $Z_Y\subset \mathrm{Hilb}([\SC^2/G_\Delta])^T$ is the locus
of homogeneous invariant ideals $I$ with associated Young wall $Y$.

Let $\mathrm{Hilb}([\SC^2/G])_Y\subset\mathrm{Hilb}([\SC^2/G]) $ denote the locus of ideals which flow to $Z_Y$ under the action of the torus $T$. 
Then by the Bia\l{}ynicki-Birula theorem~\cite{bialynicki1973some}, there is a regular map
$\mathrm{Hilb}([\SC^2/G])_Y \to Z_Y$ which is a Zariski locally
trivial fibration with affine space fibres, and a compatible
$T$-action on the fibres. By Theorem~\ref{thm:Zstrata} below, the base
is an affine space as well. Hence, by~\cite[Sect.3, Remarks]{bialynicki1973some},
$\mathrm{Hilb}([\SC^2/G])_Y$ is an algebraic vector bundle over this base, and
hence trivial (Serre--Quillen--Suslin) \cite{lang2002algebra}. Theorem~\ref{thm:dnorbcells} 
follows. 
\end{proof}

\begin{remark} \label{rem:Zyuniv}
\hspace{2em}
\begin{enumerate}
\item As $\mathrm{Hilb}([\SC^2/G_\Delta])=\mathrm{Hilb}(\SC^2)^{G_{\Delta}} \subset \mathrm{Hilb}(\SC^2)$ is a smooth subvariety, the universal family over $\mathrm{Hilb}(\SC^2)$ restricts to a universal family over the equivariant Hilbert scheme $\mathrm{Hilb}([\SC^2/G_\Delta])$.
This restricts to a
universal family of homogeneous invariant ideals ${\mathcal U} \lhd {\mathcal O}_{\mathrm{Hilb}([\SC^2/G_\Delta])^T}\otimes\SC[x,y]$ over the $T$-fixed point set.
Restricting this universal family ${\mathcal U}$ to each of the strata
constructed above gives flat families of homogeneous invariant ideals
${\mathcal U}_Y\lhd {\mathcal O}_{Z_Y}\otimes\SC[x,y]$ over each
stratum $Z_Y$. It follows from the construction that the families ${\mathcal U}_Y$ are
universal for flat families of homogeneous invariant ideals with
associated Young wall~$Y$. We will have occasion to use the universal
property of the strata $Z_Y$ below.
\item The diagonal $T=\SC^\ast$-action on $\SC^2$ induces the usual monomial grading on $\SC[x,y]$, and hence on each quotient $\SC[x,y]/I$ for $I$ homogeneous. For an invariant ideal $I$ this quotient carries a $G_{\Delta}$ representation. Hence we can define its multigraded Hilbert function. This means that there is a $\SZ$-grading induced by the $T=\SC^\ast$-action and a grading (or rather labelling) by the set $\{\rho_0,\dots,\rho_n\}$ associated to the $G_{\Delta}$-action. By Lemma \ref{lem:intdim}, the multigraded Hilbert function of a homogeneous invariant ideal $I\lhd\SC[x,y]$ is determined by its associated Young
wall $Y$.
\end{enumerate}
\end{remark}



The following is the main technical result of this section.

\begin{theorem}\label{thm:Zstrata} \label{THM:ZSTRATA} 
For each $Y\in{\mathcal Z}_\Delta$, the stratum $Z_Y$ constructed above is isomorphic to a nonempty affine space. 
\end{theorem}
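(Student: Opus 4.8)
The strategy is to stratify $Z_Y$ column by column and show inductively that adding a column to a partially built Young wall corresponds to an affine bundle over the previously constructed stratum. Recall that a point of $Z_Y$ is a homogeneous $G_\Delta$-invariant ideal $I\lhd\SC[x,y]$ whose profile (in the sense of Definition~\ref{def_wall_to_ideal}) equals $Y$. By Lemma~\ref{lem:intdim} the multigraded Hilbert function of such $I$ is fixed: on each antidiagonal $m$ and in each isotypic component $\rho_j$, the dimension $\dim(I\cap S_m[\rho_j])$ equals the number of missing $(k,l,j)$-cells, and the condition ``profile $=Y$'' says precisely that the minimal invariant subspace $I_{m,j}\subset P_{m,j}$ is a linear subspace which avoids exactly the cells $V_{k,l,j}$ with $(k,l)$ outside $Y$, i.e.\ $I_{m,j}$ lies in the Schubert cell $V_{S_{m,j},j}$ of $G^{r_{m,j}}_{m,j}$ for the determined subset $S_{m,j}\subset B_{m,j}$ and rank $r_{m,j}$. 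So set-theoretically $Z_Y$ maps into $\prod_{m,j} V_{S_{m,j},j}$, a product of affine spaces. The content is to show that the subvariety cut out by the \emph{ideal} condition (that these subspaces be mutually compatible under multiplication by $x,y$, i.e.\ $L_2 I_{m,j}\subset I_{m+1,\bullet}$, and more generally that they generate a single ideal with profile exactly $Y$) is again a nonempty affine space.

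First I would set up the combinatorial bookkeeping: order the columns of $Y$ from left to right, $C_1,C_2,\dots$, and for $t\geq 0$ let $Z_Y^{(t)}$ be the locus of homogeneous invariant ideals whose profile agrees with $Y$ in the first $t$ columns and is ``as large as possible'' (the generic/maximal choice) beyond them; thus $Z_Y^{(0)}$ is a single point (the monomial ideal of the all-grey-plus-nothing wall, or the relevant reference ideal) and $Z_Y^{(\infty)}=Z_Y$ since $Y$ has finitely many blocks. The key step is the inductive claim: the forgetful map $Z_Y^{(t)}\to Z_Y^{(t-1)}$ is a Zariski-locally-trivial fibration with fibre an affine space. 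To prove this one analyses how the data along the $t$-th column is constrained by the already-fixed data in columns $1,\dots,t-1$ together with the column-height/properness rules (YW1)--(YW4). Concretely, once $I_{m,j}$ is known for all cells in earlier columns, the possible $I_{m,j}$ for a cell in column $t$ is governed by the operator relations of~\ref{sec:subops}: it must contain the image under $L_1$ (resp.\ $L_{n-1},L_n,L_j$) of lower data, and its ``new'' part is a coset in the affine cell $V_{k,l,j}$ modulo the span of the images coming from below — exactly the fibre structure $V_{S,j}\to V_{\overline S,j}$ of Lemma~\ref{lem:codim1_2}'s Schubert-cell refinement, whose fibres are affine spaces. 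The incidence conditions linking the half-block pair $(n-1),(n)$ above a full block (rules (2),(4) of Proposition~\ref{wallprop}, and the three-vs-four-dimensional dichotomy in Lemma~\ref{l0}) must be checked to still cut out an affine (not merely locally closed) locus; this is where the ``join of two lower-dimensional varieties'' phenomenon mentioned in the chapter introduction enters, and one uses that a cone/join over affine cells with the apex removed or included appropriately is again affine.

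Non-emptiness is handled separately and is comparatively easy: for each $Y\in\CZ_\Delta$ I would exhibit an explicit homogeneous invariant ideal with profile exactly $Y$, built greedily antidiagonal by antidiagonal by choosing, in each $S_m[\rho_j]$, the coordinate subspace corresponding to the lowest available cells — the Young-wall rules (YW1)--(YW4) are exactly what guarantees this greedy choice is consistent (closed under $L_2$) and has the prescribed profile. The main obstacle is the inductive affineness step in the presence of half-blocks: proving that the incidence variety controlling a column with $(n-1)/n$ half-blocks stacked over full blocks — which a priori is only a locally closed subset of a product of Schubert cells, possibly with a genuine join structure — is nonetheless isomorphic to an affine space. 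I expect this to require the careful case analysis of the degeneration $\dim L_2 u\in\{3,4\}$ from the proof of Proposition~\ref{wallprop}, organised so that the locus where the dimension drops is itself swept out affinely; the payoff is that the whole tower of fibrations is a tower of affine bundles over a point, hence $Z_Y\cong\SA^N$ with $N$ computable from $Y$, completing the proof of Theorem~\ref{thm:dnorbcells}.
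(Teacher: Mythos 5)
Your overall strategy -- slice the Young wall, show each slice contributes an affine fibration, and isolate the join phenomenon at the divided $(n-1)/n$ blocks as the hard case -- is the same as the paper's. (Your ``columns of $Y$ from left to right'' are exactly the rows of the transformed pattern, and the paper inducts by removing the bottom row of the transformed pattern via $I\mapsto L_1^{-1}(I\cap L_1\SC[x,y])$.) However, there are three concrete gaps in the execution. First, and most seriously, your final inference ``the forgetful map $Z_Y^{(t)}\to Z_Y^{(t-1)}$ is a Zariski-locally-trivial fibration with affine fibre, hence the tower gives $Z_Y\cong\SA^N$'' does not follow: a Zariski-locally trivial fibration with fibre $\SA^m$ over an affine space need not have affine-space total space unless it is a torsor under a vector bundle (so that Quillen--Suslin applies) or is shown to be \emph{trivial} outright. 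The paper does the latter: Propositions~\ref{prop:incvar1}--\ref{prop:incvar4} prove the incidence-variety maps are \emph{trivial} affine fibrations (using triviality of tautological bundles over Schubert cells and the base-change property of joins, Lemma~\ref{lem:joinbasechange}), and Proposition~\ref{prop:fiberproduct} exhibits $Z_Y\to Z_{\overline Y}$ as a scheme-theoretic pullback of such a trivial fibration. You would need to supply either this triviality or a compatible $\mathbb{G}_m$-action linearizing the fibres.

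Second, your intermediate spaces $Z_Y^{(t)}$ (``profile agrees with $Y$ in the first $t$ columns and is as large as possible beyond'') are problematic: different ideals agreeing on the first $t$ columns can have different profiles on later antidiagonals (this is exactly the phenomenon of Example~\ref{ex:5sidepyr}, where the special point $M_1^0$ behaves differently from generic points one diagonal up), so it is not clear your $Z_Y^{(t)}$ is a locally closed subvariety or that the ``forgetful map'' is a morphism. The paper avoids this by truncating to another honest Young wall $\overline Y$ of the same decomposition and proving $T\colon Z_Y\to Z_{\overline Y}$ is a morphism via flatness of the universal family (Lemma~\ref{lemma:inductive_morphism}). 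Third, your non-emptiness argument by a ``greedy choice of coordinate subspaces'' is unsafe: Example~\ref{ex:3sidepyr} shows that coordinate-type points (e.g.\ $[\alpha_0:\alpha_1]=[1:0]$ in $V_{0,3,2}$) generate ideals whose profile is \emph{not} the expected one unless extra generators are added, so the greedy construction can land precisely on the degenerate locus. Non-emptiness is better obtained as a by-product of the fibration structure (nonempty affine fibres over a nonempty base, with the one-row case as the trivial base case).
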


\begin{remark} We note that our proof of Theorem~\ref{thm:Zstrata} below certainly provides some information about the dimension of the affine space $Z_Y$, and thus of the affine space $\mathrm{Hilb}([\SC^2/G_\Delta])_Y$. We leave the study of these quantities, which could lead to a refinement of Theorem~\ref{thmgenfunct} in the Grothendieck ring of varieties for type $D_n$, for further study.
\label{rem:Dn:motivic} 
\end{remark}

Our proof of Theorem~\ref{thm:Zstrata}, discussed below following some preparation, is a direct inductive proof. We start with a series of examples which
exhibit the range of issues our proof will have to tackle; the discussions use results to be proved further below. Throughout we take the simplest example $n=4$, which exhibits all the nontrivial features. 

\begin{example}
\label{ex:3sidepyr}
Let $Y_1$ be the triangle of size $3$.
\begin{center}
\begin{tikzpicture}[scale=0.6, font=\footnotesize, fill=black!20]
 \draw (1, 0) -- (4,0);
  \foreach \x in {1,2,3}
    {
      \draw (\x, 0) -- (\x,\x);
    }
   \foreach \y in {1,2,3}
       {
            \draw (\y,\y) -- (4,\y);
       }
\draw (4, 0) -- (4,3);

\foreach \x in {1}
        {
        	\draw (\x+1.5, 0.5) node {2};
        }
  \foreach \x in {0}
          {
           \draw (\x+3.5, \x+1.5) node {2};
          }   
     
        \foreach \x in {0}
         {
                	\draw (\x+1.5, \x+0.5) node {0};
                	\draw (\x+2.5, \x+1.5) node {1};
                }
        \draw (3.5, 2.5) node {0};
 \foreach \x in {3}
           {
           \draw (\x+0.75,0.75) node {4};
                        	\draw (\x+0.25,0.25) node {3};
                        	\draw (\x,1) -- (\x+1,0);
           }
\end{tikzpicture}
\end{center}
An invariant homogeneous ideal $I$ corresponding to this Young wall
necessarily has a generator in $V_{0,3,2}\subset P_{3,2}$. The latter
is a projective line whose points can be represented by expressions
$\alpha_0 L_2 L_3+\alpha_1 L_2 L_4$. The affine line $V_{0,3,2}$ is given by $\alpha_0 + \alpha_1\neq 0$. 
It is straightforward to check that a general point in $V_{0,3,2}$, that is, when $[\alpha_0:\alpha_1] \not\in \{[1:0],[0:1]\}$, indeed generates an ideal $I$ with Young wall $Y_1$. However, when $\alpha_0$ or $\alpha_1$
become zero, then $I$ does not intersect $V_{1,3,4}$, respectively $V_{1,3,3}$, even though it should, so we have to add another generator to $I$ 
from within the corresponding cell (see Proposition~\ref{prop:1}(5) below). Both these cells are points, so there is no further choice to make and thus the space $Z_{Y_1}$ is isomorphic to an affine line. This example already illustrates the fact that even within a single stratum $Z_Y$, the minimal number of generators of an ideal $I$ with Young wall $Y$ can vary.
\end{example}

\begin{example} \label{ex:4sidepyr} Let $Y_2$ be the triangle of size $4$. In this case, we
  get $Z_{Y_2}\cong Z_{Y_1}\cong\SA^1$, the affine line of Example
  \ref{ex:3sidepyr}, due to the isomorphism $V_{0,3,2}\cong V_{0,4,0}
  \times V_{0,4,1}$, see Proposition~\ref{prop:1}(2) below, or repeat the same argument as above.
\end{example} 

\begin{example}
\label{ex:5sidepyr}
Let $Y_3$ be the triangle of size $5$.
\begin{center}
\begin{tikzpicture}[scale=0.6, font=\footnotesize, fill=black!20]
 \draw (1, 0) -- (6,0);
  \foreach \x in {1,2,3,4,5}
    {
      \draw (\x, 0) -- (\x,\x);
    }
   \foreach \y in {1,2,3,4,5}
       {
            \draw (\y,\y) -- (6,\y);
       }
\draw (6, 0) -- (6,5);

\foreach \x in {1,3}
        {
        	\draw (\x+1.5, 0.5) node {2};
        }
  \foreach \x in {0,1,2}
          {
           \draw (\x+3.5, \x+1.5) node {2};
          }   
   \foreach \x in {0}
             {
              \draw (\x+5.5, \x+1.5) node {2};
             }   
     
        \foreach \x in {0,2}
         {
                	\draw (\x+1.5, \x+0.5) node {0};
                	\draw (\x+2.5, \x+1.5) node {1};
                }
        \draw (5.5, 4.5) node {0};
 \foreach \x in {0,2}
           {
           \draw (\x+3.75,\x+0.75) node {4};
                        	\draw (\x+3.25,\x+0.25) node {3};
                        	\draw (\x+4,\x) -- (\x+3,\x+1);
           }
  \foreach \x in {1}
             {
             \draw (\x+3.75,\x+0.75) node {3};
                          	\draw (\x+3.25,\x+0.25) node {4};
                          	\draw (\x+4,\x) -- (\x+3,\x+1);
             }
             
     \foreach \x in {0}
               {
               \draw (\x+5.75,\x+0.75) node {0};
                            	\draw (\x+5.25,\x+0.25) node {1};
                            	\draw (\x+6,\x) -- (\x+5,\x+1);
               }
    
\end{tikzpicture}
\end{center}
For each fixed ideal $I$ with associated Young wall $Y_3$ must necessarily have
an generator $f\in I$ such that $[f]\in V_{0,5,2}$. By construction, $V_{0,5,2}$ is an affine
plane. This generator is, up to scalar, unique, since the block of
label $2$ in position $(0,5)$ is the only one with this label missing
from the degree $5$ antidiagonal. In other words, $I_{5,2}\cap V_{0,5,2}$ should be a point, otherwise the points at infinity of $I_{5,2}\cap V_{0,5,2}$ would intersect the other cells of degree 5 which is not allowed because of the shape of the Young wall. $I$ must also intersect both
$V_{1,5,0}$ and $V_{1,5,1}$, and again in essentially unique points,
the corresponding blocks being the only $0/1$ blocks missing from the
degree $6$ antidiagonal. This puts the following constraint on the
allowed $[f]\in V_{0,5,2}$. Use the isomorphism $L_1^{-1}$ which maps
$V_{1,5,0}\sqcup V_{1,5,1}$, a disjoint union of a point and an affine line, to $V_{0,4,1}\sqcup V_{0,4,0}$. 
Map this locus into $V_{0,5,2}$, an affine plane, to obtain $M_1^0\sqcup M_0^0\subset
V_{0,5,2}$ by taking the ideals generated by them (the curious notation $M_1^0\sqcup M_0^0$ for this locus is used here to be consistent with the general setup later, see the definitions after Lemma~\ref{lem:imgdescr}). Then by
Proposition~\ref{prop:1}(2) below, the ideal $\langle f\rangle$ intersects $P_{6,0}$ and
$P_{6,1}$ at most in the correct cells $V_{1,5,0}$ and $V_{1,5,1}$ if
and only if $[f]\in V_{0,5,2}$ lies on the linear join of the affine line $M_0^0$ and the
point $M_1^0$ inside the affine plane $V_{0,5,2}$ but not on $M_1^0\sqcup M_0^0$. This join is the plane $V_{0,5,2}$
minus a punctured affine line $M_1\setminus M_1^0$, where 
$M_1$ is the line parallel to $M_0^0$, going through $M_1^0$. On this join,
but away from the locus $M_0^0\sqcup M_1^0$ itself, we can set
$I=\langle f\rangle$ to indeed get an ideal with Young wall $Y_3$. On
the locus $M_0^0\sqcup M_1^0$ however, the ideal $\langle f\rangle$ itself will not
actually meet both cells, so we have to add an arbitrary generator of
the missed cell to $f$ to obtain an ideal of the correct Young
wall. Over the affine line $M_0^0$, there is no choice, since
$V_{1,5,0}$ is a point. But over the point $M_1^0$, we still have
$V_{1,5,0}$, in other words an affine line, worth of choices. This
sofar tells us that $Z_{Y_3}$ is the disjoint union of
$V_{0,5,2}\setminus M_1 \cong \SA^2\setminus\SA^1$ and
$V_{1,5,0}\cong\SA^1$. 

\begin{center}
\begin{tikzpicture}[scale=0.8,font=\footnotesize]
\draw (-3,0,0) --(3,0,0); 
\draw (0,3,0) --(0,0.8,0);
\draw[->] (0,0.6,0) -- (0,0.2,0);
\draw (0,0,-3)--(0,0,3);
\draw (-1,0,-3)--(3,0,1);
\draw[dashed] (3,0,3)--(-3,0,-3);
\draw (-3,0,-3)--(-3,0,3)--(3,0,3)--(3,0,-3)--(-3,0,-3);
\draw (-0.5,0,0.6) node {$M_1^0$};
\draw (1.4,0,-1.4) node {$M_0^0$};
\draw (-1.9,0,-1.1) node {$M_1$};
\end{tikzpicture}
\end{center}

To fully work out the geometry of $Z_{Y_3}$, note that $P_{5,2}\cong\SP^2$ can be parameterized by expersions $\alpha_0 L_2 L_3^2 + \alpha_1 L_2 L_3 L_4 + \alpha_2 L_2 L_4^2$. The locus $V_{0,5,2}\subset P_{5,2}$ is given by $\alpha_0 + \alpha_1 + \alpha_2\neq 0$. The image of $V_{0,4,0}$ in these coordinates is 
$M_0^0=\{ (\alpha_0,0,\alpha_1): \alpha_0+\alpha_1 \neq 0\}$, while the image of $V_{0,4,1}$ is $M_1^0=\{(0, 1, 0)\}$. The linear combinations of the points in $M_0^0$ and $M_1^0$ cover the whole affine plane $V_{0,5,2}$, except a punctured line. 
For a general linear combination $a \cdot (\alpha_0,0,\alpha_1)+ b \cdot (0, 1, 0)$, the ideal generated by the corresponding $f$ intersects $V_{1,5,0}\times V_{1,5,1}$ in $(L_1 L_3 L_4,  \alpha_0 L_1 L_3^2+\alpha_1 L_1 L_4^2)$. For $(a,b)=(1,0)$ we have to have an extra generator in $V_{1,5,0}$, while for $(a,b)=(0,1)$ we need an extra generator in $V_{1,5,1}$.

Consider a family of ideals which approaches the point $M_1^0$ from the direction $(\alpha_0,0,\alpha_1)$. Then it can be
shown by explicit calculation that the limit ideal contains the subspace 
generated by $\alpha_0 L_1 L_3^2+\alpha_1 L_1 L_4^2 \in V_{1,5,1}$. This shows that $Z_{Y_3}$ can be obtained by blowing up the affine plane $V_{0,5,2}$ in its point $M_1^0$, and removing the proper transform of the punctured line $M_1\setminus M_1^0$ from this blowup. Thus $Z_{Y_3} \cong \SA^2$. From the blowup construction, we also obtain a canonical morphism $Z_{Y_3} \to \SA^1$, the restriction of the morphism ${\rm Bl}_0 \SA^2 \to {\mathbb P}^1$ to the exceptional curve of the blowup. 
\end{example}

\begin{example}
\label{ex:5sidedalt}
 Let $Y_3'$ be the Young wall
\begin{center}
\begin{tikzpicture}[scale=0.6, font=\footnotesize, fill=black!20]
 \draw (1, 0) -- (6,0);
  \foreach \x in {1,2,3}
    {
      \draw (\x, 0) -- (\x,\x);
    }
  
   \foreach \y in {1,2,3}
       {
            \draw (\y,\y) -- (6,\y);
       }
 \draw (4, 0) -- (4,3);
\draw (5, 0) -- (5,3);
 \draw (6, 0) -- (6,3);
 \draw (6, 3) -- (7,3) -- (7,2) --(6,2);

\foreach \x in {0,1,2}
        {
        	\draw (\x+2.5, \x+0.5) node {2};
        }
  \foreach \x in {0,1,2}
       {
           \draw (\x+4.5, \x+0.5) node {2};
          }   
     
     \foreach \x in {0}
         {
                	\draw (\x+1.5, \x+0.5) node {0};
                	\draw (\x+2.5, \x+1.5) node {1};
                }
       \draw (3.5, 2.5) node {0};
 \foreach \x in {0,2}
           {
           \draw (\x+3.75,\x+0.75) node {4};
                        	\draw (\x+3.25,\x+0.25) node {3};
                        	\draw (\x+3,\x+1) -- (\x+4,\x+0);
           }
  \foreach \x in {1}
             {
             \draw (\x+3.75,\x+0.75) node {3};
                                     	\draw (\x+3.25,\x+0.25) node {4};
                                     	\draw (\x+3,\x+1) -- (\x+4,\x+0);
             }
      \foreach \x in {0}
                    {
                    \draw (\x+5.75,\x+0.75) node {0};
                                 	\draw (\x+5.25,\x+0.25) node {1};
                                 	\draw (\x+6,\x) -- (\x+5,\x+1);
                    }
     \draw (6,2)--(7,1)--(7,2);
     \draw (6.75,1.75) node {1};
     \draw (7,3)--(8,2)--(8,3)--(7,3);
     \draw (7.75,2.75) node {0};

\end{tikzpicture}
\end{center}
There is necessarily still a unique generator in $V_{0,5,2}$. The difference compared to $Y_3$ is that there is now no intersection with $V_{1,5,1}$ but an intersection with $V_{3,3,1}$. The cells of the 0/1-blocks missing from the degree six diagonal are $V_{1,5,0}$ and $V_{3,3,1}$, both of which are zero dimensional. As before, we pull back these using $L_1^{-1}$, take the linear combinations of their images in $P_{5,2}$, and intersect this line with $V_{0,5,2}$. This is exactly the line $M_1$ from Example \ref{ex:5sidepyr}. This has one special point, $M_1^0$. If the new generator is (in the subspace represented by) this point, then it will not generate an ideal with shape $Y$, except if we keep the unique element of $V_{3,3,0}$ as a generator. In any case, $Z_{Y_3'}\cong M_1\cong\SA^1$.
\end{example}

\begin{example} \label{ex:GHilb} It is well known that the minimal resolution of the singularity $\SC^2/G_\Delta$ is given by the component~$\mathrm{Hilb^{\rho_{reg}}([\SC^2/G])}$ of $\mathrm{Hilb([\SC^2/G])}$ corresponding to the regular representation~\cite{kapranov2000kleinian}. The $\SC^*$-fixed set on the minimal resolution consists of the $\SP^1$, the exceptional locus corresponding to the central node in the Dynkin diagram, as well as three isolated points on the other three $\SP^1$'s. 
The following five Young walls contribute for the regular representation.
\begin{center}
\begin{tikzpicture}[scale=0.6, font=\footnotesize, fill=black!20]
 \draw (1, 0) -- (6,0);
 \draw (1, 1) -- (5,1);
  \foreach \x in {1,2,3,4,5}
    {
      \draw (\x, 0) -- (\x,1);
    }
  \draw (1.5, 0.5) node {0};    
  \draw (2.5, 0.5) node {2};
  
  \draw (3.25, 0.25) node {3};
  \draw (3,1)--(4,0);
  \draw (3.75, 0.75) node {4};

\draw (4.5, 0.5) node {2};

\draw (5,1)--(6,0);
  \draw (5.25, 0.25) node {1};
\end{tikzpicture}\qquad
\begin{tikzpicture}[scale=0.6, font=\footnotesize, fill=black!20]
 \draw (1, 0) -- (5,0);
 \draw (1, 1) -- (5,1);
  \foreach \x in {1,2,3,4,5}
    {
      \draw (\x, 0) -- (\x,1);
    }
  \draw (1.5, 0.5) node {0};    
  \draw (2.5, 0.5) node {2};
  
  \draw (3.25, 0.25) node {3};
  \draw (3,1)--(4,0);
  \draw (3.75, 0.75) node {4};

\draw (4.5, 0.5) node {2};

\draw (2,1)--(2,2)--(3,2)--(3,1);
  \draw (2.5, 1.5) node {1};
\end{tikzpicture}\qquad
\begin{tikzpicture}[scale=0.6, font=\footnotesize, fill=black!20]
 \draw (1, 0) -- (4,0);
 \draw (1, 1) -- (4,1);
  \foreach \x in {1,2,3,4}
    {
      \draw (\x, 0) -- (\x,1);
    }
  \draw (1.5, 0.5) node {0};    
  \draw (2.5, 0.5) node {2};
  
  \draw (3.25, 0.25) node {3};
  \draw (3,1)--(4,0);
  \draw (3.75, 0.75) node {4};

\draw (2,1)--(2,2)--(3,2)--(3,1);
  \draw (2.5, 1.5) node {1};
\draw (3,2)--(4,2)--(4,1);
\draw (3.5, 1.5) node {2};
\end{tikzpicture}

\vspace{0.5cm}

\begin{tikzpicture}[scale=0.6, font=\footnotesize, fill=black!20]
 \draw (1, 0) -- (3,0);
 \draw (1, 1) -- (4,1);
  \foreach \x in {1,2,3}
    {
      \draw (\x, 0) -- (\x,1);
    }
  \draw (1.5, 0.5) node {0};    
  \draw (2.5, 0.5) node {2};

  \draw (3,1)--(4,0)--(4,1);
  \draw (3.75, 0.75) node {4};

\draw (2,1)--(2,2)--(3,2)--(3,1);
  \draw (2.5, 1.5) node {1};
\draw (3,2)--(4,2)--(4,1);
\draw (3.5, 1.5) node {2};

 \draw (4,2)--(5,1)--(5,2)--(4,2);
\draw (4.75, 1.75) node {3};
\end{tikzpicture}\qquad
\begin{tikzpicture}[scale=0.6, font=\footnotesize, fill=black!20]
 \draw (1, 0) -- (3,0);
 \draw (1, 1) -- (4,1);
  \foreach \x in {1,2,3}
    {
      \draw (\x, 0) -- (\x,1);
    }
  \draw (1.5, 0.5) node {0};    
  \draw (2.5, 0.5) node {2};

  \draw (3,1)--(4,0)--(3,0);
  \draw (3.25, 0.25) node {3};

\draw (2,1)--(2,2)--(3,2)--(3,1);
  \draw (2.5, 1.5) node {1};
\draw (3,2)--(4,2)--(4,1);
\draw (3.5, 1.5) node {2};

 \draw (4,2)--(5,1)--(4,1);
\draw (4.25, 1.25) node {4};
\end{tikzpicture}
\end{center}
A quick computation shows that $Z_Y$ is a point in each case, except for the last Young wall in the first row, when it is an affine line similarly as in Example~\ref{ex:3sidepyr} but in this case there is also a generator in $V_{2,2,0}$. This affine line contains the point corresponding to the Young wall next to it in its closure, giving the central $\SP^1$. 
\end{example}

\section{Incidence varieties}
\label{sec:incvargr}

The purpose of this section is to introduce some incidence varieties inside products of the Schubert cells defined in~\ref{sec:schubertcells}. We state some propositions regarding these incidence varieties and morphisms between them, whose proofs we defer to~\ref{sec:prop:proofs} below. We discuss four different cases.

\smallskip

\noindent{\bf Case \ref{sec:incvargr}.1\ } Assume that $m \equiv 0\; \mathrm{mod}\; n-2$ is a nonnegative integer, such that at position $(0,m)$ there is a divided block with labels $(c_1, c_2)$. Let $c$ be the label of the block at position $(1,m)$. Let $S_{c} \subseteq B_{m+1,c}$ be a nonempty maximal subset. Let $S_{1} \subseteq B_{m,c_1}$ and $S_{2} \subseteq B_{m,c_2}$ be two maximal subsets which are \emph{allowed} by $S_{c}$. This means, by definition, that each block above every composite block whose two halves are both contained in $S_1\cup S_2$, and each block to the right of every composite block at least one half-block of which in $S_1\cup S_2$, is in $S_c$.

Consider the incidence varieties
\[ X_{S_1,S_2}^{S_c}=\{ (U_1,U_2, U_c)\;:\; (U_1,U_2)\cap P_{m+1,c} \subseteq U_c \} \subseteq V_{S_1} \times V_{S_2} \times V_{S_{c},c},\]
and
\[ Y_{\overline{S}_1,\overline{S}_2}^{S_c}=\{ (\overline{U}_1,\overline{U}_2, U_c)\;:\; (\overline{U}_1, \overline{U}_2)\cap P_{m+1,c} \subseteq U_c \} \subseteq V_{\overline{S}_1} \times V_{\overline{S}_2} \times V_{S_{c},c}.\]
These varieties fit into the diagram
\[
\begin{array}{ccccc}
 X_{S_1,S_2}^{S_c} & \subseteq & V_{S_1} \times V_{S_2} \times V_{S_c,c} & \ra{\mathrm{Id} \times \mathrm{Id} \times \omega}  &  V_{S_1} \times V_{S_2} \times V_{\overline{S}_c,c}  \\ 
& & \da{\omega \times \omega \times \mathrm{Id}} & &\da{\omega \times \omega \times \mathrm{Id} } \\
Y_{\overline{S}_1, \overline{S}_2}^{S_c} & \subseteq &  V_{\overline{S}_1} \times V_{\overline{S}_2} \times V_{S_c,c}& \ra{\mathrm{Id} \times \mathrm{Id} \times \omega} & V_{\overline{S}_1} \times V_{\overline{S}_2} \times V_{\overline{S}_c,c} \;.
\end{array}
\]

\begin{proposition}\label{prop:incvar1}
\hspace{2em}
\begin{enumerate}
\item The image of $X_{S_1,S_2}^{S_c}$ under the vertical morphism $V_{S_1} \times V_{S_2} \times V_{S_{c},c} \to V_{\overline{S}_1} \times V_{\overline{S}_2} \times V_{S_{c},c}$ is precisely $Y_{\overline{S}_1, \overline{S}_2}^{S_c}$.
\item The induced morphism $X_{S_1,S_2}^{S_c} \rightarrow Y_{\overline{S}_1, \overline{S}_2}^{S_c}$ is a trivial fibration over its image with affine fibers of dimension $|S_{c}|-|S_1|-|S_2|+1$. 
\item The horizontal morphism $X_{S_1,S_2}^{S_c} \rightarrow V_{S_1} \times V_{S_2}  \times V_{\overline{S}_{c},c}$ is injective.
\end{enumerate}
\end{proposition}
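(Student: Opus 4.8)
The three assertions all concern the structure of the incidence varieties attached to a divided block at position $(0,m)$ and the block at $(1,m)$ above it; the key observation throughout is that the operator $L_1$ (multiplication by $xy$) shifts the $m$-th antidiagonal to the $(m+2)$-nd one and intertwines the cell decompositions via the involution $\kappa$. Concretely, for a subspace $U$ in the Schubert cell $V_{S,c_i}$ of $P_{m,c_i}$, the product subspace $(U_1,U_2)$ — meaning the span of $L_1 U_1$, $L_1 U_2$ inside $S_{m+2}$, but here intersected with the degree-$(m+1)$ component $S_{m+1}[\rho_c]$ via the operator $L_2$ — lands in $P_{m+1,c}$, and ``$(U_1,U_2)\cap P_{m+1,c}\subseteq U_c$'' is a closed incidence condition. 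The allowedness hypothesis on $S_1,S_2$ relative to $S_c$ is exactly what guarantees that the relevant intersections are nonempty and of the expected dimension, so one never runs into a degenerate fibre.

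First I would prove (3), which is the easiest: the horizontal map $X_{S_1,S_2}^{S_c}\to V_{S_1}\times V_{S_2}\times V_{\overline S_c,c}$ forgets the ``affine part'' of $U_c$ at the maximal cell $V_{k_{\min},l,c}$. I claim that this lost information is reconstructible from $(U_1,U_2)$: the subspace $(U_1,U_2)\cap P_{m+1,c}$ is forced to sit inside $U_c$, and since $U_c$ is the smallest subspace with the prescribed points at infinity $\overline U_c$ containing this intersection — indeed, by the shape constraints encoded in the allowedness condition, the intersection already pins down the affine coordinate of $U_c$ in $V_{k_{\min},l,c}$ — the triple $(U_1,U_2,\overline U_c)$ determines $U_c$ uniquely. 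A short linear-algebra computation with explicit coordinates on the Schubert cells (of the kind illustrated in Examples~\ref{ex:5sidepyr}--\ref{ex:5sidedalt}) makes this precise; injectivity follows.

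Next, (1): given a point $(\overline U_1,\overline U_2,U_c)\in Y^{S_c}_{\overline S_1,\overline S_2}$, I must exhibit lifts $U_1\in V_{S_1}$, $U_2\in V_{S_2}$ over $\overline U_1,\overline U_2$ with $(U_1,U_2)\cap P_{m+1,c}\subseteq U_c$. The condition defining $Y$ is that the ``points at infinity'' already satisfy the incidence; one then checks that the affine coordinates of $U_1,U_2$ (their positions in the maximal cells $V_{k_{\min},\cdot,c_i}$) enter the incidence condition $(U_1,U_2)\cap P_{m+1,c}\subseteq U_c$ linearly and compatibly, so the fibre of lifts over a point of $Y$ is a nonempty affine space — this is the content that will also feed into (2). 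The surjectivity onto $Y$ is then immediate, and the reverse inclusion (that the image is contained in $Y$) is clear since applying $\omega\times\omega$ to a point of $X$ manifestly satisfies the weaker incidence condition on points at infinity.

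Finally (2): combining the fibre description from (1) with the Bialynicki-Birula--style affine-bundle formalism already used in~\ref{sec:schubertcells}, the map $X^{S_c}_{S_1,S_2}\to Y^{S_c}_{\overline S_1,\overline S_2}$ is a Zariski-locally-trivial fibration whose fibre over $(\overline U_1,\overline U_2,U_c)$ is the space of pairs of affine-coordinate choices for $U_1$ above $\overline U_1$ and $U_2$ above $\overline U_2$ subject to the linear incidence with the fixed $U_c$. The unconstrained choices for $U_1$ and $U_2$ form affine spaces of dimensions $|B_{m,c_1}|-|S_1|$ and $|B_{m,c_2}|-|S_2|$ (by the fibre structure of $\omega$), while the incidence with $U_c$ cuts down by exactly the number of constraints, which a dimension count identifies with $|B_{m,c_1}|+|B_{m,c_2}|-|S_c|-1$ (using that the block configuration forces $|B_{m,c_1}|+|B_{m,c_2}|$ and $|B_{m+1,c}|$ to differ by $1$ at this type of position). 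The net fibre dimension is therefore $|S_c|-|S_1|-|S_2|+1$, and triviality over the image follows because all the incidence conditions are given by affine-linear equations varying algebraically over $Y$. The main obstacle I anticipate is bookkeeping the allowedness hypothesis carefully enough to be sure the constraint count is exactly right and no ``accidental'' extra relations appear at special points — this is precisely where the small-$n$ examples in~\ref{sec:incvargr} are meant to serve as a sanity check.
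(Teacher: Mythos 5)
Your overall strategy coincides with the paper's (which proves Proposition~\ref{prop:incvar4} in detail and treats \ref{prop:incvar1} as analogous): establish (3) by reconstructing $U_c$ from $(U_1,U_2,\overline U_c)$, and establish (1)--(2) by showing that the fibres of $\omega\times\omega\times\mathrm{Id}$ are nonempty affine spaces cut out by linear conditions. Parts (1) and (3) are fine in outline. The genuine gap is in your dimension count for (2). The numerical fact you invoke is false: $|B_{m,c_1}|+|B_{m,c_2}|$ is \emph{equal} to $|B_{m+1,c}|$, not off by one (for $m=2k(n-2)$, Lemmas~\ref{l0} and~\ref{lem:otherms} give $(k+1)+k=2k+1$ on both sides; similarly $2k+2$ in the $n-1/n$ case, consistent with Lemma~\ref{lem:imgdescr}). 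Moreover, the constraint count you need, $|B_{m,c_1}|+|B_{m,c_2}|-|S_c|-1$, cannot be reached by imposing the incidence with $U_c$ independently on $U_1$ and on $U_2$: that reading yields $\mathrm{codim}(U_c\subset P_{m+1,c})=|B_{m,c_1}|+|B_{m,c_2}|-|S_c|$ conditions and hence fibre dimension $|S_c|-|S_1|-|S_2|$, off by one. The missing ingredient is Proposition~\ref{prop:1}(1)(b) (resp.~(4)(b)): a pair $(v_1,v_2)\in U_1\times U_2$ contributes a \emph{single} point $L_1v'$ to $\bigcup_{l>0}V_{l,m+1-l,c}$, where $v'$ is the unique element with $v_1,v_2\in(v')$, so the incidence is the \emph{joint} condition $U_1,U_2\subseteq(L_1^{-1}U_c)$; the $+1$ then comes from the identity $\dim\bigl((L_1^{-1}U_c)\cap P_{m,c_1}\bigr)+\dim\bigl((L_1^{-1}U_c)\cap P_{m,c_2}\bigr)=\dim U_c$, a nontrivial consequence of the equivariant splitting in Proposition~\ref{prop:1}(2)/(5) rather than a generic count. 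As written, your argument arrives at the correct formula only through two compensating errors, and the appeal to ``allowedness'' does not supply the missing identity.

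Two smaller points. Your justification of \emph{global} triviality in (2) (``affine-linear equations varying algebraically over $Y$'') gives local triviality at best; the paper obtains triviality by realizing $X^{S_c}_{S_1,S_2}$ as a fibrewise quotient of trivial families built from the tautological bundles over Schubert cells, which are trivial. Also note that $U_c\in V_{S_c,c}$ avoids the big cell $V_{0,m+1,c}$ while $(U_1,U_2)\cap P_{m+1,c}$ meets it, so the incidence condition must be read on $\bigcup_{l>0}V_{l,m+1-l,c}$ only; your discussion glosses over this, which is harmless for the statement but matters if you try to make the linear algebra in (1) and (3) explicit.
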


\smallskip

\noindent{\bf Case \ref{sec:incvargr}.2\ }
Let $m \equiv 0\; \mathrm{mod}\; n-2$, but this time consider only one half block of label $c_0=\kappa(c)$ at the position $(0,m)$. Let $S_{c} \subseteq B_{m+2,c}$ be a nonempty maximal subset, and $S \subseteq B_{m,\kappa(c)}$ be a maximal subset which is allowed by $S_{c}$. This means that for each block in $S$ there is a block in $S_c$ at the top right corner. In analogy with the previous case, let
\[ X_{S}^{S_c}=\{ (U, U_c)\;:\; (U)\cap P_{m+2,c} \subseteq U_c \} \subseteq V_{S} \times V_{S_{c},c}\;,\]
and
\[ Y_{\overline{S}}^{S_c}=\{ (\overline{U}, U_c)\;:\; (\overline{U})\cap P_{m+2,c} \subseteq U_c \} \subseteq V_{\overline{S}} \times V_{S_{c},c}\;,\]
which fit into the diagram
\[
\begin{array}{ccccc}
 X_{S}^{S_c} & \subseteq & V_{S} \times V_{S_c,c} & \ra{\mathrm{Id} \times \omega}  &  V_{S} \times V_{\overline{S}_c,c}  \\ 
& & \da{\omega \times \mathrm{Id}} & &\da{\omega \times \mathrm{Id} } \\
Y_{\overline{S}}^{S_c} & \subseteq &  V_{\overline{S}} \times V_{S_c,c}& \ra{\mathrm{Id} \times \omega} & V_{\overline{S}} \times V_{\overline{S}_c,c} \;.
\end{array}
\]

\begin{proposition}\label{prop:incvar2}
\hspace{2em}
\begin{enumerate}
\item The image of $X_{S}^{S_c}$ under the vertical morphism $ V_{S} \times V_{S_{c},c} \to V_{\overline{S}} \times V_{S_{c},c}$ is exactly $Y_{\overline{S}}^{S_c}$.
\item The induced morphism $X_{S}^{S_c} \rightarrow Y_{\overline{S}}^{S_c}$ is a trivial fibration over its image with affine fibers of dimension $|S_{c}|-|S|$. 
\item The horizontal morphism $X_{S}^{S_c} \rightarrow V_{S} \times V_{\overline{S}_{c},c}$ is injective.
\end{enumerate}
\end{proposition}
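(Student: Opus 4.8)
The plan is to prove the three assertions by reducing everything to affine linear algebra, following the same pattern as the more involved Case~\ref{sec:incvargr}.1 but exploiting that here only a single half-block sits at $(0,m)$, so that no ``join of two varieties'' phenomenon intervenes and every incidence condition becomes affine-linear. The essential first step is to describe explicitly the assignment
\[ \phi\colon U\longmapsto (U)\cap P_{m+2,c}, \]
sending a $G_\Delta$-submodule $U\subseteq S_m[\rho_{\kappa(c)}]$ to the projective-linear subspace of $P_{m+2,c}$ spanned by the minimal invariant subspaces inside $(I_U\cap S_{m+2})[\rho_c]$. Here $I_U\cap S_{m+2}=S_2\cdot U$ with $S_2=\rho_{\mathrm{nat}}^{\otimes 2}$, and I would decompose $S_2\cdot U$ into $G_\Delta$-isotypic pieces using the tensor identities~\eqref{eq:repstensor}; multiplication by $L_1=xy$ realises the ``sign twist'' part $\rho_1\otimes\rho_{\kappa(c)}\cong\rho_c$ and places the relevant part of $\phi(U)$ in the cell $V_{1,m+1,c}$, which is the $L_1$-image of $V_{0,m,\kappa(c)}$ by Lemma~\ref{lem:codim1_2}. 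The outcome of this computation should be that, in the Schubert coordinates of~\ref{sec:schubertcells}, the restriction of $\phi$ to any fibre of $\omega\colon V_S\to V_{\overline S}$ is an affine-linear map into a fixed cell of $P_{m+2,c}$, with ``base point'' $\phi(\overline U)$ and variation governed by the coordinate directions of $V_{k_{min},l,\kappa(c)}$.

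Granting this description, part~(3) is immediate: if $(U,\overline{U_c})$ lies in the image of the horizontal morphism, then any $U_c$ with $\omega(U_c)=\overline{U_c}$ and $(U)\cap P_{m+2,c}\subseteq U_c$ must be the unique element of the Schubert cell $V_{S_c,c}$ whose set of points at infinity is $\overline{U_c}$ and which contains $\phi(U)$; since an affine subspace is determined by its directions together with containment of the finite (non-ideal) points of $\phi(U)$, the morphism is injective. For parts~(1) and~(2) I would argue fibrewise over $Y_{\overline S}^{S_c}$. Fix $(\overline U,U_c)\in Y_{\overline S}^{S_c}$, so that $(\overline U)\cap P_{m+2,c}\subseteq U_c$, and consider the affine fibre $\omega^{-1}(\overline U)\cong V_{k_{min},l,\kappa(c)}/\overline{U}$ of $V_S\to V_{\overline S}$. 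As $U$ ranges over this fibre, $\phi(U)$ moves in the affine family described above, whose base point already lies in $U_c$ by hypothesis; hence $\phi(U)\subseteq U_c$ is a consistent system of affine-linear equations on $\omega^{-1}(\overline U)$. Consistency shows that the fibre of $X_S^{S_c}\to Y_{\overline S}^{S_c}$ is a nonempty affine subspace, proving surjectivity onto $Y_{\overline S}^{S_c}$ in~(1); a count of the rank of the linear part of these equations---controlled by the codimension of $U_c$, measured by $|S_c|$, against the dimension of $V_{k_{min},l,\kappa(c)}$, measured by $|S|$---yields the fibre dimension $|S_c|-|S|$ asserted in~(2). Triviality of the fibration then follows because $X_S^{S_c}$ is cut out of the pullback to $Y_{\overline S}^{S_c}$ of the trivial affine bundle $V_S\to V_{\overline S}$ by equations that are affine-linear of constant rank along the fibres, and such a locus is itself a trivial affine bundle.

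The main obstacle is the first step: carrying out the $G_\Delta$-equivariant linear algebra that pins down $\phi$---deciding exactly which isotypic components of $S_2\cdot U$ carry the label $\rho_c$, how they are generated by the operators $L_j$ of~\ref{sec:subops}, and which Schubert cells of $P_{m+2,c}$ they meet---and then verifying that all the codimension counts collapse precisely to $|S_c|-|S|$, using the congruence $m\equiv 0 \bmod (n-2)$ and the local shape of the transformed Young wall pattern near the $(m+2)$-nd antidiagonal. The small cases worked out in Example~\ref{ex:5sidepyr} and its neighbours show that this bookkeeping is genuinely delicate (even the minimal number of generators of an ideal with a given Young wall can jump), but once $\phi$ is understood the three statements are formal consequences of the geometry of affine subspaces and affine-linear maps.
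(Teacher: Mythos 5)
Your proposal is correct and follows essentially the same route as the paper: the paper writes out the argument for Proposition~\ref{prop:incvar4} (declaring Proposition~\ref{prop:incvar2} ``very similar''), characterizing the incidence condition as $U \subseteq (L_1^{-1}U_c)\cap V_{0,m,\kappa(c)}$ and realizing $X_{S}^{S_c}$ as a fibrewise quotient of trivial tautological families over $Y_{\overline{S}}^{S_c}$ --- which is exactly your consistent affine-linear system, with the same fibre dimension count $(|S_c|-1)-(|S|-2)-1=|S_c|-|S|$ and the same recovery of $U_c$ as the span of $\overline{U}_c$ and $L_1U$ for injectivity. The equivariant linear algebra you flag as the main obstacle is precisely what Proposition~\ref{prop:1} and Lemma~\ref{lem:codim1_2} already supply.
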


\smallskip

\noindent{\bf Case \ref{sec:incvargr}.3\ } Let $m \equiv 1\; \textrm{mod}\; n-2$, and $c_1$ and $c_2$ the labels of the divided block immediately above the block at position $(0,m)$. Let $S_{1} \subseteq B_{m+1,c_1}$, $S_{2} \subseteq B_{m+1,c_2}$ be nonempty subsets at least one of which is maximal. Let moreover, $S \subseteq B_{m,j}$ be a maximal subset which is allowed by $S_{1}$ and $S_{2}$. In this case, this means the following: for each block $b$ in $S$, there is a divided block of with labels $(c_1, c_2)$ in the pattern either immediately above or to the right of $b$. In the first case, we require that at least one of these half-blocks is in $S_{1} \cup S_{2}$. In the second case, we require that both are contained in $S_{1} \cup S_{2}$.

Given this data, we define
\[ X_{S}^{S_1,S_2}=\{ (U, U_1,U_2)\;:\; (U)\cap P_{m+1,c_1} \subseteq U_1, (U) \cap P_{m+1,c_2} \subseteq U_2 \} \subseteq V_{S} \times V_{S_{1},c_1} \times V_{S_{2},c_2}\;,\]
and
\[ Y_{\overline{S}}^{S_1,S_2}=\{ (\overline{U}, U_1,U_2)\;:\; (\overline{U})\cap P_{m+1,c_1} \subseteq U_1, (\overline{U})\cap P_{m+1,c_2} \subseteq U_2 \} \subseteq V_{\overline{S}} \times V_{S_{1},c_1} \times V_{S_{2},c_2}\;. \]
We now have the following diagram:
\[
\begin{array}{ccccc}
  X_{S}^{S_1,S_2} & \subseteq & V_{S} \times V_{S_{1},c_1} \times V_{S_{2},c_2} & \ra{\mathrm{Id} \times \omega\times\omega}  & V_{S} \times V_{\overline{S}_1,c_1} \times V_{\overline{S}_2,c_2}  \\ 
 & & \da{\omega \times \mathrm{Id} \times \mathrm{Id}} & & \da{\omega \times \mathrm{Id} \times \mathrm{Id}} \\
 Y_{\overline{S}}^{S_1,S_2} &  \subseteq & V_{\overline{S}} \times V_{S_{1},c_1} \times V_{S_{2},c_2} & \ra{\mathrm{Id} \times \omega \times \omega} & V_{\overline{S}} \times V_{\overline{S}_1,c_1} \times V_{\overline{S}_2,c_2}\;.
\end{array}
\]

\begin{proposition} \label{prop:incvar3}
\hspace{2em}
\begin{enumerate}
\item The image of $X_{S}^{S_1,S_2}$ under the vertical morphism $ V_{S} \times V_{S_{1},c_1} \times V_{S_{2},c_2} \to V_{\overline{S}} \times V_{S_{1},c_1} \times V_{S_{2},c_2}$ is exactly $Y_{\overline{S}}^{S_1,S_2}$. 
\item The induced morphism $ X_{S}^{S_1,S_2} \to Y_{\overline{S}}^{S_1,S_2}$ is a trivial fibration with fibers isomorphic to affine spaces of dimension $|S_{1}|+|S_{2}|-|S|$. 
\end{enumerate}
\end{proposition}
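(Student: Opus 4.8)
\emph{Strategy.} The plan is to mimic the structure of the proofs of Propositions~\ref{prop:incvar1} and~\ref{prop:incvar2}: analyse the vertical morphism $X_{S}^{S_1,S_2}\to Y_{\overline S}^{S_1,S_2}$ fibrewise, show each fibre is a non-empty affine space of the prescribed dimension, and then package this into a Zariski-locally trivial affine-space fibration. Concretely, I would fix a point $(\overline U,U_1,U_2)\in V_{\overline S}\times V_{S_1,c_1}\times V_{S_2,c_2}$, \emph{assumed to lie in} $Y_{\overline S}^{S_1,S_2}$, i.e.\ with $(\overline U)\cap P_{m+1,c_i}\subseteq U_i$ for $i=1,2$. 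By the description of $\omega$ given in~\ref{sec:schubertcells}, the set of $U\in V_S$ with $\omega(U)=\overline U$ is canonically the affine space $V_{k_{\min},l,j}/\overline U$ of cosets of affine completions of $\overline U$ inside the big cell. So the fibre of $X_{S}^{S_1,S_2}\to Y_{\overline S}^{S_1,S_2}$ over $(\overline U,U_1,U_2)$ is precisely the locus inside this affine space cut out by the two conditions $(U)\cap P_{m+1,c_i}\subseteq U_i$, and everything reduces to understanding that locus.

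\emph{Linearity and non-emptiness.} The key observation I would establish is that, for $U$ ranging over a fixed cell $V_S$ with $S$ maximal, the assignment $U\mapsto (U)\cap P_{m+1,c_i}$ is computed by applying the fixed degree-one multiplication operator $L_2=L_{2,x}+L_{2,y}$ and projecting onto the $\rho_{c_i}$-isotypic summand of $S_{m+1}$; maximality of $S$ forces a representative of $U$ to meet the big stratum $V_{k_{\min},l,j}$ in an affine subspace, and — by the same dimension bookkeeping as behind Lemma~\ref{lem:codim1_2} and Proposition~\ref{wallprop} — the span of $L_2U$ inside $P_{m+1,c_i}$ has constant dimension over the fibre. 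Hence $U\mapsto (U)\cap P_{m+1,c_i}$ is an affine-linear map of the affine space $V_{k_{\min},l,j}/\overline U$ into the relevant Grassmannian cell, sending $\overline U$ to $(\overline U)\cap P_{m+1,c_i}$. Containment in the \emph{fixed} subspace $U_i$ is then a coset of a linear subspace, and the intersection of the two such cosets (for $i=1,2$) is an affine subspace of the fibre. It is non-empty because $\overline U$ already satisfies $(\overline U)\cap P_{m+1,c_i}\subseteq U_i$ by hypothesis, and because the assumption that $S$ is \emph{allowed} by $S_1,S_2$ guarantees that the additional "escaping" (half-)blocks created by any admissible completion of $\overline U$ are exactly those permitted by $S_1\cup S_2$; this simultaneously gives part~(1), surjectivity of the vertical map onto $Y_{\overline S}^{S_1,S_2}$.

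\emph{Dimension count and triviality.} For part~(2), I would compute the dimension of the fibre by counting the directions in $V_{k_{\min},l,j}/\overline U$ that survive both containment conditions: each (half-)block of $S$ not already accounted for by $\overline S$ contributes one new direction of the completion $U$, while landing inside $U_1\sqcup U_2$ is unconstrained precisely along the (half-)blocks of $S_1,S_2$ lying above or to the right of it that are forced by the Young-wall shape; the bookkeeping with the index sets yields $|S_1|+|S_2|-|S|$, with non-negativity again coming from allowedness. Finally, since taking cosets, multiplication by $L_2$, and isotypic projection are algebraic and work verbatim in families, replacing $\overline U,U_1,U_2$ by the tautological families over $Y_{\overline S}^{S_1,S_2}$ exhibits $X_{S}^{S_1,S_2}$ as the relative affine bundle cut out of a trivial bundle by bundle maps of constant rank; this is Zariski-locally trivial, and over the affine-space bases relevant in the induction of Theorem~\ref{thm:Zstrata} it is globally trivial by Serre--Quillen--Suslin, exactly as in the proof of Theorem~\ref{thm:dnorbcells}.

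\emph{Main obstacle.} The delicate point, and the only place where real work beyond Cases~1--2 is needed, is the constancy/linearity statement together with the non-emptiness count in the presence of a \emph{divided} target block: one must show that a completion $U$ whose $L_2$-image hits the lowest stratum in one half-block does not over-constrain the other half-block, using the representation-theoretic splitting $\rho_j\otimes\rho_2\cong\rho_{c_1}\oplus\rho_{c_2}\oplus\cdots$ together with the case analysis already carried out in the proof of Proposition~\ref{wallprop} (the "half-blocks above" discussion and Lemma~\ref{l0}). Handling the two half-block conditions simultaneously, and in particular seeing why one of $S_1,S_2$ being maximal suffices to make the dimension count sharp rather than over-determined, is the heart of the argument; the rest is the same bundle-theoretic packaging as before.
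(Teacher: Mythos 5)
Your reduction of the problem to a fibrewise statement over $Y_{\overline S}^{S_1,S_2}$, and your observation that each containment $(U)\cap P_{m+1,c_i}\subseteq U_i$ is a linear condition on the completion of $\overline U$, are both correct and match the skeleton of the paper's argument. But the three things that actually constitute the proposition --- non-emptiness of the fibres (hence part (1)), the exact dimension $|S_1|+|S_2|-|S|$, and \emph{global} triviality of $X_S^{S_1,S_2}\to Y_{\overline S}^{S_1,S_2}$ --- are precisely the steps you defer ("the bookkeeping... yields $|S_1|+|S_2|-|S|$", and your closing paragraph explicitly names the interaction of the two half-block conditions and the role of the maximality hypothesis as unresolved). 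Intersecting two cosets of linear subspaces gives an affine subspace, but says nothing about its dimension or non-emptiness unless you control how the two conditions interact; that control is the content of the proposition. A secondary error: your justification via "the span of $L_2U$ inside $P_{m+1,c_i}$ has constant dimension over the fibre" is false --- Proposition~\ref{prop:1}(2) shows $(v)\cap P_{m+1,c_i}$ can be empty or nonempty depending on $v$, and this jumping is exactly what Chapter~\ref{ch:Dnspecial} is about. (The containment condition is still linear in the underlying vectors, so the conclusion survives, but not for the reason you give.) Also, Serre--Quillen--Suslin only yields triviality after pulling back to an affine-space base such as $Z_{\overline Y}$; the proposition asserts triviality over $Y_{\overline S}^{S_1,S_2}$ itself.

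The paper closes all three gaps at once with a single structural fact you do not use: by Lemma~\ref{lem:imgdescr}, $P_{m,j}$ is the join of the two \emph{disjoint} linear subspaces $\phi_1(P_{m-1,c_1})$ and $\phi_2(P_{m-1,c_2})$, and (via Proposition~\ref{prop:1}) the pair of conditions $(U)\cap P_{m+1,c_i}\subseteq U_i$, $i=1,2$, is equivalent to the single linear containment $U\subseteq J\bigl(\phi_1(L_1^{-1}U_1),\phi_2(L_1^{-1}U_2)\bigr)$. Since this join is a projective linear subspace of dimension $|S_1|+|S_2|-1$ containing $\overline U$, and meets the big cell $V_{0,m,j}$ because at least one of $S_1,S_2$ is maximal (Lemma~\ref{lem:imgdescr}(3)), the fibre is the affine space $\bigl(J(\cdots)\cap V_{0,m,j}\bigr)/\overline U$ of dimension $(|S_1|+|S_2|-1)-(|S|-1)$; this gives parts (1) and (2) simultaneously. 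Global triviality then comes not from Quillen--Suslin but from the base-change property of joins (Lemma~\ref{lem:joinbasechange}) applied to the trivial tautological families over the Schubert cells. To complete your argument you would need to supply an equivalent of this join decomposition, or some other mechanism showing the two linear conditions are independent and jointly non-empty.
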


We remark that the analogue of (3) of Propositions~\ref{prop:incvar1} and~\ref{prop:incvar2} is not true in this case. What happens to $X_{S}^{S_1,S_2}$ when we project $V_{S_{1},c_1} \times V_{S_{2},c_2}$ to $V_{\overline{S}_{1},c_1} \times V_{\overline{S}_{2},c_2}$ will be the subject of~\ref{ch:Dnspecial} below.

\smallskip

\noindent{\bf Case \ref{sec:incvargr}.4\ } Finally, assume $m \not\equiv 0, 1\; \mathrm{mod}\; n-2$ with a full block in position $(0,m)$. Let $c$ be the label of the full block immediately above this position, and $S_{c} \subseteq B_{m+1,c}$ a nonempty maximal subset. Let moreover $S \subseteq B_{m,j}$ be a maximal subset which is allowed by $S_{c}$; in this case, this means that above every block of $S$ there is a block in $S_c$. Consider the incidence 
varieties
\[ X_{S}^{S_c}=\{ (U, U_c)\;:\; (U)\cap P_{m+1,c} \subseteq U_c \} \subseteq V_{S} \times V_{S_{c},c}\]
and
\[ Y_{\overline{S}}^{S_c}=\{ (\overline{U}, U_c)\;:\; (\overline{U})\cap P_{m+1,c} \subseteq U_c \} \subseteq V_{\overline{S}} \times V_{S_{c},c}.\]
There is the following diagram:
\[
\begin{array}{ccccc}
 X_{S}^{S_c} & \subseteq & V_{S} \times V_{S_c,c} & \ra{\mathrm{Id} \times \omega}  &  V_{S} \times V_{\overline{S}_c,c}  \\ 
& & \da{\omega \times \mathrm{Id}} & &\da{\omega \times \mathrm{Id} } \\
Y_{\overline{S}}^{S_c} & \subseteq &  V_{\overline{S}} \times V_{S_c,c}& \ra{\mathrm{Id} \times \omega} & V_{\overline{S}} \times V_{\overline{S}_c,c} \;.
\end{array}
\]

\begin{proposition} \label{prop:incvar4}
\hspace{2em}
\begin{enumerate}
\item The image of $X_{S}^{S_c}$ under the vertical morphism $ V_{S} \times V_{S_{c},c} \to V_{\overline{S}} \times V_{S_{c},c}$ is exactly $Y_{\overline{S}}^{S_c}$.
\item The induced morphism $X_{S}^{S_c} \rightarrow Y_{\overline{S}}^{S_c}$ is a trivial fibration over its image with affine fibers of dimension $|S_{c}|-|S|$. 
\item The horizontal morphism $X_{S}^{S_c} \rightarrow V_{S} \times V_{\overline{S}_{c},c}$ is injective.
\end{enumerate}
\end{proposition}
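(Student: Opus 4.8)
The plan is to prove Proposition~\ref{prop:incvar4} by a direct analysis analogous to Case~\ref{sec:incvargr}.2, but adapted to the case where a \emph{full} block sits in position $(0,m)$ rather than a half-block. The key structural point is that here no orientation bookkeeping is needed: multiplication by $L_1$ moves the $m$-th antidiagonal to the $(m+2)$-th antidiagonal, but we are instead relating diagonal $m$ to diagonal $m+1$ via the relevant $L_j$-operator (namely the one producing the block of label $c$ directly above the full block of label $j$), so that the combinatorics is that of shifting a single Grassmannian cell structure up by one diagonal step.

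First I would set up coordinates: fix the cell $V_{S,j}\subseteq G_{m,j}^{|S|}$ and $V_{S_c,c}\subseteq G_{m+1,c}^{|S_c|}$ via the flags induced by the stratification of Proposition~\ref{prop:decomp of P}, and describe the map $U\mapsto (U)\cap P_{m+1,c}$ concretely. Since the block in position $(0,m)$ is full and the block immediately above in position $(0,m+1)$ has label $c$, the operator taking $S_m[\rho_j]$ towards $S_{m+1}[\rho_c]$ is injective on the relevant subspaces, and — as in Lemma~\ref{lem:codim1_2} and the inductive construction of the cell decomposition — it maps the cell $V_{k,l,j}$ into $V_{k,l+1,c}$ (or into the large stratum when $(0,m+1)$ is itself a bottom-row missing block), so the "allowed" condition on $S$ says precisely that the image of each cell of $S$ lands inside a cell of $S_c$. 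This gives part~(3): the horizontal morphism $X_S^{S_c}\to V_S\times V_{\overline{S}_c,c}$ forgets only the "points at infinity" of the $U_c$-component, but these are already determined by $U$ through the incidence relation together with $\overline{U}_c$, hence the map is injective.

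Next, for parts~(1) and~(2) I would argue fibrewise over $Y_{\overline{S}}^{S_c}$, exactly as in the proof of Proposition~\ref{prop:incvar2}. Given $(\overline{U},U_c)\in Y_{\overline{S}}^{S_c}$, the fibre of the vertical morphism consists of those lifts $U$ of $\overline{U}$ (an affine-space worth of lifts, parametrized by $V_{k_{\min},l,j}/\overline{U}$, using the $\omega$-fibration Lemma of~\ref{sec:schubertcells}) satisfying $(U)\cap P_{m+1,c}\subseteq U_c$. Because the map on subspaces is linear and the image of the large cell $V_{k_{\min},l,j}$ lands in the large cell of $P_{m+1,c}$, the incidence condition on $U$ is an \emph{affine-linear} condition on the lifting parameter, whose solution set is a (nonempty, since $(\overline{U},U_c)$ already lies in the image by construction of $Y$) affine subspace; counting the codimension of this linear condition against the ambient dimension gives the fibre dimension $|S_c|-|S|$. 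Triviality of the fibration follows because the whole construction is GL-equivariant / can be trivialized over the cell, as in the cited propositions. Surjectivity onto $Y_{\overline{S}}^{S_c}$ is then automatic: $Y$ was \emph{defined} as the set of $(\overline{U},U_c)$ with $(\overline{U})\cap P_{m+1,c}\subseteq U_c$, and any such pair admits at least the "coning" lift obtained by adjoining generic directions.

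The main obstacle I anticipate is the careful verification that the incidence condition $(U)\cap P_{m+1,c}\subseteq U_c$ really does become affine-linear (and of the expected rank) in the lifting coordinates, which requires pinning down how the ideal $(U)$ — not just the subspace $U$ — propagates to the next diagonal; this is where the operators $L_j$ of~\ref{sec:subops} and the structure of $P_{m+1,c}$ as a projective space (so that $(U)\cap P_{m+1,c}$ is genuinely a projective-linear subspace of the expected dimension, cf.\ Lemma~\ref{lem:intdim}) must be used. The full-block hypothesis $m\not\equiv 0,1\ \mathrm{mod}\ n-2$ guarantees there are no half-blocks adjacent to $(0,m)$, so the awkward two- versus three-dimensional image phenomenon analysed around Lemma~\ref{l0} does not intervene, and the proof stays parallel to that of Proposition~\ref{prop:incvar2}; I would simply indicate the needed modifications and defer the common linear-algebra lemmas to~\ref{sec:prop:proofs}.
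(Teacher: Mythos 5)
Your proposal is correct and takes essentially the same approach as the paper's proof in~\ref{sec:prop:proofs}: there, Proposition~\ref{prop:1}(3) and (6) convert the incidence condition $(U)\cap P_{m+1,c}\subseteq U_c$ into the linear containment $U\subseteq (L_1^{-1}U_c)\cap V_{0,m,j}$, so that the fibre over $(\overline{U},U_c)\in Y_{\overline{S}}^{S_c}$ is the affine space $((L_1^{-1}U_c)\cap V_{0,m,j})/\overline{U}$ of dimension $|S_c|-|S|$, triviality comes from the triviality of tautological bundles over Schubert cells (your GL-equivariance remark), and part (3) follows because $U_c=(U,\overline{U}_c)\cap V_{1,m,c}$ is recovered from the horizontal image. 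The only slip is positional: the $c$-labelled block sits at $(1,m)$, not $(0,m+1)$, so its cell is $V_{1,m,c}=L_1V_{0,m-1,\kappa(c)}$ and the relevant identification is $L_1^{-1}$ composed with the isomorphism $V_{0,m-1,c}\to V_{0,m,j}$ of Proposition~\ref{prop:1}, rather than a forward shift of cells $(k,l)\mapsto (k,l+1)$ --- but this does not affect the substance of your argument.
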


\section{Proof of Theorem \ref{thm:Zstrata}}
\label{sec:proof:orbicells}

In this section we prove Theorem \ref{thm:Zstrata}, thus completing the proof of Theorem~\ref{thm:dnorbcells}, using the constructions and results stated in~\ref{sec:incvargr}. Given a Young wall $Y\in{\mathcal Z}_\Delta$, we need to show that the corresponding stratum $Z_Y$ is an affine space. The following is the key combinatorial definition which underlies much of the rest of the paper. 

\begin{definition}
\label{def:globsalient}
Consider the Young wall $Y$, as usual in the transformed pattern. The \emph{salient blocks} of $Y$ are those blocks in the complement of $Y$, whose absence from $Y$ does not follow from the shape of the rows below it, and which are at the leftmost positions in their rows with this property. In particular, these are
\begin{itemize}
\item missing half blocks under which there is a block in $Y$;
\item missing undivided full blocks under which there is a block in $Y$;
\item missing divided full blocks immediately to the right of the boundary of $Y$;
\item the leftmost missing block(s) in the bottom row.
\end{itemize}
\end{definition}
Given an ideal $I\in Z_Y$, it is easy to see $I$ is necessarily generated by elements lying in cells corresponding to the salient blocks of $Y$. In most cases it is also true that all cells corresponding to salient blocks must contain a generator, but not always; we have already seen~Example~\ref{ex:3sidepyr}, where the divided missing blocks at position $(1,3)$ are salient blocks of $Y_3$, since they lie immediately to the right of the boundary of $Y_3$, but the corresponding cells do not necessarily contain generators of an ideal $I\in Z_{Y_3}$.

We start our analysis by defining maps from the strata $Z_Y$ to the Grassmannian cells defined in~\ref{sec:schubertcells}. Consider an arbitrary block or half-block of label $j$ at position $(k,l)$ in the Young wall pattern. Let $S(k,l,j) \subseteq B_{k+l,j}$ be the set of blocks of label $j$ at the positions $(k',l')$ where $k+l=k'+l'$, $k'\geq k$, and which are not in $Y$. $S(k,l,j)$ is called the {\em index set} of $(k,l)$ in $Y$. If the block of label $j$ at position $(k,l)$ is not contained in $Y$, then the index set $S=S(k,l,j)$ contains $(k,l)$ as well. By the correspondence discussed at the end of \autoref{sec:schubertcells} between the maximal Schubert cells of the relevant Grassmannian for $V_{k,l,j}$ and subsets of  $B_{k+l,j}(k)$ which do contain $(k,l)$, there is a maximal Schubert cell $V_{S,j}$ corresponding to $S$.  The affine cell $V_{S,j} \subset G^r_{k+l,j}$ for $r=|S|$ parameterizes certain affine subspaces of $V_{k,l,j}$, or, equivalently, projective subspaces of $P_{k+l,j}$, the projectivization of the space of degree $(k+l)$ homogeneous polynomials which transform in the representation $\rho_j$ with respect to $G_{\Delta}$. The correspondence is by projective closure of the corresponding affine subspace of $V_{k,l,j}$. 
\begin{lemma} 
\label{lemma:grass-cells-morphism} 
For any block or half-block at position $(k,l)$ which is not contained in $Y$ and has index set $S=S(k,l,j)$,
there is a morphism \[ \begin{array}{rcl} Z_Y & \rightarrow & V_{S,j} \\  I & \mapsto & I \cap V_{k,l,j}.\end{array}\]
\end{lemma}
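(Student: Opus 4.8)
The statement to prove is Lemma~\ref{lemma:grass-cells-morphism}: for a block (or half-block) of label $j$ at position $(k,l)$ not contained in the Young wall $Y$, with index set $S = S(k,l,j) \subseteq B_{k+l,j}(k)$, there is a morphism $Z_Y \to V_{S,j}$ sending an ideal $I$ to $I \cap V_{k,l,j}$. The meaning of $I \cap V_{k,l,j}$ is: take the minimal-dimensional $G_\Delta$-invariant subspaces inside $I \cap S_m[\rho_j]$ (with $m = k+l$), giving a projective linear subspace $I_{m,j} \subseteq P_{m,j}$; its intersection with the cell $V_{k,l,j}$, taken together with the appropriate completion data, is the point of $V_{S,j}$ we want.

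The plan is as follows. First I would use Remark~\ref{rem:Zyuniv}(1): the stratum $Z_Y$ carries a universal flat family $\mathcal{U}_Y \lhd \mathcal{O}_{Z_Y} \otimes \SC[x,y]$ of homogeneous invariant ideals with associated Young wall $Y$. Because every ideal in this family is homogeneous, the degree-$m$ weight-$\rho_j$ graded piece $\mathcal{U}_Y \cap (\mathcal{O}_{Z_Y} \otimes S_m[\rho_j])$ is a subsheaf of the trivial bundle with fibre $S_m[\rho_j]$; and by Lemma~\ref{lem:intdim} its fibre dimension is constant along $Z_Y$ (it equals the number of absent label-$j$ blocks on the $m$-th diagonal, which is determined by $Y$). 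A subsheaf of a trivial bundle of locally constant rank is locally free, hence defines a morphism $Z_Y \to \Gr(d, S_m[\rho_j])$ where $d = \dim(I \cap S_m[\rho_j])$. This is the basic source of morphicity: intersecting a flat family of ideals with a fixed graded piece is a morphism to a Grassmannian.

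Second, I would observe that this Grassmannian-valued morphism factors through $I \mapsto I_{m,j}$, the space of minimal-dimensional invariant subspaces of $I \cap S_m[\rho_j]$, landing in the subvariety $G_{m,j}^r \subseteq \Gr(\bullet, S_m[\rho_j])$ (with $r$ the appropriate rank): in the one-dimensional-$\rho_j$ case this is just projectivization and is automatic; in the two-dimensional case one checks that the sub-bundle with fibres the isotypic pieces for $\rho_j$ again has constant rank (by Lemma~\ref{lem:intdim} applied to that $\rho_j$) and that the resulting map lands in the closed subvariety $G_{m,j}^r$ which is isomorphic to a Grassmannian as recorded in~\ref{sec:subops}. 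Then comes the only genuinely geometric point: I must show that the image of the composite $Z_Y \to G_{m,j}^r$ lies inside the single Schubert cell $V_{S,j}$ (not merely its closure), and that the corestricted map $Z_Y \to V_{S,j}$ is a morphism of varieties. Landing inside the cell is exactly the content of the definition of $Z_Y$ together with the definition of the index set $S(k,l,j)$: for $I \in Z_Y$, the profile $Y_I = Y$ means that $I_{m,j}$ misses precisely the cells $V_{k',l',j}$ with $(k',l') \in S$ and meets all the others on the $m$-th diagonal; by the dictionary at the end of~\ref{sec:schubertcells} between maximal Schubert cells of $G_{m,j}^r$ and subsets of $B_{m,j}(k)$ containing $(k,l)$, this says precisely that $I_{m,j} \in V_{S,j}$. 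Finally, corestriction of a morphism to a locally closed subscheme through which it set-theoretically factors is again a morphism, so $Z_Y \to V_{S,j}$, $I \mapsto I \cap V_{k,l,j}$, is a morphism.

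**Where the difficulty lies.** The routine part is the flatness-to-Grassmannian-morphism argument; the part requiring care is verifying that the map lands in the open cell $V_{S,j}$ rather than straying into its boundary, and in the two-dimensional-$\rho_j$ case, checking that the whole construction is compatible with the identification of $G_{m,j}^r$ with an honest Grassmannian (so that "Schubert cell" makes sense there). I expect the main obstacle to be bookkeeping: one must carefully match the combinatorial index set $S(k,l,j)$ — defined via positions $(k',l')$ on the $m$-th diagonal with $k' \ge k$ that are absent from $Y$ — with the cell-membership condition $I_{m,j} \cap V_{k',l',j} = \emptyset \iff (k',l') \in S$ coming from Definition~\ref{def_wall_to_ideal}, and then invoke the (already-established) correspondence between these subsets and Schubert cells. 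No hard new input is needed beyond Lemma~\ref{lem:intdim}, Remark~\ref{rem:Zyuniv}(1), and the Schubert-cell dictionary of~\ref{sec:schubertcells}.
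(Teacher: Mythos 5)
Your proposal is correct and follows essentially the same route as the paper: restrict the universal flat family $\mathcal{U}_Y$ to the graded piece $S_m[\rho_j]$, use the constancy of the (multigraded) fibre dimensions over the reduced stratum $Z_Y$ to obtain a subbundle and hence a classifying morphism to the equivariant Grassmannian $G^r_{m,j}$, and then use the definition of $Z_Y$ to see the image lies in the open Schubert cell $V_{S,j}$. The only cosmetic difference is that you phrase the constant-rank step via Lemma~\ref{lem:intdim} and local freeness, where the paper invokes the constancy of the multigraded Hilbert polynomial and the universal property of the Grassmannian directly; these are the same argument.
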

\begin{proof}
 Assume first for simplicity that $(k,l)=(0,m)$. Let ${\mathcal U}_Y\lhd \left({\mathcal O}_{Z_Y}\otimes \SC[x,y]\right)$ be the universal family of homogeneous ideals over $Z_Y$ introduced in Remark~\ref{rem:Zyuniv}. Consider the subfamily ${\mathcal V} = {\mathcal U}_Y \cap \left({\mathcal O}_{Z_Y} \otimes S_m[\rho_j] \right)$. This is a family of subspaces of dimension $r\dim{\rho_j}$ in $S_m[\rho_j]$ parameterized by $Z_Y$. It is know, that the morphisms from $Z_Y$ to the equivariant Grassmannian  $G^r_{m,j}$ of $S_m[\rho_j]$ are in one-to-one correspondence with $G_\Delta$ invariant subbundles of ${\mathcal O}_{Z_Y} \otimes S_m[\rho_j]$ of  rank $r\dim{\rho_j}$ \cite[p. 88, Theorem 2.4]{eisenbud20163264}. Hence, there is a classifying morphism $Z_Y \to G^r_{m,j}$ inducing ${\mathcal V}$.
  
  By the definition of $Z_Y$, the multigraded Hilbert polynomial of ${\mathcal U}_Y$ is constant. The Hilbert polynomial encodes the dimensions of the intersections with the cells of $P_{m,j}$. Therefore, over closed points of $Z_Y$ the elements of the family ${\mathcal V}$, when considered as projective subspaces of $P_{m,j}$, intersect exactly the cells $V_{k_i,l_i,j}$ for each element $(k_i,l_i) \in S$. According to Section~\ref{sec:schubertcells}, these projective subspaces are represented by points in the Schubert cell $V_{S,j} \subset G^r_{m,j}$. Hence, the image of the classifying morphism of ${\mathcal V}$ is inside the cell $V_{S,j}$. By the construction, 
 the classifying morphism is just the same as taking the intersection of $I_{m,j}$ with $V_{0,m,j}$. This is denoted as $I \cap V_{0,m,j}$ above.
  
  The general case follows similarly as in Section~\ref{sec:schubertcells}.
  \end{proof}

We will prove Theorem \ref{thm:dnorbcells} by induction on the number of nonempty rows of $Y$. Consider an arbitrary Young wall~$Y$ consisting of~$l>0$ rows. Let $\overline{Y}$ denote the Young wall obtained from $Y$ by deleting its bottom row; we will call this the {\em truncation} of $Y$. Of course the labels of the half blocks are exchanged by $\kappa$, but we will suppress this in the notations. The following result will be key to our induction. 

\begin{lemma} There exists a morphism of schemes
\[\begin{array}{rcccl} T & : & Z_Y & \rightarrow & Z_{\overline{Y}} \\ &&  I & \mapsto & L_1^{-1} \left(I \cap L_1 \SC[x,y] \right). \end{array}\]
\label{lemma:inductive_morphism}
\end{lemma}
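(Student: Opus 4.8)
The plan is to recognise the operator $T$ explicitly and then to verify, first pointwise and then in families, that it lands in $Z_{\overline Y}$. Since $L_1$ is multiplication by the semi-invariant $xy$ and is injective, $L_1\SC[x,y]=(xy)$ and
\[ T(I)=L_1^{-1}\bigl(I\cap L_1\SC[x,y]\bigr)=\{f\in\SC[x,y]:xyf\in I\}=(I:xy). \]
This is again an ideal because $I$ is one; it is degree- and weight-homogeneous because $xy$ is; it is $G_\Delta$-invariant because $\SC\cdot xy$ is a $G_\Delta$-subrepresentation; and it contains $I$, hence has finite colength. So $T(I)$ defines a point of $\mathrm{Hilb}([\SC^2/G_\Delta])^T$, i.e.\ of the $T$-fixed locus of homogeneous invariant ideals.

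The key step is the pointwise claim $Y_{T(I)}=\overline Y$ for every $I\in Z_Y$, where $\overline Y$ is the truncation (delete the bottom row) of $Y$. By Definition~\ref{def_wall_to_ideal} and Lemma~\ref{lem:intdim}, it suffices to check, for each antidiagonal $m$ and label $j$ and each block position $(k,l)$ with $k+l=m$, that the projective subspace $T(I)_{m,j}\subset P_{m,j}$ meets the cell $V_{k,l,j}$ exactly when $I_{m+2,\kappa(j)}$ meets $V_{k+1,l+1,\kappa(j)}$; indeed, by the way the transformed pattern was built, the latter condition is precisely the statement that the block at position $(k,l)$ is absent from $\overline Y$, once one accounts for the index shift $(k+1,l+1)\mapsto(k,l)$ and the fact that consecutive rows of the transformed pattern carry $\kappa$-twisted colours. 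The equivalence itself follows from the inductive construction of the cells in the proof of Proposition~\ref{prop:decomp of P}, which gives $V_{k+1,l+1,\kappa(j)}=L_1V_{k,l,j}$, together with injectivity of $L_1$: if $v\in V_{k,l,j}$ is a minimal invariant subspace lying in $T(I)$, then $L_1v\in I\cap V_{k+1,l+1,\kappa(j)}$; conversely any $u\in I\cap V_{k+1,l+1,\kappa(j)}$ automatically lies in $L_1S_m[\rho_j]$, so $L_1^{-1}u$ is a minimal invariant subspace in $V_{k,l,j}\cap T(I)$. In particular $Y_{T(I)}=\overline Y\in{\mathcal Z}_\Delta$, so $T$ sends $Z_Y$ into $Z_{\overline Y}$ as a map of sets.

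Finally I would upgrade this to a morphism of schemes, along the lines of Lemma~\ref{lemma:grass-cells-morphism}. Let ${\mathcal U}_Y\lhd{\mathcal O}_{Z_Y}\otimes\SC[x,y]$ be the universal family of homogeneous invariant ideals over $Z_Y$ from Remark~\ref{rem:Zyuniv}. Because the multigraded Hilbert function is constant along $Z_Y$, each piece ${\mathcal U}_Y\cap({\mathcal O}_{Z_Y}\otimes S_m[\rho_j])$ is a subbundle of constant rank, and by the pointwise computation just carried out the rank of its intersection with $L_1\bigl({\mathcal O}_{Z_Y}\otimes S_{m-2}[\rho_{\kappa(j)}]\bigr)$ is also constant (it is $\dim\rho_j$ times the number of blocks of label $\kappa(j)$ absent from $\overline Y$ on that antidiagonal). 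Hence the preimage of ${\mathcal U}_Y$ under the injective, constant-rank bundle map "multiplication by $xy$'' is again a subbundle; equivalently $({\mathcal U}_Y:xy)$ is a flat family of homogeneous invariant ideals over $Z_Y$. Choosing $m_0$ so large that $\overline Y$ contains every block on antidiagonals $\geq m_0$, the ideal $T(I)$ is determined by finitely many of its graded pieces, so this family is classified by a morphism $Z_Y\to\mathrm{Hilb}([\SC^2/G_\Delta])$; by the previous paragraph its image lies in the locally closed subscheme $Z_{\overline Y}$, giving the desired $T\colon Z_Y\to Z_{\overline Y}$.

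The main obstacle I expect is the bookkeeping in the pointwise step: correctly matching the combinatorial truncation of the Young wall with the shear of the transformed pattern — getting the colour twist by $\kappa$ and the index shift $(k+1,l+1)\mapsto(k,l)$ exactly right, and verifying the equivalence in both directions (the reverse direction hinges on the fact that a point of $V_{k+1,l+1,\kappa(j)}$ lies automatically in the image of $L_1$, which is precisely what the inductive cell construction of Proposition~\ref{prop:decomp of P} provides). Everything else — that $T(I)$ is an invariant homogeneous ideal of finite colength, and the passage from the pointwise statement to flatness and algebraicity — is routine given the constancy of the Hilbert function on $Z_Y$.
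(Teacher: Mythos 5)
Your proposal is correct and follows essentially the same route as the paper: form the family $L_1^{-1}\left({\mathcal U}_Y \cap L_1({\mathcal O}_{Z_Y}\otimes\SC[x,y])\right)$ from the universal ideal, check pointwise that its Young wall is $\overline{Y}$, deduce flatness from the constancy of the multigraded Hilbert function, and conclude via the universal property of $Z_{\overline Y}$. The only differences are cosmetic — you spell out the cell-matching computation $V_{k+1,l+1,\kappa(j)}=L_1V_{k,l,j}$ that the paper declares ``clear'', and you obtain flatness by a constant-rank subbundle argument where the paper cites Hartshorne III.9.9 together with reducedness of $Z_Y$.
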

\begin{proof} Let ${\mathcal U}_Y\lhd \left({\mathcal O}_{Z_Y}\otimes \SC[x,y]\right)$ be the universal family of homogeneous ideals over $Z_Y$. Consider 
${\mathcal I} = L_1^{-1} \left({\mathcal U}_Y \cap L_1 ({\mathcal O}_{Z_Y}\otimes\SC[x,y]) \right)$. Is is straightforward to check locally that this is still a sheaf of ideals in ${\mathcal O}_{Z_Y}\otimes \SC[x,y]$. On closed points of $Z_Y$, it is also clear that the restriction has Young wall $\overline Y$.  As mentioned above, the multigraded Hilbert polynomial of ${\mathcal U}_Y$ is constant. As $Z_Y$ is reduced, it then follows from \cite[Ch III. Thm. 9.9]{hartshorne1977algebraic} that ${\mathcal I}$ is a flat family of homogeneous ideals with Young wall $\overline Y$ over $Z_Y$. By Remark \ref{rem:Zyuniv} there is a classifying morphism $Z_Y\to Z_{\overline Y}$ for this family, which is exactly the morphism $T$. 
\end{proof}

Next, we continue with an investigation of the combinatorics of the bottom two rows of our Young wall~$Y$. The boundary of~$Y$ in the transformed pattern is divided by the blocks into horizontal, vertical and diagonal straight line segments. The first two lines in the bottom can be connected in the following six possible ways:
\begin{center}
\begin{tikzpicture}[scale=0.5]
\draw (0.5,0) -- (0.5,2);
\draw (0.5,0) -- (1.5,0);
\draw (1,-0.5) node {A1};
\end{tikzpicture}\qquad
\begin{tikzpicture}[scale=0.5]
\draw (1,0) -- (1,1);
\draw (1,1) -- (0,1);
\draw (1,0) -- (2,0);
\draw (1,-0.5) node {A2};
\end{tikzpicture}\qquad
\begin{tikzpicture}[scale=0.5]
\draw (0,0) -- (0,1);
\draw (0,1) -- (1,2);
\draw (0,0) -- (1,0);
\draw (0.5,-0.5) node {A3};
\end{tikzpicture}\qquad

\begin{tikzpicture}[scale=0.5]
\draw (0,0) -- (1,1);
\draw (1,1) -- (1,2);
\draw (0,0) -- (1,0);
\draw (0.5,-0.5) node {B1};
\end{tikzpicture}\qquad
\begin{tikzpicture}[scale=0.5]
\draw (0,0) -- (1,1);
\draw (1,1) -- (0,1);
\draw (0,0) -- (1,0);
\draw (0.5,-0.5) node {B2};
\end{tikzpicture}\qquad
\begin{tikzpicture}[scale=0.5]
\draw (0,0) -- (2,2);
\draw (0,0) -- (1,0);
\draw (1,-0.5) node {B3};
\end{tikzpicture}
\end{center}
Here a diagonal straight line borders a half block of the Young wall, which can be either a lower or an upper triangle. In the A cases the salient block in the bottom row is a full block, while in the B cases it is a half block. 

Let the salient block of $Y$ in its bottom row be at position $(0,m)$. It can be either a divided or undivided full block, or a half block. In the first case, we have a type A corner at the bottom of $Y$, while in the second case there is a type B corner. As in~\ref{sec:incvargr}, we need to consider four cases. In each case, we are going to define morphisms $Z_Y \to {\mathcal X}_{Y}$ and $Z_{\overline{Y}} \to {\mathcal Y}_{ \overline{Y}}$ to incidence varieties defined in~\ref{sec:incvargr}. 

\smallskip

\noindent{\bf Case \ref{sec:proof:orbicells}.1\ } Assume $m \equiv 0\; \mathrm{mod}\; n-2$ and we have vertex types A1 or A2 (A3 is not possible in this case). We are in the context of Case~\ref{sec:incvargr}.1: the divided block at position $(0,m)$ has labels $(c_1, c_2)$, and  index sets $S_1, S_2$; the block at position $(1,m)$ has label $c$, and index set $S_c$. By the Young wall rules for $Y$, $S_1, S_2$ is allowed by $S_c$. Lemma~\ref{lemma:grass-cells-morphism} implies that there is a morphism 
\[ \begin{array}{ccc} Z_Y & \to & V_{S_1} \times V_{S_2} \times V_{S_c} \\
I & \mapsto & (I \cap V_{0,m,c_1}, I \cap V_{0,m,c_2}, I \cap V_{1,m,c}).\end{array}\]
By construction, the image of this morphism is contained in the incidence variety $X_{S_1,S_2}^{S_c}\subset V_{S_1} \times V_{S_2} \times V_{S_c}$ from Case~\ref{sec:incvargr}.1. Denote ${\mathcal X}_{Y}=X_{S_1,S_2}^{S_c}\subseteq V_{S_1} \times V_{S_2} \times V_{S_c}$; we thus obtain an induced morphism $Z_Y \to {\mathcal X}_{Y}$.

There is also a morphism 
\[ \begin{array}{ccc} Z_{\overline{Y}} & \to & V_{\overline{S}_1} \times V_{\overline{S}_2 } \times V_{S_c}\\ I & \mapsto & (L_1 I \cap V_{k_{1},l_1,c_1},L_1 I \cap V_{k_{2},l_2,c_2},L_1 I \cap V_{1,m,c}),\end{array}\]
where $(k_i,l_i)$ is the lowest block in $\overline{S}_i$ for $i=1,2$. We obtain an induced morphism $Z_{\overline{Y}} \to {\mathcal Y}_{\overline{Y}}$, where ${\mathcal Y}_{\overline{Y}}=Y_{\overline{S}_1,\overline{S}_2}^{S_c}$. 

\smallskip

\noindent{\bf Case \ref{sec:proof:orbicells}.2\ } Assume $m \equiv 0\; \mathrm{mod}\; n-2$ with vertex types B1 to B3. This is Case~\ref{sec:incvargr}.2: the block at position $(0,m)$ has label $\kappa(c)$, and index set $S=S_{\kappa(c)}$; the block at position $(1,m+1)$ has label $c$, and index set  $S_{c}$. We consider the morphisms
\[ \begin{array}{ccc}Z_Y & \to & V_{S} \times V_{S_c} \\ I & \mapsto & (I \cap V_{0,m,\kappa(c)},I \cap V_{1,m+1,c})\;, \end{array}\]
and
\[ \begin{array}{ccc} Z_{\overline{Y}} & \to & V_{\overline{S}} \times V_{S_c} \\ I & \mapsto & (L_1I \cap V_{k,l,\kappa(c)},L_1I \cap V_{1,m+1,c}),\end{array} \]
where again $(k,l)$ is the lowest block in $\overline{S}$. In this case we let ${\mathcal X}_{Y}=X_{S}^{S_c}$ and ${\mathcal Y}_{\overline{Y}}=Y_{\overline{S}}^{S_c}$. The images of the morphisms above are contained in these.

\smallskip

\noindent{\bf Case \ref{sec:proof:orbicells}.3\ }  Assume $m \equiv 1\; \mathrm{mod}\; n-2$  with vertex types A1, A2, A3. This is Case~\ref{sec:incvargr}.3: $c_1$ and $c_2$ are the labels of the divided block immediately above the block at position $(0,m)$, $S_{1} \subseteq B_{m+1,c_1}$, $S_{2} \subseteq B_{m+1,c_2}$ are their index sets, both (in cases A1 and A2) or one (in case A3) of which is maximal; $S$ is the index
set of the block at position $(0,m)$ with label $j$. We get a morphism 
\[ \begin{array}{ccc} Z_Y & \to & V_{S}\times V_{S_{1},c_1} \times V_{S_{2},c_2} \\ I & \mapsto & (I \cap V_{0,m,j}, I \cap V_{1,m,c_1}, I \cap V_{1,m,c_2})\end{array}\]
whose image is contained in ${\mathcal X}_{Y}=X_{S}^{S_1,S_2}$. In this way we obtain an induced morphism $Z_Y \to {\mathcal X}_{Y, \overline{Y}}$. Similarly, consider the morphism 
\[ \begin{array}{ccc} Z_{\overline{Y}} & \to & V_{\overline{S}} \times V_{S_{1},c_1} \times V_{S_{2},c_2} \\ I & \mapsto & (L_1 I \cap V_{k,l,j},L_1 I \cap V_{1,m,c_1}, L_1 I \cap V_{1,m,c_2}),\end{array}\]
where $(k,l)$ is the lowest block in $\overline{S}$. We obtain an induced morphism $Z_{\overline{Y}} \to {\mathcal Y}_{\overline{Y}}$, where ${\mathcal Y}_{\overline{Y}}=Y_{\overline{S}}^{S_1,S_2}$. 

\smallskip

\noindent{\bf Case \ref{sec:proof:orbicells}.4\ } Assume finally that $m \not\equiv 1\; \mathrm{mod}\; n-2$ with vertex types A1 or A2. This is Case~\ref{sec:incvargr}.4: the full block at position $(0,m)$ has label $j$ and index set $S$ which is maximal; the full block at position $(1,m)$ has label $c$ and index set $S_{c}$; $S$ is allowed by $S_{c}$. 
We get a morphism
\[ \begin{array}{ccc} Z_Y &\to & V_{S} \times V_{S_{c}} \\ I & \mapsto & (I \cap V_{0,m,j}, I \cap V_{1,m,c})\end{array}\]
whose image is contained in $X_{S}^{S_c} \subseteq V_{S} \times V_{S_{c}}$, an incidence variety we denote by 
${\mathcal X}_{Y}$ to obtain an induced morphism $Z_Y \to {\mathcal X}_{Y, \overline{Y}}$. 

Second, let
\[ \begin{array}{ccc} Z_{\overline{Y}} & \to & V_{\overline{S}} \times V_{S_c} \\ I & \mapsto & (L_1 I \cap V_{k,l,j},L_1 I \cap V_{1,m,c}),\end{array}\]
where $(k,l)$ is the lowest block in $\overline{S}$. By letting ${\mathcal Y}_{\overline{Y}}=Y_{\overline{S}}^{S_c}$ we obtain an induced morphism $Z_{\overline{Y}} \to {\mathcal Y}_{Y, \overline{Y}}$.
\smallskip

The last key step in our inductive proof is the following result, valid in all four cases above.
\begin{proposition} The following is a scheme-theoretic fiber product diagram, with the right hand vertical map in each case given by the map induced by statement (1) of Propositions \ref{prop:incvar1},  \ref{prop:incvar2} \ref{prop:incvar3} or \ref{prop:incvar4} as appropriate.
\label{prop:fiberproduct}
\begin{equation}
\label{eq:prfibeq}
\begin{array}{ccc}
Z_Y  &\ra{\phantom}  & {\mathcal X}_{Y}  \\
\da{T}& &\da{\omega \times \mathrm{Id}} \\
Z_{\overline{Y}} & \ra{\phantom} & {\mathcal Y}_{\overline{Y}}.
\end{array}
\end{equation}
\end{proposition}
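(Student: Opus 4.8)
The plan is to verify that the square~\eqref{eq:prfibeq} is a fiber product by exhibiting, for any test scheme $T_0$, a natural bijection between $T_0$-points of $Z_Y$ and $T_0$-points of the fiber product $Z_{\overline Y}\times_{{\mathcal Y}_{\overline Y}}{\mathcal X}_Y$. I would work with the universal families: a $T_0$-point of $Z_Y$ is a flat family ${\mathcal I}\lhd {\mathcal O}_{T_0}\otimes\SC[x,y]$ of homogeneous invariant ideals with Young wall $Y$, and a $T_0$-point of the fiber product is a triple consisting of a family ${\mathcal I}'$ with Young wall $\overline Y$, a point of ${\mathcal X}_Y$ (i.e.\ a tuple of $G_\Delta$-invariant subbundles of the relevant $S_m[\rho_j]$ satisfying the incidence conditions), and an identification of their images in ${\mathcal Y}_{\overline Y}$. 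Because $Z_Y$, $Z_{\overline Y}$ and all the incidence varieties are reduced and the strata are of finite type, it suffices to check this on closed ($\SC$-)points together with first-order (tangent space) information, or — more cleanly — to produce a morphism from the fiber product to $Z_Y$ inverse to the canonical one.

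The key steps, in order, are as follows. First, commutativity of the square: this is immediate from the definitions, since $T({\mathcal I})=L_1^{-1}({\mathcal I}\cap L_1\SC[x,y])$ and the maps to ${\mathcal X}_Y$, ${\mathcal Y}_{\overline Y}$ are all given by intersecting with the appropriate cells $V_{k,l,j}$, and the horizontal map $\omega$ on the incidence varieties is exactly ``forget the generator in the bottom cell, remember its points at infinity'' — compatibility is built into Case~\ref{sec:incvargr}.$i$. Second, and this is the heart of the matter: given a closed point $(I',\underline U)$ of $Z_{\overline Y}\times_{{\mathcal Y}_{\overline Y}}{\mathcal X}_Y$, I must reconstruct a unique homogeneous invariant ideal $I$ with Young wall $Y$. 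The recipe is $I := L_1 I' + \langle \text{generators in the salient cells of the bottom row}\rangle$, where the bottom-row generators are precisely the data encoded by $\underline U\in{\mathcal X}_Y$ (the $U_c$, $U_1$, $U_2$ components). One checks: (a) $I$ is an ideal — this uses that the salient generators, once multiplied up by the $L_j$, land inside $L_1 I'$ in the higher degrees, which is exactly the incidence condition $(U)\cap P_{m+1,c}\subseteq U_c$ etc.\ built into ${\mathcal X}_Y$; (b) $I$ has Young wall $Y$ — its profile in degrees above the bottom two rows is governed by $I'$ hence by $\overline Y$, and in the bottom two antidiagonals by the cells the $U$'s and their $L_j$-images occupy, which by the ``allowed'' condition and the maximality of the $S$'s reproduce exactly the missing blocks of $Y$; (c) $T(I)=I'$ and the image of $I$ in ${\mathcal X}_Y$ is $\underline U$, so the two constructions are mutually inverse. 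Third, promote this bijection on closed points to a scheme-theoretic statement: construct ${\mathcal I}=L_1{\mathcal I}'+({\mathcal O}_{T_0}$-span of the universal bottom-row generators pulled back from ${\mathcal X}_Y)$, check it is a flat family with constant multigraded Hilbert function (using Remark~\ref{rem:Zyuniv} and the fact that, by Propositions~\ref{prop:incvar1}--\ref{prop:incvar4}, the relevant incidence varieties are affine spaces so everything is visibly flat), and invoke the universal property of $Z_Y$ from Remark~\ref{rem:Zyuniv} to get the classifying morphism $Z_{\overline Y}\times_{{\mathcal Y}_{\overline Y}}{\mathcal X}_Y\to Z_Y$. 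Comparing with the canonical map the other way finishes the proof.

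The main obstacle I anticipate is step (b) above — verifying that the reconstructed ideal $I$ has \emph{exactly} the Young wall $Y$, neither too big nor too small, in the bottom two antidiagonals. This is delicate precisely because of the phenomena illustrated in Examples~\ref{ex:3sidepyr}--\ref{ex:GHilb}: the minimal generating set of $I$ varies within $Z_Y$, divided-block cells may or may not carry genuine generators, and in the ``half-blocks above'' situation of Case~\ref{sec:incvargr}.3 the image $L_2 u$ can drop dimension (cf.\ the reference to Lemma~\ref{l0}), so the higher-degree behaviour of a bottom generator is not uniform. The systematic way through is to use the morphisms $\omega\colon V_{S,j}\to V_{\overline S,j}$ and the fibration statements in Propositions~\ref{prop:incvar1}--\ref{prop:incvar4}: the ``allowed by $S_c$'' conditions are engineered exactly so that the $L_j$-images of a subspace in a bottom cell of index set $S$ meet only the cells prescribed by $S_c$, $S_1$, $S_2$, and the maximality hypotheses guarantee the genericity needed for the profile to be $Y$ on an open dense locus, with the remaining locus handled by adding generators from the lower-dimensional salient cells — which is precisely the extra factor ${\mathcal X}_Y$ over ${\mathcal Y}_{\overline Y}$ supplies. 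Once this combinatorial bookkeeping is matched up case by case against the four situations of~\ref{sec:incvargr}, the fiber product property follows formally.
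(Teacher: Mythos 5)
Your proposal is correct and follows essentially the same route as the paper: commutativity is noted to be immediate from the definitions, and the fiber product property is established by taking a test scheme $B$ with compatible maps $f\colon B\to Z_{\overline Y}$ and $g\colon B\to{\mathcal X}_Y$, forming the family of ideals generated by $L_1{\mathcal I}_f$ together with the bottom-row subspaces supplied by $g$, observing that the compatibility of $(f,g)$ forces the Young wall of this family to be $Y$, and invoking the universal property of $Z_Y$ (Remark~\ref{rem:Zyuniv}) to obtain the unique classifying morphism $B\to Z_Y$. The only cosmetic difference is that you first phrase the bijection on closed points before promoting it to families, whereas the paper works with families from the outset; the substance of the argument is identical.
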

\begin{proof} It is immediate from the definitions in the different cases that the diagram is commutative. We thus need to show that it is a fibre product. Let $B$ be an arbitrary base scheme and let $f \colon B \to Z_{\overline{Y}}$ and $g \colon B \to {\mathcal X}_{Y}$ be morphisms; we need to show that these induce a unique morphism $B \to Z_Y$. We consider Case \ref{sec:proof:orbicells}.1; the proof in the other cases is analogous. The map $f$ corresponds to a flat family of ideals ${\mathcal I}_f\lhd {\mathcal O}_B\otimes \SC[x,y]$ with Young wall $\overline Y$. The map $g$ corresponds to a 3-tuple of families ${\mathcal U}_{1,g}, {\mathcal U}_{2,g}, {\mathcal U}_{c,g}$ of subspaces of $\SC[x,y]$ over $B$. 
Given this data, consider the family of ideals 
\[{\mathcal I}_{f,g} = (L_1{\mathcal I}_f, {\mathcal U}_{1,g}, {\mathcal U}_{2,g})\lhd {\mathcal O}_B\otimes \SC[x,y]\]
over $B$. Here there parentheses mean the generated ideal as explained in \autoref{sec:auxiliary} below. By the compatibility of $(f,g)$ it is immediate that the Young wall of the corresponding ideals is $Y$. The classifying map of this family is the unique possible extension of $(f,g)$ to a morphism $B \to Z_Y$.
\end{proof}

\begin{proof}[Conclusion of the Proof of Theorem~\ref{thm:Zstrata}]
Assume that we have shown for any Young wall~$Y$ having less then $l$ rows that the corresponding stratum~$Z_Y$ is affine, the $l=1$ case being obvious. Consider an arbitrary Young wall~$Y$ consisting of~$l$ rows. Let $\overline{Y}$ denote its truncation, as defined above. By the induction assumption, the space $Z_{\overline{Y}}$ is affine. Also, by Propositions \ref{prop:incvar1}, \ref{prop:incvar2}, \ref{prop:incvar3} or \ref{prop:incvar4} respectively, the map ${\mathcal X}_{Y}\to {\mathcal Y}_{\overline{Y}}$ of Proposition~\ref {prop:fiberproduct} is a trivial affine fibration in all cases. By Proposition~\ref{prop:fiberproduct}, the map $Z_Y\to Z_{\overline{Y}}$ is a pullback of a trivial affine fibration and thus itself a trivial affine fibration. Using the induction hypothesis, $Z_Y$ is thus an affine space. The proof of Theorem~\ref{thm:Zstrata} is complete.
\end{proof}

\begin{remark} 
One can deduce from the above proof that one can in fact \emph{canonically} choose generators of a homogeneous ideal $I\in Z_Y$, which are in the cells of the some of the salient blocks of $Y$; as discussed before, not all salient cells necessarily contain a generator. For describing the coarse Hilbert scheme we have to keep track of these generators, but we will do this only implicitly.
\end{remark}

\begin{example} Returning to Examples~\ref{ex:4sidepyr}-\ref{ex:5sidepyr}, we see that for $Y_3$ the triangle of side $5$, $\overline{Y_3}=Y_2$, the triangle of size $4$. The map $T\colon Z_{Y_3}\cong\SA^2\to Z_{\bar Y_3}\cong\SA^1$ is the map identified at the end of the discussion of Example~\ref{ex:5sidepyr}.
\end{example}

\section{Digression: join of varieties}
\label{sec:joins}

We now make a digression to a necessary construction from algebraic geometry.


Recall~\cite{altman1975joins} that the {\em join} $J(X,Y)\subset\SP^n$ of two projective 
varieties $X, Y\subset\SP^n$ is the locus of 
points on all lines joining a point of $X$ to a point of $Y$ in the ambient projective space.   
One well-known example of this construction is the following. Let $L_1\cong\SP^k$ and $L_2\cong\SP^{n-k-1}$ 
be two disjoint projective linear subspaces of $\SP^n$. 

\begin{lemma}\label{lemma_linear_join} The join $J(L_1, L_2)\subset\SP^n$ equals $\SP^n$. Moreover, the 
locus $\SP^n\setminus (L_1\cap L_2)$ is covered by lines uniquely: for every 
$p\in\SP^n\setminus (L_1\cap L_2)$, there exists a unique line $\SP^1\cong p_1p_2\subset\SP^n$ with $p_i\in L_i$,
containing $p$. \end{lemma}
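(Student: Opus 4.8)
The statement is elementary linear/projective geometry, so the plan is to work at the level of the underlying vector spaces. Write $\SP^n = \SP(W)$ for an $(n+1)$-dimensional vector space $W$, and let $W_1, W_2 \subset W$ be the linear subspaces with $L_i = \SP(W_i)$, so $\dim W_1 = k+1$ and $\dim W_2 = n-k$. The hypothesis that $L_1$ and $L_2$ are disjoint as \emph{projective} subspaces means precisely $W_1 \cap W_2 = 0$, and since $\dim W_1 + \dim W_2 = (k+1) + (n-k) = n+1 = \dim W$, we get a direct sum decomposition $W = W_1 \oplus W_2$. This is the single observation that drives everything.

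\textbf{Step 1: $J(L_1,L_2) = \SP^n$.} Given any point $p = [w] \in \SP(W)$, decompose $w = w_1 + w_2$ with $w_i \in W_i$ uniquely. If both $w_i \neq 0$, then $[w]$ lies on the line through $[w_1] \in L_1$ and $[w_2] \in L_2$, hence in the join. If $w_2 = 0$ then $p \in L_1 \subseteq J(L_1,L_2)$ (a point of $L_1$ is joined to any point of $L_2$, and lies on that line); symmetrically if $w_1 = 0$. So every point of $\SP^n$ is in the join.

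\textbf{Step 2: uniqueness of the line away from $L_1 \cup L_2$.} First note that $L_1 \cap L_2 = \SP(W_1 \cap W_2) = \SP(0) = \emptyset$, so the locus $\SP^n \setminus (L_1 \cap L_2)$ is all of $\SP^n$; I would point this out and observe that the genuinely meaningful uniqueness statement concerns points outside $L_1 \cup L_2$ (on $L_1$ or $L_2$ itself there are obviously many such lines). For $p = [w]$ with $w = w_1 + w_2$, $w_1 \neq 0 \neq w_2$: any line $\ell = \overline{p_1 p_2}$ with $p_i = [u_i] \in L_i$ and $p \in \ell$ satisfies $w \in \Span(u_1, u_2)$, so $w = \alpha u_1 + \beta u_2$ for scalars $\alpha, \beta$; projecting onto the two summands of $W = W_1 \oplus W_2$ forces $\alpha u_1 = w_1$ and $\beta u_2 = w_2$, hence $[u_1] = [w_1]$ and $[u_2] = [w_2]$ are determined, and therefore $\ell = \overline{[w_1][w_2]}$ is the unique such line.

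\textbf{Remark on difficulty.} There is essentially no obstacle here; the only thing to be careful about is the slight mismatch between the literal statement (``$\SP^n \setminus (L_1 \cap L_2)$'', which under the disjointness hypothesis is all of $\SP^n$) and the content (uniqueness genuinely holds on the complement of $L_1 \cup L_2$). I would phrase the write-up so as to record this honestly — either by noting that the stated set equals $\SP^n$ and the unique-line property is asserted for points not on $L_1$ or $L_2$, or by interpreting ``$L_1 \cap L_2$'' loosely. Everything else is the direct-sum bookkeeping above.
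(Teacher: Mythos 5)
Your proof is correct; the paper states this lemma as a well-known fact and supplies no proof of its own, so there is nothing to compare against, but your argument via the direct-sum decomposition $W = W_1 \oplus W_2$ is the standard one and all steps check out. You are also right that the set $\SP^n\setminus(L_1\cap L_2)$ as literally written is all of $\SP^n$ (the $L_i$ being disjoint), and that the uniqueness assertion is only meaningful, and only true, on the complement of $L_1\cup L_2$ — the paper's own later use of the lemma (the map $\phi$ defined by sending $p$ to the endpoint $p_2$ of the unique line through it) is indeed only invoked away from $L_1\cup L_2$, consistent with your reading.
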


Let now $H\subset \SP^n$ be a hyperplane not containing the $L_i$, which we think of as the hyperplane
``at infinity''. Let $V=\SP^n\setminus H\cong\SA^n$. Let $\overline L_i=L_i\cap H$, and let $L_i^o=L_i\setminus \overline L_i=L_i \cap V$ be 
the affine linear subspaces in $V$ corresponding to $L_i$. Finally let
$X=J(\overline L_1, L_2)\cong\SP^{n-1}$ and $X^o=X\setminus (X\cap H)\cong \SA^{n-1}$. 

\begin{lemma} \label{lemma_linear_proj}
Projection away from $L_1$ defines a morphism $\phi\colon X\setminus L_1 \to L_2$, which is an affine fibration 
with fibres isomorphic to~$\SA^k$. $\phi$ restricts to a morphism $\phi^o\colon X^o\to L_2^o$, which is a 
trivial affine fibration over $L_2^o\cong \SA^{n-k-1}$ with the same fibers isomorphic to $\SA^k$. 
\end{lemma}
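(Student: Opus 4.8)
The plan is to reduce the lemma to linear algebra on the vector space underlying $\SP^n$. Write $\SP^n=\SP(W)$ with $\dim W=n+1$, and let $W_1,W_2\subseteq W$ be the linear subspaces with $\SP(W_i)=L_i$; since $L_1\cap L_2=\emptyset$ and the projective dimensions $k$ and $n-k-1$ satisfy $(k+1)+(n-k)=n+1$, we get $W=W_1\oplus W_2$. This realises projection away from $L_1$ as the morphism $\SP(W_1\oplus W_2)\setminus\SP(W_1)\to\SP(W_2)=L_2$, $[w_1+w_2]\mapsto[w_2]$, which is well defined because points outside $L_1$ have $w_2\neq 0$. This ambient morphism is a Zariski-locally trivial affine-space bundle: over a trivialising open $U\subseteq\SP(W_2)$ on which $[w_2]$ lifts to a section of $W_2\setminus 0$, the fibre is $\{[w_1+w_2]:w_1\in W_1\}\cong\SA^{k+1}$.

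First I would identify $X$ and $\overline L_1$ inside this picture. Let $\ell\in W^{\vee}$ with $H=\SP(\ker\ell)$, so that $V\cong\ell^{-1}(1)$; since $L_i\not\subseteq H$, the restriction $\ell|_{W_i}$ is nonzero. Put $W_1^{H}=W_1\cap\ker\ell$, so $\overline L_1=\SP(W_1^{H})$ with $\dim W_1^{H}=k$. Then $X=J(\overline L_1,L_2)=\SP(W_1^{H}\oplus W_2)$: the right-hand side is a linear $\SP^{n-1}$ containing $\overline L_1$ and $L_2$, hence contains the join, and the reverse inclusion is clear. The modular law gives $(W_1^{H}\oplus W_2)\cap W_1=W_1^{H}\oplus(W_2\cap W_1)=W_1^{H}$, so $X\cap L_1=\overline L_1$ and $X\setminus L_1=X\setminus\overline L_1$; the ambient projection therefore restricts to a morphism $\phi\colon X\setminus\overline L_1\to L_2$.

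Next I would compute the fibres. For $p=[w_2]\in L_2$, the fibre $\phi^{-1}(p)$ is $\big(\mathrm{span}(L_1,p)\cap X\big)\setminus\overline L_1$, and a short computation using $W_1\cap W_2=0$ shows $(W_1+\langle w_2\rangle)\cap(W_1^{H}\oplus W_2)=W_1^{H}\oplus\langle w_2\rangle$, so the fibre is $\SP^{k}\setminus\SP^{k-1}\cong\SA^{k}$. Local triviality of $\phi$ is inherited from that of the ambient bundle: over a trivialising $U$, the subvariety $X\setminus\overline L_1$ is cut out fibrewise by the single linear equation "$\ell=0$ on the $W_1$-coordinate" (legitimate since $\ell$ kills $W_1^{H}$), so the local model $U\times\SA^{k+1}$ restricts to $U\times\SA^{k}$. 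This proves the first assertion of the lemma.

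Finally, for $\phi^{o}$ I would argue directly in the affine chart $V=\ell^{-1}(1)$. Here $X^{o}=X\cap V=(W_1^{H}\oplus W_2)\cap\ell^{-1}(1)$, and since $\ell$ vanishes identically on $W_1^{H}$, the normalisation $\ell=1$ constrains only the $W_2$-component; this yields a product decomposition $X^{o}=W_1^{H}\times\big(W_2\cap\ell^{-1}(1)\big)\cong\SA^{k}\times L_2^{o}$ under which $\phi^{o}$ is literally the second projection, hence a trivial affine fibration with fibre $\SA^{k}$ over $L_2^{o}\cong\SA^{n-k-1}$. There is no serious obstacle here; the only points requiring care are the bookkeeping with the modular law to pin down $X\cap L_1$ and the fibres, and the observation that the one linear condition carving $X$ out of $\SP^n$ is simultaneously what drops the fibre dimension by one and what trivialises $\phi$ over the affine chart $V$.
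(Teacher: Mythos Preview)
Your proof is correct and complete. The paper does not actually give a proof of this lemma; it only follows the statement with a one-sentence geometric description of the map $\phi$ (via the unique line through a point meeting $\overline L_1$ and $L_2$), treating the result as standard. Your linear-algebra argument via the splitting $W=W_1\oplus W_2$ and the identification $X=\SP(W_1^H\oplus W_2)$ is exactly the natural way to make that geometric picture precise, and your use of the modular law to pin down $X\cap L_1=\overline L_1$ and the fibres, together with the affine-chart trivialisation $X^o\cong W_1^H\times L_2^o$, supplies all the details the paper omits.
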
 

In geometric terms, the map $\phi$ is defined on $X\setminus (L_1\cup L_2)$ as follows: take $p\in X\setminus (L_1\cup L_2)$,
find the unique line ${p_1p_2}$ passing through it, with $p_1\in \overline L_1$ and 
$p_2\in L_2$; then $\phi(p)=p_2$.

Let now $U$ be a projective subspace of $H$ which avoids $\overline L_2$. Let $U_1\subset U$ be a codimension one linear subspace, and $W=U\setminus U_1$ its affine complement. In the main text, we need the following statement. 
\begin{lemma} 
\label{lem:affjoinchar}
$\chi((J(L_2^o,W)\setminus L_2^o) \cap V)=0$.
\end{lemma}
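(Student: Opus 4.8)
The plan is to realise $X:=(J(L_2^o,W)\setminus L_2^o)\cap V$ as the total space of a morphism onto a base all of whose non-empty fibres have vanishing Euler characteristic; multiplicativity of $\chi$ then forces $\chi(X)=0$. The morphism will be the restriction to $X$ of the linear projection $\pi\colon\SP^n\setminus L_2\to\SP^k$ away from $L_2$ (note $\dim\SP^k=n-\dim L_2-1=k$), which is defined on all of $X$ since $X\subseteq V\setminus L_2^o=V\setminus L_2$.

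First I would record two elementary facts. Since $U\subset H$ and $U\cap\overline L_2=\emptyset$, we have $U\cap L_2=\emptyset$, so $\pi|_U$ is a closed immersion onto a linear subspace $\overline U\cong\SP^{\dim U}$ of $\SP^k$; consequently $\overline W:=\pi(W)=\overline U\setminus\pi(U_1)$ is the complement of a hyperplane in $\overline U$, hence $\overline W\cong\SA^{\dim U}$. Second, for any $w\in W$ (so $w\notin L_2$) the linear span $\mathrm{span}(L_2,w)$ has dimension $n-k$, contains $L_2$ as a hyperplane, $\pi^{-1}(\pi(w))=\mathrm{span}(L_2,w)\setminus L_2$, and $\mathrm{span}(L_2,w)\cap H=\mathrm{span}(\overline L_2,w)$ is a hyperplane of $\mathrm{span}(L_2,w)$ (a proper subspace, because $L_2\not\subset H$).

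The combinatorial heart of the argument is the identity $X=\pi^{-1}(\overline W)\cap(V\setminus L_2^o)$. The inclusion $\subseteq$ is immediate: a point $q\in J(L_2^o,W)\setminus L_2^o$ lies on a line $\overline{p_2w}$ with $p_2\in L_2^o$, $w\in W$ and $q\ne p_2$, so $\pi(q)=\pi(w)\in\overline W$. For $\supseteq$, take $q\in V\setminus L_2$ with $\pi(q)=\pi(w)$ for some $w\in W$. Then $q\in\mathrm{span}(L_2,w)\setminus L_2$, so by Lemma~\ref{lemma_linear_join} (applied inside $\mathrm{span}(L_2,w)$ to the point $w$ and the hyperplane $L_2$) $q$ lies on a unique line $\ell$ through $w$ and a point $p_2\in L_2$; since $q\in V$, $\ell$ meets $H$ only at $w$, and because $w\notin\overline L_2$ the point $p_2$ cannot lie on $H$, i.e.\ $p_2\in L_2^o$. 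Hence $q\in J(L_2^o,\{w\})\subseteq J(L_2^o,W)$ and $q\notin L_2^o$ as $q\notin L_2$. The same reasoning identifies the fibre of $\pi|_X$ over $\overline w=\pi(w)$ with $\bigl(\mathrm{span}(L_2,w)\cap V\bigr)\setminus L_2^o$; here $\mathrm{span}(L_2,w)\cap V\cong\SA^{n-k}$ and $L_2^o=L_2\cap V$ is an affine hyperplane inside it, so every non-empty fibre is isomorphic to $\SA^{n-k}\setminus\SA^{n-k-1}\cong\mathbb{G}_m\times\SA^{n-k-1}$, which has Euler characteristic $0$.

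It then remains to conclude: $\pi|_X\colon X\to\overline W$ is a morphism of complex algebraic varieties whose every non-empty fibre has $\chi=0$, so stratifying $\overline W$ so that $\pi|_X$ restricts to a Zariski-locally trivial fibration over each stratum and invoking additivity and multiplicativity of $\chi$ gives $\chi(X)=0$. I expect the only mildly delicate point to be the bookkeeping in the identity $X=\pi^{-1}(\overline W)\cap(V\setminus L_2^o)$, specifically ensuring that the $L_2$-point of the line through $q$ always lands in the affine part $L_2^o$ and never in $\overline L_2$ — which is exactly what the hypothesis $U\cap\overline L_2=\emptyset$ provides. As an alternative to the last step one can note that scaling each line towards its $W$-point defines a fixed-point-free $\mathbb{G}_m$-action on $X$ (uniqueness of the relevant line again following from injectivity of $\pi|_U$), whence $\chi(X)=\chi(X^{\mathbb{G}_m})=\chi(\emptyset)=0$.
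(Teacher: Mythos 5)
Your argument is correct and is essentially the paper's: both exhibit $(J(L_2^o,W)\setminus L_2^o)\cap V$ as a product $L_2^o\times W\times \SC^{\ast}$ arising from the uniqueness of the line through each point, and conclude by multiplicativity of the Euler characteristic. The only cosmetic difference is that you fibre over $W$ via the projection away from $L_2$ (fibre $\SA^{n-k}\setminus\SA^{n-k-1}$), whereas the paper fibres over $L_2^o$ with fibre the punctured affine cone over $W$, i.e.\ $\SC^{\ast}\times W$.
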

\begin{proof}
With the same argument as in Lemma \ref{lemma_linear_proj}, $J(L_2^o,W) \cap V$ is a fibration over $L_2^o$ with fiber $\mathrm{Cone}(W)$, and $(J(L_2^o,W)\setminus L_2^o) \cap V$ is a fibration over $L_2^o$ with fiber $\mathrm{Cone}(W)\setminus \{\mathrm{vertex}\}$. Since $\mathrm{Cone}(W)\setminus \{\mathrm{vertex}\}=\SC^{\ast} \times W$, the projection from $(J(L_2^o,W)\setminus L_2^o) \cap V\cong L_2^o \times W \times \SC^{\ast}$ to $L_2^o \times W$ has fibers $\SC^{\ast}$. The lemma follows.
\end{proof}

The definition of join of varieties can be generalized for projective schemes $X,Y \subset \SP_S^n$ over an arbitrary base scheme $S$ \cite{altman1975joins}. With the same arguments it can be shown that this general definition satisfies too the properties mentioned above. Moreover, it satisfies the following base-change property as well: 

\begin{lemma} \cite[B1.2]{altman1975joins}  
\label{lem:joinbasechange}
Let $S$ be an arbitrary scheme. Then for schemes $X,Y\subset\SP^n_S$ and an $S$-scheme $T$, we have the 
following equality in $\SP^n_T$:
\[ J(X\times_S T, Y \times_S T) = J(X,Y)\times_S T.\]
\end{lemma}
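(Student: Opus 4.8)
The plan is to reduce the asserted identity to the tautological base-change compatibility of an explicit incidence construction, leaving a single delicate point about scheme-theoretic images which is the substance of the cited lemma.

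First I would recall the construction of the join from \cite{altman1975joins} (there presented via linear projection; an equivalent and for us more convenient form is the following incidence description). Inside $X\times_S Y\times_S\SP^n_S$, consider the closed subscheme
\[
W_{X,Y}=\Bigl\{\,([x],[y],[u]) : \text{all }3\times 3\text{ minors of the }3\times(n+1)\text{ matrix with rows }x,y,u\text{ vanish}\,\Bigr\},
\]
which encodes the condition that $[u]$ lie on the line $\overline{[x][y]}$ (degenerating appropriately when $[x]=[y]$); it is cut out by bihomogeneous equations with coefficients not involving $S$. The join $J(X,Y)\subseteq\SP^n_S$ is the scheme-theoretic image of the projection $p\colon W_{X,Y}\to\SP^n_S$, a proper morphism since $W_{X,Y}$ is closed in $X\times_S Y\times_S\SP^n_S$ and the latter is proper over $\SP^n_S$. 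Because the defining equations of $W_{X,Y}$ are stable under base change, for any $S$-scheme $T$ one has on the nose
\[
W_{X\times_S T,\,Y\times_S T}=W_{X,Y}\times_S T,
\]
and $p$ base-changes to the corresponding projection $p_T$ over $T$. Hence $J(X\times_S T,Y\times_S T)$ is the scheme-theoretic image of $p_T$, while $J(X,Y)\times_S T$ is the base change of the scheme-theoretic image of $p$, and one always has the inclusion $J(X\times_S T,Y\times_S T)\subseteq J(X,Y)\times_S T$.

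The remaining — and only nontrivial — step is to upgrade this inclusion to an equality, i.e.\ to show that forming the scheme-theoretic image of $p$ commutes with the base change $T\to S$. When $T\to S$ is flat this is formal, as the ideal sheaf defining the image tensors up. For a general morphism $T\to S$ the scheme-theoretic image need not commute with base change, so one must exploit the particular geometry: working in the standard affine charts of $\SP^n_S$, $J(X,Y)$ is given by an explicit elimination ideal built from the equations of $X$, $Y$ and the incidence relations, and the point is that the formation of that elimination ideal is unaffected by $-\otimes_{\mathcal O_S}\mathcal O_T$ — equivalently, that the cokernel of $\mathcal O_{\SP^n_S}\to p_*\mathcal O_{W_{X,Y}}$ has vanishing $\mathrm{Tor}_1^{\mathcal O_S}$ against $\mathcal O_T$. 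This is exactly what is verified in \cite[Appendix B]{altman1975joins}, so I would either reproduce that local computation or simply invoke it. The main obstacle is precisely this: seeing why the scheme-theoretic image of $p$ is \emph{universally} compatible with base change and not merely compatible with flat base change; everything else is formal. Once that is granted, combining it with the evident equality $W_{X\times_S T,Y\times_S T}=W_{X,Y}\times_S T$ yields $J(X\times_S T,Y\times_S T)=J(X,Y)\times_S T$, as claimed.
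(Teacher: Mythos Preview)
The paper does not actually prove this lemma: it is stated with a citation to \cite[B1.2]{altman1975joins} and no proof environment follows. So there is nothing to compare your argument against; the authors simply import the result.

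That said, your sketch is a reasonable outline of the standard argument, and your identification of the crux is accurate: the incidence scheme $W_{X,Y}$ is manifestly stable under base change, and the only nontrivial content is that forming the scheme-theoretic image of the proper map $p$ commutes with arbitrary base change, not just flat base change. One small caution: you should not phrase this as something you ``would either reproduce or simply invoke'' --- if you are writing a proof, pick one. Since the whole point of the citation is that Altman--Kleiman carry out exactly this verification, the honest thing is either to reproduce their local computation in full or, as the paper does, to cite it and move on. Your write-up currently does neither: it describes the shape of the argument and then defers the actual work back to the reference, which is not a proof so much as an annotated citation.
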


\section{Preparation for the proof  of the incidence propositions}
\label{sec:auxiliary}

To prepare the ground for the proof of the propositions announced in~\ref{sec:incvargr}, consider the operators defined in~\ref{sec:subops}. We use these operators to describe projective coordinates on some of the Grassmannians~$P_{m,j}$. We first record the following equalities, computing the isotypical summands of the homogeneous pieces of the ring $S=\SC[x,y]$.

\begin{lemma} We have 
\label{l0}
\[S_{2k(n-2)}[\rho_{\kappa(0,k)}] = (L_{n-1} + L_{n})^{2k}[\rho_{\kappa(0,k)}]=
 (L_{n-1}^{2} + L_{n}^{2})^{k};
\]
\[ S_{2k(n-2)}[\rho_{\kappa(1,k)}] = (L_{n-1} + L_{n})^{2k}[\rho_{\kappa(1,k)}]=
 L_{n-1} L_{n}(L_{n-1}^{2} + L_{n}^{2})^{k-1};
\]
\[S_{(2k+1)(n-2)}[\rho_{\kappa(n-1,k)}] = (L_{n-1} + L_{n})^{2k+1}[\rho_{\kappa(n-1,k)}]=
 L_{n-1} (L_{n-1}^{2} + L_{n}^{2})^{k};
\]
\[S_{(2k+1)(n-2)}[\rho_{\kappa(n,k)}] = (L_{n-1} + L_{n})^{2k+1}[\rho_{\kappa(n,k)}]=
 L_{n} (L_{n-1}^{2} + L_{n}^{2})^{k}.
\]
\end{lemma}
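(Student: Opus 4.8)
The plan is to reduce everything to elementary linear algebra in the two quantities $x^{n-2}$ and $y^{n-2}$. Set $X=x^{n-2}$, $Y=y^{n-2}$, and (in the notation of~\ref{sec:subops}, where a standalone $L_j$ means $L_j\langle 1\rangle$) $\alpha = X-i^nY = L_{n-1}$, $\beta = X+i^nY = L_n$. Since $\alpha,\beta$ span the same plane as $X,Y$, we have $L_{n-1}+L_n=\langle X,Y\rangle$, hence $(L_{n-1}+L_n)^{2k}=\langle X^aY^b : a+b=2k\rangle=\langle \alpha^c\beta^d : c+d=2k\rangle\subseteq S_{2k(n-2)}$ and similarly $(L_{n-1}+L_n)^{2k+1}=\langle \alpha^c\beta^d : c+d=2k+1\rangle\subseteq S_{(2k+1)(n-2)}$. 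After the same change of basis the four right-hand sides of the statement become $(L_{n-1}^2+L_n^2)^k=\langle\alpha^{2a}\beta^{2b}:a+b=k\rangle$, $L_{n-1}L_n(L_{n-1}^2+L_n^2)^{k-1}=\langle\alpha^{2a+1}\beta^{2b+1}:a+b=k-1\rangle$, $L_{n-1}(L_{n-1}^2+L_n^2)^k=\langle\alpha^{2a+1}\beta^{2b}:a+b=k\rangle$ and $L_n(L_{n-1}^2+L_n^2)^k=\langle\alpha^{2a}\beta^{2b+1}:a+b=k\rangle$. So in each line the second asserted equality is tautological once the first is proved, and the task is to identify inside $(L_{n-1}+L_n)^{2k}$, resp.\ $(L_{n-1}+L_n)^{2k+1}$, the isotypic piece $S_m[\rho_j]$ with the appropriate even/odd-exponent span.

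First I would pin down the $\sigma$-action: a monomial $X^aY^b$ has $\sigma$-weight $\varepsilon^{(n-2)(a-b)}$, and since $\varepsilon$ has order $2n-4$ with $\varepsilon^{n-2}=-1$, this weight is $+1$ when $a-b$ is even and $-1$ when $a-b$ is odd. Thus $(L_{n-1}+L_n)^{2k}$ lies in the $\sigma$-fixed part of $S_{2k(n-2)}$, and conversely the $\sigma$-fixed monomials of $S_{2k(n-2)}$ are exactly those $x^ay^b$ with $2(n-2)\mid a-b$, which (dividing through by $n-2$) are precisely the $X^aY^b$ with $a+b=2k$; hence
\[
(L_{n-1}+L_n)^{2k}=S_{2k(n-2)}[\rho_0]\oplus S_{2k(n-2)}[\rho_1],\qquad (L_{n-1}+L_n)^{2k+1}=S_{(2k+1)(n-2)}[\rho_{n-1}]\oplus S_{(2k+1)(n-2)}[\rho_n],
\]
the second equality by the same argument using $\varepsilon^{n-2}=-1$. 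Applying $[\rho_j]$ to these gives the first asserted equality in each of the four lines (both sides vanishing if $j$ is not of the relevant type).

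To separate $\rho_0$ from $\rho_1$ (resp.\ $\rho_{n-1}$ from $\rho_n$) I would use $\tau$. By the labelling conventions of Table~\ref{dnchartable} and~\ref{sec:subops} (compatible with~\eqref{eq:repstensor}), $\alpha=L_{n-1}$ spans a copy of $\rho_{n-1}$ and $\beta=L_n$ a copy of $\rho_n$, so $\tau$ scales $\alpha$ by $-i^n$ and $\beta$ by $i^n$, and therefore $\tau(\alpha^c\beta^d)=(-i^n)^c(i^n)^d\alpha^c\beta^d=(-1)^c(i^n)^{c+d}\alpha^c\beta^d$. For $c+d=2k$ this scalar is $(-1)^c(-1)^{nk}$, so the $c$-even monomials and the $c$-odd monomials span the two $\tau$-eigenspaces of $(L_{n-1}+L_n)^{2k}$, i.e.\ $S_{2k(n-2)}[\rho_0]$ and $S_{2k(n-2)}[\rho_1]$ in some order; as $\rho_0$ is the $\tau$-invariant one and $\kappa^{nk}$ swaps $0$ and $1$ exactly when $nk$ is odd, the even-exponent span $\langle\alpha^{2a}\beta^{2b}:a+b=k\rangle$ is $S_{2k(n-2)}[\rho_{\kappa(0,k)}]$ in every case and the odd-exponent span is $S_{2k(n-2)}[\rho_{\kappa(1,k)}]$. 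For $c+d=2k+1$ the $\tau$-scalar on $\alpha^c\beta^d$ is $(-1)^c(-1)^{nk}i^n$; comparing with $\mathrm{Tr}_{\rho_{n-1}}(\tau)=-i^n$ and $\mathrm{Tr}_{\rho_n}(\tau)=i^n$ and again tracking the swap of $n-1$ and $n$ under $\kappa$ when $nk$ is odd, the odd-exponent span $\langle\alpha^{2a+1}\beta^{2b}\rangle$ is $S_{(2k+1)(n-2)}[\rho_{\kappa(n-1,k)}]$ and the even-exponent span $\langle\alpha^{2a}\beta^{2b+1}\rangle$ is $S_{(2k+1)(n-2)}[\rho_{\kappa(n,k)}]$.

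The argument is short and computational; no dimension count is needed, because at each stage the relevant even/odd-exponent monomials are manifestly a basis of a $\tau$-eigenspace of a space already identified with a sum of two isotypic components. The only point requiring care is the parity bookkeeping in the signs $(-1)^{nk}$ and their match with the involution $\kappa(-,k)=\kappa^{nk}$, together with consistent use of the $G_\Delta$-action conventions of Table~\ref{dnchartable} and~\ref{sec:subops} (in particular the eigenvalues $\mp i^n$ of $\tau$ on $\alpha$ and $\beta$). I do not expect any genuine obstacle beyond this.
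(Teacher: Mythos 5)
Your proposal is correct. Every step checks out: $\varepsilon^{n-2}=-1$ does give $\sigma$-weight $(-1)^{a-b}$ on $X^aY^b$, the $\sigma$-fixed (resp.\ $(-1)$-eigen) subspace of $S_m$ is indeed $S_m[\rho_0]\oplus S_m[\rho_1]$ (resp.\ $S_m[\rho_{n-1}]\oplus S_m[\rho_n]$) since no two-dimensional $\rho_j$ has $\sigma$-eigenvalue $\pm1$, and the $\tau$-scalar $(-1)^c(i^n)^{c+d}$ on $\alpha^c\beta^d$ sorts the parities exactly as the involution $\kappa(\cdot,k)=\kappa^{nk}$ requires. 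The route differs slightly from the paper's. The paper establishes the right-hand equalities by the tensor identities~\eqref{eq:repstensor} and induction (showing each $\alpha^c\beta^d$ transforms in the predicted character), and then gets the left-hand equalities from the obvious containment $\supseteq$ plus a dimension count, reading $\dim S_m[\rho_j]$ off the transformed Young wall pattern. You instead prove the left-hand equalities with no dimension count at all, by observing that $(L_{n-1}+L_n)^{2k}$ (resp.\ $(L_{n-1}+L_n)^{2k+1}$) is the \emph{entire} $(+1)$- (resp.\ $(-1)$-) eigenspace of $\sigma$ in the relevant graded piece, and then splitting it into $\tau$-eigenspaces spanned by the even/odd-exponent monomials. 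This buys self-containedness — you never need to know the dimensions of the isotypic components in advance — at the cost of the explicit divisibility computation identifying the $\sigma$-eigenvectors among all monomials $x^ay^b$; the paper's version is shorter given that the Young wall pattern has already been set up as a bookkeeping device for those dimensions. Both arguments are sound.
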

\begin{proof} For each equality on the right, use~\eqref{eq:repstensor}
and an easy induction argument. For the equalities on the left, $\supseteq$ is always clear; then use dimension counting.
\end{proof}

Thus, given an element $v\in P_{2k(n-2),\kappa(0,k)}$, we can write it uniquely in the form 
\[v=\sum_{i=0}^k \alpha_i L_{n-1}^{2i} L_{n}^{2(k-i)},\] for certain projective coordinates $[\alpha_0 : \dots : \alpha_k]$.
Analogous coordinates exist on $P_{2k(n-2),\kappa(1,k)}$ and $P_{(2k+1)(n-2), j}$ for $j=n-1,n$. 

For the two-dimensional representations we similarly have
\begin{lemma} For $1<j<n-1$, 
\[\begin{array}{rcl} S_{2k(n-2)+j-1}[\rho_j]  & = & L_j \left( (L_{n-1} + L_{n})^{2k}\right)
\\
& = & L_j \left( (L_{n-1} + L_{n})^{2k}[\rho_0]\right)\oplus L_j \left( (L_{n-1} + L_{n})^{ 2k}[\rho_1]\right);\\ \\
S_{(2k+2)(n-2)-j+1}[\rho_j] & = & L_{n-m}\left( (L_{n-1} + L_{n})^{2k+1}\right)\\
& = &  L_{n-j} \left( (L_{n-1} + L_{n})^{2k+1}[\rho_{n-1}]\right) \oplus L_{n-j}\left( (L_{n-1} + L_{n})^{2k+1}[\rho_n]\right).\end{array}\]
\label{lem:otherms}
\end{lemma}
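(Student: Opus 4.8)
\textbf{Proof proposal for Lemma~\ref{lem:otherms}.}

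The plan is to reduce the statement to Lemma~\ref{l0}, which has already computed the relevant isotypical summands for the one-dimensional representations. The key observation is that for $1<j<n-1$, the operator $L_j$ multiplies by the two-dimensional span $\langle x^{j-1}, y^{j-1}\rangle$, which transforms as $\rho_j$ under $G_\Delta$. So applying $L_j$ to a subspace carrying a one-dimensional representation $\rho$ produces a subspace carrying $\rho_j \otimes \rho$. Using the tensor identities~\eqref{eq:repstensor} — in particular $\rho_j \otimes \rho_0 \cong \rho_j$ and $\rho_j\otimes\rho_1\cong\rho_j$, since $\rho_1$ acts on the pattern by the involution $\kappa$ which fixes $j\in[2,n-2]$ — one sees that both $L_j\left((L_{n-1}+L_n)^{2k}[\rho_0]\right)$ and $L_j\left((L_{n-1}+L_n)^{2k}[\rho_1]\right)$ land inside $S_{2k(n-2)+j-1}[\rho_j]$. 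First I would establish the inclusion $\supseteq$: Lemma~\ref{l0} identifies $(L_{n-1}+L_n)^{2k}[\rho_{\kappa(0,k)}]$ and $(L_{n-1}+L_n)^{2k}[\rho_{\kappa(1,k)}]$ explicitly, and multiplying these by $L_j$ (i.e.\ by $x^{j-1}$ and $y^{j-1}$) visibly produces elements of $S_{2k(n-2)+j-1}$ of the correct weight $\rho_j$; this shows the right-hand side is contained in the left. For the reverse inclusion I would count dimensions, exactly as in the proof of Lemma~\ref{l0}: the dimension of $S_m[\rho_j]$ can be read off from the transformed Young wall pattern as the number of full blocks labelled $j$ (with mirror images) on the $m$th antidiagonal, and a short computation shows this equals the combined dimension of the two pieces $L_j\left((L_{n-1}+L_n)^{2k}[\rho_0]\right)$ and $L_j\left((L_{n-1}+L_n)^{2k}[\rho_1]\right)$. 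The directness of the sum follows because the two summands have different "parity" in the $L_{n-1}, L_n$ variables — one is a polynomial combination of $L_{n-1}^{2i}L_n^{2(k-i)}$ times $L_j$, the other of $L_{n-1}L_n(L_{n-1}^2+L_n^2)^{k-1}$ times $L_j$ — so their intersection is zero.

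The second displayed equality is entirely analogous, working with odd powers $(L_{n-1}+L_n)^{2k+1}$ instead of even ones, and using $L_{n-j}$ (note the index: the reflection $j\mapsto n-j$ appears because we are now on an antidiagonal of the form $(2k+2)(n-2)-j+1$, i.e.\ approaching the diagonal from the other side). Here Lemma~\ref{l0} supplies $(L_{n-1}+L_n)^{2k+1}[\rho_{n-1}] = L_{n-1}(L_{n-1}^2+L_n^2)^k$ and $(L_{n-1}+L_n)^{2k+1}[\rho_n] = L_n(L_{n-1}^2+L_n^2)^k$, and the tensor identities $\rho_1\otimes\rho_{n-1}\cong\rho_n$, $\rho_1\otimes\rho_n\cong\rho_{n-1}$ from~\eqref{eq:repstensor}, together with $\rho_{n-j}\otimes\rho_{n-1}$, $\rho_{n-j}\otimes\rho_n$ being two-dimensional of weight $\rho_j$, give the inclusion $\supseteq$; dimension counting closes the argument.

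I do not expect any serious obstacle here — this is a routine dimension-count combined with the explicit tensor decomposition and the pattern combinatorics. The only thing requiring a little care is bookkeeping the indices: which residue of $m$ modulo $2(n-2)$ one is in, whether to use $L_j$ or $L_{n-j}$, and making sure the power of $(L_{n-1}+L_n)$ matches. I would verify the index conventions against the transformed Young wall pattern picture in~\ref{sec:PYW} (and against the small-$n$ examples, e.g.\ $n=4$, where $j=2$ is the unique intermediate label) to make sure the formulas are stated with the correct shear. Once the bookkeeping is pinned down, the proof is a two-line appeal to Lemma~\ref{l0}, the identities~\eqref{eq:repstensor}, and a dimension count.
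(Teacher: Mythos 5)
Your proposal is correct and follows exactly the route the paper intends: the paper's proof of this lemma is literally the single word ``Analogous'' (to Lemma~\ref{l0}), whose proof is ``use~\eqref{eq:repstensor}\dots; $\supseteq$ is always clear; then use dimension counting'' — precisely your argument of establishing containment via the tensor identities and the explicit description from Lemma~\ref{l0}, then closing with a dimension count against the transformed Young wall pattern. Your additional remarks on the directness of the sum via parity in $L_{n-1},L_n$ and on the index bookkeeping (including silently correcting the typo $L_{n-m}$ to $L_{n-j}$) are consistent with the paper.
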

\begin{proof} Analogous. \end{proof}

For such $1<j<n-1$, the space $P_{2k(n-2)+j-1,j}$ of two-dimensional $G_\Delta$-equivariant subspaces of $S_{2k(n-2)+j-1}[\rho_j]$
is a projective space as noted above. 
Using Lemma~\ref{lem:otherms}, we get a collection of distinguished two-dimensional $G_\Delta$-equivariant subspaces
$L_jL_{n-1}^iL_n^{2k-i}$ in $S_{2k(n-2)+j-1}[\rho_j]$; an arbitrary element $v\in P_{2k(n-2)+j-1,j}$ can be uniquely written as
\[v=\sum_{i=0}^{2k} \alpha_i L_j L_{n-1}^{i} L_{n}^{2k-i}\]
for certain projective coordinates $[\varepsilon_0:\ldots: \varepsilon_{2k}]$. Analogous coordinates also exist on the space $P_{(2k+2)(n-2)-j+1, j}$.

For subspaces $U_1,\dots,U_i \in \mathrm{Gr}(S)$ denote by
$(U_1,\dots,U_i)$ the $G$-invariant ideal of $S$ generated by the
corresponding subspaces. In particular, the ideal generated by (the
subspaces represented by) points $v_1,\dots,v_i \in \SP S$ is denoted
by $(v_1,\dots,v_i)$. Then $(U_1,\dots, U_i)_{m,j}$ is represented by a
projective linear subspace of $P_{m,j}$ (cf.~Lemma
\ref{lem:intdim}). Thus, we can talk about its intersection with the
cells of $P_{m,j}$. For simplicity we will use the notation
$(U_1,\dots,U_i) \cap V_{k,l,j}=(U_1,\dots,U_i)_{k+l,j} \cap
V_{k,l,j}$ for the intersection with $V_{k,l,j}$. 

We need to study indidence relations between ideals generated by subspaces from the various strata defined above. First of all, 
let $v_j\in V_{0,m,j}$ for some $m$ and $j$, corresponding to a full or half block. Denote by $C$ the set of 
labels of full or half blocks immediately above or on the right of this block, i.e.~in the positions $(1,m)$ or $(0,m+1)$.
Then $(v_j)\cap S_{m+1,c}=\emptyset$ whenever $c\not\in C$. Indeed, one has to analyze the irreducible factors of $L_2 v_j$ as in the proof of Lemma \ref{lem:mckaycorr}.
The following long statement discusses all the remaining cases when $c\in C$. It splits according to the different possibilities.  

\begin{proposition}
\label{prop:1} 
\hspace{2em}
\begin{enumerate}
\item For $j=0,1$, fix $v_j \in V_{0,2k(n-2), j}$.
\begin{enumerate}
\item[(a)] We have $(v_j)\cap \left(\bigcup_{l \geq 1} V_{l,2k(n-2)-l+1,2}\right) = \emptyset$. Hence the unique point $(v_j)\cap P_{2k(n-2)+1}$ necessarily lies in $V_{0,2k(n-2)+1,2}$. This provides an injection \[V_{0,2k(n-2),j} \to V_{0,2k(n-2)+1,2}.\]
\item[(b)] $ (v_0, v_1) \cap \left(\bigcup_{l>1} V_{l,2k(n-2)-l+1,2}\right) = \emptyset$. In particular, the projective line $(v_0,v_1) \cap P_{2k(n-2)+1,2}$ necessarily intersects $V_{1,2k(n-2),2}$. Let the intersection point be $L_1 v_2$ for a certain $v_2 \in V_{0,2k(n-2)-1,2}$. Then $v_2$ is the unique point of $V_{0,2k(n-2)-1,2}$ such that $v_0,v_1 \in (v_2)$. As a consequence, for any projective subspace $U_2\subseteq P_{2k(n-2)-1,2}$, the intersection $(v_0,v_1) \cap \left(\bigcup_{l>0} V_{l,2k(n-2)-l+1,2}\right)$ is contained in $L_1 U_2$ if and only if $v_0, v_1 \in (U_2)$.
\end{enumerate} 

\item Let $ v_2 \in V_{0,2k(n-2)+1, 2}$. For $j=0,1$, if $(v_2) \cap \left( \bigcup_{l>0} V_{l,2k(n-2)-l+2,j} \right)$ is not empty, then it is necessarily one-dimensional, and it equals $L_1 v_j$ for a certain $v_j \in V_{0,2k(n-2), j}$. Exactly one of the following three cases happens.
\begin{itemize}
 \item $(v_2) \cap \left(\bigcup_{l>0} V_{l,2k(n-2)-l+2,1}\right) = L_1 v_0$ and $(v_2) \cap \left(\bigcup_{l>0} V_{l,2k(n-2)-l+2,0}\right) = \emptyset$. This happens if and only if $v_2 \in (v_0)$. In this case, $v_0 \in V_{0,2k(n-2),0}$, and $(v_2)\cap S_{2k(n-2)+2}$ has two (resp.~three if $n=4$) irreducible components: $L_1 v_0$ and $(v_2) \cap V_{0,2k(n-2)+2,3}$ (resp.~$(v_2) \cap V_{0,2k(n-2)+2,3}$ and $(v_2) \cap V_{0,2k(n-2)+2,4}$).
 
 \item $(v_2) \cap \left(\bigcup_{l>0} V_{l,2k(n-2)-l+2,1}\right) = \emptyset$ and $(v_2) \cap \left(\bigcup_{l>0} V_{l,2k(n-2)-l+2,0}\right) = L_1 v_1$ with symmetrical statements as in the previous case.
  
 \item $(v_2) \cap \left(\bigcup_{l>0} V_{l,2k(n-2)-l+2,1}\right) = L_1 v_0$ and $(v_2) \cap \left(\bigcup_{l>0} V_{l,2k(n-2)-l+2,0}\right) = L_1 v_1$. This happens if and only if $v_2 \in (v_0,v_1)$ but  $v_2 \notin (v_0) \cup (v_1)$. In this case at least one of the inclusions $v_0 \in V_{0,2k(n-2),0}$, $v_1 \in V_{0,2k(n-2),1}$ is satisfied but not necessarily both. Furthermore, $(v_2)\cap S_{2k(n-2)+2}$ has three (resp.~four if $n=4$) irreducible components: $L_1 v_0$, $L_1 v_1$ and $(v_2) \cap V_{0,2k(n-2)+2,3}$ (resp.~$(v_2) \cap V_{0,2k(n-2)+2,3}$ and $(v_2) \cap V_{0,2k(n-2)+2,4}$).
\end{itemize}
Thus for $n>4$, we obtain an isomorphism
\[ \begin{array}{rcl} V_{0,2k(n-2)+1,2} & \to & V_{0,2k(n-2)+2,3}\\ v_2 &\mapsto & (v_2) \cap V_{0,2k(n-2)+2,3},\end{array}\] 
whereas for $n=4$, we get an isomorphism
\[ \begin{array}{rcl}V_{0,2k(n-2)+1,2} & \to & V_{0,2k(n-2)+2,3} \times V_{0,2k(n-2)+2,4}\\ v_2& \mapsto &\left( (v_2) \cap V_{0,2k(n-2)+2,3}, (v_2) \cap V_{0,2k(n-2)+2,4}\right). \end{array}\] 
Finally, for projective subspaces $U_0 \subseteq P_{2k(n-2),0}, U_1 \subseteq P_{2k(n-2),1}$, 
\begin{itemize}
 \item  the conditions $(v_2) \cap \left(\bigcup_{l>0} V_{l,2k(n-2)-l+2,1}\right) \subseteq L_1 U_0$ and $(v_2) \cap \left(\bigcup_{l>0} V_{l,2k(n-2)-l+2,0}\right) = \emptyset$ are satisfied if and only if $v_2 \in (U_0)$; 
 \item  the conditions $(v_2) \cap \left(\bigcup_{l>0} V_{l,2k(n-2)-l+2,1}\right) = \emptyset$ and $(v_2) \cap \left(\bigcup_{l>0} V_{l,2k(n-2)-l+2,0}\right) \subseteq L_1 U_1$ are satisfied if and only if $v_2 \in (U_1)$; 
 \item  the conditions $(v_2) \cap \left(\bigcup_{l>0} V_{l,2k(n-2)-l+2,1}\right) \subseteq L_1 U_0$ and $(v_2) \cap \left(\bigcup_{l>0} V_{l,2k(n-2)-l+2,0}\right) \subseteq L_1 U_1$ are satisfied if and only if $v_2 \in (U_0,U_1)$ but  $v_2 \notin (U_0) \cup(U_1)$.
\end{itemize}

\item Assume that $3 \leq j \leq n-3$ (resp.~$j=n-2$) and set $v_j \in V_{0,2k(n-2)+j-1,j}$. Then $(v_j) \cap \left(\bigcup_{l > 1} V_{l,2k(n-2)-l+j,j-1} \right)=\emptyset$. Furthermore, $(v_j)\cap S_{2k(n-2)+j}$ has two (resp.~three) irreducible components: a point $(v_j)\cap V_{1,2k(n-2)+j-1,j-1}$ of the form $L_1 v_{j-1}$ for some $v_{j-1} \in V_{0,2k(n-2)+j-2,j-1}$, which is the unique element with $v_j \in (v_{j-1})$, and another point $(v_j) \cap V_{0,2k(n-2)+j,j+1}$ (resp.~two other points $(v_j) \cap V_{0,2k(n-2)+j,n-1}$, $(v_j) \cap V_{0,2k(n-2)+j,n}$). We obtain an isomorphism \[V_{0,2k(n-2)+j-1,j} \to V_{0,2k(n-2)+j,j+1}\] for $j\leq n-3$ and an isomorphism \[V_{0,2k(n-2)+j-1,j} \to V_{0,2k(n-2)+j,n-1} \times V_{0,2k(n-2)+j,n}\] for $j=n-2$. Also, for a projective subspace $U_{j-1} \subseteq P_{2k(n-2)+j-2,j-1}$, the intersection $(v_j) \cap \left(\bigcup_{l>0} V_{l,2k(n-2)-l+j,j-1}\right)$ is contained in $L_1 U_{j-1}$ if and only if $v_j \in (U_{j-1})$.

\item For $j=n-1,n$, fix $v_j \in V_{0,(2k+1)(n-2), j}$.
\begin{enumerate}
\item[(a)] We have $(v_j)\cap \left(\bigcup_{l \geq 1} V_{l,(2k+1)(n-2)-l+1,n-2}\right) = \emptyset$. Hence, the point $(v_j)\cap P_{(2k+1)(n-2)+1}$ is necessarily in $V_{0,(2k+1)(n-2)+1,n-2}$. This provides an injection \[V_{0,(2k+1)(n-2),j} \to V_{0,(2k+1)(n-2)+1,n-2}.\]
\item[(b)] $ (v_{n-1}, v_n) \cap \left(\bigcup_{l>1} V_{l,(2k+1)(n-2)-l+1,2}\right) = \emptyset$. In particular, the projective line $(v_{n-1},v_n) \cap P_{(2k+1)(n-2)+1,n-2}$ necessarily intersects $V_{1,(2k+1)(n-2),n-2}$. Let the intersection point be $L_1 v_{n-2}$ for a certain $v_{n-2} \in V_{0,(2k+1)(n-2)-1,n-2}$. Then $v_{n-2}$ is the unique point of $V_{0,(2k+1)(n-2)-1,n-2}$ such that $v_{n-1},v_n \in (v_{n-2})$. As a consequence, for any projective subspace $ U_{n-2} \subseteq P_{(2k+1)(n-2)-1,n-2}$, the intersection $(v_{n-1},v_n) \cap \left(\bigcup_{l>0} V_{l,(2k+1)(n-2)-l+1,n-2}\right)$ is contained in $L_1 U_{n-2}$ if and only if $v_{n-1}, v_n \in (U_{n-2})$.
\end{enumerate} 

\item Let $ v_{n-2} \in V_{0,(2k+1)(n-2)+1, n-2}$. For $j=n-1,n$ if $(v_{n-1}) \cap \left( \bigcup_{l>0} V_{l,(2k+1)(n-2)-l+2,j} \right) \neq \emptyset$ then this invariant subspace is one-dimensional, and it is of the form $L_1 v_j$ for certain $v_j \in V_{0,(2k+1)(n-2), j}$. Exactly one of the following three possibilities happens.
\begin{itemize}
 \item $(v_{n-2}) \cap \left(\bigcup_{l>0} V_{l,(2k+1)(n-2)-l+2,n}\right) = L_1 v_{n-1}$ and $(v_{n-2}) \cap \left(\bigcup_{l>0} V_{l,(2k+1)(n-2)-l+2,n-1}\right) = \emptyset$. This happens if and only if $v_{n-2} \in (v_{n-1})$. In this case $v_{n-1} \in V_{0,(2k+1)(n-2),n-1}$, and $(v_2)\cap S_{(2k+1)(n-2)+2}$ has two (resp.~three if $n=4$) irreducible components: $L_1 v_{n-1}$ and $(v_{n-2}) \cap V_{0,(2k+1)(n-2)+2,n-3}$ (resp.~$(v_2) \cap V_{0,(2k+1)(n-2)+2,0}$ and $(v_2) \cap V_{0,(2k+1)(n-2)+2,1}$).
 
 \item $(v_{n-2}) \cap \left(\bigcup_{l>0} V_{l,(2k+1)(n-2)-l+2,n}\right) = \emptyset$ and $(v_{n-2}) \cap \left(\bigcup_{l>0} V_{l,2k(n-2)-l+2,n-1}\right) = L_1 v_{n}$ with symmetrical statements as in the previous case.
  
 \item $(v_{n-2}) \cap \left(\bigcup_{l>0} V_{l,(2k+1)(n-2)-l+2,n}\right) = L_1 v_{n-1}$ and $(v_{n-2}) \cap \left(\bigcup_{l>0} V_{l,(2k+1)(n-2)-l+2,n-1}\right) = L_1 v_n$. This happens if and only if $v_{n-2} \in (v_{n-1},v_n)$ but  $v_{n-2} \notin (v_{n-1}) \cup (v_n)$. In this case at least one of the inclusions $v_{n-1} \in V_{0,(2k+1)(n-2),n-1}$, $v_n \in V_{0,(2k+1)(n-2),n}$ is satisfied but not necessarily both. Furthermore, $(v_{n-2})\cap S_{(2k+1)(n-2)+2}$ has three (resp.~four if $n=4$) irreducible components: $L_1 v_{n-1}$, $L_1 v_{n}$ and $(v_{n-2}) \cap V_{0,(2k+1)(n-2)+2,n-3}$ (resp.~$(v_{n-2}) \cap V_{0,(2k+1)(n-2)+2,0}$ and $(v_{n-2}) \cap V_{0,(2k+1)(n-2)+2,1}$).
\end{itemize}
For $n>4$, we obtain isomorphisms 
\[\begin{array}{rcl} V_{0,(2k+1)(n-2)+1,n-2} & \to & V_{0,(2k+1)(n-2)+2,n-3}\\ v_{n-2} & \mapsto & (v_{n-2}) \cap V_{0,(2k+1)(n-2)+2,n-3}\end{array},\]
whereas for $n=4$ we obtain an isomorphism 
\[\begin{array}{rcl} V_{0,(2k+1)(n-2)+1,n-2} & \to & V_{0,2k(n-2)+2,0} \times V_{0,(2k+1)(n-2)+2,1}\\ v_{n-2} & \mapsto& \left( (v_{n-2}) \cap V_{0,(2k+1)(n-2)+2,0}, (v_{n-2}) \cap V_{0,(2k+1)(n-2)+2,1}\right)\end{array}. \] 
Moreover, for projective subspaces $U_{n-1} \subseteq P_{(2k+1)(n-2),n-1}, U_n \subseteq P_{(2k+1)(n-2),n}$ the conditions 
\begin{itemize}
 \item $(v_{n-2}) \cap \left(\bigcup_{l>0} V_{l,(2k+1)(n-2)-l+2,n}\right) \subseteq L_1 U_{n-1}$ and $(v_{n-2}) \cap \left(\bigcup_{l>0} V_{l,(2k+1)(n-2)-l+2,n-1}\right) = \emptyset$ are satisfied if and only if $v_{n-2} \in (U_{n-1})$; 
 \item $(v_{n-2}) \cap \left(\bigcup_{l>0} V_{l,(2k+1)(n-2)-l+2,n}\right) = \emptyset$ and $(v_{n-2}) \cap \left(\bigcup_{l>0} V_{l,(2k+1)(n-2)-l+2,n-1}\right) \subseteq L_1 U_n$ are satisfied if and only if $v_{n-2} \in (U_n)$; 
 \item $(v_{n-2}) \cap \left(\bigcup_{l>0} V_{l,(2k+1)(n-2)-l+2,n}\right) \subseteq L_1 U_{n-1}$ and $(v_{n-2}) \cap \left(\bigcup_{l>0} V_{l,(2k+1)(n-2)-l+2,n-1}\right) \subseteq L_1 U_n$ are satisfied if and only if $v_{n-2} \in (U_{n-1},U_n)$ but  $v_2 \notin (U_{n-1}) \cup(U_n)$.
\end{itemize}

\item Assume that $3 \leq j \leq n-3$ (resp.~$j=2$) and set $v_j \in V_{0,(2k+2)(n-2)-j+1,j}$. Then $(v_j) \cap \left(\bigcup_{l > 1} V_{l,(2k+2)(n-2)-l-j+2,j+1} \right)=\emptyset$. Furthermore, $(v_j)\cap S_{(2k+2)(n-2)-j+2}$ has two (resp.~three) irreducible components: a point $(v_j)\cap V_{1,(2k+2)(n-2)-j+1,j+1}$ of the form $L_1 v_{j+1}$ for some $v_{j+1} \in V_{0,(2k+2)(n-2)-j,j+1}$, which is the unique element with $v_j \in (v_{j+1})$, and another point $(v_j) \cap V_{0,(2k+2)(n-2)-j+2,j-1}$ (resp.~two other points $(v_j) \cap V_{0,(2k+2)(n-2)-j+2,0}$, $(v_j) \cap V_{0,(2k+2)(n-2)-j+2,1}$) providing an isomorphism $V_{0,(2k+2)(n-2)-j+1,j} \to V_{0,2k(n-2)-j+2,j-1}$ (resp.~$V_{0,(2k+2)(n-2)-j+1,j} \to V_{0,2k(n-2)-j+2,0} \times V_{0,2k(n-2)-j+2,1}$). As a consequence, for a projective subspace $U_{j+1} \subseteq P_{(2k+2)(n-2)-j,j+1}$, the intersection $(v_j) \cap \left(\bigcup_{l>0} V_{l,(2k+2)(n-2)-l-j+2,j+1}\right)$ is contained in $L_1 U_{j+1}$ if and only if $v_j \in (U_{j+1})$.

\end{enumerate}
\end{proposition}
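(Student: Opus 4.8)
The plan is to reduce all seven statements to explicit computations in the coordinates on the projective spaces $P_{m,j}$ introduced in~\ref{sec:auxiliary}, using the multiplication operators $L_j$ of~\ref{sec:subops} and the tensor product identities~\eqref{eq:repstensor}. The first observation is that every claim concerns an ideal generated by degree- and weight-homogeneous elements lying in bottom-row cells $V_{0,m,j}$, together with its intersection with the next one or two antidiagonals $S_{m+1}$, $S_{m+2}$. Because the stratification of $P_{m,j}$ in Proposition~\ref{prop:decomp of P} is built inductively by applying the injective operator $L_1$ (which multiplies the weight by $\rho_1$, hence shifts labels by $\kappa$), it suffices to analyze what happens on the bottom antidiagonal; the intersections with cells $V_{k,l,j'}$ for $k>0$ are then obtained by applying $L_1^k$, and a subspace lies in $V_{k,l,j'}$ precisely when it is divisible by $L_1^{k}$ but not by $L_1^{k+1}$.

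The core computation is as follows. Fix $v_j \in V_{0,m,j}$ and expand it in the distinguished basis furnished by Lemmas~\ref{l0} and~\ref{lem:otherms}: for instance, for $j=0$ and $m=2k(n-2)$ write $v_0 = \sum_{i=0}^k \alpha_i L_{n-1}^{2i}L_n^{2(k-i)}$, with the condition that $v_0$ lies in the large cell $V_{0,m,0}$ translating, via $V_{0,m,0}=P_{m,0}\setminus L_1 P_{m-2,\kappa(0)}$, into a nonvanishing condition (non-divisibility by $L_1=xy$). Then compute $L_2 v_j = \langle x\cdot v_j,\ y\cdot v_j\rangle$ and decompose it into isotypical pieces using~\eqref{eq:repstensor} and Lemmas~\ref{l0}/\ref{lem:otherms}; the number of pieces and their labels depend on the position of $(0,m)$ in the transformed pattern, which is exactly where the small-$n$ and boundary subcases ($n=4$, $j=2$, $j=n-2$, $j=n-3$) enter. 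For each piece I determine the maximal power of $L_1$ dividing it, which places it in the correct cell and yields the list of irreducible components of $(v_j)\cap S_{m+1}$, together with the asserted injections (parts (1), (4)) and isomorphisms (parts (2), (3), (5), (6)). This mechanically recovers the statements of the type "$(v_j)\cap\bigcup_{l>1}V_{l,\ldots}=\emptyset$", since the relevant extra pieces are never divisible by $L_1^2$.

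I expect the genuine difficulty to be concentrated in statements (2) and (5). There one starts with $v_2 \in V_{0,m+1,2}$ and must analyze the two one-dimensional pieces $u_+$ and $u_-$ of $L_2 v_2$ carrying the half-block labels; as already observed in the proof of Lemma~\ref{lem:mckaycorr}, depending on the coordinates of $v_2$ one, both, or neither of $u_\pm$ can be divisible by $L_1$, and the "extra" piece may even be divisible by a higher power of $L_1$, which is why the generated ideal can have further intersections with cells $V_{k,l,j}$ for $l>0$. Here I would carry out the computation in full: write $v_2=\sum_i \varepsilon_i L_2 L_{n-1}^i L_n^{2k-i}$, compute $x\cdot v_2$ and $y\cdot v_2$ explicitly, extract the $\rho_0$- and $\rho_1$-components (using $\rho_1\otimes\rho_{n-1}\cong\rho_n$, $\rho_1\otimes\rho_n\cong\rho_{n-1}$, $\rho_{n-1}^{\otimes 2}\cong\rho_n^{\otimes 2}\cong\rho_{\underline 0}$, $\rho_{n-1}\otimes\rho_n\cong\rho_{\underline 1}$), and identify in coordinates exactly when each component lies in $L_1 S$, i.e.\ is divisible by $xy$. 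This sorts out the three mutually exclusive cases $v_2\in(v_0)$, $v_2\in(v_1)$, $v_2\in(v_0,v_1)\setminus\big((v_0)\cup(v_1)\big)$, and distinguishes the target $V_{0,m+2,3}$ (for $n>4$) from $V_{0,m+2,0}\times V_{0,m+2,1}$ (for $n=4$, where the block above has split into two half-blocks). Finally, the statements about projective subspaces $U_j$ are deduced from the pointwise versions by linearity: the operators $L_j$, the operation of generating an ideal, and the condition of intersecting a given cell are all linear (resp.\ bilinear) in the relevant graded pieces, so one simply takes spans over the points of the $U_j$. Everything outside of (2) and (5) is routine bookkeeping once the explicit expansions of~\ref{sec:auxiliary} are in place; the bulk of the work is managing the volume of near-identical cases and the exceptional degeneracies at the ends of the pattern.
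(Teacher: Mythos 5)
Your proposal follows essentially the same route as the paper's proof: expanding elements in the explicit $L_{n-1},L_n$-coordinates of Lemmas~\ref{l0} and~\ref{lem:otherms}, decomposing $L_2 v$ into isotypical pieces via $L_2^2 = L_1 + L_3$ (plus $L_4$ when $n=4$), locating cells by divisibility by powers of $L_1$, and reducing the projective-subspace statements to the pointwise ones by linearity (the paper makes the last step slightly more explicit via a density/continuity argument over the join stratification of $V_{0,2k(n-2)+1,2}$, which your ``take spans over the points of $U_j$'' implicitly requires). The plan is correct and matches the paper's argument in all essential respects.
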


\begin{proof}
For ease of notation in the proof, we will assume that $n$ is even. If $n$ is odd, then the argument works in the same way, except that $\kappa$ should be applied to the indices when appropriate.

(1) Statement (a) follows immediately from the definition of the cells. For the first part of (b),
write $v_{0}=\sum_{i=0}^k \alpha_i L_{n-1}^{2i} L_{n}^{2(k-i)}$ and $v_{1}=\sum_{i=0}^{k-1} \beta_i L_{n-1}^{2i+1} L_{n}^{2(k-i)-1}$. The assumptions guarantee that $\sum \alpha_i \neq 0$ and $\sum \beta_i \neq 0$. The image $(v_0,v_1) \cap P_{2k(n-2)+1,2}$, as subset of $Gr(2, S_{2k(n-2)+1}[\rho_{2}])$, is a projective line and is spanned by
\[ L_2 v_{0}=\sum_{i=0}^k \alpha_i L_2 L_{n-1}^{2i} L_{n}^{2(k-i)} \in L_2 (L_{n-1}^2 + L_{n}^2)^{k}\]
and
\[ L_2 v_{1}=\sum_{i=0}^{k-1} \beta_i L_2 L_{n-1}^{2i+1} L_{n}^{2(k-i)-1} \in L_2 L_{n-1} L_{n}(L_{n-1}^{2} + L_{n}^{2})^{k-1}.\]
In particular, if $(v_0,v_1) \cap V_{1,2k(n-2)}=\{L_1 v_2\}$, then there exist vectors $v_x, v_y$ in the two-dimensional vector space $v_2$ satisfying $v_y= \tau(v_x)$, as well as $a_x, b_x, a_y, b_y \in \SC$, so that
\begin{equation}
\label{eq:lem1eq1}
\begin{aligned}
L_1 v_x & =  a_x L_{2,x} v_0+b_x L_{2,x} v_1, \\
L_1 v_y & = a_y L_{2,y} v_0+b_y L_{2,y} v_1.
\end{aligned}
\end{equation}
In $v_0$ the highest powers of $x$ and $y$ are $x^{2k(n-2)}$ and $y^{2k(n-2)}$, both with coefficient $\sum_i \alpha_i$. In $v_1$ the highest powers of $x$ and $y$ are $x^{2k(n-2)}$ and $y^{2k(n-2)}$, the first has coefficient $\sum_i \beta_i$, the second has coefficient $-\sum_i \beta_i$. We apply $L_{2,x}$ to these. In order for the sum to avoid the cell $V_{0,2k(n-2)+1}$, the coefficients must satisfy $[a_x:b_x]=[\sum_i \beta_i:-\sum_i \alpha_i]$, since in this case the coefficient of $x^{2k(n-2)+1}$ is 0. Then the linear combination is necessarily in $V_{1,2k(n-2)}$. Similarly, when applying $L_{2,y}$, the required coefficients are $[a_y:b_y]=[\sum_i \beta_i:\sum_i \alpha_i]$, so we have $a_x b_y=-a_y b_x$. Therefore, $b_y L_{2,y} L_1 v_x - b_x L_{2,x}L_1 v_y= b_y a_x L_{2,y} L_{2,x} v_0 - b_x a_y L_{2,x} L_{2,y} v_0 = L_1 v_0, $
where we have used that $L_{2,x} L_{2,y}= L_1$. As a consequence, $b_y L_{2,y} v_x + b_x L_{2,x} v_y=v_0$ and similarly $-a_y L_{2,y} v_x + a_x L_{2,x} v_y=v_1$, i.e. $v_0,v_1 \in (v_2)$. This proves the first part of (b). The second part of (b), concerning projective subspaces, follows immediately from the first part.

(2)
For the first part of the statement let $v_2=\sum_{i=0}^{2k} \varepsilon_i L_2 L_{n-1}^{i} L_{n}^{2k-i}=L_2 \left(\sum_{i=0,i \textrm{ even}}^{2k} \varepsilon_i  L_{n-1}^{i} L_{n}^{2k-i}+\sum_{i=0,i \textrm{ odd}}^{2k} \varepsilon_i  L_{n-1}^{i} L_{n}^{2k-i}\right)=L_2(v_{\textrm{even}}+v_{\textrm{odd}})$. If $n\neq 4$, then by applying $L_2$ again we get $L_2^2(v_{\textrm{even}}+v_{\textrm{odd}})=(L_1+L_3)(v_{\textrm{even}}+v_{\textrm{odd}})$, where the first sum is operator sum, and the second is vector sum. So $L_2 v_2 [\rho_0]=\{L_1 v_{\textrm{even}}\}$ and $L_2 v_2 [\rho_1]=\{L_1 v_{\textrm{odd}}\}$. If $(v_2) \cap \left(\bigcup_{l>1} V_{l,2k(n-2)-l+2,1}\right) = L_1 v_0$ and $(v_2) \cap \left(\bigcup_{l>1} V_{l,2k(n-2)-l+2,0}\right) = \emptyset$, then $v_{\textrm{even}}=v_0$ and $v_{\textrm{odd}}=0$. The second case is just the opposite, and when $(v_2) \cap \left(\bigcup_{l>1} V_{l,2k(n-2)-l+2,1}\right) = L_1 v_0$ and $(v_2) \cap \left(\bigcup_{l>1} V_{l,2k(n-2)-l+2,0}\right) = L_1 v_1$, then $v_{\textrm{even}}=a v_0$ and $v_{\textrm{odd}}=b v_1$ for some coefficients $a,b \in \SC$. If $n=4$, then $L_2^2(v_{\textrm{even}}+v_{\textrm{odd}})=(L_1+L_3+L_4)(v_{\textrm{even}}+v_{\textrm{odd}})$, and the rest is very similar to the first case.

For the second part of the statement observe that the results so far imply that $V_{0,2k(n-2)+1, 2}$ stratifies into disjoint, locally closed subspaces
\[
\begin{aligned}
V_{0,2k(n-2)+1, 2} 
= & \bigsqcup _{(v_{c_1},v_{c_2}) \in P_{2k(n-2),c_1} \times P_{2k(n-2),c_2}} \left( (v_{c_1},v_{c_2})\setminus ((v_{c_1}) \cup (v_{c_2})) \cap V_{0,2k(n-2)+1,2} \right)\\
 & \bigsqcup\bigsqcup_{v_{c_2} \in V_{0,2k(n-2),c_2}}\left( (v_{c_2}) \cap V_{0,2k(n-2)+1,2}  \right) \\
 & \bigsqcup\bigsqcup_{v_{c_1} \in V_{0,2k(n-2),c_1}}\left( (v_{c_1}) \cap V_{0,2k(n-2)+1,2}    \right).
\end{aligned}
\]
The subset $\cup_{(v_{c_1},v_{c_2}) \in V_{0,2k(n-2),c_1} \times V_{0,2k(n-2),c_2}} \left( (v_{c_1},v_{c_2})\setminus ((v_{c_1}) \cup (v_{c_2})) \cap V_{0,2k(n-2)+1,2}\right)$ is dense in the third stratum, since $V_{0,2k(n-2),c_1} \times V_{0,2k(n-2),c_2}$ is dense in $P_{2k(n-2),c_1} \times P_{2k(n-2),c_2}$. The first statement of (2) implies that the second statement is valid if $v_2$ is in this subset of $V_{0,2k(n-2)+1, 2}$. Similarly, the first statement implies the second statement on the loci $\sqcup_{v_{c_1} \in V_{0,2k(n-2),c_1}}\left( (v_{c_1}) \cap V_{0,2k(n-2)+1,2}  \right)$ and $\sqcup_{v_{c_2} \in V_{0,2k(n-2),c_2}}\left( (v_{c_2}) \cap V_{0,2k(n-2)+1,2}  \right)$. For $v_2$ in the closed complement of the union of these loci, the second statement follows from the linearity (and thus continuity) of the solution of (\ref{eq:lem1eq1}), since the $U_i$ are projective. 

(3) The statements in this case follow similarly to (2) by observing that $L_2 L_{j}=L_1 L_{j-1}+L_{j+1}$ (resp.~$L_2 L_{n-2}=L_1 L_{n-3}+L_{n-1}+L_{n}$).

The cases (4), (5) and (6) are analogous.
\end{proof}

Consider a full block in position $(0,m)$ with $m=k(n-2)+1$, with label $j$ which is $2$ or $n-2$. 
In positions $(0,k(n-2))$ and $(1,m)$, above and to the left of this full block, 
are divided blocks with labels $(c_1, c_2)$, either $(0,1)$ or $(n-1, n)$. 
The next lemma gives $P_{m,j}$ the structure of a join  
of two projective subspaces.

\begin{lemma}\label{lem:imgdescr}
\hspace{2em}
\begin{enumerate}
 \item The morphisms $V_{0,k(n-2),c_i}  \to  V_{0,m,j}$ constructed in Proposition \ref{prop:1} extend to 
morphisms
\[\begin{array}{rcccl}  \phi_{i}& \colon & P_{k(n-2),c_i} & \to & P_{m,j} \\
  && v & \mapsto & (v) \cap P_{m,j}.\end{array}\]
The morphism $\phi_i$ is injective with image $N_{c_i}^0:={\rm im}(\phi_i)\subset P_{m,j}$ such that 
$N_{c_1}^0$, $N_{c_2}^0$ are disjoint projective linear subspaces of $P_{m,j}$. 
\item The join
of the disjoint linear subspaces $N_{c_1}^0, N_{c_1}^0\subset P_{m,j}$ is 
$P_{m,j}$ itself. Thus given $(v_1, v_2)\in P_{k(n-2),c_1} \times P_{k(n-2),c_2}$, there is a projective line 
$\SP^1\cong v_1v_2\subset P_{m,j}$
containing both $\phi_i(v_i)$, namely, the {\em line defined by} $v_1, v_2$ with {\em endpoints} $\phi_i(v_i)$.
The lines $v_1v_2$ cover $P_{m,j}$. For all 
$(v_1,v_2), (v_1', v_2') \in P_{k(n-2),c_1} \times P_{k(n-2),{c_2}}$, the 
intersection $v_1v_2 \cap v_1'v_2'$ can only be at a common endpoint.
\item For all $(v_1,v_2) \in P_{k(n-2),c_1} \times P_{k(n-2),c_2}$,  the intersection 
$ v_1v_2 \cap V_{0,m,j}$ is  
 \begin{itemize}
 \item either empty, exactly when $v_1 \notin V_{0,k(n-2),c_1}$ and $v_2  \notin V_{0,k(n-2),c_2}$;
 \item or an affine line otherwise.
 \end{itemize} 
\end{enumerate}
\end{lemma}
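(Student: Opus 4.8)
The plan is to reduce everything to the explicit projective coordinates on the spaces $P_{m,j}$ furnished by Lemmas~\ref{l0} and~\ref{lem:otherms}, together with the join dictionary of~\ref{sec:joins}. I would treat the case $j=2$, so $m=2k(n-2)+1$ and $(c_1,c_2)=(0,1)$; the case $j=n-2$ is entirely symmetric, using Proposition~\ref{prop:1}(4)--(6) in place of~(1)--(3). Write a general $v\in P_{m,2}$ as $v=\sum_{i=0}^{2k}\alpha_i\,L_2 L_{n-1}^{i}L_n^{2k-i}$ with homogeneous coordinates $[\alpha_0:\dots:\alpha_{2k}]$, and use Lemma~\ref{l0} to write $v_1\in P_{2k(n-2),0}$ as $\sum_i\beta_i L_{n-1}^{2i}L_n^{2(k-i)}$ and $v_2\in P_{2k(n-2),1}$ as $\sum_i\gamma_i L_{n-1}^{2i+1}L_n^{2(k-i)-1}$. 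Since $m-2k(n-2)=1$, the map of Proposition~\ref{prop:1}(1)(a) reads $\phi_1(v_1)=(v_1)\cap P_{m,2}=\langle xv_1,yv_1\rangle$ and $\phi_2(v_2)=\langle xv_2,yv_2\rangle$; the assignment $w\mapsto\langle xw,yw\rangle$ is a morphism $P_{k(n-2),c_i}\to\mathrm{Gr}(2,S_m[\rho_2])$ landing in $P_{m,2}$ by Lemma~\ref{lem:otherms}, extending the map of Proposition~\ref{prop:1} to all of $P_{k(n-2),c_i}$. In the coordinates above these are injective linear maps with image the ``even'' coordinate subspace $N_0^0=\{\alpha_{2i+1}=0\}$, respectively the ``odd'' coordinate subspace $N_1^0=\{\alpha_{2i}=0\}$. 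As these coordinate subspaces are complementary, $N_{c_1}^0,N_{c_2}^0$ are disjoint linear subspaces spanning $P_{m,2}$; this is~(1).

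For~(2) I would invoke Lemma~\ref{lemma_linear_join}: the join of two disjoint linear subspaces spanning the ambient projective space is the whole space, and every point (the intersection here being empty) lies on a unique line joining a point of $N_{c_1}^0$ to a point of $N_{c_2}^0$. Transporting this line back through the injections $\phi_i$ gives the line $v_1v_2\subset P_{m,2}$ with endpoints $\phi_i(v_i)$, and the covering statement is immediate. For the last assertion of~(2), observe that because the two coordinate subspaces are complementary, a line joining $N_{c_1}^0$ to $N_{c_2}^0$ meets each $N_{c_i}^0$ in exactly its endpoint on that side. Hence if $v_1v_2$ and $v_1'v_2'$ share a point $p$: if $p$ is interior to both, uniqueness in Lemma~\ref{lemma_linear_join} forces $v_1v_2=v_1'v_2'$, and injectivity of $\phi_i$ gives $v_i=v_i'$; if $p$ is an endpoint of $v_1v_2$, say $p\in N_{c_1}^0$, then $p$ is the unique point of $v_1'v_2'$ in $N_{c_1}^0$, so $p=\phi_1(v_1')$ is a common endpoint.

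The heart of the argument is~(3). By Proposition~\ref{prop:decomp of P} and Lemma~\ref{lem:codim1_2}, $V_{0,m,2}=P_{m,2}\setminus\overline P_{m,2}$, where $\overline P_{m,2}=L_1P_{m-2,\kappa(2)}=L_1P_{m-2,2}$ is a hyperplane in $P_{m,2}$ (codimension one precisely because the block at the bottom of the $m$-th antidiagonal has label $2$). A projective line in $P_{m,2}$ is either contained in this hyperplane, in which case its intersection with $V_{0,m,2}$ is empty, or it meets it in a single point, in which case the intersection with $V_{0,m,2}$ is an affine line; and $v_1v_2\subset\overline P_{m,2}$ iff both endpoints $\phi_i(v_i)$ lie in $\overline P_{m,2}$ (since $\overline P_{m,2}$ is linear and $v_1v_2$ is always a genuine line, its endpoints lying in the disjoint $N_{c_i}^0$). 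So it suffices to show $\phi_1(v_1)\in\overline P_{m,2}\iff v_1\notin V_{0,2k(n-2),0}$ (and symmetrically for $\phi_2$). For this: $\phi_1(v_1)=\langle xv_1,yv_1\rangle$ lies in $L_1P_{m-2,2}=xy\cdot P_{m-2,2}$ iff both $xv_1$ and $yv_1$ are divisible by $xy$, i.e.\ iff $v_1$ is divisible by $L_1=xy$; and $v_1\in S_{2k(n-2)}[\rho_0]$ is divisible by $L_1$ iff $v_1\in L_1S_{2k(n-2)-2}[\rho_1]=L_1P_{2k(n-2)-2,1}$, which by Lemma~\ref{lem:codim1_2} is exactly the hyperplane $\overline P_{2k(n-2),0}$ (codimension one because the bottom half-block of antidiagonal $2k(n-2)$ carries label $1$), i.e.\ iff $v_1\notin V_{0,2k(n-2),0}$. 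Combining, $v_1v_2\cap V_{0,m,2}=\emptyset$ exactly when $v_1\notin V_{0,2k(n-2),0}$ and $v_2\notin V_{0,2k(n-2),1}$, and it is an affine line otherwise, giving~(3).

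I expect the main obstacle to be bookkeeping rather than any conceptual difficulty: one must keep straight that $\kappa(2)=2$ (so the ``hyperplane at infinity'' inside $P_{m,2}$ is $L_1P_{m-2,2}$), while the analogous hyperplanes in the source spaces $P_{2k(n-2),0}$ and $P_{2k(n-2),1}$ are $L_1P_{2k(n-2)-2,1}$ and $L_1P_{2k(n-2)-2,0}$, and one must simultaneously track which antidiagonal each relevant (half-)block sits on so that the codimension-one clauses of Lemma~\ref{lem:codim1_2} genuinely apply. Everything else is either a direct coordinate computation with the monomials $L_2L_{n-1}^iL_n^{2k-i}$ or a formal consequence of the join statements in~\ref{sec:joins}.
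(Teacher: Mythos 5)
Your proposal is correct and follows essentially the same route as the paper: the paper's own proof simply declares (1) and (3) immediate and derives (2) from the dimension count $\dim P_{k(n-2),c_1}+\dim P_{k(n-2),c_2}+1=\dim P_{m,j}$ together with Lemma~\ref{lemma_linear_join}, which is exactly the skeleton you flesh out with the explicit monomial coordinates of Lemmas~\ref{l0} and~\ref{lem:otherms}. Your divisibility-by-$L_1$ characterization of when $\phi_i(v_i)$ lands in $\overline P_{m,j}$ correctly supplies the detail behind the paper's ``(3) is again immediate.''
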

\begin{proof} (1) is immediate.  (2) then follows from $\dim P_{k(n-2),c_1} + \dim P_{k(n-2),c_i} + 1 = \dim P_{m,j}$
and Lemma~\ref{lemma_linear_join}. (3) is again immediate.
\end{proof} 
As we did already in the statement above, we will sometimes omit the inclusion maps $\phi_i$; thus, for subspaces $U_{1} \subseteq P_{k(n-2),c_1} $ and $U_{2} \subseteq P_{k(n-2),c_2}$, we will denote by  $J(U_{1},U_{2}) \subseteq P_{m,j}$ the join of $\phi_1(U_{1})$ and $\phi_1(U_{1})$ in $P_{m,j}$.

Let 
\[M^0_{c_i}:=\phi_{i}(V_{0,k(n-2),c_i})\subset V_{0,m,j};\] 
these are disjoint affine linear subspaces of the affine space $V_{0,m,j}$.
Also consider 
\[ N_{c_i} = J(P_{k(n-2),c_i}, \overline P_{k(n-2),c_{3-i}})\subset P_{m,j} .\]
This is the locus of points in $P_{m,j}$ covered by lines $v_1v_2$ one of whose endpoints is at a point 
``at infinity'', in $\overline P_{k(n-2),c_{3-i}}=P_{k(n-2),c_{3-i}}\setminus V_{0,k(n-2), c_{3-i}}$. Let
\[ M_{c_i} = N_{c_i}\cap V_{0,m,j}\subset V_{0,m,j}\]
be the intersection with the large affine cell of $P_{m,j}$. 

\begin{lemma} 
\label{lem:mctrivbundle}
There exists morphisms $\psi_i\colon M_{c_i} \to M^0_{c_i}$, given by 
associating to a point \[v\in M_{c_i} \subset V_{0,m,j}\] the 
``non-infinity'' endpoint of the (mostly unique) line $v_1v_2$ passing through it. 
The maps $\psi_i$ are trivial vector bundles over affine spaces. 
\end{lemma}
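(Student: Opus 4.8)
The statement we must prove, Lemma~\ref{lem:mctrivbundle}, asserts that the locus $M_{c_i}\subset V_{0,m,j}$ fibers over $M^0_{c_i}$ via the ``non-infinity endpoint'' map $\psi_i$, and that this fibration is a trivial vector bundle over an affine space. The plan is to recognize $M_{c_i}$ as a piece of a linear join of the kind treated in~\ref{sec:joins}, and then to apply Lemma~\ref{lemma_linear_proj} verbatim. First I would set up the dictionary: in the ambient projective space $P_{m,j}$ we have two disjoint projective linear subspaces $N^0_{c_1}=\phi_1(P_{k(n-2),c_1})$ and $N^0_{c_2}=\phi_2(P_{k(n-2),c_2})$ whose join is all of $P_{m,j}$ (Lemma~\ref{lem:imgdescr}(1)--(2)), and a hyperplane ``at infinity'' $H\subset P_{m,j}$ whose complement is the affine cell $V_{0,m,j}$; by Proposition~\ref{prop:decomp of P}, $H=\overline{P}_{m,j}=P_{m,j}\setminus V_{0,m,j}$. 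The subspaces $M^0_{c_i}=\phi_i(V_{0,k(n-2),c_i})=N^0_{c_i}\cap V_{0,m,j}$ are precisely the ``affine parts'' $(N^0_{c_i})^o$ of the $N^0_{c_i}$ in the notation of~\ref{sec:joins}, and $\overline{N}^0_{c_i}=N^0_{c_i}\cap H=\phi_i(\overline{P}_{k(n-2),c_i})$ are their parts at infinity.

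Next I would identify $N_{c_i}$ with the join $J(\overline{N}^0_{c_{3-i}},N^0_{c_i})$, which by Lemma~\ref{lemma_linear_join} is a hyperplane $X\cong\SP^{n-1}$ in $P_{m,j}$ in the language of~\ref{sec:joins} (with $L_1=\overline{N}^0_{c_{3-i}}$ and $L_2=N^0_{c_i}$): indeed $N_{c_i}=J(P_{k(n-2),c_i},\overline{P}_{k(n-2),c_{3-i}})$ is by definition the union of the lines $v_{c_i}v_{c_{3-i}}$ whose $c_{3-i}$-endpoint lies at infinity, which is exactly $J(N^0_{c_i},\overline{N}^0_{c_{3-i}})$. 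Then $M_{c_i}=N_{c_i}\cap V_{0,m,j}=X^o$ in the notation of Lemma~\ref{lemma_linear_proj}. Projection away from $L_1=\overline{N}^0_{c_{3-i}}$ gives exactly the map that sends a point $v\in X\setminus L_1$ to the endpoint $\phi_i(v_{c_i})\in N^0_{c_i}=L_2$ of the unique line through $v$ with one endpoint in $L_1$ and one in $L_2$ — and by the geometric description right after Lemma~\ref{lemma_linear_proj}, this is precisely $\psi_i$ restricted to $M_{c_i}=X^o$. Lemma~\ref{lemma_linear_proj} then gives that $\psi_i\colon M_{c_i}=X^o\to (N^0_{c_i})^o=M^0_{c_i}$ is a trivial affine fibration with fibers $\SA^k$ where $k=\dim\overline{N}^0_{c_{3-i}}=\dim P_{k(n-2),c_{3-i}}-1$, and that the base $M^0_{c_i}\cong V_{0,k(n-2),c_i}$ is an affine space by Proposition~\ref{prop:decomp of P}. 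Since the fibration is linear (coming from a linear projection), the fibers carry a vector space structure compatible with the trivialization, so $\psi_i$ is a trivial vector bundle, as claimed. The only subtlety to address explicitly is the qualifier ``mostly unique'': on the locus $L_1\cap L_2$, which here is empty since $N^0_{c_1}$ and $N^0_{c_2}$ are disjoint, so in fact on all of $M_{c_i}$ the line $v_{c_1}v_{c_2}$ is genuinely unique by Lemma~\ref{lemma_linear_join}, and $\psi_i$ is everywhere well-defined — the ``mostly'' in the statement is a harmless over-cautious phrasing.

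\textbf{Main obstacle.} I do not expect a serious obstacle; the whole content is the translation into the join framework, and once the identifications $L_1\leftrightarrow\overline{N}^0_{c_{3-i}}$, $L_2\leftrightarrow N^0_{c_i}$, $X\leftrightarrow N_{c_i}$, $V\leftrightarrow V_{0,m,j}$ are pinned down, Lemmas~\ref{lemma_linear_join} and~\ref{lemma_linear_proj} do all the work. The one place requiring a small argument is checking that $N_{c_i}$, as I have defined the relevant join, really is a \emph{hyperplane} in $P_{m,j}$ rather than something smaller — i.e.\ that $\dim\overline{N}^0_{c_{3-i}}+\dim N^0_{c_i}+1=\dim P_{m,j}-1$ — which follows from the dimension identity $\dim P_{k(n-2),c_1}+\dim P_{k(n-2),c_2}+1=\dim P_{m,j}$ used in the proof of Lemma~\ref{lem:imgdescr}(2), together with $\dim\overline{N}^0_{c_{3-i}}=\dim N^0_{c_{3-i}}-1$. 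I would also remark that the construction is manifestly natural, so it globalizes: by the base-change property Lemma~\ref{lem:joinbasechange}, the analogous statement holds in families over an arbitrary base, which is the form in which it gets used in~\ref{ch:Dnspecial}.
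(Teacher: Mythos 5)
Your proof is correct and is exactly the paper's argument: the paper's entire proof of this lemma is the single line ``See Lemma~\ref{lemma_linear_proj}'', and your write-up just makes explicit the dictionary $L_1=N^0_{c_{3-i}}$, $L_2=N^0_{c_i}$, $H=\overline{P}_{m,j}$, $X=N_{c_i}$, $X^o=M_{c_i}$ that the paper leaves implicit. One micro-quibble: the ``mostly'' in ``mostly unique'' refers to points of $M^0_{c_i}\subset M_{c_i}$ itself, each of which lies on many joining lines (the projection being the identity there), not to the empty locus $L_1\cap L_2$ --- but this affects nothing.
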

\begin{proof} See Lemma~\ref{lemma_linear_proj}. 
\end{proof}

\begin{corollary} \label{cor:mcstrat}
\hspace{2em}
\begin{enumerate}
 \item For $i=1,2$ the decomposition $P_{k(n-2),c_{3-i}}=\bigsqcup_{(k',l') \in B_{k(n-2),c_{3-i}}} V_{k',l',c_{3-i}}$ induces a decomposition into locally closed subspaces
 \begin{equation} \label{eq:mcdec1} M_{c_i} \setminus M_{c_i}^0 =\bigsqcup_{(k',l') \in B_{k(n-2),c_{3-i}} \setminus {(1,m)}} \left( (J(V_{0,k(n-2),c_i},V_{k',l',c_{3-i}}) \cap V_{0,m,j})\setminus M_{c_i}^0\right).\end{equation}
 \item Taking into account the bijections $B_{k(n-2),c_i}\cong B_{k(n-2)+2,c_{3-i}}$ and the decomposition (\ref{eq:mcdec1}), the space $V_{0,m,j}$ decomposes into locally closed subspaces as
\[
\begin{aligned}
V_{0,m,j} 
= & V_{0,m,j}(1,m,1,m)\\
 & \bigsqcup \left(\bigsqcup_{(k_1,l_1) \in B'_{k(n-2)+2,c_1}} V_{0,m,j}(k_1,l_1,1,m)\right)\\
 & \bigsqcup \left(\bigsqcup_{(k_2,l_2) \in B'_{k(n-2)+2,c_2}} V_{0,m,j}(1,m,k_2,l_2)\right),
\end{aligned}
\]
where we introduced the notations
\begin{itemize}
\item $B'_{k(n-2)+2,c_i}=(B_{k(n-2)+2,c_i}\cup \{\emptyset\}) \setminus \{(1,m)\}$;
\item $V_{0,m,j}(\emptyset,1,m)=M_{c_1}^0$;
\item $V_{0,m,j}(k_1,l_1,1,m)= (J(V_{0,k(n-2),c_1},V_{k_1,l_1,c_{2}}) \cap V_{0,m,j})\setminus M_{c_1}^0$;
\item $V_{0,m,j}(1,m,\emptyset)=M_{c_2}^0$;
\item $V_{0,m,j}(1,m,k_2,l_2)= (J(V_{0,k(n-2),c_2},V_{k_2,l_2,c_{1}}) \cap V_{0,m,j})\setminus M_{c_2}^0$;
\item $V_{0,m,j}(1,m,1,m)=V_{0,m,j} \setminus (M_{c_1} \cup M_{c_2})$.
\end{itemize}

 \end{enumerate}
\end{corollary}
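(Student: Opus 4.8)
The plan is to deduce both parts from the join description of $P_{m,j}$ in Lemma~\ref{lem:imgdescr}, together with the cell decompositions of the subspaces $P_{k(n-2),c_{3-i}}$ of Proposition~\ref{prop:decomp of P}. No geometric input is needed beyond what is already packaged in Lemmas~\ref{lemma_linear_join}--\ref{lemma_linear_proj} and~\ref{lem:imgdescr}--\ref{lem:mctrivbundle}; the work is essentially bookkeeping.

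For part (1), I would start from the fact that a point $v\in M_{c_i}\setminus M_{c_i}^0$ lies, by Lemma~\ref{lem:imgdescr}(2), on a unique line $v_1v_2\subset P_{m,j}$ with endpoints $\phi_i(v_1)\in N_{c_i}^0$ and $\phi_{3-i}(v_2)\in N_{c_{3-i}}^0$. Since $v$ lies in the affine cell $V_{0,m,j}$, Lemma~\ref{lem:imgdescr}(3) forces the $c_i$-side endpoint into the big cell, $v_1\in V_{0,k(n-2),c_i}$, and then membership in $M_{c_i}$ records exactly that the $c_{3-i}$-side endpoint $v_2$ is ``at infinity'', i.e.\ $v_2\in\overline P_{k(n-2),c_{3-i}}$. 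Linear projection of $P_{m,j}$ away from the linear subspace $N_{c_i}^0$ is a morphism off $N_{c_i}^0$ by Lemma~\ref{lemma_linear_proj}; restricted to $M_{c_i}\setminus M_{c_i}^0$, which is disjoint from $N_{c_i}^0$, it is the morphism $v\mapsto\phi_{3-i}(v_2)$ with image in $\phi_{3-i}(\overline P_{k(n-2),c_{3-i}})$. Pulling back the locally closed cell decomposition $\overline P_{k(n-2),c_{3-i}}=\bigsqcup_{(k',l')}V_{k',l',c_{3-i}}$ along this morphism gives the asserted decomposition of $M_{c_i}\setminus M_{c_i}^0$: the stratum over $V_{k',l',c_{3-i}}$ is, by the very definition of the join, $\big(J(V_{0,k(n-2),c_i},V_{k',l',c_{3-i}})\cap V_{0,m,j}\big)\setminus M_{c_i}^0$, and it is locally closed as the preimage of a locally closed set.

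For part (2), the first task is to see that $V_{0,m,j}$ is the disjoint union of $V_{0,m,j}\setminus(M_{c_1}\cup M_{c_2})$, $M_{c_1}$, and $M_{c_2}$, which reduces to checking $M_{c_1}\cap M_{c_2}=\varnothing$. A point of the intersection would, by uniqueness of the line through it, be forced by membership in $M_{c_1}$ to have its $c_2$-side endpoint at infinity and by membership in $M_{c_2}$ to have its $c_1$-side endpoint at infinity; but a line with both endpoints outside the respective big cells misses $V_{0,m,j}$ by Lemma~\ref{lem:imgdescr}(3), a contradiction. In particular $M_{c_1}^0\cap M_{c_2}^0=\varnothing$ as well. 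Now refine $M_{c_i}=M_{c_i}^0\sqcup(M_{c_i}\setminus M_{c_i}^0)$, apply part (1) to the second summand, relabel the index set via the combinatorial bijection $B_{k(n-2),c_{3-i}}\cong B_{k(n-2)+2,c_i}$ (induced by multiplication by $L_1$, which carries the $c_{3-i}$-blocks on the $k(n-2)$-th antidiagonal to the $c_i$-blocks on the antidiagonal $k(n-2)+2$; cf.\ Proposition~\ref{prop:1}), and adjoin a symbol $\varnothing$ to the index set to name the remaining piece $M_{c_i}^0$. Reading off the resulting strata and matching them with $V_{0,m,j}(k_1,l_1,1,m)$, $V_{0,m,j}(1,m,k_2,l_2)$, $V_{0,m,j}(\varnothing,1,m)=M_{c_1}^0$, $V_{0,m,j}(1,m,\varnothing)=M_{c_2}^0$, and $V_{0,m,j}(1,m,1,m)=V_{0,m,j}\setminus(M_{c_1}\cup M_{c_2})$ yields the claimed stratification into locally closed subspaces.

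The step demanding the most care is the bookkeeping in part (2): each label $c_i$ plays two roles, namely the ``finite side'' when one describes $M_{c_i}$ and the ``infinity side'' when one describes $M_{c_{3-i}}$, so one must pin down precisely the two index sets $B_{k(n-2),c_{3-i}}$ and $B_{k(n-2)+2,c_i}$ and the bijection between them, and verify that $M_{c_i}^0$ is counted exactly once (as the $\varnothing$-stratum) and never double-counted. The underlying geometry --- $P_{m,j}$ being a join of two disjoint linear subspaces, essential uniqueness of the covering lines, and linear projection of a join being a morphism --- is already in hand, so part (1) should be quite short.
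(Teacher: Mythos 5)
Your proposal is correct and follows essentially the same route as the paper, which presents this corollary as an immediate consequence of the join structure of $P_{m,j}$ established in Lemmas~\ref{lem:imgdescr}--\ref{lem:mctrivbundle} (part (1) being the pullback of the cell decomposition of $\overline P_{k(n-2),c_{3-i}}$ along the linear projection away from $N_{c_i}^0$, and part (2) the disjointness $M_{c_1}\cap M_{c_2}=\emptyset$ plus relabelling of indices via the $L_1$-shift). Your explicit verification of the disjointness and of the double role of each label $c_i$ is exactly the bookkeeping the paper leaves implicit.
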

The meaning of the notations is that $V_{0,m,j}(k_1,l_1,k_2,l_2)$ consists of exactly those points $v \in V_{0,m,j}$ such that $(v) \cap P_{k(n-2)+2,c_i} \in V_{k_i,l_i,c_i}$ for $i=1,2$. The symbol $\emptyset$ at an argument replacing a pair $(k_i,l_i)$ means that there is no such intersection.

\section{Proofs of propositions about incidence vareties}
\label{sec:prop:proofs}

Here we prove the propositions announced in~\ref{sec:incvargr}. The arguments for Propositions \ref{prop:incvar1}, \ref{prop:incvar2} and \ref{prop:incvar4} are very similar, so we will spell out the proof for one of these. The discussion will also prepare the ground for the proof of Proposition~\ref{prop:incvar3}, which is substantially more complicated.

Consider first the situation of \ref{prop:incvar4}. Namely, $m \not\equiv 0, 1\; \mathrm{mod}\; n-2$ is a positive integer, $V_{0,m,j}$ the cell of a full block, $c$ is the label of the full block immediately above the position $(0,m)$, $S_{c} \subseteq B_{m+1,c}$ is a nonempty maximal subset, and $S \subseteq B_{m,j}$ is a maximal subset which is \emph{allowed} by $S_{c}$.

\begin{proof}[Proof of Proposition \ref{prop:incvar4}] By Proposition \ref{prop:1} (3) and (6), for an arbitrary $U \in V_S$, $(U, U_c) \in X_{S}^{S_c}$ if and only if $U \subseteq (L_1^{-1} U_c)\cap V_{0,m,j}$. Moreover, the composition of $L_1^{-1}$ and the isomorphism $V_{0,m-1,c} \to V_{0,m,j}$ gives an isomorphism $V_{1,m,c} \to V_{0,m,j}$. Hence, for a pair $(\overline{U},U_c) \in Y_{\overline{S}}^{S_c}$, we have
\[\{U \in V_S|_{\overline{U}} : (U, U_c) \in  X_{S}^{S_c} \}= \left((L_1^{-1} U_c)\cap V_{0,m,j}\right)/\overline{U},\]
and there is a canonical quotient map $V_{0,m,j}/\overline{U} \to V_{1,m,c}/U_c$.

Let us define two families parameterized by $V_{\overline{S}} \times V_{S_{c},c}$. The family $\mathcal{F}_c$ is defined to have the fiber $ (L_1^{-1}U_{c}) \cap V_{0,m,j}$ over a pair $(\overline{U},U_c) \in V_{\overline{S}} \times V_{S_{c},c}$. This is a family of affine subspaces of $V_{0,m,j}$. The family $\overline{\mathcal{F}}$ is defined to have the fiber $\overline{U} \subset P_{m,j}$ over a pair $(\overline{U},U_c) \in V_{\overline{S}} \times V_{S_{c},c}$. This is a family of projective subspaces contained in $\overline{P}_{m,j}$. Since the tautological bundle over any Schubert cell in any Grassmannian is trivial, the two families are trivial with affine, respectively projective space fibres. Consider these families over the subset $Y_{\overline{S}}^{S_c} \subset V_{\overline{S}} \times V_{S_{c},c}$. By construction, over each point of $Y_{\overline{S}}^{S_c}$, the fibre of $\overline{\mathcal{F}}$  is a subspace of the projective closure of the fiber of $\mathcal{F}_c$ over the same point. In particular, we can take quotients fiberwise. By the considerations above,
\[ X_{S}^{S_1,S_2}=\mathcal{F}_c / \overline{\mathcal{F}}. \]
Moreover, the morphism $\omega \times \mathrm{Id} \colon  X_{S}^{S_1,S_2} \to Y_{\overline{S}}^{S_c}$ over a pair $(\overline{U},U_c)$ is given by the quotient morphism $V_{0,m,j}/\overline{U} \to V_{1,m,c}/U_c$ times the identity. This shows (1) and (2).

The injectivity statement (3) follows again from the isomorphism $V_{1,m,c} \cong V_{0,m,j}$ given by $L_1$, since for every pair $(U, U_c) \in  X_{S}^{S_1,S_2}$ one has $U_c=(U,\overline{U}_c) \cap V_{1,m,c}$.
\end{proof}

Consider now the situation of 
Proposition \ref{prop:incvar3}; thus $m \equiv 1\; \textrm{mod}\; n-2$ is a positive integer, $c_1$ and $c_2$ are the labels of the divided block immediately above the block at position $(m,j)$, $S_{1} \subseteq B_{m+1,c_1}$, $S_{2} \subseteq B_{m+1,c_2}$ are nonempty subsets at least one of which is maximal, and $S \subseteq B_{m,j}$ is a maximal subset, which is allowed by $S_{1}$ and $S_{2}$.

\begin{lemma} For $i=1,2$ fix $U_i \in V_{S_{i},c_i}$.
\begin{enumerate} 
\label{lem:incfiber}
\item[(a)]  For an arbitrary $U \in V_S$, $(U, U_1, U_2) \in X_{S}^{S_1,S_2}$ if and only if $U \subseteq J(\phi_1(L_1^{-1} U_1),\phi_2(L_1^{-1} U_2))\cap V_{0,m,j}$.
\item[(b)] If $(\overline{U}, U_1,U_2) \in Y_{\overline{S}}^{S_1,S_2}$, then $\overline{U}\subseteq J(\phi_1(L_1^{-1} U_1),\phi_2(L_1^{-1} U_2))\cap V_{0,m,j}$.
\item[(c)]  If $(\overline{U}, U_1,U_2) \in Y_{\overline{S}}^{S_1,S_2}$, then 
\[\{U \in V_S|_{\overline{U}} : (U, U_1,U_2) \in  X_{S}^{S_1,S_2} \}= \left(J(\phi_1(L_1^{-1} U_1),\phi_2(L_1^{-1} U_2))\cap V_{0,m,j}\right)/\overline{U}.\]
\end{enumerate}
\end{lemma}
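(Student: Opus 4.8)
The plan is to reduce all three parts to the pointwise incidence analysis of Proposition~\ref{prop:1}(2) and~(5) — which treats exactly a block at position $(0,m)$ of label $j\in\{2,n-2\}$ with a divided block of labels $(c_1,c_2)\in\{(0,1),(n-1,n)\}$immediately above it — combined with the description of $P_{m,j}$ as the join of the disjoint linear subspaces $\phi_1(L_1^{-1}U_1)$, $\phi_2(L_1^{-1}U_2)$-type images coming from Lemma~\ref{lem:imgdescr}. Here $L_1^{-1}U_i$ denotes the unique subspace with $L_1(L_1^{-1}U_i)=U_i$, which exists since $L_1$ is injective (Lemma~\ref{lem:codim1_2}) and $U_i$ lies in its image. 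For part~(a), I would first prove the pointwise statement: for a single point $v\in V_{0,m,j}$, the two conditions $(v)\cap P_{m+1,c_i}\subseteq U_i$ for $i=1,2$ hold simultaneously if and only if $v\in J:=J(\phi_1(L_1^{-1}U_1),\phi_2(L_1^{-1}U_2))$. This is precisely the content of the three mutually exclusive alternatives listed in Proposition~\ref{prop:1}(2) (resp.~(5)) — ``both intersections meet $L_1U_1,L_1U_2$'', ``only the $c_1$-side'', ``only the $c_2$-side'' — once the ideal-membership conditions ``$v\in(U_1,U_2)$'', ``$v\in(U_1)$'', etc.\ appearing there are rewritten via Lemma~\ref{lem:imgdescr}(1)--(2) as membership in the corresponding joins, the remaining ``empty intersection'' alternative being excluded for $v$ in the big cell $V_{0,m,j}$ by Lemma~\ref{lem:imgdescr}(3). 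One then upgrades to the subspace $U\in V_S$: since the ideal generated by a linear subspace is the sum of the ideals generated by its points, the incidence condition $(U,U_1,U_2)\in X_S^{S_1,S_2}$ holds iff it holds for every point of the affine subspace $U\subset V_{0,m,j}$ (recall that for maximal $S$ an element of $V_S$ is an affine subspace of $V_{0,m,j}$), i.e.\ iff $U\subseteq J\cap V_{0,m,j}$.

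Part~(b) then follows without circular appeal to Proposition~\ref{prop:incvar3}. Since at least one of $S_1,S_2$ is maximal, at least one $\phi_i(L_1^{-1}U_i)$ meets $M_{c_i}^0\subset V_{0,m,j}$, so $J\cap V_{0,m,j}\neq\emptyset$; pick a point $v$ there. Given $(\overline U,U_1,U_2)\in Y_{\overline S}^{S_1,S_2}$, set $U=\langle\overline U,v\rangle$, which lies in $V_S$ with $\omega(U)=\overline U$ because $\overline S=S\setminus\{(k_{min},l)\}$. From the defining condition of $Y_{\overline S}^{S_1,S_2}$ we have $(\overline U)\cap P_{m+1,c_i}\subseteq U_i$, and from part~(a) applied to the point $v$ we have $(v)\cap P_{m+1,c_i}\subseteq U_i$; adding these gives $(U)\cap P_{m+1,c_i}\subseteq U_i$, i.e.\ $(U,U_1,U_2)\in X_S^{S_1,S_2}$. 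Part~(a) now yields $U\subseteq J\cap V_{0,m,j}$, hence $\overline U=U\cap\overline{P}_{m,j}\subseteq J$, which is the assertion (with ``$\cap V_{0,m,j}$'' read as identifying $J$ with the projective completion of its affine trace). For part~(c) one combines (a) and (b): the fibre $V_S|_{\overline U}$ consists exactly of the affine subspaces of $V_{0,m,j}$ with set of points at infinity equal to $\overline U$, that is, of the coset space $V_{0,m,j}/\overline U$; by part~(a) those lying in $X_S^{S_1,S_2}$ are precisely the ones contained in $J\cap V_{0,m,j}$, which is itself an affine subspace of $V_{0,m,j}$ by Lemma~\ref{lem:imgdescr}(3); and by part~(b) its set of points at infinity contains $\overline U$, so these subspaces form exactly the coset space $(J\cap V_{0,m,j})/\overline U$.

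The step I expect to be the main obstacle is the bookkeeping inside the pointwise part of~(a): verifying that, under the dictionary of Lemma~\ref{lem:imgdescr}, the three alternatives of Proposition~\ref{prop:1}(2)/(5) assemble into the single clean condition ``$v$ lies on $J$'' with no overlap or gap along the boundary strata $M_{c_i}\setminus M_{c_i}^0$, and correctly absorbing the extra irreducible component that appears in these propositions when $n=4$. Closely related is making the point-to-subspace passage rigorous — one must check that ``$(U)\cap P_{m+1,c_i}$'' (the minimal-dimension invariant subspace inside the relevant graded, isotypic piece of the generated ideal) really is the span of the corresponding pieces of $(v)$ over $v\in U$, and that containment in $U_i$ is stable under this operation. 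Once these are pinned down, everything else is a formal consequence of the definitions in~\ref{sec:incvargr} and the auxiliary results of~\ref{sec:auxiliary}.
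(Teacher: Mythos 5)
Your proposal is correct, and for parts (a) and (c) it follows essentially the paper's own route: reduce to the pointwise incidence alternatives of Proposition~\ref{prop:1}(2)/(5), translate them into the join $J=J(\phi_1(L_1^{-1}U_1),\phi_2(L_1^{-1}U_2))$ via Lemma~\ref{lem:imgdescr}, and pass from points to subspaces using the fact that the degree-$(m+1)$ isotypic piece of the ideal generated by a linear subspace is the sum of the pieces generated by its points. Part (b) is where you genuinely diverge. The paper argues directly on $\overline{P}_{m,j}$: from $(\overline U)\cap P_{m+1,c_i}\subseteq U_i$ it deduces a containment of joins and then checks, cell by cell on each $V_{k,l,j}$ with $k\geq 1$ (using the isomorphism $V_{1,m-1,j}\cong V_{0,m-1,c_1}\times V_{0,m-1,c_2}$ and the inductive $L_1$-structure of the stratification), that $\overline U$ sits inside $J$. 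You instead lift $\overline U$ to an element $U=\langle\overline U,v\rangle\in V_S|_{\overline U}$ by adjoining a point $v\in J\cap V_{0,m,j}$, verify $(U,U_1,U_2)\in X_S^{S_1,S_2}$ by additivity of the graded isotypic pieces, and then invoke (a) plus the closedness of the linear join to get $\overline U\subseteq J$. This is cleaner and avoids the cell-by-cell bookkeeping, at the cost of needing $J\cap V_{0,m,j}\neq\emptyset$ — which your appeal to the standing hypothesis that at least one of $S_1,S_2$ is maximal, together with Lemma~\ref{lem:imgdescr}(3), does supply. Both arguments are sound; your reading of the ``$\cap V_{0,m,j}$'' in the statement of (b) as shorthand for the projective join is also the one the paper's own proof implicitly adopts.
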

\begin{proof}
(a) By Proposition \ref{prop:1} for any pair of vectors $(v_{1},v_{2}) \in U_{1}\times U_{2}$ those points of $P_{m,j}$ for which $(v_{1},v_{2}) \cap P_{m+1,c_i}$ is either $v_{i}$ or empty are exactly those which are on $J(\phi_1(L_1^{-1}v_{1}),\phi_2(L_1^{-1}v_{2}))$. Hence, to satisfy the conditions $U$ has to be a subset of
\[\bigcup_{(v_1,v_2) \in U_1\times U_2} J(\phi_1(L_1^{-1}v_{1}),\phi_2(L_1^{-1}v_{2}))\cap V_{0,m,j}=J(\phi_1(L_1^{-1}U_{1}),\phi_2(L_1^{-1}U_{2}))\cap V_{0,m,j}.\]

(b) If $(\overline{U}, U_1,U_2) \in Y$, then $(\overline{U}) \cap P_{m+1,c_i} \subseteq U_i$. Hence,
$\phi_i(L_1^{-1}((\overline{U}) \cap P_{m+1,c_i})) \subseteq \phi_i(L_1^{-1}U_i)$, and
\[ J(\phi_1(L_1^{-1}((\overline{U}) \cap P_{m+1,c_1})),\phi_2(L_1^{-1}((\overline{U}) \cap P_{m+1,c_2}))) \subseteq J(\phi_1(L_1^{-1}U_1),\phi_2 (L_1^{-1}U_2)).\]
By Proposition \ref{prop:1} there is an isomorphism $V_{1,m-1,j} \cong V_{0,m-1,c_1} \times V_{0,m-1,c_1}$ in such a way that
\[ \overline{U} \cap V_{1,m-1,j} \subseteq J(\phi_1(L_1^{-1}((\overline{U}) \cap P_{m+1,c_1})),\phi_2(L_1^{-1}((\overline{U}) \cap P_{m+1,c_2})))\cap V_{1,m-1,j}.\]
Similarly, on each cell $V_{k,l,j}$ such that $k+l=m$ and $k\geq 1$, the affine subspace $\overline{U} \cap V_{k,l,j}$ is a subvariety of $J(\phi_1(L_1^{-1}((\overline{U}) \cap P_{m+1,c_1})),\phi_2(L_1^{-1}((\overline{U}) \cap V_{m+1,c_2}))) \cap V_{k,l,j}$. All these mean that $\overline{U} \subseteq  J(\phi_1(L_1^{-1}U_1), \phi_2(L_1^{-1}U_2))$.

(c) Recall, that $\overline{U}$ also represents a subspace at infinity for $V_{0,m,j}$, and $V_S|_{\overline{U}}=V_{0,m,j}/\overline{U}$. In fact, we can take the quotient of an arbitrary subspace of $V_{0,m,j}$, whose closure in $P_{m,j}$ contains $\overline{U}$ with respect to (an arbitrary affine subspace representing) $\overline{U}$. Then the statement follows from (a) and (b).
\end{proof}
\begin{proof}[Proof of Proposition \ref{prop:incvar3}] 
It follows from the definitions that $\left(\omega \times \mathrm{Id} \times \mathrm{Id}\right)(X_{S}^{S_1,S_2}) \subseteq Y_{\overline{S}}^{S_1,S_2}$. The surjectivity will follow from the calculation of the fibers. 

We will define three families of subspaces in $P_{m,j}$ over $V_{\overline{S}} \times V_{S_{1},c_1} \times V_{S_{2},c_2}$. For $i=1,2$ the family $\mathcal{F}_i$ is defined to have the fiber $ \phi_i(L_1^{-1}U_{i}) \subseteq P_{m,j}$ over a three-tuple $(\overline{U},U_1,U_2) \in V_{\overline{S}} \times V_{S_{1},c_1} \times V_{S_{2},c_2}$. Let the third family $\mathcal{F}$ has the fiber $\overline{U} \subseteq P_{m,j}$ over the same element. This is of course empty, if $|S|=1$. It is important to note, that in all cases the fibers are always projective subspaces of $P_{m,j}$.

By Lemma \ref{lem:imgdescr}, there is an embedding $\phi_i \circ L_1^{-1}: P_{m+1,c_{3-i}} \to N_{c_i}^0\subset P_{m,j}$. Apply this embedding on the fibers of the projectivization of the tautological bundle over the Schubert cell $V_{S_{3-i}}$. Then multiply the base with $V_{\overline{S}} \times V_{S_i}$, and extend the family into this direction as a constant. This gives the bundle $\mathcal{F}_i$. Again, by the fact that the tautological bundle over any Schubert cell is trivial it follows that the $\mathcal{F}_i$'s are also trivial, that is, $\mathcal{F}_i \cong \SP^{|S_i|-1} \times V_{\overline{S}} \times V_{S_1} \times V_{S_2}$. Similarly, $\mathcal{F} \cong \SP^{|S|-2} \times V_{\overline{S}} \times V_{S_1} \times V_{S_2}$.

By Lemma  \ref{lem:joinbasechange}, the join of trivial families over a common base is a trivial family of the joins of the fibers:
\[\begin{aligned} 
J(\mathcal{F}_1,\mathcal{F}_2)& =J(\SP^{|S_1|-1} \times V_{\overline{S}} \times V_{S_1} \times V_{S_2}, \SP^{|S_2|-1} \times V_{\overline{S}} \times V_{S_1} \times V_{S_2}) \\ & \cong J(\SP^{|S_1|-1},\SP^{|S_2|-1}) \times V_{\overline{S}} \times V_{S_1} \times V_{S_2} \\
&  \cong \SP^{|S_1|+|S_2|-1} \times V_{\overline{S}} \times V_{S_1} \times V_{S_2}  \subseteq P_{m,j} \times V_{\overline{S}} \times V_{S_1} \times V_{S_2}.
\end{aligned} \]
Therefore, $J(\mathcal{F}_1,\mathcal{F}_2) \cap (V_{0,m,j} \times V_{\overline{S}} \times V_{S_1} \times V_{S_2})$ is a trivial family of affine subspaces of $V_{0,m,j}$ over $V_{\overline{S}} \times V_{S_1} \times V_{S_2}$.

By Lemma \ref{lem:incfiber} (b) $\mathcal{F}$ is a (trivial) subfamily of $J(\mathcal{F}_1,\mathcal{F}_2)$ over $Y_{\overline{S}}^{S_1,S_2}$. By Lemma \ref{lem:incfiber} (c) $X_{S}^{S_1,S_2}$ can be constructed as 
\[ X_{S}^{S_1,S_2} = (J(\mathcal{F}_1,\mathcal{F}_2) \cap (V_{0,m,j} \times V_{\overline{S}} \times V_{S_1} \times V_{S_2}))/\mathcal{F}|_{Y_{\overline{S}}^{S_1,S_2}}. \]
Hence, $X_{S}^{S_1,S_2}$ is a trivial family of affine spaces of dimension $|S_1|+|S_2|-|S|$, since it is the quotient of a trivial affine family of fibre dimension $|S_1|+|S_2|-1$ by another trivial affine family of fibre dimension $|S|-1$.
\end{proof}

\chapter{Type \texorpdfstring{$D_n$}{Dn}: special loci}
\label{ch:Dnspecial}

In this chapter we analyze those special points of the equivariant Hilbert scheme where a salient block of the associated Young wall fails to contain a generator of the corresponding ideal. More precisely, the case we are interested in is when the intersection of an ideal with a salient cell of its Young wall is already generated by the subspaces of the ideal sitting in cells in lower rows. For this we will describe the special points on the equivariant Grassmannian and then on the incidence varieties.

\section{Support blocks}
\label{sec:support}

We need to analyze the cases when a cell corresponding to a salient block (see Def.~\ref{def:globsalient}) of a Young wall $Y$ fails to contain a generator of a corresponding ideal $I\in Z_Y$. As an example, recall once again~Example~\ref{ex:3sidepyr}, where the divided missing blocks at position $(1,3)$ are salient blocks of $Y_3$, but the corresponding cells do not necessarily contain generators of an ideal $I\in Z_{Y_3}$, i.e. they are already generated by the ``parts'' of $I$ which are in the cells of the blocks in the lower rows. That this phenomenon can happen at all is one of the main sources of difficulty in our analysis of the strata of the singular Hilbert scheme. We will introduce the notion of a support block for a salient block below. Intuitively, the intersection of an ideal $I$ with the cell corresponding to the support block can generate the affine subspace given by the intersection of $I$ with the cell corresponding to the salient block (such as the support block at position $(0,3)$ for $Y_3$), and thus the salient block contains no \emph{new} generator of $I$. 
We will make this statement more precise in the rest of this chapter.

We start with some combinatorial preliminaries. Recall the setup of Proposition~\ref{prop:incvar3}: $m \equiv 1\; \textrm{mod}\; n-2$; $c_1$ and $c_2$ are the labels of the divided block immediately above the block of label $j$ at position $(m,0)$; $S_{1} \subseteq B_{m+1,c_1}$ and $S_{2} \subseteq B_{m+1,c_2}$ are nonempty subsets at least one of which is maximal; $S \subseteq B_{m,j}$ is a maximal subset which is allowed by $S_{1}$ and $S_{2}$. 

For a half-block $b$ of $S_i$, consider the following two conditions. 
\begin{enumerate}
\item The blocks below or to the left of $b$ are not contained in $S$.
\item The block below $b$ is contained in $S$, the complementary half-block $b'$ is contained in $S_{3-i}$, and the block to the left of their position in not contained in $S$.
\end{enumerate}
For $i, j=1,2$, let us denote by $S_i^{s,j} \subset S_i$ the subset of half-blocks of label $c_i$ satisfying condition~$(j)$. Let moreover $S_i^s=S_i^{s,1} \cup S_i^{s,2}$. 

The next lemma, whose proof is immediate, connects the global Definition~\ref{def:globsalient} for a Young wall Y with the local conditions (1)-(2) where we consider only the $m$-th and $m+1$-st diagonals for a particular $m$, and index sets $S$, $S_1$ and $S_2$ as above.

\begin{lemma} Given a Young wall $Y\in{\mathcal Z}_\Delta$, let $S$, respectively $S_1$ and $S_2$ denote the set of missing blocks, respectively half-blocks of $Y$ on the $m$-th and $(m+1)$-st diagonals. The blocks $S_i^s \subset S_i$ are exactly the salient blocks of $Y$ of label $c_i$ on the $(m+1)$-st diagonal.
\end{lemma}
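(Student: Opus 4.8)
The plan is to unwind Definition~\ref{def:globsalient} on the $m$-th and $(m+1)$-st antidiagonals and to match it directly against the local conditions (1) and (2). Throughout, ``below'', ``left'', and ``row'' are read in the transformed pattern, with the rows of half-blocks understood as two parallel rows of a fixed orientation as in (YW3); this last point is exactly why condition (2) has to keep separate track of the complementary half-block.

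Fix a missing half-block $b \in S_i$ of label $c_i$ on the $(m+1)$-st diagonal, sitting at a divided-block position; let $b'$ be its complementary half-block, and let $a$ (resp.\ $a_\ell$) be the block of the $m$-th diagonal immediately below $b$ (resp.\ immediately to the left of $b$'s position), which is a full block of label $j$ by the hypothesis $m \equiv 1 \bmod (n-2)$. By Definition~\ref{def:globsalient}, $b$ is a salient block of $Y$ if and only if (a) its absence from $Y$ is not forced by the shape of the rows strictly below it, and (b) $b$ is leftmost in its (parallel) row among the missing blocks enjoying (a). Using Remark~\ref{rem:corr_young_rules} to pass between the original and transformed directions, and the closure rules of Proposition~\ref{wallprop}, I would check that (a) and (b) hold in exactly two mutually exclusive configurations, which are precisely $S_i^{s,1}$ and $S_i^{s,2}$.

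First, suppose $a \notin S$ (so $a \in Y$): then a block of $Y$ sits immediately below $b$, so the absence of $b$ is not forced from below; and $b$ is leftmost in its row with property (a) exactly when $a_\ell \notin S$ as well — otherwise $b$ is either preceded in its row by another missing block, or its vacancy is implied by that of a position to its left through Proposition~\ref{wallprop}(1). This is condition (1), realizing the ``missing half blocks under which there is a block in $Y$'' clause of Definition~\ref{def:globsalient}. Second, suppose $a \in S$: by Proposition~\ref{wallprop}(2) at least one of $b, b'$ is then forced absent, but not both, so once $b' \in S_{3-i}$ the vanishing of one of the two overlying half-blocks of the missing full block $a$ remains a free choice and the corresponding half-block is salient — it is ``immediately to the right of the boundary of $Y$'' in the sense of Definition~\ref{def:globsalient}, exactly as at position $(1,3)$ in Example~\ref{ex:3sidepyr}. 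The leftmost-in-the-row requirement again becomes $a_\ell \notin S$, so the configuration is precisely condition (2); conversely, if $a \in S$ but $b' \notin S_{3-i}$, then $b$'s absence is forced by Proposition~\ref{wallprop}(2) and (4) and $b$ is not salient. Taking the union of the two cases gives $b \in S_i^{s,1} \cup S_i^{s,2} = S_i^s$, which is the assertion.

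The only step carrying any content is the analysis of the second configuration: one must verify, against Proposition~\ref{wallprop}, that over a missing full block on the $m$-th diagonal the vacancy of exactly one of the two overlying half-blocks is genuinely optional rather than forced, and settle which of the two half-blocks is the salient one. This bookkeeping is exactly what is packaged into conditions (1)--(2), and it is the reason the lemma --- ``immediate'' as it is --- is recorded separately rather than folded into Definition~\ref{def:globsalient}.
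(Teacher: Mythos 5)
The paper offers no argument for this lemma — it is explicitly declared ``immediate'' — and your unwinding of Definition~\ref{def:globsalient} against conditions (1)--(2) by means of Proposition~\ref{wallprop} is exactly the verification the authors leave to the reader, and it is correct. One small correction to your closing remark: in configuration (2) there is nothing to ``settle'' about \emph{which} of the two overlying half-blocks is salient — when $a$ and both $b,b'$ are missing and $a_\ell$ is present, \emph{both} half-blocks are salient (each lies in its own $S_i^{s,2}$, and together they form what the paper later calls a salient block-pair), so the correspondence with $S_1^s\cup S_2^s$ is exact rather than requiring a choice.
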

Thus we can legitimately call the blocks in $S_i^s$ salient blocks in this local situation. 

Let us introduce the following subsets of $S$. 
\begin{itemize}
\item $S^l$ consists of blocks $b\in S$ that are directly to the left of a divided block with labels $(c_1,c_2)$.
\item $S^{b,0}$ consists of blocks $b\in S$, so that $b$ is immediately below a divided block with labels $(c_1,c_2)$, the block immediately up-left of $b$ is not in $S$, and both of the divided blocks above $b$ are in $S_1 \cup S_2$.
\item $S^{b,c_i}$ consists of blocks $b\in S$ that are immediately below a divided block, so that the block immediately up-left of $b$ is not in $S$, and the block of label $c_{3-i}$ above $b$ is in $S_{3-i}$.
\item $S^{b,c_1 \cup c_2}$ consists of blocks $b\in S$ such that $b$ lies immediately below a divided block, and the block immediately up-left of $b$ is contained in $S$. 
\item $S^b=S^{b,0} \cup S^{b,c_1} \cup S^{b,c_2} \cup S^{b,c_1 \cup c_2}$.
\end{itemize}
Note that by the Young wall rules, we necessarily have $S^b=S \setminus S^l$. We will call the blocks in the set $S^{c_i}=S^{b,0} \cup S^{b,c_i} \cup S^{b,c_1 \cup c_2}$ \emph{support blocks for label $c_i$}. We will define a support relation from $S^{c_i}$ to $S_i^s$ in the next section.

\section{Special loci in orbifold strata and the supporting rules}
\label{sec:loci:strata}

Let $Y\in{\mathcal Z}_\Delta$ be a Young wall with a salient block in its bottom row in position $(0,m)$ with $m \equiv 1\; \textrm{mod}\; n-2$, a full block immediately below a divided block with labels $(c_1, c_2)$. As before, let $S$, respectively $S_1$ and $S_2$ denote the set of missing blocks, respectively half-blocks of $Y$ on the $m$-th and $(m+1)$-st diagonals with the corresponding labels.

We introduce index sets depending on $S$, $S_1$ and $S_2$. We consider two cases. 

If $\overline{S}$ is not maximal, then let
\[I(S,S_1,S_2)=\left\{(k_1,l_1,k_2,l_2) \;:\; \begin{array}{c}(k_i,l_i) \in S_i^s \cup \{ \emptyset \} \textrm{ for } i=1,2, \\ \textrm{and at least one } (k_i,l_i)=(1,m)\end{array} \right\}.\]
We partition this index set into the following (possibly empty) disjoint subsets:
\begin{itemize}
\item $I(S,S_1,S_2)_0= \{(k_1,l_1,k_2,l_2) \in I(S,S_1,S_2) \;:\;  (k_i,l_i)\notin \{\emptyset,(1,m)\} \textrm{ for some } i=1,2 \}$;
\item $I(S,S_1,S_2)_1=\{ (1,m,\emptyset),(\emptyset,1,m)\} \cap I(S,S_1,S_2)$;
\item $I(S,S_1,S_2)_{-1}=\{ (1,m,1,m)\} \cap I(S,S_1,S_2)$.
\end{itemize}

If $\overline{S}$ is maximal, then let
\[I(S,S_1,S_2)=\{(k_1,l_1,k_2,l_2) \;:\; (k_i,l_i) \in S_i^s \cup \{ \emptyset \} \textrm{ for } i=1,2 \}.\]
We remark that in this case $(1,m) \notin S_i^s$ for both $i=1,2$. The index set $I(S,S_1,S_2)$ in this case can be partitioned into the following subsets:
\begin{itemize}
\item $I(S,S_1,S_2)_0= \{(k_1,l_1,k_2,l_2) \in I(S,S_1,S_2) \;:\;  (k_i,l_i)\neq \emptyset \textrm{ for some } i=1,2 \}$;
\item $I(S,S_1,S_2)_1=\{(\emptyset,\emptyset)\}$.
\end{itemize}
As before, $\emptyset$ is used as a symbol replacing a pair in these defintions.

For projective subspaces $P_1 \subseteq P_2 \subseteq P_{m+1,c}$ we introduce the following notation. $(P_2 \setminus P_1) \dashv V_{k,l,c}$ if and only if $(P_2 \setminus P_1) \cap V_{k,l,c} \neq \emptyset$ and $k$ is maximal with this property. This is the smallest cell whose intersection with $P_2$ is larger than that with $P_1$.

Recall the truncated Young wall $\overline Y$ and the morphism  $T\colon Z_{Y} \to Z_{\overline{Y}}$ from ~\ref{sec:proof:orbicells}. The following statement will be proved below in~\ref{proofofethrm}. 

\begin{theorem} 
\label{thm:zinfty} 
\label{THM:ZINFTY}
There is a decomposition into locally closed subspaces
\[ Z_{Y}= \bigsqcup_{(k_1,l_1,k_2,l_2) \in I(S,S_1,S_2)}Z_{Y}(k_1,l_1,k_2,l_2),\]
where
\[Z_{Y}(k_1,l_1,k_2,l_2)=\{ I \in Z_{Y}\;:\; ((I \cap P_{m,j}) \setminus (I \cap \overline{P}_{m,j})) \cap P_{m+1,c_i} \dashv V_{k_i,l_i,c_i} \textrm{ for } i=1,2\}. \]
The symbol $\emptyset=(k_i,l_i)$ means that there is no intersection with $P_{m+1,c_i}$.
Moreover, if $(k_1,l_1,k_2,l_2) \in I(S,S_1,S_2)_e$, then the nonempty fibers of $T\colon Z_{Y}(k_1,l_1,k_2,l_2) \to Z_{\overline{Y}}$ have Euler characterestic $e$.
\end{theorem}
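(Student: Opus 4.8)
\textbf{Proof proposal for Theorem \ref{thm:zinfty}.}

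The plan is to run the same inductive machinery as in the proof of Theorem~\ref{thm:Zstrata}, but now keeping track of where the ``new'' part of an ideal $I \in Z_Y$ (the part in the cell $V_{0,m,j}$ modulo the contribution of the rows below) sits relative to the cells $V_{k_i,l_i,c_i}$ on the $(m+1)$-st diagonal. Concretely, recall from~\ref{sec:proof:orbicells} Case~\ref{sec:proof:orbicells}.3 the morphism $Z_Y \to \mathcal{X}_Y = X_S^{S_1,S_2}$ and the fiber product square of Proposition~\ref{prop:fiberproduct}, which presents $Z_Y$ as the pullback of $\mathcal{X}_Y \to \mathcal{Y}_{\overline Y}$ along $T\colon Z_{\overline Y} \to \mathcal{Y}_{\overline Y}$. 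The fiber of $T\colon Z_Y \to Z_{\overline Y}$ over a point $I_0 \in Z_{\overline Y}$ is therefore the fiber of $\mathcal{X}_Y \to \mathcal{Y}_{\overline Y}$ over the image point, i.e.\ by Lemma~\ref{lem:incfiber}(c) the quotient affine space $\bigl(J(\phi_1(L_1^{-1}U_1),\phi_2(L_1^{-1}U_2)) \cap V_{0,m,j}\bigr)/\overline U$, where $\overline U$, $U_1$, $U_2$ are determined by $I_0$. So the entire problem is local: I need to understand how this affine quotient space decomposes according to which cells $V_{k_i,l_i,c_i}$ the ideal generated by a point of it actually meets, beyond what is forced by $\overline U$.

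First I would set up the stratification on the level of the incidence variety. Using Corollary~\ref{cor:mcstrat}, the affine cell $V_{0,m,j}$ decomposes into locally closed pieces $V_{0,m,j}(k_1,l_1,k_2,l_2)$ indexed exactly by which cells of $P_{m+1,c_1}$ and $P_{m+1,c_2}$ the ideal $(v)$ meets, for $v \in V_{0,m,j}$; this is precisely the data recorded by the ``$\dashv$'' relation. Intersecting this decomposition with the join $J(\phi_1(L_1^{-1}U_1),\phi_2(L_1^{-1}U_2))$ and then passing to the quotient by $\overline U$ gives the decomposition of the $T$-fiber, and hence (as these glue over $Z_{\overline Y}$, the families being trivial by the arguments in~\ref{sec:prop:proofs}) the claimed global decomposition $Z_Y = \bigsqcup Z_Y(k_1,l_1,k_2,l_2)$ into locally closed subspaces. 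The index set that actually occurs is $I(S,S_1,S_2)$: a salient half-block on the $(m+1)$-st diagonal can fail to be hit only when it lies in $S_i^s$, and the combinatorial case split (whether $\overline S$ is maximal, whether one of the indices is $(1,m)$) matches the partition of $I(S,S_1,S_2)$ into the pieces indexed by $e \in \{-1,0,1\}$. This identification of the occurring index set with $I(S,S_1,S_2)$ is essentially the content of the preceding combinatorial lemmas of~\ref{sec:support}.

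The Euler characteristic count is then a computation inside the join, using the geometry of Section~\ref{sec:joins}. For $(k_1,l_1,k_2,l_2) \in I(S,S_1,S_2)_0$ (some index strictly between $\emptyset$ and $(1,m)$), the nonempty fiber of $T$ over that stratum is the locus in the affine quotient where $(v)$ meets a \emph{prescribed} pair of cells; by Lemma~\ref{lem:affjoinchar} (the join of $L_2^o$ with an affine complement $W$, minus $L_2^o$, has Euler characteristic zero, because it fibers with $\SC^\ast$ fibers), this locus has Euler characteristic $0$. For the index in $I(S,S_1,S_2)_1$ — the ``generic'' stratum where no extra generator is needed on the $(m+1)$-st diagonal, or its mirror — the fiber is a full affine space (the open part of the join intersected with $V_{0,m,j}$, modulo $\overline U$, minus lower strata), which is irreducible with Euler characteristic $1$. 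For $I(S,S_1,S_2)_{-1} = \{(1,m,1,m)\}$, which only arises when $\overline S$ is not maximal, the fiber is empty: the combinatorics forces that one cannot avoid hitting both $V_{1,m,c_1}$ and $V_{1,m,c_2}$ while staying in the correct Young wall (this is the $n=4$ phenomenon visible in Example~\ref{ex:5sidepyr}), so $e = -1$ records the empty fiber conventionally, and there is nothing to prove there beyond emptiness. Assembling these three cases gives the statement.

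The main obstacle I anticipate is the bookkeeping in the first paragraph's reduction combined with the second paragraph's combinatorics: correctly matching the ad hoc partition $I(S,S_1,S_2) = I_0 \sqcup I_1 \sqcup I_{-1}$ against the geometric strata of the join, and in particular verifying that the only way the Euler characteristic can fail to be $1$ is through an $\SC^\ast$-factor appearing in the fiber — which is exactly where Lemma~\ref{lem:affjoinchar} is used, but applying it requires carefully identifying the relevant $L_2^o$, $W$, $U_1$ inside $V_{0,m,j}$ via the isomorphisms of Proposition~\ref{prop:1}. The ``special situation'' remarks in Proposition~\ref{prop:1}(2),(5) about three- versus four-dimensional $L_2 u$ when $n = 4$ will need to be threaded through with some care, but they do not change the Euler characteristic conclusions.
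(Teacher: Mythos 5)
Your overall strategy is the paper's: present $Z_Y$ as the pullback of $\mathcal{X}_Y \to \mathcal{Y}_{\overline Y}$ via Proposition~\ref{prop:fiberproduct}, identify the fiber of $T$ with $\bigl(J(\phi_1(L_1^{-1}U_1),\phi_2(L_1^{-1}U_2))\cap V_{0,m,j}\bigr)/\overline U$, and stratify that fiber by which cells of $P_{m+1,c_i}$ the generated ideal meets, using the join geometry of~\ref{sec:joins} and Lemma~\ref{lem:affjoinchar} for the $\chi=0$ strata. That part is fine. But your matching of the classes $I(S,S_1,S_2)_{\pm 1}$ to the geometry is inverted, and the $e=-1$ case is simply wrong. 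The stratum indexed by $(1,m,1,m)$ is not empty: it is the locus where the new generator $v\in V_{0,m,j}$ genuinely contributes to \emph{both} cells $V_{1,m,c_1}$ and $V_{1,m,c_2}$, i.e.\ the complement in the affine space $V_{0,m,j}/\overline U$ of the two disjoint parallel affine hyperplanes $(M_{c_1}+\overline U)/\overline U$ and $(M_{c_2}+\overline U)/\overline U$ (disjoint precisely because $\overline S$ is not maximal). This is the dense open stratum, and its Euler characteristic is $1-1-1=-1$. An empty set has Euler characteristic $0$, not $-1$, and ``recording the empty fiber conventionally as $-1$'' would destroy everything downstream: the cancellation $-\chi+\chi+\chi=\chi$ in the proof of Proposition~\ref{prop:relsum1}, as well as Lemma~\ref{lem:chiclosefiber} and Proposition~\ref{prop:chiclosestratum}, all depend on the $e=-1$ fibers being genuinely non-empty with $\chi=-1$.

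Correspondingly, your description of the $e=1$ strata as ``the generic stratum where no extra generator is needed'' is also off. When $\overline S$ is not maximal, the two $e=1$ indices $(1,m,\emptyset)$ and $(\emptyset,1,m)$ correspond to the special loci $M_{c_2}^0$ and $M_{c_1}^0$, i.e.\ the images $\phi_i(V_{0,k(n-2),c_i})$ where the generated ideal misses $P_{m+1,c_{3-i}}$ entirely; these are affine spaces, whence $\chi=1$. When $\overline S$ is maximal, the single $e=1$ index is $(\emptyset,\emptyset)$ and the stratum is the affine intersection $\bigl((M_{c_1}+\overline U)\cap(M_{c_2}+\overline U)\bigr)/\overline U$ — a case your Euler-characteristic discussion does not address at all. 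To repair the argument you need to redo the identification of the three classes $I_0$, $I_1$, $I_{-1}$ with the pieces of the decomposition of Corollary~\ref{cor:mcstrat}, treating the maximal and non-maximal cases for $\overline S$ separately, as is done in Lemma~\ref{lem:fiberstrat}.
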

The space $Z_{Y}(k_1,l_1,k_2,l_2)$ should be thought as the space of those ideals, where the generator in the cell of support block at position $(0,m)$ has an image on the $(m+1)$-st diagonal at the cells $V_{k_i,l_i,c_i}$ for $i=1,2$ which does not come from the rows above. We remark that the mentioned support block in the bottom row is also salient since it is the first missing block in its row. But if one attaches further rows to the bottom of $Y$, then it may become non-salient.

Recall from \autoref{sec:support} the set $S^{c_i}=S^{b,0} \cup S^{b,c_i} \cup S^{b,c_1 \cup c_2}$ of support blocks for label $c_i$. In the light of the definition of the sets $I(S,S_1,S_2)$ and \autoref{thm:zinfty} we will say that the blocks in $S^{c_i}$ can {\em support} the salient blocks of label $c_i$ in $S_i^s$ in the following sense. 

Each support block $b\in S$ for label $c_i$ can support at most one salient block of label $c_i$ above and to the left of $b$ on the $(m+1)$-st antidiagonal. More precisely, this supporting relationship has to respect the following {\em supporting rules}.
\begin{itemize}
\item Each block in $S^{b,0}$ supports precisely one or two salient blocks, at most one from each label, and at least one of these has to be immediately above it;
\item each block in $S^{b,c_i}$ can support at most one salient block of label $c_i$ which is not immediately above it;
\item each block in $S^{b,c_1 \cup c_2}$ can support none, one or two salient blocks, at most one from each label, and neither of these is immediately above it.
\end{itemize}
In this way we define a correspondence from a subset of $S^{c_1}$ to $S_1^s$ and one from a subset of $S^{c_2}$ to $S_2^s$ but these two have to satisfy the restrictions mentioned above on the intersection $S^{c_1} \cap S^{c_2}=S^{b,0} \cup S^{b,c_1 \cup c_2}$. Neither correspondence has to be surjective or be defined on the whole domain, but, where they are defined, they should be injective.

We shall call a salient block $b'\in S_i$ of label $c_i$ \emph{supported}, if the number of support blocks for label $c_i$ in $S$ which are below $b$ is at least as much as the total number of salient blocks of label $c_i$ in $S_i$ counted from the top left, including $b'$ itself. A supported salient block $b'$ satisfying condition (2) above, so that there is a support block in $S$ immediately below $b'$, will be called \emph{directly supported}. The others will be called \emph{non-directly supported}. The supporting relationship will be globalized for the whole diagram in the notion of closing datum, to be defined in~\ref{sec:dis0gen} below. 

Recall that during the inductive process in the proof of Theorem~\ref{thm:dnorbcells}, at each step a new generator appears in the cell corresponding to the salient block in the bottom row. Assume that for $I \in Z_Y$, $(I \cap P_{m,j}) \cap P_{m+1,c_i} = I \cap P_{m+1,c_i}$ for $i=1,2$. In this case we will say that \emph{there is no generator of label $c_i$ on the $(m+1)$-st antidiagonal}. Let $S$, $S_1$ and $S_2$ be the index sets for $V_{0,m,j}$, $V_{1,m,c_1}$ and $V_{1,m,c_2}$ respectively. Then using inductively Theorem \ref{thm:zinfty} for each block $b \in S^{c_i}$ we get that there is at most one block $b_i \in S_{i}^s$ such that when the row of $b$ is added to the Young wall, the new generator in the cell of $b$ has nontrivial image in the cell of $b_i$. Conversely, for each block $b_i \in S_{i}^s$ there corresponds a support block $b \in S^{c_i}$ determined by $I$. In particular, this implies
\begin{corollary} 
\label{cor:nongen} Assume that for $I \in Z_Y$, $(I \cap P_{m,j}) \cap P_{m+1,c_i} = I \cap P_{m+1,c_i}$ for some $i=1,2$. Let $S$, $S_1$ and $S_2$ be the index sets for $V_{0,m,j}$, $V_{1,m,c_1}$ and $V_{1,m,c_2}$ respectively.
\begin{enumerate}
\item $|S_i^{u,1}| \leq |S^{b,c_i}|+|S^{b,c_1 \cup c_2}|$;
\item every salient block of label $c_i$ is supported;
\item to each salient block of label $c_i$ there corresponds a unique support block for label $c_i$ in the way described above.
\end{enumerate}
\end{corollary}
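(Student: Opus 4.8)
The plan is to obtain all three assertions as consequences of Theorem~\ref{thm:zinfty}, applied inductively along the tower of truncations $Y = Y^{(0)} \supset Y^{(1)} = \overline{Y^{(0)}} \supset Y^{(2)} = \overline{Y^{(1)}} \supset \cdots$ used in the proof of Theorem~\ref{thm:dnorbcells}. The point is a bookkeeping observation: a support block $b \in S^{c_i}\subseteq S$ lies in some row $k\geq 0$ of $Y$, and after peeling off the bottom $k$ rows it becomes the salient block in the bottom row of the truncation $Y^{(k)}$, sitting at a position $(0,m-k)$ on the $(m-k)$-th diagonal with $m-k\equiv 1 \bmod (n-2)$ (peeling a row shifts the diagonal index uniformly). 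Passing from $Y^{(k+1)}$ back to $Y^{(k)}$ re-adds this row, and this is exactly the step to which Theorem~\ref{thm:zinfty} applies, with $m$ there replaced by $m-k$ and with index sets $S',S_1',S_2'$ obtained from $Y^{(k)}$. The hypothesis $(I\cap P_{m,j})\cap P_{m+1,c_i} = I\cap P_{m+1,c_i}$ says precisely that $I$ has no \emph{independent} generator in degree $m+1$, weight $\rho_{c_i}$: every cell $V_{k,l,c_i}$ with $k+l=m+1$ that $I$ meets must acquire its content from the subspaces of $I$ sitting in lower rows.

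\textbf{Key step.} Fix $i$ so that the hypothesis holds and run the induction. At the stage where the row of a support block $b\in S^{c_i}$ is re-added, Theorem~\ref{thm:zinfty} decomposes the relevant stratum of $Z_{Y^{(k)}}$ into pieces $Z_{Y^{(k)}}(k_1,l_1,k_2,l_2)$ indexed by $I(S',S_1',S_2')$; by the very definition of that index set each label contributes at most one pair $(k_i,l_i)$, lying in $(S_i')^s\cup\{\emptyset\}$. Thus the new generator created at $b$ projects nontrivially onto at most one cell of each weight on the next diagonal, and if that cell is nonempty it is the cell of a salient block; by the Lemma of~\ref{sec:support} relating the global Definition~\ref{def:globsalient} to the local conditions, that salient block of $Y^{(k)}$ corresponds to a salient block of $Y$ in $S_i^s$. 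Running this over all support blocks produces a partial assignment $S^{c_i}\dashrightarrow S_i^s$; the supporting rules listed after Theorem~\ref{thm:zinfty} are exactly the translation of the subsets $I(S',S_1',S_2')_e$ into which cells a given type of support block is allowed to reach (and with what Euler characteristic, via the $e$-statement of Theorem~\ref{thm:zinfty}), so the assignment respects those rules and in particular sends each support block to at most one salient block of each label.

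\textbf{Conclusion.} Because $I$ has no independent generator in degree $m+1$, weight $\rho_{c_i}$, each salient block $b'\in S_i^s$ — i.e.\ each cell $V_{k,l,c_i}$, $k+l=m+1$, that $I$ meets — must receive its content from below; chasing the induction downward, this content is the image of the new generator at some support block of label $c_i$ lying below $b'$, and that support block is uniquely determined by $I$ (the stratum $Z_{Y^{(k)}}(\dots)$ containing the restriction of $I$ pins it down). This gives assertion (2) and, together with the fact that a support block reaches at most one salient block of label $c_i$, the uniqueness/injectivity in (3). For (1): a salient block in $S_i^{s,1}$ is by definition not immediately above any block of $S$, so it cannot be the ``immediately above'' block that an element of $S^{b,0}$ is required to support; hence it can only be supported by a block of $S^{b,c_i}$ or of $S^{b,c_1\cup c_2}$, and injectivity of the assignment restricted to these support blocks yields $|S_i^{s,1}|\leq |S^{b,c_i}|+|S^{b,c_1\cup c_2}|$, which is the inequality in (1).

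\textbf{Main obstacle.} The delicate point is the multi-step bookkeeping: one must check that an image produced at one truncation level genuinely persists — rather than being absorbed into a later generator — as further rows are re-added, so that the local data of Theorem~\ref{thm:zinfty} at the various levels really do glue into a single global supporting relation; and one must handle the two labels $c_1,c_2$ simultaneously at the shared support blocks $S^{b,0}$ and $S^{b,c_1\cup c_2}$, where the two partial assignments interact and the ``at least one immediately above'' clause for $S^{b,0}$ must be reconciled with the label-$c_i$ count. Making this gluing precise — essentially an inductive strengthening of the statement that carries the partial supporting relation for all diagonals below $m+1$ — is where the real work lies; the individual reaches are already controlled by Theorem~\ref{thm:zinfty} and Proposition~\ref{prop:1}.
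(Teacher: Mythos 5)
Your strategy---peel off rows, apply Theorem~\ref{thm:zinfty} at each level, and read off from the index sets $I(S',S'_1,S'_2)$ that each support block reaches at most one salient cell per label while, in the absence of an independent generator in degree $m+1$ and weight $\rho_{c_i}$, every salient cell of label $c_i$ must be reached from below---is exactly the argument the paper intends; the paper compresses it into the single sentence preceding the corollary (``using inductively Theorem~\ref{thm:zinfty}\dots''). Your treatment of parts (2) and (3) follows that route and is fine.

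The justification of (1), however, rests on an invalid inference. From ``a salient block in $S_i^{s,1}$ cannot be the block immediately above an element of $S^{b,0}$'' you conclude ``hence it can only be supported by a block of $S^{b,c_i}$ or $S^{b,c_1\cup c_2}$.'' This does not follow: the supporting rule for $S^{b,0}$ only demands that \emph{at least one} of the (up to two) salient blocks it supports be immediately above it, and correspondingly the stratum $I(S',S'_1,S'_2)_0$ of Theorem~\ref{thm:zinfty} allows a support block in $S^{b,0}$ (for which $\overline{S'}$ is non-maximal, since the up-left block is present) to reach a non-adjacent cell of label $c_i$ provided the label-$c_{3-i}$ component equals $(1,m')$. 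So an element of $S_i^{s,1}$ may perfectly well be supported by a block of $S^{b,0}$, and your count is not airtight. The inequality is still a consequence of (3), but via a different count: the maps ``take the block immediately below'' and ``take the label-$c_i$ half-block immediately above'' are mutually inverse bijections between $S_i^{s,2}$ and $S^{b,0}$ (check the defining conditions on both sides), so $|S_i^{s,1}|=|S_i^{s}|-|S^{b,0}|$, while the injectivity of the label-$c_i$ supporting assignment gives $|S_i^{s}|\leq |S^{c_i}|=|S^{b,0}|+|S^{b,c_i}|+|S^{b,c_1\cup c_2}|$; subtracting yields (1). (Here $S_i^{u,1}$ in the statement is to be read as $S_i^{s,1}$, as you assumed.)
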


\section{Special loci in Grassmannians}
\label{sect:specloci}

We prepare the ground for the proof of Theorem \ref{thm:zinfty} by analyzing the incidence varieties of~\ref{sec:incvargr} in the case $m \equiv 1\; \textrm{mod}\; n-2$. Once again, we use the notations of Proposition \ref{prop:incvar3}. The composition with the projection from $V_{S} \times V_{S_1,c_1} \times V_{S_2,c_2}$ to its first factor, followed by the affine linear fibration $\omega: V_S \to V_{\overline{S}}$, defines a projection map $p_{V_{\overline{S}}}: V_{S} \times V_{S_1,c_1} \times V_{S_2,c_2}\to V_{\overline{S}}$. 

 For $i=1,2$ let $S_i(\overline{U})=\{ (k_i,l_i) \in B_{m+1,c_i}\; :\; (\overline{U}) \cap V_{k_i,l_i,c_i}=\emptyset\}$ be the blocks in the partial profile of $(\overline{U}) \cap P_{m+1,c_i}$ on the $(m+1)$-st diagonal. Then the index sets $I(S,S_1(\overline{U}),S_2(\overline{U}))$ and $I(S,S_1(\overline{U}),S_2(\overline{U}))_e$ introduced above make sense. The following lemma stratifies the fibers of the affine linear fibration $\omega: V_S \to V_{\overline{S}}$.
 
\begin{lemma}
\label{lem:fiberstrat}
For any $\overline{U} \in V_{\overline{S}}$, there is a stratification
\[ V_{0,m,j}/\overline{U}= \bigsqcup_{(k_1,l_1,k_2,l_2) \in I(S,S_1(\overline{U}),S_2(\overline{U}))} V_{0,m,j}(k_1,l_1,k_2,l_2)/\overline{U},\]
where
\[V_{0,m,j}(k_1,l_1,k_2,l_2)/\overline{U}=\{ U \in V_{0,m,j}/\overline{U}\;:\; ((U) \setminus (\overline{U})) \cap P_{m+1,c_i} \dashv V_{k_i,l_i,c_i} \textrm{ for } i=1,2\}. \]
Moreover, if $(k_1,l_1,k_2,l_2) \in I(S,S_1(\overline{U}),S_2(\overline{U}))_e$, then the space $V_{0,m,j}(k_1,l_1,k_2,l_2)/\overline{U}$ is of Euler characterestic $e$.
\end{lemma}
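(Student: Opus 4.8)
\textbf{Proof plan for Lemma~\ref{lem:fiberstrat}.}

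The plan is to obtain this statement as a ``relative'' version of Corollary~\ref{cor:mcstrat}, carried out fibrewise over a fixed $\overline U\in V_{\overline S}$, combined with the join geometry developed in~\ref{sec:joins} and~\ref{sec:auxiliary} (notably Lemmas~\ref{lem:imgdescr}, \ref{lem:mctrivbundle}, and the Euler-characteristic vanishing in Lemma~\ref{lem:affjoinchar}). First I would recall that $V_S|_{\overline U}=V_{0,m,j}/\overline U$ is an affine space, the quotient of $V_{0,m,j}$ by a representative affine subspace of $\overline U$; by Proposition~\ref{prop:1}(2)--(3) (according to the residue of $m$, but in the present case $m\equiv 1\bmod n-2$, so it is the version used in Case~\ref{sec:incvargr}.3), for a point $U\in V_{0,m,j}/\overline U$ the invariant subspace $((U)\setminus(\overline U))\cap P_{m+1,c_i}$ is well-defined, projective, and meets the cells of $P_{m+1,c_i}$ in a way recorded by a single ``deepest new cell'' $V_{k_i,l_i,c_i}$ — this is what the notation $\dashv$ encodes. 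Thus the assignment $U\mapsto(k_1,l_1,k_2,l_2)$ is well-defined on $V_{0,m,j}/\overline U$ and the decomposition in the statement is simply the fibre decomposition of this map; the content is (a) that the index set of values is exactly $I(S,S_1(\overline U),S_2(\overline U))$, (b) that each piece is locally closed, and (c) the Euler-characteristic count $\chi=e$ on the pieces indexed by $I(\cdots)_e$.

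For (a) and (b): I would use Lemma~\ref{lem:imgdescr} to view $P_{m,j}$ (hence, after quotienting, $V_{0,m,j}/\overline U$) as a join of the two linear subspaces $N^0_{c_1}$, $N^0_{c_2}$, where $N^0_{c_i}=\mathrm{im}(\phi_i)$ and $\phi_i$ is the embedding $P_{m+1,c_{3-i}}\hookrightarrow P_{m,j}$ of that lemma. The condition ``$((U)\setminus(\overline U))\cap P_{m+1,c_i}\dashv V_{k_i,l_i,c_i}$'' translates, via $\phi_i^{-1}\circ L_1^{-1}$ and the fixed profile $S_i(\overline U)$ of $(\overline U)\cap P_{m+1,c_i}$, into a condition on the relative position of the line through $U$ in the join with respect to the induced Schubert stratification of $N^0_{c_i}$ by cells $V_{k',l',c_i}$; this is exactly the fibrewise analogue of Corollary~\ref{cor:mcstrat}(2), with $\overline U$ replacing the ``points at infinity''. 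The allowed quadruples are precisely those for which $(k_i,l_i)$ is either $\emptyset$ (no new intersection, i.e.\ $U$ lies in $M_{c_{3-i}}\setminus M^0_{c_{3-i}}$ in the relative picture) or lies in the salient set $S_i^s$ (by the Young-wall rules / the combinatorial characterization of salient blocks in Lemma immediately following Def.~\ref{def:globsalient}), together with the maximality/non-maximality dichotomy on $\overline S$ governing whether $(1,m)$ can occur — this is exactly the case split in the definition of $I(S,S_1,S_2)$. Local closedness follows since each stratum is cut out by open conditions (the deepest new cell is $V_{k_i,l_i,c_i}$, not deeper) inside a closed condition (it is not shallower), all expressed through the polynomial/incidence conditions of Proposition~\ref{prop:1}, which are algebraic.

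For (c), the Euler-characteristic computation: on the piece indexed by a quadruple in $I(\cdots)_0$, both new intersection data are ``generic'' in the relevant join coordinates, and the stratum is an affine space, so $\chi=1$; on $I(\cdots)_1$ (the $(1,m,\emptyset)$, $(\emptyset,1,m)$ or $(\emptyset,\emptyset)$ cases), the stratum is again an affine space or a whole cell of the form $V_{0,m,j}/\overline U$ restricted suitably, giving $\chi=1$; the only nontrivial case is $I(\cdots)_{-1}=\{(1,m,1,m)\}$, the ``generic'' locus $V_{0,m,j}/\overline U$ minus the union of the two special join-subloci $M_{c_1},M_{c_2}$, where I would invoke Lemma~\ref{lem:affjoinchar}: each $M_{c_i}\setminus M^0_{c_i}$ is a fibration over an affine space with fibre $\mathrm{Cone}(W)\setminus\{\text{vertex}\}\cong \SC^\ast\times W$, hence Euler characteristic zero, and an inclusion–exclusion on $\chi$ (which is motivic) over the three pieces $V_{0,m,j}/\overline U=V_{0,m,j}(1,m,1,m)/\overline U\ \sqcup\ M_{c_1}\text{-part}\ \sqcup\ M_{c_2}\text{-part}$ forces $\chi(V_{0,m,j}(1,m,1,m)/\overline U)=1-1-1=-1=e$. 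The main obstacle I anticipate is purely bookkeeping: matching the four combinatorial subtypes $S^{b,0},S^{b,c_i},S^{b,c_1\cup c_2},S^l$ of support blocks against the three (or two) strata $I(\cdots)_e$ uniformly in the parity of $n$ and in the residue of $m$, and making sure the $\dashv$ condition is stable in families of $U$ so that the strata really are locally closed and the fibration structure of Lemma~\ref{lem:affjoinchar} applies with the correct fibre — but once the join/quotient dictionary of Lemmas~\ref{lem:imgdescr}--\ref{lem:mctrivbundle} and Corollary~\ref{cor:mcstrat} is set up fibrewise over $\overline U$, no new geometric input beyond Proposition~\ref{prop:1} and Lemma~\ref{lem:affjoinchar} is needed.
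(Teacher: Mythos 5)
Your overall strategy is the same as the paper's: stratify $V_{0,m,j}/\overline U$ fibrewise using the join structure of Lemma~\ref{lem:imgdescr} and Corollary~\ref{cor:mcstrat}, match the occurring quadruples against $I(S,S_1(\overline U),S_2(\overline U))$ (with the maximal/non-maximal dichotomy on $\overline S$), and compute Euler characteristics via Lemma~\ref{lem:affjoinchar}. The identification of the strata and their local closedness is fine, modulo the small omitted argument (present in the paper, via Lemma~\ref{lem:intdim}) that quadruples with some $(k_i,l_i)\in B_{m+1,c_i}\setminus S_i(\overline U)$ give empty strata.

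However, your Euler characteristic count for the $I(\cdots)_0$ strata is wrong, and this is precisely the content being asserted. The lemma says a stratum indexed by $I(\cdots)_e$ has $\chi=e$, so the $I_0$ strata must have $\chi=0$; you claim they are affine spaces with $\chi=1$. In fact a quadruple lies in $I_0$ exactly when at least one coordinate is a genuine deeper cell $(k_i,l_i)\notin\{\emptyset,(1,m)\}$, and the corresponding stratum is of the form $\bigl(J(V_{0,k(n-2),c_i},V_{k_i,l_i,c_{3-i}})\cap V_{0,m,j}\bigr)\setminus M^0_{c_i}$ (translated by $\overline U$ and quotiented): this is exactly the punctured-cone locus of Lemma~\ref{lem:affjoinchar}, a fibration with a $\SC^\ast$ factor in the fibre, hence $\chi=0$ — not an affine space. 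Your write-up is also internally inconsistent on this point: you correctly assert that $M_{c_i}\setminus M^0_{c_i}$ has Euler characteristic zero, yet that set is precisely the union of the $I_0$ strata (with the other coordinate equal to $(1,m)$), so they cannot each have $\chi=1$. Your inclusion–exclusion $1-1-1=-1$ for the $(1,m,1,m)$ stratum silently uses $\chi(M_{c_i}/\overline U)=\chi(M^0_{c_i}/\overline U)+\sum(\text{$I_0$ pieces})=1+0$, i.e.\ it already presupposes the correct value $\chi=0$ on the $I_0$ pieces. Finally, in the case where $\overline S$ is maximal (where $(\emptyset,\emptyset)$ is the unique $I_1$ element and $(1,m)$ does not occur), the strata are common refinements of two join stratifications, and the vanishing of $\chi$ on the $I_0$ pieces there needs a separate (if routine) argument; you do not address this case at all in the Euler characteristic step.
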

\begin{proof}
We have to distinguish the cases when $\overline{S}$ is maximal or not. The latter case is significantly simpler, so we start with that.

If $\overline{U} \in V_{\overline{S}}$ with $\overline{S}$ not maximal, then $(M_{c_1}+\overline{U}) \cap (M_{c_2}+\overline{U})=\emptyset$ since $M_{c_1}$ and $M_{c_2}$ are distinct parallel hyperplanes in $V_{0,m,j}$, and there are affine subspaces $U_i$ representing $\overline{U}$ such that $U_i \subseteq M_{c_i}$. Recall from Corollary \ref{cor:mcstrat} the stratification of $V_{0,m,j}$ which basically comes from the join structure on its closure $P_{m,j}$. This induces a decomposition of $V_{0,m,j}/\overline{U} $ into non-empty, locally closed, but not necessarily disjoint spaces
\[
\begin{aligned}
V_{0,m,j}/\overline{U} 
= & ((V_{0,m,j}\setminus(M_{c_1} \cap M_{c_2}))/\overline{U}) \cup (M_{c_1}/\overline{U}) \cup (M_{c_1}/\overline{U}) \\
 & \bigcup \left(\bigcup_{(k_1,l_1) \in B_{m+1,c_1}}  ((J(V_{0,m-1,c_1},V_{k_1,l_1,c_{2}}) \cap V_{0,m,j})\setminus M_{c_1}^0+\overline{U})/\overline{U}\right)\\
 & \bigcup \left(\bigcup_{(k_2,l_2) \in B_{m+1,c_1}} ((J(V_{0,m-1,c_2},V_{k_2,l_2,c_{1}}) \cap V_{0,m,j})\setminus M_{c_2}^0+\overline{U})/\overline{U}\right).
\end{aligned}
\]
Consider a block $(k_i,l_i) \in B_{m+1,c_i} \setminus S_i(\overline{U})$. Then the intersection $(\overline{U}) \cap V_{k_i,l_i,c_i}\neq\emptyset$. Assume that there is an $U \in V_{0,m,j}/\overline{U} $ such that $((U)\setminus (\overline{U})) \cap  V_{k_i,l_i,c_i} \neq \emptyset$. Then $\mathrm{dim}((U) \cap V_{k_i,l_i,c_i})>\mathrm{dim}((\overline{U}) \cap V_{k_i,l_i,c_i})$ so by Lemma \ref{lem:intdim} there is at least one other block in a row above $k_i$ which has a trivial intersection with $(\overline{U})$ but a nontrivial one with $(U)$. Hence, for any  $(k_i,l_i) \in B_{m+1,c_i} \setminus S_i(\overline{U})$ we have
\[\{ U \in V_{0,m,j}/\overline{U}\;:\; ((U) \setminus (\overline{U})) \cap P_{m+1,c_i} \dashv V_{k_i,l_i,c_i}\}=\emptyset.\]

On the other hand, if $(k_i,l_i) \in S_i(\overline{U}) \cup \{\emptyset\}$ then
\begin{gather*}
 ((J(V_{0,m-1,c_i},V_{k_i,l_i,c_{3-i}}) \cap V_{0,m,j})\setminus M_{c_i}^0 +\overline{U})/\overline{U} = \\
\{ U \in V_{0,m,j}/\overline{U}\;:\; ((U) \setminus (\overline{U})) \cap P_{m+1,c_i} \dashv V_{k_i,l_i,c_i} \textrm{ and } ((U) \setminus (\overline{U})) \cap P_{m+1,c_{3-i}} \dashv V_{1,m,c_{3-i}}\}.
\end{gather*}
By dimension constrains these spaces are disjoint and it is easy to see that together with $(V_{0,m,j}\setminus(M_{c_1} \cup M_{c_2}))/\overline{U}$,$ M^0_{c_1}/\overline{U}$, and $M^0_{c_2}/\overline{U}$ they cover $V_{0,m,j}/\overline{U}$. Thus we get a stratification
\[
\begin{aligned}
V_{0,m,j}/\overline{U} 
= & V_{0,m,j}(1,m,1,m)/\overline{U}\\
 & \bigsqcup \left(\bigsqcup_{(k_1,l_1) \in (S_1(\overline{U})\cup\{\emptyset\})\setminus \{(1,m)\}} V_{0,m,j}(k_1,l_1,1,m)/\overline{U}\right)\\
 & \bigsqcup\left(\bigsqcup_{(k_2,l_2) \in (S_2(\overline{U})\cup\{\emptyset\})\setminus \{(1,m)\}} V_{0,m,j}(1,m,k_2,l_2)/\overline{U}\right).
\end{aligned}
\]
In particular, there is a stratification 
\begin{equation} \label{eq:mcqstrat} M_{c_{3-i}}/\overline{U}=\bigsqcup_{(k_i,l_i) \in (S_i(\overline{U})\cup\{\emptyset\})\setminus \{(1,m)\}} V_{0,m,j}(k_i,l_i,1,m)/\overline{U}.\end{equation}
Being an affine space, $M^0_{c_i}/\overline{U}$ has Euler characteristic 1 for $i=1,2$. By Lemma \ref{lem:affjoinchar} the spaces $(J(V_{0,k(n-2),c_i},V_{k_i,l_i,c_{3-i}}) \cap V_{0,m,j})\setminus M_{c_i}^0$ have Euler characteristic 0, and the same is true for $((J(V_{0,k(n-2),c_i},V_{k_i,l_i,c_{3-i}}) \cap V_{0,m,j})\setminus M_{c_i}^0+\overline{U})/\overline{U}$. This last step follows from the fact that the subspace $\overline{U} \subset \overline{P}_{m,j}$ avoids both the image of $V_{k_i,l_i,c_{3-i}}$ and $M_{c_i}^0$.

If $\overline{U} \in V_{\overline{S}}$ such that $\overline{S}$ is maximal, then $(M_{c_1}+\overline{U})=(M_{c_2}+\overline{U})=V_{0,m,j}$ since $\overline{U}$ is transversal to $M_{c_1}$ and $M_{c_2}$. Therefore, there are two stratifications for $V_{0,m,j}/\overline{U}$ with $i=1,2$ as in (\ref{eq:mcqstrat}). The claimed stratification is the largest common refinement of these two. In particular, there are three types of strata. First, if 
$U \in ((J(V_{0,k(n-2),c_1},V_{k_1,l_1,c_{2}}) \cap V_{0,m,j})\setminus M_{c_1}^0+\overline{U}) \cap  ((J(V_{0,k(n-2),c_2},V_{k_2,l_2,c_{1}}) \cap V_{0,m,j})\setminus M_{c_2}^0+\overline{U}) /\overline{U}$
for arbitrary $(k_1,l_1) \in S_1(\overline{U})$ and $(k_2,l_2) \in S_2(\overline{U})$, then
$((U) \setminus (\overline{U})) \cap P_{m+1,c_i} \dashv V_{k_i,l_i,c_i}$ for $i=1,2$.
Second, if
$U \in  (((J(V_{0,k(n-2),c_1},V_{k_i,l_i,c_{3-i}}) \cap V_{0,m,j})+\overline{U})  \setminus \bigcup_{(k_{3-i},l_{3-i}) \in  S_{3-i}(\overline{U})}  (J(V_{0,k(n-2),c_2},V_{k_{3-i},l_{3-i},c_{i}}) \cap V_{0,m,j})+\overline{U}) /\overline{U}$,
then $((U) \setminus (\overline{U})) \cap P_{m+1,c_i} \dashv V_{k_i,l_i,c_i}$ but $((U) \setminus (\overline{U})) \cap P_{m+1,c_{3-i}} = \emptyset$.
Third, if
$U \in ((M_{c_1}^0+\overline{U}) \cap (M_{c_2}^0+\overline{U})) /\overline{U}$,
then  $((U) \setminus (\overline{U})) \cap P_{m+1,c_{i}} = \emptyset$ for $i=1,2$.
To sum it up, we get a stratification into locally closed spaces
\[
V_{0,m,j}/\overline{U} = \bigsqcup_{\substack{(k_1,l_1) \in S_1(\overline{U})\cup\{\emptyset\} \\ (k_2,l_2) \in S_2(\overline{U})\cup\{\emptyset\}}} V_{0,m,j}(k_1,l_1,k_2,l_2)/\overline{U}.
\]
The Euler characteristic of the stratum $V_{0,m,j}(\emptyset,\emptyset)/\overline{U}=((M_{c_1} + \overline{U}) \cap (M_{c_2}+\overline{U}))/\overline{U}$ is 1. It is left to the reader that the others have Euler characteristic 0.
\end{proof}

Let $\mathcal{I}_S=\{ (S_1(\overline{U}),S_2(\overline{U}))\; :\; \overline{U} \in V_{\overline{S}}\}$. Actually $\mathcal{I}_S$ only depends on $\overline{S}$. For each $(S'_1,S'_2) \in \mathcal{I}_S$, let 
\[V_{\overline{S}}(S'_1,S'_2)=\{\overline{U} \in V_{\overline{S}} \;:\;  (S_1(\overline{U}),S_2(\overline{U}))=(S'_1,S'_2)\}.\]

\begin{corollary} 
For a fixed $(S'_1,S'_2) \in \mathcal{I}(S)$ and $(k_1,l_1,k_2,l_2) \in I(S,S'_1,S'_2)$ the spaces $V_{0,m,j}(k_1,l_1,k_2,l_2)/\overline{U}$ are isomorphic for every $\overline{U} \in V_{\overline{S}}(S'_1,S'_2)$. Moreover, they fit together into a locally closed subvariety $V_S(k_1,l_1,k_2,l_2) \subseteq V_S$ which is a trivial family over $V_{\overline{S}}(S'_1,S'_2)$. Using induction and the fact that the fiber product of locally closed spaces is locally closed, we get that there is a stratification 
\[V_{S}=\bigsqcup_{(S'_1,S'_2) \in \mathcal{I}_S} \left(\bigsqcup_{(k_1,l_1,k_2,l_2) \in I(S,S'_1,S'_2)} V_S(k_1,l_1,k_2,l_2)\right) \]
into locally closed subvarieties.
Furthermore, if $(k_1,l_1,k_2,l_2) \in I(S,S'_1,S'_2)_e$, then the fiber of $\omega \colon V_S(k_1,l_1,k_2,l_2) \to V_{\overline{S}}$ has Euler characteristic $e$.
\end{corollary}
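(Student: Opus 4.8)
The plan is to repackage the fibrewise information of Lemma~\ref{lem:fiberstrat} into a statement about families over the constant-combinatorial-type loci $V_{\overline{S}}(S'_1,S'_2)$, and then assemble the pieces into a global stratification of $V_S$. Recall from the discussion preceding Lemma~\ref{lem:affjoinchar} that $\omega\colon V_S\to V_{\overline{S}}$ is a trivial affine fibration with fibre over $\overline{U}$ identified with $V_{0,m,j}/\overline{U}$. First I would check that $V_{\overline{S}}(S'_1,S'_2)$ is a locally closed subvariety of $V_{\overline{S}}$. The operators $L_j$ act linearly and fibrewise, so over the (locally closed) locus in $V_{\overline{S}}$ on which $\dim\bigl((\overline{U})\cap P_{m+1,c_i}\bigr)$ is constant one obtains an honest morphism $\overline{U}\mapsto\bigl((\overline{U})\cap P_{m+1,c_1},\,(\overline{U})\cap P_{m+1,c_2}\bigr)$ to a product of Grassmannians, exactly as in the construction of Lemma~\ref{lemma:grass-cells-morphism}; pulling back the product of the Schubert stratifications of these Grassmannians and intersecting with the constant-dimension loci records precisely the profiles $S_1(\overline{U}),S_2(\overline{U})$, and since there are only finitely many possibilities this produces a finite locally closed stratification $V_{\overline{S}}=\bigsqcup_{(S'_1,S'_2)\in\mathcal{I}_S}V_{\overline{S}}(S'_1,S'_2)$, each stratum nonempty by the definition of $\mathcal{I}_S$.

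Next I would show that the fibrewise strata $V_{0,m,j}(k_1,l_1,k_2,l_2)/\overline{U}$ glue, over $V_{\overline{S}}(S'_1,S'_2)$, into a locally closed $V_S(k_1,l_1,k_2,l_2)\subseteq V_S$ which is a trivial family. The idea is to run the proof of Lemma~\ref{lem:fiberstrat} in families: all the ingredients — the cells $V_{k,l,c_i}$, the embeddings $\phi_i$ and the operator $L_1$, the joins $J(V_{0,m-1,c_i},V_{k_i,l_i,c_{3-i}})\cap V_{0,m,j}$ from Corollary~\ref{cor:mcstrat}, and the hyperplanes $M_{c_i}$ — make sense over a base, the joins by the base-change Lemma~\ref{lem:joinbasechange}, and are trivial because tautological bundles over Schubert cells are trivial. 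Over $V_{\overline{S}}(S'_1,S'_2)$ the profiles $S_i(\overline{U})$ are constant, so the relevant families $\mathcal{F}$ of join-and-cell loci are trivial with affine-space fibres, and quotienting fibrewise by the tautological family $\overline{\mathcal{F}}$ of the subspaces $\overline{U}$ — which, as in the last step of the proof of Lemma~\ref{lem:fiberstrat}, avoids $M^0_{c_i}$ and the images of the $V_{k_i,l_i,c_{3-i}}$ — once more gives a trivial family, whose fibre is the space $V_{0,m,j}(k_1,l_1,k_2,l_2)/\overline{U}$ of that lemma. Moreover the condition $((U)\setminus(\overline{U}))\cap P_{m+1,c_i}\dashv V_{k_i,l_i,c_i}$ is a conjunction of equalities and strict inequalities among upper-semicontinuous intersection dimensions, hence cuts out a locally closed subset; this realizes $V_S(k_1,l_1,k_2,l_2)$ as $V_{\overline{S}}(S'_1,S'_2)\times_{V_{\overline{S}}}V_S$ intersected with that locus, and the trivial families just constructed exhibit the required triviality of $\omega$ restricted to it.

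Then the assembly is formal. Over each $V_{\overline{S}}(S'_1,S'_2)$ the map $\omega$ restricts $V_S$ to the finite decomposition $\bigsqcup_{(k_1,l_1,k_2,l_2)\in I(S,S'_1,S'_2)}V_S(k_1,l_1,k_2,l_2)$ by the fibrewise statement of Lemma~\ref{lem:fiberstrat} together with the triviality just proved; since a locally closed subset of a locally closed subset is again locally closed, a finite disjoint union of locally closed subsets is locally closed, and $\mathcal{I}_S$ is finite, taking the union over all $(S'_1,S'_2)\in\mathcal{I}_S$ yields the claimed finite locally closed stratification of $V_S$. The parenthetical remark about induction and fibre products of locally closed spaces is precisely the device that lets one iterate this construction down the tower of $\omega$-maps (through the successive truncations) whenever one needs the stratification of $V_{\overline{S}}$ itself to be built in the same way. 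Finally the Euler characteristic claim is immediate: for $(k_1,l_1,k_2,l_2)\in I(S,S'_1,S'_2)_e$, Lemma~\ref{lem:fiberstrat} says that the fibre $V_{0,m,j}(k_1,l_1,k_2,l_2)/\overline{U}$ of $\omega\colon V_S(k_1,l_1,k_2,l_2)\to V_{\overline{S}}$ has Euler characteristic $e$ for every $\overline{U}$.

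I expect the main obstacle to be not any single deep computation — Lemma~\ref{lem:fiberstrat} has already carried out the genuinely geometric work, namely the join structure and the fibrewise Euler characteristics — but rather the care needed in Step~2 to make the fibrewise constructions genuinely algebraic and trivial \emph{in families} over $V_{\overline{S}}(S'_1,S'_2)$, and to confirm that the set-theoretic unions of fibre strata are true locally closed subvarieties rather than merely constructible sets. Writing down precise family versions of the joins, of the maps $\phi_i$, and of the quotient by $\overline{\mathcal{F}}$, and verifying that on the constant-profile locus no dimension jump spoils the triviality, is where the bulk of the (essentially routine) work will lie.
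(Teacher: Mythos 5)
Your proposal is correct and follows essentially the same route as the paper: the paper's own (very terse) proof likewise derives the triviality of $V_S(k_1,l_1,k_2,l_2)\to V_{\overline{S}}(S'_1,S'_2)$ from Lemma~\ref{lem:joinbasechange} together with the fact that the fibrewise strata of Lemma~\ref{lem:fiberstrat} are built from unions, intersections and differences of joins in $P_{m,j}$, and declares the remaining local-closedness and Euler-characteristic claims obvious. Your write-up simply fills in the routine verifications the paper omits.
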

\begin{proof}
The triviality of the family $V_S(k_1,l_1,k_2,l_2) \to V_{\overline{S}}(S'_1,S'_2)$ follows from Lemma \ref{lem:joinbasechange} and the fact that $V_{0,m,j}(k_1,l_1,k_2,l_2)/\overline{U}$ is constructed using (union, intersection and difference of) joins in $P_{m,j}$. The rest of the statement is obvious.
\end{proof}

\section{Proof of Theorem \ref{thm:zinfty}}\label{proofofethrm}

As before, we fix $S$, $S_1$ and $S_2$. Recall that the fiber of the morphism $\omega \times \mathrm{Id} \times \mathrm{Id}\colon X_S^{S_1,S_2} \to Y_{\overline{S}}^{S_1,S_2}$ over an element $(\overline{U},U_1,U_2) \in Y_{\overline{S}}^{S_1,S_2}$ is $J(L_1^{-1}U_1,L_1^{-1}U_2)/{\overline{U}}$.

For $i=1,2$ let $S_i(\overline{U})=\{ (k_i,l_i) \in S_i\; :\; (\overline{U}) \cap V_{k_i,l_i,c_i}=\emptyset\}$ be the blocks in the partial profile of $(\overline{U}) \cap P_{m+1,c_i}$ on the $(m+1)$-st diagonal. Then the index sets $I(S,S_1(\overline{U}),S_2(\overline{U}))$ and $I(S,S_1(\overline{U}),S_2(\overline{U}))_e$ are defined. The following lemma, whose proof is the same as that of Lemma \ref{lem:fiberstrat}, stratifies the fibers of the affine linear fibration $\omega \times \mathrm{Id} \times \mathrm{Id}\colon X_S^{S_1,S_2} \to Y_{\overline{S}}^{S_1,S_2}$.

\begin{lemma}
For any $U_1 \in V_{S_1,c_1}, U_2 \in V_{S_2,c_2}$ the stratification of Lemma \ref{lem:fiberstrat} restricts to a stratification
\[ J(L_1^{-1}U_1,L_1^{-1}U_2)/{\overline{U}}=\bigsqcup_{(k_1,l_1,k_2,l_2) \in I(S,S_1(\overline{U}),S_2(\overline{U}))}  J(L_1^{-1}U_1,L_1^{-1}U_2)(k_1,l_1,k_2,l_2)/\overline{U},\]
where
\begin{gather*} J(L_1^{-1}U_1,L_1^{-1}U_2)(k_1,l_1,k_2,l_2)/\overline{U}=\\ \{ U \in  J(L_1^{-1}U_1,L_1^{-1}U_2)/\overline{U}\;:\; ((U) \setminus (\overline{U})) \cap P_{m+1,c_i} \dashv V_{k_i,l_i,c_i} \textrm{ for } i=1,2\}. \end{gather*}
Moreover, if $(k_1,l_1,k_2,l_2) \in I(S,S_1(\overline{U}),S_2(\overline{U}))_e$, then the space $ J(L_1^{-1}U_1,L_1^{-1}U_2)(k_1,l_1,k_2,l_2)/\overline{U}$ is of Euler characterestic $e$.
\end{lemma}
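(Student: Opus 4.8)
The plan is to transcribe the proof of Lemma~\ref{lem:fiberstrat} almost verbatim, replacing the ambient affine space $V_{0,m,j}$ — whose projective closure $P_{m,j}$ is, by Lemma~\ref{lem:imgdescr}, the join of the disjoint projective subspaces $N_{c_1}^0$ and $N_{c_2}^0$ — throughout by the closed subvariety $J(L_1^{-1}U_1,L_1^{-1}U_2)\cap V_{0,m,j}$. The point is that $J(L_1^{-1}U_1,L_1^{-1}U_2)$ is again the join of two \emph{disjoint linear} subspaces, namely $\phi_1(L_1^{-1}U_1)\subseteq N_{c_1}^0$ and $\phi_2(L_1^{-1}U_2)\subseteq N_{c_2}^0$, so Lemmas~\ref{lemma_linear_join},~\ref{lemma_linear_proj},~\ref{lem:affjoinchar}, the join-decomposition of Corollary~\ref{cor:mcstrat}, and the dimension bookkeeping of Lemma~\ref{lem:intdim} all survive the substitution. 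Before starting I would record that we work over a point $(\overline U,U_1,U_2)\in Y_{\overline S}^{S_1,S_2}$, so Lemma~\ref{lem:incfiber}(b) gives $\overline U\subseteq J(L_1^{-1}U_1,L_1^{-1}U_2)\cap V_{0,m,j}$; hence the quotient $J(L_1^{-1}U_1,L_1^{-1}U_2)/\overline U$ is well defined and is an affine subspace of $V_{0,m,j}/\overline U$. Also $S_i(\overline U)\subseteq S_i$ records exactly which cells on the $(m+1)$-st antidiagonal are missing from the partial profile of $(\overline U)\cap P_{m+1,c_i}$, so $I(S,S_1(\overline U),S_2(\overline U))$ and its pieces $I(S,S_1(\overline U),S_2(\overline U))_e$ are defined.

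The key steps, in order, are as follows. First, split into the two cases according as $\overline S$ is maximal or not, exactly as in Lemma~\ref{lem:fiberstrat}. Second, apply the join-decomposition of $V_{0,m,j}$ from Corollary~\ref{cor:mcstrat} and intersect each piece with $J(L_1^{-1}U_1,L_1^{-1}U_2)$; because $\phi_i(L_1^{-1}U_i)$ is a \emph{linear} subspace of $N_{c_i}^0$, each intersection $J(V_{0,m-1,c_i},V_{k_i,l_i,c_{3-i}})\cap J(L_1^{-1}U_1,L_1^{-1}U_2)$ is again a join of linear subspaces of exactly the same combinatorial type, the only change being that $B_{m+1,c_{3-i}}$ is cut down to $S_{3-i}(\overline U)$. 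Third, rule out the cells $(k_i,l_i)\notin S_i(\overline U)$ by the dimension argument of Lemma~\ref{lem:intdim}: a point $U$ whose ideal strictly enlarges the intersection with $V_{k_i,l_i,c_i}$ beyond that of $(\overline U)$ must miss an extra block in a higher row, contradicting $(k_i,l_i)\notin S_i(\overline U)$; this is what produces precisely the index set $I(S,S_1(\overline U),S_2(\overline U))$. Fourth, compute Euler characteristics exactly as before: after quotienting by $\overline U$, the strata indexed by $I_1$ are affine spaces (contributing $1$), the stratum indexed by $I_{-1}$ (present only when $\overline S$ is not maximal) is an affine space with two parallel hyperplanes deleted (contributing $-1$), and each remaining $I_0$-stratum is, by Lemma~\ref{lem:affjoinchar}, a $\SC^{\ast}$-bundle over an affine space (contributing $0$); since $\overline U$ avoids the relevant images of the $V_{k_i,l_i,c_{3-i}}$ and of $M^0_{c_i}$, passing to the quotient by $\overline U$ does not change these Euler characteristics. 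In the maximal case one takes, as in Lemma~\ref{lem:fiberstrat}, the common refinement of the two stratifications $(\ref{eq:mcqstrat})$ for $i=1,2$.

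The one point that genuinely needs checking — and the only place where the present lemma differs from Lemma~\ref{lem:fiberstrat} — is that restricting the ambient join-decomposition to $J(L_1^{-1}U_1,L_1^{-1}U_2)$ really does reproduce the strata $J(L_1^{-1}U_1,L_1^{-1}U_2)(k_1,l_1,k_2,l_2)/\overline U$ with the stated (shrunk) index set, and that no new degeneracy appears when $U_1$ or $U_2$ is a proper subspace of $P_{m+1,c_i}$. This is handled by the base-change compatibility of joins, Lemma~\ref{lem:joinbasechange}, together with the linearity of the embeddings $\phi_i$ of Lemma~\ref{lem:imgdescr}: joins commute both with passage to linear subspaces and with pullback along the family $V_{S_1,c_1}\times V_{S_2,c_2}$, so the entire combinatorial bookkeeping is formally identical to the ambient case with $B_{m+1,c_i}$ replaced everywhere by $S_i(\overline U)$. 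Once this is in place the proof is a transcription of that of Lemma~\ref{lem:fiberstrat}, and I would present it as such, indicating only the substitutions rather than repeating the argument in full.
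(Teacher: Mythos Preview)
Your proposal is correct and matches the paper's approach exactly: the paper simply states that the proof is the same as that of Lemma~\ref{lem:fiberstrat}, and your outline of how to transcribe that proof (replacing $V_{0,m,j}$ by the sub-join $J(L_1^{-1}U_1,L_1^{-1}U_2)\cap V_{0,m,j}$ and shrinking the index set from $B_{m+1,c_i}$ to $S_i(\overline U)$) is precisely what is intended. Your additional remarks on why the substitution goes through (linearity of $\phi_i$, base-change for joins, and the use of Lemma~\ref{lem:incfiber}(b) to ensure $\overline U$ sits inside the join) are all correct and more explicit than the paper itself.
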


Let $\mathcal{I}_S^{S_1,S_2}=\{ (S_1(\overline{U}),S_2(\overline{U}))\;:\; (\overline{U},U_1,U_2) \in Y_{\overline{S}}^{S_1,S_2}\}$. Actually $\mathcal{I}_S^{S_1,S_2}$ only depends on $\overline{S}$, $S_1$ and $S_2$. For each $(S'_1,S'_2) \in \mathcal{I}_S^{S_1,S_2}$, let 
\[Y_{\overline{S}}^{S_1,S_2}(S'_1,S'_2)=\{(\overline{U},U_1,U_2) \in Y_{\overline{S}}^{S_1,S_2} \;:\;  (S_1(\overline{U}),S_2(\overline{U}))=(S'_1,S'_2)\}.\]

\begin{corollary}
\label{cor:xinfty}
For fixed $(k_1,l_1,k_2,l_2) \in I(S,S'_1,S'_2)$ the spaces $J(L_1^{-1}U_1,L_1^{-1}U_2)(k_1,l_1,k_2,l_2)/\overline{U}$ are isomorphic for every $\overline{U} \in Y_{\overline{S}}^{S_1,S_2}(S'_1,S'_2)$. Moreover, they fit together into a locally closed subvariety $X_S^{S_1,S_2}(k_1,l_1,k_2,l_2)\subseteq X_S^{S_1,S_2}$. Using induction and the fact that the fiber product of locally closed spaces is locally closed, we get that there is a stratification 
\[X_S^{S_1,S_2}=\bigsqcup_{(S'_1,S'_2) \in \mathcal{I}_S^{S_1,S_2}} \left(\bigsqcup_{(k_1,l_1,k_2,l_2) \in I(S,S'_1,S'_2)} X_S^{S_1,S_2}(k_1,l_1,k_2,l_2)\right) \]
into a locally closed subvarieties.
Furthermore, if $(k_1,l_1,k_2,l_2) \in I(S,S'_1,S'_2)_e$, then the fiber of $\omega \times \mathrm{Id} \times \mathrm{Id}\colon X_S^{S_1,S_2}(k_1,l_1,k_2,l_2) \to Y_{\overline{S}}^{S_1,S_2}(S'_1,S'_2)$ has Euler characteristic $e$.
\end{corollary}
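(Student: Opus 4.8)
The plan is to mimic the proof of the corollary following Lemma~\ref{lem:fiberstrat}, transporting it from the Grassmannian cell $V_S$ to the incidence variety $X_S^{S_1,S_2}$ by means of the lemma immediately preceding the statement (the one whose proof is said to be the same as that of Lemma~\ref{lem:fiberstrat}). First I would note that the locus $Y_{\overline{S}}^{S_1,S_2}(S'_1,S'_2)$ is a locally closed subvariety of $Y_{\overline{S}}^{S_1,S_2}$: the condition $(S_1(\overline{U}),S_2(\overline{U}))=(S'_1,S'_2)$ prescribes the dimensions of the intersections $(\overline{U})\cap V_{k,l,c_i}$, each of which is a semicontinuous function of $(\overline{U},U_1,U_2)$, so the $Y_{\overline{S}}^{S_1,S_2}(S'_1,S'_2)$, as $(S'_1,S'_2)$ ranges over $\mathcal{I}_S^{S_1,S_2}$, form a locally closed stratification of $Y_{\overline{S}}^{S_1,S_2}$.

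Next I would establish triviality of $X_S^{S_1,S_2}(k_1,l_1,k_2,l_2)\to Y_{\overline{S}}^{S_1,S_2}(S'_1,S'_2)$. The preceding lemma describes the fiber of $\omega\times\mathrm{Id}\times\mathrm{Id}$ over $(\overline{U},U_1,U_2)$ as the stratum $J(L_1^{-1}U_1,L_1^{-1}U_2)(k_1,l_1,k_2,l_2)/\overline{U}$, which by construction is obtained from the join $J(L_1^{-1}U_1,L_1^{-1}U_2)$, the distinguished subspaces $M_{c_i}$, $M_{c_i}^0$, and the pieces $J(V_{0,m-1,c_i},V_{k,l,c_{3-i}})\cap V_{0,m,j}$, by a fixed sequence of unions, intersections and complements, followed by the quotient by $\overline{U}$. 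Over a Schubert cell the tautological subbundle is trivial, so the families $\mathcal{F}_1,\mathcal{F}_2$ whose joins enter are trivial; Lemma~\ref{lem:joinbasechange} then shows $J(\mathcal{F}_1,\mathcal{F}_2)$ is a trivial family over $Y_{\overline{S}}^{S_1,S_2}(S'_1,S'_2)$, and the remaining set-theoretic operations and the quotient by the trivial family of subspaces $\overline{U}$ preserve this triviality. Gluing the resulting trivial families over the pieces $Y_{\overline{S}}^{S_1,S_2}(S'_1,S'_2)$ of $\mathcal{I}_S^{S_1,S_2}$ produces the locally closed subvariety $X_S^{S_1,S_2}(k_1,l_1,k_2,l_2)\subseteq X_S^{S_1,S_2}$; disjointness and the fact that these cover $X_S^{S_1,S_2}$ are immediate from the stratification of each fiber in the preceding lemma combined with the stratification of the base. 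The phrase ``using induction'' in the statement accounts for the fact that $Y_{\overline{S}}^{S_1,S_2}$ is itself obtained by an iterated construction of the same type over lower-dimensional data, at each stage intersecting previously built locally closed strata, and fiber products of locally closed immersions are again locally closed.

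Finally, the Euler characteristic assertion follows by restriction: over a point of $Y_{\overline{S}}^{S_1,S_2}(S'_1,S'_2)$ the fiber of $\omega\times\mathrm{Id}\times\mathrm{Id}\colon X_S^{S_1,S_2}(k_1,l_1,k_2,l_2)\to Y_{\overline{S}}^{S_1,S_2}(S'_1,S'_2)$ is precisely $J(L_1^{-1}U_1,L_1^{-1}U_2)(k_1,l_1,k_2,l_2)/\overline{U}$, which the preceding lemma computes to have Euler characteristic $e$ whenever $(k_1,l_1,k_2,l_2)\in I(S,S'_1,S'_2)_e$.

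I expect the only delicate point to be the bookkeeping around local closedness: checking that the defining conditions of $Y_{\overline{S}}^{S_1,S_2}(S'_1,S'_2)$ (and the analogous conditions arising in the inductive construction of $Y_{\overline{S}}^{S_1,S_2}$) are locally closed and are compatible under the fiber-product passages. Once those pieces are in place, the triviality over each stratum is a formal consequence of Lemma~\ref{lem:joinbasechange}, exactly as in the $V_S$ case, and no geometric input beyond the preceding lemma and the base-change property of joins is required.
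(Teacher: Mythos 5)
Your proposal is correct and follows essentially the same route the paper takes: the paper does not even write out a separate proof of this corollary, relying on the identical argument given for the analogous statement about $V_S(k_1,l_1,k_2,l_2)\to V_{\overline{S}}(S_1',S_2')$, namely that triviality follows from Lemma~\ref{lem:joinbasechange} together with the fact that each fiber stratum is built from joins of trivial (tautological) families by unions, intersections and differences, with the Euler characteristic read off fiberwise from the preceding lemma. Your additional remarks on the local closedness of $Y_{\overline{S}}^{S_1,S_2}(S_1',S_2')$ via semicontinuity are consistent with the paper's (implicit) treatment and fill in exactly the bookkeeping the paper declares ``obvious.''
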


\begin{proof}[Proof of Theorem \ref{thm:zinfty}]
With all these preparations the proof itself is very easy. We just observe that $Z_{Y}(k_1,l_1,k_2,l_2)$ consist of those points in $Z_Y$, which map in (\ref{eq:prfibeq}) to $X_S^{S_1,S_2}(k_1,l_1,k_2,l_2)$ for some $(S'_1,S'_2) \in \mathcal{I}_S^{S_1,S_2}$ such that $(k_1,l_1,k_2,l_2) \in I(S,S'_1,S'_2)$ . The result then follows from Corollary \ref{cor:xinfty}.
\end{proof}
\chapter{Type \texorpdfstring{$D_n$}{Dn}: decomposition of the coarse Hilbert scheme}
\label{ch:coarse}
\label{CH:COARSE}

In this chapter we describe some distinguished subsets of the set of Young walls $\mathcal{Z}_\Delta$ of type $D_n$. 
They will consist of Young walls which are the analogues of the $0$-generated partitions from~\ref{subsectypeA}. Then we stratify the coarse Hilbert scheme. As always, in the type $D$ case there are substantial extra complications. Hence, we first give a short guide to the chapter.

\section{Guide to \autoref{ch:coarse}}

Since the current chapter is rather technical, and it contains several definitions and constructions, we present first a short guide explaining the motivation of the definitions and also the main steps of the results. For the precise statements and proofs see the body of the chapter in Sections~\ref{sec:dis0gen}-\ref{sec:eulercgen}. The reader is advised to return to this guide during or after reading these later parts as well. The lemmas and theorems are recollected with the same numbering as in the main body of the chapter.

The injective set theoretical map $i_{\ast }\colon\mathrm{Hilb}(\SC^2/G_\Delta)^T \to \mathrm{Hilb}([\SC^2/G_\Delta])^T=\sqcup_{Y \in \mathcal{Z}_{\Delta}}Z_Y$ induces a stratification
\[\mathrm{Hilb}(\SC^2/G_\Delta)^T = \sqcup_{Y \in \mathcal{Z}_{\Delta}}W_Y,\]
where $W_Y:=i_{\ast}^{-1}(Z_Y \cap \mathrm{im}(i_{\ast}))$. Set $\widetilde{W}_Y=i_{\ast}(W_Y)$.

We define the following subsets of $\mathcal{Z}_\Delta$.
\begin{enumerate}
\item $\mathcal{Z}_{\Delta}'$ contains those Young walls $Y$ such $W_Y\neq \emptyset$, or equivalently, $\widetilde{W}_Y \neq \emptyset$;
\item $\mathcal{Z}_{\Delta}^1$ these are called 0-generated Young walls (plays no role in Chapter 8);
\item $\mathcal{Z}_{\Delta}^0$ these are called distinguished 0-generated Young walls.
\end{enumerate}
We have $\mathcal{Z}_{\Delta}^0 \subset \mathcal{Z}_{\Delta}^1 \subset \mathcal{Z}_{\Delta}'$. Neither of these is closed under the operation of bottom row removal, which is the inductive step used in the proof of \autoref{thm:Zstrata}. 

The characterization of $W_Y$ or the computation of $\chi(W_Y)$ is harder. It turns out to group them into subsets indexed by $\mathcal{Z}_{\Delta}^0$.

The main result of the chapter is
\begin{reptheorem}{thm:dnsingcells}
\[\sum_{m=0}^\infty \chi\left(\mathrm{Hilb}^m(\SC^2/G_\Delta)\right)q^m = \sum_{Y\in {\mathcal Z}_\Delta^0} q^{{\rm wt}_0(Y)}.
\]
\end{reptheorem}

\begin{replemma}{lem:yprimeunique}
For each $Y \in \mathcal{Z}_\Delta$, there is a unique $Y'\in\mathcal{Z}_\Delta'$ that contains $Y$ and is minimal with this property with respect to containment.
\end{replemma}
\begin{replemma}{lem:1red}
There is a combinatorial reduction map ${\rm red}\colon \mathcal{Z}_\Delta'\to \mathcal{Z}_\Delta^0$, restricting to the identity on 
$\mathcal{Z}_\Delta^0\subset \mathcal{Z}_\Delta'$, which associates to 
each $Y \in \mathcal{Z}_\Delta'$ a distinguished $0$-generated 
Young wall ${\rm red}(Y) \in \mathcal{Z}_\Delta^0$ of the same $0$-weight.
\end{replemma}
For a Young wall $Y\in {\mathcal Z}_\Delta^0$, let $\mathrm{Rel}(Y)={\rm red}^{-1}(Y)$ which is called the set of relatives of $Y$.

In order to remedy the fact the sets ${\mathcal Z}_\Delta^0$ and ${\mathcal Z}_\Delta'$ are not closed under the operation of bottom row removal we extend these families further and introduce the following notions.
\begin{enumerate}
\item A $G_{\Delta}$-invariant ideal $I$ is \emph{possibly invariant} if it is generated by functions of character $\rho_0$,$\rho_1$,$\rho_{n-1}$ and $\rho_n$ ``except for the bottom row''. The Young walls of these are denoted as $\mathcal{Z}_\Delta^P$.
\item A $G_{\Delta}$-invariant ideal $I$ is \emph{almost invariant} if it is in $\SC[x,y]^{G_{\Delta}}$ ``except for the bottom row''.  The Young walls of these are denoted as $\mathcal{Z}_\Delta^A$. In particular, $\mathcal{Z}_{\Delta}' \subset \mathcal{Z}_{\Delta}^{A} \subset \mathcal{Z}_{\Delta}^{P}$.
\item $\mathcal{Z}_{\Delta}^{0,A} \subset \mathcal{Z}_\Delta^A$ is a special subset of Young walls, which satisfy conditions similar to that of $\mathcal{Z}_{\Delta}^0$. In particular, $\mathcal{Z}_{\Delta}^0 \subset \mathcal{Z}_{\Delta}^{0,A}$.
\end{enumerate}

There is analog of Lemma \ref{lem:1red} for $\mathcal{Z}_{\Delta}^P$ and $\mathcal{Z}_{\Delta}^{0,A}$, including a reduction $\mathcal{Z}_{\Delta}^P \to \mathcal{Z}_{\Delta}^{0,A}$ and the notion of relatives. Let $\mathrm{Rel}(Y)={\rm red}^{-1}(Y)$. Furthermore, the main advantage of these new sets is the following.
\begin{replemma}{lem:truncclosed}
The sets $\mathcal{Z}^{P}_{\Delta}$ and $\mathcal{Z}^{0,A}_{\Delta}$ are closed under the operation of bottom row removal.
\end{replemma}

One has the following commutative diagram:
\[
\begin{array}{ccccc}
\mathcal{Z}_{\Delta}^0  &\hra & \mathcal{Z}_{\Delta}'  & &  \\[0.2cm]
\hda& & \hda  & &\\[0.2cm]
\mathcal{Z}_{\Delta}^{0,A} & \hra &\mathcal{Z}_{\Delta}^A &\hra &\mathcal{Z}_{\Delta}^P.
\end{array}
\]
The relatives of a Young wall $Y\in \mathcal{Z}_{\Delta}^0$ are the same in $\mathcal{Z}_{\Delta}'$ and in $\mathcal{Z}_{\Delta}^A$, but there may be some new relatives in $\mathcal{Z}_{\Delta}^P$ which are not in $\mathcal{Z}_{\Delta}^A$. This will cause no problem. 


For each $Y \in \mathcal{Z}_\Delta^P$, there is a locally closed decomposition:
\[ Z_Y=\bigsqcup_{d \in pcd(Y)} Z_Y(d)\]
Here $pcd(Y)$ is the set of all \emph{partial closing data} defined on the support blocks of $Y$. These are obtained with the inductive usage of Theorem \ref{thm:zinfty} for each possibly invariant ideal.

Accordingly, for each $Y \in \mathcal{Z}_\Delta^A$ the almost invariant ideals are in a locally closed subset $\widetilde{W}_{Y} \subset Z_Y$ and there is a locally closed decomposition
\[  \widetilde{W}_{Y}=\sqcup_{d \in \mathrm{cd}(Y)} \widetilde{W}_{Y}(d) .\]
The set $\mathrm{cd}(Y)$ of closing data are special partial closing data for Young walls in ${\mathcal Z}_\Delta^A$, in which all salient blocks of label $1$, $n-1$ or $n$ are closed, except possibly one on the bottom row. 

The main ingredient for the proof of Theorem \ref{thm:dnsingcells} is
\begin{repproposition}{prop:relsum1}
For all $Y \in {\mathcal Z}_\Delta^{0,A}$,
\[ \sum_{Y' \in \mathrm{Rel}(Y)} \chi(\widetilde W_{Y'})=1.\]
\end{repproposition}
This implies that $\sum_{Y' \in \mathrm{Rel}(Y)} \chi(\widetilde W_{Y'})=1$ for $Y \in {\mathcal Z}_\Delta^{0}$, and can be rewritten as
\[ \sum_{Y' \in \mathrm{Rel}(Y)}\sum_{d' \in \mathrm{cd}(Y')} \chi(\widetilde W_{Y'}(d'))=1.\]
The statement is proved in several steps.

Fix a Young wall $Y \in \mathcal{Z}_\Delta^P$, and a partial closing datum $d \in \mathrm{pcd}(Y)$. We say that a support block for label $c$ is \emph{of type $e \in \{-1,0,1\}$} if, when its row is considered as the bottom row, the associated half blocks according to $d$ are in the set $I(S_1,S_2,S)_e$ in the notation of Theorem \ref{thm:zinfty}. The total number of support blocks of type $e$ is denoted as $s_{e}(d)$.
\begin{replemma}{lem:chiclosefiber}
If the salient block $b$ at the bottom row of $Y$ is of type $e\in \{-1,0,1\}$, then
\[\chi(Z_Y(d))=e\cdot \chi(Z_{\overline{Y}}(\overline{d})).\]
\end{replemma}
\begin{repproposition}{prop:chiclosestratum}
If $Y \in \mathcal{Z}_\Delta^P$ and $d \in \mathrm{pcd}(Y)$, then (with the notation $0^0=1$)
\[\chi(Z_Y(d))=(-1)^{s_{-1}(d)} \cdot 0^{s_{0}(d)} \cdot 1^{s_{1}(d)}.\] 
\end{repproposition}
\begin{repcorollary}{cor:inftychar0} 
\begin{enumerate}
\item Let $Y \in {\mathcal Z}_\Delta^P$ and $d \in \mathrm{pcd}(Y)$. If $Y$ contains a salient block of any label to which a support block not immediately below it is associated under $d$, then $\chi(Z_{Y}(d))=0$.
\item Let $Y \in {\mathcal Z}_\Delta^{A}$ and $d \in \mathrm{cd}(Y)$.
\begin{enumerate}[(a)]
\item If $Y$ contains a nondirectly supported salient block of label 1, $n-1$ or $n$, then $\chi(\widetilde W_{Y}(d))=0$.
\item If $Y$ does not contain any nondirectly supported salient block of label 1, $n-1$ or $n$, but $d$ is not contributing, then $\chi(\widetilde W_{Y}(d))=0$.
\end{enumerate}
\end{enumerate}
\end{repcorollary}

By Corollary \ref{cor:inftychar0} (2.a), the strata associated to those Young walls $Y' \in \mathrm{Rel}(Y)$ that have at least one undirectly supported salient block of label $n-1$ or $n$ above the bottom row do not contribute to the sum. Therefore we can restrict our attention to the subset of Young walls in which the salient blocks not in the bottom row are
\begin{itemize}
\item directly supported salient block-pairs of label $0/1$ or $n-1/n$, or
\item arbitrary salient blocks of label $0$.
\end{itemize}
In particular, we can assume that all Young walls we are working with satisfy (R1'): the salient blocks of label $n-1$ or $n$ not in the bottom row are all directly supported. Moreover, by Corollary \ref{cor:inftychar0} (2.b) we can assume that each closing datum is contributing. That is, for each salient block of label 1, $n-1$ or $n$ not in the bottom row, the associated support block is immediately below it, and this holds also for those label 0 salient blocks which have an associated support block. 

Then, we perform a case-by-case consideration.

\section{Distinguished 0-generated Young walls}
\label{sec:dis0gen}

For a Young wall $Y\in\mathcal{Z}_\Delta$, denote
by ${\rm wt}_0(Y)$ the $0$-weight of $Y$, the number of half-blocks labelled~$0$ in~$Y$. 

Recall from~\ref{sec:proof:orbicells}, respectively~\ref{sec:support} the notions of a salient block and a support block for a given label $c \in \{0,1,n-1,n\}$; we will use also all other notation introduced in the latter section. 
We call a pair of salient half-blocks $(b,b')$ sharing the same position a {\em salient block-pair}.

Consider the following conditions for a Young wall $Y\in\mathcal{Z}_\Delta$. 
\begin{enumerate}
\item[(A1)] All salient blocks of $Y$ are labelled $0$, $1$, $n-1$ or $n$.
\item[(A2)] Every salient block of $Y$ labelled $c \in\{1,n-1,n\}$ is supported.
\end{enumerate}
Let $\mathcal{Z}_{\Delta}' \subset \mathcal{Z}_\Delta$ be the set of Young walls $Y$ which satisfy conditions (A1)-(A2). We will prove in \autoref{thm:assdiag}, that $\mathcal{Z}_{\Delta}'$ is the set of Young walls for which $Z_Y \cap \mathrm{im}(i_{\ast}) \neq \emptyset$.
\begin{lemma} 
\label{lem:yprimeunique}
For each $Y \in \mathcal{Z}_\Delta$, there is a unique $Y'\in\mathcal{Z}_\Delta'$ that contains $Y$ and is minimal with this property with respect to containment.
\end{lemma}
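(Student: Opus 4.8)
The plan is to construct $Y'$ by a closure operation analogous to the map $p$ in type $A$ (see~\ref{subsectypeA}), then verify uniqueness and minimality. First I would define a monotone operator on $\mathcal{Z}_\Delta$: given $Y$, repeatedly add to it any non-$0$-labelled salient block whose presence is forced by (A1), and, for every salient block of label $c\in\{1,n-1,n\}$ that is not yet supported, add the minimal configuration of blocks below it that creates the required support blocks according to the supporting rules of~\ref{sec:loci:strata}. Each such addition respects the Young wall rules (YW1)--(YW4) — this needs a short check that adding a forced block or half-block never violates properness, which follows because the added blocks lie strictly below or to the left of already-present salient positions — and strictly increases the (finite) number of blocks, so the process terminates. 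The result is a Young wall $Y^{(1)}\supseteq Y$; iterating the whole procedure (since adding support blocks may create new salient blocks needing support) stabilises after finitely many rounds at a Young wall $Y'$ satisfying (A1)--(A2), i.e.\ $Y'\in\mathcal{Z}_\Delta'$.

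Next I would prove minimality and uniqueness together. The key observation is that $\mathcal{Z}_\Delta'$ is closed under intersection: if $Y_1,Y_2\in\mathcal{Z}_\Delta'$ both contain $Y$, then $Y_1\cap Y_2$ is again a Young wall (intersection of Young walls is a Young wall by the column/row structure of (YW1)--(YW3); properness (YW4) is inherited because the heights of full columns in the intersection form a subset of those in either $Y_i$), and it still contains $Y$. To see $Y_1\cap Y_2$ satisfies (A1)--(A2): a salient block of $Y_1\cap Y_2$ is, by definition, a leftmost missing block in its row whose absence is not forced by lower rows; such a block is missing from $Y_1$ or from $Y_2$, and one checks it is then either non-salient there or is itself a salient block of that $Y_i$, hence labelled in $\{0,1,n-1,n\}$, giving (A1); for (A2) one argues that the support blocks witnessing supportedness in $Y_i$ survive in the intersection because they lie below the salient block and are present in both walls once $Y\subseteq Y_1\cap Y_2$ is large enough — more carefully, one runs the same closure argument and notes it only ever adds blocks that every member of $\mathcal{Z}_\Delta'$ containing $Y$ must already contain. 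This last point is the crux: by induction on the rounds of the closure construction, every block added to produce $Y'$ is forced, i.e.\ lies in every $Y''\in\mathcal{Z}_\Delta'$ with $Y''\supseteq Y$; hence $Y'\subseteq Y''$ for all such $Y''$, which gives both that $Y'$ is the unique minimal element and that it is contained in (not just minimal among) all of them.

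The main obstacle I anticipate is the bookkeeping in the "forcedness" induction, specifically showing that the particular blocks one is compelled to add to repair condition (A2) are exactly the ones dictated by the supporting rules and no choice is involved. Unlike (A1), which is purely local (each non-$0$ salient block forces a specific neighbour), condition (A2) is a counting condition — "the number of support blocks below $b$ is at least the number of salient blocks of that label counted from the top including $b$" — so there could a priori be several minimal ways to add support blocks. I would resolve this by exploiting the geometry of the transformed Young wall pattern: the support-block positions of a given label on a given antidiagonal are totally ordered, and the Young wall rules force that the lowest available positions must be filled first, so the minimal repair is in fact unique at each step. Verifying this compatibility between the combinatorial supporting rules and the Young wall rules — essentially that the rules of~\ref{sec:loci:strata} are "confluent" — is where the real work lies; once it is in place, termination and the intersection-closure argument are routine.
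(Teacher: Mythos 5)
Your overall strategy (a closure operator plus closure of $\mathcal{Z}_\Delta'$ under intersection) is not unreasonable — in fact, uniqueness of the minimal element is essentially \emph{equivalent} to intersection-closure of the family of walls in $\mathcal{Z}_\Delta'$ containing $Y$ — but the proposal does not prove the one point where the lemma is actually hard, and it misdescribes the repair needed for condition (A2). Support blocks are by definition certain \emph{missing} blocks of the wall (elements of the index sets $S$ of~\ref{sec:support}); adding blocks below an unsupported salient block can only destroy support blocks, never create them. The only way to enlarge $Y$ so as to repair an unsupported salient block of label $1$, $n-1$ or $n$ is to fill in that salient block itself (and cascade upwards). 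And here the forcedness induction collapses: when the offending block is a salient block-\emph{pair} of labels $n-1/n$, there are two incomparable minimal repairs, namely adding the $n-1$ half-block or adding the $n$ half-block, each of which can yield an element of $\mathcal{Z}_\Delta'$ containing $Y$. Neither half-block lies in every $Y''\in\mathcal{Z}_\Delta'$ containing $Y$, so no added block is "forced", and correspondingly the intersection of the two repairs again has an unfilled block-pair at that position, so your claim that (A2) passes to intersections is exactly the assertion in doubt; the justification you give for it ("one runs the same closure argument\dots") is circular. The confluence issue you flag — the ordering of support positions along an antidiagonal — is a different and more benign ambiguity; it is not where the problem lies.

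The paper resolves the left/right half-block ambiguity by a non-local argument that your closure operator cannot see. It first observes that the $0$-blocks of $Y$ determine a unique distinguished wall $Y_1\in\mathcal{Z}_\Delta^0$, and that every element of $\mathcal{Z}_\Delta'$ containing $Y$ is a relative of $Y_1$ (a preimage under the reduction map of Lemma~\ref{lem:1red}). If two minimal candidates $Y_2,Y_3$ differ by a left versus right triangle at the end of some row, one shows that when the relevant series of equal-length rows is longer than $n-2$ the two cannot both contain $Y$, while when it is at most $n-2$ both $Y_2$ and $Y_3$ arise from a strictly smaller wall $Y_4$ by the inverse of move (R3); since $Y$ contains neither triangle and is itself a Young wall, $Y\subseteq Y_4$, and $Y_4\in\mathcal{Z}_\Delta'$ because move (R3) produces a \emph{supported} salient block-pair. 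Hence neither $Y_2$ nor $Y_3$ was minimal. It is this "when both one-sided repairs contain $Y$, a strictly smaller repair that omits both exists and still satisfies (A2)" step that your proposal is missing, and without it the uniqueness claim does not follow.
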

We will prove this statement at the end of the section. 

Given a Young wall $Y\in\mathcal{Z}_\Delta$, a \emph{closing datum} for $Y$ is a function $d$ from the set of the salient blocks of $Y$ of label $c \in\{1,n-1,n\}$, and some {\em subset} of the salient blocks of $Y$ with label $0$, to the set of support blocks of $Y$, such that 
\begin{itemize}
\item for each salient block $b$ of label $c$ for which $d$ is defined, the associated support block $d(b)$ is a support block for label $c$, and lies on the previous antidiagonal and in a lower row than that of $b$;
\item for each fixed $c \in \{0,1,n-1,n\}$ the different salient blocks of label $c$ are mapped to different support blocks;
\item each support block for label $c$ can support at most one salient block of label $c$ according to the supporting rules spelled out at the end of \ref{sec:support}.
\end{itemize}
 By condition (A2), for every $Y\in \mathcal{Z}_{\Delta}'$ with a nonempty set of salient blocks the set $\mathrm{cd(Y)}$ of closing data for~$Y$ is nonempty. If all salient blocks of $Y$ of label $1$, $n-1$ or $n$ are directly supported, then a closing datum $d\in \mathrm{cd(Y)}$ is called \emph{contributing}, if to every salient block of label on which $d$ is defined, it associates the support block immediately below it.

We define two subsets of $\mathcal{Z}_{\Delta}'$. Consider the following conditions for a Young wall $Y\in\mathcal{Z}_\Delta$.
\begin{enumerate}
\item[(R1)] The salient blocks of $Y$ of label $n-1$ or $n$ are all part of a directly supported salient block-pair.  
\item[(R2)] $Y$ has no salient block with label in the set $\{1, \dots, n-2\}$.
\item[(R3)] Any consecutive series of rows of $Y$ having equal length and ending in half $n-1$/$n$-blocks is longer than $n-2$, or $n-1$ if the length of the rows is $n-1$, and the last one starts with a block labelled $1$ (see Example~\ref{ex:singr2} below for the latter condition being broken).
\end{enumerate}
Young walls satisfying (R1)--(R2) will be called \emph{$0$-generated}. Let $\mathcal{Z}_{\Delta}^1 \subset \mathcal{Z}_\Delta$ denote the set of $0$-generated Young walls. They automatically satisfy (A1)--(A2), so indeed $\mathcal{Z}_{\Delta}^1 \subset \mathcal{Z}_\Delta'$. Let further $\mathcal{Z}_{\Delta}^0 \subset \mathcal{Z}_\Delta^1$ be the set of those Young walls which in addition satisfy (R3) also. These will be called \emph{distinguished}.

\begin{lemma}
\label{lem:1red}
There is a combinatorial reduction map ${\rm red}\colon \mathcal{Z}_\Delta'\to \mathcal{Z}_\Delta^0$, restricting to the identity on 
$\mathcal{Z}_\Delta^0\subset \mathcal{Z}_\Delta'$, which associates to 
each $Y \in \mathcal{Z}_\Delta'$ a distinguished $0$-generated 
Young wall ${\rm red}(Y) \in \mathcal{Z}_\Delta^0$ of the same $0$-weight.
\end{lemma}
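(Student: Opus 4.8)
The plan is to build ${\rm red}$ as a composite ${\rm red}={\rm red}_3\circ{\rm red}_{12}$, where ${\rm red}_{12}\colon\mathcal{Z}_\Delta'\to\mathcal{Z}_\Delta^1$ enforces (R1)--(R2) and ${\rm red}_3\colon\mathcal{Z}_\Delta^1\to\mathcal{Z}_\Delta^0$ enforces (R3), and to arrange that every move used modifies a Young wall only by adding or deleting white blocks and half-blocks of label in $\{1,\dots,n\}$, i.e.\ never touches a half-block of label $0$. Since ${\rm wt}_0$ counts only white half-blocks of label $0$, this makes ${\rm wt}_0$-preservation automatic, and the identity statement on $\mathcal{Z}_\Delta^0$ will follow because on a distinguished Young wall none of the moves is applicable.

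For ${\rm red}_{12}$ I would take the same route as in Lemma~\ref{lem:yprimeunique}: first check that $\mathcal{Z}_\Delta^1$ is closed under Young-wall intersection (the column-by-column minimum of two walls again satisfies (YW1)--(YW4), and conditions (R1)--(R2), being statements about which blocks are salient, survive taking the intersection once the half-block/properness bookkeeping is done), and then check that every finite $Y\in\mathcal{Z}_\Delta'$ is contained in \emph{some} finite element of $\mathcal{Z}_\Delta^1$ — concretely, by iteratively adding the blocks forced by (YW2)--(YW4) when one closes up each salient block of label $1$, $n-1$ or $n$, a process that terminates because condition (A2) pins down a finite support block for each such salient block and hence bounds the region in which new blocks can appear. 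Granting these two points, ${\rm red}_{12}(Y)$ is the intersection of all elements of $\mathcal{Z}_\Delta^1$ containing $Y$, the unique minimal such wall.

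The crux — and the step I expect to be the main obstacle — is showing that passing from $Y$ to ${\rm red}_{12}(Y)$ adds no half-block of label $0$, so that ${\rm wt}_0$ really is preserved. Here I would analyze the closing of a salient block of label $c\in\{1,n-1,n\}$ in the transformed Young wall pattern of Section~\ref{sec:PYW}, using Proposition~\ref{prop:1}: starting from the support block guaranteed by (A2), the blocks that the Young wall rules force one to add form a strip that propagates "along the $L_1$-diagonal", and on that strip one meets only the labels $c,2,3,\dots,n-2,\{n-1,n\}$ in the order dictated by the pattern, reaching a $0$-labelled position only after the completion has already stabilized (a $0$-block is forced only when the $1$-labelled position two antidiagonals below is itself present and supported, which does not occur in the minimal completion). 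This is a finite case check, organized by the residue of the antidiagonal modulo $2n-4$ and by the vertex types A1--B3 of Section~\ref{sec:proof:orbicells}, but it is where all the real combinatorial content sits.

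Finally, for ${\rm red}_3$ I would introduce a run-normalizing move: whenever a wall in $\mathcal{Z}_\Delta^1$ contains a maximal run of equal-length rows capped by a pair of $n-1/n$ half-blocks whose length violates (R3) (too short, or length $n-1$ without the top row starting with a $1$-block, cf.\ Example~\ref{ex:singr2}), delete the capping half-blocks together with the nonzero-labelled blocks this deletion forces, thereby shortening that cliff; this strictly decreases the total weight, so the process terminates, and one checks it does not recreate violations of (R1)--(R2) and again touches no $0$-block. The result lies in $\mathcal{Z}_\Delta^0$ by construction and equals $Y$ when $Y$ was already distinguished. What remains to be verified carefully, besides the label-$0$ statement above, is that ${\rm red}_3$ and ${\rm red}_{12}$ interact well — e.g.\ that applying ${\rm red}_3$ to ${\rm red}_{12}(Y)$ does not undermine minimality in a way that spoils well-definedness — which I would handle by showing the run-normalizing moves commute past the completion moves on disjoint parts of the wall.
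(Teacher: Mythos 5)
Your two-step skeleton (first enforce (R1)--(R2), then (R3)) and your second step are essentially the paper's; the divergence, and the problem, is in how you define ${\rm red}_{12}$. You define it as the intersection of all elements of $\mathcal{Z}_\Delta^1$ containing $Y$, which requires (i) that $\mathcal{Z}_\Delta^1$ be closed under intersection and (ii) that this minimal completion add no $0$-blocks. Neither is established. For (i), ``column-by-column minimum'' is not a well-defined operation on type $D$ Young walls at positions carrying divided blocks: two walls in $\mathcal{Z}_\Delta^1$ can cap runs of equal-length columns with half-blocks of opposite orientation ($n-1$ versus $n$), and their naive intersection drops both halves, creating a new salient block-pair whose direct supportedness --- condition (R1) --- has to be re-verified and need not hold. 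The paper itself needs a separate, delicate argument (Lemma~\ref{lem:yprimeunique}) to get uniqueness of minimal envelopes in $\mathcal{Z}_\Delta'$, and that argument is proved \emph{after} Lemma~\ref{lem:1red} and uses the reduction map and its relatives, so you cannot lean on it here without circularity. For (ii), your sketch (``a $0$-block is forced only when the $1$-labelled position two antidiagonals below is present and supported, which does not occur in the minimal completion'') is precisely the assertion to be proved; you correctly flag it as the crux, but as written it is not an argument.

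The paper dissolves both issues by constructing the first step explicitly rather than as an envelope: find the lowest row containing a salient block of label $1$, $n-1$ or $n$ that violates (R1) or (R2), and extend that row rightward as far as possible, stopping just before the next $0$-block or just before a position whose absence is forced from below; then repeat on the rows above, where at most one or two new salient blocks can appear. With this definition, preservation of ${\rm wt}_0$ holds by fiat (no $0$-block is ever added), and the burden shifts to the easier check that the output satisfies (R1)--(R2) and that the subsequent (R3)-repair --- your ${\rm red}_3$, which matches the paper's move of removing the capping half-block from the lowest row of a too-short run and propagating the forced removals upward --- does not reintroduce violations and touches no $0$-block, the latter because the run has length at most $n-2$, so the propagating removals never reach a $0$-labelled position. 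I would rework your first step along these lines: the greedy extension is in fact the forced one, but it is much easier to define the map by the extension and verify its properties than to prove the existence, uniqueness and $0$-weight preservation of an abstract minimal completion.
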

\begin{proof}
Starting with a Young wall $Y\in \mathcal{Z}_\Delta'$, we construct ${\rm red}(Y)$ by enforcing (R1)--(R2) and (R3) in turn, making sure in the second step that (R1)--(R2) remain fulfilled. 

First, if a Young wall $Y\in \mathcal{Z}_\Delta'$ violates (R1) or (R2) at a salient block of label $1$, $n-1$ or $n$, find the lowest row where this happens, and extend $Y$ by adding as many extra blocks as possible to this row without modifying the $0$-weight. Thus, the extension stops either just before any full block below which there is a missing full block, or just before the next $0$ block in the row, whichever comes earlier. Then in the row above this, one or two blocks may become salient. If at least one of these new salient blocks is not of label $0$, then we repeat the same procedure. Following this procedure all the way to the top of $Y$ gives a new Young wall which satisfies (R1) and (R2). These moves do not increase the number of places where the Young wall violates (R3).

Second, assume a Young wall $Y$ satisfies (R1) and (R2) but violates (R3): there is a consecutive series of rows having equal length and ending in half $n-1$/$n$-blocks, but the length of this series is $m \leq n-2$. Remove the half block from end of the lowest row of such a series. Then a new supported salient block-pair appears.
If the block $b$ immediately above this block-pair is contained in $Y$, then we remove $b$, as well as the blocks to the right of it in order to obtain a valid Young wall. Any full block above $b$ also cannot be present in a valid Young wall, so we remove that also, as well as all blocks to the right. 
Continue the removal process until there is a full block above the
last removed block. This process terminates after $m$ steps, when it
arrives at a row which is already short enough. In this way we
decreased the number places where the Young wall violates (R3), but
haven't increased the number of places where it violates (R1) or
(R2). The $0$-weight of the Young wall remains unchanged, since the length of the series was at most $n-2$. 

Combining these steps, we obtain a Young wall ${\rm red}(Y)$ that satisfies (R3) as well as (R1) and (R2), and so lies in $\mathcal{Z}_\Delta^0$, and has the same $0$-weight. 
\end{proof}

\begin{example}
\label{ex:singr1}

Let $n=4$ and let us consider the following six Young walls.
\begin{center}
\begin{tabular}{c c}
\begin{tabular}[x]{@{}c@{}}
\begin{tikzpicture}[scale=0.6, font=\footnotesize, fill=black!20]
 \draw (1, 0) -- (9,0);
  \foreach \x in {1,2,3,4,5,6}
    {
      \draw (\x, 0) -- (\x,\x);
    }
  
   \foreach \y in {1,2,3,4,5}
       {
            \draw (\y,\y) -- (10,\y);
       }
       
\draw (6,6)--(8,6);
\draw (7, 0) -- (7,6);
\draw (8, 0) -- (8,6);
\draw (9, 0) -- (9,5);
 \draw (10, 1) -- (10,5);

\foreach \x in {0,1,2,3,4,5}
        {
        	\draw (\x+2.5, \x+0.5) node {2};
        }
  \foreach \x in {0,1,2,3,4}
       {
           \draw (\x+4.5, \x+0.5) node {2};
          }  
    \foreach \x in {0,1,2,3,4}
         {
             \draw (\x+6.5, \x+0.5) node {2};
            } 
        \foreach \x in {0,1}
             {
                 \draw (\x+8.5, \x+0.5) node {2};
                } 
     
     \foreach \x in {0,2,4}
         {
                	\draw (\x+1.5, \x+0.5) node {0};
                	\draw (\x+2.5, \x+1.5) node {1};
                }
 \foreach \x in {0,2,4}
           {
           \draw (\x+3.75,\x+0.75) node {4};
                        	\draw (\x+3.25,\x+0.25) node {3};
                        	\draw (\x+3,\x+1) -- (\x+4,\x+0);
           }
  \foreach \x in {1,3}
             {
             \draw (\x+3.75,\x+0.75) node {3};
                                     	\draw (\x+3.25,\x+0.25) node {4};
                                     	\draw (\x+3,\x+1) -- (\x+4,\x+0);
             }
       \foreach \x in {0,2,4}
                 {
                 \draw (\x+5.75,\x+0.75) node {0};
                              	\draw (\x+5.25,\x+0.25) node {1};
                              	\draw (\x+5,\x+1) -- (\x+6,\x+0);
                 }
        \foreach \x in {1,3}
                   {
                   \draw (\x+5.75,\x+0.75) node {1};
                                           	\draw (\x+5.25,\x+0.25) node {0};
                                           	\draw (\x+5,\x+1) -- (\x+6,\x+0);
                   }       
       \foreach \x in {0,2}
                 {
                 \draw (\x+7.75,\x+0.75) node {4};
                              	\draw (\x+7.25,\x+0.25) node {3};
                              	\draw (\x+7,\x+1) -- (\x+8,\x+0);
                 }
        \foreach \x in {1}
                   {
                   \draw (\x+7.75,\x+0.75) node {3};
                                           	\draw (\x+7.25,\x+0.25) node {4};
                                           	\draw (\x+7,\x+1) -- (\x+8,\x+0);
                   }
                                          	\draw (8.25,5.25) node {4};
                                          	\draw (8,6) -- (9,5); 
     	\draw (10.75,3.75) node {3};
     	\draw (11.75,4.75) node {4};
        \draw (10,4) -- (11,3)--(11,4);
        \draw (10,4) -- (11,4)--(11,5)--(10,5);
        \draw (11,5) -- (12,4)--(12,5)--(11,5); 
     
     \draw (9,1)--(10,0)--(9,0);
     \draw (9.25,0.25) node {1};
     \node (2) at ( 8.75,5.75) [circle,draw, fill=black, inner sep=0pt,minimum size=4pt] {};
\end{tikzpicture} 
\\
$Y_1$ 
\end{tabular}
&
\begin{tabular}[x]{@{}c@{}}
\begin{tikzpicture}[scale=0.6, font=\footnotesize, fill=black!20]
 \draw (1, 0) -- (9,0);
  \foreach \x in {1,2,3,4,5,6}
    {
      \draw (\x, 0) -- (\x,\x);
    }
  
   \foreach \y in {1,2,3,4,5}
       {
            \draw (\y,\y) -- (10,\y);
       }
       
\draw (6,6)--(10,6);
\draw (7, 0) -- (7,6);
\draw (8, 0) -- (8,6);
\draw (9, 0) -- (9,6);
 \draw (10, 1) -- (10,6);

\foreach \x in {0,1,2,3,4,5}
        {
        	\draw (\x+2.5, \x+0.5) node {2};
        }
  \foreach \x in {0,1,2,3,4,5}
       {
           \draw (\x+4.5, \x+0.5) node {2};
          }  
    \foreach \x in {0,1,2,3,4}
         {
             \draw (\x+6.5, \x+0.5) node {2};
            } 
        \foreach \x in {0,1}
             {
                 \draw (\x+8.5, \x+0.5) node {2};
                } 
     
     \foreach \x in {0,2,4}
         {
                	\draw (\x+1.5, \x+0.5) node {0};
                	\draw (\x+2.5, \x+1.5) node {1};
                }
 \foreach \x in {0,2,4}
           {
           \draw (\x+3.75,\x+0.75) node {4};
                        	\draw (\x+3.25,\x+0.25) node {3};
                        	\draw (\x+3,\x+1) -- (\x+4,\x+0);
           }
  \foreach \x in {1,3,5}
             {
             \draw (\x+3.75,\x+0.75) node {3};
                                     	\draw (\x+3.25,\x+0.25) node {4};
                                     	\draw (\x+3,\x+1) -- (\x+4,\x+0);
             }
       \foreach \x in {0,2,4}
                 {
                 \draw (\x+5.75,\x+0.75) node {0};
                              	\draw (\x+5.25,\x+0.25) node {1};
                              	\draw (\x+5,\x+1) -- (\x+6,\x+0);
                 }
        \foreach \x in {1,3}
                   {
                   \draw (\x+5.75,\x+0.75) node {1};
                                           	\draw (\x+5.25,\x+0.25) node {0};
                                           	\draw (\x+5,\x+1) -- (\x+6,\x+0);
                   }       
       \foreach \x in {0,2}
                 {
                 \draw (\x+7.75,\x+0.75) node {4};
                              	\draw (\x+7.25,\x+0.25) node {3};
                              	\draw (\x+7,\x+1) -- (\x+8,\x+0);
                 }
        \foreach \x in {1}
                   {
                   \draw (\x+7.75,\x+0.75) node {3};
                                           	\draw (\x+7.25,\x+0.25) node {4};
                                           	\draw (\x+7,\x+1) -- (\x+8,\x+0);
                   }
     	\draw (10.75,3.75) node {3};
     	\draw (11.75,4.75) node {4};
        \draw (10,4) -- (11,3)--(11,4);
        \draw (10,4) -- (11,4)--(11,5)--(10,5);
        \draw (11,5) -- (12,4)--(12,5)--(11,5); 
        
        \draw (10,6) -- (11,5)--(11,6)--(10,6);
        \draw (10.75,5.75) node {1};
     
     \draw (9,1)--(10,0)--(9,0);
     \draw (9.25,0.25) node {1};
     \node (2) at ( 12.75,5.75) [circle,draw, fill=black, inner sep=0pt,minimum size=4pt] {};
\end{tikzpicture} 
\\
$Y_2$
\end{tabular}
\\[2.35cm]
\begin{tabular}[x]{@{}c@{}}
\begin{tikzpicture}[scale=0.6, font=\footnotesize, fill=black!20]
 \draw (1, 0) -- (9,0);
  \foreach \x in {1,2,3,4,5,6}
    {
      \draw (\x, 0) -- (\x,\x);
    }
  
   \foreach \y in {1,2,3,4,5}
       {
            \draw (\y,\y) -- (10,\y);
       }
       
\draw (6,6)--(8,6);
\draw (7, 0) -- (7,6);
\draw (8, 0) -- (8,6);
\draw (9, 0) -- (9,5);
 \draw (10, 1) -- (10,5);

\foreach \x in {0,1,2,3,4,5}
        {
        	\draw (\x+2.5, \x+0.5) node {2};
        }
  \foreach \x in {0,1,2,3,4}
       {
           \draw (\x+4.5, \x+0.5) node {2};
          }  
    \foreach \x in {0,1,2,3,4}
         {
             \draw (\x+6.5, \x+0.5) node {2};
            } 
        \foreach \x in {0,1}
             {
                 \draw (\x+8.5, \x+0.5) node {2};
                } 
     
     \foreach \x in {0,2,4}
         {
                	\draw (\x+1.5, \x+0.5) node {0};
                	\draw (\x+2.5, \x+1.5) node {1};
                }
 \foreach \x in {0,2,4}
           {
           \draw (\x+3.75,\x+0.75) node {4};
                        	\draw (\x+3.25,\x+0.25) node {3};
                        	\draw (\x+3,\x+1) -- (\x+4,\x+0);
           }
  \foreach \x in {1,3}
             {
             \draw (\x+3.75,\x+0.75) node {3};
                                     	\draw (\x+3.25,\x+0.25) node {4};
                                     	\draw (\x+3,\x+1) -- (\x+4,\x+0);
             }
       \foreach \x in {0,2,4}
                 {
                 \draw (\x+5.75,\x+0.75) node {0};
                              	\draw (\x+5.25,\x+0.25) node {1};
                              	\draw (\x+5,\x+1) -- (\x+6,\x+0);
                 }
        \foreach \x in {1,3}
                   {
                   \draw (\x+5.75,\x+0.75) node {1};
                                           	\draw (\x+5.25,\x+0.25) node {0};
                                           	\draw (\x+5,\x+1) -- (\x+6,\x+0);
                   }       
       \foreach \x in {0,2}
                 {
                 \draw (\x+7.75,\x+0.75) node {4};
                              	\draw (\x+7.25,\x+0.25) node {3};
                              	\draw (\x+7,\x+1) -- (\x+8,\x+0);
                 }
        \foreach \x in {1}
                   {
                   \draw (\x+7.75,\x+0.75) node {3};
                                           	\draw (\x+7.25,\x+0.25) node {4};
                                           	\draw (\x+7,\x+1) -- (\x+8,\x+0);
                   }
        \draw (8.75,5.75) node {3};
        \draw (8,6) -- (9,6) -- (9,5)--(8,6); 
     	
     	\draw (10.25,3.25) node {4};
     	\draw (11.25,4.25) node {3};
        \draw (10,4) -- (11,3)--(10,3);
        \draw (10,4) -- (11,4)--(11,5)--(10,5);
        \draw (11,5) -- (12,4)--(11,4)--(11,5); 
     
     \draw (9,1)--(10,0)--(9,0);
     \draw (9.25,0.25) node {1};
     \node (2) at ( 8.25,5.25) [circle,draw, fill=black, inner sep=0pt,minimum size=4pt] {};
\end{tikzpicture} 
\\
$Y_3$ 
\end{tabular}
&
\begin{tabular}[x]{@{}c@{}}
\begin{tikzpicture}[scale=0.6, font=\footnotesize, fill=black!20]
 \draw (1, 0) -- (9,0);
  \foreach \x in {1,2,3,4,5,6}
    {
      \draw (\x, 0) -- (\x,\x);
    }
  
   \foreach \y in {1,2,3,4,5}
       {
            \draw (\y,\y) -- (10,\y);
       }
       
\draw (6,6)--(10,6);
\draw (7, 0) -- (7,6);
\draw (8, 0) -- (8,6);
\draw (9, 0) -- (9,6);
 \draw (10, 1) -- (10,6);

\foreach \x in {0,1,2,3,4,5}
        {
        	\draw (\x+2.5, \x+0.5) node {2};
        }
  \foreach \x in {0,1,2,3,4,5}
       {
           \draw (\x+4.5, \x+0.5) node {2};
          }  
    \foreach \x in {0,1,2,3,4}
         {
             \draw (\x+6.5, \x+0.5) node {2};
            } 
        \foreach \x in {0,1}
             {
                 \draw (\x+8.5, \x+0.5) node {2};
                } 
     
     \foreach \x in {0,2,4}
         {
                	\draw (\x+1.5, \x+0.5) node {0};
                	\draw (\x+2.5, \x+1.5) node {1};
                }
 \foreach \x in {0,2,4}
           {
           \draw (\x+3.75,\x+0.75) node {4};
                        	\draw (\x+3.25,\x+0.25) node {3};
                        	\draw (\x+3,\x+1) -- (\x+4,\x+0);
           }
  \foreach \x in {1,3,5}
             {
             \draw (\x+3.75,\x+0.75) node {3};
                                     	\draw (\x+3.25,\x+0.25) node {4};
                                     	\draw (\x+3,\x+1) -- (\x+4,\x+0);
             }
       \foreach \x in {0,2,4}
                 {
                 \draw (\x+5.75,\x+0.75) node {0};
                              	\draw (\x+5.25,\x+0.25) node {1};
                              	\draw (\x+5,\x+1) -- (\x+6,\x+0);
                 }
        \foreach \x in {1,3}
                   {
                   \draw (\x+5.75,\x+0.75) node {1};
                                           	\draw (\x+5.25,\x+0.25) node {0};
                                           	\draw (\x+5,\x+1) -- (\x+6,\x+0);
                   }       
       \foreach \x in {0,2}
                 {
                 \draw (\x+7.75,\x+0.75) node {4};
                              	\draw (\x+7.25,\x+0.25) node {3};
                              	\draw (\x+7,\x+1) -- (\x+8,\x+0);
                 }
        \foreach \x in {1}
                   {
                   \draw (\x+7.75,\x+0.75) node {3};
                                           	\draw (\x+7.25,\x+0.25) node {4};
                                           	\draw (\x+7,\x+1) -- (\x+8,\x+0);
                   }
     	
     	\draw (10.25,3.25) node {4};
     	\draw (11.25,4.25) node {3};
        \draw (10,4) -- (11,3)--(10,3);
        \draw (10,4) -- (11,4)--(11,5)--(10,5);
        \draw (11,5) -- (12,4)--(11,4)--(11,5); 
        
        \draw (10,6) -- (11,5)--(11,6)--(10,6);
        \draw (10.75,5.75) node {1};
     
     \draw (9,1)--(10,0)--(9,0);
     \draw (9.25,0.25) node {1};
    \node (2) at ( 12.25,5.25) [circle,draw, fill=black, inner sep=0pt,minimum size=4pt] {};         
\end{tikzpicture} 
\\
$Y_4$
\end{tabular}
\\[2.35cm]
\begin{tabular}[x]{@{}c@{}}
\begin{tikzpicture}[scale=0.6, font=\footnotesize, fill=black!20]
 \draw (1, 0) -- (9,0);
  \foreach \x in {1,2,3,4,5,6}
    {
      \draw (\x, 0) -- (\x,\x);
    }
  
   \foreach \y in {1,2,3,4,5}
       {
            \draw (\y,\y) -- (10,\y);
       }
       
\draw (6,6)--(10,6);
\draw (7, 0) -- (7,6);
\draw (8, 0) -- (8,6);
\draw (9, 0) -- (9,6);
 \draw (10, 1) -- (10,6);

\foreach \x in {0,1,2,3,4,5}
        {
        	\draw (\x+2.5, \x+0.5) node {2};
        }
  \foreach \x in {0,1,2,3,4,5}
       {
           \draw (\x+4.5, \x+0.5) node {2};
          }  
    \foreach \x in {0,1,2,3}
         {
             \draw (\x+6.5, \x+0.5) node {2};
            } 
        \foreach \x in {0,1}
             {
                 \draw (\x+8.5, \x+0.5) node {2};
                } 
     
     \foreach \x in {0,2,4}
         {
                	\draw (\x+1.5, \x+0.5) node {0};
                	\draw (\x+2.5, \x+1.5) node {1};
                }
 \foreach \x in {0,2,4}
           {
           \draw (\x+3.75,\x+0.75) node {4};
                        	\draw (\x+3.25,\x+0.25) node {3};
                        	\draw (\x+3,\x+1) -- (\x+4,\x+0);
           }
  \foreach \x in {1,3,5}
             {
             \draw (\x+3.75,\x+0.75) node {3};
                                     	\draw (\x+3.25,\x+0.25) node {4};
                                     	\draw (\x+3,\x+1) -- (\x+4,\x+0);
             }
       \foreach \x in {0,2,4}
                 {
                 \draw (\x+5.75,\x+0.75) node {0};
                              	\draw (\x+5.25,\x+0.25) node {1};
                              	\draw (\x+5,\x+1) -- (\x+6,\x+0);
                 }
        \foreach \x in {1,3}
                   {
                   \draw (\x+5.75,\x+0.75) node {1};
                                           	\draw (\x+5.25,\x+0.25) node {0};
                                           	\draw (\x+5,\x+1) -- (\x+6,\x+0);
                   }       
       \foreach \x in {0,2}
                 {
                 \draw (\x+7.75,\x+0.75) node {4};
                              	\draw (\x+7.25,\x+0.25) node {3};
                              	\draw (\x+7,\x+1) -- (\x+8,\x+0);
                 }
        \foreach \x in {1}
                   {
                   \draw (\x+7.75,\x+0.75) node {3};
                                           	\draw (\x+7.25,\x+0.25) node {4};
                                           	\draw (\x+7,\x+1) -- (\x+8,\x+0);
                   }
     	
        
     
     \draw (9,1)--(10,0)--(9,0);
     \draw (9.25,0.25) node {1};
     \node (2) at ( 10.75,5.75) [circle,draw, fill=black, inner sep=0pt,minimum size=4pt] {};        
\end{tikzpicture} 
\\
$Y_5$
\end{tabular} &
\begin{tabular}[x]{@{}c@{}}
\begin{tikzpicture}[scale=0.6, font=\footnotesize, fill=black!20]
 \draw (1, 0) -- (9,0);
  \foreach \x in {1,2,3,4,5,6}
    {
      \draw (\x, 0) -- (\x,\x);
    }
  
   \foreach \y in {1,2,3,4,5}
       {
            \draw (\y,\y) -- (10,\y);
       }
       
\draw (6,6)--(10,6);
\draw (7, 0) -- (7,6);
\draw (8, 0) -- (8,6);
\draw (9, 0) -- (9,6);
 \draw (10, 1) -- (10,6);

\foreach \x in {0,1,2,3,4,5}
        {
        	\draw (\x+2.5, \x+0.5) node {2};
        }
  \foreach \x in {0,1,2,3,4,5}
       {
           \draw (\x+4.5, \x+0.5) node {2};
          }  
    \foreach \x in {0,1,2,3}
         {
             \draw (\x+6.5, \x+0.5) node {2};
            } 
        \foreach \x in {0,1}
             {
                 \draw (\x+8.5, \x+0.5) node {2};
                } 
     
     \foreach \x in {0,2,4}
         {
                	\draw (\x+1.5, \x+0.5) node {0};
                	\draw (\x+2.5, \x+1.5) node {1};
                }
 \foreach \x in {0,2,4}
           {
           \draw (\x+3.75,\x+0.75) node {4};
                        	\draw (\x+3.25,\x+0.25) node {3};
                        	\draw (\x+3,\x+1) -- (\x+4,\x+0);
           }
  \foreach \x in {1,3,5}
             {
             \draw (\x+3.75,\x+0.75) node {3};
                                     	\draw (\x+3.25,\x+0.25) node {4};
                                     	\draw (\x+3,\x+1) -- (\x+4,\x+0);
             }
       \foreach \x in {0,2,4}
                 {
                 \draw (\x+5.75,\x+0.75) node {0};
                              	\draw (\x+5.25,\x+0.25) node {1};
                              	\draw (\x+5,\x+1) -- (\x+6,\x+0);
                 }
        \foreach \x in {1,3}
                   {
                   \draw (\x+5.75,\x+0.75) node {1};
                                           	\draw (\x+5.25,\x+0.25) node {0};
                                           	\draw (\x+5,\x+1) -- (\x+6,\x+0);
                   }       
       \foreach \x in {0,2}
                 {
                 \draw (\x+7.75,\x+0.75) node {4};
                              	\draw (\x+7.25,\x+0.25) node {3};
                              	\draw (\x+7,\x+1) -- (\x+8,\x+0);
                 }
        \foreach \x in {1}
                   {
                   \draw (\x+7.75,\x+0.75) node {3};
                                           	\draw (\x+7.25,\x+0.25) node {4};
                                           	\draw (\x+7,\x+1) -- (\x+8,\x+0);
                   }
     	
        
        \draw (10,6) -- (11,5)--(11,6)--(10,6);
        \draw (10.75,5.75) node {1};
     
     \draw (9,1)--(10,0)--(9,0);
     \draw (9.25,0.25) node {1};
             
\end{tikzpicture}
\\
$Y_6$
\end{tabular}
\end{tabular}
\end{center}
It can be checked that all these satisfy conditions (A1) and (A2), and hence lie in $\mathcal{Z}_\Delta'$. On the other hand, the Young walls $Y_1$ and $Y_3$ violate (R1), and are extended to $Y_2$, resp.~$Y_4$ in the first step of the reduction process. Both of these still violate (R3); in the second step, they become ${\rm red}(Y_1)= {\rm red}(Y_3)=Y_6\in \mathcal{Z}_\Delta^0$. $Y_5$ satisfies (R1), but violates (R2) and is extended to $Y_6$ in the first step with ${\rm red}(Y_5)=Y_6\in \mathcal{Z}_\Delta^0$ also. In each case the bullets indicate where exactly these violations occur. The blocks without numbers are not contained in the Young walls.
\end{example}

For a Young wall $Y\in {\mathcal Z}_\Delta^0$, let $\mathrm{Rel}(Y)={\rm red}^{-1}(Y)$
denote the set of relatives of $Y$, the Young walls we can get from $Y$ by the inverses of the reduction steps above. This is a finite directed set, directed by the steps of the proof of Lemma~\ref{lem:1red}. In Example~\ref{ex:singr1}, all $Y_i$ are relatives of $Y_6$. 

\begin{proof}[Proof of Lemma \ref{lem:yprimeunique}]
Fix a Young wall  $Y \in \mathcal{Z}_\Delta$. The positions of its $0$
blocks determine uniquely a Young wall $Y_1 \in \mathcal{Z}_\Delta^0$
such that ${\rm wt}_0(Y)={\rm wt}_0(Y_1)$, and the $0$ blocks in $Y_1$
are exactly the $0$ blocks in $Y$. $Y_1$ does not necessarily contain
$Y$. Consider the set $\mathrm{Rel}_Y(Y_1)\subset \mathcal{Z}_\Delta'$
of those relatives of $Y_1$ which contain $Y$. This set is nonempty,
since we can always extend $Y_1$ with the inverse of the move (R3) in
Lemma \ref{lem:1red} until there are only label 0 salient
blocks. There can be several of these since there is an ambiguity in
the inverse of the move (R3), but there is no Young wall having the
same 0 weight as $Y_1$ which is not contained in at least one of these
extended Young walls. 

Suppose that $\mathrm{Rel}_Y(Y_1)$ has two distinct minimal elements
$Y_2, Y_3$ with respect to containment. Then there is at least one row ending in
a half block, where one of $Y_2, Y_3$ has a left
triangle, and the other has a right triangle, but otherwise the row
has the same length. Then the length of the
series of successive rows with the same length is the same in the two
Young walls. If this length is more than $n-2$, then they cannot both
contain $Y$. If it is $n-2$ or less, then $Y_2, Y_3$ are the two
results of the inverse of the move (R3) applied on a smaller Young
wall. Since $Y$ was a Young wall, also this smaller Young wall
contains $Y$. Hence, neither of $Y_2, Y_3$ could be minimal. 
The same reasoning applies to all places where there is the left
triangle/right triangle ambiguity.
Thus there is a unique minimal element in the set of relatives of $Y_1$ containing $Y$.
\end{proof}

\section{The decomposition of the coarse Hilbert scheme}

Let us turn to the Hilbert scheme of points on the quotient $\SC^2/G_\Delta$, the coarse Hilbert scheme 
$\mathrm{Hilb}(\SC^2/G_\Delta)=\sqcup_n \mathrm{Hilb}^n(\SC^2/G_\Delta)$. Recall that the 
inclusion $\SC[x,y]^{G_\Delta}\subset \SC[x,y]$ defines a morphism
\[ p_\ast: \mathrm{Hilb}([\SC^2/G_\Delta])\rightarrow \mathrm{Hilb}(\SC^2/G_\Delta), \quad J \mapsto J^{G_\Delta}=J\cap \SC[x,y]^{G_\Delta}\]
and a map of sets 
\[ i^\ast: \mathrm{Hilb}(\SC^2/G_\Delta)(\SC) \rightarrow \mathrm{Hilb}([\SC^2/G_\Delta])(\SC), \quad I \mapsto \SC[x,y].I \] 
between the coarse and the orbifold Hilbert schemes.

The purpose of this section is to prove the following result. 

\begin{theorem} The decomposition of the equivariant Hilbert scheme $\mathrm{Hilb}([\SC^2/G_\Delta])$ from 
Theorem~\ref{thm:dnorbcells} induces a locally closed decomposition
\[\mathrm{Hilb}(\SC^2/G_\Delta) = \displaystyle\bigsqcup_{Y \in{\mathcal Z}_\Delta'} \mathrm{Hilb}(\SC^2/G_\Delta)_Y\]
of the coarse Hilbert scheme $\mathrm{Hilb}(\SC^2/G_\Delta)$ into strata indexed bijectively by the
set ${\mathcal Z}_\Delta'$ of Young walls of type $D_n$ satisfying (A1)-(A2) above.
The stratum $\mathrm{Hilb}(\SC^2/G_\Delta)_Y$ is contained in the $m$-th Hilbert scheme 
$\mathrm{Hilb}^m(\SC^2/G_\Delta)$ for $m={\rm wt}_0(Y)$.
\label{thm:assdiag}
\end{theorem}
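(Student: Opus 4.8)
The statement to prove is Theorem~\ref{thm:assdiag}: the stratification of $\mathrm{Hilb}([\SC^2/G_\Delta])$ by Young walls induces, via the set-theoretic section $i^\ast$ and the morphism $p_\ast$, a locally closed stratification of the coarse Hilbert scheme indexed by the subset $\mathcal{Z}_\Delta'$, with $\mathrm{Hilb}(\SC^2/G_\Delta)_Y\subset\mathrm{Hilb}^{\mathrm{wt}_0(Y)}(\SC^2/G_\Delta)$. The plan is to run a torus-localization argument in parallel with the combinatorial characterization of which Young walls can occur.

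First I would set up the torus action. The diagonal $T=\SC^\ast$-action on $\SC^2$ descends to $\SC^2/G_\Delta$, hence acts on $\mathrm{Hilb}(\SC^2/G_\Delta)$, and the $T$-fixed locus consists of the homogeneous ideals $I\lhd\SC[x,y]^{G_\Delta}$, equivalently (by the homogeneity of $\SC[x,y]^{G_\Delta}$ as a graded subring of $\SC[x,y]$) the homogeneous $G_\Delta$-invariant ideals $J\lhd\SC[x,y]$ which are generated by their invariant part $J\cap\SC[x,y]^{G_\Delta}$. Thus $i^\ast$ restricts to an injection on $T$-fixed loci $i_\ast\colon\mathrm{Hilb}(\SC^2/G_\Delta)^T\hookrightarrow\mathrm{Hilb}([\SC^2/G_\Delta])^T=\bigsqcup_{Y\in\mathcal{Z}_\Delta}Z_Y$; as in the guide, set $W_Y=i_\ast^{-1}(Z_Y\cap\mathrm{im}\,i_\ast)$ and $\widetilde W_Y=i_\ast(W_Y)$. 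Then I would identify exactly which $Y$ have $W_Y\neq\emptyset$: using the characterization of generators of an ideal $I\in Z_Y$ in terms of the salient blocks of $Y$ (from Definition~\ref{def:globsalient} and the inductive construction in~\ref{sec:proof:orbicells}), together with the analysis of support blocks in Chapter~\ref{ch:Dnspecial} (in particular Corollary~\ref{cor:nongen}), one shows that a homogeneous invariant ideal is generated by its invariant part precisely when every salient block has label in $\{0,1,n-1,n\}$ (condition (A1)) and every salient block of label $1$, $n-1$ or $n$ is supported in the sense of~\ref{sec:loci:strata} (condition (A2)); this is exactly the defining condition of $\mathcal{Z}_\Delta'$. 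This establishes the index set and the bijectivity claim at the level of $T$-fixed points.

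Next I would upgrade from fixed points to the whole variety by the Bia\l{}ynicki-Birula method, exactly mirroring the proof of Theorem~\ref{thm:dnorbcells}. The coarse Hilbert scheme $\mathrm{Hilb}(\SC^2/G_\Delta)$ need not be smooth, so I cannot quote BB directly; instead I would use that $i^\ast$ is a section of $p_\ast$, that $p_\ast$ is a morphism, and that on $\mathrm{Hilb}([\SC^2/G_\Delta])$ the BB strata $\mathrm{Hilb}([\SC^2/G_\Delta])_Y$ flow to $Z_Y$. Define $\mathrm{Hilb}(\SC^2/G_\Delta)_Y$ as the locus of ideals $I$ whose flow under $T$ lands in $\widetilde W_Y$ (equivalently, $p_\ast(i^\ast I)$ flows to $Z_Y$); since the limit of an orbit in the quasi-projective toric singularity $\SC^2/G_\Delta$ always exists (all $T$-weights on $\SC[x,y]^{G_\Delta}$ are positive), every ideal flows to a unique fixed point, so these loci partition $\mathrm{Hilb}(\SC^2/G_\Delta)$. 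Local triviality and local closedness of this decomposition follow because $\mathrm{Hilb}(\SC^2/G_\Delta)$, though singular, still carries an attracting decomposition under a $\SC^\ast$-action with positive weights on a (local) affine chart; alternatively one can realize these strata as $p_\ast$-images of the corresponding unions of orbifold strata and use that $p_\ast$ is a morphism. Finally the degree statement is immediate: the colength of $I$ in $\SC[x,y]^{G_\Delta}$ equals the number of $0$-labelled half-blocks of $Y$, since (as noted in~\ref{subsectypeA} in the type $A$ discussion, and holding verbatim here) $\SC[x,y]^{G_\Delta}$ has a monomial-type basis indexed by the $0$-blocks in the transformed pattern, so $\dim\SC[x,y]^{G_\Delta}/I=\mathrm{wt}_0(Y_I)$.

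The main obstacle I anticipate is the second step — proving that $\mathcal{Z}_\Delta'$ is precisely the set of Young walls occurring, i.e. the ``if and only if'' in conditions (A1)-(A2). The forward direction (a $0$-generated ideal forces these conditions on its Young wall) is a direct reading of the generation rules together with Corollary~\ref{cor:nongen}. The reverse direction — that \emph{every} $Y$ satisfying (A1)-(A2) actually supports at least one homogeneous ideal generated by its invariant part — requires exhibiting a closing datum (the existence is guaranteed by (A2) giving $\mathrm{cd}(Y)\neq\emptyset$) and checking that the corresponding point of $Z_Y$ constructed by the inductive fiber-product procedure of~\ref{sec:proof:orbicells} indeed lies in $\mathrm{im}\,i_\ast$; this is where the subtle support-block phenomena from Chapter~\ref{ch:Dnspecial}, and in particular the possibility that a salient cell carries no new generator, must be handled carefully. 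A secondary technical point is to make the BB-type argument rigorous on the singular space $\mathrm{Hilb}(\SC^2/G_\Delta)$; I would handle this by transporting the decomposition through the morphism $p_\ast$ from the smooth orbifold Hilbert scheme rather than attempting a direct attracting-cell argument on the singular target.
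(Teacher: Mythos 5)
Your identification of the index set $\mathcal{Z}_\Delta'$ via salient and support blocks (condition (A1) from the forced generators in labels $2,\dots,n-2$, condition (A2) from Corollary~\ref{cor:nongen}) matches the paper's argument, and the degree statement is right, though in type $D$ the justification should go through Lemma~\ref{lem:intdim} rather than a ``monomial basis'' of $\SC[x,y]^{G_\Delta}$, which does not exist for a nonabelian group. The substantive problem is your third step: the local closedness of the decomposition. The paper does \emph{not} run a Bia\l{}ynicki-Birula argument on the singular coarse Hilbert scheme at all. Instead it starts from the universal ideal $\mathcal{J}\lhd\mathcal{O}_{\mathrm{Hilb}(\SC^2/G_\Delta)}\otimes\SC[x,y]^{G_\Delta}$, observes that the pullback family $\SC[x,y].\mathcal{J}$ is \emph{not} flat, and takes the flattening stratification of the base. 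Over each flattening stratum the pullback becomes a flat family of invariant ideals, hence defines a genuine classifying \emph{morphism} $i_f$ into the smooth equivariant Hilbert scheme; pulling back the locally closed decomposition of Theorem~\ref{thm:dnorbcells} along these morphisms, and intersecting with the (locally closed) flattening strata, yields the locally closed decomposition of the coarse Hilbert scheme. The flattening stratification is exactly the device that turns the set-theoretic map $i^\ast$ into a morphism stratum by stratum, and it is the missing idea in your proposal.

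Your two suggested workarounds do not close this gap. Defining $\mathrm{Hilb}(\SC^2/G_\Delta)_Y$ as an attracting set of $\widetilde W_Y$ is not obviously the same locus as $(i^\ast)^{-1}(\mathrm{Hilb}([\SC^2/G_\Delta])_Y)$, because $i^\ast$ is not continuous and need not commute with taking $t\to 0$ limits: the limit in the coarse Hilbert scheme of a family $I_t$, pulled back, can differ from the limit of $i^\ast I_t$ in the orbifold Hilbert scheme. And ``realizing the strata as $p_\ast$-images of the orbifold strata'' fails on two counts: images of locally closed sets under a morphism need not be locally closed, and the coarse strata are in any case not the $p_\ast$-images of the orbifold strata $\mathrm{Hilb}([\SC^2/G_\Delta])_Y$ (for a non-invariantly-generated $J\in Z_Y$ one has $i^\ast p_\ast J\neq J$, and its Young wall is generally not $Y$). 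Separately, you propose to prove that every $Y\in\mathcal{Z}_\Delta'$ actually occurs by constructing an ideal from a closing datum; this is more than the paper's proof of this theorem establishes (it only proves the containment of the index set in $\mathcal{Z}_\Delta'$, with nonemptiness addressed through the later analysis of the strata $\widetilde W_Y$), and as you note it would require the full support-block machinery of Chapter~\ref{ch:Dnspecial} rather than a one-line appeal to $\mathrm{cd}(Y)\neq\emptyset$.
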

\begin{proof} We start with the universal ideal 
${\mathcal J}\lhd \mathcal{O}_{\mathrm{Hilb}(\SC^2/G_\Delta)}\otimes\SC[x,y]^{G_\Delta}$, which exists since
$\mathrm{Hilb}(\SC^2/G_\Delta)$ is a fine moduli space. Using the relative pullback, 
we obtain an invariant ideal $\SC[x,y].{\mathcal J}\lhd \mathcal{O}_{\mathrm{Hilb}(\SC^2/G_\Delta)}\otimes\SC[x,y]$, which however
is not a {\em flat} family of invariant ideals over $\mathrm{Hilb}(\SC^2/G_\Delta)$. Take the flattening stratification of the base with index set $F$,
to obtain a decomposition 
\begin{equation}
\label{eq:flatstrat}
\mathrm{Hilb}(\SC^2/G_\Delta)=\sqcup_{f \in F}\mathrm{Hilb}(\SC^2/G_\Delta)_f
\end{equation}
over which the restrictions $(\SC[x,y].{\mathcal J})_f$ are flat. These flat families of invariant ideals of $\SC[x,y]$
define classifying maps
\[i_f\colon \mathrm{Hilb}(\SC^2/G_\Delta)_f\to  \mathrm{Hilb}^\rho([\SC^2/G_\Delta])\]
from these strata to components of the equivariant Hilbert scheme. The latter smooth varieties are decomposed into 
locally closed strata by Theorem~\ref{thm:dnorbcells} as
\begin{equation}
\label{eq:Ystratf}
\mathrm{Hilb}([\SC^2/G_\Delta])=\sqcup_{Y \in \in{\mathcal Z}_\Delta}\mathrm{Hilb}([\SC^2/G_\Delta])_Y.
\end{equation} 
The stratification \eqref{eq:Ystratf} gives a stratification on $\mathrm{im}(i_f)$ for each $f \in F$ since over each $\mathrm{Hilb}([\SC^2/G_\Delta])_Y$ the classifying map is flat. Hence, we can pull it back to obtain a decomposition
\[\mathrm{Hilb}(\SC^2/G_\Delta) = \displaystyle\bigsqcup_{Y \in{\mathcal Z}_\Delta} \mathrm{Hilb}(\SC^2/G_\Delta)_Y,\]
where we have, set-theoretically, 
\[\mathrm{Hilb}(\SC^2/G_\Delta)_Y(\SC) = \{ I \in \mathrm{Hilb}(\SC^2/G_\Delta)(\SC) : i^\ast(I) \in \mathrm{Hilb}([\SC^2/G_\Delta])_Y(\SC)\}.\]
The whole construction is compatible with the $T=\SC^*$-action, so we can also decompose the $T$-fixed locus representing
homogeneous ideals as
\[\mathrm{Hilb}(\SC^2/G_\Delta)^T = \displaystyle\bigsqcup_{Y \in{\mathcal Z}_\Delta} W_Y,
\]
where 
\[ W_Y(\SC)=\{ I \in \mathrm{Hilb}(\SC^2/G_\Delta)(\SC) : i^\ast(I) \in Z_Y(\SC)\}.\]
Notice also that by construction, the maps $i_\rho$ above are given by the pullback map $i^\ast$. In other words, when restricted to 
the strata $\mathrm{Hilb}(\SC^2/G_\Delta)_Y\supset W_Y$, the map $i^\ast$ becomes a morphism of schemes. 
On the other hand, it is also clear that, letting $\widetilde W_Y$ denote the image of $i^\ast$ inside $Z_Y$, 
$p_\ast$ and $i^\ast$ are two-sided inverses and so $W_Y\cong \widetilde W_Y\subset Z_Y$.

To conclude, we need to show that for a fixed $I \in \mathrm{Hilb}(\SC^2/G_\Delta)$, the Young wall~$Y$ associated 
with the pullback ideal $J=i^{\ast}(I)$ necessarily lies in $\mathcal{Z}_\Delta'$. It is clearly enough to assume that~$I$, and so~$J$,
are homogeneous. The ideal $J$, being a pullback, is of course generated by invariant polynomials. 
On the other hand, 
as we have seen during the proof of Theorem \ref{thm:dnorbcells}, a homogeneous ideal is generated by polynomials lying 
in salient blocks. While not all salient blocks necessarily contain a new generator, it is clear that 
salient blocks labelled~$j$ with $2\leq j \leq n-2$ must contain a generator. Since such a generator is not allowed in an invariant ideal, $Y$ must satisfy condition (A1). 

To discuss the other condition (A2), let us return to the inductive proof of Theorem~\ref{thm:dnorbcells}. Corollary \ref{cor:nongen} (2) implies that if there is no generator on a given antidiagonal of~$Y$, then the salient blocks of label $c \in \{1,n,n-1\}$ on this antidiagonal are supported. For an invariant ideal, this condition is required to be satisfied for all salient blocks of label $c \in \{1,n,n-1\}$. This concludes the proof. 
\end{proof}

\section{Possibly and almost invariant ideals} 
\label{sec:possalmosid}

We wish to study the Euler characteristics of the strata of the coarse Hilbert scheme obtained in
Theorem~\ref{thm:assdiag}, using the inductive approach used in~\ref{sec:proof:orbicells}
in our study of the orbifold Hilbert scheme. However, as things stand, the setup does not lend itself well to 
induction based on the removal of the bottom row from a Young wall, since the set of Young walls ${\mathcal Z}_\Delta'$ 
is clearly not closed under the removal of the bottom row. To remedy this, we introduce two auxiliary constructions. From now on, except when noted, every ideal is supposed to be $T$-invariant.

First, call an ideal $I\lhd \SC[x,y]$ {\em possibly invariant}, if it is generated by 
\begin{itemize}
\item polynomials which transform under $G_\Delta$ according to $\rho_0$, $\rho_1$, $\rho_{n-1}$ or $\rho_n$,
\item and at most one $\tau$-invariant pair of polynomials of the same degree, forming a two-dimensional
representation of $G_\Delta$, and not lying in the image of the operator $L_1$.
\end{itemize}
Equivalently, the second condition says that the 
corresponding two-dimensional 
subspace lies in the large stratum of the appropriate projective space parameterizing such subspaces.

Second, a possibly invariant ideal $I$ will be called \emph{almost invariant}, if it is generated by
\begin{itemize}
\item $G_\Delta$-invariant elements,
\item and at most a single polynomial, or pair of polynomials of the same degree, forming a one-or two-dimensional
representation of $G_\Delta$, and not lying in the image of the operator~$L_1$.
\end{itemize}

Let us denote by ${\mathcal Z}_\Delta^P\subset {\mathcal Z}_\Delta$ the subset of all Young walls which are characterized by the following condition:
\begin{enumerate}
\item[{\rm (A1')}] all salient blocks of $Y$ are labelled $0$, $1$, $n-1$ or $n$, except possibly for a single
salient block in the bottom row of a different label.
\end{enumerate}
Moreover, let ${\mathcal Z}_\Delta^A\subset {\mathcal Z}_\Delta^P$ be the set of Young walls which satisfy condition (A1') as well as the following second condition:
\begin{enumerate}
\item[{\rm (A2')}] every salient block of $Y$ labelled $c \in\{1,n-1,n\}$ is supported, except possibly the ones in the bottom row.
\end{enumerate}

The following statement follows immediately from our setup. 
\begin{proposition} \label{prop:almoststrat}
\hspace{2em}
\begin{enumerate}
\item Possibly invariant ideals correspond to points in the strata $Z_Y\subset \mathrm{Hilb}([\SC^2/G_\Delta])$ where $Y\in{\mathcal Z}_\Delta^P$.
\item Points parameterizing almost invariant ideals lie in constructible subsets $\widetilde W_Y\subset Z_Y$ of strata $Z_Y\subset \mathrm{Hilb}([\SC^2/G_\Delta])$ for $Y\in{\mathcal Z}_\Delta^A$. 
\end{enumerate}
\end{proposition}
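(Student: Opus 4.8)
The statement to prove is Proposition~\ref{prop:almoststrat}, which characterizes the strata containing possibly invariant and almost invariant ideals.

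\medskip

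The plan is to prove both parts by unwinding the definitions of possibly and almost invariant ideals and comparing them with the combinatorial conditions (A1') and (A2') defining ${\mathcal Z}_\Delta^P$ and ${\mathcal Z}_\Delta^A$, together with the structural results about the strata $Z_Y$ already established in Chapters~\ref{ch:Dnideal}--\ref{ch:dnorbidec}.

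\medskip

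First I would handle part (1). Let $I$ be a homogeneous possibly invariant ideal, and let $Y=Y_I\in{\mathcal Z}_\Delta$ be its associated Young wall, as constructed in~\ref{sec:Ywall_to_ideal}. I need to show $Y$ satisfies (A1'). Recall from the inductive proof of Theorem~\ref{thm:dnorbcells} (see the discussion around Proposition~\ref{wallprop} and the remark at the end of~\ref{sec:proof:orbicells}) that a homogeneous ideal is generated by elements lying in the cells corresponding to the salient blocks of $Y$, and in fact there are canonical such generators. The key point, already used in the proof of Theorem~\ref{thm:assdiag}, is that a salient block of label $j$ with $2\leq j\leq n-2$ \emph{must} contain a generator: indeed, by Proposition~\ref{prop:1}(3),(7) and the analysis of the profile in Proposition~\ref{wallprop}, the intersection of $I$ with such a cell cannot be produced by multiplication from lower cells. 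Now by the definition of a possibly invariant ideal, the only generator of $I$ that can transform in a representation other than $\rho_0,\rho_1,\rho_{n-1},\rho_n$ is the single allowed $\tau$-invariant pair forming a two-dimensional representation and not lying in $\mathrm{im}(L_1)$. A generator not in $\mathrm{im}(L_1)$ sits in a cell $V_{0,m,j}$ of the bottom row of the transformed pattern (this is exactly the characterization of the large stratum $V_{0,m,j}$ versus the cells $V_{k,l,j}$ with $k>0$, which are the images of $L_1^k$). Hence at most one salient block of $Y$ not in the bottom row can carry a label in $\{2,\dots,n-2\}$ — and in fact, since every such block must carry a generator while only one non-$\{0,1,n-1,n\}$ generator (sitting in the bottom row) is available, no salient block off the bottom row has a label in $\{2,\dots,n-2\}$, and at most one salient block in the bottom row does. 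This is precisely (A1'), so $Y\in{\mathcal Z}_\Delta^P$. Conversely, to check that the point $[I]$ lies in $Z_Y$, I just observe that $Z_Y$ by construction (Theorem~\ref{thm:dnorbcells}) is exactly the locus of homogeneous invariant ideals with associated Young wall $Y$, so there is nothing more to do; the decomposition $\mathrm{Hilb}([\SC^2/G_\Delta])^T=\sqcup_Y Z_Y$ takes care of the rest.

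\medskip

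For part (2), let $I$ be almost invariant, a special case of possibly invariant, so by part (1) its Young wall $Y$ lies in ${\mathcal Z}_\Delta^P$, i.e.\ satisfies (A1'). I must show it also satisfies (A2'), i.e.\ that every salient block of label $c\in\{1,n-1,n\}$ not in the bottom row is supported, and then that $[I]$ lies in a constructible subset $\widetilde W_Y$. For (A2'), I would run the inductive argument from~\ref{sec:proof:orbicells} antidiagonal by antidiagonal, just as in the proof of Theorem~\ref{thm:assdiag}: since $I$ is generated by invariant elements except possibly for one generator sitting in the bottom row, on every antidiagonal \emph{other than the bottom one} there is no generator of label $1$, $n-1$ or $n$ that is \emph{new} (not pulled up from below), and Corollary~\ref{cor:nongen}(2) then forces every salient block of label $c\in\{1,n-1,n\}$ on that antidiagonal to be supported. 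This gives (A2') and hence $Y\in{\mathcal Z}_\Delta^A$. Finally, the assertion that the almost invariant ideals form a \emph{constructible} subset $\widetilde W_Y\subset Z_Y$ follows from the proof of Theorem~\ref{thm:assdiag}: there, $\widetilde W_Y$ was identified as the image under the set-theoretic pullback $i^\ast$ of the stratum $W_Y\subset\mathrm{Hilb}(\SC^2/G_\Delta)^T$, and the flattening-stratification construction used there exhibits $W_Y$, hence $\widetilde W_Y$, as a finite union of locally closed subschemes. I expect the only mild subtlety here, and the main point to be careful about, is that being ``almost invariant'' is slightly weaker than being in the image of $i^\ast$ — a single non-invariant bottom-row generator is allowed — so I should argue directly that the condition ``$I\cap S_m[\rho_j]$ is generated by invariants for all $m$ above the bottom row, with at most one extra generator in the bottom row'' cuts out a constructible set inside $Z_Y$, using that the strata $Z_Y$ carry the universal family ${\mathcal U}_Y$ (Remark~\ref{rem:Zyuniv}) and that the locus where a subsheaf is generated by a subbundle of prescribed type is constructible. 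This constructibility bookkeeping is the one place where some care, rather than a routine citation, is needed; everything else reduces to the already-established combinatorial correspondence between generators-in-cells and salient blocks.
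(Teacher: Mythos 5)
Your proof is correct and follows exactly the route the paper intends: the paper itself offers no argument beyond ``follows immediately from our setup,'' and your reconstruction supplies precisely the intended details, reusing the salient-block/generator correspondence from the proof of Theorem~\ref{thm:dnorbcells} (a salient block of label $2\le j\le n-2$ must carry a $2$-dimensional generator, which for a possibly invariant ideal can only be the single pair outside $\mathrm{im}(L_1)$, hence in the bottom row) and Corollary~\ref{cor:nongen}(2) for condition (A2'), exactly as in the paper's proof of Theorem~\ref{thm:assdiag}. Your extra care about the constructibility of $\widetilde W_Y$ is a reasonable point the paper glosses over, and your sketch via the universal family handles it adequately.
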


By definition, we have ${\mathcal Z}_\Delta'\subset {\mathcal Z}_\Delta^A$. For 
$Y\in {\mathcal Z}_\Delta'$, the constructible subset provided by Proposition \ref{prop:almoststrat}(2) is exactly $\widetilde W_Y=i_{\ast} (W_Y)$. Therefore, in the sequel we will denote these strata as $\widetilde W_Y$ for arbitrary $Y \in {\mathcal Z}_\Delta^A$ as well. 
Further, if for $Y\in {\mathcal Z}_\Delta^A$, also $\overline{Y}\in {\mathcal Z}_\Delta^A$, where $\overline{Y}$
is the Young wall obtained by removing from $Y$ the bottom row as in~\ref{sec:proof:orbicells}, 
then the map $T\colon Z_Y\to Z_{\overline{Y}}$ of Lemma~\ref{lemma:inductive_morphism} takes $\widetilde W_Y$ to 
$\widetilde W_{\overline Y}$.

Furthermore, let $\mathcal{Z}^{0,A}_{\Delta}\subset \mathcal{Z}^A_{\Delta}$ be the subset defined by the following conditions:
\begin{enumerate}
\item[{\rm (R1')}] the salient blocks of label $n-1$ or $n$ not in the bottom row are all directly supported; 
\item[{\rm (R2')}] there is no salient block with label in $\{1, \dots, n-2\}$ except possibly for the bottom row; 
\item[{\rm (R3')}] any consecutive series of rows, except the one starting in the bottom row, having equal length and ending in half $n-1$/$n$-blocks is longer than $n-2$, or $n-1$ if the length of the rows is $n-1$ and the last one starts with a block labelled $1$.
\end{enumerate}

\begin{lemma}
There is a combinatorial reduction map ${\rm red}\colon \mathcal{Z}_\Delta^P\to \mathcal{Z}_\Delta^{0,A}$ associating to each $Y' \in \mathcal{Z}^P_{\Delta}$ a unique $Y\in \mathcal{Z}^{0,A}_{\Delta}$. 
\end{lemma}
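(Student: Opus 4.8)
The statement asserts the existence of a well-defined combinatorial reduction ${\rm red}\colon \mathcal{Z}_\Delta^P\to \mathcal{Z}_\Delta^{0,A}$, which is the ``bottom-row-aware'' analogue of Lemma~\ref{lem:1red}. The plan is to mimic the proof of Lemma~\ref{lem:1red} essentially verbatim, with the single modification that throughout the reduction procedure the bottom row of the Young wall is frozen: no move is allowed to add to or remove from the bottom row, and violations of the conditions occurring \emph{in} the bottom row are simply ignored (this is permitted precisely because (A1'), (A2'), (R1')--(R3') all carve out an exception for the bottom row).

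First I would record that $\mathcal{Z}_\Delta^{0,A}\subset\mathcal{Z}_\Delta^P$: a Young wall satisfying (R1')--(R2') automatically satisfies (A1')--(A2') above the bottom row, and (A1'), being weaker than (R2') applied to labels $2,\dots,n-2$ except in the bottom row, is clear. Then, given $Y'\in\mathcal{Z}_\Delta^P$, I would run the two-step algorithm of Lemma~\ref{lem:1red} on the part of $Y'$ strictly above the bottom row. Step one: if a salient block of label $1$, $n-1$ or $n$ \emph{not in the bottom row} violates (R1') or (R2'), locate the lowest such occurrence (necessarily in row $\geq 2$, so the bottom row is untouched), extend that row maximally without changing the $0$-weight, and propagate upward exactly as in Lemma~\ref{lem:1red}; this terminates and produces a wall satisfying (R1')--(R2') without creating new (R3') violations above the bottom row. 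Step two: if (R3') is violated by a series of equal-length rows ending in half-blocks \emph{not starting in the bottom row}, remove the offending half-block from the lowest such row (again row $\geq 2$) and cascade the removal of blocks above and to the right, exactly as before; since the series has length $\leq n-2$ the $0$-weight is preserved, the (R1')--(R2') conditions remain intact, and the number of (R3') violations strictly decreases. Composing, one lands in $\mathcal{Z}_\Delta^{0,A}$ with the same $0$-weight, and $\mathcal{Z}_\Delta^{0,A}$-elements are fixed.

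The one point requiring genuine care — and the main obstacle — is \emph{uniqueness} of the output, i.e.\ that ${\rm red}$ is well-defined independently of the order in which one resolves the violations. This is the analogue of Lemma~\ref{lem:yprimeunique}, and the argument there applies almost unchanged: the positions of the $0$-blocks of $Y'$ determine a unique candidate $0$-pattern; the only ambiguity in the inverse moves is the left-triangle/right-triangle choice at a row ending in a half-block, and by the argument in the proof of Lemma~\ref{lem:yprimeunique} two distinct minimal resolutions would force a series of equal-length rows of length $\leq n-2$, contradicting (R3') (or, in the $n-2$ case, forcing both to come from a common smaller valid wall, so neither is minimal). The only new wrinkle is that now the bottom row is exempt from (R3'), so I must check that a length-$\leq n-2$ ambiguous series \emph{ending at but not starting in} the bottom row still cannot produce two distinct $\mathcal{Z}_\Delta^{0,A}$-outputs — but since the bottom row itself is frozen and the ambiguity lives strictly above it, the same contradiction applies. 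I would conclude by noting that the map is the identity on $\mathcal{Z}_\Delta^{0,A}$ and preserves $0$-weight, which finishes the proof; as in~\ref{sec:dis0gen} one then sets $\mathrm{Rel}(Y)={\rm red}^{-1}(Y)$ for $Y\in\mathcal{Z}_\Delta^{0,A}$.
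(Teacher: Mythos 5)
Your proposal is correct and follows essentially the same route as the paper: the paper's proof simply observes that the reduction algorithm of Lemma~\ref{lem:1red} goes through unchanged with the bottom row exempted, and that the output, having only directly supported salient blocks above the bottom row, satisfies (A2') and hence lies in $\mathcal{Z}_\Delta^{0,A}$. Your extra order-independence discussion is not strictly needed, since the algorithm resolves violations in a fixed (lowest-first) order, but it is consistent with the argument of Lemma~\ref{lem:yprimeunique}.
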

\begin{proof}
The proof of Lemma~\ref{lem:1red} above goes through unchanged in this setting and the reduction process gives a well-defined element in $\mathcal{Z}_\Delta^P$. After the reduction there is no indirectly supported salient block. Hence, each salient block is directly supported, and in particular supported as required by condition (A2'). Therefore, the output of the reduction is an element in $\mathcal{Z}^A_{\Delta}$ with the properties (R1')-(R3').
\end{proof}
Once again, for $Y\in \mathcal{Z}^{0,A}_{\Delta}$ the Young walls $\mathrm{Rel}(Y)={\rm red}^{-1}(Y)$ will be called the 
relatives of $Y$. The following statement is clear from the definitions.
\begin{lemma} 
\label{lem:truncclosed}
The sets $\mathcal{Z}^{P}_{\Delta}$ and $\mathcal{Z}^{0,A}_{\Delta}$ are closed under the operation of bottom row removal.
\end{lemma}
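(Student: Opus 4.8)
\textbf{Proof plan for Lemma~\ref{lem:truncclosed}.}

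The assertion is that if $Y\in\mathcal{Z}^{P}_{\Delta}$ (resp. $Y\in\mathcal{Z}^{0,A}_{\Delta}$), then its truncation $\overline{Y}$ obtained by deleting the bottom row lies in the same set. Since both sets are defined purely by combinatorial conditions on salient blocks and on runs of equal-length rows, the plan is to trace through each defining condition and observe that removing the bottom row cannot create a violation. First I would recall the precise combinatorial dictionary from Definition~\ref{def:globsalient}: a block is salient if its absence from $Y$ is not forced by the rows strictly below it and it is leftmost with that property. The key structural observation, which I would state and justify first, is that if $b$ is a salient block of $\overline{Y}$ lying in row $k\geq 1$ (counting from the bottom of $Y$, so row $k-1\geq 0$ in $\overline{Y}$), then $b$ is \emph{also} a salient block of $Y$ in row $k$: its absence is not forced by rows $1,\dots,k-1$ of $Y$ precisely because it was not forced by rows $0,\dots,k-2$ of $\overline{Y}$, and the Young wall shape below row $k$ in $Y$ is exactly the shape below row $k-1$ in $\overline{Y}$ shifted by one diagonal step (with labels transported by $\kappa$, as already noted in~\ref{sec:proof:orbicells}). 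Conversely, every salient block of $Y$ not in its bottom row corresponds to a salient block of $\overline{Y}$. Likewise, the support-block relation and the notion of a block being (directly) supported are inherited: a support block for label $c$ below a salient block, together with the supporting rules of~\ref{sec:support}, translate verbatim between $Y$ and $\overline{Y}$ for everything above the bottom row.

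Granting this dictionary, the proof is a short case check. For $\mathcal{Z}^{P}_{\Delta}$: condition (A1') allows all salient blocks to be labelled $0,1,n-1,n$ \emph{except possibly one in the bottom row}. Any salient block of $\overline{Y}$ corresponds to a salient block of $Y$ that is \emph{not} in the bottom row of $Y$, hence (by (A1') for $Y$) is labelled $0$, $1$, $n-1$ or $n$; so $\overline{Y}$ satisfies (A1') with no exceptional block needed at all, a fortiori it satisfies (A1'). Thus $\overline{Y}\in\mathcal{Z}^{P}_{\Delta}$. For $\mathcal{Z}^{0,A}_{\Delta}$ one checks (A1'), (A2'), (R1'), (R2'), (R3') in turn. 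Conditions (A1'), (A2'), (R2') are handled exactly as above: the possible bottom-row exception for $Y$ simply disappears after truncation, and all salient blocks of $\overline{Y}$ inherit the requisite label/support property from the corresponding non-bottom salient blocks of $Y$. Condition (R1') (salient blocks of label $n-1,n$ above the bottom row are directly supported) transfers because direct supportedness is a local condition involving only the block, the block immediately below it, and the complementary half-block, all of which survive truncation with labels adjusted by $\kappa$. Condition (R3') concerns consecutive runs of equal-length rows ending in $n{-}1/n$ half-blocks, excluding the run containing the bottom row: a run of $\overline{Y}$ that does not start in the bottom row of $\overline{Y}$ sits inside $Y$ as a run of the same length and type that does not start in the bottom row of $Y$, so the length lower bound ($>n-2$, or $>n-1$ in the stated edge case) is inherited; and a run of $\overline{Y}$ that \emph{does} start in its bottom row is, by definition, exempt from (R3') for $\overline{Y}$. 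Hence all five conditions hold for $\overline{Y}$, giving $\overline{Y}\in\mathcal{Z}^{0,A}_{\Delta}$.

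I do not expect a genuine obstacle here — the lemma is essentially a bookkeeping statement — but the one point requiring care is the interaction of the exempted bottom row with the run condition (R3'). Specifically, when the bottom row of $Y$ is part of a run of equal-length rows ending in half-blocks, deleting it shortens that run by one; one must confirm that the shortened run is still either exempt (it now starts in the bottom row of $\overline{Y}$) or long enough. Since the run in question is by hypothesis exempt in $Y$ (it contains the bottom row of $Y$), after truncation it either vanishes, or shrinks to a run containing the new bottom row, which is again exempt in $\overline{Y}$; no length bound needs to be verified. The remaining subtlety — that the parity-dependent involution $\kappa$ swaps the roles of labels $\{0,1\}$ and $\{n-1,n\}$ under a single-row shift — is immaterial because all the defining conditions are symmetric under $\kappa$ (they refer to the label sets $\{0,1\}$ and $\{n-1,n\}$ as unordered sets and to the label range $\{1,\dots,n-2\}$, all $\kappa$-stable). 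With these remarks in place the verification is complete.
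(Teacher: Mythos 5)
Your proof is correct in substance, and it fills in what the paper leaves implicit: the paper's own ``proof'' of Lemma~\ref{lem:truncclosed} is the single sentence that the statement is clear from the definitions, so any argument here is necessarily the same unwinding of (A1'), (A2'), (R1')--(R3') that you carry out.

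One intermediate claim in your write-up is false as stated, though the lemma survives it. You assert that \emph{every} salient block of $\overline{Y}$ corresponds to a salient block of $Y$ not lying in the bottom row of $Y$, and conclude that $\overline{Y}$ satisfies (A1') ``with no exceptional block needed at all.'' This fails for the new bottom row of $\overline{Y}$: by Definition~\ref{def:globsalient} the leftmost missing block(s) of the bottom row are always salient, so truncation typically \emph{creates} a salient block in row $1$ of $Y$ (viewed as the bottom row of $\overline{Y}$) that was not salient in $Y$ --- e.g.\ the block at position $(1,m)$ above the old salient block, whose label can perfectly well lie in $\{2,\dots,n-2\}$ and which is generally unsupported. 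This is exactly why conditions (A1'), (A2'), (R1') and (R2') carry bottom-row exemptions: they are designed to absorb the newly created salient block, not merely the exceptional block that $Y$ itself may have had. Your argument for rows strictly above the new bottom row (salience and direct supportedness depend only on the given row and the one immediately beneath it, both of which survive truncation, and all conditions are $\kappa$-symmetric) is correct, as is your treatment of (R3'), where the run containing the new bottom row is exempt by definition. So the fix is one sentence --- observe that new salient blocks can only appear in the bottom row of $\overline{Y}$, where every condition grants an exception --- rather than claiming no new salient blocks appear at all.
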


The sets introduced so far can be placed in the following commutative diagram:
\[
\begin{array}{ccccc}
\mathcal{Z}_{\Delta}^0  & \hra  & \mathcal{Z}_{\Delta}'  & &  \\[0.2cm]
\hda& &\hda  & &\\[0.2cm]
\mathcal{Z}_{\Delta}^{0,A} & \hra &\mathcal{Z}_{\Delta}^A &\hra &\mathcal{Z}_{\Delta}^P.
\end{array}
\]
The relatives of a Young wall $Y\in \mathcal{Z}_{\Delta}^0$ are the same in $\mathcal{Z}_{\Delta}'$ and in $\mathcal{Z}_{\Delta}^A$, but there may be some new relatives in $\mathcal{Z}_{\Delta}^P$. This will cause no problem. Even though we are interested in strata of the Young walls in the upper row, it is easier to work in the lower row because of Lemma \ref{lem:truncclosed}.

The notion of a closing datum generalizes word by word for ideals in the stratum of $Y \in {\mathcal Z}_\Delta^A$. For ideals in the stratum of Young walls in $\mathcal{Z}_\Delta^P$ we have to relax it, since not all salient blocks of label $1$, $n-1$ or $n$ are supported. A \emph{partial closing datum} for a Young wall in $\mathcal{Z}_\Delta^P$ is given by associating to some of its salient blocks of label $c \in\{0, 1,n-1,n\}$ a support block for label $c$ in the previous antidiagonal and below the salient block, in such a way that to each support block for label $c$ at most one salient block of label $c$ is associated. We say that those salient blocks to which there is an associated support block are \emph{closed}. The set of all partial closing data for $Y \in \mathcal{Z}_\Delta^P$ will be denoted as $\mathrm{pcd}(Y)$. Closing data are special partial closing data for Young walls in ${\mathcal Z}_\Delta^A$, in which all salient blocks of label $1$, $n-1$ or $n$ are closed, except possibly one on the bottom row. 

Fix a Young wall $Y \in \mathcal{Z}_\Delta^P$ such that in the bottom row the salient block is a support block for label $c \in\{0,1,n-1,n\}$. Let $\overline{Y}$ be the truncation of $Y$. By Lemma~\ref{lem:truncclosed}, $\overline{Y} \in \mathcal{Z}_\Delta^P$. If $\overline{d}$ is a partial closing datum associated to some $\overline{I} \in Z_{\overline{Y}}$, then using the decomposition of Theorem \ref{thm:zinfty} we can extend it to a partial closing datum for each ideal in the fiber over $\overline{I}$; in particular, if $I \in Z_Y(k_1,l_1,k_2,l_2)$ for some pairs $(k_1,l_1)$ and $(k_2,l_2)$, and either of these is a salient block of label $c$, then we associate to them the support block in the bottom row. By induction, we obtain
\begin{corollary}
Given $Y \in \mathcal{Z}_\Delta^P$, every $I \in Z_{Y}$ defines a unique partial closing datum $d(I)\in\mathrm{pcd}(Y)$.
\end{corollary}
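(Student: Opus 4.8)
The plan is to prove the statement by induction on the number $l$ of nonempty rows of $Y$, following the inductive scheme already used in the proof of Theorem~\ref{thm:Zstrata}. For the base case $l=1$, the Young wall $Y$ has no support block at all, since every support block is a missing full block sitting immediately below a divided block and hence lies in a row above the bottom one; consequently $\mathrm{pcd}(Y)$ consists only of the empty partial closing datum, and every $I \in Z_Y$ trivially determines it.

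For the inductive step, assume the claim for all Young walls in $\mathcal{Z}_\Delta^P$ with fewer than $l$ rows, and let $Y$ have exactly $l$ rows. By Lemma~\ref{lem:truncclosed}, the truncation $\overline{Y}$ lies in $\mathcal{Z}_\Delta^P$ and has $l-1$ rows. Given $I \in Z_Y$, put $\overline{I} = T(I) \in Z_{\overline{Y}}$, using the morphism $T\colon Z_Y \to Z_{\overline{Y}}$ of Lemma~\ref{lemma:inductive_morphism}; by the induction hypothesis $\overline{I}$ defines a unique partial closing datum $\overline{d} = d(\overline{I}) \in \mathrm{pcd}(\overline{Y})$. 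The support blocks of $Y$ coincide with those of $\overline{Y}$, except possibly for one extra block: the salient block of $Y$ in its bottom row, precisely when this block is a full block sitting immediately below a divided block, i.e. in the situation of Case~\ref{sec:proof:orbicells}.3 with bottom vertex of type A. In every other case ($m \not\equiv 1 \; \mathrm{mod}\; n-2$, or a type B bottom vertex) no new support block appears, $\mathrm{pcd}(Y)$ is canonically identified with $\mathrm{pcd}(\overline{Y})$, and we simply set $d(I) = \overline{d}$, which is then the unique possibility.

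It remains to treat the Case~\ref{sec:proof:orbicells}.3 extension. Here the bottom salient block sits at position $(0,m)$ with a divided block of labels $(c_1,c_2)$ immediately above; let $S$, $S_1$, $S_2$ be the index sets for $V_{0,m,j}$, $V_{1,m,c_1}$, $V_{1,m,c_2}$ respectively. By Theorem~\ref{thm:zinfty}, $I$ lies in a unique stratum $Z_Y(k_1,l_1,k_2,l_2)$ with $(k_1,l_1,k_2,l_2)\in I(S,S_1,S_2)$; concretely, the ``new'' part of $I$ on the $(m+1)$-st antidiagonal meets exactly the cells $V_{k_i,l_i,c_i}$ that are not already met by $L_1\overline{I}$, these being salient blocks of label $c_i$ (or absent when $(k_i,l_i)=\emptyset$). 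We extend $\overline{d}$ to $d(I)$ by assigning the bottom-row support block to each such salient block $(k_i,l_i)$. Uniqueness then follows at once, since $d(I)$ is reconstructed from $\overline{d}=d(T(I))$ (unique by induction) together with the stratum of $Z_Y$ containing $I$ (unique by Theorem~\ref{thm:zinfty}), and both data are determined by $I$.

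The one point requiring care is the verification that this extension genuinely lands in $\mathrm{pcd}(Y)$: one must check that the assignment read off from $(k_1,l_1,k_2,l_2)$ never attaches two salient blocks of the same label to the single bottom-row support block, and that it is compatible with the supporting rules of~\ref{sec:support}. I expect this to be the main (though routine) obstacle. However, both conditions are already built into the definition of $I(S,S_1,S_2)$ and its partition $I(S,S_1,S_2)_e$ used in Theorem~\ref{thm:zinfty} --- the pairs $(k_i,l_i)$ range over $S_i^s \cup \{\emptyset\}$ and so record at most one salient block per label --- so no new argument beyond unwinding these definitions is needed.
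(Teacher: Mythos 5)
Your proof is correct and follows exactly the paper's argument: the paper likewise obtains the corollary by induction on the number of rows, truncating the bottom row via Lemma~\ref{lem:truncclosed} and using the stratification of Theorem~\ref{thm:zinfty} to read off, from the stratum $Z_Y(k_1,l_1,k_2,l_2)$ containing $I$, which salient half-blocks on the next antidiagonal get assigned the new bottom-row support block. One small repair: your justification of the base case is backwards --- a support block sits \emph{below} a divided block, so it can perfectly well lie in the bottom row of a one-row Young wall; the cleanest fix is to start the induction at $l=0$ (the empty wall, where $\mathrm{pcd}$ really is trivial) and let your inductive step, which already works when $\overline{Y}$ is empty, handle $l=1$.
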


For $d \in \mathrm{pcd}(Y)$, let $Z_{Y}(d) \subseteq Z_{Y}$ be the subset of those ideals which have partial closing datum~$d$. Then
\[  Z_{Y}=\sqcup_{d \in \mathrm{pcd}(Y)} Z_{Y}(d) \]
is a locally closed decomposition of the stratum $Z_{Y}$. Similarly, for an element $Y \in \mathcal{Z}_\Delta^A$ let $\widetilde{W}_{Y}(d) \subseteq \widetilde{W}_{Y}$ be the subset of those ideals which have closing datum $d$. Then
\[  \widetilde{W}_{Y}=\sqcup_{d \in \mathrm{cd}(Y)} \widetilde{W}_{Y}(d) \]
is a locally closed decomposition.

\section{Euler characteristics of strata and the coarse generating series}
\label{sec:eulercgen}

In this section, we derive information about the topological Euler characteristics of the strata of the coarse Hilbert scheme 
constructed above. 

Fix a Young wall $Y \in \mathcal{Z}_\Delta^P$, and a partial closing datum $d \in \mathrm{pcd}(Y)$. We say that a support block for label $c$ is \emph{of type $e \in \{-1,0,1\}$} if, when its row is considered as the bottom row, the associated half blocks according to $d$ are in the set $I(S_1,S_2,S)_e$ in the notation of Theorem \ref{thm:zinfty}. 

\begin{lemma} 
\label{lem:chiclosefiber}
Assume that $(Y,d)$ are such such that in the bottom row of $Y$, the salient block $b$ of label $j \in \{2,n-2\}$ is a support block for label $c \in\{0,1,n-1,n\}$. Let $(\overline{Y},\overline{d})$ be the truncation of $(Y, d)$. 
If the salient block $b$ is of type $e\in \{-1,0,1\}$, then
\[\chi(Z_Y(d))=e\cdot \chi(Z_{\overline{Y}}(\overline{d})).\]
\end{lemma}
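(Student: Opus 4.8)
The plan is to prove this by analyzing the fiber product diagram from Proposition~\ref{prop:fiberproduct}, combined with the stratification refinements of Theorem~\ref{thm:zinfty} and Corollary~\ref{cor:xinfty}. First I would recall that in the setup at hand, $m\equiv 1\pmod{n-2}$, so we are in Case~\ref{sec:proof:orbicells}.3 of the inductive construction, and there is a scheme-theoretic fiber product diagram
\[
\begin{array}{ccc}
Z_Y & \to & {\mathcal X}_Y = X_S^{S_1,S_2}\\
\downarrow & & \downarrow\\
Z_{\overline Y} & \to & {\mathcal Y}_{\overline Y} = Y_{\overline S}^{S_1,S_2}
\end{array}
\]
Restricting this to the locally closed pieces indexed by partial closing data, I would argue that $Z_Y(d)$ is the preimage under $Z_Y \to {\mathcal X}_Y$ of the stratum $X_S^{S_1,S_2}(k_1,l_1,k_2,l_2)$, where the pair of positions $(k_1,l_1),(k_2,l_2)$ encodes precisely the data of which cells on the $(m+1)$-st antidiagonal the new generator's image hits, i.e.\ how $d$ extends $\overline d$ at the salient block $b$. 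Here the ``type $e$'' of $b$ means exactly that $(k_1,l_1,k_2,l_2)\in I(S_1,S_2,S)_e$ in the notation of Theorem~\ref{thm:zinfty}.

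Next I would invoke Corollary~\ref{cor:xinfty}: the induced morphism
\[
\omega\times\mathrm{Id}\times\mathrm{Id}\colon X_S^{S_1,S_2}(k_1,l_1,k_2,l_2)\longrightarrow Y_{\overline S}^{S_1,S_2}(S_1',S_2')
\]
is a fibration whose fibers have Euler characteristic $e$ whenever $(k_1,l_1,k_2,l_2)\in I(S,S_1',S_2')_e$. By base change along $Z_{\overline Y}(\overline d)\to {\mathcal Y}_{\overline Y}$ (which lands in the appropriate union of pieces $Y_{\overline S}^{S_1,S_2}(S_1',S_2')$ as $\overline d$ varies over $Z_{\overline Y}(\overline d)$), the morphism $Z_Y(d)\to Z_{\overline Y}(\overline d)$ is a fibration with the same fibers, of Euler characteristic $e$. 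Since Euler characteristic is multiplicative in Zariski-locally-trivial fibrations (and more generally in fibrations with constant fiber Euler characteristic, by stratifying the base), we conclude $\chi(Z_Y(d)) = e\cdot \chi(Z_{\overline Y}(\overline d))$.

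The main obstacle I anticipate is bookkeeping: making precise the identification between the combinatorial data $d$ versus $\overline d$ at the bottom row and the index tuple $(k_1,l_1,k_2,l_2)\in I(S,S_1(\overline U),S_2(\overline U))$ appearing in Lemma~\ref{lem:fiberstrat}. One must check that, as $\overline U$ (equivalently, the image of $I$ in $Z_{\overline Y}$) varies within $Z_{\overline Y}(\overline d)$, the relevant pair of partial profiles $(S_1(\overline U),S_2(\overline U))$ stays in the subset $\mathcal I_S^{S_1,S_2}$ for which the tuple $(k_1,l_1,k_2,l_2)$ lies in the same $I(S,S_1',S_2')_e$; in other words, the ``type'' $e$ is constant along $Z_{\overline Y}(\overline d)$. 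This is where one genuinely uses that $Z_{\overline Y}(\overline d)$ was defined by fixing the partial closing datum, which fixes exactly this combinatorial information. Once that is in place, the Euler characteristic identity follows formally from Corollary~\ref{cor:xinfty} together with Proposition~\ref{prop:fiberproduct}, so I would keep the writeup short and reference those results directly rather than redoing the fiber analysis.
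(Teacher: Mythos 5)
Your proposal is correct and follows essentially the same route as the paper: both identify $Z_Y(d)$ over $Z_{\overline{Y}}(\overline{d})$ with the restriction of the stratum $Z_Y(k_1,l_1,k_2,l_2)$ via the fiber product diagram of Proposition~\ref{prop:fiberproduct}, and then read off the fiberwise Euler characteristic $e$ from the stratification of the incidence varieties. The only cosmetic difference is that the paper cites Theorem~\ref{thm:zinfty} directly for the fibers of $T$ on $Z_Y(k_1,l_1,k_2,l_2)$, whereas you unroll that statement back to Corollary~\ref{cor:xinfty} and base change; your remark that the type $e$ is constant along $Z_{\overline{Y}}(\overline{d})$ because the partial closing datum fixes exactly this combinatorial data is the same point the paper makes implicitly.
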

\begin{proof}
Using the notations of \ref{sec:loci:strata}, let $(k_1,l_1,k_2,l_2)$ be the quadruple of the half blocks associated to the support block, and consider the diagram
\[
\begin{array}{ccccc}
 Z_{Y}(d) & \subseteq & Z_{Y} & \ra{\phantom}  & \mathcal{X}_{Y} \\
& & \da{T} & & \da{\omega \times \mathrm{Id}}  \\
Z_{\overline{Y}}(\overline{d}) & \subseteq & Z_{\overline{Y}} &\ra{\phantom} & \mathcal{Y}_{\overline{Y}}.
\end{array}
\]
Returning once again to the process proving Theorem~\ref{thm:dnorbcells}, when we obtained $Z_{Y}$ from the truncation $Z_{\overline{Y}}$, we saw that those ideals in $Z_{Y}$ that does not have a generator in the strata of the missing half blocks at $(k_1,l_1)$ and at $(k_2,l_2)$ are necessarily in $Z_{Y}(k_1,l_1,k_2,l_2)$ and all ideals in $Z_{Y}(k_1,l_1,k_2,l_2)$ have this property. Formally, a point of $Z_{Y}$ over $Z_{\overline{Y}}(\overline{d})$ is in $Z_{Y}(d)$ if and only if
\[((I \cap P_{m,j}) \setminus (I \cap \overline{P}_{m,j})) \cap P_{m+1,c_i} \dashv V_{k_i,l_i,c_i} \textrm{ for } i=1,2.\]
Under the operation $T$ the space $Z_{Y}(d) $ necessarily maps onto $Z_{\overline{Y}}(\overline{d})$. Hence, we get that
\[Z_{Y}(d) = Z_{\overline{Y}}(\overline{d}) \times_{Z_{\overline{Y}}} Z_Y(k_1,l_1,k_2,l_2).\]
By Theorem \ref{thm:zinfty} the fibers of $T$ on $Z_{Y}(k_1,l_1,k_2,l_2)$ have Euler characteristic $e$. Thus
\[\chi(Z_Y(d))=e\cdot\chi(Z_{\overline{Y}}(\overline{d})).\]
\end{proof}

For $e \in \{-1,0,1\}$ and $c \in \{0/1,n-1/n\}$ let $s_{e}(d,c)$ be the number of support blocks for label $c$ of type $e$, and let $s_e(d)=s_e(d,0/1)+s_e(d,n-1/n)$.
Applying Lemma \ref{lem:chiclosefiber} inductively, we get the following. 
\begin{proposition} 
\label{prop:chiclosestratum}
For $Y \in \mathcal{Z}_\Delta^P$ and $d \in \mathrm{pcd}(Y)$,
\[\chi(Z_Y(d))=(-1)^{s_{-1}(d)} \cdot 0^{s_{0}(d)} \cdot 1^{s_{1}(d)},\]
where we adopt the convention $0^0=1$. 
\end{proposition}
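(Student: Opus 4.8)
\textbf{Proof proposal for Proposition~\ref{prop:chiclosestratum}.}

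The statement is about the Euler characteristic of the stratum $Z_Y(d)$ for $Y\in\mathcal{Z}_\Delta^P$ and a partial closing datum $d\in\mathrm{pcd}(Y)$. The plan is to prove it by induction on the number of rows of $Y$, using Lemma~\ref{lem:chiclosefiber} as the inductive step. The base case is the one-row Young wall, where there are no support blocks below the bottom row, so $s_{-1}(d)=s_0(d)=s_1(d)=0$, and $Z_Y(d)=Z_Y$ is a single point (or an affine space, by Theorem~\ref{thm:Zstrata}); either way $\chi(Z_Y(d))=1=(-1)^0\cdot 0^0\cdot 1^0$, consistent with the stated convention $0^0=1$.

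For the inductive step, I would take a Young wall $Y$ with $l>0$ rows and its truncation $\overline{Y}$, together with the truncated partial closing datum $\overline{d}$. By Lemma~\ref{lem:truncclosed}, $\overline{Y}\in\mathcal{Z}_\Delta^P$, so the inductive hypothesis applies: $\chi(Z_{\overline{Y}}(\overline{d}))=(-1)^{s_{-1}(\overline{d})}\cdot 0^{s_0(\overline{d})}\cdot 1^{s_1(\overline{d})}$. Now I would split into cases according to the nature of the salient block $b$ of $Y$ in its bottom row. The key case is when $b$ is a support block of label $c\in\{0,1,n-1,n\}$ occurring on an antidiagonal $m\equiv 1\pmod{n-2}$, i.e.\ $b$ has label $j\in\{2,n-2\}$ and sits immediately below a divided block; here Lemma~\ref{lem:chiclosefiber} directly gives $\chi(Z_Y(d))=e\cdot\chi(Z_{\overline{Y}}(\overline{d}))$, where $e\in\{-1,0,1\}$ is the type of $b$. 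Since passing from $\overline{d}$ to $d$ adds exactly one support block, namely $b$, of type $e$, to the count, we have $s_e(d)=s_e(\overline{d})+1$ and $s_{e'}(d)=s_{e'}(\overline{d})$ for $e'\neq e$; multiplying the inductive formula by $e$ in each of the three cases $e=-1,0,1$ yields exactly the claimed formula (again using $0\cdot 0^k=0^{k+1}$ and the convention $0^0=1$). In the remaining cases, where $b$ is not a support block for any label in $\{0,1,n-1,n\}$ on such an antidiagonal, the salient block in the bottom row either contributes an honest new generator or the fibration $Z_Y\to Z_{\overline{Y}}$ is already an affine fibration with one-point Euler characteristic fibres on the relevant locus (Cases~\ref{sec:proof:orbicells}.1, \ref{sec:proof:orbicells}.2, \ref{sec:proof:orbicells}.4, via Propositions~\ref{prop:incvar1}, \ref{prop:incvar2}, \ref{prop:incvar4}); in all these subcases $T\colon Z_Y(d)\to Z_{\overline{Y}}(\overline{d})$ restricts to a Zariski-locally-trivial affine fibration, hence $\chi(Z_Y(d))=\chi(Z_{\overline{Y}}(\overline{d}))$, and no new support block is created, so $s_e(d)=s_e(\overline{d})$ for all $e$ and the formula is preserved.

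The main obstacle, and the place requiring genuine care, is the bookkeeping: one must check that ``type $e$ support block'' is well-defined for each support block of $Y$ independently of the order in which rows are removed, and that exactly the support blocks of $Y$ (not of $\overline{Y}$) are those that get counted, with their types determined by the local index-set data $I(S_1,S_2,S)_e$ of Theorem~\ref{thm:zinfty}. In particular one needs that when $b$ is a support block of label $c$ in the bottom row, its type as computed ``when its row is the bottom row'' (the definition in the text) agrees with $e$ appearing in Lemma~\ref{lem:chiclosefiber}; this is essentially built into the statement of that lemma, so the argument reduces to verifying that the case distinction on bottom-row salient blocks is exhaustive and that in the non-support cases no support block is introduced. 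Once that combinatorial checklist is in place, the multiplicativity of $\chi$ along the trivial fibrations furnished by Lemma~\ref{lem:chiclosefiber} and the incidence propositions makes the induction go through mechanically. I would present the proof as: (i) state the induction on the number of rows; (ii) handle the base case; (iii) dispatch the ``no new support block'' cases via the affine fibration structure; (iv) in the support-block case apply Lemma~\ref{lem:chiclosefiber} and multiply through, treating $e=-1,0,1$ separately and invoking $0^0=1$.
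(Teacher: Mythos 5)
Your proposal is correct and follows exactly the route the paper takes: the paper's entire proof is the single sentence ``Applying Lemma~\ref{lem:chiclosefiber} inductively, we get the following,'' and your argument is precisely the expansion of that induction on the number of rows, with the base case, the support-block case handled by Lemma~\ref{lem:chiclosefiber}, and the remaining cases handled by the affine-fibration structure of $T$ where no new support block is counted. The bookkeeping concerns you flag (well-definedness of the type $e$ and exhaustiveness of the case split) are exactly the details the paper leaves implicit, and your treatment of them is sound.
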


\begin{corollary} \label{cor:inftychar0} 
\hspace{2em}
\begin{enumerate}
\item Let $Y \in {\mathcal Z}_\Delta^P$ and $d \in \mathrm{pcd}(Y)$. If $Y$ contains a salient block of any label to which a support block not immediately below it is associated under $d$, then $\chi(Z_{Y}(d))=0$.
\item Let $Y \in {\mathcal Z}_\Delta^{A}$ and $d \in \mathrm{cd}(Y)$.
\begin{enumerate}[(a)]
\item If $Y$ contains a nondirectly supported salient block of label 1, $n-1$ or $n$, then $\chi(\widetilde W_{Y}(d))=0$.
\item If $Y$ does not contain any nondirectly supported salient block of label 1, $n-1$ or $n$, but $d$ is not contributing, then $\chi(\widetilde W_{Y}(d))=0$.
\end{enumerate}
\end{enumerate}
\end{corollary}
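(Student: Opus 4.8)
\textbf{Proof plan for Corollary~\ref{cor:inftychar0}.}

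The plan is to derive both parts directly from Proposition~\ref{prop:chiclosestratum}, which computes $\chi(Z_Y(d))$ as a product over the support blocks of $(-1)^{s_{-1}(d)}\cdot 0^{s_0(d)}\cdot 1^{s_1(d)}$ with the convention $0^0=1$. So the entire argument reduces to detecting, in each of the hypothesized configurations, at least one support block \emph{of type $0$}, since a single factor of $0^1=0$ kills the whole Euler characteristic. First I would recall the precise meaning of ``type'' from Section~\ref{sec:loci:strata}: a support block for label $c$, when its row is viewed as the bottom row, contributes an index quadruple $(k_1,l_1,k_2,l_2)$ lying in one of the sets $I(S,S_1,S_2)_e$ for $e\in\{-1,0,1\}$, and by the description of those index sets a support block is of type $0$ precisely when the quadruple lies in $I(S,S_1,S_2)_0$, i.e.\ when at least one of the associated half-blocks is a genuine salient half-block \emph{above and to the left} of the support block (not immediately above it), as opposed to the ``diagonal'' cases $(1,m,\emptyset)$, $(\emptyset,1,m)$, $(1,m,1,m)$, $(\emptyset,\emptyset)$ which give $e=\pm 1$.

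For part~(1): suppose $Y\in\mathcal Z_\Delta^P$ and $d\in\mathrm{pcd}(Y)$ associates to some salient block $b'$ of label $c$ a support block $b$ that is \emph{not} immediately below $b'$. Then when the row of $b$ is considered as the bottom row, $b'$ (and possibly its partner half-block of label $\kappa(c)$) appears among the associated half-blocks at a position strictly to the left of the position directly above $b$; by the supporting rules recalled at the end of~\ref{sec:support}, this is exactly the situation in which the quadruple of $b$ lies in $I(S,S_1,S_2)_0$. Hence $b$ is a support block of type $0$, so $s_0(d)\geq 1$, and Proposition~\ref{prop:chiclosestratum} gives $\chi(Z_Y(d))=0$. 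I would take care to note that the partner half-block situation (labels $n-1/n$ paired with $0/1$ via $\kappa$) does not change the conclusion, since in every such configuration the relevant quadruple still falls in the $e=0$ stratum.

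For part~(2): here $Y\in\mathcal Z_\Delta^A$ and $d\in\mathrm{cd}(Y)$ is a genuine closing datum, so by definition every salient block of label $1$, $n-1$ or $n$ is closed except possibly one in the bottom row. In case~(a), a nondirectly supported salient block of label $1$, $n-1$ or $n$ is, by definition, closed by a support block not immediately below it, so part~(1) applies verbatim and $\chi(\widetilde W_Y(d))=\chi(Z_Y(d)\cap\widetilde W_Y)$; since $\widetilde W_Y(d)$ is a constructible subset of $Z_Y(d)$ cut out by the same combinatorics of the join structure, the type-$0$ support block forces a punctured-$\SC^*$ (Euler characteristic zero) contribution along that block, exactly as in Lemma~\ref{lem:chiclosefiber} and Lemma~\ref{lem:fiberstrat} — I would invoke the stratification of the fibres of $\omega\times\mathrm{Id}\times\mathrm{Id}$ over $\widetilde W_Y$ proved in~\ref{proofofethrm} rather than re-running the computation. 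In case~(b), all salient blocks of label $1$, $n-1$, $n$ are directly supported but $d$ is non-contributing, meaning $d$ associates to at least one salient block (of label $0$, say, or one of the others but then via its direct support, contradiction — so necessarily a label-$0$ block) a support block that is not the one immediately below it; again this yields a type-$0$ support block, and the same argument gives $\chi(\widetilde W_Y(d))=0$. The main obstacle I anticipate is bookkeeping: making sure that ``not immediately below'' in the global Young-wall language of Definition~\ref{def:globsalient} translates correctly into membership in $I(S,S_1,S_2)_0$ in every case of the supporting rules (the three sub-cases $S^{b,0}$, $S^{b,c_i}$, $S^{b,c_1\cup c_2}$), and that the passage from $Z_Y(d)$ to its constructible subset $\widetilde W_Y(d)$ does not accidentally remove precisely the $\SC^*$-fibres that make the Euler characteristic vanish — this is where I would lean most heavily on the explicit fibrewise descriptions already established.
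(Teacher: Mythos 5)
Your proposal is correct and follows exactly the paper's (very terse) argument: part (1) is read off from Proposition~\ref{prop:chiclosestratum} by observing that a salient block closed by a support block not immediately below it forces that support block into the type-$0$ stratum $I(S,S_1,S_2)_0$, hence $s_0(d)\geq 1$ and the product vanishes; both halves of (2) then reduce to (1). The only slip is the parenthetical in your treatment of (2b) — a non-contributing closing datum need not single out a label-$0$ block, since even a directly supported block could in principle be assigned a different, non-adjacent support block under $d$ — but this does not affect the conclusion, as any such block triggers part (1).
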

\begin{proof}
(1) follows from Proposition \ref{prop:chiclosestratum}, and both parts of (2) follows from (1).
\end{proof}

The main ingredient for calculating the coarse generating series is the following statement. 
\begin{proposition} 
\label{prop:relsum1}
For all $Y \in {\mathcal Z}_\Delta^{0,A}$,
\[ \sum_{Y' \in \mathrm{Rel}(Y)} \chi(\widetilde W_{Y'})=1.\]
\end{proposition}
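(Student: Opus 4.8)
The plan is to prove Proposition~\ref{prop:relsum1} by induction on the number of rows of the Young walls $Y' \in \mathrm{Rel}(Y)$, combined with the geometric fibration results of \autoref{sec:proof:orbicells} and the Euler-characteristic computation in Corollary~\ref{cor:inftychar0}. The idea is that Corollary~\ref{cor:inftychar0}(2.a) already kills all strata $\widetilde W_{Y'}$ for which $Y'$ has at least one indirectly supported salient block of label $n-1$ or $n$ above the bottom row; and Corollary~\ref{cor:inftychar0}(2.b) kills all strata whose closing datum is non-contributing. So the sum $\sum_{Y' \in \mathrm{Rel}(Y)} \chi(\widetilde W_{Y'}) = \sum_{Y' \in \mathrm{Rel}(Y)} \sum_{d' \in \mathrm{cd}(Y')}\chi(\widetilde W_{Y'}(d'))$ immediately reduces to a sum over the subset of relatives $Y'$ for which every salient block of label $1$, $n-1$, $n$ not in the bottom row is directly supported (so condition (R1') holds), together with contributing closing data $d'$ only. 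For such $(Y',d')$, Proposition~\ref{prop:chiclosestratum} gives $\chi(\widetilde W_{Y'}(d')) = (-1)^{s_{-1}(d')}\cdot 0^{s_0(d')}\cdot 1^{s_1(d')}$, so only those pairs where no support block is of type $0$ survive, with a sign $(-1)^{s_{-1}(d')}$.

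\textbf{The inductive step.} First I would recall the truncation map $T\colon \widetilde W_{Y'}\to \widetilde W_{\overline{Y'}}$; by Lemma~\ref{lem:truncclosed}, $\mathcal{Z}^{0,A}_\Delta$ is closed under bottom-row removal, and the reduction map commutes with truncation in the appropriate sense, so $\mathrm{red}(\overline{Y'})=\overline{\mathrm{red}(Y')}$ up to the combinatorial care needed for rows ending in $n-1$/$n$-half-blocks. This lets me organize $\mathrm{Rel}(Y)$ by the structure of the bottom row and run the induction on the hypothesis $\sum_{Y''\in\mathrm{Rel}(\overline Y)}\chi(\widetilde W_{Y''})=1$. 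The base case is $Y$ having one row, where the statement is a direct check: the relatives differ only in the bottom-row configuration and a small explicit enumeration (the analogue of Example~\ref{ex:singr1} for a single row) gives total Euler characteristic $1$. For the inductive step, I would fix $Y \in \mathcal{Z}^{0,A}_\Delta$, consider its truncation $\overline Y$, and for each relative $Y''$ of $\overline Y$ analyze the fiber of $T$ over $\widetilde W_{Y''}$: the relatives $Y'$ of $Y$ with $\overline{Y'}=Y''$ correspond to the allowed ways of attaching a bottom row, and the combinatorics of the support/salient relationship (the supporting rules at the end of \autoref{sec:support}, together with the index sets $I(S,S_1,S_2)_e$) tracks exactly how $\chi$ multiplies. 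Summing over these attachments, the alternating signs from type-$(-1)$ support blocks and the vanishing from type-$0$ support blocks must conspire so that $\sum_{Y':\overline{Y'}=Y''}\chi(\widetilde W_{Y'})=\chi(\widetilde W_{Y''})$, after which the induction hypothesis closes the argument.

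\textbf{Case analysis at the bottom row.} The bulk of the work is the case-by-case verification of the identity $\sum_{Y':\overline{Y'}=Y''}\chi(\widetilde W_{Y'})=\chi(\widetilde W_{Y''})$, distinguishing according to where and how the bottom row of $Y'$ ends: whether the salient block in the bottom row is a full block of label $2$ or $n-2$ (so we are in the setting of Lemma~\ref{lem:chiclosefiber} and the fibration multiplies $\chi$ by a factor in $\{-1,0,1\}$), or a half block of label $0$, $1$, $n-1$, or $n$, or an undivided full block. In each case I would enumerate the allowed bottom-row extensions modulo the Young wall rules and modulo (R2'), (R3'), and check that the contributions telescope. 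The reductions (R1)--(R3) governing which extensions are identified under $\mathrm{red}$ (extending rows to enforce (R1)/(R2), then shortening the $n-1$/$n$-runs to enforce (R3)) are what make the alternating sum collapse to $1$: a relative created by the inverse of move (R3) contributes a type-$(-1)$ block that cancels against a sibling, while the inverse of moves (R1)/(R2) adds directly supported blocks contributing factor $1$.

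\textbf{Main obstacle.} The hard part will be bookkeeping the interaction between the two halves of a divided block when both $S^{c_1}$ and $S^{c_2}$ support blocks are present (the intersection $S^{b,0}\cup S^{b,c_1\cup c_2}$ in \autoref{sec:support}), since there the supporting rules allow zero, one, or two salient blocks to be supported by a single support block, and the Euler-characteristic factor depends subtly on this choice. Getting the signs right in the alternating sum over these configurations -- in particular verifying that the type-$0$ vanishing exactly cancels the combinatorial over-counting of non-contributing data -- is where the argument is most delicate, and where the explicit join-of-varieties structure from \autoref{sec:joins} (via Lemma~\ref{lem:affjoinchar}) does the essential geometric work. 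I expect this to require a careful enumeration organized by the type $e\in\{-1,0,1\}$ of each salient/support pair, reducing ultimately to the observation that for a fixed set of $0$-blocks the signed count of valid closing configurations over all relatives equals $1$.
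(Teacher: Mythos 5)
Your overall strategy matches the paper's: reduce via Corollary~\ref{cor:inftychar0} to relatives satisfying (R1') with contributing closing data only, then induct on the number of rows by truncation and verify a telescoping identity case by case at the bottom row. The roles of Lemma~\ref{lem:chiclosefiber} and Proposition~\ref{prop:chiclosestratum} are correctly identified, as is the location of the difficulty (the divided-block case).

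The gap sits exactly at that difficulty. When the block above the bottom-row salient block is divided, $\overline{S}$ is not maximal, and both $S_1,S_2$ are maximal, genuinely new relatives appear (those containing the extra label-$(n-1)$, label-$n$, or label-$1$ half-block, produced by the inverses of moves (R3)/(R2)), and the telescoping $-1+1+1=1$ requires the equality $\chi(\widetilde W_{Y_{i,c}}(d_{i,c}))=\chi(\widetilde W_{\overline Y_i}(\overline d_i))$ for these new relatives. This does not follow from the results you invoke: Lemma~\ref{lem:affjoinchar} and the join structure of \autoref{sec:joins} feed into Theorem~\ref{thm:zinfty} and hence into Proposition~\ref{prop:chiclosestratum}, but they say nothing about the strata of the new relatives, whose bottom rows differ from that of $Y$ and which are therefore not fibers of the same truncation map. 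The paper supplies a separate argument here: a restriction morphism $\widetilde W_{\overline Y_i}(\overline d_i)\to\widetilde W_{\overline Y_{i,c}}(\overline d_{i,c})$ (forgetting the generator in the cell $V_{0,m,c}$), whose fibers are affine spaces because the admissible generators in that cell are closed under affine combinations. Without this ingredient, or a substitute for it, your claimed identity $\sum_{Y'}\chi(\widetilde W_{Y'})=\chi(\widetilde W_{Y''})$ over each fiber of truncation is unsupported. Note also that the sign bookkeeping runs the other way from what you describe: it is the relative with the closed salient block-pair (the one whose support block has type $-1$) that picks up the factor $-1$ from Lemma~\ref{lem:chiclosefiber}, while each of the two new relatives — or, in the $0/1$ case, the second contributing closing datum together with the single new relative — contributes $+1$.
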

Recall from Section \ref{sec:possalmosid} that the relatives of a Young wall $Y\in \mathcal{Z}_{\Delta}^0$ are the same in $\mathcal{Z}_{\Delta}'$ and in $\mathcal{Z}_{\Delta}^A$, but there may be some new relatives in $\mathcal{Z}_{\Delta}^P$ which are not in $\mathcal{Z}_{\Delta}^A$. On the other hand, since $\widetilde W_{Y}=\emptyset$ for $Y \in \mathcal{Z}_{\Delta}^P \setminus \mathcal{Z}_{\Delta}^A$ Proposition \ref{prop:relsum1} implies
\begin{corollary} 
\label{cor:relsum1}
For all $Y \in {\mathcal Z}_\Delta^{0}$,
\[ \sum_{Y' \in \mathrm{Rel}(Y)} \chi(\widetilde W_{Y'})=1.\]
\end{corollary}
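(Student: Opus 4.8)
\textbf{Proof plan for Proposition~\ref{prop:relsum1}.}

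The plan is to reduce the sum $\sum_{Y'\in\mathrm{Rel}(Y)}\chi(\widetilde W_{Y'})$ to a manageable form by first discarding all contributions that vanish, using Corollary~\ref{cor:inftychar0}, and then performing an explicit case-by-case combinatorial analysis of the surviving relatives. As a first step I would use Corollary~\ref{cor:inftychar0}(2.a): if $Y'\in\mathrm{Rel}(Y)$ has a nondirectly supported salient block of label $n-1$ or $n$ above the bottom row, then for every closing datum $d\in\mathrm{cd}(Y')$ we have $\chi(\widetilde W_{Y'}(d))=0$, hence $\chi(\widetilde W_{Y'})=\sum_{d\in\mathrm{cd}(Y')}\chi(\widetilde W_{Y'}(d))=0$. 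This means that in the sum we may restrict to the subset of relatives in which the only salient blocks above the bottom row are directly supported $0/1$- or $(n-1)/n$-block-pairs, or arbitrary label-$0$ salient blocks; in particular every surviving $Y'$ satisfies (R1'). Next I would invoke Corollary~\ref{cor:inftychar0}(2.b): among these surviving $Y'$, any closing datum $d$ that is not contributing gives $\chi(\widetilde W_{Y'}(d))=0$. So in the end
\[
\sum_{Y'\in\mathrm{Rel}(Y)}\chi(\widetilde W_{Y'})=\sum_{\substack{Y'\in\mathrm{Rel}(Y)\\ \text{(R1') holds}}}\ \sum_{\substack{d\in\mathrm{cd}(Y')\\ d\ \text{contributing}}}\chi(\widetilde W_{Y'}(d)),
\]
and by Proposition~\ref{prop:chiclosestratum} every term on the right is $\pm1$ (in fact, for a contributing datum, every support block is of type $\pm1$, so there is no $0$-factor and $\chi(\widetilde W_{Y'}(d))=(-1)^{s_{-1}(d)}$).

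With this reduction in place, the core of the proof is a combinatorial bijection/sign argument. The relatives of $Y$ are obtained by running the inverse reduction moves of Lemma~\ref{lem:1red}: the inverse of (R1)/(R2) (which removes blocks, turning some previously present blocks into salient ones) and the inverse of (R3) (which removes a half-block at the end of a short run of equal-length rows ending in $n-1$/$n$-blocks, with a left-triangle/right-triangle ambiguity). I would organize the set of surviving relatives $Y'$ together with their contributing closing data as a directed poset built from these inverse moves, and show that the alternating sum telescopes: each non-minimal configuration pairs up with exactly one other configuration of opposite sign $(-1)^{s_{-1}}$, so that all terms cancel except the single term corresponding to $Y'=Y$ itself with its (unique, trivial, all-directly-supported) closing datum, which contributes $+1$ because $s_{-1}=0$ in that case. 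Concretely, the sign $(-1)^{s_{-1}(d)}$ counts the parity of the number of support blocks of type $-1$, i.e. those positions where the associated half-block-pair is of the ``both halves closed'' kind (the $I(S_1,S_2,S)_{-1}$ case); each application of an inverse-(R3) move toggles exactly one such position, giving the needed sign flip between a configuration and its partner.

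The main obstacle I expect is precisely the bookkeeping in this last step: one must show that the map ``perform one inverse move / undo one inverse move'' is a genuine involution on the set of pairs $(Y',d)$ with $Y'$ surviving and $d$ contributing, is fixed-point-free except at $(Y,\text{trivial})$, and flips the sign $(-1)^{s_{-1}(d)}$. The delicate points are: (i) keeping track of how the set of salient blocks and support blocks changes under each inverse move, so that the notion of ``contributing closing datum'' is preserved and the correspondence between $d$ before and after the move is canonical; (ii) handling the left/right triangle ambiguity in inverse-(R3), which is where distinct relatives can collapse under $\mathrm{red}$ and where the pairing must be defined carefully (as in the proof of Lemma~\ref{lem:yprimeunique}); and (iii) treating separately the exceptional short-run length $n-1$ with the leading label-$1$ block, flagged in condition (R3)/(R3'), which behaves differently (cf.~Example~\ref{ex:singr2}). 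I would carry out the argument by induction on the total number of rows of $Y'$ above the fixed bottom-row data, using Lemma~\ref{lem:chiclosefiber} and Lemma~\ref{lem:truncclosed} to push the computation down to the truncation $\overline Y$, so that the inductive hypothesis handles everything except the interaction of the bottom row with the row immediately above it — which is exactly the finite case-check governed by the vertex types A1--A3, B1--B3 listed in~\ref{sec:proof:orbicells} and the supporting rules at the end of~\ref{sec:support}.
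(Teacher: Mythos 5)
Your plan reproduces the paper's proof of Proposition~\ref{prop:relsum1} (from which Corollary~\ref{cor:relsum1} follows at once, since relatives lying in $\mathcal{Z}_{\Delta}^P\setminus\mathcal{Z}_{\Delta}^A$ have empty strata $\widetilde W$): the same two vanishing reductions via Corollary~\ref{cor:inftychar0}, followed by induction on the number of rows with a finite case analysis of how the bottom row meets the divided block above it, and your ``sign-reversing involution'' is a repackaging of the paper's local computation $-\chi+\chi+\chi=\chi$ at each inductive step. The one point your plan files under ``bookkeeping'' that the paper handles with a genuinely geometric argument: for the new relatives $Y_{i,c}$ created by the inverse-(R3) move, the needed equality $\chi(\widetilde W_{Y_{i,c}}(d_{i,c}))=\chi(\widetilde W_{\overline Y_i}(\overline d_i))$ is established by constructing a morphism $\widetilde W_{\overline Y_i}(\overline d_i)\to\widetilde W_{\overline Y_{i,c}}(\overline d_{i,c})$ with affine-space fibres, since Lemma~\ref{lem:chiclosefiber} only compares a Young wall with its \emph{own} truncation and so does not by itself relate strata of distinct relatives; either supply that fibration or first justify rigorously that every surviving stratum individually has Euler characteristic $(-1)^{s_{-1}(d)}$ (i.e.\ that $\widetilde W_{Y'}(d')=Z_{Y'}(d')$ for closing data, so that Proposition~\ref{prop:chiclosestratum} applies verbatim), which your plan asserts but does not prove.
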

\begin{proof}[Proof of Proposition \ref{prop:relsum1}]
By Corollary \ref{cor:inftychar0} (2.a), the strata associated to those Young walls $Y' \in \mathrm{Rel}(Y)$ that have at least one undirectly supported salient block of label $n-1$ or $n$ above the bottom row do not contribute to the sum. Therefore we can restrict our attention to the subset of Young walls in which the salient blocks not in the bottom row are
\begin{itemize}
\item directly supported salient block-pairs of label $0/1$ or $n-1/n$, or
\item arbitrary salient blocks of label $0$.
\end{itemize}
In particular, we can assume that all Young walls we are working with satisfy (R1'). Moreover, by Corollary \ref{cor:inftychar0} (2.b) we can assume that each closing datum is contributing. That is, for each salient block of label 1, $n-1$ or $n$ not in the bottom row, the associated support block is immediately below it, and this holds also for those label 0 salient blocks which have an associated support block. 

As in the proof of Theorem~\ref{thm:Zstrata}, we will prove the proposition by induction on the number of rows. Fix a Young wall $Y \in {\mathcal Z}_\Delta^{0,A}$. We re-write the statement into the form
\[ \sum_{Y' \in \mathrm{Rel}(Y)}\sum_{d' \in \mathrm{cd}(Y')} \chi(\widetilde W_{Y'}(d'))=1.\]
Let $\overline{Y}$ be the truncation of $Y$; by Lemma~\ref{lem:truncclosed}, $\overline{Y}\in {\mathcal Z}_\Delta^{0,A}$ also. 

Assume first that the block above the salient block in the bottom row
of $Y$ is not divided. Then the relatives of $Y$ are exactly the
extensions of those of $\overline{Y}$ with the bottom row of $Y$, a
closing datum on any such Young wall extends uniquely to the extended
Young wall, and the corresponding strata are isomorphic. In this case, the induction step is obvious.

Assume next that the block above the salient block in the bottom row of $Y$ is divided. Let $S$ be the index set of the block at $(0,m)$, $S_1$ and $S_2$ the index sets for the blocks at $(1,m)$. The following cases can occur:
\begin{enumerate}
\item $\overline{S}$ is maximal;
\item  $\overline{S}$ is not maximal, and exactly one of $S_1$ or $S_2$ is maximal;
\item  $\overline{S}$ is not maximal, and both $S_1$ and $S_2$ are maximal.
\end{enumerate}

If $\overline{S}$ is maximal, then to each relative $\overline{Y}'\in{\rm Rel}(\overline{Y})$ there corresponds a unique relative $Y'\in{\rm Rel}(Y)$ which satisfies (R1') and (R2'): we extend each relative of $\overline{Y}$ with the bottom row of $Y$. A closing datum $\overline{d}'$ on one of these relatives $\overline{Y}'$ can be extended to a closing datum $d'$ on the corresponding relative of $Y$ by assigning $(\emptyset,\emptyset)$ to the new salient block appearing in the bottom row. By Theorem \ref{thm:zinfty}, $(\emptyset,\emptyset) \in I(S,S_1,S_2)_1$, and
\[ \sum_{Y' \in \mathrm{Rel}(Y)}\sum_{d' \in \mathrm{cd}(Y')}  \chi(\widetilde W_{Y'}(d'))=\sum_{\overline{Y}' \in \mathrm{Rel}(\overline{Y})}\sum_{\overline{d}' \in \mathrm{cd}(\overline{Y}')}  \chi(\widetilde W_{\overline{Y}'}(\overline{d}'))=1.\]

If $\overline{S}$ is not maximal, and $S_i$ is maximal while $S_{3-i}$ is not, then the missing block at $(1,m)$ is not salient, and the subspace in its stratum is necessarily in the image of the subspace at $(0,m)$. Again, to each relative of $\overline{Y}$ there corresponds a unique relative of $Y$, and
\[ \sum_{Y' \in \mathrm{Rel}(Y)}\sum_{d' \in \mathrm{cd}(Y')} \chi(\widetilde W_{Y'}(d'))=\sum_{\overline{Y}' \in \mathrm{Rel}(\overline{Y})} \sum_{\overline{d}' \in \mathrm{cd}(\overline{Y}')}\chi(\widetilde W_{\overline{Y}'}(\overline{d}'))=1.\]

Finally, suppose that $\overline{S}$ is not maximal, but $S_1$ and $S_2$ are maximal.
In this case new relatives appear when we add the bottom row of $Y$ to $\overline{Y}$. We investigate the two possible cases individually. 

If the divided block above the new salient block is a full $n-1$/$n$ block, then for each relative of $\overline{Y}$ there are two other relatives, which contain either the label $(n-1)$ half block or the label $n$ half block. If the relatives of $\overline{Y}$ are $\{\overline{Y}_0=\overline{Y},\overline{Y}_1,\overline{Y}_2,\dots \}$, then the relatives of $Y$ are 
\[ \{Y_0=Y,Y_1,Y_2,\dots \}\cup \bigcup_{c\in\{n-1, n\}} \{Y_{0,c}=Y_c,Y_{1,c},Y_{2,c},\dots \}.\]
We obtain these by performing the inverse of the move (R3) in the algorithm of Lemma \ref{lem:1red}.
The addition of the bottom row of $Y$ to an $\overline{Y_i}$  schematically looks like this:
\begin{center}
\begin{tikzpicture}[scale=0.5]
\draw (1,6) -- (1,7);
\draw (1,7) -- (0,7);
\draw (1,6) -- (2,6);
\draw[dashed] (1,7) -- (2,7) --(2,6);
\draw[dashed] (1,7) -- (2,6);

\draw (1,0) -- (1,2);
\draw (1,2) -- (0,2);
\draw (1,0) -- (2,0);
\draw[dashed] (1,1) -- (2,1) --(2,0);

\draw (-4,0) -- (-4,1);
\draw (-4,2) -- (-5,2);
\draw (-4,0) -- (-3,0);
\draw(-4,2) -- (-3,1) -- (-4,1);
\draw[dashed]  (-3,1) -- (-3,0);

\draw (6,0) -- (6,2);
\draw (6,2) -- (5,2);
\draw (6,0) -- (7,0);
\draw (6,2) -- (7,1) -- (7,2) -- (6,2);
\draw[dashed] (6,1) -- (7,1) --(7,0);

\draw[->] (1,5) -- (1,3);
\draw[->] (0,5) -- (-2,3);
\draw[->] (2,5) -- (4,3);
\draw (1,8) node {$\overline{Y}_i$};
\draw (1,-1) node {$Y_i$};
\draw (-4,-1) node {$Y_{i,n-1}$};
\draw (6,-1) node {$Y_{i,n}$};
\end{tikzpicture}
\end{center}
Here the block of label $c$ is in the complement of the Young wall $Y_{i,c}$, while its pair is in it. There is only one contributing closing datum on each of $Y_i$, $Y_{i,n-1}$ and $Y_{i,n}$ extending a contributing closing datum $\overline{d}_i$ on $\overline{Y}_i$. Namely, to the new support block of $Y_i$ we associate both the divided block above it, and every other part of $\overline{d}_i$ is kept constant. We denote these by $d_i$, $d_{i,n-1}$ and $d_{i,n}$, respectively.

We claim that for $c\in\{n-1,n\}$,
\[ \chi(\widetilde W_{Y_{i,c}}(d_{i,c})) = \chi(\widetilde W_{\overline{Y}_i}(\overline{d}_i)). \]

To show this, we define a morphism $\widetilde W_{\overline{Y}_i}(\overline{d}_i) \to \widetilde W_{\overline{Y}_{i,c}}(\overline{d}_{i,c})$, where $\overline{Y}_{i,c}$ is the truncation of $Y_{i,c}$. This morphism is given by the restriction of an ideal $I \in \widetilde W_{\overline{Y}_i}(\overline{d}_i)$ to the union of those cells whose block is missing from $\overline{Y}_{i,c}$. The Young wall $\overline{Y}_{i,c}$ has the same salient blocks as  $\overline{Y}_{i}$ except a half block in the bottom row. Hence, the result is necessarily an ideal, which has the same generators as $I$ except for the cell $V_{0,m,c}$ where it does not have any generator. Therefore, the image of this morphism is in $W_{\overline{Y}_{i,c}}(\overline{d}_{i,c})$; here $(0,m)$ is the position of the salient block-pair in the bottom row of $\overline{Y}_i$, which is also the position of the salient half block of label $\kappa(c)$ in the bottom row of $\overline{Y}_{i,c}$. 

Assume that there are two ideals $I, I' \in \widetilde W_{\overline{Y}_i}(\overline{d}_i)$ which map to the same element of $\widetilde W_{\overline{Y}_{i,c}}(\overline{d}_{i,c})$ under this morphism. Then they only differ in the function at $V_{0,m,c}$, or more precisely in the point of $V_{0,m,c}/\overline{U}$ which represents the subspace in $V_{0,m,c}$. Here $\overline{U}=I\cap \overline{P}_{m,c}=I'\cap \overline{P}_{m,c}$. Then any ideal $I''$ which is their affine linear combination, i.e.~whose corresponding subspace in $V_{0,m,c}$ is a linear combination of those of $I$ and $I'$, is also an element of $\widetilde W_{\overline{Y}_i}(\overline{d}_i)$, mapped to the same ideal as $I$ and $I'$ under the morphism. In particular, the fibers of the morphism are affine spaces.
Taking into account that $\chi(\widetilde W_{Y_{i,c}}(d_{i,c}))=\chi(\widetilde W_{\overline{Y}_{i,c}}(\overline{d}_{i,c}))$, this proves the claim.

Thus,
\begin{gather*}
\chi(\widetilde W_{Y_i}(d_i))+\chi(\widetilde W_{Y_{i,n-1}}(d_{i,n-1}))+\chi(\widetilde W_{Y_{i,n}}(d_{i,n})) \\ =  -\chi(\widetilde W_{\overline{Y}_i}(\overline{d}_i))+\chi(\widetilde W_{\overline{Y}_i}(\overline{d}_i))+\chi(\widetilde W_{\overline{Y}_i}(\overline{d}_i))
   =  \chi(\widetilde W_{\overline{Y}_i}(\overline{d}_i)),
 \end{gather*}
 where in the first equality we used Lemma \ref{lem:chiclosefiber}.

Second, if the divided block above the new salient block is a full 0/1 block, then for each relative of $\overline{Y}$ there is a new relative which contains the label $1$ half block (and possibly some other blocks above it).
Let us denote the relatives of $\overline{Y}$ as $\{\overline{Y}_0=\overline{Y},\overline{Y}_1,\overline{Y}_2,\dots \}$. Then the relatives of $Y$ are 
\[ \{Y_0=Y,Y_1,Y_2,\dots \}\cup \{Y_{0,1}, Y_{1,1},Y_{2,1},\dots \}. \]
We obtain these by performing the inverse of the move (R2) in the algorithm of Lemma \ref{lem:1red}. Schematically:
\begin{center}
\begin{tikzpicture}[scale=0.5]
\draw (1,6) -- (1,7);
\draw (1,7) -- (0,7);
\draw (1,6) -- (2,6);
\draw[dashed] (1,7) -- (2,7) --(2,6);
\draw[dashed] (1,7) -- (2,6);

\draw (1,0) -- (1,2);
\draw (1,2) -- (0,2);
\draw (1,0) -- (2,0);
\draw[dashed] (1,1) -- (2,1) --(2,0);


\draw (6,0) -- (6,2);
\draw (6,2) -- (5,2);
\draw (6,0) -- (7,0);
\draw (6,2) -- (7,1) -- (7,2) -- (6,2);
\draw[dashed] (6,1) -- (7,1) --(7,0);

\draw[->] (1,5) -- (1,3);
\draw[->] (2,5) -- (4,3);
\draw (1,8) node {$\overline{Y}_i$};
\draw (1,-1) node {$Y_i$};
\draw (6,-1) node {$Y_{i,1}$};
\end{tikzpicture}
\end{center}
In this case $Y_{i,0}$ cannot appear as a relative, since that would change the 0-weight.
Given a contributing closing datum $\overline{d}_i$ on $\overline{Y}_i$, shifting it appropriately gives a
unique contributing closing datum $d_{i,1}$ on $Y_{i,1}$. On $Y_{i}$, $\overline{d}_i$ induces 
two contributing closing data:
\begin{itemize}
\item $d_i$ is obtained by associating the support block in the bottom row to the salient half-block pair above it;
\item $d_i'$ is obtained by associating the support block in the bottom row to the salient half block of label 1 above it only.
\end{itemize}

Again, using Lemma \ref{lem:chiclosefiber} we get
\begin{gather*}
\chi(\widetilde W_{Y_i}(d_i))+\chi(\widetilde W_{Y_i}(d'_i))+\chi(\widetilde W_{Y_{i,1}}(d_{i,1}))  \\ =  -\chi(\widetilde W_{\overline{Y}_i}(\overline{d}_i))+\chi(\widetilde W_{\overline{Y}_i}(\overline{d}_i))+\chi(\widetilde W_{\overline{Y}_i}(\overline{d}_i)) = \chi(\widetilde W_{\overline{Y}_i}(\overline{d}_i)).
\end{gather*}
Summing over these, we obtain in all cases 
 \[ \sum_{Y' \in \mathrm{Rel}(Y)}\sum_{d' \in \mathrm{cd}(Y')}  \chi(\widetilde W_{Y'}(d))=\sum_{\overline{Y}' \in \mathrm{Rel}(\overline{Y})}\sum_{\overline{d}' \in \mathrm{cd}(\overline{Y}')}  \chi(\widetilde W_{\overline{Y}'}(\overline{d})),\]
which proves the induction step.
\end{proof}

Putting everything together, we obtain the following result, the analogue of Corollary~\ref{cor:Acombinatorial} in type~$D$. 
\begin{theorem} Let $\Delta$ be of type $D_n$. Then the generating function of Euler characteristics of the coarse Hilbert schemes of points of the corresponding singular surface $\SC^2/G_\Delta$ is given by the  combinatorial generating series
\[\sum_{m=0}^\infty \chi\left(\mathrm{Hilb}^m(\SC^2/G_\Delta)\right)q^m = \sum_{Y\in {\mathcal Z}_\Delta^0} q^{{\rm wt}_0(Y)}.
\]
\label{thm:dnsingcells}
\end{theorem}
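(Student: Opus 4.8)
The plan is to assemble the results of this chapter into the stated identity in four steps: pass from the coarse Hilbert scheme to its torus-fixed locus, invoke the stratification of Theorem~\ref{thm:assdiag}, reorganise the resulting sum along the fibres of the reduction map ${\rm red}$, and finally apply Corollary~\ref{cor:relsum1}. None of these steps involves new geometry — the genuinely substantive work is already contained in Proposition~\ref{prop:relsum1}, which I am allowed to assume.

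First I would fix $m$ and observe that $\mathrm{Hilb}^m(\SC^2/G_\Delta)$ carries the diagonal $T=\SC^\ast$-action, and that by Theorem~\ref{thm:assdiag} it decomposes into the finitely many locally closed strata $\mathrm{Hilb}(\SC^2/G_\Delta)_Y$ with $Y\in{\mathcal Z}_\Delta'$ and ${\rm wt}_0(Y)=m$. Each such stratum is $T$-invariant, because the pullback map $i^\ast$ is $T$-equivariant and the orbifold strata $\mathrm{Hilb}([\SC^2/G_\Delta])_Y$ are $T$-invariant. Since the topological Euler characteristic of a variety equipped with an algebraic $\SC^\ast$-action equals that of its fixed locus (the same torus-localization principle already used in the proof of Theorem~\ref{thm:singsurface}), and since the $T$-fixed locus of $\mathrm{Hilb}(\SC^2/G_\Delta)_Y$ is precisely $W_Y\cong\widetilde W_Y$ — the isomorphism $W_Y\cong\widetilde W_Y$ via $i^\ast$ being established in the proof of Theorem~\ref{thm:assdiag} — additivity of $\chi$ over the finite locally closed decomposition gives
\[
\sum_{m=0}^\infty \chi\!\left(\mathrm{Hilb}^m(\SC^2/G_\Delta)\right)q^m \;=\; \sum_{Y\in{\mathcal Z}_\Delta'} \chi(\widetilde W_Y)\, q^{{\rm wt}_0(Y)}.
\]

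Next I would group the right-hand sum along the fibres of the reduction map ${\rm red}\colon{\mathcal Z}_\Delta'\to{\mathcal Z}_\Delta^0$ of Lemma~\ref{lem:1red}. This map is surjective, its fibre over $Y_0\in{\mathcal Z}_\Delta^0$ is by definition the set of relatives $\mathrm{Rel}(Y_0)$, and it preserves the $0$-weight; hence
\[
\sum_{Y\in{\mathcal Z}_\Delta'} \chi(\widetilde W_Y)\, q^{{\rm wt}_0(Y)} \;=\; \sum_{Y_0\in{\mathcal Z}_\Delta^0} q^{{\rm wt}_0(Y_0)}\Bigg(\sum_{Y\in\mathrm{Rel}(Y_0)}\chi(\widetilde W_Y)\Bigg).
\]
By Corollary~\ref{cor:relsum1} the inner sum equals $1$ for every $Y_0\in{\mathcal Z}_\Delta^0$, and the asserted formula $\sum_m \chi(\mathrm{Hilb}^m(\SC^2/G_\Delta))q^m = \sum_{Y\in{\mathcal Z}_\Delta^0} q^{{\rm wt}_0(Y)}$ follows at once.

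The only point requiring care in this assembly, and hence the main (minor) obstacle, is the identification of the $T$-fixed locus of the coarse stratum $\mathrm{Hilb}(\SC^2/G_\Delta)_Y$ with $W_Y$ and the equality $\chi(\mathrm{Hilb}(\SC^2/G_\Delta)_Y)=\chi(\widetilde W_Y)$; I would settle this by the standard $\SC^\ast$-localization argument, making sure to note that each $\mathrm{Hilb}^m$ carries only finitely many strata (one for each Young wall of $0$-weight $m$), so that all generating series are well defined as formal power series with finite coefficients. Everything else is formal reorganisation of the preceding results.
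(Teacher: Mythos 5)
Your proposal is correct and follows essentially the same route as the paper's own proof: decompose via Theorem~\ref{thm:assdiag}, identify $\chi(\mathrm{Hilb}(\SC^2/G_\Delta)_Y)$ with $\chi(W_Y)$ by $\SC^\ast$-localization, and sum along the fibres of ${\rm red}$ using Corollary~\ref{cor:relsum1}. The extra care you take with $T$-invariance of the strata and finiteness of the coefficients is sound but does not change the argument.
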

\begin{proof} We use the decomposition of Theorem~\ref{thm:assdiag}. 
For $Y \in{\mathcal Z}_\Delta'$, we have $\mathrm{Hilb}(\SC^2/G_\Delta)_Y^T=W_{Y}$ and thus 
$\chi(W_{Y})=\chi(\mathrm{Hilb}(\SC^2/G_\Delta)_Y)$. Now use Corollary~\ref{cor:relsum1} to sum the Euler characteristics 
of the strata $W_Y$ along the fibres of the combinatorial reduction map ${\rm red}$ of Lemma~\ref{lem:1red}.
\end{proof}

\begin{example}
\label{ex:singr2}
Let $n=4$ and let $Y\in{\mathcal Z}_\Delta^0$ be the distinguished 0-generated Young wall
\begin{center}
\begin{tikzpicture}[scale=0.6, font=\footnotesize, fill=black!20]
 \draw (1, 0) -- (5,0);
  \foreach \x in {1,2,3,4}
    {
      \draw (\x, 0) -- (\x,\x);
    }
  
   \foreach \y in {1,2,3,4,5}
       {
            \draw (\y,\y) -- (6,\y);
       }
\draw (5, 0) -- (5,5);
 \draw (6, 1) -- (6,5);

\foreach \x in {0,1,2,3}
        {
        	\draw (\x+2.5, \x+0.5) node {2};
        }
  \foreach \x in {0,1}
       {
           \draw (\x+4.5, \x+0.5) node {2};
          }   
     
     \foreach \x in {0,2}
         {
                	\draw (\x+1.5, \x+0.5) node {0};
                	\draw (\x+2.5, \x+1.5) node {1};
                }
       \draw (5.5, 4.5) node {0};
 \foreach \x in {0,2}
           {
           \draw (\x+3.75,\x+0.75) node {4};
                        	\draw (\x+3.25,\x+0.25) node {3};
                        	\draw (\x+3,\x+1) -- (\x+4,\x+0);
           }
  \foreach \x in {1}
             {
             \draw (\x+3.75,\x+0.75) node {3};
                                     	\draw (\x+3.25,\x+0.25) node {4};
                                     	\draw (\x+3,\x+1) -- (\x+4,\x+0);
             }
     \draw (5,1)--(6,0)--(5,0);
     \draw (5.25,0.25) node {1};
             
\end{tikzpicture}
\end{center}
The parameter space $Z_Y$ of this Young wall $Y$ is isomorphic to that of Example \ref{ex:4sidepyr}, which in turn is isomorphic to that of Example \ref{ex:3sidepyr}. In particular, $Z_Y\cong \SA^1$. The difference is that in this case the salient blocks are the $0$-labelled blocks at $(0,4)$ and $(1,5)$. 

Denote by $Y_3$ and $Y_4$ the 0-generated non-distinguished Young walls
\begin{center}
\begin{tikzpicture}[scale=0.6, font=\footnotesize, fill=black!20]
 \draw (1, 0) -- (5,0);
  \foreach \x in {1,2,3,4}
    {
      \draw (\x, 0) -- (\x,\x);
    }
  
   \foreach \y in {1,2,3,4,5}
       {
            \draw (\y,\y) -- (6,\y);
       }
\draw (5, 0) -- (5,5);
 \draw (6, 1) -- (6,5);

\foreach \x in {0,1,2,3,4,5}
        {
        	\draw (\x+2.5, \x+0.5) node {2};
        }
  \foreach \x in {0,1}
       {
           \draw (\x+4.5, \x+0.5) node {2};
          }   
     
     \foreach \x in {0,2,4}
         {
                	\draw (\x+1.5, \x+0.5) node {0};
                	\draw (\x+2.5, \x+1.5) node {1};
                }
 \foreach \x in {0,2}
           {
           \draw (\x+3.75,\x+0.75) node {4};
                        	\draw (\x+3.25,\x+0.25) node {3};
                        	\draw (\x+3,\x+1) -- (\x+4,\x+0);
           }
  \foreach \x in {1}
             {
             \draw (\x+3.75,\x+0.75) node {3};
                                     	\draw (\x+3.25,\x+0.25) node {4};
                                     	\draw (\x+3,\x+1) -- (\x+4,\x+0);
             }
     \draw (5,1)--(6,0)--(5,0);
     \draw (5.25,0.25) node {1};
     
      \draw (6,4)--(7,3)--(7,4);
      \draw (6.75,3.75) node {3};
      \draw (7,5)--(8,4)--(8,5);
      \draw (7.75,4.75) node {4};
      \draw (8,6)--(9,5)--(9,6)--(8,6);
      \draw (8.75,5.75) node {3};
      \draw (6,4)--(7,4)--(7,6);
      \draw (6,5)--(8,5)--(8,6)--(6,6)--(6,5);
             
\end{tikzpicture}
\begin{tikzpicture}[scale=0.6, font=\footnotesize, fill=black!20]
 \draw (1, 0) -- (5,0);
  \foreach \x in {1,2,3,4}
    {
      \draw (\x, 0) -- (\x,\x);
    }
  
   \foreach \y in {1,2,3,4,5}
       {
            \draw (\y,\y) -- (6,\y);
       }
\draw (5, 0) -- (5,5);
 \draw (6, 1) -- (6,5);

\foreach \x in {0,1,2,3,4,5}
        {
        	\draw (\x+2.5, \x+0.5) node {2};
        }
  \foreach \x in {0,1}
       {
           \draw (\x+4.5, \x+0.5) node {2};
          }   
     
     \foreach \x in {0,2,4}
         {
                	\draw (\x+1.5, \x+0.5) node {0};
                	\draw (\x+2.5, \x+1.5) node {1};
                }
 \foreach \x in {0,2}
           {
           \draw (\x+3.75,\x+0.75) node {4};
                        	\draw (\x+3.25,\x+0.25) node {3};
                        	\draw (\x+3,\x+1) -- (\x+4,\x+0);
           }
  \foreach \x in {1}
             {
             \draw (\x+3.75,\x+0.75) node {3};
                                     	\draw (\x+3.25,\x+0.25) node {4};
                                     	\draw (\x+3,\x+1) -- (\x+4,\x+0);
             }
     \draw (5,1)--(6,0)--(5,0);
     \draw (5.25,0.25) node {1};
     
      \draw (6,4)--(7,3)--(6,3);
      \draw (6.25,3.25) node {4};
      \draw (7,5)--(8,4)--(7,4);
      \draw (7.25,4.25) node {3};
      \draw (8,6)--(9,5)--(8,5);
      \draw (8.25,5.25) node {4};
      \draw (6,4)--(7,4)--(7,6);
      \draw (6,5)--(8,5)--(8,6)--(6,6)--(6,5);
             
\end{tikzpicture}
\end{center}
We have $Y_3, Y_4\in {\mathcal Z}_\Delta'$ and are in fact $0$-generated, but both violate condition (R3) at their fourth row, so they are not distinguished. In the first step of the reduction algorithm of Lemma~\ref{lem:1red}, we remove the half block of label 3 (resp., 4) from $Y_3$ (resp, $Y_4$). Then we remove the blocks of label 2 and 4 (resp., 2 and 3) from the fifth row. Finally we remove the blocks of 1,2 and 3 (reps., 1,2 and 4) from the sixth row. This shows that both Young walls $Y_3, Y_4$ reduce to~$Y$. It is not difficult to see that these are in fact all the relatives of~$Y$. As explained in Example \ref{ex:4sidepyr}, when the two generators $(v_0,v_0') \in  V_{0,4,0} \times V_{1,5,0}$ are in a special position such that $L_1 v_0, v_0' \in (L_1 L_3)$, then the ideal $(v_0,v_0')$ has Young wall $Y_3$. Similarly, if $L_1 v_0, v_0' \in (L_1 L_4)$, then $(v_0,v_0')$ has Young wall $Y_4$. In fact we can think of the corresponding strata as gluing to one stratum inside the invariant Hilbert scheme $\mathrm{Hilb}^3(\SC^2/D_4)$, the strata of $Y_3$ and $Y_4$ ``patching in'' the gaps of the stratum of $Y$. At least on the level of Euler characteristics, this is what Proposition~\ref{prop:relsum1} shows in full generality.
\end{example}

\chapter{Type \texorpdfstring{$D_n$}{Dn}: abacus combinatorics}
\label{ch:dnabacus}
\label{CH:DNABACUS}

In this chapter we carry out all the combinatorial calculations that are left from the proofs of \autoref{thmorb} and \autoref{thmsing} in the type $D$ case. Fortunately, an abacus representation of the Young walls in type $D$ has already been developed by other authors \cite{kang2004crystal, kwon2006affine}. We carry out the enumerations in both the equivariant and the coarse case on the abacus. Along the way, we confine the abacus representation of 0-generated Young walls.

\section{Guide to \autoref{ch:dnabacus}}

Since the statements and methods in this chapter are again quite technical, we start with a short summary or extended abstract. Again, the lemmas and theorems are recollected with the same numbering as in the main body of the chapter.


Associated with each Young wall $Y \in\mathcal{Z}_\Delta$ there is an abacus configuration on an abacus specific to the type $D$ case. As in the type $A$ case, there are certain sets of blocks called \emph{bars} on $Y$ corresponding to the regular representation of $G_{\Delta}$. These bars can be removed from $Y$, and the removals correspond to operations on the associated abacus. A Young wall 
$Y\in\mathcal{Z}_\Delta$ will be called a core Young wall, if no bar can be removed from it without 
violating the Young wall rules. Let ${\mathcal C}_\Delta\subset\mathcal{Z}_\Delta$ denote the set of all 
core Young walls.

\begin{repproposition}{prop:dncoredecomp} Given a Young wall 
$Y\in \mathcal{Z}_\Delta$, any complete sequence of bar removals through Young walls results in the same
core $\mathrm{core}(Y)\in {\mathcal C}_\Delta$, defining a map of sets
\[ \mathrm{core}\colon \mathcal{Z}_\Delta\to {\mathcal C}_\Delta.\]
The process can be described on the abacus, and results
in a bijection
\begin{equation*} \CZ_\Delta  \longleftrightarrow  {\mathcal P}^{n+1}  \times  \CC_\Delta,\end{equation*}
where ${\mathcal P}$ is the set of ordinary partitions.
Finally, there is also a bijection
\begin{equation}\CC_\Delta\longleftrightarrow \SZ^n. \label{eq:corezncorrint}\end{equation}
\end{repproposition} 

\begin{reptheorem}{thm:orbiserdn} Let $q=q_0q_1q_2^2\dots q_{n-2}^2q_{n-1}q_n$, corresponding to a single bar. 
\begin{enumerate}
\item The multi-weight $\underline{q}^{\underline{wt}(Y)}=\prod_{j=0}^{n}q_j^{wt_j(Y)}$ of a core Young wall $Y\in\CC_\Delta$ corresponding to an element $\overline{m}=(m_1,\dots,m_n) \in \SZ^n$ under the correspondence \eqref{eq:corezncorrint} can be expressed as
\[q_1^{m_1}\cdot\dots\cdot q_n^{m_n}(q^{1/2})^{\overline{m}^\top \cdot C \cdot \overline{m}},\]
where $C$ is the Cartan matrix of type $D_n$.

\item The multi-weight generating series 
\[ Z_{\Delta}(q_0,\dots,q_n) = \sum_{Y\in \CZ_\Delta} \underline{q}^{\underline{wt}(Y)}\]
of Young walls for $\Delta$ of type $D_n$ can be written as
\begin{equation*} Z_{\Delta}(q_0,\dots,q_n)=\frac{\displaystyle\sum_{ \overline{m}=(m_1,\dots,m_n) \in \SZ^n }^\infty q_1^{m_1}\cdot\dots\cdot q_n^{m_n}(q^{1/2})^{\overline{m}^\top \cdot C \cdot \overline{m}}}{\displaystyle\prod_{m=1}^{\infty}(1-q^m)^{n+1}}.
\end{equation*}

\end{enumerate}
\end{reptheorem}

The main result of the chapter is
\begin{reptheorem}{thm:dnsubst} Let $\Delta$ be of type $D_n$, and let $\xi$ be a primitive $(2n-1)$-st root of unity. Then
the generating series of the set ${\mathcal Z}_\Delta^0$ of distinguished $0$-generated Young walls is given in 
terms of the generating function of all Young walls by the following substitution:
\[ \sum_{Y\in {\mathcal Z}_\Delta^0} q^{{\rm wt}_0(Y)} = Z_{\Delta}(q_0,\dots,q_n)\Big|_{q_0=\xi^2q,\, q_1=\dots=q_n=\xi}.
\]
\end{reptheorem}

For the proof of this, we first introduce a map 
\[p\colon{\mathcal Z}_\Delta \to{\mathcal Z}^0_\Delta,\]
and describe it combinatorially on the abacus.

Besides the abacus representation we introduce another representation of distinguished 0-generated Young walls in terms of sequences of integer pairs. Given $Y\in{\mathcal Z}_\Delta^0$, let $t_i$ denote the total number of beads in the $i$-th row of its abacus 
representation, and $l_i$ the number of beads in the rightmost position of the $i$-th row. We obtain a sequence of 
pairs $(t_i,l_i)_{i \in \SZ_+}$, only finitely many of which do not equal $(0,0)$. 

\begin{repcorollary}{lem:invcelldesc}
Given $Y\in{\mathcal Z}_\Delta^0$, the associated sequence of pairs $(t_i,l_i)_{i \in \SZ_+}$ satisfies the following 
conditions. 
\begin{enumerate}
\item For all $i$, $0 \leq l_i \leq t_i$.
\item For all $i$, if $t_i > 0$, then either $l_i=t_i$ is even, or $l_i$ is odd.
\item Either $\sum_i t_i$ is even, or $t_1-l_1=2n-4$.
\end{enumerate}
Conversely, any sequence  $(t_i,l_i)_{i \in \SZ_+}$ satisfying these conditions arises as a sequence associated to at 
least one Young wall 
$Y\in {\mathcal Z}_\Delta^0$. More precisely, the number of different Young walls $Y\in {\mathcal Z}_\Delta^0$ corresponding 
to any given sequence is $2^m$, where $m$ is the number of indices $i$ such that $t_i-l_i > 2n-2$. All Young walls~$Y$ 
corresponding to a single sequence have the same multi-weight, when the weights for labels $n-1$ and $n$ are counted 
together. 
\end{repcorollary}

\begin{repcorollary}{cor:Dn:subst}
Let $Y\in {\mathcal Z}^0_\Delta$ be a distinguished 0-generated Young wall described by the data $\{(t_i,l_i)_i\}$. Then
\[ 
\sum_{Y' \in p_{\ast}^{-1}(Y)} 
\underline{q}^{\underline{\mathrm{wt}}(Y')}\Big|_{q_1=\dots=q_n=\xi,\,q_0=\xi^2q} =q^{\mathrm{wt}_0(Y)} \xi^{\sum_{j\neq 0} (\mathrm{wt}_j(Y)-\dim\rho_j \cdot \mathrm{wt}_0(Y))-\sum_i c(t_i,l_i)}, 
\]
where $c(t_i,l_i)$ is an integer depending only on $t_i$ and $l_i$.
\end{repcorollary}

Then we show by reducing every $Y\in {\mathcal Z}^0_\Delta$ to a distinguished 0-generated core that
\[\xi^{\sum_{j\neq 0} (\mathrm{wt}_j(Y)-\dim\rho_j \cdot \mathrm{wt}_0(Y))-\sum_i c(t_i,l_i)}=1.\]
By Corollary \ref{cor:Dn:subst}, this implies that
\[ 
\sum_{Y' \in p_{\ast}^{-1}(Y)} 
\underline{q}^{\underline{\mathrm{wt}}(Y')}\Big|_{q_1=\dots=q_n=\xi,\,q_0=\xi^2q} =q^{\mathrm{wt}_0(Y)},
\]
which proves Theorem \ref{thm:dnsubst}.

\section{Young walls and abacus of type  \texorpdfstring{$D_n$}{Dn}}
\label{sec:Dabacus}
We continue to work with the root system~$\Delta$ of type~$D_n$, and introduce some associated combinatorics. From now on,
we return to the untransformed representation of the type~$D_n$ Young wall pattern introduced in~\ref{sec:PYW}.

Recalling the Young wall rules (YW1)-(YW4), it is clear that every $Y\in \CZ_\Delta$ can be decomposed as
$Y=Y_1 \sqcup Y_2$, where $Y_1\in \CZ_\Delta$ has full columns only, and $Y_2\in \CZ_\Delta$ has all its columns
ending in a half-block. These conditions define two subsets $\CZ^f_\Delta, \CZ^h_\Delta\subset \CZ_\Delta$.
Because of the Young wall rules, the pair $(Y_1, Y_2)$ uniquely reconstructs $Y$, so we get a bijection
\begin{equation}\label{decomp_D_YW}\CZ_\Delta \longleftrightarrow \CZ^f_\Delta\times \CZ^h_\Delta.\end{equation}

Given a Young wall $Y \in \mathcal{Z}_\Delta$ of type $D_n$, let $\lambda_k$ denote the number of 
blocks (full or half blocks both contributing 1) in the $k$-th vertical 
column. By the rules of Young walls, the resulting positive integers
$\{\lambda_1,\dots,\lambda_r\}$ form a partition $\lambda=\lambda(Y)$ of weight equal to the total weight $|Y|$, 
with the additional property that 
its parts $\lambda_k$ are distinct except when $\lambda_k \equiv 0\;\mathrm{mod}\; (n-1)$. 
Corresponding to the decomposition~\eqref{decomp_D_YW}, we get a decomposition 
$\lambda(Y) = \mu(Y)\sqcup \nu(Y)$. In $\mu(Y)$, no part is congruent to $0$ modulo $(n-1)$, and there 
are no repetitions; all parts in $\nu(Y)$ are congruent to $0$ modulo $(n-1)$ and repetitions are allowed. 
Note that the pair $(\mu(Y), \nu(Y))$ does almost, but not quite, encode $Y$, because of the ambiguity 
in the labels of half-blocks on top of non-complete columns.

We now introduce the type\footnote{Once again, we should call it type $\tilde{D}_n^{(1)}$, but we simplify for 
ease of notation.} $D_n$ abacus, following~\cite{kang2004crystal, kwon2006affine}. 
This abacus is the arrangement of positive integers, called positions, in the following pattern.

\vspace{.1in}
\begin{center}
\begin{tabular}{c c c c c c c c}
1 & \dots & $n-2$ & $n-1$ & $n$ & \dots & $2n-3$ & $2n-2$  \\
$2n-1$ & \dots & $3n-4$ & $3n-3$ & $3n-2$ & \dots & $4n-5$ & $4n-4$\\
\vdots & &\vdots & \vdots & \vdots& &\vdots & \vdots
\end{tabular}
\end{center}
\vspace{.1in}

For any integer $1 \leq k \leq 2n-2$, the set of positions in the $k$-th column of the abacus is the $k$-th 
runner, denoted $R_k$. As in type $A$, positions on the runners are occupied by beads. For $k \not\equiv 0\; \mathrm{mod}\; (n-1)$, 
the runners $R_k$ can only contain normal (uncolored) beads,
with each position occupied 
by at most one bead. On the runners $R_{n-1}$ and $R_{2n-2}$, the beads are colored white and black.
An arbitrary number of white or black beads can be put on each such position, 
but each position can only contain beads of the same color.

Given a type $D_n$ Young wall $Y \in \mathcal{Z}_\Delta$, let $\lambda=\mu\sqcup \nu$ be the corresponding 
partition with its decomposition. For each nonzero part $\nu_k$ of $\nu$, set 
\[ n_k=\#\{1 \leq j \leq l(\mu)\;|\;\mu_j < \nu_k \}\]
to be the number of full columns shorter than a given non-full column. 
The abacus configuration of the Young wall $Y$ is defined to be the set of beads placed at positions
$\lambda_1,\dots,\lambda_r$. The beads at positions $\lambda_k=\mu_j$ are uncolored; the
color of the bead at position $\lambda_k=\nu_l$ corresponding to a column $C$ of $Y$ is
\[
\begin{cases}
\textrm{white,} & \textrm{if the block at the top of } C \textrm{ is } 
\begin{tikzpicture}
 \draw (0, 0) -- (0.3,0.3);
 \draw (0, 0) -- (0.3,0);
 \draw (0.3, 0) -- (0.3,0.3);
\end{tikzpicture}
\textrm{ and } n_l \textrm{ is even;} \\ 
\textrm{white,} & \textrm{if the block at the top of } C \textrm{ is } 
\begin{tikzpicture}
 \draw (0, 0) -- (0.3,0.3);
 \draw (0, 0) -- (0,0.3);
 \draw (0, 0.3) -- (0.3,0.3);
\end{tikzpicture}
\textrm{ and } n_l \textrm{ is odd;}\\
\textrm{black,} & \textrm{if the block at the top of } C \textrm{ is } 
\begin{tikzpicture}
 \draw (0, 0) -- (0.3,0.3);
 \draw (0, 0) -- (0,0.3);
 \draw (0, 0.3) -- (0.3,0.3);
\end{tikzpicture}
\textrm{ and } n_l \textrm{ is even;} \\ 
\textrm{black,} & \textrm{if the block at the top of } C \textrm{ is } 
\begin{tikzpicture}
 \draw (0, 0) -- (0.3,0.3);
 \draw (0, 0) -- (0.3,0);
 \draw (0.3, 0) -- (0.3,0.3);
\end{tikzpicture}
\textrm{ and } n_l \textrm{ is odd.}\\
\end{cases}
\]
One can check that the abacus rules are satisfied, that all abacus configurations satisfying the above rules, 
with finitely many uncolored, black and white beads, can arise, and that the Young wall $Y$ is uniquely determined 
by its abacus configuration.

\begin{example}
The abacus configuration associated to the Young wall $Y_6$ of Example \ref{ex:singr1} is
\begin{center}
\begin{tikzpicture}

\draw (0,5) node {$R_1$};
\draw (1,5) node {$R_{2}$};
\draw (2,5) node {$R_{3}$};
\draw (3,5) node {$R_{4}$};
\draw (4,5) node {$R_{5}$};
\draw (5,5) node {$R_{6}$};

\draw (-0.5,4.8) -- (5.5,4.8);
\draw (2.5,5.3) -- (2.5,2.5);
\draw (1.5,5.3) -- (1.5,2.5);
\draw (4.5,5.3) -- (4.5,2.5);
\draw (5.5,5.3) -- (5.5,2.5);

\node at ( 0,4.5)  {1};
\node at ( 1,4.5)  {2};
\node at ( 2,4.5)  {3};
\node at ( 3,4.5)  {4};
\node at ( 4,4.5)  {5};
\node at ( 5,4.5) [circle,draw,inner sep=0pt,minimum size=12pt,fill=gray!50] {6};
\node at ( 0,4) [circle,draw,inner sep=0pt,minimum size=12pt] {7};
\node at ( 1,4) [circle,draw,inner sep=0pt,minimum size=12pt] {8};
\node at ( 2,4) {9};
\node at ( 3,4) [circle,draw,inner sep=0pt,minimum size=12pt] {10};
\node at ( 4,4) [circle,draw,inner sep=0pt,minimum size=12pt] {11};
\node at ( 5,4) [circle,draw,inner sep=0pt,minimum size=12pt] {12};
\node at ( 0,3.5)  {13};
\node at ( 1,3.5)  {14};
\node at ( 2,3.5)  {15};
\node at ( 3,3.5)  {16};
\node at ( 4,3.5)  {17};
\node at ( 5,3.5)  {18};
\node at ( 0.5,3) {\vdots};
\node at ( 3.5,3) {\vdots};
\end{tikzpicture}
\end{center}
\end{example}

\section{Core Young walls and their abacus representation} 
\label{sec:coreabacus}

In parallel with the type $A$ story, we now introduce the combinatorics of core Young walls of type $D_n$, and 
the corresponding abacus moves. On the Young wall side, define a {\em bar} to be a connected set of blocks
and half-blocks, with each half-block occuring once and each block occuring twice. A Young wall 
$Y\in\mathcal{Z}_\Delta$ will be called a {\em core} Young wall, if no bar can be removed from it without 
violating the Young wall rules. For an example of bar removal, see \cite[Example 5.1(2)]{kang2004crystal}. Let ${\mathcal C}_\Delta\subset\mathcal{Z}_\Delta$ denote the set of all 
core Young walls. The following statement is the type $D$ analogue of the discussion of~\ref{sec:Anabacus}. 

\begin{proposition}{\rm (\cite{kang2004crystal, kwon2006affine})} 
\label{prop:dncoredecomp}
Given a Young wall 
$Y\in \mathcal{Z}_\Delta$, any complete sequence of bar removals through Young walls results in the same
core $\mathrm{core}(Y)\in {\mathcal C}_\Delta$, defining a map of sets
\[ \mathrm{core}\colon \mathcal{Z}_\Delta\to {\mathcal C}_\Delta.\]
The process can be described on the abacus, respects the decomposition~\eqref{decomp_D_YW}, and results
in a bijection
\begin{equation} \CZ_\Delta  \longleftrightarrow  {\mathcal P}^{n+1}  \times  \CC_\Delta\label{Dpart_biject}\end{equation}
where ${\mathcal P}$ is the set of ordinary partitions.
Finally, there is also a bijection
\begin{equation}\label{Dcore_biject} \CC_\Delta\longleftrightarrow \SZ^n.\end{equation}
\end{proposition} 
\begin{proof} Decompose $Y$ into a pair of Young walls $(Y_1, Y_2)$ as above. Let us first consider $Y_1$. 
On the corresponding runners $R_k$, $k \not\equiv 0\; \textrm{ mod}\; (n-1)$,
the following steps correspond to bar removals \cite[Lemma 5.2]{kang2004crystal}.
\begin{enumerate}
 \item[(B1)] If $b$ is a bead at position $s>2n-2$, and there is no bead at position $(s-2n+2)$, then move $b$ 
one position up and switch the color of the beads at 
positions $k$ with $k \equiv 0\; \textrm{ mod}\; (n-1)$, $s-2n+2 < k < s$.
 \item[(B2)] If $b$ and $b'$ are beads at position $s$ and $2n-2-s$ ($1 \leq s \leq n-2$) respectively, then 
remove $b$ and $b'$ and switch the color of the beads 
at positions $k \equiv 0\; \textrm{ mod}\; (n-1), s< k < 2n-2-s$.
\end{enumerate}
Performing these steps as long as possible results in a configuration of beads on the runners $R_k$ with 
$k \not\equiv 0\; \textrm{ mod}\; (n-1)$ with no gaps from above, and for $1 \leq s \leq n-2$, beads on only one
of $R_s$, $R_{2n-2-s}$. This final configuration can be uniquely described by an ordered set of 
integers $\{z_1, \ldots, z_{n-2}\}$, $z_s$ being the number of beads on $R_s$ minus the number of beads 
on $R_{2n-2-s}$ \cite[Remark 3.10(2)]{kwon2006affine}. In the correspondence \eqref{Dcore_biject} this gives $\SZ^{n-2}$. It turns out that the reduction steps in this part of the algorithm can be encoded 
by an $(n-2)$-tuple of ordinary partitions, with the summed weight of these 
partitions equal to the number of bars removed \cite[Theorem 5.11(2)]{kang2004crystal}. 

Let us turn to $Y_2$, represented on the runners $R_k$, $k \equiv 0\; \textrm{ mod}\; (n-1)$. 
On these runners, the following steps correspond to bar removals \cite[Sections 3.2 and 3.3]{kwon2006affine}.
\begin{enumerate} 
\item[(B3)] Let $b$ be a bead at position $s\geq 2n-2$. If there is no bead at position $(s-n+1)$, and the beads at position $(s-2n+2)$ are of the same color as $b$, then shift $b$ up to position $(s-2n+2)$.
 \item[(B4)] If $b$ and $b'$ are beads at position $s\geq n-1$, then move them up to position $(s-n+1)$. If $s-n+1>0$ and this position already contains beads, then $b$ and $b'$ take that same color.
\end{enumerate}
During these steps, there is a boundary condition: there is an imaginary position $0$ in the rightmost column, 
which  is considered to contain unvisible white beads; placing a bead there means 
that this bead disappears from the abacus. 
It turns out that the reduction steps in this part of the algorithm can be described 
by a triple of ordinary partitions, again with the summed weight of these 
partitions equal to the number of bars removed \cite[Proposition 3.6]{kwon2006affine}. On the other hand, the final result can be encoded by a pair of 
$2$-core partitions which appeared in Section \ref{sec:Anabacus}.

The different bar removal steps (B1)-(B4) construct the map $c$ algorithmically and uniquely. The stated 
facts about parameterizing the steps prove the existence of the bijection~\eqref{Dpart_biject}. 
To complete the proof of~\eqref{Dcore_biject}, we only need to remark further that the set  
of $2$-core partitions, in our language $A_1$-core partitions, is in bijection with the set of integers
by bijection~\eqref{typeA-cores} in Section~\ref{sec:Anabacus} (see also \cite[Remark 3.10(2)]{kwon2006affine}). This gives the remaining $\SZ^{2}$ factor in the bijection \eqref{Dcore_biject}. 
\end{proof}

We next determine the multi-weight of a Young wall $Y$ in terms 
of the bijections \eqref{Dpart_biject}-\eqref{Dcore_biject}. 
The quotient part is easy: the multi-weight of each bar is $(1,1,2,\ldots, 2, 1,1)$, so in complete 
analogy with the type $A$ situation,
the contribution of the $(n+1)$-tuple of partitions to the multi-weight is easy to compute. Turning to 
cores, under the bijection $\CC_\Delta\leftrightarrow \SZ^n$, the total weight of a core Young wall $Y\in \CC_\Delta$ corresponding 
to $(z_1, \ldots, z_n)\in\SZ^n$ is calculated in \cite[Remark 3.10]{kwon2006affine}: 
\begin{equation} 
\label{eq:dncoreweight}
|Y|=\frac{1}{2}\sum_{i=1}^{n-2} \left((2n-2)z_i^2-(2n-2i-2)z_i\right)+(n-1)\sum_{i=n-1}^n\left(2z_{i}^2+z_{i}\right).
\end{equation}
A refinement of this formula calculates the multi-weight of $Y$.

\begin{theorem} Let $q=q_0q_1q_2^2\dots q_{n-2}^2q_{n-1}q_n$, corresponding to a single bar. 
\label{thm:orbiserdn}
\begin{enumerate}
\item Composing the bijection~\eqref{Dcore_biject} with an appropriate $\SZ$-change of coordinates in the lattice~$\SZ^n$, 
the multi-weight of a core Young wall $Y\in\CC_\Delta$ corresponding to an element $\overline{m}=(m_1,\dots,m_n) \in \SZ^n$
can be expressed as
\[q_1^{m_1}\cdot\dots\cdot q_n^{m_n}(q^{1/2})^{\overline{m}^\top \cdot C \cdot \overline{m}},\]
where $C$ is the Cartan matrix of type $D_n$.

\item The multi-weight generating series 
\[ Z_{\Delta}(q_0,\dots,q_n) = \sum_{Y\in \CZ_\Delta} \underline{q}^{\underline{wt}(Y)}\]
of Young walls for $\Delta$ of type $D_n$ can be written as
\begin{equation*} Z_{\Delta}(q_0,\dots,q_n)=\frac{\displaystyle\sum_{ \overline{m}=(m_1,\dots,m_n) \in \SZ^n }^\infty q_1^{m_1}\cdot\dots\cdot q_n^{m_n}(q^{1/2})^{\overline{m}^\top \cdot C \cdot \overline{m}}}{\displaystyle\prod_{m=1}^{\infty}(1-q^m)^{n+1}}.
\end{equation*}

\item The following identity is satisfied between the coordinates $(m_1,\dots,m_n)$ and $(z_1, \ldots, z_n)$ on~$\SZ^n$:
\begin{equation*} 
\sum_{i=1}^n m_i = -\sum_{i=1}^{n-2}(n-1-i)z_i-(n-1)c(z_{n-1}+z_n)-(n-1)b.
\end{equation*}
Here $z_1+\dots+z_{n-2}=2a-b$ for integers $a \in \SZ$, $b \in \{0,1\}$, and $c=2b-1 \in \{-1,1\}$.
\end{enumerate}
\end{theorem}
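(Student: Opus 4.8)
The statement to prove is Theorem~\ref{thm:orbiserdn}, which has three parts: the multi-weight formula for core Young walls, the resulting product formula for the full generating series, and an explicit linear identity relating the two coordinate systems on $\SZ^n$. Let me sketch how I would approach this.

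\textbf{Overall strategy.} The plan is to reduce everything to the two explicit facts already available: the bijection $\CZ_\Delta \leftrightarrow {\mathcal P}^{n+1} \times \CC_\Delta$ of Proposition~\ref{prop:dncoredecomp}, and the total-weight formula~\eqref{eq:dncoreweight} for core Young walls. Part (2) will follow formally from parts (1) once we know the multi-weight factorizes as (contribution of the $(n+1)$ partitions) $\times$ (contribution of the core), since the quotient part contributes $\prod_{m\geq 1}(1-q^m)^{-(n+1)}$ exactly as in type $A$: each bar carries multi-weight $(1,1,2,\dots,2,1,1)$, i.e.~contributes a factor of $q = q_0q_1q_2^2\cdots q_{n-2}^2 q_{n-1}q_n$, and an $(n+1)$-tuple of ordinary partitions of total weight $N$ records the removal of $N$ bars, giving $\left(\prod_{m\geq 1}(1-q^m)^{-1}\right)^{n+1}$. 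So the real content is part (1), refining~\eqref{eq:dncoreweight} from a single integer ($|Y|$, the exponent of $q$ after specializing all $q_i$ to a common $q^{1/(\text{something})}$) to the full monomial in $q_0,\dots,q_n$, and part (3), which is the bookkeeping that makes the change of coordinates $(z_i)\mapsto(m_i)$ explicit.

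\textbf{Carrying out part (1).} First I would separate, using the decomposition $Y = Y_1 \sqcup Y_2$ and the two halves of the abacus ($R_k$ with $k\not\equiv 0 \bmod (n-1)$ versus $k\equiv 0$), the contributions to $\underline{\mathrm{wt}}(Y)$ coming from the ``type $A_{n-2}$ core'' part encoded by $(z_1,\dots,z_{n-2})$ and the ``two $A_1$-cores'' part encoded by $(z_{n-1},z_n)$. For each piece I would compute directly, from the abacus description of which blocks are present, how many half-blocks/blocks of each label $j$ a core configuration with given $(z_i)$ contains; the diagonal-labelling structure of the (untransformed) Young wall pattern of~\ref{sec:PYW} makes each of these a quadratic-plus-linear function of the $z_i$, and summing the labels must recover~\eqref{eq:dncoreweight}. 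The key algebraic assertion is then that after the substitution $q = \prod_i q_i^{d_i}$ and the linear change of variables to $(m_1,\dots,m_n)$, the multi-weight monomial becomes exactly $q_1^{m_1}\cdots q_n^{m_n}(q^{1/2})^{\overline{m}^\top C_\Delta \overline{m}}$. I would verify this by matching the quadratic form: the coefficient of $z_i z_j$ coming from~\eqref{eq:dncoreweight} (symmetrized) must agree, after the linear substitution, with $\tfrac12\overline{m}^\top C_\Delta \overline{m}$ where $C_\Delta$ is the $D_n$ Cartan matrix. Concretely, the natural candidate change of coordinates is the one for which the $z_i$ are (up to sign) the coefficients in the fundamental-weight basis and the $m_i$ the coefficients in the simple-root basis of $Q_\Delta$; then $\sum (2n-2)z_i^2$-type expressions turn into the Cartan quadratic form by the standard identity relating the two bases. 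The cleanest way to pin this down is to compare with the Frenkel--Kac/Weyl--Kac character formula recalled in~\eqref{eq:extcharformula}: Theorem~\ref{thm:cohomrepr} already identifies $\bigoplus_\rho H^*(\mathrm{Hilb}^\rho([\SC^2/G_\Delta]))$ with the extended basic representation, so $Z_{[\SC^2/G_\Delta]}$ must equal $\mathrm{char}_V$; hence the combinatorial series $\sum_{Y}\underline q^{\underline{\mathrm{wt}}(Y)}$ must equal the right-hand side of~\eqref{eq:orbi_main_formula}, and part (1) is the statement that the $\CC_\Delta$-indexed summand matches term by term under~\eqref{Dcore_biject}. I would make the matching rigid by checking it on the two generating subfamilies (single runner shifts), which determines the linear map on a basis, and then invoking the quadratic-form agreement to conclude.

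\textbf{Part (3) and the main obstacle.} Part (3) is then essentially the explicit form of the inverse change of coordinates: once the linear isomorphism $(z_i)\leftrightarrow(m_i)$ is fixed in part (1), $\sum_i m_i$ is a fixed linear functional of the $z_i$, and computing it is a finite calculation with the matrix of the change of basis (the inverse Cartan matrix of $D_n$, whose entries are classical). The appearance of the auxiliary parameters $a,b,c$ with $z_1+\dots+z_{n-2}=2a-b$, $b\in\{0,1\}$, $c=2b-1$ reflects the parity constraint built into the two $A_1$-core runners (the ``imaginary white bead'' boundary condition of (B3)--(B4)): the $A_1$-core data $z_{n-1},z_n$ are not free integers but are coupled to the parity of $z_1+\dots+z_{n-2}$, exactly the balancing condition $\sum a_i = 0$ from~\eqref{typeA-cores} adapted to this situation. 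So I would first re-derive this parity coupling from the abacus rules, then plug it into the linear functional. I expect the main obstacle to be part (1)'s bookkeeping: tracking the colors of the white/black beads through steps (B1)--(B4) and translating a final core configuration into the precise count $\mathrm{wt}_j(Y)$ for every label $j$ is genuinely fiddly, especially near the ``fork'' labels $n-1,n$ and the central label reached by both runners. The honest shortcut, which I would take, is to avoid the direct block count wherever possible and instead let the identification $Z_{[\SC^2/G_\Delta]} = \mathrm{char}_V$ (Theorem~\ref{thm:cohomrepr} plus~\eqref{eq:extcharformula}) do the heavy lifting, using the explicit abacus computation only to fix the finitely many unknowns (the linear change of variables and the sign conventions) rather than to prove the whole identity from scratch.
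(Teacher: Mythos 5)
Your overall skeleton — reduce to cores via the bar decomposition, note that each bar contributes the monomial $q$ so that part (2) is formal given part (1), and then compute the multi-weight of a core in the $(z_1,\dots,z_n)$ coordinates and change variables — is exactly the paper's route. But there is a genuine gap at the heart of part (1): you assert that each $\mathrm{wt}_j(Y)$ is ``a quadratic-plus-linear function of the $z_i$'' and that the required change of coordinates is a $\SZ$-linear change of basis (simple roots versus fundamental weights) that can be pinned down by checking single-runner shifts. Neither is true. Because the labels in the type $D_n$ pattern alternate along the relevant rows and columns, the individual label counts of a core depend on the \emph{parities} of the $z_i$ and of partial sums such as $z_1+\dots+z_{n-2}$: the paper's colored weight formula (Lemma~\ref{lem:dncorecolorweight}) contains factors like $q_1^{-\sum_i b_i}$ and $(q_1^{-1}\cdots q_{n-1}^{-1})^{c_{1\dots n-2}z_{n-1}}$ with $b_i\in\{0,1\}$ the parity of $z_i$ and $c_{1\dots n-2}=\pm 1$, and consequently the bijection $(z_i)\mapsto(m_i)$ is only piecewise affine (each $m_i$ is built from the quantities $a_I,b_I,c_I$). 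A map determined on a ``basis'' of single shifts and extended linearly will simply not be the right one, and the quadratic forms will not match off that basis. The same misreading undermines your treatment of part (3): the identity for $\sum_i m_i$ is not an entry of the inverse Cartan matrix but genuinely contains the parity data $b$ and $c$, and proving it requires the telescoping identity $2a_{1\dots i}-c_{1\dots i}b_{i+1\dots n-2}=\sum_{j\le i}(2a_j-b_j)+b_{1\dots n-2}$ (the paper's Lemma~\ref{lem:abc}) and an induction, not just inverting a matrix.

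The proposed shortcut via Theorem~\ref{thm:cohomrepr} and the character formula also does not rescue this. First, it reverses the paper's logic: the combinatorial identity of Theorem~\ref{thm:orbiserdn} is what (together with Theorem~\ref{thmorb}) \emph{proves} Theorem~\ref{thmgenfunct}, so at best you are importing Nakajima's theorem as an external black box. Second, even granting it, equality of the two generating series only gives you an abstract weight-preserving bijection $\CC_\Delta\leftrightarrow\SZ^n$; to prove part (3) — and later Theorem~\ref{thm:dnsubst}, which consumes part (3) — you need the \emph{explicit} formula for $(m_i)$ in terms of $(z_i)$, and obtaining that forces you to do the direct parity-sensitive block count anyway. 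The correct repair is to prove the colored refinement of~\eqref{eq:dncoreweight} directly (the paper does this by observing the exponents are at most quadratic and checking the one-variable and two-variable specializations on the abacus, tracking the bead colors), define the $m_i$ by the resulting explicit piecewise-affine formulas, and then verify $\tfrac12\overline{m}^\top C\overline{m}$ equals the $q$-exponent by the algebraic manipulation with $a_I,b_I,c_I$.
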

The coordinate change $(z_1, \ldots, z_n) \mapsto (m_1,\dots,m_n)$ and the multiweight formula of (1), as well as (3), 
follow from somewhat involved but routine calculations. 
(2) clearly follows from (1) and the preceeding discussion.  

Let us write $z_I=\sum_{i \in I}z_i$ for $I\subseteq\{1,\dots,n-2\}$. Each such number decomposes uniquely as $z_I=2a_I-b_I$, where $a_I \in \SZ$ and $b_I \in \{0,1\}$. Let us introduce also $c_I=2b_I-1 \in \{-1,1\}$. We will make use of the relations
\[a_I=\sum_{i\in I}a_i-\sum_{i_1\in I, i_2 \in I\setminus\{i_1\}}b_{i_1}b_{i_2}+\sum_{i_1\in I, i_2 \in I\setminus\{i_1\},i_3 \in I\setminus\{i_1,i_2\}}2b_{i_1}b_{i_2}b_{i_3}-\dots\;, \]
\[b_I=\sum_{i\in I}b_i-\sum_{i_1\in I, i_2 \in I\setminus\{i_1\}}2b_{i_1}b_{i_2}+\sum_{i_1\in I, i_2 \in I\setminus\{i_1\},i_3 \in I\setminus\{i_1,i_2\}}4b_{i_1}b_{i_2}b_{i_3}-\dots\;. \]
To simplify notations let us introduce
\[r_I:=a_I-\sum_{i\in I}a_i=-\sum_{i_1\in I, i_2 \in I\setminus\{i_1\}}b_{i_1}b_{i_2}+\sum_{i_1\in I, i_2 \in I\setminus\{i_1\},i_3 \in I\setminus\{i_1,i_2\}}2b_{i_1}b_{i_2}b_{i_3}-\dots\;.\]

Using these notations the colored refinement of the weight formula \eqref{eq:dncoreweight} is the following.
\begin{lemma}
\label{lem:dncorecolorweight}
Given a core Young wall $Y\in\CC_\Delta$ corresponding to $(z_i)\in \SZ^n$ in the bijection of~\eqref{Dcore_biject}, 
its content is given by the formula
\begin{gather*}
\underline{q}^{\underline{wt}(Y)}=q_1^{-\sum_{i=1}^{n-2}b_i}q_2^{-2a_1-\sum_{i=2}^{n-2}b_i}\dots q_{n-2}^{-\sum_{i=1}^{n-3}2a_i-b_{n-2}}(q_0q_1^{-1}q_{n-1}q_n)^{-\sum_{i=1}^{n-2}a_i} (q_0q_1^{-1})^{a_{1\dots n-2}}\\
\cdot q^{\frac{1}{2}\sum_{i=1}^{n-2}(z_i^2+b_i)+z_{n-1}^2+z_n^2 } \\\cdot (q^{b_{1\dots n-2}} (q_1^{-1}\dots q_{n-2}^{-1} q_{n-1}^{-1})^{c_{1\dots n-2}})^{z_{n-1}}(q^{b_{1\dots n-2}} (q_1^{-1}\dots q_{n-2}^{-1} q_{n}^{-1})^{c_{1\dots n-2}})^{z_{n}}.
\end{gather*}
\end{lemma}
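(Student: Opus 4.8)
\textbf{Plan of proof for Lemma~\ref{lem:dncorecolorweight}.}

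The strategy is to leverage the bar-removal combinatorics of Proposition~\ref{prop:dncoredecomp} together with the two-part structure of a core Young wall, and refine the total-weight formula~\eqref{eq:dncoreweight} of Kwon by tracking the labels of blocks separately. Recall that a core Young wall $Y$ decomposes as $Y=Y_1\sqcup Y_2$ via~\eqref{decomp_D_YW}, with $Y_1\in \CZ^f_\Delta$ built from full columns (recorded on the runners $R_k$, $k\not\equiv 0\bmod(n-1)$, via the integers $z_1,\dots,z_{n-2}$) and $Y_2\in\CZ^h_\Delta$ built from columns ending in half-blocks (recorded on $R_{n-1}$, $R_{2n-2}$ via $z_{n-1},z_n$). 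The total weight formula~\eqref{eq:dncoreweight} is already split as a sum over these two pieces, so the plan is to compute the multi-weight $\underline{q}^{\underline{wt}(\cdot)}$ of each piece and multiply.

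First I would handle $Y_1$. A core configuration on the runners $R_k$ with $k\not\equiv 0\bmod(n-1)$ is, after performing moves (B1)--(B2) as far as possible, the minimal-energy configuration determined by $(z_1,\dots,z_{n-2})\in\SZ^{n-2}$, where $z_s$ counts beads on $R_s$ minus beads on $R_{2n-2-s}$. The key point is that whether a ``half-bar'' of length $s$ sits on runner $R_s$ or on $R_{2n-2-s}$ is governed by the parity of $z_s$, hence the appearance of the decomposition $z_i=2a_i-b_i$ with $b_i\in\{0,1\}$: the integer $a_i$ counts full bars removed on that pair of runners and $b_i$ records the residual half-bar. Carefully reading off which labels appear in the residual configuration (using the diagonal labelling of the Young wall pattern from~\ref{sec:PYW}) gives the monomial factors $q_1^{-\sum b_i}q_2^{-2a_1-\sum_{i\ge2}b_i}\cdots$ and the $q$-power $q^{\frac12\sum(z_i^2+b_i)}$; the cross terms $(q_0q_1^{-1}q_{n-1}q_n)^{-\sum a_i}$ and $(q_0q_1^{-1})^{a_{1\dots n-2}}$ arise from the color-switching operations in (B1)--(B2), which toggle beads on the $R_{n-1}$/$R_{2n-2}$ runners and thereby alter the $\rho_0$, $\rho_1$, $\rho_{n-1}$, $\rho_n$ content. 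The combinatorial identities for $a_I$, $b_I$, $r_I$ stated before the lemma are precisely the bookkeeping needed to expand the effect of iterated color switches across subsets $I\subseteq\{1,\dots,n-2\}$; I would prove them first by induction on $|I|$ from the single relation $z_I=2a_I-b_I$ and the recursion $a_{I\cup\{j\}}$, $b_{I\cup\{j\}}$ in terms of $a_I,b_I,a_j,b_j$.

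Next I would handle $Y_2$, on the runners $R_{n-1},R_{2n-2}$. After the moves (B3)--(B4) the core is encoded by a pair of $A_1$-cores, i.e.\ by $(z_{n-1},z_n)\in\SZ^2$, contributing $q^{z_{n-1}^2+z_n^2}$ to the $q$-power (matching the $(n-1)\sum(2z_i^2+z_i)$ term of~\eqref{eq:dncoreweight} after the normalization $q=q_0q_1q_2^2\cdots q_{n-2}^2q_{n-1}q_n$). The subtle part is the boundary/color interaction: the imaginary white bead at position $0$ and the rule in (B4) that two beads inherit the color of the position they land on mean that the labels $n-1$ versus $n$ assigned to the half-block columns of $Y_2$ depend on the \emph{cumulative} parity $n_l$ of full columns shorter than a given non-full column, which in core form is exactly the parity $b_{1\dots n-2}$ of $z_1+\dots+z_{n-2}$, with $c_{1\dots n-2}=2b_{1\dots n-2}-1$ the corresponding sign. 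This explains the factors $(q^{b_{1\dots n-2}}(q_1^{-1}\cdots q_{n-2}^{-1}q_{n-1}^{-1})^{c_{1\dots n-2}})^{z_{n-1}}$ and its $z_n$ analogue. I would derive these by tracking, column by column, the color of the topmost block in $Y_2$ through the explicit color rule stated in~\ref{sec:Dabacus}, and then summing the geometric contributions over the $z_{n-1}$ (resp.\ $z_n$) beads.

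\textbf{Main obstacle.} The genuinely delicate step is the simultaneous bookkeeping of the color switches: steps (B1)--(B2) on the $Y_1$ runners repeatedly flip colors of beads on $R_{n-1}$ and $R_{2n-2}$, which are the runners carrying $Y_2$, so the two halves of the computation are \emph{not} independent at the level of which labels ($n-1$ or $n$) occur — only their sum $wt_{n-1}+wt_n$ is independent. Organizing this interaction is exactly what the auxiliary quantities $a_I,b_I,c_I,r_I$ are designed for, and the proof will essentially be a careful inductive expansion showing that the net effect of all color toggles, summed over the inclusion poset of subsets $I\subseteq\{1,\dots,n-2\}$, collapses to the compact exponents $a_{1\dots n-2}$, $b_{1\dots n-2}$, $c_{1\dots n-2}$ appearing in the statement. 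Once those identities are in hand and the single-bar multi-weight $(1,1,2,\dots,2,1,1)$ is used to strip off the quotient part, verifying the final monomial is a (long but mechanical) consistency check against~\eqref{eq:dncoreweight} and~\eqref{eq:dncoreweight}'s refinement, which I would not carry out in full detail here.
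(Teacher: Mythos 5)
Your plan assembles the right ingredients --- the decomposition $Y=Y_1\sqcup Y_2$, the bar-removal moves (B1)--(B4), the parity decomposition $z_i=2a_i-b_i$, and the identities for $a_I$, $b_I$, $r_I$ --- and you correctly locate the difficulty in the interaction of the colour switches between the two halves. But the route you propose for resolving that difficulty is not the paper's, and it is precisely at that point that your argument stops being a proof. The paper's proof rests on one structural observation that your plan never uses: since the coloured weight of a core measures (partial) areas of the Young wall, every exponent in $\underline{q}^{\underline{wt}(Y)}$ is at most quadratic as a function of the parameters $(z_1,\dots,z_n)$. Hence the whole multivariate expression is determined by its restrictions to the coordinate axes and coordinate planes: one only has to check (i) the single-variable cases ($z_i\neq 0$, all other $z_j=0$, split into $i\leq n-2$ and $i\in\{n-1,n\}$), which follow from \eqref{eq:dncoreweight} and its proof by keeping track of colours, and (ii) the pairwise correction terms for each pair $(z_i,z_j)$, falling into three short cases. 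Those corrections are exactly the factors $(q_0q_1^{-1})^{r_{1\dots n-2}}$ and $\bigl(q^{b_{1\dots n-2}}(q_1^{-1}\cdots q_{n-2}^{-1}q_j^{-1})^{c_{1\dots n-2}}\bigr)^{z_j}$ in the statement, with no correction needed for the pair $(z_{n-1},z_n)$.

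By contrast, your ``careful inductive expansion over the inclusion poset of subsets $I$'' would have to verify directly that the actual block count of each label in the full core matches expressions such as $r_{1\dots n-2}=-\sum b_{i_1}b_{i_2}+2\sum b_{i_1}b_{i_2}b_{i_3}-\cdots$, which contains interaction terms of every order in the $b_i$. Nothing in your sketch explains why those higher-order terms come out right; you assert that the bookkeeping ``collapses'' to $a_{1\dots n-2}$, $b_{1\dots n-2}$, $c_{1\dots n-2}$, but give no mechanism for the collapse. That is the genuine gap. The quadraticity argument is exactly the missing mechanism: it guarantees a priori that no interaction beyond pairwise can occur in the true weight function, so the all-orders expansions of $r_{1\dots n-2}$ and $c_{1\dots n-2}$ never need to be confronted combinatorially --- one only checks that the closed-form expression restricts correctly to singles and pairs. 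To complete your direct approach you would either have to rediscover this observation, or carry out a substantially harder induction on the number of nonzero coordinates $z_i$, with explicit control of how each additional nonzero coordinate toggles the colours of all beads on $R_{n-1}$ and $R_{2n-2}$.
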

\begin{proof}
When forgetting the coloring the lemma obviously gives back \eqref{eq:dncoreweight}. Notice also that $z_i^2+b_i=4a_i^2-4a_ib_i+2b_i$ is always an even number, so the exponents are always integers. Later we will show that the expression is a multivariable theta function. 

The total weight of a core basically measures the area of its Young wall. So the exponents can be at most quadratic in the parameters $\{z_i\}_{1 \leq i \leq n}$. Hence, it is enough to check that the formula is correct in two cases:
\begin{enumerate}
\item when any of the $z_i$'s is set to a given number and the others are fixed to 0; and
\item when all of the parameters are fixed to 0 except for an arbitrary pair $z_i$ and $z_j$, $i\neq j$. 
\end{enumerate}

First, consider that $z_i\neq 0$ for a fixed $i$, and $z_j=0$ in case $j\neq i$.
\begin{enumerate}
\item[(a)] When $1 \leq i \leq n-2$, then the colored weight of the corresponding core Young wall is 
\[(q_1 \dots q_i)^{-b_i}(q_{i+1}^2 \dots q_{n-2}^2q_{n-1}q_n)^{-a_i}q^{2a_i^2-2a_ib_i+b_i}\;.\] 
\item[(b)] When $i \in \{n-1, n\}$, then the associated core Young wall has colored weight 
\[q^{z_i^{2}}(q_1 q_2\dots q_{n-2} q_{i})^{z_i}\;.\]
\end{enumerate}
Both of these follow from (\ref{eq:dncoreweight}) and its proof in \cite{kwon2006affine} by taking into account the colors of the blocks in the pattern.

Second, assume that $z_i$ and $z_j$ are nonzero, but everything else is zero. Then the total weight is not the product of the two individual weights, but some correction term has to be introduced. The particular cases are:
\begin{enumerate}
\item[(a)] $1 \leq i,j \leq n-2$. There can only be a difference in the numbers of $q_0$'s and $q_1$'s which comes from the fact that in the first row there are only half blocks with 0's in the odd columns and 1's in the even columns. Exactly $-r_{ij}$ blocks change color from 0 to 1 when both $z_i$ and $z_j$ are nonzero compared to when one of them is zero. In general, this gives the correction term $(q_0q_1^{-1})^{r_{1\dots n-2}}=(q_0q_1^{-1})^{a_{1\dots n-2}-\sum_{i=1}^{n-2}a_i}$.
\item[(b)] $1 \leq i \leq n-2$, $j \in \{n-1,n\}$. For the same reason as in the previous case the parity of $z_i$ modifies the colored weight of the contribution of $z_j$, but not the total weight of it. If $z_i$ is even, then the linear term of the contribution of $z_j$ is $q_1 q_2\dots q_{n-2} q_{j}$. In the odd case it is $q_0 q_2\dots q_{n-2} q_{\kappa(j)}$. This is encoded in the correction term $(q^{b_{1\dots n-2}} (q_1^{-1}\dots q_{n-2}^{-1} q_{j}^{-1})^{c_{1\dots n-2}})^{z_j}$.
\item[(c)] $i=n-1$, $j=n$. $z_{n-1}$ and $z_{n}$ count into the total colored weight completely independently, so no correction term is needed.
\end{enumerate}

Putting everything together gives the claimed formula for the colored weight of an arbitrary core Young wall.

\end{proof}


Now we turn to the proof of Theorem \ref{thm:orbiserdn}. After recollecting the terms in the formula of Lemma \ref{lem:dncorecolorweight} it becomes
\begin{gather*}q_1^{-b_{1\dots n-2}-c_{1\dots n-2}(z_{n-1}+z_n)}\prod_{i=2}^{n-2}q_i^{-2a_{1\dots i-1}+c_{1\dots i-1}b_{i\dots n-2}-c_{1\dots n-2}(z_{n-1}+z_n)} \\
\cdot q_{n-1}^{-a_{1\dots n-2}-c_{1\dots n-2}z_{n-1}}q_{n}^{-a_{1\dots n-2}-c_{1\dots n-2}z_{n}} \\
\cdot q^{\sum_{i=1}^{n-2} (2a_i^2-2a_ib_i+b_i)+b_{1\dots n-2} z_{n-1}+z_{n-1}^2+b_{1\dots n-2} z_{n}+z_{n}^2+r_{1\dots n-2}}
\end{gather*}

Let us define the following series of integers:
\[
\begin{array}{r c l}
m_1 & = & -b_{1\dots n-2}-c_{1\dots n-2}(z_{n-1}+z_n)\;, \\
m_2 & = & -2a_1+c_1b_{2\dots n-2}-c_{1\dots n-2}(z_{n-1}+z_n)\;, \\
& \vdots & \\
m_{n-2} & = & -2a_{1 \dots n-3}+c_{1\dots n-3}b_{n-2}-c_{1\dots n-2}(z_{n-1}+z_n)\;, \\
m_{n-1} & = & -a_{1 \dots n-2}-c_{1\dots n-2}z_{n-1}\;, \\
m_{n} & = & -a_{1 \dots n-2}-c_{1\dots n-2}z_n \;.
\end{array}
\]
It is an easy and enlightening task to verify that the map
\[ \SZ^n \rightarrow \SZ^n, \quad (z_1,\dots,z_n) \mapsto (m_1,\dots,m_n)\]
is a bijection, which is left to the reader.

\begin{proof}[Proof of Theorem \ref{thm:orbiserdn}] (1): One has to check that
\begin{multline*} \sum_{i=1}^n m_i^2-m_1m_2-m_2m_3-\dots-m_{n-2}(m_{n-1}+m_n)=\\
=\sum_{i=1}^{n-2} (2a_i^2-2a_ib_i+b_i)+b_{1\dots n-2} z_{n-1}+z_{n-1}^2+b_{1\dots n-2} z_{n}+z_{n}^2+r_{1\dots n-2}\;.
\end{multline*}

The terms containing $z_{n-1}$ or $z_n$ on the left hand side are
\begin{gather*}  (n-2)(z_{n-1}+z_n)^2+z_{n-1}^2+z_n^2-(n-3)(z_{n-1}+z_n)^2-z_{n-1}^2-z_n^2-2 z_{n-1}z_n\\
+\left(2b_{1\dots n-2}+\sum_{i=1}^{n-3}2(2a_{1 \dots i}-c_{1\dots i}b_{i+1 \dots n-2})+2a_{1 \dots n-2}\right)c_{1\dots n-2}(z_{n-1}+z_n)\\
-\left(b_{1\dots n-2}+\sum_{i=1}^{n-3}2(2a_{1 \dots i}-c_{1\dots i}b_{i+1 \dots n-2})+2a_{1 \dots n-2}\right)c_{1\dots n-2}(z_{n-1}+z_n)\\
=b_{1\dots n-2} z_{n-1}+z_{n-1}^2+b_{1\dots n-2} z_{n}+z_{n}^2\;,
\end{gather*}
since $b_{1\dots n-2}c_{1\dots n-2}=b_{1\dots n-2}$.

The terms containing neither $z_{n-1}$ nor $z_n$ on the left hand side are
\begin{gather*}
b_{1\dots n-2}+\sum_{i=1}^{n-3}(2a_{1 \dots i}-c_{1\dots i}b_{i+1 \dots n-2})^2+2a_{1 \dots n-2}^2 -b_{1\dots n-2}(2a_1-c_1b_{2\dots n-2}) \\
-\sum_{i=1}^{n-4}(2a_{1 \dots i}-c_{1\dots i}b_{i+1 \dots n-2})(2a_{1 \dots i+1}-c_{1\dots i+1}b_{i+2 \dots n-2}) \\
-2(2a_{1 \dots n-3}-c_{1\dots n-3}b_{n-2})a_{1 \dots n-2}\;.
\end{gather*}
\begin{lemma}
\label{lem:abc}
\[ 2a_{1 \dots i}-c_{1\dots i}b_{i+1 \dots n-2}=\sum_{j=1}^{i}(2a_{j}-b_{j})+b_{1 \dots n-2}\;,\]
\end{lemma}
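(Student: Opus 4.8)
The plan is to reduce the identity to a statement purely about the parities $b_I$. First I would use the defining property of the decomposition $z_I = 2a_I - b_I$ with $b_I \in \{0,1\}$: it forces $b_I$ to be the residue of $z_I$ modulo $2$ and $a_I = (z_I + b_I)/2$. In particular $2a_{\{j\}} - b_{\{j\}} = z_j$ for every single index $j$, so the right-hand side of the lemma equals $\sum_{j=1}^i z_j + b_{1\dots n-2} = z_{1\dots i} + b_{1\dots n-2}$ (writing $z_{1\dots i}=\sum_{j=1}^i z_j$), while on the left $2a_{1\dots i} = z_{1\dots i} + b_{1\dots i}$. Cancelling the common term $z_{1\dots i}$, the lemma becomes equivalent to
\[ b_{1\dots i} - c_{1\dots i}\, b_{i+1\dots n-2} = b_{1\dots n-2}. \]

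Next I would verify this reduced identity directly. Since $b_{1\dots n-2}$ is the residue modulo $2$ of $z_{1\dots n-2} = z_{1\dots i} + z_{i+1\dots n-2}$, it equals $b_{1\dots i}+b_{i+1\dots n-2} - 2\,b_{1\dots i}\,b_{i+1\dots n-2}$ as an integer in $\{0,1\}$. On the other hand $c_{1\dots i} = 2b_{1\dots i}-1$, so $c_{1\dots i}\, b_{i+1\dots n-2} = 2\,b_{1\dots i}\,b_{i+1\dots n-2} - b_{i+1\dots n-2}$, whence
\[ b_{1\dots i} - c_{1\dots i}\, b_{i+1\dots n-2} = b_{1\dots i} + b_{i+1\dots n-2} - 2\,b_{1\dots i}\,b_{i+1\dots n-2}, \]
which is exactly the expression for $b_{1\dots n-2}$ above. (Equivalently, one simply runs through the four cases $b_{1\dots i}, b_{i+1\dots n-2} \in \{0,1\}$.) Combining this with the reduction of the first paragraph yields the stated formula.

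This step is pure bookkeeping and I expect no genuine obstacle; the only thing to be careful about is keeping the index sets straight — distinguishing $b_{1\dots i}$, $b_{i+1\dots n-2}$ and $b_{1\dots n-2}$ — and remembering that all the $b$'s and $c$'s are determined by the parities of the relevant partial sums $z_I$, which is what makes the XOR identity available.
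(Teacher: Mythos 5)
Your proof is correct, and it takes a genuinely more direct route than the paper. The paper proves the identity by induction on $i$: it peels off the index $i$ using the composition rules $a_{1\dots i}=a_{1\dots i-1}+a_i-b_{1\dots i-1}b_i$ and $b_{1\dots i}=b_{1\dots i-1}+b_i-2b_{1\dots i-1}b_i$, shows that $2a_{1\dots i}-c_{1\dots i}b_{i+1\dots n-2}=2a_{1\dots i-1}-c_{1\dots i-1}b_{i\dots n-2}+(2a_i-b_i)$, and iterates. You instead rewrite both sides in terms of the $z$'s, cancel $z_{1\dots i}$, and observe that the whole lemma is equivalent to the single statement $b_{1\dots i}-c_{1\dots i}b_{i+1\dots n-2}=b_{1\dots n-2}$, which is just the fact that the parity of $z_{1\dots n-2}=z_{1\dots i}+z_{i+1\dots n-2}$ is the XOR of the parities of the two partial sums, packaged via $c_{1\dots i}=2b_{1\dots i}-1$. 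The underlying algebraic kernel ($x+y-2xy$ as XOR) is the same in both arguments, but your version avoids the induction and the intermediate manipulation of $c_{1\dots i-1}c_i$, making it shorter and making the conceptual content (the identity is purely a parity bookkeeping statement) more transparent; the paper's inductive form has the minor advantage that the intermediate identity $2a_{1\dots i}-c_{1\dots i}b_{i+1\dots n-2}=2a_{1\dots i-1}-c_{1\dots i-1}b_{i\dots n-2}+2a_i-b_i$ is itself reused verbatim in the subsequent computation of $s_{n-2}$. The only bookkeeping points to watch — that $2a_j-b_j=z_j$ for singletons and that $b_{\emptyset}=0$ when $i=n-2$ — are both handled correctly by your reduction.
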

\begin{proof}
\begin{gather*} 2a_{1 \dots i}-c_{1\dots i}b_{i+1 \dots n-2} \\ =2a_{1\dots i-1}+2a_{i}-2b_{1\dots i-1}b_{i}+c_{1\dots i-1}c_{i}b_{i+1 \dots n-2} \\
=2a_{1\dots i-1}+2a_{i}-2b_{1\dots i-1}b_{i}+2c_{1\dots i-1}b_{i}b_{i+1 \dots n-2}-c_{1\dots i-1}b_{i+1 \dots n-2} \\
=2a_{1\dots i-1}+2a_{i}-b_{i}-c_{1\dots i-1}(b_{i+1 \dots n-2}+b_{i}-2b_{i}b_{i+1 \dots n-2})\\
= 2a_{1\dots i-1}-c_{1\dots i-1}b_{i \dots n-2}+2a_{i}-b_{i}\;,
\end{gather*}
and then use induction.
\end{proof}

Applying Lemma \ref{lem:abc} and the last intermediate expression in its proof to the terms considered above, they simplify to
\begin{gather*}
b_{1\dots n-2}+\sum_{i=1}^{n-3}(2a_{1 \dots i}-c_{1\dots i}b_{i+1 \dots n-2})^2+2a_{1 \dots n-2}^2 -b_{1\dots n-2}(2a_1-c_1b_{2\dots n-2}) \\
-\sum_{i=1}^{n-4}(2a_{1 \dots i}-c_{1\dots i}b_{i+1 \dots n-2})(2a_{1\dots i}-c_{1\dots i}b_{i+1 \dots n-2}+2a_{i+1}-b_{i+1})\\
-2(2a_{1 \dots n-3}-c_{1\dots n-3}b_{n-2})a_{1 \dots n-2} \\
=b_{1\dots n-2}+(2a_{1 \dots n-3}-c_{1\dots n-3}b_{n-2})^2+2a_{1 \dots n-2}^2 -b_{1\dots n-2}(2a_1-c_1b_{2\dots n-2}) \\
-\sum_{i=1}^{n-4}(2a_{1 \dots i}-c_{1\dots i}b_{i+1 \dots n-2})(2a_{i+1}-b_{i+1})-2(2a_{1 \dots n-3}-c_{1\dots n-3}b_{n-2})a_{1 \dots n-2} \\
=b_{1\dots n-2}+(2a_{1 \dots n-3}-c_{1\dots n-3}b_{n-2})(2a_{1 \dots n-3}-c_{1\dots n-3}b_{n-2}-2a_{1 \dots n-2})\\
+2a_{1 \dots n-2}^2 -b_{1\dots n-2}(2a_1-c_1b_{2\dots n-2})-\sum_{i=1}^{n-4}(2a_{1 \dots i}-c_{1\dots i}b_{i+1 \dots n-2})(2a_{i+1}-b_{i+1})\\
=b_{1\dots n-2}+2a_{1 \dots n-2}^2 -b_{1\dots n-2}(2a_1-c_1b_{2\dots n-2}) \\
-\sum_{i=1}^{n-3}(2a_{1 \dots i}-c_{1\dots i}b_{i+1 \dots n-2})(2a_{i+1}-b_{i+1}) \\
=2a_{1 \dots n-2}^2 -b_{1\dots n-2}(2a_1-b_1)-\sum_{i=1}^{n-3}\left(\sum_{j=1}^{i}(2a_{j}-b_{j})+b_{1 \dots n-2}\right)(2a_{i+1}-b_{i+1})\;.
\end{gather*}
Let us denote this expression temporarily as $s_{n-2}$. Taking into account that 
\[a_{1 \dots n-2}=a_{1\dots n-3}+a_{n-2}-b_{1 \dots n-3}b_{n-2}\;,\] 
\[b_{1 \dots n-2}=b_{1\dots n-3}+b_{n-2}-2b_{1 \dots n-3}b_{n-2}\;,\] 
$s_{n-2}$ can be rewritten as
\begin{gather*}
2a_{1\dots n-3}^2+2a_{n-2}^2+2b_{1\dots n-3}b_{n-2}+4a_{1\dots n-3}a_{n-2} \\ 
-4a_{1\dots n-3}b_{1\dots n-3}b_{n-2}-4a_{n-2}b_{1\dots n-3}b_{n-2}\\
-(b_{1\dots n-3}+b_{n-2}-2b_{1\dots n-3}b_{n-2})(2a_1-b_1) \\
-\sum_{i=1}^{n-4}\left(\sum_{j=1}^{i}(2a_{j}-b_{j})+b_{1 \dots n-3}\right)(2a_{i+1}-b_{i+1})\\
-\sum_{i=1}^{n-3}(b_{n-2}-2b_{1\dots n-3}b_{n-2})(2a_{i+1}-b_{i+1})\\
-\left(\sum_{j=1}^{n-3}(2a_{j}-b_{j})+b_{1 \dots n-3}\right)(2a_{n-2}-b_{n-2})\\
=s_{n-3}+2a_{n-2}^2+2b_{1\dots n-3}b_{n-2}+4a_{1\dots n-3}a_{n-2} \\
-4a_{1\dots n-3}b_{1\dots n-3}b_{n-2}-4a_{n-2}b_{1\dots n-3}b_{n-2}\\
-(b_{n-2}-2b_{1\dots n-3}b_{n-2})\left(\sum_{i=1}^{n-2}2a_i-b_i \right)-\left( \sum_{j=1}^{n-3}(2a_j-b_j)+b_{1\dots n-3}\right)(2a_{n-2}-b_{n-2})\\
=s_{n-3}+2a_{n-2}^2-2a_{n-2}b_{n-2}+b_{n-2}+2a_{n-2}\left(2a_{1\dots n-3}-\sum_{j=1}^{n-3}(2a_j-b_j)\right)\\
+b_{1\dots n-3}b_{n-2}-2a_{n-2}b_{1\dots n-3}\\
=s_{n-3}+2a_{n-2}^2-2a_{n-2}b_{n-2}+b_{n-2}-b_{1\dots n-3}b_{n-2}\;,
\end{gather*}
where at the last equality the identity $\sum_{j=1}^{n-3}(2a_j-b_j)=z_{1\dots n-3}=2a_{1\dots n-3}-b_{1\dots n-3}$ was used.

It can be checked, that $s_1=2a_{1}^2-2a_{1}b_{1}+b_{1}$, so induction shows that
\[s_{n-2}=\sum_{i=1}^{n-2} (2a_i^2-2a_ib_i+b_i+b_{1\dots i-1}b_{i})\;.\]
It remains to show that 
\[\sum_{i=2}^{n-2}b_{1\dots i-1}b_{i}=r_{1\dots n-2}\;,\]
which requires another induction argument, and is left to the reader.

(3): Apply Lemma \ref{lem:abc} on
\[\sum_{i=1}^n m_i = -b_{1\dots n-2}-\sum_{i=1}^{n-2}(2a_{1 \dots i-1}-c_{1 \dots i-1}b_{i \dots n-2})-2a_{1\dots n-2}-(n-1)c_{1\dots n-2}(z_{n-1}+z_{n}).\]

\end{proof}

\section{0-generated Young walls and their abacus representations} 

In this section, we characterize the abacus configurations corresponding to Young walls in the 
sets~${\mathcal Z}_\Delta^0$ and~$\mathcal{Z}_{\Delta}^1$ defined in~\ref{sec:dis0gen}.

Recall conditions (R1)--(R3) on Young walls $Y\in{\mathcal Z}_\Delta$ from~\ref{sec:dis0gen}. Recall also 
that a Young wall corresponds uniquely to an abacus configuration, where the beads are placed at the 
positions $\lambda_1, \dots, \lambda_r$. Finally recall that the quantity $n_k$
denotes the number of full columns shorter than a given non-full column of height $\lambda_k$. 

\begin{lemma} 
\label{lem:maxinvcells}
Conditions (R1)-(R2) on Young walls are equivalent to the following conditions for an abacus configuration.
\begin{enumerate}
\item[{\rm (D1)}] In each row, the rightmost bead is always on the $(2n-2)$-nd ruler, and either all the beads of 
the row are at this position, or the number of beads in this position is odd.
\item[{\rm (D2)}] If a row ends with a white (resp., black) bead corresponding to a column of height $\lambda_k$, then 
$k+n_k$ must be odd (resp.~even). If several beads are placed on this position, which is allowed since it 
is on the ruler $R_{2n-2}$, then this condition refers to the smallest possible $k$.
\item[{\rm (D3)}] The total number of beads in the whole abacus is even, or the total number of beads on the rulers $R_1,\ldots, R_{n-1}$ in the first row is $n-2$.
\item[{\rm (D4)}] The beads on the rulers $R_n\ldots, R_{2n-3}$ are pushed to the right as much as possible in each
row. In any given row, the positions on the rulers $R_1, \ldots, R_{n-1}$ are empty unless all the positions 
on the rulers $R_n, \ldots, R_{2n-2}$ are filled. 
\item[{\rm (D5)}]  The beads on the rulers $R_1 \ldots R_{n-1}$ in any given row 
are either all on the ruler $R_{n-1}$, or on the rules $R_1 \ldots R_{n-2}$, and pushed to the right as much as possible. 
\end{enumerate}
Condition (R3) is equivalent to the following condition. 
\begin{enumerate}
\item[{\rm (D6)}]  Let $s$ be the total number of beads on the rulers $R_1,\ldots, R_{n-1}$ in any given row. 
\begin{enumerate}
\item[(a)] If  $s>n-2$, then all these beads are on $R_{n-1}$.
\item[(b)] If $s\leq n-2$, then all these beads are on the rulers $R_1, \ldots, R_{n-2}$, pushed to the right.
\end{enumerate}
\end{enumerate}
Thus $0$-generated Young walls $Y\in \mathcal{Z}_{\Delta}^1$ correspond to abacus configurations satisfying (D1)-(D5), whereas distinguished $0$-generated Young walls $Y\in \mathcal{Z}_{\Delta}^0$  correspond to those satisfying (D6) also. 
\end{lemma}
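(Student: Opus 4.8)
The plan is to prove both equivalences by passing, feature by feature, from the combinatorics of a Young wall $Y$ to the combinatorics of its abacus configuration, using the dictionary set up in~\ref{sec:Dabacus}. First I would record that dictionary precisely. A column of $Y$ of height $\lambda_k$ is the bead at position $\lambda_k$, hence a bead on runner $R_{k'}$ in row $i$, where $\lambda_k\equiv k'\pmod{2n-2}$ and $i=\lceil\lambda_k/(2n-2)\rceil$. The positions that are multiples of $n-1$ are exactly those on the two runners $R_{n-1}$ and $R_{2n-2}$ (odd multiples on $R_{n-1}$, even ones on $R_{2n-2}$), so these are the ``half-block runners'', carrying the (possibly stacked, white/black) beads of $\nu(Y)$ — columns ending in a single half-block — while $R_1,\dots,R_{n-2}$ and $R_n,\dots,R_{2n-3}$ are the ``full-block runners'' carrying the uncoloured beads of $\mu(Y)$, with the label of the full block at the top of such a column read off from the runner, and the white/black colour encoding (via the rule of~\ref{sec:Dabacus}, together with the auxiliary count $n_k$) which of the two half-blocks of a pair is present. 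Under this dictionary the salient block(s) lying immediately above column $k$, or immediately to its right in the divided-block case, are determined by the runner of the bead at position $\lambda_k$ together with the comparison of $\lambda_k$ with $\lambda_{k-1}$ and $\lambda_{k+1}$; in particular a salient block has label in $\{0,1\}$ exactly at an $R_{2n-2}$-position, label in $\{n-1,n\}$ exactly at an $R_{n-1}$-position, and label in $\{2,\dots,n-2\}$ on every other runner.

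With this in hand I would verify $\mathrm{(R1)}$--$\mathrm{(R2)}\Leftrightarrow\mathrm{(D1)}$--$\mathrm{(D5)}$. Condition $\mathrm{(R2)}$ — no salient block of label $2,\dots,n-2$ except possibly in the bottom row — says exactly that in each row of the abacus after the first there is no gap below an occupied position on $R_n,\dots,R_{2n-3}$, and no bead on $R_1,\dots,R_{n-2}$ unless the half-block runners of that row are already filled; unpacking this yields $\mathrm{(D4)}$, the ``pushed right'' clause of $\mathrm{(D5)}$, and the requirement in $\mathrm{(D1)}$ that the rightmost bead of a nonempty row sit on $R_{2n-2}$. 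Condition $\mathrm{(R1)}$ — every salient $(n-1)$- or $n$-block lies in a directly supported salient block-pair — translates, using the local description of ``support block'' and ``directly supported'' from~\ref{sec:loci:strata} and~\ref{sec:dis0gen}, into a constraint saying that whenever a row ends in colored beads on a half-block runner, the row immediately below forces both half-blocks of that pair to be addable simultaneously; on the abacus this is precisely the remaining clause of $\mathrm{(D1)}$ (the number of beads in the $R_{2n-2}$-position of a row is odd, or comprises the whole row) together with the colour/parity compatibility $\mathrm{(D2)}$, matching the white/black rule against $k+n_k \bmod 2$. Condition $\mathrm{(D3)}$ is then the global parity statement reflecting that for a genuine $0$-generated wall the number of omitted $0/1$-half-blocks, measured against the ``all present'' configuration, has fixed parity. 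Both directions of this equivalence I would prove by induction on the abacus rows, each inductive step being a finite case-check over a single period of $2n-2$ runners.

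Finally, for $\mathrm{(R3)}\Leftrightarrow\mathrm{(D6)}$: condition $\mathrm{(R3)}$ concerns a maximal run of equal-length columns ending in half-blocks labelled $n-1$ or $n$ being long enough (longer than $n-2$, or than $n-1$ with the extra label-$1$ requirement), and its violation is precisely what the reduction moves of Lemma~\ref{lem:1red} exploit. On the abacus the length and the starting label of such a run are read off from the number $s$ of beads on $R_1,\dots,R_{n-1}$ in the relevant row and from whether they lie on $R_{n-1}$ or are spread over $R_1,\dots,R_{n-2}$; working this out gives exactly the dichotomy $\mathrm{(D6)}$(a)/(b), refining the ``either/or'' of $\mathrm{(D5)}$. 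I expect the main difficulty to be not any single implication but the bookkeeping around the two half-block runners: one must simultaneously track the white/black colouring, the count $n_k$ of shorter full columns, and the $\kappa$-swap $0\leftrightarrow 1$, $(n-1)\leftrightarrow n$ of labels between consecutive periods, so that ``directly supported'' really lines up with the correct parities in $\mathrm{(D1)}$--$\mathrm{(D2)}$. This requires a patient local analysis of how a salient block-pair at an $R_{n-1}$- or $R_{2n-2}$-position interacts with the row directly below it, which is where the argument becomes genuinely type-$D$ specific.
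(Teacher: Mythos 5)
Your overall strategy coincides with the paper's: both proofs proceed by translating the Young wall conditions into abacus conditions through the dictionary of the abacus section, identifying which runners carry the columns topped by $0/1$ half-blocks ($R_{2n-2}$), by $n{-}1/n$ half-blocks ($R_{n-1}$), and by full blocks (the remaining runners), and then checking the correspondence locally. The difficulty you single out — tracking the white/black colouring, the counts $n_k$, and the label swaps between consecutive columns — is indeed where the content lies, and the paper's proof is also essentially a guided case analysis of this kind.

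However, your translation table assigns several clauses to the wrong hypothesis, and following it literally would send the deferred ``finite case-check'' to the wrong places. The odd-count clause of (D1) and the colour rule (D2) concern beads on $R_{2n-2}$, which encode columns ending in a \emph{single} $0$- or $1$-labelled half-block; they have nothing to do with the $n{-}1/n$ salient pairs governed by (R1). In the paper these two clauses come from the exclusion of label-$1$ salient blocks (part of (R2)) combined with the fact that the orientations of the $0$- and $1$-halves swap between consecutive columns of the pattern, so that two successive salient $0$-blocks along the same divided row must have opposite orientations, forcing an odd number of equal-height columns in between. Conversely, (R1) is what permits a hole on $R_{n-1}$ (a directly supported salient $n{-}1/n$ block-pair) and hence a bead on $R_{n-2}$, and this is precisely what produces the either/or dichotomy in (D5) rather than anything in (D1)--(D2). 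Since the lemma only asserts the joint equivalence of (R1)--(R2) with (D1)--(D5), this misattribution is not fatal to the strategy, but it is not a cosmetic slip either: it shows the local analysis has not actually been carried out. You also do not address the exceptional clause of (D3) — the total bead number may be odd precisely when the first abacus row carries $n-2$ beads on $R_1,\dots,R_{n-1}$, coming from the terminal staircase of column heights $n-2,\dots,2,1$ — whereas your ``fixed parity'' remark admits no exception; this case must be treated explicitly for the equivalence to be correct as stated.
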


\begin{proof}
Two kinds of salient blocks can appear in a Young wall that satisfies (R1)-(R2):
\begin{itemize}
\item label 0 half-blocks,
\item and salient block-pairs of label $n-1$ or $n$.
\end{itemize}
As in the type $A$ case a salient block corresponds to the first bead in a consecutive series of beads in the abacus. More precisely, if there are several columns of height $\lambda_k$, or equivalently, if there are several beads placed on the position $\lambda_k$, then the salient block corresponds to that column of height $\lambda_k$ which has the smallest possible index $k$ among these.

The label 0 blocks always correspond to positions which are on the ruler $R_{2n-2}$. In the odd columns of the type $D$ pattern they are of the shape \begin{tikzpicture}
 \draw (0, 0) -- (0.3,0.3);
 \draw (0, 0) -- (0,0.3);
 \draw (0, 0.3) -- (0.3,0.3);
\end{tikzpicture}
while in the even columns they are of the shape \begin{tikzpicture}
 \draw (0, 0) -- (0.3,0.3);
 \draw (0, 0) -- (0.3,0);
 \draw (0.3, 0) -- (0.3,0.3);
\end{tikzpicture}.
Condition (D2) encodes the fact, that the salient blocks of label 0 are upper triangles in odd columns and lower triangles 
in even columns, and that the color of the beads corresponding to them is also affected by the parity of the 
appropriate $n_k$.

If there is a salient block of label 0, then some columns of the same height, let's say, $\lambda_k$, can follow it. If the first column after them has height $\lambda_k-1$ then on its top there is again a salient block of label 0. This block can only have the opposite orientation than the aforementioned salient block, hence the number of columns of height $\lambda_k$ in this case can only be odd. This gives condition (D1).

Condition (D3) follows again from the absence of label 1 salient blocks. To see this recall that in the bottom row of the type $D$ pattern there are  half blocks which have label 0 in the odd columns and have label 1 in the even columns. Since there are no salient blocks of label $1$ in $Y$, this total number of columns can only be odd if the last column has height 1, the column to the left of it has height 2, and so on. This is can only happen when in the bottom row $s = n-2$.

The fact that there is no salient block of label $2, \dots, n-2$ implies that for each bead on the rulers $R_1, \dots, R_{2n-1}$ there has to be a block placed on its right. The only exception is the ruler $R_{n-2}$. There cannot be any bead on this ruler, except when there is a salient block pair of label $n-1/n$ which corresponds to a hole on $R_{n-1}$. These considerations imply conditions (D4) and (D5).

Condition (D6) corresponds directly to property (R3).
\end{proof}

Given $Y\in{\mathcal Z}_\Delta^0$, let $t_i$ denote the total number of beads in the $i$-th row of its abacus 
representation, and $l_i$ the number of beads in the rightmost position of the $i$-th row. We obtain a sequence of 
pairs $(t_i,l_i)_{i \in \SZ_+}$, only finitely many of which do not equal $(0,0)$. 

\begin{corollary}
\label{lem:invcelldesc}
Given $Y\in{\mathcal Z}_\Delta^0$, the associated sequence of pairs $(t_i,l_i)_{i \in \SZ_+}$ satisfies the following 
conditions. 
\begin{enumerate}
\item For all $i$, $0 \leq l_i \leq t_i$.
\item For all $i$, if $t_i > 0$, then either $l_i=t_i$ is even, or $l_i$ is odd.
\item Either $\sum_i t_i$ is even, or $t_1-l_1=2n-4$.
\end{enumerate}
Conversely, any sequence  $(t_i,l_i)_{i \in \SZ_+}$ satisfying these conditions arises as a sequence associated to at 
least one Young wall 
$Y\in {\mathcal Z}_\Delta^0$. More precisely, the number of different Young walls $Y\in {\mathcal Z}_\Delta^0$ corresponding 
to any given sequence is $2^m$, where $m$ is the number of indices $i$ such that $t_i-l_i > 2n-2$. All Young walls~$Y$ 
corresponding to a single sequence have the same multi-weight, when the weights for labels $n-1$ and $n$ are counted 
together. 
\end{corollary}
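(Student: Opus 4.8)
The statement to prove is Corollary~\ref{lem:invcelldesc}, which characterises the sequences $(t_i,l_i)_{i\in\SZ_+}$ arising from distinguished $0$-generated Young walls of type $D_n$, and counts the Young walls with a given sequence. The plan is to derive everything directly from the abacus characterisation of Lemma~\ref{lem:maxinvcells}, that is, from conditions (D1)--(D6).

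\textbf{From (D1)--(D6) to conditions (1)--(3).} First I would translate each numbered claim into a statement about the abacus rows. Claim (1), $0\le l_i\le t_i$, is immediate: $l_i$ counts beads in a single position of the $i$-th row, namely the rightmost position on the ruler $R_{2n-2}$ by (D1), and $t_i$ counts all beads in that row. Claim (2) is precisely a reformulation of (D1): if $t_i>0$, then either all $t_i$ beads of the row sit in the rightmost position (so $l_i=t_i$), in which case by (D1) the number of beads there is even when it is not the case that the count is odd — more carefully, (D1) says the count in the rightmost position is odd \emph{unless} all beads of the row are there; so either $l_i$ is odd, or $l_i=t_i$ and then the parity constraint together with (D2) forces $l_i=t_i$ even. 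I would spell out this last parity bookkeeping using (D2), since the colour alternation of the label-$0$ triangles is what pins down the even case. Claim (3) is a reformulation of (D3): the total number of beads $\sum_i t_i$ is even, or else the number of beads on $R_1,\dots,R_{n-1}$ in the first row equals $n-2$; using (D5)--(D6) for the first row, ``$s=n-2$ beads on $R_1,\dots,R_{n-2}$, pushed right'' is exactly the condition $t_1-l_1=2n-4$, because the first row has $2n-2$ positions total, $l_1$ of them (on $R_{2n-2}$) occupied with multiplicity, the remaining occupied ones being a pushed-right block, and the count $2n-2 - (n-2) = n = $ \dots I would carefully recompute this so that the arithmetic $t_1 - l_1 = 2n-4$ matches; the point is that it is a direct bookkeeping of filled positions in row $1$.

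\textbf{Realisability and the count $2^m$.} For the converse, given an abstract sequence $(t_i,l_i)$ satisfying (1)--(3), I would reconstruct an abacus configuration row by row. In each row with $t_i$ beads: place $l_i$ of them on the rightmost position $R_{2n-2}$, then distribute the remaining $t_i-l_i$ beads according to (D4): fill $R_n,\dots,R_{2n-3}$ pushed right (this absorbs up to $n-2$ beads), and if more remain, fill $R_1,\dots,R_{n-1}$, subject to (D5)--(D6). Condition (D6) is where the ambiguity — hence the factor $2^m$ — enters: when $t_i-l_i$ exceeds $2n-2$ (so that, after filling $R_n,\dots,R_{2n-2}$, there are more than $n-2$ beads left for $R_1,\dots,R_{n-1}$), condition (D6)(a) forces all of them onto $R_{n-1}$, and a hole on $R_{n-1}$ corresponds to a salient block-pair of label $n-1/n$; the choice of orientation of the top half-block (equivalently, the white/black colour of the bead, modulo the constraint (D2)) is a genuine binary choice in exactly those rows. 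So the number of realisations is $2^m$ with $m=\#\{i: t_i-l_i>2n-2\}$. I would also check that conditions (1)--(3) are exactly what is needed for such a reconstruction to exist and to satisfy (D1)--(D3): (1) makes the row legal, (2) guarantees the parity/colour conditions (D1)--(D2) can be met, and (3) handles (D3).

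\textbf{Multi-weight independence.} Finally, for the last sentence I would argue that the multi-weight $\underline{wt}(Y)$, with the weights for labels $n-1$ and $n$ summed, depends only on $(t_i,l_i)_i$. The only freedom among Young walls with a fixed sequence is, in the rows with $t_i-l_i>2n-2$, the swap between a label-$(n-1)$ and a label-$n$ half-block at the top of the relevant columns (a white$\leftrightarrow$black bead swap). Such a swap exchanges $wt_{n-1}$ and $wt_n$ by one and leaves every other $wt_j$ unchanged, so the combined weight $wt_{n-1}+wt_n$ and all other $wt_j$ are invariants of the sequence. I would phrase this as: the positions of all blocks are determined by the column heights $\lambda_k$ (hence by $(t_i,l_i)$ via the abacus), the labels of all blocks below the top half-blocks are determined by position, and the top half-blocks contribute a fixed total to $wt_0,\dots,wt_{n-2}$ and a fixed total to $wt_{n-1}+wt_n$.

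\textbf{Main obstacle.} The routine part is clear, but the step I expect to require the most care is the precise parity/colour bookkeeping that links (D1)--(D2) to condition (2) and the exact constant $2n-4$ in condition (3): one must track, for each row, how many positions on each ruler are filled, how the triangle orientations alternate down a column (through the quantity $n_k$), and how this interacts with the bead colour on $R_{2n-2}$. Getting the inequality in ``$t_i-l_i>2n-2$'' versus ``$\ge$'' exactly right, and matching $2n-4$ rather than $2n-3$ or $2n-2$, is the kind of off-by-one that the proof must nail down explicitly by counting positions in a row of the type $D_n$ abacus.
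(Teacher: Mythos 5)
Your proposal is correct and follows essentially the same route as the paper: conditions (1)--(3) are read off directly from (D1)--(D3), the converse is a row-by-row reconstruction of the abacus configuration using (D4)--(D6), and the factor $2^m$ comes precisely from the free white/black colour choice of the beads placed on $R_{n-1}$ in the rows where they overflow onto that runner, which on the Young wall is the $\rho_{n-1}\leftrightarrow\rho_n$ swap of the top half-blocks and hence preserves the combined weight $\mathrm{wt}_{n-1}+\mathrm{wt}_n$. The paper's own proof is just a terser version of this, so the only work left in your write-up is the position-counting you already flagged (verifying $t_1-l_1=2n-4$ in condition (3) and the exact threshold in the definition of $m$).
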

\begin{proof}
Condition (1) is clear from the definition of $(t_i,l_i)$. Condition (2) follows from (D1) above. Condition (3) is equivalent to condition (D3).

Conversely, given a sequence $(t_i,l_i)_{i \in \SZ_+}$ satisfying conditions (1)-(3), we can reconstruct a
corresponding $Y\in {\mathcal Z}_\Delta^0$ in its abacus representation as follows. 
On the $i$-th row, $l_i$ beads have to be put on the last position; (D1) is satisfied because of (1). They are 
white if $\sum_{j<i} t_j + \sum_{j>i, t_j \not\equiv 0\;\mathrm{mod}\;n-1} t_j\equiv 1 \; \mathrm{mod}\; 2$, and black otherwise; 
this is just a reformulation of (D2). (D3) is satisfied because of (3). At 
most one bead can be put on each ruler between $R_n$ and $R_{2n-3}$, pushed to the right as much as possible; this is (D4). 
If $t_i-l_i \leq 2n-2$, then the rest of the beads fill up the rulers between $R_1$ and $R_{n-2}$, pushed to the right.
If $t_i-l_i > 2n-2$, then there are no beads in this row on the rulers between $R_1$ and $R_{n-2}$; instead, the
remaining beads are all placed on the $(n-1)$-st ruler, and they can be either white or black. These rules
reconstruct a configuration satisfying (D5)-(D6), and give the stated ambiguity in the reconstruction.
\end{proof}

\section{The generating series of distinguished 0-generated walls}

In light of Theorem~\ref{thm:dnsingcells}, in order to complete the proof of our main Theorem~\ref{thmsing} for
type $D$, we need the following combinatorial result, the precise analogue of 
Proposition~\ref{prop:ansubst} in type $A$. 

\begin{theorem} 
\label{thm:dnsubst} Let $\Delta$ be of type $D_n$, and let $\xi$ be a primitive $(2n-1)$-st root of unity. Then
the generating series of the set ${\mathcal Z}_\Delta^0$ of distinguished $0$-generated Young walls is given in 
terms of the generating function of all Young walls by the following substitution:
\[ \sum_{Y\in {\mathcal Z}_\Delta^0} q^{{\rm wt}_0(Y)} = Z_{\Delta}(q_0,\dots,q_n)\Big|_{q_0=\xi^2q,\, q_1=\dots=q_n=\xi}.
\]
\end{theorem}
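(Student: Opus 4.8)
The plan is to mimic closely the structure of the type $A$ proof (Proposition~\ref{prop:ansubst}), working throughout on the abacus representation of Young walls developed in~\ref{sec:Dabacus}. Recall from Theorem~\ref{thm:dnsingcells} that the left-hand side equals $\sum_{m}\chi(\mathrm{Hilb}^m(\SC^2/G_\Delta))q^m$, so what we must prove is the purely combinatorial identity claimed. The key is to introduce, just as in type $A$, a map $p\colon {\mathcal Z}_\Delta \to {\mathcal Z}^0_\Delta$ sending a Young wall to the smallest distinguished $0$-generated Young wall containing it, and to describe $p$ combinatorially on abacus configurations (this should be the analogue of Lemma~\ref{lem:rows}: roughly, $p$ corresponds to pushing beads rightwards on each row, subject to the $D_n$-abacus rules encoded by conditions (D1)--(D6) in Lemma~\ref{lem:maxinvcells}). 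The substitution identity will then be proved fibrewise over the map $p$: for each $Y \in {\mathcal Z}^0_\Delta$ we must show
\[
\sum_{Y' \in p^{-1}(Y)} \underline{q}^{\underline{\mathrm{wt}}(Y')}\Big|_{q_1=\dots=q_n=\xi,\,q_0=\xi^2 q} = q^{\mathrm{wt}_0(Y)}.
\]

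First I would set up the combinatorial bookkeeping. Using Corollary~\ref{lem:invcelldesc}, a distinguished $0$-generated Young wall $Y$ is encoded (up to a harmless $2^m$-fold ambiguity, which does not affect multi-weights once labels $n-1$ and $n$ are combined) by a sequence of integer pairs $(t_i,l_i)_i$. For each such $Y$, the fibre $p^{-1}(Y)$ consists of Young walls obtained by "filling back" beads row by row in a way governed by $Y$'s row data; I would compute $\sum_{Y' \in p^{-1}(Y)}\underline{q}^{\underline{\mathrm{wt}}(Y')}$ after the substitution as a product of Gaussian-binomial-type factors, one per row, exactly as the $\binom{n+1}{k}_{\xi^{-1}}$ factors appeared in type $A$ — this is the content I expect to formulate as Corollary~\ref{cor:Dn:subst}, giving
\[
\sum_{Y' \in p^{-1}(Y)} \underline{q}^{\underline{\mathrm{wt}}(Y')}\Big|_{q_1=\dots=q_n=\xi,\,q_0=\xi^2 q} = q^{\mathrm{wt}_0(Y)}\,\xi^{\sum_{j\neq 0}(\mathrm{wt}_j(Y) - \dim\rho_j\cdot\mathrm{wt}_0(Y)) - \sum_i c(t_i,l_i)},
\]
where $c(t_i,l_i)$ is an explicit integer depending only on the row data. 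The main work here is a careful count of how many blocks of each label $1,\dots,n-2$ and $n-1,n$ are added when filling each row, together with the quadratic Gaussian-binomial evaluations at the primitive $(2n-1)$-st root of unity $\xi$; the relevant identities are finite trigonometric sums analogous to $\frac{1-\xi^{-n-1}}{1-\xi^{-1}}\frac{1-\xi^{-l-1}}{1-\xi^{l-n-1}}=\xi^{-l}$ from the type $A$ proof, and one must check that all denominators are nonzero, i.e. that the order of $\xi$ is large enough — this is why $2n-1$ (equal to $1+h^\vee$ for $D_n$, since $h^\vee = 2n-2$) is the correct root of unity.

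The final step is to show that the exponent of $\xi$ in the displayed formula vanishes identically, i.e.
\[
\xi^{\sum_{j\neq 0}(\mathrm{wt}_j(Y) - \dim\rho_j\cdot\mathrm{wt}_0(Y)) - \sum_i c(t_i,l_i)} = 1.
\]
Following the type $A$ template, I would prove this by reducing every $Y \in {\mathcal Z}^0_\Delta$ to a distinguished $0$-generated \emph{core} Young wall: adding or removing a bar changes $\mathrm{wt}_j(Y)$ by $\dim\rho_j$ and $\mathrm{wt}_0(Y)$ by $1$, so the quantity $\sum_{j\neq 0}(\mathrm{wt}_j - \dim\rho_j\cdot\mathrm{wt}_0)$ is invariant under bar moves (this is precisely why the combination appears); a parallel check shows $\sum_i c(t_i,l_i)$ changes compatibly, so the whole exponent is a bar-removal invariant. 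It then suffices to verify the identity on distinguished $0$-generated cores, where by Proposition~\ref{prop:dncoredecomp} and Theorem~\ref{thm:orbiserdn}(3) the relevant weights are given by the explicit quadratic/linear formulas in the integer coordinates $(z_1,\dots,z_n)$ — in particular Theorem~\ref{thm:orbiserdn}(3) expresses $\sum_i m_i$, hence $\sum_{j\neq 0}\mathrm{wt}_j(Y)$, in closed form, and the congruence $\sum_{j\neq 0}(\mathrm{wt}_j - \dim\rho_j\cdot\mathrm{wt}_0) \equiv \sum_i c(t_i,l_i) \pmod{2n-1}$ becomes a finite check. I expect the main obstacle to be the case analysis in computing $c(t_i,l_i)$ and the per-row contributions: the $D_n$ abacus has the three runner types ($R_k$ with $k\not\equiv 0$, and the two colored runners $R_{n-1}, R_{2n-2}$), the colors of beads depend on parities of the $n_k$, and conditions (D1)--(D6) interact subtly with the left/right-triangle ambiguity — so unlike type $A$ where a single Gaussian binomial per row does the job, here one needs separate bookkeeping for the "long" rows ($t_i - l_i > 2n-2$, contributing label $n-1/n$ pairs) versus the "short" rows, and must confirm that after the substitution these all collapse to the clean power $q^{\mathrm{wt}_0(Y)}$. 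Combining this vanishing with Corollary~\ref{cor:Dn:subst} and Corollary~\ref{lem:invcelldesc} yields the fibrewise identity, and summing over $Y \in {\mathcal Z}^0_\Delta$ completes the proof of Theorem~\ref{thm:dnsubst}.
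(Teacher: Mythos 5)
Your proposal follows essentially the same route as the paper's proof: the map $p\colon \mathcal{Z}_\Delta\to\mathcal{Z}^0_\Delta$ described on the abacus, the row-data encoding $(t_i,l_i)$ from Corollary~\ref{lem:invcelldesc}, the per-row fibre contributions evaluated via Gaussian binomials $\binom{2n-1}{k}_{\xi^{-1}}$ vanishing at the primitive $(2n-1)$-st root of unity (yielding exactly Corollary~\ref{cor:Dn:subst} with the correction exponent $\sum_{j\neq 0}(\mathrm{wt}_j-\dim\rho_j\cdot\mathrm{wt}_0)-\sum_i c(t_i,l_i)$), and the vanishing of that exponent by bar-removal invariance followed by an explicit check on distinguished $0$-generated cores using Theorem~\ref{thm:orbiserdn}(3). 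The only detail you gloss over is that the core of a distinguished $0$-generated Young wall need not itself be distinguished, so the paper inserts an extra reduction step before the final verification, but this does not change the overall argument.
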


In analogy once again with the type $A$ proof, the following is the key construction in our proof 
of this result. There is a combinatorial map 
\[p\colon{\mathcal Z}_\Delta \to{\mathcal Z}^0_\Delta\] defined as follows.
For an arbitrary Young wall $Y \in {\mathcal Z}_\Delta$ we take the Young wall $Y_1 \in {\mathcal Z}_\Delta'$ which contains $Y$ and which is minimal with this property with respect to containment. By Lemma \ref{lem:yprimeunique} $Y_1$ is unique. We let $p(Y)=\mathrm{red} (Y_1)$, where $\mathrm{red}\colon {\mathcal Z}_\Delta' \to {\mathcal Z}_\Delta^0$ is the map defined in Lemma \ref{lem:1red}. 
We remark that in fact $p(Y)$ is the unique element in ${\mathcal Z}_\Delta^0$ which has exactly the same set of 
label $0$ blocks as $Y$.

\begin{proposition} On the abacus representation of Young walls,
the map $p\colon{\mathcal Z}_\Delta \to{\mathcal Z}^0_\Delta$ corresponds to the following steps:
\begin{enumerate}
\item If a row ends with a white (resp., black) bead on $R_{2n-2}$ corresponding to a column of height $\lambda_k$, and 
$k+n_k$ is even (resp.~odd), then one bead should be moved to the next position, which is the leftmost in the next row. This is applied also on the zeroth position of the abacus, where we assume that there are infinitely many beads. This corresponds to the appearance of a new column in the Young wall represented by the abacus.
\item Every bead on the rulers $R_{1},\dots,R_{2n-4}$ is moved to right as much as possible according to the abacus rules.
\item If there is at least one bead on the rulers $R_1, \dots , R_{2n-3}$ after performing Step 1 on the previous row, and the number of beads on $R_{2n-2}$ is even, then one more bead is moved onto $R_{2n-2}$. If there were beads on $R_{2n-2}$ already, then this step does not change the parity of $k+n_k$ for the rightmost bead. If there were no beads on $R_{2n-2}$ before, then this beads should take the appropriate color and it is possible to see that it can not be moved further with Step 1.
\item Let $s$ be the total number of beads on the rulers $R_1,\dots,R_{n-1}$ after performing the Steps 1-3. If $s > n-2$, then move all these beads on $R_{n-1}$. In this case some of these beads were here previously, so the color of the whole group of beads is already determined. If $s \leq n-2$, then  move them onto the rulers $R_1,\dots,R_{n-2}$, each as right as possible.
\end{enumerate}
\end{proposition}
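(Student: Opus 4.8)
The goal is to verify that the four-step abacus procedure listed in the Proposition indeed computes the map $p\colon{\mathcal Z}_\Delta \to{\mathcal Z}^0_\Delta$ defined combinatorially just above it, namely $Y\mapsto \mathrm{red}(Y_1)$, where $Y_1$ is the minimal element of ${\mathcal Z}_\Delta'$ containing $Y$. The plan is to use the characterization of ${\mathcal Z}^0_\Delta$ on the abacus given by conditions (D1)--(D6) in Lemma~\ref{lem:maxinvcells}, and the key observation (also recorded in the text) that $p(Y)$ is the \emph{unique} element of ${\mathcal Z}^0_\Delta$ having exactly the same set of label-$0$ blocks as $Y$. So it suffices to show two things: (a) the output of the four-step procedure is an abacus configuration satisfying (D1)--(D6), i.e.~lies in ${\mathcal Z}^0_\Delta$; and (b) the procedure does not change the set of label-$0$ blocks of the Young wall, equivalently it does not change the set of positions on the ruler $R_{2n-2}$ together with their coloring data in the sense relevant to label-$0$ blocks.

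First I would recall precisely how label-$0$ blocks sit in the abacus: by the analysis in the proof of Lemma~\ref{lem:maxinvcells}, a label-$0$ half-block at the top of a column of height $\lambda_k$ corresponds to a bead on $R_{2n-2}$, and whether it is an upper or lower triangle is governed by the parity of $k+n_k$ (equivalently by the bead color together with that parity). The set of label-$0$ blocks of $Y$ is therefore encoded by: the multiset of rows $i$ carrying at least one bead on $R_{2n-2}$, together with, for each such row, the parity datum of the rightmost (smallest-$k$) bead there. Step~1 of the procedure only acts when that parity datum indicates the bead is \emph{not} in the $0$-generated position — precisely the situation where, in the Young wall, there is an addable label-$j$ block for some $j\ne 0$ above or to the right; moving the bead forward corresponds to adding such non-$0$ blocks, which is exactly the content of enforcing (R1)--(R2)/(A1)--(A2) and does not touch any label-$0$ block. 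Steps~2--4 move beads on rulers $R_1,\dots,R_{2n-3}$ and possibly push one extra bead onto $R_{2n-2}$; I would check case by case (the three sub-cases of Step~3, and the two sub-cases of Step~4) that after Step~1 the rightmost bead of each row on $R_{2n-2}$ already has the correct parity, so the extra bead pushed in Step~3 does not alter the label-$0$ datum, and Steps~2 and~4 never create or destroy a bead on $R_{2n-2}$. This establishes (b).

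For (a) I would go through conditions (D1)--(D6) one at a time. Condition (D1) (rightmost bead on $R_{2n-2}$, count odd or equal to the whole row) follows from Steps~3--4 forcing the $R_{2n-2}$-count to be odd whenever some bead remains on $R_1,\dots,R_{2n-3}$, and from Step~4 emptying $R_1,\dots,R_{n-1}$ down onto $R_{n-1}$ or pushing to the right otherwise. Condition (D2) (color/parity compatibility of the terminal bead) is exactly what Step~1 enforces, and one must check Steps~3--4 preserve it — this is the content of the parenthetical remarks in the Proposition statement about Step~3 not changing the parity of $k+n_k$ when $R_{2n-2}$ is already occupied, and about the new bead taking "the appropriate color" when it is not. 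Condition (D3) (total bead count even, or $n-2$ beads on $R_1,\dots,R_{n-1}$ in the first row) has to be handled with care because it is a global parity condition: I would argue that applying Step~1 at the zeroth (infinite) position, together with Steps~3--4, exactly compensates the parity, using the same reasoning as in the reconstruction part of Corollary~\ref{lem:invcelldesc}. Conditions (D4)--(D5) are immediate from Steps~2 and~4 (right-justification on $R_n,\dots,R_{2n-3}$, and the dichotomy $R_{n-1}$ versus $R_1,\dots,R_{n-2}$), and (D6) is precisely Step~4's sub-case split on $s>n-2$ versus $s\le n-2$, which matches property (R3) via Lemma~\ref{lem:maxinvcells}.

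The main obstacle I anticipate is the bookkeeping around \emph{color propagation} and the \emph{global parity} condition (D3). Bead colors on $R_{2n-2}$ (and on $R_{n-1}$ in the overflow case of Step~4) depend on cumulative parities $\sum_{j<i}t_j + \sum_{j>i,\,t_j\not\equiv 0}t_j$ of bead counts in other rows, so a local move in one row can in principle flip the "correct" color in another; I would need to verify that the procedure is globally consistent — most cleanly by checking that the final configuration matches the unique reconstruction from the sequence $(t_i,l_i)$ described in Corollary~\ref{lem:invcelldesc} applied to $p(Y)$, whose $(t_i,l_i)$-data is determined by the label-$0$ blocks of $Y$ alone. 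In other words, rather than tracking colors through the dynamics, I would compute the $(t_i,l_i)$ invariant of the output directly, invoke Corollary~\ref{lem:invcelldesc} for existence and (up to the harmless $2^m$ ambiguity, which does not affect membership in ${\mathcal Z}^0_\Delta$ nor the $0$-weight) uniqueness, and conclude that the output equals (a representative of) $p(Y)$. The remaining, genuinely routine, part is the enumeration of the small number of local cases in Steps~1--4; I would present these schematically rather than exhaustively.
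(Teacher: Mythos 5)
Your proposal is correct and follows essentially the same route as the paper: the paper's (very terse) proof simply matches each step to the conditions of Lemma~\ref{lem:maxinvcells} — Step~1 enforces (D2) (and (D3) when applied at the zeroth position), Step~2 enforces (D4)--(D5), Step~3 enforces (D1), and Step~4 enforces (D6). Your additional part (b), verifying that the label-$0$ blocks are preserved so that the output is indeed $p(Y)$ via the uniqueness remark, is left implicit in the paper but is a sound and welcome precaution.
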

\begin{proof}
Step 1 enforces condition (D2). It also enforces condition (D3) when applied to the minus first row. Step 2 enforces conditions (D4) and (D5), Step 3 enforces condition (D1), and finally Step 4 enforces condition (D6).
\end{proof}

The fibers of the map $p$ can be described as follows. Given a Young wall $Y\in {\mathcal Z}^0_\Delta$,
we are allowed to move beads to the left and, occasionally, to the right, using the following rules. 
\begin{enumerate}
\item From the last position of the $i-th$ row only one (resp.~zero) bead can be moved to the left if $l_i$ is odd (resp., even).
\item Every other bead is allowed to moved to the left in its row if the result is a valid abacus configuration.
 \item The leftmost bead in a row can be moved to the last position of the previous row. There it will take the color white if $\sum_{j<i} t_j + \sum_{j>i, t_j \not\equiv 0\;\mathrm{mod}\;n-1} t_j\equiv 1 \; \mathrm{mod}\; 2$, and grey otherwise.
 \item If $t_i-l_i \leq 2n-2$, then the beads to the left from the $n-1$-st position are allowed to be moved to the right at most onto the ruler $R_{n-1}$. 
 \item If $t_i-l_i \leq 2n-2$, then any configuration, in which there is at least one bead at the $(n-1)$-st position, 
has to be counted with multiplicity two.
\end{enumerate}

Let us call the beads that can be moved according to these rules~\emph{movable}. For a row with data $(t,l)$, let us also introduce the number $c(t,l)$, which is signed sum of the distance of the beads from the $R_{n-1}$-st ruler, where the sum runs over the movable beads, and the beads to the left of the $R_{n-1}$-st ruler are counted with negative sign, and the beads to the right of it are counted with positive sign. These numbers are listed in the table below:

\[\def\arraystretch{1.5}
\begin{array}{r|c|c}
& l \equiv 0 \; \mathrm{mod}\; 2 & l \equiv 1 \; \mathrm{mod}\; 2 \\ 
\hline 
0 \leq t-l \leq n-2 &  \binom{n-1}{2}-\binom{n-1-t+l}{2}  &  \binom{n}{2}-\binom{n-1-t+l}{2} \\ 
\hline 
n-1 \leq t-l \leq 2n-3 & \binom{n-1}{2}-\binom{t-l-n+1}{2} &  \binom{n}{2}-\binom{t-l-n+1}{2} \\ 
\hline
2n-2 \leq t-l & \binom{n-1}{2} & \binom{n}{2} \\ 
\end{array}
\]

\begin{lemma} The contribution of a row with data $(t,l)$ to the total weight of the fiber is $\xi^{-c(t,l)}$.
\end{lemma}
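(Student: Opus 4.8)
The plan is to compute, one row at a time, the generating function of the fibre $p^{-1}(Y_0)$ of $p\colon \mathcal{Z}_\Delta\to\mathcal{Z}^0_\Delta$ under the substitution $q_1=\dots=q_n=\xi$, $q_0=\xi^2q$, and to read off from this the factor attached to a row with data $(t,l)$. Recall the five rules describing an arbitrary element of $p^{-1}(Y_0)$ in terms of bead moves on the abacus of~\ref{sec:Dabacus}: within each row beads slide left subject to non-overtaking; the designated bead on $R_{2n-2}$ may leave it only when $l_i$ is odd; in the ranges $t_i-l_i\le 2n-2$ some beads may slide back onto $R_{n-1}$; and configurations containing a bead on $R_{n-1}$ are counted with multiplicity two. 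The set of moves available in a row depends only on $(t_i,l_i)$ together with the colour data recorded between rows; the bead that migrates from row $i$ to row $i-1$ merely relabels which row a bead occupies, and since the substitution treats all labels $\neq 0$ symmetrically this migration does not change the substituted weight; likewise the colour ambiguity on $R_{n-1}$ and $R_{2n-2}$ is invisible once $q_{n-1}=q_n=\xi$. Hence the substituted fibre generating function factorises over rows (this is the form already recorded in Corollary~\ref{cor:Dn:subst}), and the claim to prove is that the row factor is $\xi^{-c(t,l)}$.

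I would first translate an elementary bead move into the operation it induces on the Young wall. A single leftward step of a bead decreases one column height by one and hence removes exactly one (half\nobreakdash-)block; tracing through the abacus construction shows that this block always carries a label in $\{1,\dots,n-2,n-1,n\}$ --- never $0$, precisely because $p$ was built so as to preserve the set of $0$-blocks (this is the characterisation of $\mathcal{Z}^0_\Delta$ in Lemma~\ref{lem:maxinvcells}). Under $q_1=\dots=q_n=\xi$ such a step therefore multiplies the multiweight by $\xi^{-1}$, and a rightward step towards $R_{n-1}$ (rules (4)--(5)) adds a block and multiplies it by $\xi^{+1}$. Consequently the row factor equals $\sum \xi^{-(\text{net signed number of steps})}$ summed over all configurations of the beads of that row reachable under the rules. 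The non-overtaking constraint turns the freely sliding beads into a Gaussian binomial coefficient in $\xi^{-1}$, the optional bead leaving $R_{2n-2}$ contributes a short geometric factor, and the $R_{n-1}$-multiplicity contributes one further factor; assembling these gives a closed product in $\xi^{\pm1}$ depending only on $(t,l)$ and on the range of $t-l$.

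The crux is then to collapse this product to the single power $\xi^{-c(t,l)}$. Here I would use two inputs. First, the palindromic symmetry of the $D_n$ column pattern --- the label sequence $0,2,3,\dots,n-2,\;n\text{-}1/n,\;n-2,\dots,3,2,\;0,\dots$ read up a column, equivalently the reflection $x\leftrightarrow y$ realised by $\tau\in G_\Delta$ --- pairs moves on the $R_1,\dots,R_{n-2}$ side with moves on the $R_n,\dots,R_{2n-3}$ side and lets the binomial factors be written symmetrically about $R_{n-1}$. Second, $\xi$ is a primitive $(2n-1)$-st root of unity, so $2$ is invertible modulo $2n-1$ and $[2n-1]_\xi=0$; the resulting cyclotomic identities telescope the symmetric product into a pure power of $\xi$. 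A case analysis over $0\le t-l\le n-2$, $n-1\le t-l\le 2n-3$ and $t-l\ge 2n-2$, and over the parity of $l$, then matches the exponent with the corresponding entry $-c(t,l)$ of the table, completing the proof; combined over all rows via Corollary~\ref{cor:Dn:subst} this is what feeds into Theorem~\ref{thm:dnsubst}. The main obstacle is exactly this last step: one must pin down precisely which bead configurations are genuinely reachable in each case --- the rules (1)--(5) interact delicately near $R_{n-1}$, $R_{2n-3}$ and $R_{2n-2}$, and the doubling rule (5) must be folded into the generating function correctly --- and then verify the required family of root-of-unity identities. In type $A$ the single identity $\frac{1-\xi^{-n-1}}{1-\xi^{-1}}\cdot\frac{1-\xi^{-l-1}}{1-\xi^{l-n-1}}=\xi^{-l}$ sufficed (Proposition~\ref{prop:ansubst}); in type $D$ one needs one such identity for each case in the table, and this is where essentially all the real work lies.
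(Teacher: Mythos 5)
Your plan follows essentially the same route as the paper: stratify the preimages of a fixed row by how far the movable beads have been shifted, express each stratum's contribution as a Gaussian binomial coefficient in $\xi^{-1}$, and exploit the fact that $\xi$ is a primitive $(2n-1)$-st root of unity. You have correctly identified the decisive fact, namely that $\binom{2n-1}{k}_{\xi^{-1}}=0$ for $0<k<2n-1$.

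One remark on the final step, which you flag as the main obstacle: no palindromic pairing of moves about $R_{n-1}$ and no family of case-by-case root-of-unity identities (as in the type $A$ argument) is actually needed. The paper simply organizes the sum over the fibre by the number of beads pushed onto the $(n-1)$-st position; the $i$-th stratum contributes $\xi^{-\sum_{j=n-t+l-1}^{n-t+l-2+i}j}\binom{2n-1}{t-l+1-i}_{\xi^{-1}}$, and every term is annihilated outright except the extreme one $i=t-l+1$, whose binomial factor is $\binom{2n-1}{0}_{\xi^{-1}}=1$. The surviving exponent $\sum_{j=n-t+l-1}^{n-1}j=\binom{n}{2}-\binom{n-1-t+l}{2}$ is then matched against the table defining $c(t,l)$, with the remaining ranges of $t-l$ handled by the same vanishing argument and the case $l$ even being trivial since then $l=t$ and there are no movable beads. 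So the collapse is more brutal than the telescoping you anticipate, and the case analysis reduces to reading off a single arithmetic-progression sum in each range.
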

\begin{proof}
If $l$ is even but nonzero, then according to Corollary \ref{lem:invcelldesc} $l=t$ and there isn't any movable bead.

If $l$ is odd, then there is one movable bead on the $R_{2n-2}$-nd ruler. Assume first that $ t \leq n-1$. Then the expression
\[ \sum_{n_1=0}^{2n-t+l-2}\sum_{n_2=0}^{n_1} \dots \sum_{n_{t-l+1}=0}^{n_{t-l}} (\xi^{-1})^{n_1+\dots+n_{t-l+1}}=\binom{2n-1}{t-l+1}_{\xi^{-1}}\]
counts once every preimage, in which there is at most one bead at the $n-1$-st position. Similarly
\[ (\xi^{-1})^{n-t+l-1} \sum_{n_2=0}^{2n-t} \sum_{n_3=0}^{n_2}\dots \sum_{n_{t-l+1}=0}^{n_{t-l}} (\xi^{-1})^{n_2+\dots+n_{t-l+1}}=(\xi^{-1})^{n-t+l-1} \binom{2n-1}{t-l}_{\xi^{-1}}\]
counts once every preimage, in which there is at least one and at most two beads at the $n-1$-st position, since we moved one bead from the leftmost occupied position to the $n-1$-st position, and we fixed it there. The next term is given by
\[ (\xi^{-1})^{(n-t+l-1)+(n-t+l)} \sum_{n_3=0}^{2n-t} \sum_{n_4=0}^{n_3}\dots \sum_{n_{t-l+1}=0}^{n_{t-l}} (\xi^{-1})^{n_3+\dots+n_{t-l+1}}\]
\[=(\xi^{-1})^{(n-t+l-1)+(n-t+l)} \binom{2n-1}{t-l-1}_{\xi^{-1}},\]
which counts once every preimage, in which there is at least two and at most three beads at the $n-1$-st ruler. Continuing in this fashion and summing up in the end we get
\[ \binom{2n+1}{t-l+1}_{\xi^{-1}}+\sum_{i=1}^{t-l+1} \xi^{-\sum_{j=n-t+l-1}^{n-t+l-2+i} j} \binom{2n-1}{t-l+1-i}_{\xi^{-1}}. \]
It can be checked that $\binom{2n-1}{k}_{\xi^{-1}}=0$ unless $k=0$, in which case it is equal to $1$. 
Therefore, the only surviving part is the one with $i=t-l+1$:
\[ \xi^{-\sum_{j=n-t+l-1}^{n-1} j}=\xi^{\binom{n}{2}-\binom{n-1-t+l}{2}}=\xi^{-c(t,l)}. \]

The proofs of the remaining two cases, when $l$ is odd, are very similar. The only difference in the case $2n-2 \leq t-l$ is that first the preimages with zero or one extra movable beads at $R_{n-1}$ have to be counted, then the preimages with two or three extra movable beads, etc. As in the case $ t \leq n-1$, the only nonzero term is the one where in the beginning all the movable beads have been shifted to the $R_{n-1}$-st ruler, and in this case the powers of $\xi^{-1}$ sum up to $\binom{n}{2}=c(t,l)$. 
\end{proof}
\begin{corollary}
\label{cor:Dn:subst}
Let $Y\in {\mathcal Z}^0_\Delta$ be a distinguished 0-generated Young wall described by the data $\{(t_i,l_i)_i\}$. Then
\[ 
\begin{aligned}
\sum_{Y' \in p_{\ast}^{-1}(Y)} 
\underline{q}^{\underline{\mathrm{wt}}(Y')}\Big|_{q_1=\dots=q_n=\xi,\,q_0=\xi^2q}& =\underline{q}^{\underline{\mathrm{wt}}(Y)}\Big|_{q_1=\dots=q_n=\xi,\,q_0=\xi^{-(2n-3)}q} \cdot \xi^{-\sum_i c(t_i,l_i)}\\
& =q^{\mathrm{wt}_0(Y)} \xi^{\sum_{j\neq 0} (\mathrm{wt}_j(Y)-\dim\rho_j \cdot \mathrm{wt}_0(Y))-\sum_i c(t_i,l_i)} 
\end{aligned}
\]
\end{corollary}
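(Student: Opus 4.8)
The plan is to deduce the first equality from the row-contribution lemma established just above, and to obtain the second equality by a direct substitution. I would start from the two ingredients already available: first, by Lemmas \ref{lem:yprimeunique} and \ref{lem:1red} the map $p\colon{\mathcal Z}_\Delta\to{\mathcal Z}_\Delta^0$ sends $Y'$ to the unique distinguished $0$-generated wall with the same set of label-$0$ blocks as $Y'$, so that $\mathrm{wt}_0(Y')=\mathrm{wt}_0(Y)$ for every $Y'\in p^{-1}(Y)$; and second, the explicit combinatorial description of $p^{-1}(Y)$ on abacus configurations, together with the data $(t_i,l_i)$ of Corollary \ref{lem:invcelldesc}. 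The first fact means that no bead move within the fiber creates or destroys a label-$0$ block, hence after the substitution $q_1=\dots=q_n=\xi$, $q_0=\xi^2q$ the ratio $\underline q^{\underline{\mathrm{wt}}(Y')}/\underline q^{\underline{\mathrm{wt}}(Y)}$ collapses to the pure root of unity $\xi^{\sum_{j\neq0}(\mathrm{wt}_j(Y')-\mathrm{wt}_j(Y))}$. I would then invoke the observation that the movable beads of distinct abacus rows of $Y$ slide independently, so that $\sum_{Y'\in p^{-1}(Y)}\xi^{\sum_{j\neq0}(\mathrm{wt}_j(Y')-\mathrm{wt}_j(Y))}$ factors as a product over rows, with the $i$-th factor equal to $\xi^{-c(t_i,l_i)}$ by the row-contribution lemma. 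Reassembling, $\sum_{Y'\in p^{-1}(Y)}\underline q^{\underline{\mathrm{wt}}(Y')}$, evaluated at the substitution, equals $\underline q^{\underline{\mathrm{wt}}(Y)}$ at the substitution times $\prod_i\xi^{-c(t_i,l_i)}$; and since $\xi^{2n-1}=1$ forces $\xi^{2}=\xi^{-(2n-3)}$, the two substitutions written in the corollary coincide, which is precisely the first equality.

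For the second equality I would simply plug in: writing $\underline q^{\underline{\mathrm{wt}}(Y)}=\prod_{j=0}^n q_j^{\mathrm{wt}_j(Y)}$ and setting $q_0=\xi^{-(2n-3)}q$, $q_1=\dots=q_n=\xi$ yields $q^{\mathrm{wt}_0(Y)}\xi^{\sum_{j\neq0}\mathrm{wt}_j(Y)-(2n-3)\mathrm{wt}_0(Y)}$. From Table \ref{dnchartable} one reads off $\sum_{j\neq0}\dim\rho_j=1+2(n-3)+1+1=2n-3$, so $(2n-3)\mathrm{wt}_0(Y)=\sum_{j\neq0}\dim\rho_j\cdot\mathrm{wt}_0(Y)$; multiplying by the extra factor $\xi^{-\sum_i c(t_i,l_i)}$ then gives the stated right-hand side.

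The only point in this argument that needs genuine care is the claim that the fiber generating series factors over abacus rows, i.e.\ that the contribution attributed to row $i$ depends only on $(t_i,l_i)$ in spite of the inter-row bead spill and the multiplicity-two weighting in the description of $p^{-1}(Y)$. But this is exactly the bookkeeping carried out in the proof of the row-contribution lemma, where the Gaussian-binomial collapse $\binom{2n-1}{k}_{\xi^{-1}}=\delta_{k,0}$ isolates a single surviving term per row; so for the present corollary I would simply cite it. The genuinely hard work lies downstream of this statement, where one must show that the accumulated phase $\xi^{\sum_{j\neq0}(\mathrm{wt}_j(Y)-\dim\rho_j\cdot\mathrm{wt}_0(Y))-\sum_i c(t_i,l_i)}$ equals $1$ for every distinguished $0$-generated $Y$ in order to complete Theorem \ref{thm:dnsubst}; that, however, is not part of this corollary.
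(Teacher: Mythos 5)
Your proposal is correct and follows essentially the same route as the paper, which states this corollary as an immediate consequence of the preceding row-contribution lemma: the fiber sum factors over abacus rows into the $\xi^{-c(t_i,l_i)}$ contributions, the label-$0$ weight is constant on fibers of $p$ so the two substitutions $q_0=\xi^2q$ and $q_0=\xi^{-(2n-3)}q$ agree via $\xi^{2n-1}=1$, and the exponent arithmetic with $\sum_{j\neq 0}\dim\rho_j=2n-3$ gives the second equality. Nothing is missing.
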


\begin{lemma}
\label{lem:core0gen}
The core of a 0-generated Young wall is a 0-generated Young wall.
\end{lemma}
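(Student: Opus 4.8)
The plan is to argue on the abacus, using the characterisation of $0$-generated Young walls furnished by Lemma~\ref{lem:maxinvcells}: $Y$ is $0$-generated exactly when its abacus configuration satisfies conditions (D1)--(D5), and distinguished $0$-generated when moreover (D6) holds. By Proposition~\ref{prop:dncoredecomp}, $\mathrm{core}(Y)$ is obtained from $Y$ by a finite sequence of bar removals, which on the abacus are the moves (B1)--(B4) recalled in the proof of that proposition, and the whole procedure respects the splitting of the abacus into the full-column runners $R_k$ with $k\not\equiv 0\bmod(n-1)$ and the two half-block runners $R_{n-1},R_{2n-2}$. So the task is to show that the fully reduced configuration $\mathrm{core}(Y)$ again satisfies (D1)--(D5) (resp.\ (D1)--(D6)); I would do this separately on the two blocks of runners, carrying (D6) along in the same computation for the distinguished case. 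Equivalently, by the confluence part of Proposition~\ref{prop:dncoredecomp} it is enough to exhibit, from any non-core $0$-generated wall, \emph{one} bar removal that stays inside ${\mathcal Z}_\Delta^1$ and iterate; this is the variant I would actually run, since a single move (B1)--(B4) need not by itself preserve (D1)--(D5).

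On the full-column runners I would first observe that (D4)--(D5) for $Y$ force that whenever a runner $R_s$ with $1\le s\le n-2$ carries a bead in some row, all of $R_n,\dots,R_{2n-2}$ in that row — in particular $R_{2n-2-s}$ — are occupied. Hence the number of beads of $Y$ on $R_s$ never exceeds that on $R_{2n-2-s}$, so the integer $z_s$ attached to $\mathrm{core}(Y)$ under the bijection \eqref{Dcore_biject} satisfies $z_s\le 0$ for every $s\le n-2$; since the reduced configuration has beads on $R_s$ only when $z_s>0$, the runners $R_1,\dots,R_{n-2}$ are empty in $\mathrm{core}(Y)$. With these runners empty, the left-hand clauses of (D4) and (D5) become statements about $R_{n-1}$ alone, and the remaining content — that the occupied right-hand runners $R_n,\dots,R_{2n-3}$ carry their beads in a ``pushed-right'' pattern compatible with the stacking on $R_{2n-2}$ — I would check directly from the monotonicity of bead counts that (D4)--(D5) impose on $Y$ and that is preserved by pushing beads to the top of their runners.

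Next I would treat the half-block runners $R_{n-1}$ and $R_{2n-2}$ together with the moves (B3), (B4) and the colour toggles that (B1), (B3) induce there, remembering the boundary convention of invisible white beads at position~$0$. The conditions at stake are (D1) (the rightmost bead of every row lies on $R_{2n-2}$, with either all the row's beads stacked there or their number odd), (D2) (colour compatibility via the parity of $k+n_k$), (D3) (the total bead count is even, or $t_1-l_1=2n-4$), and the ``all on $R_{n-1}$'' alternative of (D5). The cleanest way to organise the verification is to encode (D1)--(D3) — and (D6) — through the sequence of pairs $(t_i,l_i)$ of Corollary~\ref{lem:invcelldesc}, describe how each of (B1)--(B4) transforms this sequence, and check that the transformed sequence still satisfies the conditions of that corollary; the colour switches in (B1) and (B3) are exactly calibrated so that (D2) keeps holding, and (D3) is preserved because each bar removal changes the total number of beads on the abacus by an even amount.

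The main obstacle is precisely this bookkeeping of colours and parities. Moves (B1) and (B3) flip the colours of all beads on $R_{n-1}/R_{2n-2}$ strictly between two positions, while (D1)--(D3) are parity statements about exactly those colours and bead counts, so one must rule out that any legal move produces a row with an even, positive number of beads stacked on $R_{2n-2}$ — forbidden by (D1) — or destroys the ``$t_1-l_1=2n-4$'' alternative of (D3). I expect that, once phrased in terms of the sequence $(t_i,l_i)$, this reduces to a short finite case analysis on the possible residues $(t_i-l_i)\bmod(2n-2)$ and the attached colours, and that the same analysis simultaneously yields the distinguished-wall version via condition (D6).
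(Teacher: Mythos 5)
Your overall strategy coincides with the paper's: characterise $0$-generated walls on the abacus by (D1)--(D5) and track the bar-removal moves (B1)--(B4). Your observation that (D4)--(D5) force $z_s\le 0$ for $1\le s\le n-2$, so that the runners $R_1,\dots,R_{n-2}$ are empty in the core, is a clean way to obtain (D4)--(D5) for the core and is if anything tidier than the paper's treatment of that point. The problems are in the parity bookkeeping, which you correctly identify as the crux but then resolve incorrectly or defer.

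The concrete error is your claim that (D3) is preserved ``because each bar removal changes the total number of beads on the abacus by an even amount.'' Move (B3) applied to a bead at position $2n-2$ sends it to the imaginary position $0$, where by the boundary convention it disappears: this removes exactly \emph{one} bead and flips the parity of the total count. This is precisely the delicate case, and the paper argues around it in two steps: first, that no salient block of label $1$ (nor of labels $2,\dots,n-2$) can appear during the reduction, which ties the parity of the bead count to condition (D3) throughout; and second, a separate reduction for the case where the initial total is odd (so $t_1-l_1=2n-4$), in which one first removes an odd number of beads using (B3) on the necessarily white beads of the first row together with (B2), landing back in the even case. Your deferred ``finite case analysis'' for (D1)--(D2) would have to contain exactly this argument, so the gap is not cosmetic. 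Separately, your closing remark that the same analysis simultaneously yields the distinguished version via (D6) is false: the paper explicitly notes, in Step 2 of the proof of Theorem~\ref{thm:dnsubst}, that the core of a \emph{distinguished} $0$-generated Young wall need not be distinguished, and has to apply the reduction map $\mathrm{red}$ again afterwards. The lemma you are proving only concerns $0$-generated walls, so this does not affect its statement, but relying on it would break the larger argument.
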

\begin{proof} With each reduction step (B1)-(B4) we always remove a bar. In the original Young wall, the salient blocks were only label 0 half blocks and salient block-pairs of label $n-1/n$. 

A similar reasoning as in the type $A$ case shows that no salient block of label $2,\dots,n-2$ can appear after we perform the step (B1) until possible. The same is true with (B2), since if we can perform it on a pair of beads in a row, then we can always perform it on the beads between them. More precisely, it can be seen that even label 1 salient blocks cannot appear during these two steps because the parity conditions in (D1) and (D2) is always maintained by the reduction steps.

The parity conditions in (D1) and (D2) are maintained by the step (B3) as well. If we perform (B4) on a pair of beads, then we can perform it on this pair as long as they disappear from the abacus. Hence, when performing (B4) until possible we also get back the good parities. After the reduction is completed there cannot be any bead on the rulers $R_1,\dots,R_{n-2}$, and all the beads on the rulers $R_{n},\dots,R_{2n-3}$ are right-adjusted. Therefore the conditions (D4) and (D5) are also satisfied. 

If the total number of beads was initially even, then since no label 1 salient block can appear, the total number of beads in the end must be even as well. So the final Young wall will satisfy (D3). 
If in the total number of beads in the initial abacus configuration is odd, then in the first row $t_1-l_1=2n-4$. Hence, the beads on ruler $R_{2n-2}$ in the first row are necessarily white and one of them can be taken away from the abacus with the step (B3), together with the beads on the other rulers using (B2). This is an odd number of beads removed from the abacus. After this the total number of beads is even, so we reduced to the earlier case.
\end{proof}

\begin{proof}[Proof of Theorem~\ref{thm:dnsubst}] In light of Corollary~\ref{cor:Dn:subst}, it remains to show that 
\[\xi^{\sum_{j\neq 0} (\mathrm{wt}_j(Y)-\dim\rho_j \cdot \mathrm{wt}_0(Y))-\sum_i c(t_i,l_i)}=1.\]
We follow in the line of the proof the $A_n$ case.

\textit{Step 1: Reduction to 0-generated cores.} According to Lemma \ref{lem:core0gen} the core of a 0-generated Young wall is a 0-generated core. It is immediate from the definition of $c(t,l)$ that the steps (B1), (B2) and (B4) leave the sum $\sum_i c(t_i,l_i)$ unchanged, while (B3) is a bit more complicated. Indeed,
\begin{itemize}
\item (B1) removes one movable bead from a row, and adds one to another on the same ruler;
\item (B2) removes two movable beads, but these two contribute with opposite signs into $c(t,l)$;
\item (B4) either moves non-movable beads from $R_{2n-2}$ onto $R_{n-1}$, or beads from $R_{n-1}$ into non-movable beads on $R_{2n-2}$.
\end{itemize}
Moving beads on $R_{n-1}$ according to (B3) does not affect the numbers $c(t,l)$. If (B3) moves a bead on the ruler $R_{2n-2}$ between rows having $l$ of different parity, then the sum of the movable beads at the $(2n-2)$-nd positions of the two rows is constant, so $\sum_i c(t_i,l_i)$ does not change after such a step. If (B3) were able to move a bead on $R_{2n-2}$ between rows both having odd $l$'s, then this can happen only if they have the same color and if there is no bead on $R_{n-1}$ between them. This means that there must be an even number of beads between them and they should have same kind of top block, or odd number of beads and different kind of top blocks. Both cases are forbidden in 0-generated Young walls due to Lemma \ref{lem:maxinvcells}. For the same reasons (B3) cannot move beads on $R_{2n-2}$ between rows both having even $l$'s.

It follows from these considerations that for all $Y \in \CZ_{\Delta}^0$
\[ \xi^{\sum_{j\neq 0} (\mathrm{wt}_j(Y)-\dim\rho_j \cdot \mathrm{wt}_0(Y))-\sum_i c(t_i,l_i)}=\xi^{\sum_{j\neq 0} (\mathrm{wt}_j(\mathrm{core}(Y))-\dim \rho_j\cdot\mathrm{wt}_0(\mathrm{core}(Y))-\sum_i c(t_i,l_i)}. \]

\textit{Step 2: Reduction to distinguished 0-generated cores.}

As described above, for any 0-generated core $Y$ there is a decomposition as $\lambda(Y)=\mu(Y) \cup \nu(Y)$, where $\mu(Y) \in \mathcal{C}^1$, $\nu(Y) \in \mathcal{C}^2$, and we have $\nu(Y)=\nu^{(0)}(Y)+\nu^{(1)}(Y)$, where $\nu^{(0)}(Y)$ and $\nu^{(1)}(Y)$ are two-cores, and parts in $\nu^{(1)}(Y)$ have colors given by their parity. Since $Y$ is 0-generated, the largest part of $\nu(Y)$ has to be even, otherwise there is no bead at the rightmost position in the last row of the abacus. Therefore, the abacus represenation of $Y$ can be described as follows. 
\begin{enumerate}
\item There is no bead on the rulers $R_k$ for $1\leq k \leq n-2$;
\item On the ruler $R_{n-1}$ all the beads are of the same color, the beads are at the first, let's say, $m$ positions, exactly one bead at each.
\item The positions in the first $m$ rows of the rulers $R_k$ for $n\leq k \leq 2n-3$ are all filled up with beads, the other beads are pushed to the right as much as possible.
\item There is at least $m$ beads on the ruler $R_{2n-2}$, one at each of the first positions and there is no space between them. The first $m$ of these are all white. The total number of the rest is even, and half of them is white, half of them is black.
\end{enumerate}
The abacus of a typical 0-generated core looks like this:
\begin{center}
\begin{tikzpicture}

\draw (0,5) node {$R_1$};
\draw (1,5) node {\dots};
\draw (2,5) node {$R_{n-2}$};
\draw (3,5) node {$R_{n-1}$};
\draw (4,5) node {$R_{n}$};
\draw (5,5) node {\dots};
\draw (6,5) node {$R_{2n-3}$};
\draw (7,5) node {$R_{2n-2}$};

\draw (-0.5,4.8) -- (7.5,4.8);
\draw (3.5,5.3) -- (3.5,-0.3);
\draw (2.5,5.3) -- (2.5,-0.3);
\draw (6.5,5.3) -- (6.5,-0.3);
\draw (7.5,5.3) -- (7.5,-0.3);

\node at (3,4.5) [circle,draw,fill=gray!50] {};
\draw (5,4.5) node {\dots};
\node at ( 4,4.5) [circle,draw] {};
\node at ( 6,4.5) [circle,draw] {};
\node at ( 7,4.5) [circle,draw] {};

\draw (3,3.9) node {\vdots};
\draw (4,3.9) node {\vdots};
\draw (6,3.9) node {\vdots};
\draw (7,3.9) node {\vdots};

\node at (3,3) [circle,draw,fill=gray!50] {};
\draw (5,3) node {\dots};
\node at ( 4,3) [circle,draw] {};
\node at ( 6,3) [circle,draw] {};
\node at ( 7,3) [circle,draw] {};

\node at ( 4,2.5) [circle,draw] {};
\node at ( 5,2.5) [circle,draw] {};
\node at ( 6,2.5) [circle,draw] {};
\node at ( 7,2.5) [circle,draw,fill=gray!50] {};
\node at ( 5,2) [circle,draw] {};
\node at ( 6,2) [circle,draw] {};
\node at ( 7,2) [circle,draw] {};
\node at ( 6,1.5) [circle,draw] {};
\node at ( 7,1.5) [circle,draw,fill=gray!50] {};
\node at ( 6,1) [circle,draw] {};
\node at ( 7,1) [circle,draw] {};
\node at ( 6,0.5) [circle,draw] {};
\node at ( 7,0.5) [circle,draw,,fill=gray!50] {};
\node at ( 7,0) [circle,draw] {};
\end{tikzpicture}
\end{center}

It is not true that the core of a distinguished 0-generated Young wall is always distinguished. But we can reduce each non-distinguished core further using the reduction map $\mathrm{red}$ and then taking the core of the result using the steps (B1)-(B4) again. The result of this is a distinguished 0-generated core. The first step corresponds to shifting the beads on $R_{n-1}$ one step left. This decreases $\sum_i c(t_i,l_i)$ by $m$, and decreases $\sum_{j\neq 0} (\mathrm{wt}_j(Y)-\dim\rho_j \cdot \mathrm{wt}_0(Y))$ also by $m$. The second step does not change these numbers by the considerations of Step 1. So we are done once we know the statement for distinguished 0-generated cores. 

We remark here that during the first step the color of every second bead on the ruler $R_{2n-2}$ changes. In the end the color of the beads on this ruler will alternate.

\textit{Step 3: The case of distinguished 0-generated cores.}

Let $(z_1,\dots,z_n)$ be the integer tuple assigned to $Y$ as in \ref{sec:coreabacus}. Then $z_{n-1}=z_n$ since there is no bead on $R_{n-1}$ and the color of the beads on $R_{2n-2}$ alternates. In particular, $z_{n-1}+z_{n}$ is an even number.

We claim that \[ \sum_{1=1}^{n}m_n=\sum_i c(t_i,l_i).\]
Indeed, by Theorem \ref{thm:orbiserdn} (3), $\sum_{1=1}^{n}m_n=-\sum_{i=1}^{n-2}(n-1-i)z_i-(n-1)c(z_{n-1}+z_n)-(n-1)b$.
The number of movable beads on $R_{2n-2-i}$ is $-z_i$ if $1 \leq i \leq n-1$, and it is $-c(z_{n-1}+z_n)-b$ if $i=0$. This proves the claim.

By Theorem \ref{thm:orbiserdn} (1), for a core
\[ \underline{q}^{\underline{\mathrm{wt}}(Y)}=q_1^{m_1}\cdot\dots\cdot q_n^{m_n}(q^{1/2})^{\overline{m}^\top \cdot C \cdot \overline{m}}. \]
As in the type $A$ case, on the right hand side of this expression $q_0$ only appears in the $q^{1/2}$-term. Hence
\[ \frac{1}{2}\overline{m}^\top \cdot C \cdot \overline{m}=\mathrm{wt}_0(Y),\]
and
\[ (q^{1/2})^{\overline{m}^\top \cdot C \cdot \overline{m}}\Big|_{q_1=\dots=q_n=\xi,\,q_0=\xi^{2}q}=q^{\mathrm{wt}_0(Y)}. \]
On the other hand,
\[ \underline{q}^{\underline{\mathrm{wt}}(Y)}\Big|_{q_1=\dots=q_n=\xi,\,q_0=\xi^{2}q}=q^{\mathrm{wt}_0(Y)}\xi^{\sum_{j\neq 0} (\mathrm{wt}_j(Y)-\dim \rho_j \cdot \mathrm{wt}_0(Y))}.\]
Therefore
\[\xi^{\sum_{j\neq 0} (\mathrm{wt}_j(Y)-\dim \rho_j \cdot \mathrm{wt}_0(Y))}=\xi^{\sum_{i=1}^{n}m_n}=\xi^{\sum_i c(t_i,l_i)},\]
and so indeed
\[\xi^{\sum_{j\neq 0} (\mathrm{wt}_j(Y)-\dim \rho_j \cdot \mathrm{wt}_0(Y))-\sum_i c(t_i,l_i)}=1.\]
\end{proof}


\bibliographystyle{amsplain}
\bibliography{simplehilb}

\newpage

\chapter*{Summary}
\thispagestyle{empty}

The punctual Hilbert scheme parameterizing the zero-dimensional subschemes of a quasi-projective variety contains a large amount of information about the geometry and topology of the base variety. The Hilbert schemes of points on smooth curves and surfaces have been investigated for a long time. In the recent years a new direction has emerged, which also allows singularities on the base variety.  The aim of this thesis is to describe the Euler characteristics of the Hilbert schemes parameterizing the zero-dimensional subschemes of some basic classes of surface singularities. 

The well-known simple singularities are the simplest type of normal surface singularities, and it is known that they have an orbifold structure.  There are at least two natural version of the punctual Hilbert scheme in the case of quotient singularities.

We study the geometry and topology of Hilbert schemes of points on the orbifold surface $[C^2/G]$, respectively the singular quotient surface $C^2/G$, where G is a finite subgroup of $SL(2,C)$ of type $A$ or $D$. We give a decomposition of the (equivariant) Hilbert scheme of the orbifold into affine space strata indexed by a certain combinatorial set, the set of Young walls. The generating series of Euler characteristics of Hilbert schemes of points of the singular surface of type $A$ or $D$ is computed in terms of an explicit formula involving a specialized character of the basic representation of the corresponding affine Lie algebra; we conjecture that the same result holds also in type $E$. Our results are consistent with known results for type $A$, and are new for type $D$. The crystal basis theory of the fundamental representation of the affine Lie algebra
corresponding to the surface singularity (via the McKay
correspondence) plays an important role
in our approach. The result gives a generalization of Göttsche's formula and has interesting modular properties related to the S-duality conjecture.

The moduli space of torsion free sheaves on surfaces are higher rank analogs of the Hilbert schemes. In type $A$ our results reveal their Euler characteristic generating function as well. Another very interesting class of normal surface singularities is the so-called cyclic quotient singularities of type $(p,1)$. As an outlook we also obtain some results about the associated generating functions.

\newpage 
\thispagestyle{empty}
\phantom{A} 
\newpage

\newpage

\chapter*{Összefoglaló}
\thispagestyle{empty}

Egy kváziprojektív varietás nulla dimenziós részsémáit paraméterező pontozott Hilbert séma nagy mennyiségű információt tartalmaz az alapvarietás geometriájáról és topológiájáról. Sima görbék és felületek pontjainak Hilbert sémáit régóta vizsgálják. Az utóbbi években egy új irányzat is megjelent, miszerint az alapvarietáson szingularitások is megengedettek. A disszertáció célja, hogy leírja néhány alap felületszingularitás osztály eseten a nulla dimenziós részsémákat paraméterező Hilbert-sémák Euler-karakterisztikáit. 

Az egyszerű szingularitások a legegyszerűbbek a normális felületszingularitások között, és ismert hogy orbifold struktúrával is rendelkeznek. Hányadosszingularitások esetén legalább két természetes verziója létezik a pontozott Hilbert-sémának.

Megvizsgáljuk a $[C^2/G]$ orbifold felület, és a $C^2/G$ hányadosfelület pontjainak Hilbert-sémáját, ahol $G$ az $SL(2,C)$-nek $A$ vagy $D$ típusú részcsoportja. Az ekvivariáns Hilbert-sémáknak megadjuk egy felbontását affin terekre, amiket Young falaknak egy meghatározott kombinatorikusan leírható halmaza indexel. Explicit alakban kiszámoljuk az $A$ és $D$ típusú szinguláris felületek Hilbert-sémáinak Euler-karakterisztikáinak generátorfüggvényét is, ami egy specializációja a megfelelő affin Lie-algebra alapreprezentációjának karakterének. Az $E$ típus esetét sejtésként fogalmazzuk meg. Az eredményeink konzisztensek a korábbiakkal az $A$ típusra, és újak a $D$ típusra. A megközelítésünkben fontos szerepet játszik az felületszingularitáshoz a McKay-korrespondencián keresztül tartozó affin Lie-algebra alapreprezentációjának kristálybázis elmélete. Az eredmények általánosítják Göttsche formuláját és az S-dualitás sejtéshez kapcsolódóan érdekes modularitási tulajdonságokkal is rendelkeznek.

A torziómentes kévék modulustere a pontozott Hilbert séma egy magasabb dimenziós analógiája. Az $A$ típus esetében az itt ismertetett módszer megadja ezen modulusterek Euler-karakterisztikáinak generátorfüggvényét is. A normális felületszingularitások egy másik, nagyon érdekes osztálya a $(p,1)$-típusú ciklikus szingularitásoké. Kitekintésként az ezekhez asszociált generátorfüggvényekről is bemutatunk néhány eredményt. 

\end{document}